\numberwithin{equation}{subsection}
\theoremstyle{plain}
        \newtheorem{theorem}[equation]{Theorem}
        \newtheorem{lemma}[equation]{Lemma}
        \newtheorem{proposition}[equation]{Proposition}
        \newtheorem{corollary}[equation]{Corollary}
	    \newtheorem{definition}[equation]{Definition}
        \newtheorem{sinnadaitalica}[equation]{}
\theoremstyle{definition}
        \newtheorem{remark}[equation]{Remark}
        \newtheorem{sinnadastandard}[equation]{}
\newcommand{\eps}{\varepsilon}
\newcommand{\cqd}{\hfill$\Box$}  %fin de demostracion%
\newcommand{\comw}{\textcolor{white}}
\newcommand{\cc}{\mathcal}
\newcommand{\ff}{\mathsf}
\newcommand{\mr}[1]{\overset {#1} {\longrightarrow}}
\newcommand{\smr}[1]{\overset {#1} {\rightarrow}}
\newcommand{\ml}[1]{\overset {#1} {\longleftarrow}}
\newcommand{\Mr}[1]{\overset {#1} {\Longrightarrow}}
\newcommand{\Mrsimeq}[1]{\stackrel[\cong] {#1} {\Longrightarrow}}
\newcommand{\mrpairviejo}[2]
   {
    \xymatrix@C=5ex@R=2.4ex
            {
             {} \ar@<1.6ex>[r]^{#1}
	            \ar@<-1.1ex>[r]^{#2}
	         & {}
            }
   }
\newcommand{\mrpair}[2]
   {
    \xymatrix@C=5ex@R=2.4ex
            {
             {} \ar@<1ex>[r]^{#1}
	            \ar@<-1ex>[r]_{#2}
	         & {}
            }
   }
\newcommand{\mlpair}[2]
   {
    \xymatrix@C=5ex@R=2.4ex
            {
             {}
              & {} \ar@<1.0ex>[l]_{#2}
	          \ar@<-1.7ex>[l]_{#1}
            }
    }
\newcommand{\cellrd}[3] % flechas para adelante, cell para abajo %
 {
  \xymatrix@C=7ex@R=2.4ex
         {
          {} \ar@<1.6ex>[r]^{#1}
             \ar@{}@<-1.3ex>[r]^{\Downarrow \; {#2}}
             \ar@<-1.1ex>[r]_{#3}
          & {}
         }
}
\newcommand{\cellrdb}[3] % flechas para adelante, cell para abajo %
 {
  \xymatrix@C=7ex@R=2.4ex
         {
          {} \ar@<1.9ex>[r]^{#1}
             \ar@{}@<-1.3ex>[r]^{\Downarrow \; {#2}}
             \ar@<-1.1ex>[r]_{#3}
          & {}
         }         
 }
 \newcommand{\scellrd}[3] % mas cortas flechas para adelante, cell para abajo %
 {
  \xymatrix@C=4.5ex@R=2.4ex
         {
          {} \ar@<1.6ex>[r]^{#1}
             \ar@{}@<-1.3ex>[r]^{\!\! \Downarrow \, {#2}}
             \ar@<-1.1ex>[r]_{#3}
          & {}
         }
}
 \newcommand{\modif}[3] % cells para adelante, flecha para abajo %
 {
  \xymatrix@C=7ex@R=2.4ex
         {
          {} \ar@<1.6ex>@{=>}[r]^{#1}
             \ar@{}@<-1.3ex>@{=>}[r]^{\!\! {#2} \, \!\downarrow}
             \ar@{}@<-1.1ex>[r]_{#3}
          & {}
         }
 }
\newcommand{\cellld}[3] % flechas para atras, cell para abajo %
 {
  \xymatrix@C=6ex@R=2.4ex
         {
            {}
          & {} \ar@<1.0ex>[l]^{#3}
          \ar@{}@<-1.7ex>[l]^{\!\! {#2} \, \!\Downarrow}
	                                 \ar@<-1.7ex>[l]_{#1}
         }
 }
\newcommand{\cellpairrd}[4] % como \cellrd pero con dos cells %
 {
  \xymatrix@C=8ex@R=2.2ex
         {
          {} \ar@<1.6ex>[r]^{#1}
             \ar@{}@<-1.3ex>[r]^{\!\! \Downarrow \, {#2} 
                                 \;\;\; \Downarrow \, {#3} }
             \ar@<-1.1ex>[r]_{#4}
          & {}
         }
 }
\newcommand{\coLim}[2]
   {
    \underset{#1}{\underrightarrow{\ff{Lim}}}
    \; {#2}
   }
\newcommand{\Lim}[2]
   {
    \underset{#1}{\underleftarrow{\ff{Lim}}}
    \; {#2}
   }
\newcommand{\biLim}[2]
   {
    \underset{#1}{\underleftarrow{\ff{biLim}}}
    \; {#2}
   }
\newcommand{\bicoLim}[2]
   {
    \underset{#1}{\underrightarrow{\ff{biLim}}}
    \; {#2}
   }
\newcommand{\dcell}[1]  % celda down %
          {
					 \ar@<8pt>@{-}[d]+<-4pt,8pt>
           \ar@<-8pt>@{-}[d]+<4pt,8pt>
           \ar@{}[d]|{#1}
          }
\newcommand{\dcellb}[1]   % celda ancha down %
          {
           \ar@<10pt>@{-}[d]+<-5pt,8pt>
           \ar@<-10pt>@{-}[d]+<5pt,8pt>
           \ar@{}[d]|{#1}
          }
\newcommand{\dcellbymedio}[1]   % celda mas ancha pero no tanto como bb down %
          {
           \ar@<15pt>@{-}[d]+<-7.5pt,10pt>
           \ar@<-15pt>@{-}[d]+<7.5pt,10pt>
           \ar@{}[d]|{#1}          
          }
\newcommand{\dcellbymediobis}[1]   % celda mas ancha pero no tanto como bb down, otra version %
          {
           \ar@<15pt>@{-}[d]+<-7.5pt,-5pt>
           \ar@<-15pt>@{-}[d]+<7.5pt,0pt>
           \ar@{}[d]|{#1}          
          }
\newcommand{\deq}        % identidad down %
         {
          \ar@{=}[d]
         }
\newcommand{\ddeq}{\ar@{=}[dd]}
\newcommand{\dddeq}{\ar@{=}[ddd]}         
\newcommand{\ddddeq}{\ar@{=}[dddd]}         
\newcommand{\dddddeq}{\ar@{=}[ddddd]}                  
\newcommand{\ddddddeq}{\ar@{=}[dddddd]}                           
\newcommand{\dddddddeq}{\ar@{=}[ddddddd]}         
\newcommand{\ddddddddeq}{\ar@{=}[dddddddd]}         
\newcommand{\dddddddddeq}{\ar@{=}[ddddddddd]}                  
\newcommand{\ddddddddddeq}{\ar@{=}[dddddddddd]}                  
\newcommand{\dddddddddddeq}{\ar@{=}[ddddddddddd]}                  
\newcommand{\dreq}       % identidad down right %
         {
          \ar@{=}[dr]
         }
\newcommand{\dleq}       % identidad down left %
         {
          \ar@{=}[dl]
         }
\newcommand{\dccell}[1]    % celda de uno a dos lugares down %
          {
           \ar@{-}[ld]
           \ar@{-}[rd]
           \ar@{}[d]|{#1}
          }
\newcommand{\dcellbb}[1]   % celda mas ancha down %
          {
           \ar@<20pt>@{-}[d]+<-10pt,12pt>
           \ar@<-20pt>@{-}[d]+<10pt,12pt>
           \ar@{}[d]|{#1}
          }
\newcommand{\dl}    % una raya para poner a la izquierda (inclinada) %
          {
           \ar@<-2pt>@{-}[d]+<4pt,8pt>
          }
\newcommand{\dr}    % una raya para poner a la derecha (inclinada) %
          {
           \ar@<2pt>@{-}[d]+<-4pt,8pt>
          }
\newcommand{\drbis}    % una raya para poner a la derecha (inclinada) usada para chamullar un dcellbb%
          {
           \ar@<-2pt>@{-}[d]+<-4pt,8pt>
          }
\newcommand{\drmediobis}    % una raya para poner a la derecha (inclinada) usada para chamullar un dcellbb%
          {
           \ar@<-1pt>@{-}[d]+<-4pt,8pt>
          }
\newcommand{\dc}[1]    % para poner el label en el centro %
          {
           \ar@{}[d]|{#1}
          }
\newcommand{\dcr}[1]    % para poner el label en el centro a la derecha para tamanos impares%
          {
           \ar@{}[dr]|{#1}
          }
\newcommand{\dcl}[1]    % para poner el label en el centro a la izquierda para tamanos impares%
          {
           \ar@{}[dl]|{#1}
          }
\newcommand{\uccell}[1]      % celda de uno a dos lugares up %
          {
           \ar@{-}[ur]
           \ar@{}[u]|{#1}
           \ar@{-}[ul]
          }
\newcommand{\uccellb}[1]     % celda ancha de uno a dos lugares up %
          {
           \ar@<-1ex>@{-}[ur]
           \ar@{}[u]|{#1}
           \ar@<1ex>@{-}[ul]
          }
\newcommand{\dcellop}[1]  % celda down abriendo %
          {
					 \ar@<6pt>@{-}[d]+<6pt,8pt>
           \ar@<-6pt>@{-}[d]+<-6pt,8pt>
           \ar@{}[d]|{#1}
          }
\newcommand{\dcellopb}[1]  % celda down abriendo mas ancha%
          {
					 \ar@<7pt>@{-}[d]+<7pt,8pt>
           \ar@<-7pt>@{-}[d]+<-7pt,8pt>
           \ar@{}[d]|{#1}
          }
\newcommand{\dcellopbb}[1]  % celda down abriendo aun mas ancha%
          {
					 \ar@<8pt>@{-}[d]+<8pt,8pt>
           \ar@<-8pt>@{-}[d]+<-8pt,8pt>
           \ar@{}[d]|{#1}
          }
\newcommand{\did}{\ar@2{-}[d]}
\newcommand{\dig}{ \ar@2{-}[d] & & }
\newcommand{\op}[1]
          {
           \ar@{-}[ld]
           \ar@{-}[rd]
           \ar@{}[d]|{#1}
          }
\newcommand{\opb}[1]
          {
           \ar@<-2pt>@{-}[ld]%+<-2pt,0pt>
           \ar@<2pt>@{-}[rd]%+<-2pt,3pt>
           \ar@{}[d]|{#1}
          }        
\newcommand{\opmediob}[1]
          {
           \ar@<-1pt>@{-}[ld]%+<-2pt,0pt>
           \ar@<1pt>@{-}[rd]%+<-2pt,3pt>
           \ar@{}[d]|{#1}
          }  
\newcommand{\opbymedio}[1]
          {
           \ar@<-3pt>@{-}[ld]%+<-2pt,0pt>
           \ar@<3pt>@{-}[rd]%+<-2pt,3pt>
           \ar@{}[d]|{#1}
          }     
\newcommand{\opbb}[1]
          {
           \ar@<-4pt>@{-}[ld]%+<-2pt,0pt>
           \ar@<4pt>@{-}[rd]%+<-2pt,3pt>
           \ar@{}[d]|{#1}
          }               
\newcommand{\opunodos}[1]
          {
           \ar@{-}[ld]
           \ar@{-}[rrd]
           \ar@{}[dr]|{#1}
          }
\newcommand{\opunodosb}[1]
          {
           \ar@<-2pt>@{-}[ld]
           \ar@<2pt>@{-}[rrd]
           \ar@{}[dr]|{#1}
          }
\newcommand{\opdosuno}[1]
          {
           \ar@{-}[lld]
           \ar@{-}[rd]
           \ar@{}[d]|{#1}
          }          
\newcommand{\opdosdos}[1]
          {
           \ar@{-}[lld]
           \ar@{-}[rrd]
           \ar@{}[d]|{#1}
          }    
\newcommand{\opdostres}[1]
          {
           \ar@{-}[lld]
           \ar@{-}[rrrd]
           \ar@{}[d]|{#1}
          }    
\newcommand{\optresuno}[1]
          {
           \ar@{-}[llld]
           \ar@{-}[rd]
           \ar@{}[d]|{#1}
          }   
\newcommand{\optresdos}[1]
          {
           \ar@{-}[llld]
           \ar@{-}[rrd]
           \ar@{}[d]|{#1}
          }    
\newcommand{\optrestres}[1]
          {
           \ar@{-}[llld]
           \ar@{-}[rrrd]
           \ar@{}[d]|{#1}
          }            
\newcommand{\opcincocinco}[1]
          {
           \ar@{-}[llllld]
           \ar@{-}[rrrrrd]
           \ar@{}[d]|{#1}
          }
\newcommand{\cl}[1]
          {
           \ar@{-}[ur]
           \ar@{}[u]|{#1}
           \ar@{-}[ul]
          }
\newcommand{\clb}[1]
          {
           \ar@<-1ex>@{-}[ur]
           \ar@{}[u]|{#1}
           \ar@<1ex>@{-}[ul]
          }
\newcommand{\clmediob}[1]
          {
           \ar@<-.5ex>@{-}[ur]
           \ar@{}[u]|{#1}
           \ar@<.5ex>@{-}[ul]
          } 
\newcommand{\clrightb}[1]
          {
           \ar@<-1ex>@{-}[ur]
           \ar@{}[u]|{#1}
           \ar@{-}[ul]
          }
\newcommand{\clunodos}[1]
          {
           \ar@{-}[urr]
           \ar@{}[u]|{#1}
           \ar@{-}[ul]
          }
\newcommand{\cldosuno}[1]
          {
           \ar@{-}[ur]
           \ar@{}[u]|{#1}
           \ar@{-}[ull]
          }
\newcommand{\cldosdos}[1]
          {
           \ar@{-}[urr]
           \ar@{}[u]|{#1}
           \ar@{-}[ull]
          }         
\newcommand{\cltresdos}[1]
          {
           \ar@{-}[urr]
           \ar@{}[u]|{#1}
           \ar@{-}[ulll]
          }           
\newcommand{\cldostres}[1]
          {
           \ar@{-}[urrr]
           \ar@{}[u]|{#1}
           \ar@{-}[ull]
          }     
\newcommand{\cltrestres}[1]
          {
           \ar@{-}[urrr]
           \ar@{}[u]|{#1}
           \ar@{-}[ulll]
          }            
\newcommand{\clcincocinco}[1]
          {
           \ar@{-}[urrrrr]
           \ar@{}[u]|{#1}
           \ar@{-}[ulllll]
          }            
\newcommand{\Pro}[1]{2\hbox{-}\cc{P}ro(\cc{#1})}
\newcommand{\Prop}[1]{2\hbox{-}\cc{P}ro_p(\cc{#1})}
\newcommand{\CJ}{\cc{H}om_p(\ff{J}^{op},\cc{C})}
\newcommand{\pCJ}{p\cc{H}om_p(\ff{J}^{op},\cc{C})}
\newcommand{\C}{\ff{C}}
\newcommand{\fv}{\ff{v}}
\newcommand{\fu}{\ff{u}}
\newcommand{\f}{\ff{f}}
\newcommand{\X}{\ff{X}}
\newcommand{\Y}{\ff{Y}}
\newcommand{\Z}{\ff{Z}}
\newcommand{\D}{\ff{D}}
\newcommand{\F}{\ff{F}}
\newcommand{\G}{\ff{G}}
\newcommand{\A}{\ff{A}}
\newcommand{\B}{\ff{B}}
\newcommand{\E}{\ff{E}}
\newcommand{\fP}{\ff{P}}
\newcommand{\Q}{\ff{Q}}
\newcommand{\fa}{\ff{a}}
\newcommand{\ii}{\ff{i}}
\newcommand{\fb}{\ff{b}}
\newcommand{\fc}{\ff{c}}
\newcommand{\q}{\ff{q}}
\newcommand{\p}{\ff{p}}
\newcommand{\fr}{\ff{r}}
\newcommand{\s}{\ff{s}}
\newcommand{\ft}{\ff{t}}
\newcommand{\g}{\ff{g}}
\newcommand{\h}{\ff{h}}
\newcommand{\fk}{\ff{k}}
\newcommand{\m}{\ff{m}}
\newcommand{\e}{\ff{e}}
\newcommand{\Yal}{\ff{Y}_a^l}
\newcommand{\Ybl}{\ff{Y}_b^l}
\newcommand{\Ycl}{\ff{Y}_c^l}
\newcommand{\pkj}{(k \! < \! j)}
\newcommand{\pkl}{(k \! < \! l)}
\newcommand{\plj}{(l \! < \! j)}
\newcommand{\pkjj}{(k \! < \! j')}
\newcommand{\pjjj}{(j \! < \! j')}
\newcommand{\oj}{_{0<j}}
\newcommand{\ok}{_{0<k}}
\newcommand{\lj}{_{l<j}}
\newcommand{\kj}{_{k<j}}
\newcommand{\kl}{_{k<l}}
\newcommand{\kjj}{_{k<j'}}
\newcommand{\Xuc}{\X_{u_0}}
\newcommand{\Xuu}{\X_{u_1}}
\newcommand{\Xud}{\X_{u_2}}
\newcommand{\Xvc}{\X_{v_0}}
\newcommand{\Xvu}{\X_{v_1}}
\newcommand{\Xvd}{\X_{v_2}}
\newcommand{\Xvt}{\X_{v_3}}
\newcommand{\Xwc}{\X_{w_0}}
\newcommand{\Xwu}{\X_{w_1}}
\newcommand{\Xwd}{\X_{w_2}}
\newcommand{\Xwt}{\X_{w_3}}
\newcommand{\piuc}{\pi_{u_0}}
\newcommand{\piuu}{\pi_{u_1}}
\newcommand{\pivc}{\pi_{v_0}}
\newcommand{\pivu}{\pi_{v_1}}
\newcommand{\piwc}{\pi_{w_0}}
\newcommand{\piic}{\pi_{i_0}}
\newcommand{\piiu}{\pi_{i_1}}
\newcommand{\piid}{\pi_{i_2}}
\newcommand{\kk}{_{k+1}}
\newcommand{\kko}{_{\tilde{k}_0\leq k_0}}
\newcommand{\kkj}{_{\tilde{k}_j\leq \tilde{\tilde{k}}_j}}
\newcommand{\vokj}{_{\varphi(0) \leq \tilde{\tilde{k}}_j}}
\newcommand{\kvj}{_{\tilde{k}_j\leq \varphi(j)}}
\newcommand{\skto}{_{\tilde{k}_0}}
\newcommand{\sko}{_{k_0}}
\newcommand{\svo}{_{\varphi(0)}}
\newcommand{\svj}{_{\varphi(j)}}
\newcommand{\svjj}{_{\varphi(j')}}
\newcommand{\sktj}{_{\tilde{k}_j}}
\newcommand{\skttj}{_{\tilde{\tilde{k}}_j}}
\newcommand{\tjj}{^{j''}}
\newcommand{\tj}{^{j'}}
\newcommand{\sjj}{_{j''}}
\newcommand{\sj}{_{j'}}
\newcommand{\sk}{_{k'}}
\newcommand{\tjjj}{^{j,j'}}
\newcommand{\tkj}{^{k,j}}
\newcommand{\tkl}{^{k,l}}
\newcommand{\tkjj}{^{k,j'}}
\newcommand{\tB}{^{\B}}
\newcommand{\tA}{^{\A}}
\newcommand{\tE}{^{\E}}
\newcommand{\tY}{^{\Y}}
\newcommand{\tZ}{^{\Z}}
\newcommand{\tX}{^{\X}}
\newcommand{\sY}{_{\Y}}
\newcommand{\sii}{_{i''}}
\newcommand{\si}{_{i'}}
\newcommand{\sit}{_{\tilde{i}}}
\newcommand{\swt}{_{\tilde{w}}}
\newcommand{\smut}{_{\tilde{\mu}}}
\newcommand{\sXY}{_{\X,\Y}}
\newcommand{\sEY}{_{\E,\Y}}
\newcommand{\sB}{_{\B}}
\newcommand{\ta}{^{a'}}
\newcommand{\tba}{^{ba}}
\newcommand{\tca}{^{ca}}
\newcommand{\tcb}{^{cb}}
\newcommand{\sba}{_{ba}}
\newcommand{\sd}[2]{_{({#1},{#2})}}
\newcommand{\st}[3]{_{({#1},{#2},{#3})}}
\newcommand{\ardr}{\ar@{-}[dr]}
\newcommand{\ardrr}{\ar@{-}[drr]}
\newcommand{\ardrrr}{\ar@{-}[drrr]}
\newcommand{\ardrrrr}{\ar@{-}[drrrr]}
\newcommand{\ardl}{\ar@{-}[dl]}
\newcommand{\ardll}{\ar@{-}[dll]}
\newcommand{\ardlll}{\ar@{-}[dlll]}
\newcommand{\ardllll}{\ar@{-}[dllll]}
\newcommand{\ardlllll}{\ar@{-}[dlllll]}
\newcommand{\inv}{^{-1}}
\begin{document}
\thispagestyle{empty}

\begin {center}
\includegraphics[scale=.3]{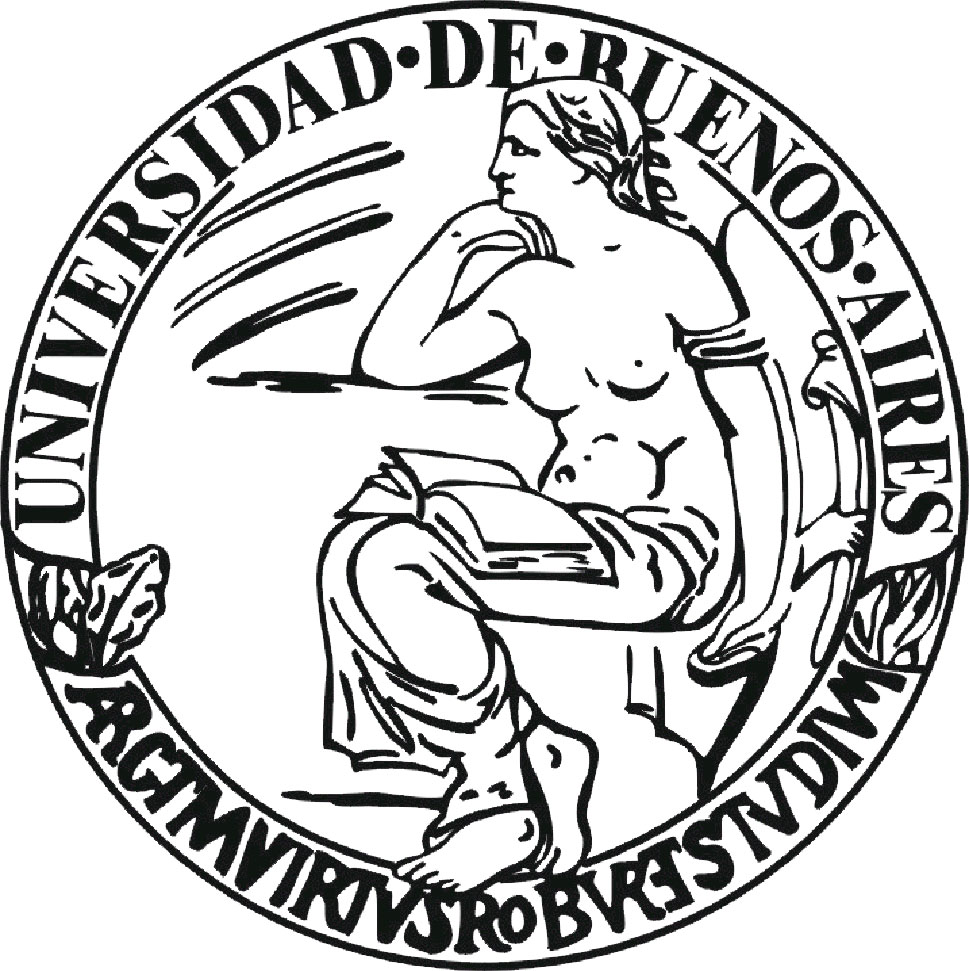}

\medskip
UNIVERSIDAD DE BUENOS AIRES

Facultad de Ciencias Exactas y Naturales

Departamento de Matem\'atica

\vspace{3cm}

\textbf{\large  Una teor\'ia de 2-pro-objetos, una teor\'ia de 2-categor\'ias de 2-modelos y la estructura de 2-modelos para $\Pro{C}$}

\vspace{2cm}

Tesis presentada para optar al t\'itulo de Doctor de la Universidad de Buenos Aires en el \'area Ciencias Matem\'aticas

\vspace{2cm}

\textbf{Mar\'ia Emilia Descotte}

\end {center}

\vspace{1.5cm}

\noindent Director de tesis: Eduardo J. Dubuc
 
\noindent Consejero de estudios: Eduardo J. Dubuc

\vspace{1cm}

\noindent Lugar de trabajo: Instituto de investigaciones matem\'aticas ``Luis A. Santal\'o''

\noindent Lugar y fecha de defensa: Buenos Aires, 7 de julio de 2015

\pagebreak

\begin{center}\large
\bf Una teor\'ia de 2-pro-objetos, una teor\'ia de 2-categor\'ias de 2-modelos y la estructura de 2-modelos para $\Pro{C}$
\end{center}

\noindent \small {\bf Resumen.} En los 60, Grothendieck desarrolla la teor\'ia de \mbox{pro-objetos} de una categor\'ia.  La propiedad fundamental de $\ff{Pro}(\ff{C})$ es que se tiene un embedding $\ff{C} \mr{c} \ff{Pro}(\ff{C})$, $\ff{Pro}(\ff{C})$ tiene l\'imites cofiltrantes peque\~nos, y estos son libres en el sentido de que para cualquier otra categor\'ia $\ff{E}$ con l\'imites cofiltrantes peque\~nos, la precomposici\'on con $c$ determina una equivalencia de categor\'ias
$\cc{C}at(\ff{Pro}(\ff{C}),\,\ff{E})_+ \simeq \cc{C}at(\ff{C},\, \ff{E})$, (el
``$+$'' indica la subcategor\'ia plena formada por los funtores que preservan l\'imites cofiltrantes). 

En este trabajo, desarrollamos la teor\'ia de pro-objetos ``2-dimensional''. Dada una 2-categor\'ia 
$\cc{C}$, definimos la 2-categor\'ia $\Pro{C}$ cuyos objetos llamamos 2-pro-objetos. Probamos que $\Pro{C}$ tiene todas las propiedades b\'asicas esperadas relativizadas adecuadamente al caso 2-categ\'orico, incluyendo la propiedad universal correspondiente. Damos una definici\'on de ``closed 2-model 2-category'' adecuada y demostraciones de sus propiedades b\'asicas. Dejamos para un trabajo futuro la construcci\'on de su categor\'ia homot\'opica. Finalmente, probamos que nuestra 2-categor\'ia $\Pro{C}$ tiene una estructura de ``closed 2-model 2-category'' si $\cc{C}$ la tiene.

Parte de la motivaci\'on de este trabajo fue desarrollar un contexto te\'orico para manipular el nervio de $\check{C}$ech en teor\'ia de homotop\'ia, \cite{AM}, en particular en teor\'ia de la forma fuerte, \cite{M1}. El nervio de $\check{C}$ech est\'a indexado por las categor\'ias de cubrimientos e hipercubrimientos con morfismos dados por los refinamientos, que no son categor\'ias filtrantes pero s\'i determinan 2-categor\'ias 2-filtrantes en las cuales el nervio de $\check{C}$ech tambi\'en est\'a definido, manda las 2-celdas en homotop\'ias, y determina un 2-pro-objeto sobre los conjuntos simpliciales. Usualmente, el nervio de $\check{C}$ech debe ser considerado como un 2-pro-objeto en la categor\'ia homot\'opica, perdiendo la informaci\'on codificada en las homotop\'ias expl\'icitas.

% La teor\'ia de pro-objetos tiene diversas aplicaciones en teor\'ia de homotop\'ias y, en particular, en teor\'ia de la forma fuerte. En teor\'ia de la forma se trabaja con la categor\'ia $\ff{SS}$ de conjuntos simpliciales, la categor\'ia homot\'opica $\ff{Ho}(\ff{SS})$ asociada a su ya cl\'asica estructura de ``closed model category'', y la categor\'ia $\ff{Pro}(\ff{Ho}(\ff{SS}))$ de pro-objetos de esta \'ultima. 
% En teor\'ia de la forma fuerte se busca obtener una teor\'ia de invariantes m\'as fina. Se trabaja con una categor\'ia $\ff{Ho}(\ff{Pro}(\ff{SS}))$ resultante de la estructura de categor\'ia de modelos de $\ff{Pro}(\ff{SS})$. 
%Aqu\'i definimos una noci\'on de ``closed 2-model 2-category'' y damos las herramientas necesarias para ver que $2$-$\cc{P}ro(\ff{SS})$ satisface esta definici\'on.  

% Uno de las principales aplicaciones que este trabajo espera tener es la de permitir trabajar con el nervio de $\check{C}$ech asociado a un cubrimiento en teor\'ia de la forma fuerte, ya que si bien no determina un pro-objeto simplicial, s\'i resulta un 2-pro-objeto. Hasta ahora, en teor\'ia de la forma fuerte solo se puede trabajar con el nervio de Vietoris que s\'i determina un pro-objeto simplicial. Tambi\'en ser\'ia interesante investigar la categor\'ia de la forma determinada por $\ff{Ho}(2$-$\cc{P}ro(\ff{SS}))$.
%Una gran desventaja de este enfoque es que el nervio de Vietoris parece inmanejable en los c\'alculos dado que tiene la cardinalidad del espacio, mientras que el de $\check{C}ech$ tiene la cardinalidad del conjunto de \'indices del cubrimiento.

\vspace{20ex}

\noindent {\bf Palabras claves.} 2-pro-objeto, 2-filtrante, pseudo-l\'imite, bi-l\'imite, 2-cofinal, 2-categor\'ia de \mbox{2-modelos}.

\pagebreak

\begin{center}\large
\bf A theory of 2-pro-objects, a theory of 2-model 2-categories and the 2-model structure for $\Pro{C}$
\end{center}

\noindent \small {\bf Abstract.} In the sixties, Grothendieck developed the theory of pro-objects over a \mbox{category}.  The fundamental property of the category $\ff{Pro}(\ff{C})$ is that there is an \mbox{embedding} $\ff{C} \mr{c} \ff{Pro}(\ff{C})$, $\ff{Pro}(\ff{C})$ is closed under small cofiltered limits, and these are free in the sense that for any \mbox{category} $\ff{E}$ closed under small cofiltered \mbox{limits}, pre-composition with $c$ determines an equivalence of categories
$\cc{C}at(\ff{Pro}(\ff{C}),\,\ff{E})_+ \simeq \cc{C}at(\ff{C},\, \ff{E})$, (the
``$+$'' indicates the full subcategory of the functors that preserve cofiltered limits).  

In this work we develop a ``2-dimensional'' pro-object theory. Given a 2-category
$\cc{C}$, we define the \mbox{2-category} $\Pro{C}$ whose objects we call 2-pro-objects. We prove that $\Pro{C}$ has all the expected basic properties adequately relativized to the \mbox{2-categorical} \mbox{setting}, including the corresponding universal property. We give an adecuate definition of closed 2-model 2-category and demonstrations of its basic properties. We leave for a future work the construction of its homotpy 2-category. Finally, we
prove that our \mbox{2-category} $\Pro{C}$ has a closed 2-model 2-category structure provided that $\cc{C}$ has one.

%We also give an adecuate definition of closed 2-model 2-category and demonstrations of its basic properties, and we prove that our 2-category $\Pro{C}$ has a closed 2-model 2-category structure provided that $\cc{C}$ has one.

Part of the motivation of this work was to develop a conceptual framework to handle the $\check{C}$ech nerve in homotopy theory, \cite{AM}, in particular in strong shape theory, \cite{M1}. The $\check{C}$ech nerve is indexed by the categories of covers and of hypercovers, with cover refinments as morphisms, which are not filtered categories, but determine 2-filtered \mbox{2-categories} on which the $\check{C}$ech nerve is also defined, sends 2-cells into homotopies, and determines a 2-pro-object of simplicial sets. Usually, the $\check{C}$ech nerve has to be considered as a pro-object in the homotopy category, loosing the information encoded in the explicit homotopies.

% Pro-object theory has many applications in homotopy theory and, in particular, in strong shape theory. Shape theory works with the category of simplicial sets $\ff{\ff{SS}}$, the homotopy category $\ff{Ho}(\ff{SS})$ associated to its classical closed model structure and the category $\ff{Pro}(\ff{Ho}(\ff{SS}))$ of pro-objects over the previous one. 
% Strong shape theory attempts to obtain a more refined invariant theory. It works with a category $\ff{Ho}(\ff{Pro}(\ff{SS}))$ resulting from the closed model category structure of $\ff{Pro}(\ff{SS})$. 
%In this work, we define a notion of closed 2-model 2-category and give the tools needed to prove that $2$-$\cc{P}ro(\ff{SS})$ has a closed 2-model structure. 

% One of the main intended applications of this work is to allow the use of the $\check{C}$ech nerve associated to a covering system in strong shape theory, due to the fact that, while it is not a simplicial pro-object, it is a simplicial 2-pro-object. So far, strong shape theory can only use the Vietoris nerve, which is a simplicial pro-object. It will be interesting also to investigate the shape category determined by $\ff{Ho}(2$-$\cc{P}ro(\ff{SS}))$.
%A huge disadvantage of this point of view is that the Vietoris nerve seems to be unmanageable in calculations because it has the cardinality of the space, while the $\check{C}ech$ nerve has the cardinality of the index set of the covering system.

\vspace{20ex}

\noindent {\bf Key words.} 2-pro-object, 2-filtered, pseudo-limit, bi-limit, 2-cofinal, 2-model 2-category.

\pagebreak

\section*{Agradecimientos}

En primer indiscutible lugar a Martu. Necesitar\'ia otras 226 p\'aginas para enumerar todas las maneras en las que me ayudaste pero se puede resumir en que sos el mejor compa\~nero del mundo. !`Gracias por estar ah\'i tan incondicional y tan dulce siempre! !`Te amo!

A nuestra bebota hermosa Sofi. Simplemente por haber llegado a hacernos la vida m\'as feliz cada d\'ia con sus sonrisas y su dulzura. Y por portarse tan bien ayudando a que pueda terminar a tiempo. !`Sos todo, hermosa!

A Eduardo por ser un director de primera tanto en lo acad\'emico como en lo humano, por seguir ense\~n\'andome una forma de ver la matem\'atica incre\'iblemente genial. Te admiro much\'isimo y no podr\'ia estar m\'as contenta de haberte elegido todos estos a\~nos. !`Que el adi\'os sea s\'olo en los papeles! ;)

Al CONICET, por el apoyo econ\'omico que me permiti\'o dedicarme a esta tesis. 

A los jurados por aceptar esta tarea y por dedicar su tiempo y esfuerzo a evaluar mi tesis, por las correcciones y observaciones tan valiosas y por qu\'e no, tambi\'en por los elogios que tan bien vienen despu\'es de tanto esfuerzo.

A Santi por ser la persona correcta en el lugar correcto en el momento correcto. Adem\'as de aprender much\'isimo de vos como docente, como cient\'ifico y como persona, gracias a tu confianza y tu ayuda me anim\'e y logr\'e muchas cosas que no hubiese hecho sin vos. Es un placer trabajar con un amigo as\'i que !`espero la postdoc ansiosa! :)

A Deby que, como ella dice, nos acompa\~na a un costadito en este proceso tan importante para nosotros y, agrego yo, lo hace siempre con la mejor onda y con una eficiencia incre\'ible. Gracias a ella odiamos menos la burocracia. Una genia y dulce total.

A mi mam\'a que estuvo siempre que necesit\'e algo desde que tengo memoria y en particular en estos \'ultimos meses ayud\'andonos con todo para que pueda terminar la tesis. A mi hermanita querida que tambi\'en me ayud\'o mucho. A Edi, Gra, Lauri y Mati que tambi\'en estuvieron pendientes de nosotros para aliviarnos en la crisis del final de tesis. !`Muchas gracias a los seis por toda la invalorable ayuda y por cuidar a Sofi con tanto amor!

A mis amigos y al resto de la familia por estar y por bancarse que yo no lo haya estado tanto estos \'ultimos meses. Como me dijo Manu el primer d\'ia que me vio post entrega: ``!`Volvi\'o Emi!'', !`Volv\'i! !`Gracias por ser parte de mi vida y dejarme ser parte de las suyas! !`Los quiero!

\begin{flushright}
 Emi
\end{flushright}

\pagebreak
\begin{flushright}
 A Martu y Sofi que me hacen tan feliz.
\end{flushright}

\pagebreak

\section*{Introducci\'on}

La teor\'ia de pro-objetos comenz\'o en Francia en los a\~nos 60 en el \emph{Seminaire de \mbox{Geometrie} Algebrique du Bois-Marie} llevado a cabo por Alexander Grothendieck y otros matem\'aticos. Este seminario fue un fen\'omeno \'unico de investigaci\'on y tuvo lugar entre los a\~nos 1960 y 1969 en el IHÉS cerca de Par\'is. En \cite{G2} se re\'unen parte de las notas de estos seminarios. La categor\'ia $\ff{Pro}(\ff{C})$ de pro-objetos de una categor\'ia $\ff{C}$ se define all\'i. Aqu\'i tambi\'en se demuestran sus propiedades b\'asicas y se da una caracterizaci\'on de la misma por propiedad universal:

\begin{quote}
\sl El funtor can\'onico $\ff{C}\mr{c}\ff{Pro}(\ff{C})$ es 2-universal respecto de los funtores de $\ff{C}$ en una categor\'ia con l\'imites cofiltrantes, m\'as expl\'icitamente:
 Dada $\ff{E}$ una categor\'ia con l\'imites cofiltrantes
 
 $$\ff{Hom}(\ff{Pro}(\ff{C}),\ff{E})_+\mr{c^*}\ff{Hom}(\ff{C},\ff{E})$$ 
 
 \noindent es una equivalencia de categor\'ias (aqu\'i el ``+'' indica la subcategor\'ia plena formada por aquellos funtores que preservan l\'imites cofiltrantes).
\end{quote}

En esa misma \'epoca, Daniel Quillen desarrollaba la teor\'ia de categor\'ias de modelos \cite{Q1}, vastamente utilizada en teor\'ia de homotop\'ia. Las categor\'ias de modelos de Quillen permiten construir la categor\'ia homot\'opica $\ff{Ho}(\ff{C})$ asociada a una categor\'ia $\ff{C}$. Esta categor\'ia se obtiene invirtiendo formalmente la clase de morfismos formada por las equivalencias d\'ebiles de una estructura de modelos de Quillen de $\ff{C}$. Las categor\'ias homot\'opicas as\'i obtenidas tienen la ventaja de tener muchas buenas propiedades que las hacen muy \'utiles en la pr\'actica.

El nervio de $\check{C}ech$ de un cubrimiento es una herramienta de base en ciertos \mbox{desarrollos} de la teor\'ia de homotop\'ia, y en teor\'ia de la forma (\cite{AM}, \cite{MS}). Dado un sitio $\cc{C}$ (por \linebreak ejemplo, el reticulado $\cc{O}(X)$ de los abiertos de un espacio topol\'ogico $X$), los \linebreak cubrimientos (tomando como morfismos los refinamientos) forman una categor\'ia $\ff{COV}(\cc{C})$ que no es cofiltrante, por lo cual el nervio de $\check{C}ech$, que es un funtor $\ff{COV}(\cc{C}) \mr{\check{C}} \ff{SS}$, no determina un pro-objeto en la categor\'ia de los conjuntos simpliciales y no se pueden utilizar las herramientas de la teor\'ia de pro-objetos. Este problema se resuelve pasando a la categor\'ia homot\'opica $\ff{Ho}(\ff{SS})$.  Los cubrimientos ordenados bajo refinamiento s\'i forman una categor\'ia cofiltrante $\ff{cov}(\cc{C})$, y dados dos refinamientos, los morfismos inducidos entre los nervios son homot\'opicos, por lo que se tiene un pro-objeto 
%de $\check{C}ech$ 
$\ff{cov}(\cc{C}) \mr{\check{C}} \ff{Ho}(\ff{SS})$. Este pasaje m\'odulo homotop\'ia pierde la informaci\'on dada por las homotop\'ias expl\'icitas asociadas a los refinamientos, haciendo que la teor\'ia no sea suficientemente fina en muchas aplicaciones. En la teor\'ia de la forma fuerte, las homotop\'ias expl\'icitas no se descartan pero el contexto conceptual de la teor\'ia de pro-objetos se pierde para el nervio de $\check{C}ech$.
% Para remediar este hecho se ha desarrollado la teor\'ia de la forma fuerte (\cite{M1}), que trabaja con el nervio de Vietoris que s\'i est\'a bien definido en $cov(\cc{C})$, y determina un pro-objeto simplicial $cov(\cc{C}) \mr{V} \ff{SS}$ que puede  manipularse sin necesidad de pasar m\'odulo homotop\'ia. Sin embargo, una gran desventaja de este enfoque es que el nervio de Vietoris tiene much\'isimos s\'implices  y parece inmanejable en los c\'alculos dado que tiene la cardinalidad del espacio, mientras que el nervio de $\check{C}ech$ tiene la cardinalidad del conjunto de \'indices del cubrimiento. Por ejemplo, si $X$ es compacto pueden considerarse s\'olo cubrimientos finitos cuyo nervio de $\check{C}ech$ es finito, mientras que el de Vietoris sigue teniendo la 
% cardinalidad 
% de $X$. 

La teor\'ia de 2-categor\'ias se remonta a los a\~nos 70 pero viene teniendo un gran auge en los \'ultimos tiempos. La mayor\'ia de los resultados y construcciones b\'asicos de la teor\'ia de categor\'ias han sido generalizados al contexto 2-categ\'orico (ver por ejemplo \cite{KS} or \cite{L}). Es esencial para nuestro trabajo la definici\'on de 2-categor\'ia 2-filtrante \cite{K}, reformulada en \cite{DS}, as\'i como tambi\'en es fundamental la noci\'on de pseudo-l\'imites y, en particular, la construcci\'on expl\'icita de pseudo-col\'imites 2-filtrantes de categor\'ias dada por Dubuc y Street en ese trabajo. 

La teor\'ia de pro-objetos 2-categ\'orica ha demostrado ser de gran inter\'es en s\'i misma y ha planteado muchos problemas interesantes de la teor\'ia de 2-categor\'ias. Nuestra motivaci\'on original para estudiar estos temas fue la de dar las herramientas necesarias para poder trabajar en teor\'ia de la forma fuerte con el nervio de $\check{C}$ech y no perder informaci\'on pasando m\'odulo homotop\'ia como sucede en teor\'ia de la forma, ni tampoco tener que reemplazar el nervio de $\check{C}$ech por el menos conveniente nervio de Vietoris como se hace actualmente en teor\'ia de la forma fuerte. Esta motivaci\'on provino de observar que si bien, como ya mencionamos, la categor\'ia $\ff{COV}(\cc{C})$ no es cofiltrante, s\'i determina una 2-categor\'ia 2-cofiltrante sobre la cual el nervio de $\check{C}ech$ est\'a definido y determina un 2-funtor mandando las 2-celdas en homotop\'ias. Esto motiv\'o nuestra definici\'on de 2-pro-objeto que har\'a que el nervio de $\check{C}$ech, que no era un pro-objeto 
simplicial, s\'i resulte un 2-pro-objeto simplicial. La teor\'ia de 2-pro-objetos fue de hecho una teor\'ia muy interesante en s\'i misma y requiri\'o mucho trabajo en teor\'ia de 2-categor\'ias, posponiendo las aplicaciones previstas para un trabajo futuro.

\paragraph{Estructuraci\'on del trabajo}

En esta tesis desarrollamos una teor\'ia de pro-objetos 2-dimensional. Tambi\'en damos una noci\'on de 2-funtor 2-cofinal que nos permite probar la versi\'on 2-categ\'orica de los teoremas de reindexaci\'on de pro-objetos. Por \'ultimo damos una noci\'on de ``closed 2-bmodel 2-category'' y demostramos que nuestra 2-categor\'ia $\Pro{C}$ satisface esta definici\'on.

La secci\'on \ref{prelims} est\'a dedicada a fijar notaci\'on y dejar en claro los resultados b\'asicos de la teor\'ia de 2-categor\'ias que usaremos a lo largo de la tesis. La mayor\'ia de estos resultados son conocidos, sin embargo hay algunos (para los cuales damos demostraciones expl\'icitas) que no parecen encontrarse en la literatura. En \ref{weak limits and colimits} probamos que los pseudo-l\'imites (c\'onicos) en las 2-categor\'ias de 2-funtores $\cc{H}om(\cc{C},\cc{D})$, $\cc{H}om_p(\cc{C},\cc{D})$ y $p\cc{H}om_p(\cc{C},\cc{D})$ (definici\'on \ref{ccHom}) y los bi-l\'imites en $p\cc{H}om_p(\cc{C},\cc{D})$ se calculan punto a punto. Este resultado, si bien era esperable, necesita indefectiblemente una demostraci\'on. En \ref{2-cofinal 2-functors} 
% tambi\'en recordamos la construcci\'on de pseudocol\'imites 2-filtrantes de categor\'ias dada en \cite{DS} que resulta esencial para hacer c\'alculos en la 2-categor\'ia $\Pro{C}$ que introduciremos en la secci\'on \ref{2-Pro-objects}. 
definimos la noci\'on de pseudo-funtor 2-cofinal entre 2-categor\'ias y probamos ciertas propiedades que usaremos en la secci\'on \ref{Mtrick} para demostrar las propiedades de 
reindexaci\'on de 2-pro-objetos.  
En \ref{A sombrero} construimos un 2-funtor asociado via un pseudo-funtor 2-cofinal a un pseudo-funtor dado. Este resultado tiene inter\'es independiente y ser\'a usado en la secci\'on \ref{2-modelos}. 
Finalmente, en \ref{further results} consideramos la noci\'on de funtores flexibles dada en \cite{BKP} y enunciamos una caracterizaci\'on de los mismos muy \'util e independiente del adjunto a izquierda de la inclusi\'on $\cc{H}om(\cc{C},\cc{D}) \rightarrow \cc{H}om_p(\cc{C},\cc{D})$ (Proposici\'on
\ref{flexiblechar}). Usando esta caracterizaci\'on, el pseudo lema de Yoneda dice directamente que los 2-funtores representables son flexibles. Se sigue tambi\'en que el 2-funtor asociado a cualquier 2-pro-objeto es flexible, lo cual tiene consecuencias importantes en la teor\'ia de 2-pro-objetos.

En la secci\'on \ref{2-Pro-objects} se encuentran algunos de los resultados claves de este trabajo. En \ref{def2pro}, dada una 2-categor\'ia $\cc{C}$ definimos la 2-categor\'ia $2$-$\cc{P}ro(\cc{C})$ cuyos objetos llamamos 2-pro-objetos. Un 2-pro-objeto de $\cc{C}$ es un 2-funtor a valores en $\cc{C}$ (o diagrama en $\cc{C}$)
indexado por una 2-categor\'ia 2-cofiltrante. Nuestra teor\'ia va m\'as all\'a de la teor\'ia de categor\'ias enriquecidas
porque en la definici\'on de morfismos, en lugar de usar 2-l\'imites estrictos, usamos la noci\'on no estricta de pseudo-l\'imites, que es usualmente la de inter\'es pr\'actico. Tambi\'en en \ref{def2pro}, establecemos la f\'ormula b\'asica que describe los morfismos y las 2-celdas entre \mbox{2-pro-objetos} en t\'erminos de pseudo-l\'imites y pseudo-col\'imites de las categor\'ias de morfismos de $\cc{C}$.
Inspirados en la definici\'on hallada en \cite{AM} de que un morfismo en la categor\'ia \mbox{original} represente a un morfismo de pro-objetos, introducimos en \ref{lemas2pro} la noci\'on de que un morfismo y una 2-celda en $\cc{C}$ representen un morfismo y una \mbox{2-celda} en $\Pro{C}$ respectivamente. Tambi\'en demostramos propiedades t\'ecnicas de los 2-pro-objetos que permiten hacer c\'alculos con ellos y, en particular, son necesarias en la demostraci\'on del teorema que establece que la 2-categor\'ia \mbox{$2$-$\cc{P}ro(\cc{C})$} tiene pseudo-l\'imites 2-cofiltrantes. En \ref{pseudo-limitsen2pro}, construimos una 2-categor\'ia \mbox{2-filtrante} que sirve como 2-categor\'ia de \'indices para el pseudo-l\'imite 2-cofiltrante de 2-pro-objetos (Definici\'on \ref{kequis} y proposici\'on \ref{teo}). Esto tambi\'en fue inspirado por una construcci\'on con el mismo prop\'osito hallada en \cite{AM} para el caso 1-dimensional, pero que en el caso 2-dimensional resulta ser mucho m\'as compleja. Nos vimos 
forzados a recurrir a esta complicada construcci\'on debido a que el tratamiento conceptual hecho en \cite{G2} no puede ser aplicado al caso 2-dimensional. Esto se debe a que un 2-funtor a valores en la 2-categor\'ia de categor\'ias $\cc{C}at$ no es el pseudo-col\'imite (c\'onico) de 2-funtores 2-representables indexado por su 2-diagrama, como s\'i pasa en el caso \mbox{1-dimensional}. Finalmente, en \ref{pu2pro}, enunciamos y demostramos la propiedad universal de $2$-$\cc{P}ro(\cc{C})$ (Teorema \ref{pseudouniversal}), de una manera in\'edita incluso si se aplica al caso cl\'asico de la teor\'ia de pro-objetos.

Tambi\'en consideramos en esta secci\'on la 2-categor\'ia $\Prop{C}$ que es ``retract pseudo-equivalent'' a $\Pro{C}$, \ref{proppseudoeqapro}, hecho que se sigue de que los 2-funtores a valores en $\cc{C}at$ asociados a 2-pro-objetos son flexibles. Esta 2-categor\'ia ser\'a esencial en la secci\'on \ref{2-modelos} y probar\'a ser interesante en s\'i misma.

La mayor parte de los resultados de las secciones \ref{prelims} y \ref{2-Pro-objects} fueron publicados en \cite{DD2}.

En la secci\'on \ref{Mtrick} probamos los teoremas de reindexaci\'on de pro-objetos para el caso \mbox{2-categ\'orico}. Esta secci\'on est\'a inspirada en los resultados an\'alogos en el caso \mbox{1-dimensional} dados en \cite{AM} pero, como pasaba con los resultados de la secci\'on \ref{2-Pro-objects}, su versi\'on 2-categ\'orica supone un desaf\'io mayor. El primer resultado es una versi\'on \mbox{2-categ\'orica} de un resultado debido a Deligne \cite[Expose I, 8.1.6]{G2} que es clave en el caso 1-dimensional en el desarrollo de la estructura de modelos de la categor\'ia 
$\mathsf{Pro(C)}$ \cite{EH}. 
El enunciado 1-dimensional establece que todo pro-objeto es isomorfo a uno indexado por un poset cofinito y filtrante. Nuestra versi\'on establece que todo 2-pro-objeto es equivalente a uno indexado por un poset cofinito y filtrante. El segundo resultado establece que todo morfismo de 2-pro-objetos puede ser levantado salvo equivalencia a un morfismo entre 2-pro-objetos indexados por un poset cofinito y filtrante. Esto es un caso particular de un tercer resultado que establece que todo diagrama finito en $2$-$\cc{P}ro(\cc{C})$ puede ser levantado salvo equivalencia a un diagrama finito de 2-pro-objetos indexados por un poset cofinito y filtrante. Es clave para estos resultados la noci\'on de pseudo-funtor 2-cofinal dada en la secci\'on \ref{prelims}. Toda esta secci\'on ser\'a usada para probar el teorema central de la secci\'on \ref{2-modelos}.

En la secci\'on \ref{Definitions and basic lemmas} introducimos las nociones in\'editas de ``closed 2-model 2-category'' y ``closed 2-bmodel 2-category'' y enunciamos y demostramos algunos lemas y proposiciones que usaremos m\'as adelante. Nuestra noci\'on es m\'as fuerte que las ``fibration structures'' de Pronk (\cite{Pronk}) pues es una versi\'on 2-dimensional de los axiomas de Quillen completos para ``closed model categories''. Tambi\'en difiere en el hecho importante de que no asumimos la elecci\'on de una factorizaci\'on global privilegiada dada de forma pseudo-funtorial sino que estipulamos, como Quillen, solo la existencia de factorizaciones para cada flecha. La mayor\'ia de los resultados de esta secci\'on son generalizaciones al contexto de 2-categor\'ias de enunciados bien conocidos de la teor\'ia de ``closed model categories''.  

Para terminar, en la secci\'on \ref{2-modelos} 
% damos la noci\'on de 2-categor\'ia de 2-modelos y demostramos que \mbox{$2$-$\cc{P}ro(\cc{C})$} puede ser equipado con una estructura de 2-modelos si $\cc{C}$ la tiene. Nuestra noci\'on es m\'as fuerte que la dada por Pronk en \cite{Pronk} donde asegura que, bajo ciertas condiciones, la estructura de 2-modelos puede ser utilizada para construir la categor\'ia homot\'opica. En las aplicaciones, esto ser\'a utilizado para la categor\'ia de conjuntos simpliciales que puede ser dotada de una estructura de 2-categor\'ia que resulta de 2-modelos. En esta secci\'on se 
probamos uno de los teoremas centrales de esta tesis (\ref{Proesde2-bmodelos}) que establece que si $\cc{C}$ es una ``closed 2-bmodel 2-category'', entonces $\Pro{C}$ tambi\'en lo es. Para lograrlo, fue necesario demostrar primero los teoremas \ref{CJdeclosed 2-modelos} and \ref{Propde2modelos} que establecen respectivamente que la 2-categor\'ia $p\cc{H}om_p(\ff{J}^{op},\cc{C})$ (definici\'on \ref{ccHom}) y la 2-categor\'ia $\Prop{C}$ son ``closed 2-bmodel 2-categories'' si $\ff{J}$ es un poset cofinito y filtrante y $\cc{C}$ es de una ``closed 2-bmodel 2-category''. Las propiedades de reindexaci\'on probadas en la secci\'on \ref{Mtrick} son claves para obtener \ref{Propde2modelos} a partir de \ref{CJdeclosed 2-modelos}.

\vspace{2ex}

\paragraph{Notaci\'on}

Adem\'as del usual ``pegado'' de diagramas, usaremos el \emph{C\'alculo de ascensores} para expresiones que denotan 2-celdas (comparar con la notaci\'on usada en \mbox{\cite[3.10,  3.17]{F}}). 
Esta es una notaci\'on muy gr\'afica inventada por Eduardo Dubuc en 1969  
para escribir ecuaciones con transformaciones naturales entre funtores. En este trabajo usamos los ascensores para escribir ecuaciones con 2-celdas en 2-categor\'ias. Los objetos se omiten, las 2-celdas se escriben con celdas, y las 2-celdas identidades como una doble l\'inea. Es importante remarcar que cuando una 2-celda entre flechas distintas es la identidad, de todas formas se escribe como una 2-celda etiquetada por ``$=$''. Por ejemplo, la 2-celda estructural de un 2-funtor visto como caso particular de un pseudo-funtor. Las composiciones se leen de arriba para abajo y de derecha a izquierda. La ecuaci\'on \ref{basicelevator} es la igualdad b\'asica para el c\'alculo de ascensores:
$$ \label{ascensor}
\xymatrix@C=0ex
         {
            \,\ff{f}'\, \dcell{\alpha'} & \,\ff{f}\, \did
          \\
            \,\ff{g}'\, \did & \,\ff{f}\, \dcell{\alpha}
          \\
             \,\ff{g}'\,  &  \,\ff{g}\, 
         }
\xymatrix@R=6ex{\\ \;\;\;=\;\;\; \\}
\xymatrix@C=0ex
         {
             \,\ff{f}'\, \did & \,\ff{f}\, \dcell{\alpha}
          \\
             \,\ff{f}'\,\dcell{\alpha'} & \,\ff{g}\, \did
          \\
             \,\ff{g}'\, & \,\ff{g}\, 
         }
\xymatrix@R=6ex{ \\ \;\;\;=\;\;\; \\}
\xymatrix@C=0ex@R=0.9ex
         {
             {} & {}
          \\
                \,\ff{f}'\, \ar@<4pt>@{-}'+<0pt,-6pt>[ddd] 
                   \ar@<-4pt>@{-}'+<0pt,-6pt>[ddd]^{\alpha'}
             &  \,\ff{f}\,  \ar@<4pt>@{-}'+<0pt,-6pt>[ddd] 
                   \ar@<-4pt>@{-}'+<0pt,-6pt>[ddd]^{\alpha}
          \\ 
             {} & {}
          \\ 
             {} & {}
          \\
             \,\ff{g}'\, & \,\ff{g}\,.
         }
$$
Esto permite mover celdas de arriba hacia abajo y viceversa cuando no hay obst\'aculos, como si fueran ascensores. 
Con esto movemos celdas para formar configuraciones que den nuevas ecuaciones a partir de ecuaciones v\'alidas.

\pagebreak

\section*{Introduction}

Pro-object theory started in the sixties in France with the \emph{Seminaire de \mbox{Geometrie} Algebrique du Bois-Marie} conducted by Alexander Grothendieck and other mathematicians. This seminaire was a unique research phenomenon and took place between years 1960 and 1969 in the IHÉS near Paris. \cite{G2} consists on some of the notes of this seminaires. The category $\ff{Pro}(\ff{C})$ of pro-objects of a category $\ff{C}$ is defined there. The authors also prove the basic properties of this category and give a characterization by universal property:

\begin{quote}
\sl The canonical functor $\ff{C}\mr{c}\ff{Pro}(\ff{C})$ is 2-universal over the functors from $\ff{C}$ into a category closed under cofiltered limits, more explicitly:
 Given a category $\ff{E}$ closed under cofiltered limits
 
 $$\ff{Hom}(\ff{Pro}(\ff{C}),\ff{E})_+\mr{c^*}\ff{Hom}(\ff{C},\ff{E})$$ 
 
 \noindent is an equivalence of categories (here the ``+'' indicates the full subcategory of those functors that preserve cofiltered limits).
\end{quote}

By the same time, Daniel Quillen developed model category theory \cite{Q1} which was widely applied in homotopy theory. Quillen's model categories are useful to construct the homotopy category $\ff{Ho}(\ff{C})$ associated to a category $\ff{C}$. This category is obtained by formally turning the class of weak equivalences of the model structure into isomorphisms. Homotopy categories associated to a model category have many good properties that make them very useful in practice.

The $\check{C}ech$ nerve associated to a covering is a fundamental tool in some developments in homotopy theory and shape theory (\cite{AM}, \cite{MS}). Given a site $\cc{C}$ (for example, the lattice $\cc{O}(X)$ formed by the opened sets of a topological space $X$), coverings (taking refinements as morphisms) form a category $\ff{COV}(\cc{C})$ that fails to be cofiltered and so the $\check{C}ech$ nerve, that is a functor $\ff{COV}(\cc{C}) \mr{\check{C}} \ff{SS}$, does not determine a pro-object over simplicial sets, and pro-object theory can't be applied to this setting. This problem is solved by working in the homotopy category $\ff{Ho}(\ff{SS})$. Coverings under refinement does form a cofiltered category (poset) $\ff{cov}(\cc{C})$, and given two refinements, the induced morphisms between the nerves are homotopic, so there is a pro-object
%de $\check{C}ech$ 
$\ff{cov}(\cc{C}) \mr{\check{C}} \ff{Ho}(\ff{SS})$. Working in the homotopy category has the disadvantage that information given by the explicit homotopies associated to the refinements gets lost, making the theory not enough refined for many applications. In strong shape theory, the explicit homotopies are not discarded, but the conceptual framework of the theory of pro-objects is lost for the 
$\check{C}ech$ nerve. 
%Strong shape theory (\cite{M1}) aims to remedy this by working with the Vietoris nerve that is well defined over $\ff{cov}(\cc{C})$, and determines a simplicial pro-object $\ff{cov}(\cc{C}) \mr{V} \ff{SS}$ that can be manipulated withouth working in the homotopy category. Despite this, a disadvantage of this point of view is that the Vietoris nerve has many simplices and seems to be unmanagable in calculations because it has the cardinality of the space while the $\check{C}ech$ nerve has the cardinality of the index set of the covering. For example, if $X$ is compact, finite coverings can be considered, thus the $\check{C}ech$ nerve is finite, while the Vietoris nerve still has the cardinality of the space $X$. 

2-category theory goes back to the seventies but it's been having a heyday lately. Most of the results and basic constructions of category theory had been generalized to the 2-categorical context (see for example \cite{KS}, \cite{L}). It is essential to our work the definition of 2-filtered 2-category \cite{K}, reformulated in \cite{DS}. It is also key to our work the notion of pseudo-limit and, in particular, the explicit construction of 2-filtered pseudo-colimits of categories given by Dubuc and Street in that paper. 

2-categorical pro-object theory had proved to be very interesting itself and had raised many interesting problems in 2-category theory. Our original motivation to begin with this work was to give the needed tools to be able to work with the $\check{C}$ech nerve in strong shape theory so no information is lost by working modulo homotopy as it happens in shape theory.
%but without replacing the $\check{C}$ech nerve for the less convenient Vietoris nerve as strong shape theory does so far. 
This motivation came from observing that although, as we mentioned before, the category $\ff{COV}(\cc{C})$ is not cofiltered, it determines a 2-cofiltered 2-category over which the $\check{C}$ech nerve is defined and determines a 2-functor sending 2-cells into homotopies. This encouraged our definition of 2-pro-object that would make the $\check{C}$ech nerve, that was not a simplicial pro-object, a simplicial 2-pro-object. The tl llamado \emph{Marde$\check{s}$i\'c trick} debido a heory of 2-pro-objects was in fact a very interesting theory itself and it required much work in 2-category theory, postponing its intended applications to future work.

\paragraph{Work structure}

In this thesis, we develop a 2-dimensional pro-object theory. We also give a 2-cofinal pseudo-functor notion that allows as to prove the 2-categorical version of pro-objects reindexing properties. Finally, we give a notion of closed 2-bmodel 2-category and we prove that our 2-category $\Pro{C}$ has a closed 2-bmodel structure.

Section \ref{prelims} is intended to fix notation and set down some basic results from 2-category theory that we will use all along this thesis. Most of this results are well known, although there are some of them (for which we give explicit proofs) that seem not to be in the literature. In \ref{weak limits and colimits} we prove that (conical) pseudo-limits in the \mbox{2-categories} $\cc{H}om(\cc{C},\cc{D})$, $\cc{H}om_p(\cc{C},\cc{D})$ and $p\cc{H}om_p(\cc{C},\cc{D})$ (definition \ref{ccHom}) and bi-limits in $p\cc{H}om_p(\cc{C},\cc{D})$ are computed pointwise. These result, though expected, necessarily requires demonstration. 
%We also recall in this section the construction of 2-filtered pseudo-colimits given in \cite{DS} that is essential to do computations in the 2-category $\Pro{C}$ introduced in section \ref{2-Pro-objects}. 
In \ref{2-cofinal 2-functors} we define the notion of 2-cofinal pseudo-functor between 2-categories and prove some properties that we will use in section \ref{Mtrick} to prove 2-pro-objects reindexing properties. In \ref{A sombrero} we construct a \mbox{2-functor} associated to a given pseudo-functor via a 2-cofinal pseudo-functor. This result has independent interest and we will use it in section \ref{2-modelos}.
Finally, in \ref{further results} we consider the notion of flexible functor given in \cite{BKP} and we state a characterization of them that is very useful and independent of the left adjoint of the inclusion $\cc{H}om(\cc{C},\cc{D}) \rightarrow \cc{H}om_p(\cc{C},\cc{D})$ (Proposition \ref{flexiblechar}). Using this characterization, the pseudo Yoneda lemma says that representable 2-functors are flexible. It also follows that the \mbox{2-functor} associated to any 2-pro-object is flexible, fact which has important consequences in the theory of 2-pro-objects.

In section \ref{2-Pro-objects} are some of the most important results of this thesis. In \ref{def2pro}, given a \mbox{2-category} $\cc{C}$ we define the 2-category $2$-$\cc{P}ro(\cc{C})$ whose objects we call 2-pro-objects. A \mbox{2-pro-object} over $\cc{C}$ is a 2-functor landing on $\cc{C}$ (or a diagram in $\cc{C}$)
indexed by a \mbox{2-cofiltered} 2-category. Our theory goes beyond enriched category theory because in the definition of morphisms, instead of using strict 2-limits, we use the non-strict notion of pseudo-limits, which is usually the one of practical interest. In \ref{def2pro}, we establish the basic formula describing morphisms and 2-cells between 2-pro-objects in terms of a pseudo-limit of pseudo-colimits of categories.
Inspired on the definition found in \cite{AM} of a morphism of the original category representing a pro-objects morphism, in \ref{lemas2pro} we introduce the notion of a morphism or a 2-cell in $\cc{C}$ being a representative of a morphism or a 2-cell in $2$-$\cc{P}ro(\cc{C})$ respectively. We also prove some technical properties of 2-pro-objects that allow us to make calculations with them, and, in particular, are needed in the proof of the theorem that states the 2-category \mbox{$2$-$\cc{P}ro(\cc{C})$} is closed under 2-cofiltered pseudo-limits. In \ref{pseudo-limitsen2pro}, we construct a 2-filtered 2-category that will be the index 2-category of the 2-cofiltered pseudo-limit of 2-pro-objects (Definition \ref{kequis} and Theorem \ref{teo}). This was also inspired by a construction with the same purpouse in the 1-dimensional case found in \cite{AM}, but the 2-categorical case turned out to be significantly  more complicated. We were forced to make this complicated construction because the 
conceptual treatment made in \cite{G2} can't be applied to the 2-categorical setting. This is due to the fact that a 2-functor landing in the 2-category of categories $\cc{C}at$ is not the (conical) pseudo-colimit of representable 2-functors indexed by its 2-diagram, as it is in the 1-dimensional case. Finally, in \ref{pu2pro}, we state and prove the universal property of $2$-$\cc{P}ro(\cc{C})$ (Theorem \ref{pseudouniversal}), in an original way even applied to the classical pro-object theory. 

We also consider in this section a 2-category $\Prop{C}$ which is retract pseudo-equivalent to $\Pro{C}$, 
\ref{proppseudoeqapro}, fact that follows from the flexible nature of the category-valued 2-functor associated to a 2-pro-object. This 2-category will be essential in section \ref{2-modelos}, and may prove to be interesting in itself.

Most of the results of sections \ref{prelims} and \ref{2-Pro-objects} have been published \cite{DD2}.

In section \ref{Mtrick} we prove reindexing properties of pro-objects in the 2-categorical case. This section is inspired in the 1-dimensional analogous results given in \cite{AM}, but, as happened with results of section \ref{2-Pro-objects}, its 2-categorical version suppose a greater challenge.
The first result is a 2-categorical version of a result due to Deligne  \cite[Expose I, 8.1.6]{G2} and that is key to develop the closed 2-bmodel structure for $\mathsf{Pro(C)}$ in the 1-dimensional case treated in \cite{EH}. 
The 1-dimensional statement establishes that every pro-object is isomorphic to a pro-object indexed by a cofinite and filtered poset. Our version establishes that every \mbox{2-pro-object} is equivalent to a 2-pro-object indexed by a cofinite and filtered poset. The second result establishes that every morphism of \mbox{2-pro-objects} can be lifted up to equivalence to a morphism between 2-pro-objects indexed by a cofinite and filtered poset.
%represented up to equivalence by a 2-pro-object in the 2-category of morphisms of $\cc{C}$, $2$-$\cc{M}aps_p(\cc{C})$. 
This is a particular case of the third result that establishes that every finite diagram in $2$-$\cc{P}ro(\cc{C})$ can be lifted up to equivalence to a diagram of 2-pro-objects indexed by a cofinite and filtered poset.
%represented up to equivalence by a 2-pro-object of diagrams. 
It is key for these results the notion of 2-cofinal pseudo-functor given in section \ref{prelims}. All this section will be used to prove the central theorems of section \ref{2-modelos}. 

In section \ref{Definitions and basic lemmas} we introduce original notions of closed 2-model and closed 2-bmodel \mbox{2-category} and state some lemmas and propositions that we are going to use later. Our notion is stronger than Pronk's ``fibration structures'' (\cite{Pronk}) since it is a 2-dimensional version of the full Quillen's axioms for closed model structures. It also differs in the important fact that we do not assume the choice of a privileged global factorization given in a pseudo-functorial way, but stipulates, as Quillen does, only the existence of factorizations for each arrow. Most of the results of this section are generalizations to the context of 2-categories of well known statements about closed model categories.

%In section \ref{Definitions and basic lemmas} we give the definitions of closed 2-model and closed 2-bmodel 2-category. Our notion is stronger than the one given by Pronk in \cite{Pronk} where she guarantee that, under certain conditions, the closed 2-model structure can be used to construct the homotopy category. In applications, this will be used for the category of simplicial sets that can be provided with a closed 2-model structure. In this section, we state some basic definitions and properties of closed 2-model 2-category theory. Despite its independent interest, those properties will be used in section \ref{2-modelos}.

To conclude, in section \ref{2-modelos}, we prove one of the central theorems of this thesis (\ref{Proesde2-bmodelos}) which establishes that if $\cc{C}$ is a closed 2-bmodel 2-category, then so is $\Pro{C}$. For this result, it was necessary to prove first theorems \ref{CJdeclosed 2-modelos} and \ref{Propde2modelos} which establish that the 2-category $p\cc{H}om_p(\ff{J}^{op},\cc{C})$ (definition \ref{ccHom}) and the 2-category $\Prop{C}$ are closed 2-bmodel 2-categories respectively if $\cc{C}$ is ($\ff{J}$ will be a cofinite and filtered poset with a unique initial object). 
Reindexing properties proved in section \ref{Mtrick} were key to obtain \ref{Propde2modelos} from \ref{CJdeclosed 2-modelos}.

\vspace{2ex}

\paragraph{Notation}

In addition to the usual ``pasting'' of diagrams, we will use the \emph{Elevators calculus} for expressions denoting 2-cells (compare with the notation used in \mbox{\cite[3.10,  3.17]{F}}). 
This is a very graphic notation created by Eduardo Dubuc in 1969  
to write down equations with natural transformations between functors. In this thesis, we use elevators to write down equations with 2-cells in 2-categories. Objects are omitted, 
%horizontal identities are left as blank spaces, 
2-cells are denoted by cells and identity 2-cells as a double line. It is important to remark that when a 2-cell between different arrows is the identity, it is still written as a 2-cell with ``$=$'' as label. For example, the structural 2-cell of a 2-functor viewed as a particular case of a pseudo-functor. Compositions must be read from top to bottom and from right to left. Equation \ref{basicelevator} is the basic equality for elevators calculus:
$$ \label{ascensor}
\xymatrix@C=0ex
         {
            \,\ff{f}'\, \dcell{\alpha'} & \,\ff{f}\, \did
          \\
            \,\ff{g}'\, \did & \,\ff{f}\, \dcell{\alpha}
          \\
             \,\ff{g}'\,  &  \,\ff{g}\, 
         }
\xymatrix@R=6ex{\\ \;\;\;=\;\;\; \\}
\xymatrix@C=0ex
         {
             \,\ff{f}'\, \did & \,\ff{f}\, \dcell{\alpha}
          \\
             \,\ff{f}'\,\dcell{\alpha'} & \,\ff{g}\, \did
          \\
             \,\ff{g}'\, & \,\ff{g}\, 
         }
\xymatrix@R=6ex{ \\ \;\;\;=\;\;\; \\}
\xymatrix@C=0ex@R=0.9ex
         {
             {} & {}
          \\
                \,\ff{f}'\, \ar@<4pt>@{-}'+<0pt,-6pt>[ddd] 
                   \ar@<-4pt>@{-}'+<0pt,-6pt>[ddd]^{\alpha'}
             &  \,\ff{f}\,  \ar@<4pt>@{-}'+<0pt,-6pt>[ddd] 
                   \ar@<-4pt>@{-}'+<0pt,-6pt>[ddd]^{\alpha}
          \\ 
             {} & {}
          \\ 
             {} & {}
          \\
             \,\ff{g}'\, & \,\ff{g}\,.
         }
$$
This allows to move cells up and down when there are no obstacles, as if they were elevators.
In this way, we move cells to form configurations that fit valid equations in order to prove a new equation out of known ones.

\pagebreak

\tableofcontents

\pagebreak

\section{Preliminaries on 2-categories}\label{prelims}

We distinguish between \emph{small} and \emph{large} sets. For us \emph{legitimate} \mbox{categories} are categories with small hom sets, also called \emph{locally small}. We freely consider without previous warning illegitimate categories with large hom sets, for example the category of all (legitimate) categories, or functor categories with large (legitimate) exponent. They are \mbox{legitimate} as categories in some higher universe, or they can be considered as \mbox{convenient} notational \mbox{abbreviations} for extended collections of data. In fact, questions of size play no overt role in this work, except that we elect for simplicity to consider only small \mbox{2-pro-objects}. We will \mbox{explicitly} mention whether the categories are legitimate or  small when necessary. We reserve the notation $\cc{C}at$ for the legitimate 2-category of small categories, and we will denote $\cc{CAT}$ the illegitimate category (or 2-category) of all legitimate categories.

\begin{sinnadastandard}
 {\bf Notation.} 2-Categories will be denoted with the ``mathcal'' font 
 $\cc{C},\;\cc{D},\; \ldots \;$, \mbox{pseudo-functors} (in particular \mbox{2-functors}) with the capital ``mathff'' font, $\ff{F}$, $\ff{G}$, ... and pseudo-natural transformations (in particular \mbox{2-natural transformations}) and modifications with the Greek alphabet.
For objects in a 2-category, we will use capital ``mathff'' font $\ff{C},\;\ff{D}, \ldots \;$, for arrows in a 2-category, small case letters in ``mathff'' font $\ff{f},\;\ff{g},\;\ldots \;$, and we will use the Greek \mbox{alphabet} for 2-cells. \mbox{However}, when a \mbox{2-category} is intended to be used as the index 2-category of a \mbox{2-diagram}, we will use small case letters $i,\;j,\;\ldots \;$ to denote its objects, and small case \mbox{letters} $u,\;v,\; \dots \;$ to denote its arrows. Categories will be denoted with capital ``mathff'' font $\ff{C},\;\ff{D}, \ldots \;$, objects in a category with capital letters $C,\;D, \ldots \;$ and arrows in a category with small case letters $f,\;g,\;\ldots \;$.
\end{sinnadastandard}

 We begin with some background material on 2-categories. Most of
this is standard, but some results (for which we provide proofs) do not appear to be in the literature. We also set notation and terminology as we will explicitly use in this thesis.

\subsection{Basic theory}\label{seccion2-categorias}

Let $\cc{C}at$ be the category of small categories. By a \mbox{2-category}, we mean a $\cc{C}at$ enriched category. A 2-functor, a 2-fully-faithful \mbox{2-functor}, a 2-natural transformation and a \mbox{2-equivalence} of 2-categories, are a \mbox{$\cc{C}at$-functor}, a \mbox{$\cc{C}at$-fully-faithful} functor, a $\cc{C}at$-natural transformation and a $\cc{C}at$-equivalence respectively. For an extended treatment on enriched category theory see \cite{K1}.

 In the sequel we will call \emph{2-category} a structure satisfying the \mbox{following} descriptive definition free of the size restrictions implicit above. As usual, given a \mbox{2-category,} we denote horizontal composition by \mbox{juxtaposition}, and vertical
composition by ``$\circ$''.
\begin{sinnadaitalica} {\bf 2-Category.}
A \emph{2-category} $\cc{C}$ consists on objects or 0-cells $\ff{C}$, $\ff{D}$ ... , arrows or 1-cells $\ff{f}$, $\ff{g}$ ... ,  and 2-cells $\alpha$, $\beta$, ... .

$$\ff{C} \cellrd{\ff{f}}{\alpha}{\ff{g}} \ff{D}$$

The objects and the arrows form a category (called the \mbox{underlying} \mbox{category} of $\cc{C}$), with composition (called ``horizontal'') denoted by \mbox{juxtaposition}. For a fixed $\ff{C}$ and $\ff{D}$, the arrows between them and the 2-cells between these arrows form a category $\cc{C}(\ff{C},\ff{D})$ under ``vertical'' \mbox{composition}, denoted by ``$\circ$''. There is also an associative horizontal \mbox{composition} between 2-cells denoted by \mbox{juxtaposition}, with units
$id_{id_{\ff{C}}}$. The following is the basic \mbox{2-category} diagram: 
\begin{equation}\label{2cat}
\xymatrix@R=1ex
        {
           {\;\;} \ar[rr]^{\ff{f}}
         & {\;\;} \ar@{}[dd]|<<<{{\Downarrow} \; \alpha}
         & {\;\;} \ar[rr]^{\ff{f}'}
         & {\;\;} \ar@{}[dd]|<<<{{\Downarrow} \; {\alpha'}}
         & {\;\;}
         \\
           {\;\;}
         & 
         & {\;\;}
         & 
         & {\;\;}
         & {\;\;}
         \\
           {\ff{C}} \ar[rr]^{\ff{g}}
         & {\;\;} \ar@{}[dd]|<<<<{{\Downarrow} \; \beta}
         & {\ff{D}} \ar[rr]^{\ff{g}'}
         & {\;\;} \ar@{}[dd]|<<<<{{\Downarrow} \; {\beta'}}
         & {\ff{E}}
         \\
           {\;\;}
         & 
         & {\;\;}
         & 
         & {\;\;}
         & {\;\;}
         \\
           {\comw{\C}} \ar[rr]^{\ff{h}}
         & {\;\;}
         & {\;\;} \ar[rr]^{\ff{h}'}
         & {\;\;}
         & {\;\;}}
\end{equation}
with the equations
$
(\beta'\beta)\circ(\alpha' \alpha) =
           (\beta'\circ\alpha')(\beta\circ\alpha),
$
$id_{\ff{f}'}id_\ff{f}=id_{\ff{f}'\ff{f}}$.
\end{sinnadaitalica}

In particular it follows that given 
$\ff{C} \cellrd{\ff{f}}{\alpha}{\ff{g}} \ff{D}
\cellrd{\ff{f}'}{\alpha'}{\ff{g}'} \ff{E}$, we have:
\begin{equation} \label{basicelevator}
 (\alpha'\,id_{\ff{g}}) \circ (id_{\ff{f}'}\,\alpha) =
(id_{\ff{g}'}\,\alpha) \circ (\alpha'\,id_{\ff{f}}) =
(\alpha'\alpha).
\end{equation}

\vspace{1ex}

We consider juxtaposition more binding than ``$\circ$'',
thus $\alpha \beta \circ \gamma $ means $(\alpha \beta ) \circ \gamma$. We will abuse notation by writing $\ff{f}$ instead of $id_\ff{f}$ for arrows $\ff{f}$ when there is no risk of confusion.

\begin{sinnadaitalica} {\bf Dual 2-Category.} \label{oposite}
If $\cc{C}$ is a 2-category, we denote by $\cc{C}^{op}$ the \mbox{2-category} with the same objects as $\cc{C}$ but with $\cc{C}^{op}(\ff{C},\ff{D})=\cc{C}(\ff{D},\ff{C})$, i.e. we reverse the 1-cells but not the 2-cells.
\end{sinnadaitalica}

\begin{remark}
The category of all categories $\cc{C}at$ has a 2-category structure given by the following:

- $\:$ Its objects are the categories.

- $\:$ Its arrows are the functors.

- $\:$ Its 2-cells are the natural transformations.

With the notation of $\eqref{2cat}$, the composition between functors and the vertical composition between natural transformations are the usual ones. And the horizontal composition between natural transformations is given by        
$(\alpha'\alpha)_C=\alpha'_{\ff{g}C}\circ \ff{f}'(\alpha_C)$ for $C\in \ff{C}$.

One can easily check that this gives a 2-category structure.\cqd        

\end{remark}

\begin{sinnadaitalica} {\bf Equivalence.}\label{equivalencia}
An arrow $\ff{C}\mr{\ff{f}}\ff{D}$ in a 2-category $\cc{C}$ is said to be an \emph{equivalence} in $\cc{C}$ if there exist another arrow $\ff{D}\mr{\ff{g}}\ff{C} \in \cc{C}$ and invertible 2-cells $\ff{f}\ff{g}\Mr{\alpha}id_{\ff{D}}$, $\ff{g}\ff{f}\Mr{\beta}id_{\ff{C}}$.
\end{sinnadaitalica}

\begin{sinnadastandard} {\bf Notation.}
 We will denote equivalences by $\simeq$ and isomorphisms by $\cong$.
\end{sinnadastandard}

\begin{remark}
Equivalences in $\cc{C}at$ are usual equivalences of categories. \cqd
\end{remark}

\begin{sinnadaitalica} {\bf 2-functor.}\label{2functor}
A \emph{2-functor} $\ff{F}:\cc{C} \mr{} \cc{D}$ between 2-categories is an enriched functor over $\cc{C}at$. As such, sends objects to objects, arrows to arrows and 2-cells to 2-cells, strictly preserving all the structure. \end{sinnadaitalica}

\begin{sinnadaitalica} {\bf Pseudo-functor.}\label{pseudo-functor}
 A \emph{pseudo-functor} $\ff{F}:\cc{C} \mr{} \cc{D}$ between 2-categories is a correspondence that sends objects to objects, arrows to arrows and 2-cells to 2-cells, preserving all the structure up to invertible 2-cells $\ff{F}\ff{g}\ff{F}\ff{f}\Mr{}\ff{F}(\ff{g}\ff{f})$ and $id_{\ff{F}\ff{C}} \Mr{}\ff{F}(id_{\ff{C}})$ instead of equalities. More explicitly, it is given by the following data:

%\vspace{2ex}

\begin{itemize}
 \item[-] For each object $\ff{C}\in \cc{C}$, an object $\ff{F}(\ff{C})\in \cc{D}$. We will abuse notation and write $\ff{F}\ff{C}$ when there is no risk of confusion.
 \item[-] For each hom-category $\cc{C}(\ff{C},\ff{D})$, a functor $\ff{F}_{\ff{C},\ff{D}}:\cc{C}(\ff{C},\ff{D})\mr{} \cc{D}(\ff{F}\ff{C},\ff{F}\ff{D})$.
\end{itemize}

\begin{center}
  \begin{minipage}{10.5cm}
We will abuse notation and write $\ff{F}\ff{f}$ instead of $\ff{F}_{\ff{C},\ff{D}}(\ff{f})$ and $\ff{F}\alpha$ instead of $\ff{F}_{\ff{C},\ff{D}}(\alpha)$ for $\ff{C}\cellrd{\ff{f}}{\alpha}{\ff{g}}\ff{D} \in \cc{C}$ when there is no risk of confusion.
  \end{minipage}
\end{center}

\begin{itemize}
 \item[-] For each object $\ff{C}\in \cc{C}$, an invertible 2-cell $\alpha^{\ff{F}}_{\ff{C}}:id_{\ff{F}\ff{C}}\Mr{} \ff{F}(id_{\ff{C}}) \in \cc{D}$.
 \item[-] For each triplet $\ff{C}, \ff{D}, \ff{E}$ of objects of $\cc{C}$, a natural isomorphism:

$$\xymatrix{\cc{C}(\ff{C},\ff{D})\times \cc{C}(\ff{D},\ff{E}) \ar[rr]^{\ff{F}\times \ff{F}} \ar[d]_c 
            & \ar@{}[d]|{\cong \; \Downarrow \; \alpha^{\ff{F}}} 
            & \cc{D}(\ff{F}\ff{C},\ff{F}\ff{D})\times \cc{D}(\ff{F}\ff{D},\ff{F}\ff{E}) \ar[d]^c 
            \\
            \cc{C}(\ff{C},\ff{E}) \ar[rr]_{\ff{F}} 
            &
            & \cc{D}(\ff{F}\ff{C},\ff{F}\ff{E})}$$

where $c$ denotes the composition functors.

More explicitly, $\alpha^{\ff{F}}$ consists on an invertible 2-cell $\ff{F}\ff{C}\cellrdb{\ff{F}\ff{g}\ff{F}\ff{f}}{\alpha^{\ff{F}}_{\ff{f},\ff{g}}}{\ff{F}(\ff{g}\ff{f})} \ff{F}\ff{E}$ for each \mbox{configuration} $\ff{C}\mr{\ff{f}} \ff{D} \mr{\ff{g}} \ff{E}\in \cc{C}$ such that $\forall \ \ff{C}\cellrd{\ff{f}}{\theta}{\ff{f}'} \ff{D} \cellrd{\ff{g}}{\rho}{\ff{g}'} \ff{E}$, $\ff{F}(\rho \theta)\circ \alpha^{\ff{F}}_{\ff{f},\ff{g}}=\alpha^{\ff{F}}_{\ff{f}',\ff{g}'} \circ \ff{F}\rho \ff{F}\theta$

$$\vcenter{\xymatrix@C=0pc{\F\g & & \F\f \\
                         & \F(\g\f) \cl{\alpha^{\ff{F}}_{\f,\g}} \dcellbb{\F(\rho \theta)} \\
                         & \F(\g'\f')}}
\vcenter{\xymatrix@C=-0pc{\; = \;}}
\vcenter{\xymatrix@C=-0pc{\F\g \dcellb{\F\rho} & & \F\f \dcellb{\F\theta} \\
            \F\g' &  & \F\f' \\
            & \F(\g'\f') \cl{\alpha^{\ff{F}}_{\f',\g'}} }}
$$
\end{itemize}

\vspace{1ex}

\noindent All this data must satisfy the following equalities:

\begin{itemize}
 \item[-]For each $\ff{C}\mr{\ff{f}}\ff{D} \in \cc{C}$,

$$\vcenter{\xymatrix@C=-0pc
       {
        id_{\ff{F}\ff{D}} \dcellb{\alpha^{\ff{F}}_\ff{D}} 
	& &
	\ff{F}\ff{f} \did
	\\
	\ff{F}(id_\ff{D}) 
	& 
	& \ff{F}\ff{f} 
	\\
	&
        \ff{F}\ff{f} \clb{\alpha^{\ff{F}}_{\ff{f},id_\ff{D}}}
        &
        }}
\vcenter{\xymatrix@C=-0pc{\; = \;}}
\vcenter{\xymatrix@C=-0pc
       {
        id_{\ff{F}\ff{D}} 
        &  
        & \ff{F}\ff{f} 
        \\
        &
        \ff{F}\ff{f} \cl{=}
        &
        }}
        \quad \hbox{ and } \quad
\vcenter{\xymatrix@C=-0pc
       {
        \ff{F}\ff{f} \did
	& &
	id_{\ff{F}\ff{C}} \dcellb{\alpha^{\ff{F}}_\ff{C}} 
	\\
	\ff{F}\ff{f} 
	& 
	& \ff{F}(id_\ff{C}) 
	\\
	&
        \ff{F}\ff{f} \clb{\alpha^{\ff{F}}_{id_\ff{C},\ff{f}}}
        &
        }}        
\vcenter{\xymatrix@C=-0pc{\; = \; }}
\vcenter{\xymatrix@C=-0pc
       {
        \ff{F}\ff{f} 
        &  
        & id_{\ff{F}\ff{C}} 
        \\
        &
        \ff{F}\ff{f} \cl{=}  
        &
        }}
$$

\item[-] For each configuration $\ff{A} \mr{\ff{f}} \ff{B} \mr{\ff{g}} \ff{C} \mr{\ff{h}} \ff{D} \in \cc{C}$,

$$\vcenter{\xymatrix@C=-0pc
       {
        \ff{F}\ff{h} \did
	& \ff{F}\ff{g} 
	& 
	& \ff{F}\ff{f} 
	\\
	\ff{F}\ff{h} 
	& 
	& \ff{F}(\ff{gf}) \cl{\alpha^{\ff{F}}_{\ff{f},\ff{g}}}
	&
	\\
	&
        \ff{F}(\ff{hgf}) \cl{\alpha^{\ff{F}}_{\ff{gf},\ff{h}}}
        &
        }}        
\vcenter{\xymatrix@C=-.6pc{\quad = \quad}}
\vcenter{\xymatrix@C=-0pc
       {
        \ff{F}\ff{h} 
	& 
	& \ff{F}\ff{g} 
	& \ff{F}\ff{f} \did
	\\
	& \ff{F}(\ff{hg}) \cl{\alpha^{\ff{F}}_{\ff{g},\ff{h}}} 
	& 
	& \ff{F}\ff{f} 
	\\
	& &         \ff{F}(\ff{hgf}) \cl{\alpha^{\ff{F}}_{\ff{f},\ff{hg}}}
        &
        }}
$$
\end{itemize}
\end{sinnadaitalica}

\begin{remark}
 A 2-functor is a pseudo-functor such that $\alpha_{\ff{C}}^{\ff{F}}$ is the equality for each $\ff{C} \in \cc{C}$ and $\alpha_{\ff{f},\ff{g}}^{\ff{F}}$ is the equality for each $\ff{C} \mr{\ff{f}} \ff{D} \mr{\ff{g}} \ff{E} \in \cc{C}$. \cqd
\end{remark}

\begin{sinnadaitalica}{\bf Pseudo-essentially surjective on objects.} 
A pseudo functor $\ff{F}:\cc{C}\mr{} \cc{D}$ is said to be \mbox{\emph{pseudo-essentially surjective on objects}} if for each $\ff{D} \in \cc{D}$, there exist $\ff{C}\in \cc{C}$ and an equivalence $\ff{F}\ff{C}\mr{}\ff{D} \in \cc{D}$.
\end{sinnadaitalica}

\begin{sinnadaitalica} {\bf 2-fully-faithful.}\label{2f&f}
A 2-functor $\ff{F}:\cc{C}\mr{} \cc{D}$ is said to be \mbox{\emph{2-fully-faithful}} if for each $\ff{C},\ \ff{D}\in \cc{C}$, $\ff{F}:\cc{C}(\ff{C},\ff{D})\mr{} \cc{D}(\ff{F}\ff{C},\ff{F}\ff{D})$ is an isomorphism of categories.
\end{sinnadaitalica}

\begin{sinnadaitalica} {\bf Pseudo-fully-faithful.}\label{pf&f}
A pseudo-functor $\ff{F}:\cc{C}\mr{} \cc{D}$ is said to be \mbox{\emph{pseudo-fully-faithful}} if for each $\ff{C},\ \ff{D}\in \cc{C}$, $\ff{F}:\cc{C}(\ff{C},\ff{D})\mr{} \cc{D}(\ff{F}\ff{C},\ff{F}\ff{D})$ is an equivalence of categories.
\end{sinnadaitalica}

\begin{sinnadaitalica} {\bf Pseudo-natural transformation.} \label{pseudo-naturalentrepseudo-functors}
A \emph{pseudo-natural transformation} \mbox{$\theta:\ff{F}\Rightarrow \ff{G}:\cc{C} \rightarrow \cc{D}$} between pseudo-functors consists of a family of arrows $\left\{ \ff{F}\ff{C}\stackrel{\theta_{\ff{C}}}\rightarrow \ff{G}\ff{C} \right\}_{\ff{C}\in \cc{C}}$ and a family of invertible 2-cells $\left\{ \ff{G}\ff{f}\theta_{\ff{C}}\Mr{\theta_\ff{f}} \theta_{\ff{D}}\ff{F}\ff{f} \right\}_{\ff{C}\stackrel{\ff{f}}\rightarrow \ff{D}\in \cc{C}}$ 

$$\xymatrix@R=1.5pc@C=1.5pc{\ff{F}\ff{C}\ar[rr]^{\theta_{\ff{C}}} \ar[dd]_{\ff{F}\ff{f}} \ar@{}[ddrr]|{\cong \; \Downarrow \; \theta_\ff{f}}  & & \ff{G}\ff{C} \ar[dd]^{\ff{G}\ff{f}} \\ & & \\ \ff{F}\ff{D} \ar[rr]_{\theta_{\ff{D}}} && \ff{G}\ff{D}}$$

\noindent satisfying the following conditions:

\begin{itemize}
\item[PN0.] For each $\ff{C}\in \cc{C}$, $\; \theta_{\ff{C}} \alpha^\ff{F}_\ff{C}=\theta_{id_{\ff{C}}} \circ \alpha^\ff{G}_\ff{C} \theta_{\ff{C}}$, $\;$ i.e.
                                  
$$\vcenter{\xymatrix@C=.9pc@R=3pc
	   {
		   \theta_{\ff{C}} \did 
		& id_{\ff{F}\ff{C}} \dcellb{\alpha^\ff{F}_{\ff{C}}}  
		\\
		\theta_{\ff{C}} 
	        &
	        \ff{F}id_\ff{C}
		}}
\vcenter{\xymatrix@C=-.4pc{\quad = \quad \quad }}
\vcenter{\xymatrix@C=.9pc
       {
        id_{\ff{G}\ff{C}} \dcellb{\alpha^\ff{G}_\ff{C}} 
	& 
	\theta_\ff{C} \did
	\\
	\ff{G}id_\ff{C} \dl
	& \theta_\ff{C} \ar@{}[dl]|{\theta_{id_\ff{C}}}  \dr
	\\
        \theta_\ff{C}
	&
	\ff{F}id_\ff{C}
	}}$$
$$ i.e. \ \ \ \ \vcenter{\xymatrix{\ff{F}\ff{C} \ar@/^2ex/[r]^{id_{\ff{F}\ff{C}}} \ar@{}[r]|{\cong \; \Downarrow \; \alpha^\ff{F}_\ff{C}} \ar@/_2ex/[r]_{\ff{F}id_{\ff{C}}} 
                                   &
                                   \ff{F}\ff{C} \ar[r]^{\theta_\ff{C}}
                                   &
                                   \ff{G}\ff{C}
                                   }}
\vcenter{\xymatrix@C=-.4pc{\quad = \quad \quad }}
\vcenter{\xymatrix@R=1.2pc{\ff{F}\ff{C} \ar[r]^{\theta_\ff{C}} \ar@/_2ex/[rdd]_{\ff{F}id_\ff{C}} 
                                   &
                                   \ff{G}\ff{C} \ar@{}[dd]|{\cong \; \Downarrow \; \theta_{id_\ff{C}} } \ar@/^2ex/[r]^{id_{\ff{G}\ff{C}}} \ar@{}[r]|{\cong \; \Downarrow \; \alpha^\ff{G}_\ff{C}} \ar@/_2ex/[r]_{\ff{G}id_{\ff{C}}}
                                   &
                                   \ff{G}\ff{C} 
                                   \\
                                   &
                                   \\
                                   &
                                   \ff{F}\ff{C} \ar@/_2ex/[ruu]_{\theta_\ff{C}}
                                   }}$$ 
                                   
\item[PN1.] For each $\ff{C}\stackrel{\ff{f}}\rightarrow \ff{D}\stackrel{\ff{g}}\rightarrow \ff{E}$, $\; \theta_{\ff{E}} \alpha^{\F}_{\ff{f},\ff{g}} \circ \theta_{\ff{g}} \ff{F}\ff{f} \circ \ff{G}\ff{g} \theta_{\ff{f}}=\theta_{\ff{g}\ff{f}}\circ \alpha^{\G}_{\ff{f},\ff{g}} \theta_{\ff{C}}$, $\;$ i.e. 
   
 $$\vcenter{\xymatrix@C=-0pc{
 	   \ff{G}\ff{g} \did	 
 	   & & 
 	   \ff{G}\ff{f} \dl 
 	   &&
 	   \theta_\ff{C} \ar@{}[dll]|{\theta_\ff{f}} \dr
 	   \\
 	   \ff{G}\ff{g} \dl
 	   & & 
 	   \theta_\ff{D} \ar@{}[dll]|{\theta_\ff{g}} \dr
 	   &&
 	    \ff{F}\ff{f} \did
 	   \\ 
 	   \theta_{\ff{E}} \did
            & &  \ff{F}\ff{g} \dl
            &&
            \ff{F}\ff{f} \ar@{}[dll]|{\alpha^{\F}_{\ff{f},\ff{g}}} \dr
            \\
            \theta_{\ff{E}} 
            &&&
            \!\!\!\!\! \ff{F}\ff{g}\ff{f} \!\!\!\!\! &
            }}
 \vcenter{\xymatrix@C=-.4pc{\quad = \quad }}
 \vcenter
   {
    \xymatrix@C=-0pc%@R=3pc
        {\ff{G}\ff{g} 
 		 &&
 		 \ff{G}\ff{f} 
 		 & & \theta_\ff{C} \did 
 	   \\
 	   &
 		 \ff{G}\ff{g}\ff{f} \cl{\alpha^{\G}_{\ff{f},\ff{g}}}   \dl
 		&& &
 		\theta_\ff{C} \ar@{}[dll]_{\theta_{\ff{g}\ff{f}}}  \dr
 		\\
 		&
 		 \theta_\ff{E} 
 		& &&
 		 \ff{F}\ff{g}\ff{f} 
 		}
   }
 $$
$$ i.e. \ \ \ \ \vcenter{
\xymatrix@C=1.5pc@R=.8pc
   {
&      \ff{F}\ff{C} \ar@{}[ddrr]|{ \cong \;\Downarrow \; \theta_\ff{f}}  \ar@/_8ex/[dddd]_{\ff{F}\ff{g}\ff{f}} \ar[dd]_{\ff{F}\ff{f}} \ar[rr]^{\theta_{\ff{C}}} 
   && \ff{G}\ff{C} \ar[dd]^{\ff{G}\ff{f}} 
   && \ff{F}\ff{C} \ar@{}[ddddrr]|{\cong \; \Downarrow \; \theta_{\ff{g}\ff{f}}} \ar[dddd]_{\ff{F}\ff{g}\ff{f}} \ar[rr]^{\theta_{\ff{C}}}  
   && \ff{G}\ff{C} \ar[dddd]_{\ff{G}\ff{g}\ff{f}} \ar@/^2ex/[rdd]^{\ff{G}\ff{f}}
   \\ 
&   & 
   &&&&
   \\
\ar@{}[r]|<<<{\stackrel[\cong]{\alpha^{\F}_{\ff{f},\ff{g}}}{\Leftarrow}} &       \ff{F}\ff{D} \ar@{}[ddrr]|{ \cong \; \Downarrow \; \theta_\ff{g}}   \ar[dd]_{\ff{F}\ff{g}} \ar[rr]^{\theta_{\ff{D}}} 
   && \ff{G}\ff{D} \ar[dd]^{\ff{G}\ff{g}}
   & = \;\;\;
   & & 
   & \ar@{}[r]|{\stackrel[\cong]{\alpha^{\G}_{\ff{f},\ff{g}}}{\Leftarrow}} &
   \ff{G}\ff{D} \ar@/^2ex/[ldd]^{\ff{G}\ff{g}}
   \\
&   & 
   \\
&      \ff{F}\ff{E} \ar[rr]_{\theta_{\ff{E}}} 
   && \ff{G}\ff{E} 
   && \ff{F}\ff{E}\ar[rr]_{\theta_{\ff{E}}}  
   && \ff{G}\ff{E} }}$$

\item[PN2.] For each $\ff{C}\cellrd{\ff{f}}{\alpha}{\ff{g}}\ff{D} \in \cc{C}$, $\; \theta_{\ff{g}} \circ \ff{G}\alpha \theta_{\ff{C}}=\theta_{\ff{D}}\ff{F}\alpha \circ \theta_{\ff{f}}$, $\;$ i.e.

$$\vcenter{\xymatrix@C=-0pc{
                      \ff{G}\ff{f} \dcellb{\ff{G}\alpha}   
		      &&
		      \theta_\ff{C}  \did 
		      \\
		       \ff{G}\ff{g} \dl
		       &&
		      \theta_\ff{C} \dr \ar@{}[dll]|{\theta_\ff{g}}
		      \\
		       \theta_{\ff{D}} 
		       && 
		      \ff{F}\ff{g} 
		      }}
      \vcenter{\xymatrix@C=-.4pc{\quad = \quad }}
      \vcenter{\xymatrix@C=-0pc{
		      \ff{G}\ff{f} \dl
		      &&
		      \theta_\ff{C} \dr \ar@{}[dll]|{\theta_\ff{f}} 
		      \\
		      \theta_{\ff{D}} \did
		      &&
		      \ff{F}\ff{f} \dcellb{\ff{F}\alpha} 
		      \\ 
		      \theta_\ff{D}
		      &&
		      \ff{F}\ff{g} 
		      }}
		      $$
$$ i.e. \ \ \ \ \vcenter{\xymatrix@C=.8pc@R=.8pc
  {
    \ff{F}\ff{C} \ar@{}[ddrrr]|{ \cong \; \Downarrow \; \theta_\ff{g} \quad } \ar[dd]_{\ff{F}\ff{g}} \ar[rrr]^{\theta_{\ff{C}}} 
    &&& \ff{G}\ff{C} \ar@<-1.5ex>[dd]_{\ff{G}\ff{g}} \ar@<1.5ex>[dd]^{\ff{G}\ff{f}} \ar@{}[dd]|{\stackrel[\cong]{\ff{G}\alpha}{\Leftarrow}}
    &&& & \ff{F}\ff{C} \ar@<-1.5ex>[dd]_{\ff{F}\ff{g}} \ar@<1.5ex>[dd]^{\ff{F}\ff{f}} \ar@{}[dd]|{\stackrel[\cong]{\ff{F}\alpha}{\Leftarrow}} \ar[rrr]^{\theta_{\ff{C}}} 
    &&& \ff{G}\ff{C} \ar[dd]^{\ff{G}\ff{f}}
    \\
    & 
    && 
    && = 
    && 
    &&  
    & 
    \\
    \ff{F}\ff{D}\ar[rrr]_{\theta_{\ff{D}}}  
    &&& \ff{G}\ff{D} 
    &&&& \ff{F}\ff{D} \ar[rrr]_{\theta_{\ff{D}}} 
    &&& \ff{G}\ff{D} \ar@{}[uulll]|{\cong \;\Downarrow \; \theta_\ff{f}}
    }}$$	
\end{itemize}
\end{sinnadaitalica}

As a particular case, we have the notion of pseudo-natural transformation between 2-functors.

\begin{sinnadaitalica} {\bf 2-Natural transformation. }\label{2espseudo}
A 2-natural transformation $\theta$ between 2-functors is a pseudo-natural transformation such that $\theta_\ff{f}$ is the equality for each $\ff{f}\in \cc{C}$. Equivalently, it is a $\cc{C}at$-enriched natural transformation, that is, a natural transformation  between the functors determined by $\ff{F}$ and $\ff{G}$, such that for each 2-cell
$\ff{C}\cellrd{\ff{f}}{\alpha}{\ff{g}}\ff{D}$, the equation $\ff{G}\alpha\theta_{\ff{C}}=\theta_{\ff{D}}\ff{F}\alpha$ holds.  
\end{sinnadaitalica}

\begin{sinnadaitalica} {\bf Modification.}
Given pseudo-functors $\ff{F}$ and $\ff{G}$ from $\cc{C}$ to
$\cc{D}$ (as a particular case $\ff{F}$ and $\ff{G}$ might be 2-functors), a \mbox{\emph{modification}}
$\ff{F} \cellrd{\theta}{\rho}{\eta} \ff{G}$ between pseudo-natural transformations is a family
\mbox{$\left\{\theta_{\ff{C}}\Mr{\rho_{\ff{C}}}\eta_{\ff{C}}\right\}_{\ff{C}\in \cc{C}}$} of 2-cells of
$\cc{D}$ such that:

\begin{itemize}
 \item[PM.] For each $ \ff{C}\mr{\ff{f}} \ff{D} \in \cc{C},
\; \rho_{\ff{D}} \ff{Ff} \circ \theta_\ff{f} =
 \eta_\ff{f} \circ \ff{Gf} \rho_\ff{C}
$, $\;$ i.e.

$$\vcenter{\xymatrix@C=-0pc{
		      \ff{G}\ff{f} \dl 
		      && 
		      \theta_{\ff{C}} \dr \ar@{}[dll]|{\theta_\ff{f}} 
		      \\
		      \theta_\ff{D} \dcellb{\rho_\ff{D}}
		      && 
		      \ff{F}\ff{f} \did  
		      \\
		      \eta_\ff{D}
		      &&
		      \ff{F}\ff{f}
		      }}
      \vcenter{\xymatrix@C=-.4pc{\quad = \quad }}
      \vcenter{\xymatrix@C=-0pc{
		      \ff{G}\ff{f} \did 
		      && 
		      \theta_\ff{C} \dcellb{\rho_\ff{C}} 
		      \\
		      \ff{G}\ff{f} \dl 
		      && 
		      \eta_\ff{C} \dr \ar@{}[dll]|{\eta_\ff{f}} 
		      \\
		      \eta_\ff{D}
		      && 
		      \ff{F}\ff{f}
		      }}$$
$$ i.e. \ \ \ \ \vcenter{\xymatrix@R=1pc@C=1pc
  {
  \ff{F}\ff{C} \ar[rr]^{\theta_{\ff{C}}}   \ar[dd]_{\ff{F}\ff{f}} \ar@{}[ddrr]|{\cong \; \Downarrow \; \theta_\ff{f} }
  &
  & \ff{G}\ff{C} \ar[dd]^{\ff{G}\ff{f}}
  && \ff{F}\ff{C} \ar@{}[rr]|{\Downarrow \;\rho_{\ff{C}}} \ar[dd]_{\ff{F}\ff{f}} \ar@<1ex>[rr]^{\theta_{\ff{C}}} \ar@<-1ex>[rr]_{\eta_{\ff{C}}} \ar@{}[rrdd]|{\cong \;\Downarrow \; \eta_\ff{f}}
  &
  & \ff{G}\ff{C} \ar[dd]^{\ff{G}\ff{f}} 
  \\
  & 
  &&
  = 
  && 
  & 
  \\
  \ff{F}\ff{D} \ar@<1ex>[rr]^{\theta_{\ff{D}}} \ar@<-1ex>[rr]_{\eta_{\ff{D}}} \ar@{}[rr]|{ \Downarrow \; \rho_{\ff{D}}}
  &
  & \ff{G}\ff{D} 
  && 
  \ff{F}\ff{D} \ar[rr]_{\eta_\ff{D}}
  & & 
  \ff{G}\ff{D}
  }}$$
\end{itemize}

As a particular case, we have \mbox{\emph{modifications}} between 2-natural transformations, which are families of 2-cells as above satisfying $\rho_{\ff{D}}\ff{F}\ff{f}=\ff{G}\ff{f}\rho_{\ff{C}}$.
\end{sinnadaitalica}

\begin{sinnadastandard} \label{2CAT}
By the theory of enriched categories, it is well known that 2-categories, 2-functors and \mbox{2-natural} transformations form a 2-category
(which actually underlies a \mbox{3-category)} that we denote $2\hbox{-}\cc{CAT}$. Horizontal composition of \mbox{2-functors} and vertical composition of 2-natural transformations are the usual ones, and the horizontal composition of 2-natural transformations is defined by:

\noindent Given
$\ff{C} \cellrd{\ff{F}}{\alpha}{\ff{G}} \ff{D}
\cellrd{\ff{F}'}{\alpha'}{\ff{G}'} \ff{E}$,
$\;(\alpha'\alpha)_\ff{C} = \alpha'_{\ff{GC}}\circ \ff{F}'\alpha_\ff{C} \quad$ ($ = \ff{G}'\alpha_\ff{C} \circ \alpha'_{\ff{FC}}$).
\end{sinnadastandard}

\begin{definition}\label{ccHom}
Given two 2-categories $\cc{C}$ and $\cc{D}$, we consider three \mbox{2-categories} defined as follows:

\begin{itemize}
 \item[-] $\cc{H}om(\cc{C},\cc{D})$: 2-functors and 2-natural transformations.

 \item[-] $\cc{H}om_p(\cc{C},\cc{D})$: 2-functors and pseudo-natural transformations.

 \item[-] $p\cc{H}om_p(\cc{C},\cc{D})$: pseudo-functors and pseudo-natural transformations.
\end{itemize}

\noindent In all cases the 2-cells are the modifications. To define compositions we draw the basic 2-category diagram:
$$
\vcenter{\xymatrix@R=1ex
        {
           {\;\;} \ar[rr]^{\theta}
         & {\;\;} \ar@{}[dd]|<<<{{\Downarrow} \; {\rho}}
         & {\;\;} \ar[rr]^{\theta'}
         & {\;\;} \ar@{}[dd]|<<<{{\Downarrow} \; {\rho'}}
         & {\;\;}
         \\
           {\;\;}
         & 
         & {\;\;}
         & 
         & {\;\;}
         & {\;\;}
         \\
           {\ff{F}} \ar[rr]^{\eta}
         & {\;\;} \ar@{}[dd]|<<<<{{\Downarrow}{\;\varepsilon}}
         & {\ff{G}} \ar[rr]^{\eta'}
         & {\;\;} \ar@{}[dd]|<<<<{{\Downarrow}{\;\varepsilon'}}
         & {\ff{H}}
         \\
           {\;\;}
         & & {\;\;}
         & 
         & {\;\;}
         & {\;\;}
         \\
           {\comw{\F}} \ar[rr]^{\mu}
         & {\;\;}
         & {\;\;} \ar[rr]^{\mu'}
         & {\;\;}
         & {\;\;}
        }}
\hspace{3ex}
\vcenter{\xymatrix@R=1ex
     {
      (\theta'\theta)_{\ff{C}} =
                        \theta'_{\ff{C}}\theta_{\ff{C}}
     \\
     (\theta'\theta)_{\ff{f}} =
                          \theta'_{\ff{D}}\theta_{\ff{f}} \circ \theta'_{\ff{f}}\theta_{\ff{C}}
     \\
      (\rho'\rho)_{\ff{C}}=\rho'_{\ff{C}} \rho_{\ff{C}}
     \\
      (\epsilon\circ \rho)_{\ff{C}} =
                  \epsilon_{\ff{C}} \circ \rho_{\ff{C}}
     }}
$$
It is straightforward to check that these definitions determine \mbox{2-category} structures. \cqd
\end{definition}

\begin{remark} \cite[3.17]{F} \label{equivalencias pointwise}
A pseudo-natural transformation $\ff{F}\Mr{\theta}\ff{G}\in \cc{H}om_p(\cc{C},\cc{D})$ (respectively $p\cc{H}om_p(\cc{C},\cc{D})$) is an equivalence iff for each $\ff{C}\in \cc{C}$, $\theta_{\ff{C}}$ is an equivalence in $\cc{D}$. The same assertion does not hold in $\cc{H}om(\cc{C},\cc{D})$ (c.f. \ref{eqencadaCeseq}).\cqd
\end{remark}

% \begin{definition}\label{mapsp}
%  Let $\cc{C}$ be a 2-category. We use the notation 
% %  $2$-$\cc{M}aps(\cc{C})$ for the 2-category $\cc{H}om(\ff{2},\cc{C})$ and 
%  $2$-$\cc{M}aps_p(\cc{C})$ for the \mbox{2-category} $\cc{H}om_p(\ff{2},\cc{C})$ where $\ff{2}$ stands for the trivial 2-category with two objects, one morphism between them and no 2-cells other than identities, i.e. $\ff{2}=\{0\mr{}1\}$.  
% \end{definition}

\begin{remark}\label{mapsp}
Since we are going to make manipulations with the 2-category $\cc{H}om_p(\ff{2},\cc{C})$ (where $\ff{2}$ stands for the trivial 2-category with two objects, one morphism between them and no 2-cells other than identities, i.e. $\ff{2}=\{0\mr{}1\}$),
% $2$-$\cc{M}aps(\cc{C})$ and 
%\mbox{$2$-$\cc{M}aps_p(\cc{C})$,} 
we will give a more explicit description of it:

\begin{itemize}
 \item[-] An object is a morphism $\ff{C}\mr{\f}\ff{D} \in \cc{C}$.

% A morphism $\theta$ between $\ff{C}\mr{f}\ff{D}$ and $\ff{C}'\mr{g}\ff{D}'$ in $2$-$\cc{M}aps(\cc{C})$ is given by two morphisms in $\cc{C}$ $\ff{C}\mr{\theta_0}\ff{C}'$, $\ff{D}\mr{\theta_1}\ff{D}'$ such that the following diagram commutes:
% 
% $$\xymatrix{\ff{C} \ar[r]^{\theta_0} \ar[d]_f 
%             & \ff{C}' \ar[d]^g \\
%             \ff{D} \ar[r]_{\theta_1} 
%             & \ff{D}'}$$
            
 \item[-] A morphism $\theta$ between $\ff{C}\mr{\f}\ff{D}$ and $\ff{C}'\mr{\g}\ff{D}'$ in $\cc{H}om_p(\ff{2},\cc{C})$ is given by two morphisms $\ff{C}\mr{\theta_0}\ff{C}'$, $\ff{D}\mr{\theta_1}\ff{D}' \in \cc{C} $ and an invertible 2-cell $\g \theta_0 \Mr{\theta_m} \theta_1 \f$ as in the following diagram:

$$\xymatrix@R=.8pc@C=.8pc{\ff{C} \ar@{}[rrdd]|{\cong \; \Downarrow \; \theta_m} \ar[rr]^{\theta_0} \ar[dd]_{\f} 
            && \ff{C}' \ar[dd]^{\g} \\
            &  \\
            \ff{D} \ar[rr]_{\theta_1} 
            &&\ff{D}'}$$     
            
% A 2-cell $\mu$ in $2$-$\cc{M}aps(\cc{C})$ between $\theta$ and $\eta$ from $\ff{C}\mr{f}\ff{D}$ to $\ff{C}'\mr{g}\ff{D}'$ is given by two 2-cells in $\cc{C}$ $\theta_0 \Mr{\mu_0}\eta_0$, $\theta_1 \Mr{\mu_1}\eta_1$ such that $\mu_1 f = g \mu_0$
% 
% $$\vcenter{\xymatrix@C=-.4pc{\ff{g} \dl 
%                     & \dc{=} 
%                     & \theta_0 \dr 
%                     \\
%                     \theta_1 \dcell{\mu_1} 
%                     && \ff{f} \deq
%                     \\
%                     \eta_1 
%                     && \ff{f}}}
%  \vcenter{\xymatrix@C=-.4pc{\quad = \quad }}
%  \vcenter{\xymatrix@C=-.4pc{\ff{g} \deq
%                             && \theta_0 \dcell{\mu_0} 
%                             \\
%                             \ff{g} \dl
%                             & \dc{=}
%                             & \eta_0 \dr
%                             \\
%                             \eta_1 
%                             && \ff{f}}}$$

 \item[-] A 2-cell $\mu$ in $\cc{H}om_p(\ff{2},\cc{C})$ between $\theta$ and $\eta$ from $\ff{C}\mr{\f}\ff{D}$ to $\ff{C}'\mr{\g}\ff{D}'$ is given by two 2-cells $\theta_0 \Mr{\mu_0}\eta_0$, $\theta_1 \Mr{\mu_1}\eta_1 \in \cc{C} $ such that $\mu_1 \f \circ \theta_m = \eta_m \circ \g \mu_0$.   

$$\vcenter{\xymatrix@C=-0pc{\ff{g} \dl 
                    & \dc{\theta_m} 
                    & \theta_0 \dr 
                    \\
                    \theta_1 \dcell{\mu_1} 
                    && \ff{f} \deq
                    \\
                    \eta_1 
                    && \ff{f}}}
 \vcenter{\xymatrix@C=-.4pc{\quad = \quad }}
 \vcenter{\xymatrix@C=-0pc{\ff{g} \deq
                            && \theta_0 \dcell{\mu_0} 
                            \\
                            \ff{g} \dl
                            & \dc{\eta_m}
                            & \eta_0 \dr
                            \\
                            \eta_1 
                            && \ff{f}}}$$
\end{itemize}
\end{remark}

\begin{definition}\label{retractos}

% \begin{enumerate}
%  \item  Let $\cc{C}$ be a 2-category and $\ff{C}\mr{\ff{f}}\ff{D}$, $\ff{C}'\mr{\ff{g}}\ff{D}'$ two morphisms in $\cc{C}$. We say that $\ff{f}$ is a retract of $\ff{g}$ in $2$-$\cc{M}aps(\cc{C})$ if there are morphisms $\ff{f} \mr{\theta} \ff{g}$, $\ff{g} \mr{\eta} \ff{f}$ in $2$-$\cc{M}aps(\cc{C})$ such that $\eta \theta =id_{\ff{f}}$. More explicitly, the retraction consists in a tuple $(\theta_0,\theta_1,\eta_0,\eta_1)$ such that $\ff{g}\theta_0=\theta_1\ff{f}$, $\ff{f}\eta_0=\eta_1\ff{g}$, $\eta_0\theta_0=id_{\ff{C}}$ and $\eta_1\theta_1=id_{\ff{D}}$. 

%  \item 
 Let $\cc{C}$ be a 2-category and $\ff{C}\mr{\ff{f}}\ff{D}$, $\ff{C}'\mr{\ff{g}}\ff{D}'$ two morphisms in $\cc{C}$. We say that $\ff{f}$ is a retract of $\ff{g}$ in $\cc{H}om_p(\ff{2},\cc{C})$ if there are morphisms $\ff{f} \mr{\theta} \ff{g}$, $\ff{g} \mr{\eta} \ff{f}$ and an invertible 2-cell $\eta \theta \Mr{\mu} id_{\ff{f}}$ in $\cc{H}om_p(\ff{2},\cc{C})$. More explicitly, the retraction consists in a tuple $(\theta_0,\theta_1,\theta_m,\eta_0,\eta_1,\eta_m,\mu_0,\mu_1)$ such that $\ff{g}\theta_0 \Mrsimeq{\theta_m} \theta_1\ff{f}$, $\ff{f}\eta_0 \Mrsimeq{\eta_m} \eta_1\ff{g}$, $\eta_0\theta_0 \Mrsimeq{\mu_0} id_{\ff{C}}$, $\eta_1 \theta_1 \Mrsimeq{\mu_1} id_{\ff{D}}$ and the following equality holds:
 
$$\vcenter{\xymatrix@C=-0pc{\ff{f} \dl 
                    & \dc{\eta_m} 
                    & \eta_0 \dr 
                    & & \theta_0 \deq
                    \\
                    \eta_1 \deq
                    && \ff{g} \dl
                    & \dc{\theta_m}
                    & \theta_0 \dr
                    \\
                    \eta_1 
                    & 
                    & \theta_1 
                    && \ff{f} \deq
                    \\
                    & id_{\ff{D}} \cl{\mu_1}
                    &&& \ff{f}}}
 \vcenter{\xymatrix@C=-.4pc{\quad = \quad }}
 \vcenter{\xymatrix@C=-0pc{\ff{f} \deq
                            & \eta_0 
                            &  
                            & \theta_0 
                            \\
                            \ff{f} \dl
                            & \dc{=}
                            & id_{\ff{C}} \cl{\mu_0} \dr
                            & \\
                            id_{\ff{D}} 
                            && \ff{f}}}$$
% \end{enumerate}

\end{definition}

\begin{sinnadaitalica}{\bf Bi-universal arrows.} \emph{\cite[9.4]{F}} \label{biuniversal arrows}
 Let $\cc{D} \mr{\ff{G}} \cc{C}$ be a pseudo-functor, $\ff{C}\in \cc{C}$ and $\ff{D}\in \cc{D}$. A morphism $\ff{C}\mr{\ff{f}}\ff{G}\ff{D} \in \cc{C}$ is a bi-universal arrow from $\ff{C}$ to $\ff{G}$ if for each $\ff{D}'\in \cc{D}$, the following functor is an equivalence of categories 

 $$\xymatrix@R=.5pc{\cc{D}(\ff{D},\ff{D}') & \ar[rr] && & \cc{C}(\ff{C},\ff{G}\ff{D}') \\
		      \ff{g} \Mr{\alpha} \ff{g}' & \ar@{|->}[rr] &&& \ff{G}(\ff{g}) \ff{f} \Mr{\ff{G}(\alpha) \ff{f}} \ff{G}(\ff{g}') \ff{f}
}$$
 \end{sinnadaitalica}

\begin{sinnadaitalica}{\bf Bi-adjoint pseudo-functors.} \emph{\cite[9.8]{F}} \label{2-adjuntos}
 Let $\ff{F}: \vcenter{\xymatrix{\cc{C} \ar@<.75ex>[r] \ar@<-.75ex>@{<-}[r] & \cc{D}}}:\ff{G}$ be pseudo-functors. We say that $\ff{F}$ is \mbox{\emph{bi-left adjoint}} to $\ff{G}$ (equivalently that $\ff{G}$ is bi-right adjoint to $\ff{F}$) if for each $\ff{C}\in \cc{C}$, $\ff{D}\in \cc{D}$, there is an equivalence of categories $\cc{D}(\ff{F}\ff{C},\ff{D})\mr{\phi_{\ff{C},\ff{D}}} \cc{C}(\ff{C},\ff{G}\ff{D})$ in a way such that $\phi$ is a pseudo-natural transformation in each variable. In this case, we use the notation $\ff{F} \dashv_b \ff{G}$.
\end{sinnadaitalica}

\begin{remark}\label{equivalencia de adjuntos con mi def}
 It is straightforward to check that $\ff{F}$ is bi-left adjoint to $\ff{G}$ iff there exist pseudo-natural transformations $\ff{F}\ff{G}\Mr{\epsilon} id_{\cc{D}}$, $id_{\cc{C}}\Mr{\eta} \ff{G}\ff{F}$ such that $\epsilon \ff{F} \circ \ff{F}\eta=\ff{F}$ and $\ff{G}\epsilon \circ \eta \ff{G}=\ff{G}$. \cqd
\end{remark}

\begin{proposition}  \emph{\cite[9.16]{F}} \label{equivalencia de adjuntos fiore}
Let $\ff{F}: \vcenter{\xymatrix{\cc{C} \ar@<.75ex>[r] \ar@<-.75ex>@{<-}[r] & \cc{D}}}:\ff{G}$ be pseudo-functors. Then $\ff{F}$ is bi-left adjoint to $\ff{G}$ iff there exists a pseudo-natural transformation $id_{\cc{C}} \Mr{\eta} \ff{G}\ff{F}$ such that $\eta_{\ff{C}}$ is a bi-universal arrow from $\ff{C}$ to $\ff{G}\F\C \; \forall \ \ff{C}\in \cc{C}$. \cqd 
\end{proposition}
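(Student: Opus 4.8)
The plan is to prove both implications through the single construction $\ff{g} \mapsto \ff{G}(\ff{g})\,\eta_{\ff{C}}$, reading the bi-universal property of $\eta_{\ff{C}}$ (Definition \ref{biuniversal arrows}) precisely as the two-variable pseudo-naturality of the hom-equivalence $\phi$ of Definition \ref{2-adjuntos}. In both directions the key is that bi-universality of $\eta_{\ff{C}}$ corresponds to pseudo-naturality of $\phi$ in the variable $\ff{D}$, while genuine pseudo-naturality of $\eta$ as a transformation corresponds to pseudo-naturality of $\phi$ in the variable $\ff{C}$.

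For the implication ($\Leftarrow$), given a pseudo-natural $\eta: id_{\cc{C}} \Mr{} \ff{G}\ff{F}$ with bi-universal components, I would define, for each $\ff{C}\in\cc{C}$ and $\ff{D}\in\cc{D}$, the functor $\phi_{\ff{C},\ff{D}}:\cc{D}(\ff{F}\ff{C},\ff{D}) \mr{} \cc{C}(\ff{C},\ff{G}\ff{D})$ by $\phi_{\ff{C},\ff{D}}(\ff{g}) = \ff{G}(\ff{g})\,\eta_{\ff{C}}$ and $\phi_{\ff{C},\ff{D}}(\gamma) = \ff{G}(\gamma)\,\eta_{\ff{C}}$ on 2-cells. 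Taking $\ff{F}\ff{C}$ as the witnessing object of the bi-universal arrow $\eta_{\ff{C}}$, this is exactly the comparison functor of Definition \ref{biuniversal arrows}, so each $\phi_{\ff{C},\ff{D}}$ is an equivalence. It then remains to verify that $\phi$ is pseudo-natural in each variable. Pseudo-naturality in $\ff{D}$ is supplied directly by the structural 2-cells of the pseudo-functor $\ff{G}$: for $\ff{k}:\ff{D}\mr{}\ff{D}'$ the invertible constraint $\ff{G}(\ff{k})\ff{G}(\ff{g}) \Mr{\alpha^{\ff{G}}_{\ff{g},\ff{k}}} \ff{G}(\ff{k}\ff{g})$ gives the invertible 2-cell relating $\ff{G}(\ff{k})\,\phi_{\ff{C},\ff{D}}(\ff{g})$ with $\phi_{\ff{C},\ff{D}'}(\ff{k}\ff{g})$. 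Pseudo-naturality in $\ff{C}$ is the substantive point: for $\ff{h}:\ff{C}'\mr{}\ff{C}$ I would paste the pseudo-naturality cell $\ff{G}\ff{F}(\ff{h})\,\eta_{\ff{C}'} \Mr{\eta_{\ff{h}}} \eta_{\ff{C}}\,\ff{h}$ of $\eta$ together with $\alpha^{\ff{G}}$ to produce the invertible 2-cell comparing $\ff{G}(\ff{g})\,\eta_{\ff{C}}\,\ff{h}$ with $\ff{G}(\ff{g}\,\ff{F}\ff{h})\,\eta_{\ff{C}'}$.

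For the implication ($\Rightarrow$), assume $\ff{F}\dashv_b \ff{G}$ with hom-equivalences $\phi_{\ff{C},\ff{D}}$ pseudo-natural in each variable, and set $\eta_{\ff{C}} := \phi_{\ff{C},\ff{F}\ff{C}}(id_{\ff{F}\ff{C}})$. Evaluating the naturality square of $\phi$ in the second variable, for $\ff{g}:\ff{F}\ff{C}\mr{}\ff{D}'$, at $id_{\ff{F}\ff{C}}$ yields an invertible 2-cell $\phi_{\ff{C},\ff{D}'}(\ff{g}) \cong \ff{G}(\ff{g})\,\eta_{\ff{C}}$; hence the functor $\ff{g}\mapsto \ff{G}(\ff{g})\,\eta_{\ff{C}}$ is isomorphic to the equivalence $\phi_{\ff{C},\ff{D}'}$ and is therefore itself an equivalence, which is precisely the assertion that $\eta_{\ff{C}}$ is a bi-universal arrow from $\ff{C}$ to $\ff{G}$. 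That $\eta$ assembles into a genuine pseudo-natural transformation follows by the usual Yoneda-type transport: pseudo-naturality of $\phi$ in the first variable, evaluated at the identities $id_{\ff{F}\ff{C}}$, produces the coherent cells $\eta_{\ff{h}}$ and their axioms PN0--PN2 out of the corresponding coherence of $\phi$ (this is also consistent with Remark \ref{equivalencia de adjuntos con mi def}, which I could alternatively invoke to extract such an $\eta$).

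The main obstacle will be the bookkeeping in the ($\Leftarrow$) direction: checking not merely that the naturality squares for $\phi$ commute up to invertible 2-cells, but that those 2-cells satisfy the modification/coherence conditions making $\phi$ a bona fide pseudo-natural transformation in each variable. This is where I would use the elevators calculus systematically, reducing each identity to the pseudo-functor axioms for $\ff{G}$ (Definition \ref{pseudo-functor}) and the axioms PN0--PN2 for $\eta$. None of these steps is conceptually difficult, but the pasting diagrams are long and the invertible 2-cells $\alpha^{\ff{G}}$ and $\eta_{\ff{h}}$ must be threaded together with care.
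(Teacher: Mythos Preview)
The paper does not actually supply a proof of this proposition: it is stated with a citation to \cite[9.16]{F} and closed immediately with the \cqd\ symbol, so there is no in-paper argument to compare against. Your proposal is the standard proof one would give for this result and is correct in outline. The construction $\phi_{\ff{C},\ff{D}}(\ff{g}) = \ff{G}(\ff{g})\,\eta_{\ff{C}}$ in the ($\Leftarrow$) direction and the definition $\eta_{\ff{C}} = \phi_{\ff{C},\ff{F}\ff{C}}(id_{\ff{F}\ff{C}})$ in the ($\Rightarrow$) direction are exactly right, and your identification of which coherence data supplies which pseudo-naturality (the $\alpha^{\ff{G}}$ for the $\ff{D}$-variable, the cells $\eta_{\ff{h}}$ for the $\ff{C}$-variable) is accurate. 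The bookkeeping you flag as the main obstacle is indeed tedious but routine; nothing is missing from your plan.
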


\begin{sinnadaitalica} {\bf 2-Equivalence.} \label{2equivalencia}
A 2-functor $\cc{C}\mr{\ff{F}}\cc{D}$ is said to be a \emph{\mbox{2-equivalence} of \mbox{2-categories}} if there exist a 2-functor $\cc{D}\mr{\ff{G}} \cc{C}$ and invertible 2-natural \mbox{transformations} $\ff{F}\ff{G}\Mr{\alpha} id_\cc{D}$ and $\ff{G}\ff{F}\Mr{\beta} id_\cc{C}$. $\ff{G}$ is said to be a \emph{quasi-inverse} for $\ff{F}$.
%, and it is determined up to invertible 2-natural transformation.
\end{sinnadaitalica}

\begin{sinnadaitalica} {\bf Pseudo-equivalence.} \label{pequivalencia}
A pseudo-functor $\cc{C}\mr{\ff{F}}\cc{D}$ is said to be a \emph{\mbox{pseudo-equivalence} of \mbox{2-categories}} if there exists a pseudo-functor $\cc{D}\mr{\ff{G}} \cc{C}$ and equivalence pseudo-natural \mbox{transformations} $\ff{F}\ff{G}\Mr{\alpha} id_\cc{D}$ and $\ff{G}\ff{F}\Mr{\beta} id_\cc{C}$. $\ff{G}$ is said to be a \emph{pseudo-quasi-inverse} for $\ff{F}$.
%, and it is determined up to invertible pseudo-natural transformation.
\end{sinnadaitalica}

Pseudo-equivalences are sometimes called bi-equivalences in the literature. See for example \cite{L} where \ref{peqsiipf&f} is mentioned. 

Often we have 2-functors that do not have a quasi-inverse but do have a pseudo-quasi-inverse and thus determine a pseudo-equivalence, see \ref{proppseudoeqapro}. 

\begin{proposition} \emph{\cite[1.11]{K1}} \label{eqsiif&f}
A 2-functor $\ff{F}:\cc{C} \mr{} \cc{D}$ is a
\mbox{2-equivalence} of 2-categories if and only if it is 2-fully-faithful and essentially surjective on objects.\cqd
\end{proposition}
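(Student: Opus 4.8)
\emph{The plan} is to prove both implications directly from the definitions, the crucial point throughout being that ``2-fully-faithful'' (\ref{2f&f}) and ``2-equivalence'' (\ref{2equivalencia}) are \emph{strict} notions --- isomorphisms of hom-categories and invertible 2-natural transformations, not mere equivalences --- so every construction must be checked to be strictly functorial and strictly 2-natural. Here ``essentially surjective on objects'' is understood up to isomorphism: every $\ff{D}\in\cc{D}$ is isomorphic, via an invertible 1-cell, to some $\ff{F}\ff{C}$.

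\emph{Forward direction.} Assume $\ff{F}$ is a 2-equivalence, with quasi-inverse $\ff{G}$ and invertible 2-natural transformations $\ff{F}\ff{G}\Mr{\alpha}id_\cc{D}$, $\ff{G}\ff{F}\Mr{\beta}id_\cc{C}$. Since an invertible 2-natural transformation has every component an invertible 1-cell, each $\alpha_\ff{D}\colon\ff{F}\ff{G}\ff{D}\mr{}\ff{D}$ is an isomorphism, so $\ff{D}\cong\ff{F}(\ff{G}\ff{D})$ and $\ff{F}$ is essentially surjective on objects. For 2-fully-faithfulness I would use that $\beta$ is \emph{2-}natural, so $\beta_\ff{f}$ is trivial: the equalities $\beta_{\ff{C}'}\,\ff{G}\ff{F}\ff{f}=\ff{f}\,\beta_\ff{C}$ (and their 2-cell analogues) show that $\ff{G}\ff{F}\colon\cc{C}(\ff{C},\ff{C}')\mr{}\cc{C}(\ff{G}\ff{F}\ff{C},\ff{G}\ff{F}\ff{C}')$ is conjugation by the isomorphisms $\beta_\ff{C},\beta_{\ff{C}'}$, hence an \emph{isomorphism} of categories; symmetrically $\ff{F}\ff{G}$ is an isomorphism on the relevant hom-categories of $\cc{D}$. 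Since both composites $\ff{G}\ff{F}$ and $\ff{F}\ff{G}$ are isomorphisms of hom-categories, a short diagram chase (exactly as for ordinary categories) shows that $\ff{F}$ is bijective on 1-cells and on 2-cells, i.e.\ 2-fully-faithful.

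\emph{Reverse direction.} Assume $\ff{F}$ is 2-fully-faithful and essentially surjective on objects. I would build a quasi-inverse $\ff{G}$ by choosing, for each $\ff{D}\in\cc{D}$, an object $\ff{G}\ff{D}\in\cc{C}$ together with an isomorphism $\alpha_\ff{D}\colon\ff{F}\ff{G}\ff{D}\mr{}\ff{D}$. On a 1-cell $\ff{g}\colon\ff{D}\mr{}\ff{D}'$ set $\ff{G}\ff{g}:=\ff{F}^{-1}\big(\alpha_{\ff{D}'}^{-1}\,\ff{g}\,\alpha_\ff{D}\big)$, using that $\ff{F}\colon\cc{C}(\ff{G}\ff{D},\ff{G}\ff{D}')\mr{}\cc{D}(\ff{F}\ff{G}\ff{D},\ff{F}\ff{G}\ff{D}')$ is an isomorphism of categories, and define $\ff{G}$ on a 2-cell by the same formula with $\ff{g}$ replaced by that 2-cell. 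Because $\ff{F}$ is a strict 2-functor and an isomorphism on homs, applying $\ff{F}$ reduces the 2-functor axioms for $\ff{G}$ to on-the-nose identities such as $\alpha_{\ff{D}''}^{-1}(\ff{g}'\ff{g})\alpha_\ff{D}=(\alpha_{\ff{D}''}^{-1}\ff{g}'\alpha_{\ff{D}'})(\alpha_{\ff{D}'}^{-1}\ff{g}\alpha_\ff{D})$; injectivity of $\ff{F}$ on homs then yields strict functoriality of $\ff{G}$. The family $\alpha$ is an invertible 2-natural transformation $\ff{F}\ff{G}\Mr{}id_\cc{D}$ directly from the defining formula for $\ff{G}\ff{g}$. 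Finally I would set $\beta_\ff{C}:=\ff{F}^{-1}(\alpha_{\ff{F}\ff{C}})\colon\ff{G}\ff{F}\ff{C}\mr{}\ff{C}$, an isomorphism since $\ff{F}^{-1}$ preserves and reflects invertibility, and verify 2-naturality of $\beta$ by applying the faithful $\ff{F}$ and invoking the formula for $\ff{G}(\ff{F}\ff{f})$. This exhibits $\ff{G}$ as a quasi-inverse, so $\ff{F}$ is a 2-equivalence.

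\emph{Expected main obstacle.} The routine part is the bookkeeping of whiskerings; the genuinely delicate point is \emph{strictness}. In the forward direction one must upgrade the evident equivalences of hom-categories to honest isomorphisms, and in the reverse direction one must check that $\ff{G}$ is a strict 2-functor and that $\alpha,\beta$ are strict 2-natural rather than merely pseudo-natural. This is precisely where the hypothesis of 2-fully-faithfulness (an \emph{isomorphism} of hom-categories) is indispensable and cannot be weakened to pseudo-fully-faithfulness (\ref{pf&f}); the latter would only yield the pseudo-equivalence of \ref{pequivalencia}.
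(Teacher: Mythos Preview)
Your proposal is correct and follows the standard enriched-category argument; the paper does not give its own proof of this statement but cites \cite[1.11]{K1} and remarks (just after the proof of \ref{peqsiipf&f}) that the proof of the pseudo version adapts to this strict case, which is exactly what your argument does with the appropriate upgrading from equivalences to isomorphisms throughout. Your emphasis on strictness --- that 2-naturality of $\beta$ makes $\ff{G}\ff{F}$ on homs \emph{equal} to conjugation rather than merely isomorphic to it --- correctly identifies why the argument yields isomorphisms of hom-categories rather than mere equivalences.
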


\begin{proposition}\label{peqsiipf&f}
A pseudo-functor $\ff{F}:\cc{C} \mr{} \cc{D}$ is a
\mbox{pseudo-equivalence} of \mbox{2-categories} if and only if it is pseudo-fully-faithful and pseudo-essentially surjective on objects.
Moreover, $\ff{F}$ is essentially surjective on objects iff the pseudo-natural transformation $\alpha$ from \ref{pequivalencia} is invertible.
\end{proposition}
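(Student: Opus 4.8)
The plan is to prove the two implications separately, the forward one being essentially formal and the converse requiring an explicit construction; throughout I lean on \ref{equivalencias pointwise}, which identifies equivalence pseudo-natural transformations with those having all components equivalences. Suppose first that $\ff{F}$ is a pseudo-equivalence, with pseudo-quasi-inverse $\ff{G}$ and equivalence pseudo-natural transformations $\ff{F}\ff{G}\Mr{\alpha}id_{\cc{D}}$, $\ff{G}\ff{F}\Mr{\beta}id_{\cc{C}}$ as in \ref{pequivalencia}. Pseudo-essential surjectivity is immediate: for $\ff{D}\in\cc{D}$ the object $\ff{G}\ff{D}$ together with the equivalence $\alpha_{\ff{D}}:\ff{F}(\ff{G}\ff{D})\mr{}\ff{D}$ does the job. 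For pseudo-fully-faithfulness I would observe that the invertible $2$-cells $\beta_{\ff{f}}:\ff{f}\,\beta_{\ff{C}}\Mr{}\beta_{\ff{D}}\,\ff{G}\ff{F}\ff{f}$ form a natural isomorphism exhibiting $(\beta_{\ff{D}})_{*}\circ(\ff{G}\ff{F})_{\ff{C},\ff{D}}\cong(\beta_{\ff{C}})^{*}$ as functors $\cc{C}(\ff{C},\ff{D})\mr{}\cc{C}(\ff{G}\ff{F}\ff{C},\ff{D})$; since pre- and post-composition with the equivalences $\beta_{\ff{C}},\beta_{\ff{D}}$ are equivalences of hom-categories, $(\ff{G}\ff{F})_{\ff{C},\ff{D}}$ is an equivalence, and symmetrically $(\ff{F}\ff{G})_{\ff{D},\ff{D}'}$ is, using $\alpha_{\ff{h}}$. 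I would then bootstrap: $\ff{G}\ff{F}$ and $\ff{F}\ff{G}$ being locally equivalences forces $\ff{F},\ff{G}$ locally faithful; local fullness of $\ff{G}\ff{F}$ with local faithfulness of $\ff{G}$ forces $\ff{F}$ locally full; and local essential surjectivity of $\ff{G}\ff{F}$ with $\ff{G}$ locally full-faithful (hence reflecting $2$-isomorphisms) forces $\ff{F}_{\ff{C},\ff{D}}$ locally essentially surjective. Thus each $\ff{F}_{\ff{C},\ff{D}}$ is an equivalence.

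For the converse, assume $\ff{F}$ is pseudo-fully-faithful and pseudo-essentially surjective, and build a pseudo-quasi-inverse $\ff{G}$. On objects, pseudo-essential surjectivity lets me choose for each $\ff{D}\in\cc{D}$ an object $\ff{G}\ff{D}\in\cc{C}$ and an equivalence $\alpha_{\ff{D}}:\ff{F}(\ff{G}\ff{D})\mr{}\ff{D}$, with a fixed quasi-inverse $\bar\alpha_{\ff{D}}$. On hom-categories I would define $\ff{G}_{\ff{D},\ff{D}'}:\cc{D}(\ff{D},\ff{D}')\mr{}\cc{C}(\ff{G}\ff{D},\ff{G}\ff{D}')$ as a chosen quasi-inverse of the composite equivalence $\cc{C}(\ff{G}\ff{D},\ff{G}\ff{D}')\mr{\ff{F}}\cc{D}(\ff{F}\ff{G}\ff{D},\ff{F}\ff{G}\ff{D}')\mr{}\cc{D}(\ff{D},\ff{D}')$, the second functor being conjugation by $\alpha_{\ff{D}},\alpha_{\ff{D}'}$; this is legitimate precisely because $\ff{F}_{\ff{G}\ff{D},\ff{G}\ff{D}'}$ is an equivalence (pseudo-fully-faithfulness) and conjugation by equivalences is an equivalence. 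This single choice assigns $\ff{G}$ to both $1$-cells and $2$-cells, makes each $\ff{G}_{\ff{D},\ff{D}'}$ a functor, and comes equipped with invertible $2$-cells $\ff{F}(\ff{G}\ff{h})\Mr{}\bar\alpha_{\ff{D}'}\,\ff{h}\,\alpha_{\ff{D}}$.

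The step I expect to be the main obstacle is verifying that $\ff{G}$ is genuinely a pseudo-functor and that $\alpha$, together with a comparison $\beta$, are genuine pseudo-natural equivalences satisfying PN0--PN2 and the pseudo-functor coherences. The key leverage is that $\ff{F}$ is \emph{locally an equivalence}: being locally faithful, any equation between $2$-cells of $\cc{C}$ may be checked after applying $\ff{F}$ and tested in $\cc{D}$, while being locally full and essentially surjective it produces, uniquely, the required structural $2$-cells $\alpha^{\ff{G}}_{\ff{D}}$ and $\alpha^{\ff{G}}_{\ff{h},\ff{h}'}$ by prescribing their images under $\ff{F}$. The invertible $2$-cells above are exactly the naturality data making $\alpha:\ff{F}\ff{G}\Mr{}id_{\cc{D}}$ pseudo-natural, and since each $\alpha_{\ff{D}}$ is an equivalence, \ref{equivalencias pointwise} makes $\alpha$ an equivalence. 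For $\beta$ I would lift, for each $\ff{C}$, the equivalence $\alpha_{\ff{F}\ff{C}}:\ff{F}\ff{G}\ff{F}\ff{C}\mr{}\ff{F}\ff{C}$ through the hom-equivalence $\ff{F}_{\ff{G}\ff{F}\ff{C},\ff{C}}$ to an arrow $\beta_{\ff{C}}:\ff{G}\ff{F}\ff{C}\mr{}\ff{C}$ with $\ff{F}\beta_{\ff{C}}\cong\alpha_{\ff{F}\ff{C}}$; since a pseudo-fully-faithful $\ff{F}$ reflects equivalences, $\beta_{\ff{C}}$ is an equivalence, its naturality $2$-cells are again produced and verified through $\ff{F}$, and \ref{equivalencias pointwise} makes $\beta$ an equivalence. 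This exhibits $\ff{G}$ as a pseudo-quasi-inverse.

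Finally, for the \emph{moreover} clause, note that $\alpha$ is invertible iff each component $\alpha_{\ff{D}}$ is an isomorphism. If $\ff{F}$ is essentially surjective on objects I can choose, in the construction above, each $\alpha_{\ff{D}}$ to be an isomorphism $\ff{F}(\ff{G}\ff{D})\cong\ff{D}$, making $\alpha$ invertible; conversely, invertibility of $\alpha$ supplies isomorphisms $\ff{F}(\ff{G}\ff{D})\cong\ff{D}$ for every $\ff{D}$, which is precisely essential surjectivity on objects.
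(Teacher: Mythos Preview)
Your proposal is correct. The converse direction and the ``moreover'' clause match the paper's proof essentially line for line: choose $\ff{G}\ff{D}$ and $\alpha_{\ff{D}}$ using pseudo-essential surjectivity, transport the hom-functor of $\ff{F}$ along conjugation by the $\alpha$'s to define $\ff{G}$ locally, manufacture all coherence $2$-cells by prescribing their images under $\ff{F}$ and invoking local full-faithfulness, and lift $\alpha_{\ff{F}\ff{C}}$ to get $\beta_{\ff{C}}$.

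The forward direction, however, takes a genuinely different route. The paper argues directly with explicit elevator computations: it first proves $\ff{F}$ (and symmetrically $\ff{G}$) is faithful on $2$-cells by sandwiching with $\beta_{\ff{f}},\beta_{\ff{g}}^{-1}$ and cancelling the equivalence $\beta_{\ff{C}}$; then, given $\rho:\ff{F}\ff{f}\Rightarrow\ff{F}\ff{g}$, it writes down an explicit candidate $\mu$ built from $\ff{G}\rho$ and the $\beta$'s and checks $\ff{G}\ff{F}\mu=\ff{G}\rho$, concluding $\ff{F}\mu=\rho$ by the already-established injectivity of $\ff{G}$; essential surjectivity on $1$-cells is handled by a similar explicit construction. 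Your argument is more structural: you observe that $\beta$ and $\alpha$ exhibit $(\ff{G}\ff{F})_{\ff{C},\ff{D}}$ and $(\ff{F}\ff{G})_{\ff{D},\ff{D}'}$ as equivalences (being naturally isomorphic, via $(\beta_{\ff{D}})_*$ or $(\alpha_{\ff{D}'})_*$, to pre-composition with an equivalence), and then bootstrap---$\ff{G}\ff{F}$ faithful $\Rightarrow$ $\ff{F}$ faithful, $\ff{F}\ff{G}$ faithful $\Rightarrow$ $\ff{G}$ faithful, $\ff{G}\ff{F}$ full and $\ff{G}$ faithful $\Rightarrow$ $\ff{F}$ full (and symmetrically $\ff{G}$ full), and finally $\ff{G}\ff{F}$ essentially surjective with $\ff{G}$ full-faithful (hence reflecting invertibility of $2$-cells) $\Rightarrow$ $\ff{F}$ essentially surjective. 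This is cleaner and avoids any diagram chasing; the paper's approach has the advantage of producing the explicit witnesses (e.g.\ the formula for $\mu$) that one might want in applications. One small point: your bootstrap uses ``$\ff{G}$ locally full-faithful'' in the last step, so you should make explicit that $\ff{G}$ full follows by the symmetric argument (from $\ff{F}\ff{G}$ full and $\ff{F}$ faithful) before invoking it.
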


\begin{proof}
$\Rightarrow)$ Let $\cc{D}\mr{\ff{G}}\cc{C}$ be a pseudo-quasi-inverse for $\ff{F}$ and $\ff{F}\ff{G}\Mr{\alpha} id_{\cc{D}}$, $\ff{G}\ff{F}\Mr{\beta} id_\cc{C}$ equivalence pseudo-natural transformations as in \ref{pequivalencia}. Note that for each $\ff{C}\in \cc{C}$ and $\ff{D}\in \cc{D}$, $\alpha_{\ff{D}}$ and $\beta_{\ff{C}}$ are equivalences by \ref{equivalencias pointwise}.

Let's check first that $\ff{F}$ is pseudo-essentially surjective on objects: Given $\ff{D}\in \cc{D}$, $\ff{F}\ff{G}\ff{D}$ is equivalent to $\ff{D}$ via $\alpha_{\ff{D}}$.

Let's check now that $\ff{F}$ is pseudo-fully-faithful: To do that, we need to prove that for each $\ff{C},\ \ff{C}'\in \cc{C}$, $\ff{F}:\cc{C}(\ff{C},\ff{C}')\mr{} \cc{D}(\ff{F}\ff{C},\ff{F}\ff{C}')$ is an equivalence of categories. Recall that this is equivalent to prove that this morphisms are essentially surjective on objects and full and faithful in the 1-dimensional sense \cite{ML}. 

So let $\ff{C},\ \ff{C}'\in \cc{C}$. To check that $\ff{F}:\cc{C}(\ff{C},\ff{C}')\mr{} \cc{D}(\ff{F}\ff{C},\ff{F}\ff{C}')$ is full and faithful, we need to prove that $\ff{F}$ induces a bijection between the set of 2-cells of $\cc{C}$ between two fixed morphisms $\ff{C} \mrpair{\ff{f}}{\ff{g}} \ff{C}'$ and the set of 2-cells between $\ff{F}\ff{f}$ and $\ff{F}\ff{g}$. We are going to see first that this induced function is injective, so suppose that we have \mbox{$\ff{C} \cellpairrd{\ff{f}}{\theta}{\eta}{\ff{g}}\ff{C}'\in \cc{C}$} such that $\ff{F}\theta=\ff{F}\eta$. Then $\ff{G}\ff{F}\theta=\ff{G}\ff{F}\eta$ and so, since $\beta$ is pseudo-natural, we have the following equality:

$$ %A238
\vcenter{\xymatrix@C=-0pc{ \f \dcell{\theta} && \beta_{\ff{C}} \deq \\
\g && \beta_{\ff{C}} }}
\vcenter{\xymatrix@C=-0pc{ \quad = \quad }}
\vcenter{\xymatrix@C=-0pc{ \f \dl & \dc{\beta_{\f}} & \dr \beta_{\ff{C}} \\
\beta_{{\ff{C}}'} \deq && \G\F\f \dcellbymedio{\G\F\theta} \\
\beta_{{\ff{C}}'} \dl & \dc{\beta_{\g}\inv} & \dr \G\F\g \\
\g && \beta_{\ff{C}} }}
\vcenter{\xymatrix@C=-0pc{ \quad = \quad }}
\vcenter{\xymatrix@C=-0pc{ \f \dl & \dc{\beta_{\f}} & \dr \beta_{\ff{C}} \\
\beta_{{\ff{C}}'} \deq && \G\F\f \dcellbymedio{\G\F\eta} \\
\beta_{{\ff{C}}'} \dl & \dc{\beta_{\g}\inv} & \dr \G\F\g \\
\g && \beta_{\ff{C}} }}
\vcenter{\xymatrix@C=-0pc{ \quad = \quad }}
\vcenter{\xymatrix@C=-0pc{ \f \dcell{\eta} && \beta_{\ff{C}} \deq \\
\g && \beta_{\ff{C}} }}$$

Then, since $\beta_{\ff{C}}$ is an equivalence, we have that $\theta=\eta$.

In the same way, one can prove that the corresponding function induced by $\ff{G}$ is also injective. This is going to be useful to prove surjectivity. So, let $\ff{F}\ff{C}\cellrd{\ff{F}\ff{f}}{\rho}{\ff{F}\ff{g}} \ff{F}\ff{C}'$. Consider the following 2-cell $\mu$:

$$ %A239 
\mu = \; \vcenter{\xymatrix@C=-0pc{  && \f \opdosuno{=} \\
\f \deq &&& id_{\C} \op{\cong} \\
\f \dl & \dc{\beta_{\f}} & \dr \beta_{\ff{C}} && \overline{\beta_{\ff{C}}} \deq \\
\beta_{{\ff{C}}'} \deq && \G\F\f \dcell{\G\rho} && \overline{\beta_{\ff{C}}} \deq \\
\beta_{{\ff{C}}'} \dl & \dc{\beta_{\g}\inv} & \dr \G\F\g && \overline{\beta_{\ff{C}}} \deq \\
\g \deq && \beta_{\ff{C}} && \overline{\beta_{\ff{C}}} \\
\g  &&& id_{\C} \cl{\cong} \\
&& \g \cldosuno{=} }}$$

\noindent where $\overline{\beta_{\ff{C}}}$ denotes a quasi-inverse for $\beta_{\C}$.

Then, since $\beta$ is pseudo-natural, we have the following equality:

$$ %A240
\vcenter{\xymatrix@C=-0pc{ \beta_{{\ff{C}}'} \deq && \G\F\f \dcellbymedio{\G\F\mu} \\
\beta_{{\ff{C}}'} && \G\F\g }}
\vcenter{\xymatrix@C=-0pc{ \quad = \quad }}
\vcenter{\xymatrix@C=-0pc{ \beta_{{\ff{C}}'} \dl & \dc{\beta_{\f}\inv} & \dr \G\F\f \\
\f \dcell{\mu} && \beta_{\ff{C}} \deq \\
\g \dl & \dc{\beta_{\g}} & \dr \beta_{\ff{C}} \\
\beta_{{\ff{C}}'} && \G\F\g }}
\vcenter{\xymatrix@C=-0pc{ \quad = \quad }}
\vcenter{\xymatrix@C=-0pc{ \beta_{{\ff{C}}'} \deq && \G\F\f \dcellb{\G\rho} \\
\beta_{{\ff{C}}'} && \G\F\g}}$$

And so, since $\beta_{\ff{C}'}$ is an equivalence, $\ff{G}\ff{F}\mu=\ff{G}\rho$. This implies that $\ff{F}\mu=\rho$ because of the injectivity of $\ff{G}$ that we have mentioned before.

Finally, to check that $\ff{F}:\cc{C}(\ff{C},\ff{C}')\mr{} \cc{D}(\ff{F}\ff{C},\ff{F}\ff{C}')$ is essentially surjective on objects, let $\ff{F}\ff{C}\mr{\ff{f}}\ff{F}\ff{C}'\in \cc{D}$. Consider $\ff{g}=\beta_{\ff{C}'}\ff{G}\ff{f}\overline{\beta_{\ff{C}}}:\ff{C}\mr{}\ff{C}' \in \cc{C}$. Then, since $\beta$ is pseudo-natural, we have an invertible 2-cell $\ff{G}\ff{F}\ff{g}\Mr{}\overline{\beta_{\ff{C}'}}\ff{g}\beta_{\ff{C}} \Mr{} \ff{G}\ff{f}$. This, plus the fact that $\ff{G}:\cc{D}(\ff{F}\ff{C},\ff{F}\ff{C}')\mr{} \cc{C}(\ff{G}\ff{F}\ff{C},\ff{G}\ff{F}\ff{C}')$ is full and faithful (this can be seen as we saw the equivalent assertion for $\ff{F}$), yields that there is an invertible 2-cell $\ff{F}\ff{g}\Mr{}\ff{f}$ which concludes the proof.

$\Leftarrow)$ Given $\ff{D}\in \cc{D}$, since $\ff{F}$ is pseudo-essentially surjective on objects, there exist $\ff{G}\ff{D}\in \cc{D}$ and an equivalence $\ff{F}\ff{G}\ff{D}\Mr{\alpha_{\ff{D}}}\ff{D}\in \cc{D}$.

Given $\ff{D}\mr{\ff{f}}\ff{D}'\in \cc{D}$, consider $\ff{F}\ff{G}\ff{D}\mr{\alpha_{\ff{D}}} \ff{D} \mr{\ff{f}} \ff{D}' \mr{\overline{\alpha_{\ff{D}'}}} \ff{F}\ff{G}\ff{D}'$. Then, since $\ff{F}$ is pseudo-fully-faithful, there exist $\ff{G}\ff{D} \mr{\ff{G}\ff{f}} \ff{G}\ff{D}'\in \cc{C}$ and an invertible 2-cell \mbox{$\ff{F}\ff{G}\ff{f}\Mr{\tilde{\alpha_{\ff{f}}}} \overline{\alpha_{\ff{D}'}} \ff{f} \alpha_{\ff{D}}$}. 

Given $\ff{D} \cellrd{\ff{f}}{\theta}{\ff{g}} \ff{D}' \in \cc{D}$, consider $ %A232
\vcenter{\xymatrix@C=-0pc@R=1pc{ && \F\G\f  \opdosdos{\tilde{\alpha}_{\f}} \\
\overline{\alpha_{\ff{D}'}} \deq && \f \dcell{\theta} && \alpha_{\ff{D}} \deq \\
\overline{\alpha_{\ff{D}'}} && \g && \alpha_{\ff{D}} \\
&& \cldosdos{\tilde{\alpha}_{\g}\inv} \F\G\g }}
$. Then, since $\ff{F}$ is pseudo-fully-faithful, there exists a unique 2-cell $\ff{G}\ff{D} \cellrd{\ff{G}\ff{f}}{\ff{G}\theta}{\ff{G}\ff{g}} \ff{G}\ff{D}' \in \cc{C}$ such that $$ %A233
\vcenter{\xymatrix@C=-0pc{ \F\G\f \dcellbymedio{\F\G\theta} \\ \F\G\g  }}
\vcenter{\xymatrix@C=-0pc{ \quad = \quad }}
\vcenter{\xymatrix@C=-0pc{ && \F\G\f  \opdosdos{\tilde{\alpha}_{\f}} \\
\overline{\alpha_{\ff{D}'}} \deq && \f \dcell{\theta} && \alpha_{\ff{D}} \deq \\
\overline{\alpha_{\ff{D}'}} && \g && \alpha_{\ff{D}} \\
&& \cldosdos{\tilde{\alpha}_{\g}\inv} \F\G\g  }}$$

To construct $id_{\ff{G}\ff{D}} \mr{\alpha^{\G}_{\ff{D}}} \ff{G}id_{\ff{D}}$, consider the following invertible 2-cell $\ff{F}\ff{G}\ff{D}\cellrd{\ff{F}\ff{G}id_{\ff{D}}}{\mu}{\ff{F}id_{\ff{G}\ff{D}}} \ff{F}\ff{G}\ff{D}$:

$$\mu = \;
%A236
\vcenter{\xymatrix@C=-0pc{ && \F\G id_{\D} \opdosdos{\tilde{\alpha}_{id_{\D}}} \\
\overline{\alpha_{\ff{D}}} && id_{\D} && \alpha_{\D} \\
&& id_{\F\G\D} \cldosdos{=} \dcellbymedio{\alpha^{\F}_{\G\D}} \\
&& \F id_{\G\D} }}$$

Then, since $\ff{F}$ is pseudo-fully-faithful, there exists a unique invertible 2-cell $\ff{G}\ff{D} \cellrd{\ff{G}id_{\ff{D}}}{\widetilde{\ff{G}_{\ff{D}}}}{id_{\ff{G}\ff{D}}} \ff{G}\ff{D}$ such that $\ff{F}\widetilde{\ff{G}_{\ff{D}}}=\mu$. Take $\alpha^{\G}_{\ff{D}}=\widetilde{\ff{G}_{\ff{D}}}^{-1}$.

Given $\ff{D} \mr{\ff{f}} \ff{D}' \mr{\ff{g}} \ff{D}''\in \cc{D}$, consider the following invertible 2-cell $\ff{F}\ff{G}\ff{D}\cellrd{\ff{F}(\ff{G}\ff{g}\ff{G}\ff{f})}{\eta}{\ff{F}\ff{G}\ff{g}\ff{f}} \ff{F}\ff{G}\ff{D}''$:

$$ %A237
\eta = \; \vcenter{\xymatrix@C=-0pc{ && && & \F(\G \g \G \f) \optrestres{\alpha^{\F}_{\G \f, \G \g}} \\
&& \F\G \g \opdosdos{\tilde{\alpha}_{\g}} && && && \F\G \f \opdosdos{\tilde{\alpha}_{\g}} \\
\overline{\alpha_{\ff{D}''}} \ardr && \g && \alpha_{\D'} & \dc{\cong} & \overline{\alpha_{\ff{D}'}} && \f && \alpha_{\D} \ardl \\
& \overline{\alpha_{\ff{D}''}} \ardrrrr &&& \g & \dc{\tilde{\alpha}_{\g\f}\inv} & \f &&& \alpha_{\D} \ardllll \\
&&&&& \F\G\g\f}}$$

Then, since $\ff{F}$ is pseudo-fully-faithful, there exists a unique invertible 2-cell $\ff{G}\ff{D} \cellrdb{\ff{G}\ff{g}}{\alpha^{\G}_{\ff{f},\ff{g}}}{\ff{G}\ff{g}\ff{f}} \ff{G}\ff{D}''$ such that $\ff{F}\alpha^{\G}_{\ff{f},\ff{g}}=\eta$. 

It can be checked that $\ff{G}$ defined by this data is a pseudo-functor.

Define $\alpha_{\ff{f}}= %A234
\vcenter{\xymatrix@C=-0.3pc@R=1pc{ && \f \opunodos{= \quad} && && \alpha_{\D} \deq \\
& id_{\D'} \op{\cong} &&& \f \deq && \alpha_{\D} \deq \\
\alpha_{\D'} \deq && \overline{\alpha_{\ff{D}'}} && \f && \alpha_{\D} \\
\alpha_{\D'} && && \F\G\f \cldosdos{\tilde{\alpha}_{\f}\inv} }}
$. It can be checked that $\alpha$ is a pseudo-natural transformation.

It only remains to define $\beta$: For $\ff{C}\in \cc{C}$, consider the equivalence $\ff{F}\ff{G}\ff{F}\ff{C}\mr{\alpha_{\ff{F}\ff{C}}}\ff{F}\ff{C}$. Then, since $\ff{F}$ is pseudo-fully-faithful, there exist an equivalence $\ff{G}\ff{F}\ff{C}\mr{\beta_{\ff{C}}} \ff{C} \in \cc{C}$ and an invertible 2-cell $\ff{F}\beta_{\ff{C}}\Mr{\gamma_{\ff{C}}} \alpha_{\ff{F}\ff{C}}$. For $\ff{C}\mr{\ff{h}}\ff{C}'\in \cc{C}$, consider the following 2-cell $\ff{F}\ff{G}\ff{F}\ff{C}\cellrd{\ff{F}(\ff{h}\beta_{\ff{C}})}{\rho}{\ff{F}(\beta_{\ff{C}'}\ff{G}\ff{F}\ff{h})} \ff{F}\ff{C}'$

$$\rho \; =  \quad
%A235
\vcenter{\xymatrix@C=-0pc{ &&&& \F(\h \beta_{\C}) \opdosdos{\alpha^{\F}_{\beta_{\C},\h}{}\inv} \\
&& \F\h \deq &&&& \F\beta_{\C} \dcell{\gamma_{\C}} \\
&& \F\h \opunodos{= \quad} &&&& \alpha_{\F \C} \deq \\
& id_{\F\C'} \op{\cong} &&& \F\h \deq && \alpha_{\F \C} \deq \\
\alpha_{\F \C'} \deq && \overline{\alpha_{\F \C'}} && \F\h && \alpha_{\F \C} \\
\alpha_{\F \C'} \dcellbymedio{\gamma_{\C'}\inv} && && \F\G\F\h \deq \cldosdos{\tilde{\alpha}_{\h}{}\inv} \\
\F \beta_{\C'} &&&& \F\G\F\h \\
&& \F(\beta_{\C'}\G\F\h) \cldosdos{\alpha_{\G\F\h,\beta_{\C'}}^{\F}} }}
$$

Then, since $\ff{F}$ is pseudo-fully-faithful, there exists a unique invertible 2-cell \mbox{$\ff{h}\beta_{\ff{C}} \Mr{\beta_{\ff{h}}} \beta_{\ff{C}'}\ff{G}\ff{F}\ff{h}$} such that $\ff{F}\beta_{\ff{h}}=\rho$.

It can be checked that $\beta$ is a pseudo-natural transformation.

The remaining assertion follows immediately from the proof.
\end{proof}

\begin{remark}
 The previous proof can be easily adapted to the case of \ref{eqsiif&f}. \cqd 
\end{remark}

\vspace{2ex}

\begin{remark}[] \cite[I,4.2.]{GRAY} \label{evaluation}
Evaluation determines a \emph{quasifunctor}
\mbox{$\cc{H}om_p(\cc{C},\cc{D}) \times \cc{C} \mr{ev} \cc{D}$} (in the sense of \cite[I,4.1.]{GRAY}), in particular, fixing a variable, it is a 2-functor in the other).
 In the strict case $\cc{H}om$, evaluation is actually a \mbox{2-bifunctor.} In the case of $p\cc{H}om_p(\cc{C},\cc{D})$, it is a pseudo-functor in each variable. \cqd
\end{remark}

\begin{remark}[]\cite[I,4.2]{GRAY} \label{Homisbifunctor}
Given 2-functors
$\cc{C}' \mr{\ff{H}_0} \cc{C}$ and $\cc{D} \mr{\ff{H}_1} \cc{D}'$, and $\ff{F}\cellrd{\theta}{\rho}{\eta}\ff{G}$  in $\cc{H}om_\epsilon(\cc{C},\cc{D})(\ff{F},\ff{G})$, the definition
$\cc{H}om_\epsilon(\ff{H}_0, \ff{H}_1)(\ff{F}\cellrd{\theta}{\rho}{\eta}\ff{G}) = \ff{H}_1\ff{F}\ff{H}_0\cellrd{\ff{H}_1\theta\ff{H}_0}{\ff{H}_1\rho\ff{H}_0}{\ff{H}_1\eta\ff{H}_0}\ff{H}_1\ff{G}\ff{H}_0$ determines a functor
\mbox{$\cc{H}om_\epsilon(\cc{C}, \cc{D})(\ff{F}, \ff{G}) \mr{}
\cc{H}om_\epsilon(\cc{C}', \cc{D}')(\ff{H}_1\ff{F}\ff{H}_0, \ff{H}_1\ff{G}\ff{H}_0)$}, and this assignation is bifunctorial in the variable $(\cc{C}, \cc{D})$ (here $\cc{H}om_\epsilon$ denotes either $\cc{H}om$ or $\cc{H}om_p$). Both constructions $\cc{H}om$ and $\cc{H}om_p$ \mbox{determine} a bifunctor
\mbox{$2\hbox{-}\cc{CAT}^{op} \times 2\hbox{-}\cc{CAT} \mr{} 2\hbox{-}\cc{CAT}$.} The same assertion holds for pseudo-functors (see \cite[I,4.20]{GRAY}).
\end{remark}

If $\cc{C}$ and $\cc{D}$ are 2-categories, the product 2-category
$\cc{C} \times \cc{D}$ is constructed in the usual way, and this together with the
\mbox{2-category } $\cc{H}om(\cc{C},\cc{D})$ determine a
symmetric cartesian closed structure as follows (see \cite[chapter 2]{K1} or \cite[I,2.3.]{GRAY}):

\begin{proposition}  \label{cartesianclosed}  The usual definitions determine an isomorphism of
\mbox{2-categories} :

$$
\cc{H}om(\cc{C},\,\cc{H}om(\cc{D},\,\cc{A})) \mr{\cong}
\cc{H}om(\cc{C} \times \cc{D},\,\cc{A}).
$$
Composing with the symmetry
$\cc{C} \times \cc{D} \mr{\cong} \cc{D} \times \cc{C}$ yields an isomorphism:
$$
\cc{H}om(\cc{C},\,\cc{H}om(\cc{D},\,\cc{A})) \mr{\cong}
\cc{H}om(\cc{D},\,\cc{H}om(\cc{C},\,\cc{A})).
$$

\vspace{-4ex}

\cqd
\end{proposition}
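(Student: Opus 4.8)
The plan is to exhibit the isomorphism explicitly as the familiar currying/uncurrying correspondence that witnesses the cartesian closed structure of $2\hbox{-}\cc{CAT}$ (cf. \cite{K1}, \cite{GRAY}), and then to check that it is strictly invertible. Since everything in sight is strict (2-functors, 2-natural transformations and modifications, with no coherence cells to carry around), the whole argument reduces to a bookkeeping of equalities that follow from 2-functoriality, 2-naturality, the modification axiom, and the interchange law in $\cc{A}$.

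First I would define the comparison 2-functor $\Phi\colon \cc{H}om(\cc{C},\cc{H}om(\cc{D},\cc{A})) \mr{} \cc{H}om(\cc{C}\times\cc{D},\cc{A})$ at each level. On objects, given a 2-functor $\ff{F}$, set $\widehat{\ff{F}}(\ff{C},\ff{D}) = \ff{F}(\ff{C})(\ff{D})$; on an arrow $(\ff{f},\ff{g})\colon (\ff{C},\ff{D})\to(\ff{C}',\ff{D}')$ put $\widehat{\ff{F}}(\ff{f},\ff{g}) = \ff{F}(\ff{C}')(\ff{g})\circ \ff{F}(\ff{f})_{\ff{D}}$, which equals $\ff{F}(\ff{f})_{\ff{D}'}\circ \ff{F}(\ff{C})(\ff{g})$ precisely by the $1$-dimensional naturality square of the 2-natural transformation $\ff{F}(\ff{f})$ evaluated at $\ff{g}$; on a 2-cell $(\alpha,\beta)$ put $\widehat{\ff{F}}(\alpha,\beta) = \ff{F}(\ff{C}')(\beta)\,\ff{F}(\alpha)_{\ff{D}}$, the horizontal composite in $\cc{A}$ of the component of the modification $\ff{F}(\alpha)$ with $\ff{F}(\ff{C}')(\beta)$. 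On $1$-cells of the outer $\cc{H}om$, a 2-natural transformation $\theta\colon \ff{F}\Mr{}\ff{F}'$ has components $\theta_{\ff{C}}$ that are themselves 2-natural transformations $\ff{F}(\ff{C})\Mr{}\ff{F}'(\ff{C})$, and I set $\widehat{\theta}_{(\ff{C},\ff{D})} = (\theta_{\ff{C}})_{\ff{D}}$; on $2$-cells (modifications) $\rho$, I set $\widehat{\rho}_{(\ff{C},\ff{D})} = (\rho_{\ff{C}})_{\ff{D}}$.

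Then I would verify, in turn: that $\widehat{\ff{F}}$ is a genuine 2-functor (the interchange law together with the 2-functoriality of $\ff{F}$ and of each $\ff{F}(\ff{C})$, plus the modification axiom for $\ff{F}(\alpha)$, give preservation of identities and of vertical and horizontal composition); that $\widehat{\theta}$ is 2-natural; and that $\widehat{\rho}$ is a modification. The inverse $\Psi$ is uncurrying: $\check{\ff{G}}(\ff{C}) = \ff{G}(\ff{C},-)$, with $\check{\ff{G}}(\ff{f})$ the 2-natural transformation whose component at $\ff{D}$ is $\ff{G}(\ff{f},id_{\ff{D}})$, and analogously on 2-cells and on the higher data. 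One checks that $\Phi$ and $\Psi$ are mutually inverse bijections on objects, $1$-cells and $2$-cells and that $\Phi$ preserves all compositions and identities; being bijective and 2-functorial, it is an isomorphism of 2-categories.

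I expect the main obstacle to be the verification that $\widehat{\ff{F}}$ preserves horizontal composition of arrows and of 2-cells in $\cc{C}\times\cc{D}$: an arrow $(\ff{f},\ff{g})$ of the product must be decomposed as $(\ff{f},id)(id,\ff{g})$, and well-definedness together with functoriality of the assignment hinge on the 2-naturality of $\ff{F}(\ff{f})$ (to commute the component $\ff{F}(\ff{f})_{(-)}$ past $\ff{F}(\ff{C}')(\ff{g})$) combined with the interchange law in $\cc{A}$. The same interchange, together with the modification axioms, is what makes $\widehat{\theta}$ 2-natural with respect to a general product arrow $(\ff{f},\ff{g})$, splitting its naturality into the $\ff{g}$-part (2-naturality of $\theta_{\ff{C}}$ inside $\cc{H}om(\cc{D},\cc{A})$) and the $\ff{f}$-part (naturality of $\theta$ evaluated at $\ff{f}$). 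These checks are routine but are the only places where real structure is used. Finally, the second isomorphism is obtained by precomposing with the symmetry 2-isomorphism $\cc{C}\times\cc{D}\mr{\cong}\cc{D}\times\cc{C}$ and applying the 2-functor $\cc{H}om(-,\cc{A})$, using the bifunctoriality of $\cc{H}om$ recorded in Remark \ref{Homisbifunctor}.
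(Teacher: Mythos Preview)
Your proposal is correct and is exactly the standard currying/uncurrying argument; the paper itself gives no proof of this proposition, merely citing \cite[chapter 2]{K1} and \cite[I,2.3.]{GRAY} and marking the statement with a \cqd. What you have written is precisely the content behind those references, so there is nothing to compare: you have supplied the omitted verification.
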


%\vspace{1ex}

%We use the following notation:

\begin{sinnadastandard} {\bf Notation.}
Let $\cc{C}$ be a 2-category, $\ff{C}\in \cc{C}$ and $\ff{D}\scellrd{\ff{f}}{\alpha}{\ff{g}}\ff{E}\in \cc{C}$.

\noindent \begin{itemize}
  \item[-] $\ff{f}_*$:  $\;\cc{C}(\ff{C},\ff{D})\mr{\ff{f}_*} \cc{C}(\ff{C},\ff{E})$, $\ff{f}_*(\ff{h} \mr{\beta} \ff{h}') = (\ff{f}\ff{h} \mr{\ff{f}\beta} \ff{f}\ff{h}')$.
						
  \item[-] $\ff{f}^*$: $\;\cc{C}(\ff{E},\ff{C})\mr{\ff{f}^*} \cc{C}(\ff{D},\ff{C})$, $\ff{f}^*(\ff{h} \mr{\beta} \ff{h}') = (\ff{h}\ff{f} \mr{\beta \ff{f}}\ff{h}'\ff{f})$.

  \item[-] $\alpha_*$: $\;\ff{f}_*\Mr{\alpha_*} \ff{g}_*$,
                   $(\alpha_*)_\ff{h}=\alpha \ff{h}$.

  \item[-] $\alpha^*$: $\;\ff{f}^*\Mr{\alpha^*} \ff{g}^*$
                   $(\alpha^*)_\ff{h}=\ff{h}\alpha $.

  \item[-] $\cc{C} \mr{\cc{C}(\ff{C},-)} \cc{C}at$:
$\cc{C}(\ff{C},-)(\ff{D}\scellrd{\ff{f}}{\alpha}{\ff{g}}\ff{E})  \;=\;  (\cc{C}(\ff{C},\ff{D})\scellrd{\ff{f}_*}{\alpha_*}{\ff{g}_*}\cc{C}(\ff{C},\ff{E}))$.

  \item[-] $\cc{C}^{op} \mr{\cc{C}(-,\ff{C})} \cc{C}at$:
$\cc{C}(-,\ff{C})(\ff{D}\scellrd{\ff{f}}{\alpha}{\ff{g}}\ff{E})  \;=\;  (\cc{C}(\ff{D},\ff{C})\scellrd{\ff{f}^*}{\alpha^*}{\ff{g}^*}\cc{C}(\ff{E},\ff{C}))$.
						
   \item[-] We will also denote by $\ff{f}^*$ the 2-natural transformation from $\cc{C}(\ff{E},-)$ to $\cc{C}(\ff{D},-)$ defined by $(\ff{f}^*)_\ff{C}=\ff{f}^*$.

    \item[-] We will also denote by $\ff{f}_*$ the 2-natural transformation from $\cc{C}(-,\ff{D})$ to $\cc{C}(-,\ff{E})$ defined by $(\ff{f}_*)_\ff{C}=\ff{f}_*$.

    \item[-] We will also denote by $\alpha^*$ the modification from $\ff{f}^*$ to $\ff{g}^*$ defined by $(\alpha^*)_\ff{C}=\alpha^*$.

    \item[-] We will also denote by $\alpha_*$ the modification from $\ff{f}_*$ to $\ff{g}_*$ defined by $(\alpha_*)_\ff{C}=\alpha_*$.
 \cqd 
       \end{itemize}
\end{sinnadastandard} 
 
\begin{sinnadaitalica}{\bf Yoneda 2-functors.}  \label{Y2functor}
Given a locally small 2-category $\cc{C}$, the \emph{Yoneda \mbox{2-functors}} are the following (note that each one is the other for the dual 2-category):

\vspace{1ex}

a. $\cc{C} \mr{\ff{y}^{(-)}} \cc{H}om(\cc{C},\cc{C}at)^{op}$,
$\ff{y}^{\ff{C}} = \cc{C}(\ff{C},-)$,
$\ff{y}^{\ff{f}} = \ff{f}^*$
$\ff{y}^{\alpha} = \alpha^*$.

b. $\cc{C} \mr{\ff{y}_{(-)}} \cc{H}om(\cc{C}^{op},\cc{C}at)$,
$\ff{y}_{\ff{C}} = \cc{C}(-,\ff{C})$,
$\ff{y}_{\ff{f}} = \ff{f}_*$
$\ff{y}_{\alpha} = \alpha_*$.
\end{sinnadaitalica}

 Recall the Yoneda Lemma for enriched categories over $\cc{C}at$. We consider explicitly only the case $a.$ of
\ref{Y2functor}.

\begin{proposition}[\textbf{Yoneda lemma}]\label{2Yoneda}

Given a locally small \mbox{2-category} $\cc{C}$, a 2-functor
$\ff{F}:\cc{C} \mr{} \cc{C}at$ and an object $\ff{C}\in \cc{C}$, there is an isomorphism of categories, natural in $\ff{F}$.
% $$
% \xymatrix@R=.5pc@C=4pc{\cc{H}om(\cc{C},\cc{C}at)(\cc{C}(\ff{C},-),\ff{F}) \comw{X} \ar[r]^<<<<<<<h & \comw{XXXXX} \ff{F}\ff{C} \comw{XXXXX} \\ }
% $$
% 
% \vspace{-5ex}
% 
% $$
% \xymatrix@R=.5pc@C=1pc{\comw{XXXX} \theta \ar[rr]^\rho && \eta \comw{XXXX} \ar@{|->}[rrr] &&& \comw{X} \theta_\ff{C}(id_\ff{C}) \ar[rr]^{(\rho_\ff{C})_{id_\ff{C}}} && \eta_\ff{C}(id_\ff{C})}
% $$

$$\xymatrix@R=.5pc{\cc{H}om(\cc{C},\cc{C}at)(\cc{C}(\ff{C},-),\ff{F}) & \ar[rr]^h && & \ff{F}\ff{C} \\
		      \theta \mr{\rho} \eta & \ar@{|->}[rr] &&& \theta_\ff{C}(id_\ff{C}) \mr{(\rho_\ff{C})_{id_\ff{C}}} \eta_\ff{C}(id_\ff{C})
}$$
\end{proposition}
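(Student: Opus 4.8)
The plan is to construct an explicit inverse $k$ to the evaluation functor $h$ and then to check that $h$ and $k$ are mutually inverse, this being the enriched Yoneda argument specialised to the base $\cc{C}at$. The functor $h$ is already prescribed: it sends a $2$-natural transformation $\theta$ to the object $\theta_\ff{C}(id_\ff{C})\in\ff{F}\ff{C}$, and a modification $\rho\colon\theta\Rightarrow\eta$ to the arrow $(\rho_\ff{C})_{id_\ff{C}}$. For the inverse, given an object $X\in\ff{F}\ff{C}$ I would define a $2$-natural transformation $\theta^X\colon\cc{C}(\ff{C},-)\Rightarrow\ff{F}$ whose component $\theta^X_\ff{D}\colon\cc{C}(\ff{C},\ff{D})\to\ff{F}\ff{D}$ acts by $\theta^X_\ff{D}(\ff{p})=(\ff{F}\ff{p})(X)$ on an arrow $\ff{p}\colon\ff{C}\to\ff{D}$ and by $\theta^X_\ff{D}(\alpha)=(\ff{F}\alpha)_X$ on a $2$-cell $\alpha\colon\ff{p}\Rightarrow\ff{q}$; and given an arrow $f\colon X\to Y$ in $\ff{F}\ff{C}$, I would set the modification $k(f)=\rho^f$ to have components $(\rho^f_\ff{D})_\ff{p}=(\ff{F}\ff{p})(f)$.

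First I would check that $k$ lands where it should. That each $\theta^X_\ff{D}$ is a functor is immediate from $\ff{F}$ being a strict $2$-functor, since functoriality of $\ff{F}$ in the $2$-cell direction gives $\ff{F}(\beta\circ\alpha)=\ff{F}\beta\circ\ff{F}\alpha$ and $\ff{F}(id_\ff{p})=id$, and evaluation at $X$ preserves both. The $2$-naturality of $\theta^X$ (in the sense of \ref{2espseudo}) reduces, via strictness $\ff{F}(\ff{h}\ff{p})=\ff{F}\ff{h}\cdot\ff{F}\ff{p}$ and $\ff{F}(\gamma\ff{p})=\ff{F}\gamma\cdot\ff{F}\ff{p}$, to the definition of whiskering in $\cc{C}at$. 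Likewise each $\rho^f_\ff{D}$ is natural because of the naturality of $\ff{F}\alpha\colon\ff{F}\ff{p}\Rightarrow\ff{F}\ff{q}$ evaluated at $f$, and $\rho^f$ satisfies the modification axiom PM since $(\rho^f_\ff{E})_{\ff{h}\ff{p}}=\ff{F}\ff{h}\bigl((\ff{F}\ff{p})(f)\bigr)$ on the nose. Functoriality of $k$ (preservation of vertical composition and of identity arrows of $\ff{F}\ff{C}$) then follows from functoriality of each $\ff{F}\ff{p}$.

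The core of the argument is the pair of composite identities. For $h\circ k=\mathrm{id}$ I use $\ff{F}(id_\ff{C})=id_{\ff{F}\ff{C}}$: then $h(\theta^X)=\theta^X_\ff{C}(id_\ff{C})=(\ff{F}(id_\ff{C}))(X)=X$, and the same computation works on arrows. For $k\circ h=\mathrm{id}$, write $X=\theta_\ff{C}(id_\ff{C})$; the underlying naturality square of $\theta$ for an arrow $\ff{p}\colon\ff{C}\to\ff{D}$, namely $\theta_\ff{D}\circ\ff{p}_*=\ff{F}\ff{p}\circ\theta_\ff{C}$, evaluated at $id_\ff{C}$ yields $\theta_\ff{D}(\ff{p})=(\ff{F}\ff{p})(X)=\theta^X_\ff{D}(\ff{p})$; the $2$-naturality condition on a $2$-cell $\alpha\colon\ff{p}\Rightarrow\ff{q}$ between arrows $\ff{C}\to\ff{D}$, namely $\ff{F}\alpha\cdot\theta_\ff{C}=\theta_\ff{D}\cdot\alpha_*$, evaluated at $id_\ff{C}$ gives $(\ff{F}\alpha)_X=\theta_\ff{D}\bigl((\alpha_*)_{id_\ff{C}}\bigr)=\theta_\ff{D}(\alpha)$, using $(\alpha_*)_{id_\ff{C}}=\alpha\,id_\ff{C}=\alpha$; and the modification axiom PM for $\rho$ at $\ff{p}$, evaluated at $id_\ff{C}$, gives $(\rho_\ff{D})_\ff{p}=(\ff{F}\ff{p})\bigl((\rho_\ff{C})_{id_\ff{C}}\bigr)$, which is exactly the $\ff{p}$-component of $k(h(\rho))$. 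Hence $h$ and $k$ are mutually inverse functors.

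Finally, for naturality in $\ff{F}$: given $\sigma\colon\ff{F}\Rightarrow\ff{F}'$, the right-hand vertical map is $\sigma_\ff{C}$ and the left-hand one is postcomposition with $\sigma$, and $(\sigma\circ\theta)_\ff{C}(id_\ff{C})=\sigma_\ff{C}\bigl(\theta_\ff{C}(id_\ff{C})\bigr)$ makes the square commute on objects, with the analogous one-line check on morphisms. There is no serious conceptual obstacle; the whole content is organisational bookkeeping. The one point that genuinely uses both layers of the $2$-dimensional structure is the verification of $k\circ h=\mathrm{id}$ on $2$-cells, where one must correctly read the horizontal composite $(\alpha_*)_{id_\ff{C}}$ as $\alpha$ itself; this is precisely where the enriched Yoneda lemma goes beyond its one-categorical shadow, and it is the step I would present most carefully.
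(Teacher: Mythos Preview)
Your proof is correct and follows exactly the same approach as the paper: the paper simply writes down the inverse $\ell$ with the identical formulas $(\ell C)_\ff{D}(\ff{f}\Mr{\alpha}\ff{g})=\ff{F}\ff{f}(C)\mr{(\ff{F}\alpha)_C}\ff{F}\ff{g}(C)$ and $((\ell f)_\ff{D})_\ff{f}=\ff{F}\ff{f}(f)$, leaving all verifications to the reader. Your write-up is thus a fully expanded version of the paper's one-line proof, and your identification of the evaluation $(\alpha_*)_{id_\ff{C}}=\alpha$ as the key $2$-dimensional step is exactly the point where the enriched content lies.
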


\begin{proof}
The application $h$ has an inverse

% $$
% \xymatrix@R=.5pc@C=4pc{\comw{} \ff{F}\ff{C} \comw{X,,} \ar[r]^>>>>>>>>{\ell} & \comw{X} \cc{H}om(\cc{C},\cc{C}at)(\cc{C}(\ff{C},-),\ff{F})}
% $$
% 
% \vspace{-3ex}
% 
% $$
% \xymatrix@R=.5pc@C=1pc{ \ \ar@{|->}[rrr] &&& \comw{XX} \ell C \ar[rr]^{\ell f} && \ell D \comw{XX}}
% $$

$$\xymatrix@R=.5pc{\ff{F}\ff{C} & \ar[rr]^{\ell} &&& \cc{H}om(\cc{C},\cc{C}at)(\cc{C}(\ff{C},-),\ff{F}) \\
C \mr{f}  D & \ar@{|->}[rr] &&& \ell C \mr{\ell f} \ell D }$$

\noindent where $(\ell C)_\ff{D}(\ff{f}\Mr{\alpha}\ff{g})=\ff{F}\ff{f}(C)\mr{(\ff{F}\alpha)_C} \ff{F}\ff{g}(C)$ and $((\ell f)_\ff{D})_\ff{f}=\ff{F}\ff{f}(f)$.
\end{proof}
\begin{corollary} \label{yonedaff}
The Yoneda 2-functors in \ref{Y2functor} are 2-fully-faithful. \cqd
\end{corollary}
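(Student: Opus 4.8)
The plan is to read the result straight off the Yoneda Lemma \ref{2Yoneda}, which crucially asserts an \emph{isomorphism} of categories (not merely an equivalence). By Definition \ref{2f&f} it suffices to treat case $a$ of \ref{Y2functor} and to show that for every pair $\ff{C},\ff{D}\in\cc{C}$ the induced functor
$$\ff{y}^{(-)}\colon \cc{C}(\ff{C},\ff{D}) \mr{} \cc{H}om(\cc{C},\cc{C}at)^{op}(\ff{y}^{\ff{C}},\ff{y}^{\ff{D}})$$
is an isomorphism of categories; case $b$ is then case $a$ applied to $\cc{C}^{op}$, as noted in \ref{Y2functor}. First I would unravel the $op$ in the codomain: since $\ff{y}^{\ff{C}}=\cc{C}(\ff{C},-)$ and $\ff{y}^{\ff{D}}=\cc{C}(\ff{D},-)$, the target hom-category is $\cc{H}om(\cc{C},\cc{C}at)\big(\cc{C}(\ff{D},-),\,\cc{C}(\ff{C},-)\big)$, that is, the category of 2-natural transformations $\cc{C}(\ff{D},-)\Mr{}\cc{C}(\ff{C},-)$ together with their modifications.

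Next I would apply \ref{2Yoneda} with the representable $\ff{F}=\cc{C}(\ff{C},-)$ and with the object $\ff{D}$ playing the role of the $\ff{C}$ in that statement. This yields an isomorphism of categories $h$ from exactly this hom-category onto $\ff{F}\ff{D}=\cc{C}(\ff{C},\ff{D})$, whose inverse $\ell$ is written out explicitly in the proof of \ref{2Yoneda}. The heart of the argument is the single observation that $\ell$ coincides with the restriction of $\ff{y}^{(-)}$ to $\cc{C}(\ff{C},\ff{D})$. Once that identification is established, the conclusion is automatic: $\ell$ is an isomorphism of categories, being the inverse of $h$, and hence so is $\ff{y}^{(-)}$ on hom-categories, which is precisely 2-full-faithfulness.

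What then remains is a verification by comparison of the explicit formulas, which I expect to be routine bookkeeping rather than a genuine obstacle. On objects: for $\ff{h}\in\cc{C}(\ff{C},\ff{D})$ the transformation $\ell\ff{h}$ has component at $\ff{E}$ sending $(\ff{f}\Mr{\alpha}\ff{g})$ to $\ff{F}\ff{f}(\ff{h})\mr{(\ff{F}\alpha)_\ff{h}}\ff{F}\ff{g}(\ff{h})$; since $\ff{F}\ff{f}=\ff{f}_*$ and $\ff{F}\alpha=\alpha_*$, this equals $(\ff{f}\ff{h}\mr{\alpha\ff{h}}\ff{g}\ff{h})$, which is exactly $(\ff{h}^*)_\ff{E}=(\ff{y}^{\ff{h}})_\ff{E}$. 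On 2-cells: for $\ff{h}\Mr{\beta}\ff{h}'$ the modification $\ell\beta$ has components $((\ell\beta)_\ff{E})_\ff{f}=\ff{F}\ff{f}(\beta)=\ff{f}\beta$, matching $((\beta^*)_\ff{E})_\ff{f}=\ff{f}\beta=\ff{y}^{\beta}$. Thus $\ff{y}^{(-)}$ and $\ell$ agree on all objects and morphisms of $\cc{C}(\ff{C},\ff{D})$. The only delicate points are keeping the $op$ in the codomain straight and reconciling the variance of $\ff{f}^*$ and $\alpha^*$ with the direction of $\ell$; apart from that, the proof is an immediate consequence of the Yoneda Lemma.
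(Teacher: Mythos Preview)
Your argument is correct and is exactly the intended one: the paper treats the corollary as immediate from Proposition \ref{2Yoneda} (it carries only a \cqd), and what you have written simply unpacks that immediacy by identifying the inverse $\ell$ of the Yoneda isomorphism with the action of $\ff{y}^{(-)}$ on hom-categories.
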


Beyond the theory of $\cc{C}at$-enriched categories, the lemma also holds for pseudo-functors and pseudo-natural transformations in the following way:

\begin{proposition}[\textbf{Pseudo-Yoneda lemma}]\label{pseudoYoneda}

Given a locally small \mbox{2-category} $\cc{C}$, a pseudo-functor (in particular, a 2-functor)
$\ff{F}:\cc{C} \mr{} \cc{C}at$ and an object $\ff{C}\in \cc{C}$, there is an  equivalence of categories, natural in $\ff{F}$.
% $$
% \xymatrix@R=.5pc@C=4pc
%     {
%       p\cc{H}om_p(\cc{C},\cc{C}at)(\cc{C}(\ff{C},-),\ff{F}) \comw{X}
%                                                        \ar[r]^<<<<<<<{\tilde{h}}
%     & \comw{XXXXX} \ff{F}\ff{C} \comw{XXXXX}
%     \\
%     }
% $$
% 
% \vspace{-5ex}
% 
% $$
% \xymatrix@R=.5pc@C=1pc
%    {
%        \comw{XXX} \theta \ar[rr]^{\rho}
%     && \eta \comw{XXXXX} \ar@{|->}[rrr]
%    &&& \comw{X} \theta_\ff{C}(id_\ff{C}) \ar[rr]^{(\rho_\ff{C})_{id_\ff{C}}}
%     && \eta_{\ff{C}}(id_\ff{C})
%    }
% $$

$$\xymatrix@R=.5pc{p\cc{H}om_p(\cc{C},\cc{C}at)(\cc{C}(\ff{C},-),\ff{F}) & \ar[rr]^{\tilde{h}} && & \ff{F}\ff{C} \\
		      \theta \mr{\rho} \eta & \ar@{|->}[rr] &&& \theta_\ff{C}(id_\ff{C}) \mr{(\rho_\ff{C})_{id_\ff{C}}} \eta_\ff{C}(id_\ff{C})
}$$

\noindent Furthermore, the quasi-inverse $\tilde{\ell}$ is a section of $\tilde{h}$, $\tilde{h}\, \tilde{\ell} = id$.

\end{proposition}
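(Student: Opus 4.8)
The plan is to imitate the proof of the strict Yoneda Lemma (Proposition \ref{2Yoneda}), upgrading to invertible $2$-cells exactly those identities that held automatically in the $\cc{C}at$-enriched case because $\ff{F}$ was a genuine $2$-functor. First I would define $\tilde{h}$ exactly as $h$, by evaluation at $id_{\ff{C}}$: on a modification $\rho:\theta\Mr{}\eta$ it returns $(\rho_{\ff{C}})_{id_{\ff{C}}}$, and functoriality is immediate. The real content is the construction of a quasi-inverse $\tilde{\ell}$.

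For $C\in\ff{F}\ff{C}$ I would build a pseudo-natural transformation $\tilde{\ell}C:\cc{C}(\ff{C},-)\Mr{}\ff{F}$ whose component functors are those of the strict case, $(\tilde{\ell}C)_{\ff{D}}(\ff{f}\Mr{\alpha}\ff{g})=\big(\ff{F}\ff{f}(C)\mr{(\ff{F}\alpha)_C}\ff{F}\ff{g}(C)\big)$; these are functors because $\ff{F}_{\ff{C},\ff{D}}$ is one and evaluation at $C$ is functorial. The genuinely new datum, absent in the strict proof, is the family of pseudo-naturality $2$-cells: for $\ff{u}:\ff{D}\to\ff{D}'$ the invertible $2$-cell $(\tilde{\ell}C)_{\ff{u}}$ has $\ff{f}$-component the structural isomorphism $(\alpha^{\ff{F}}_{\ff{f},\ff{u}})_C:\ff{F}\ff{u}(\ff{F}\ff{f}(C))\to\ff{F}(\ff{u}\ff{f})(C)$ of the pseudo-functor $\ff{F}$. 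On a morphism $f:C\to D$ of $\ff{F}\ff{C}$ I would define the modification $\tilde{\ell}f:\tilde{\ell}C\Mr{}\tilde{\ell}D$ by $((\tilde{\ell}f)_{\ff{D}})_{\ff{f}}=\ff{F}\ff{f}(f)$.

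Next I would verify the axioms. Since $\cc{C}(\ff{C},-)$ is a strict $2$-functor, its own structural cells are identities, so the three pseudo-naturality axioms for $\tilde{\ell}C$ collapse to statements about $\ff{F}$ alone: PN2 is exactly the naturality of $\alpha^{\ff{F}}_{\ff{f},\ff{u}}$ in its arguments evaluated at $C$, while PN0 and PN1 are, respectively, the unit and associativity coherence equations in the definition of the pseudo-functor $\ff{F}$, read off at the object $C$. Similarly the modification axiom PM for $\tilde{\ell}f$ reduces to naturality of the components $\alpha^{\ff{F}}_{\ff{f},\ff{u}}$ in the variable $C$. Functoriality of $\tilde{\ell}$ and naturality of $\tilde{h},\tilde{\ell}$ in $\ff{F}$ are then routine. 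To see that $\tilde{\ell}$ is a quasi-inverse I would exhibit an invertible modification $\tilde{\ell}\,\tilde{h}\cong id$: for a pseudo-natural $\theta$ its $(\ff{D},\ff{f})$-component is the component $(\theta_{\ff{f}})_{id_{\ff{C}}}$ of the pseudo-naturality isomorphism of $\theta$ evaluated at $id_{\ff{C}}$ (using that $\cc{C}(\ff{C},-)$ is strict, so $\ff{f}\,id_{\ff{C}}=\ff{f}$), precisely mirroring the strict inverse but now with genuine invertible $2$-cells instead of equalities.

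The one point strictly harder than the enriched case is the section identity $\tilde{h}\,\tilde{\ell}=id$. Evaluating the naive $\tilde{\ell}$ gives $\tilde{h}(\tilde{\ell}C)=(\tilde{\ell}C)_{\ff{C}}(id_{\ff{C}})=\ff{F}(id_{\ff{C}})(C)$, which in the pseudo-functor setting is only \emph{isomorphic} to $C$, via the invertible unit cell $(\alpha^{\ff{F}}_{\ff{C}})_C$ (in the strict case $\ff{F}(id_{\ff{C}})=id_{\ff{F}\ff{C}}$ and this is an equality, which is exactly why the strict $h$ and $\ell$ were mutually inverse on the nose). I expect this to be the main obstacle: to obtain an honest section rather than merely a quasi-inverse, I would transport $\tilde{\ell}C$ along the invertible structural cell $\alpha^{\ff{F}}_{\ff{C}}$, adjusting the component at $id_{\ff{C}}$ so that it equals $C$ exactly and correcting the affected pseudo-naturality cells accordingly; since $\alpha^{\ff{F}}_{\ff{C}}$ is invertible this produces a pseudo-natural transformation isomorphic to the naive one, so nothing proved above is disturbed, and now $\tilde{h}\,\tilde{\ell}=id$ holds strictly while $\tilde{\ell}\,\tilde{h}\cong id$ still yields the equivalence.
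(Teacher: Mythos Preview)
Your approach is essentially the same as the paper's, which simply says that $\tilde h$ and $\tilde\ell$ are defined as in the strict case and refers the reader to the nLab treatment for bicategories, noting that the unit and counit become isomorphisms because $\ff F$ is pseudo and the transformations are pseudo-natural. Your explicit unpacking of the pseudo-naturality cells for $\tilde\ell C$ in terms of $\alpha^{\ff F}_{\ff f,\ff u}$ and your identification of the section issue (that the naive definition only gives $\tilde h\,\tilde\ell(C)=\ff F(id_{\ff C})(C)\cong C$) are more careful than the paper, which asserts $\tilde h\,\tilde\ell=id$ without comment; your transport fix along $\alpha^{\ff F}_{\ff C}$ is a legitimate way to secure the strict section.
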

\begin{proof}
$\tilde{h}$ and $\tilde{\ell}$ are defined as in \ref{2Yoneda}, but now $\tilde{\ell}$ is only a section quasi-inverse of $\tilde{h}$.
The details can be checked by the reader. One can find a guide in \cite{nlab} for the case of lax functors and bi-categories. We refer to the arguing and the notation there: In our case, the unit $\eta$ is an isomorphism because $\ff{F}$ is a pseudo-functor, and the counit $\epsilon$ is an isomorphism because $\alpha$ is pseudo-natural and the unitor $r$ is the equality.
\end{proof}

% \begin{remark}
% In case $\ff{F}$ is a 2-functor, one has an equivalence of categories 
% % $$
% % \xymatrix@R=.5pc@C=4pc
% %     {
% %       \cc{H}om_p(\cc{C},\cc{C}at)(\cc{C}(\ff{C},-),\ff{F}) \comw{X}
% %                                                        \ar[r]^<<<<<<<{\tilde{h}}
% %     & \comw{XXXXX} \ff{F}\ff{C} \comw{XXXXX}
% %     \\
% %     }
% % $$
% % 
% % \vspace{-5ex}
% % 
% % $$
% % \xymatrix@R=.5pc@C=1pc
% %    {
% %        \comw{XXX} \theta \ar[rr]^{\rho}
% %     && \eta \comw{XXXXX} \ar@{|->}[rrr]
% %    &&& \comw{X} \theta_\ff{C}(id_\ff{C}) \ar[rr]^{(\rho_\ff{C})_{id_\ff{C}}}
% %     && \eta_{\ff{C}}(id_\ff{C})
% %    }
% % $$ 
% 
% $$\xymatrix@R=.5pc{\cc{H}om_p(\cc{C},\cc{C}at)(\cc{C}(\ff{C},-),\ff{F}) & \ar[rr]^{\tilde{h}} && & \ff{F}\ff{C} \\
% 		      \theta \mr{\rho} \eta & \ar@{|->}[rr] &&& \theta_\ff{C}(id_\ff{C}) \mr{(\rho_\ff{C})_{id_\ff{C}}} \eta_\ff{C}(id_\ff{C})
% }$$ \cqd
% \end{remark}

\begin{corollary} \label{repflexible}
For any locally small 2-category $\cc{C}$, and $\ff{C} \in \cc{C}$,  the inclusion
$\cc{H}om(\cc{C},\cc{C}at)(\cc{C}(\ff{C},-),\ff{F}) \mr{i} \cc{H}om_p(\cc{C},\cc{C}at)(\cc{C}(\ff{C},-),\ff{F})$ has a retraction
$\alpha$, natural in $\ff{F}$, $\alpha \, i = id$,
$i \, \alpha \cong id$, which determines an equivalence of categories.
\end{corollary}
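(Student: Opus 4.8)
The plan is to deduce the statement directly from the two Yoneda lemmas \ref{2Yoneda} and \ref{pseudoYoneda}, exploiting the fact that the source $\cc{C}(\ff{C},-)$ and the target $\ff{F}$ are genuine 2-functors. The first thing I would record is that, because both are 2-functors, a pseudo-natural transformation between them, together with the modifications between such, is the very same datum whether one works in $\cc{H}om_p(\cc{C},\cc{C}at)$ or in $p\cc{H}om_p(\cc{C},\cc{C}at)$; that is, $\cc{H}om_p(\cc{C},\cc{C}at)(\cc{C}(\ff{C},-),\ff{F}) = p\cc{H}om_p(\cc{C},\cc{C}at)(\cc{C}(\ff{C},-),\ff{F})$ as categories (cf. \ref{ccHom}). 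Consequently the pseudo-Yoneda lemma \ref{pseudoYoneda} furnishes an equivalence $\cc{H}om_p(\cc{C},\cc{C}at)(\cc{C}(\ff{C},-),\ff{F}) \mr{\tilde{h}} \ff{F}\ff{C}$, natural in $\ff{F}$, together with a quasi-inverse $\tilde{\ell}$ that is a strict section, $\tilde{h}\,\tilde{\ell}=id$, while the strict Yoneda lemma \ref{2Yoneda} gives the isomorphism $h$ with two-sided inverse $\ell$ on the smaller category $\cc{H}om(\cc{C},\cc{C}at)(\cc{C}(\ff{C},-),\ff{F})$.

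Next I would simply set $\alpha := \ell\,\tilde{h}$. To see that $\alpha$ is a retraction, i.e.\ $\alpha\,i = id$, the key elementary remark is that $\tilde{h}\,i = h$: by the very formulas defining $h$ and $\tilde{h}$, both functors send a transformation $\theta$ to the object $\theta_{\ff{C}}(id_{\ff{C}})$ and a modification $\rho$ to $(\rho_{\ff{C}})_{id_{\ff{C}}}$, and the inclusion $i$ alters neither datum. Hence $\alpha\,i = \ell\,\tilde{h}\,i = \ell\,h = id$, since $\ell$ is the inverse of $h$.

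It then remains to produce the isomorphism $i\,\alpha \cong id$, and this is the only slightly delicate point. I would argue that $i\,\ell$ and $\tilde{\ell}$ are both sections of the equivalence $\tilde{h}$: indeed $\tilde{h}(i\,\ell) = (\tilde{h}\,i)\ell = h\,\ell = id = \tilde{h}\,\tilde{\ell}$. Because $\tilde{h}$ is an equivalence with $\tilde{\ell}\,\tilde{h}\cong id$, any section $s$ satisfies $s \cong \tilde{\ell}\,\tilde{h}\,s = \tilde{\ell}$; applying this to $s = i\,\ell$ gives $i\,\ell \cong \tilde{\ell}$. Composing on the right with $\tilde{h}$ yields $i\,\alpha = i\,\ell\,\tilde{h} \cong \tilde{\ell}\,\tilde{h}\cong id$, which together with $\alpha\,i = id$ shows that $i$ and $\alpha$ are mutually quasi-inverse, so $\alpha$ determines an equivalence of categories.

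Finally, naturality of $\alpha$ in $\ff{F}$ is inherited from the naturality in $\ff{F}$ of $\ell$ and $\tilde{h}$, so nothing beyond bookkeeping is needed there. The main obstacle is conceptual rather than computational: one must recognize that the pseudo-Yoneda equivalence is the right tool, and that comparing its section $\tilde{\ell}$ against the strict inverse $\ell$ is exactly what pins the retraction down strictly ($\alpha\,i = id$) while yielding only $i\,\alpha \cong id$. The genuine content, namely an equivalence of categories of the kind demanded by flexibility, is already packaged inside \ref{pseudoYoneda}; the corollary is essentially an organizational consequence of it.
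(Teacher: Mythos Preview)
Your proof is correct and follows essentially the same approach as the paper: you define $\alpha = \ell\,\tilde{h}$ and derive the retraction equations from the two Yoneda lemmas. The paper's proof is slightly more economical because it notes the \emph{equality} $i = \tilde{\ell}\,h$ (equivalently $i\,\ell = \tilde{\ell}$, since $\tilde{\ell}$ and $\ell$ are given by the same formula), from which both $\alpha\,i = \ell\,\tilde{h}\,\tilde{\ell}\,h = \ell\,h = id$ and $i\,\alpha = \tilde{\ell}\,h\,\ell\,\tilde{h} = \tilde{\ell}\,\tilde{h} \cong id$ follow in one line; you instead arrive at the weaker $i\,\ell \cong \tilde{\ell}$ via the ``two sections of an equivalence are isomorphic'' argument, which works but is a small detour.
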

\begin{proof}
Note that $i = \tilde{\ell} \, h$, then define $\alpha = \ell \, \tilde{h}$.
\end{proof}

\begin{corollary}
The Yoneda 2-functors in \ref{Y2functor} can be considered as \mbox{2-functors} landing in the $\cc{H}om_p$ 2-functor 2-categories. In this case, they are pseudo-fully-faithful.% (meaning that they determine equivalences and not isomorphisms between the hom categories). 
\cqd
\end{corollary}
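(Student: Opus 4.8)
The plan is to realise each Yoneda 2-functor of \ref{Y2functor} as a 2-functor into the $p$-version of its target and then read off pseudo-fully-faithfulness directly from the representable hom-categories. There is an evident inclusion 2-functor $\cc{H}om(\cc{C},\cc{C}at) \mr{} \cc{H}om_p(\cc{C},\cc{C}at)$ which is the identity on objects (2-functors), regards a 2-natural transformation as the pseudo-natural transformation whose $\theta_{\ff{f}}$ are all identities, and is the identity on modifications. Post-composing $\ff{y}^{(-)}$ from \ref{Y2functor}$a$ with the opposite of this inclusion produces a 2-functor $\cc{C} \mr{} \cc{H}om_p(\cc{C},\cc{C}at)^{op}$, and dually for $\ff{y}_{(-)}$ in part $b$; being composites of 2-functors, these are again 2-functors. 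I would treat case $a$ in detail, since case $b$ is the same assertion applied to $\cc{C}^{op}$.

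By \ref{pf&f} it then suffices to show that, for all $\ff{C},\ff{D}\in\cc{C}$, the induced functor on hom-categories
$$\cc{C}(\ff{C},\ff{D}) \mr{} \cc{H}om_p(\cc{C},\cc{C}at)^{op}(\cc{C}(\ff{C},-),\cc{C}(\ff{D},-)) = \cc{H}om_p(\cc{C},\cc{C}at)(\cc{C}(\ff{D},-),\cc{C}(\ff{C},-))$$
is an equivalence of categories. Unwinding the definitions in \ref{Y2functor}, this functor sends $\ff{f}$ to $\ff{f}^*$ and $\alpha$ to $\alpha^*$; since each $\ff{f}^*$ is a genuine 2-natural transformation, it factors on the nose as the strict Yoneda hom-action $\cc{C}(\ff{C},\ff{D}) \mr{} \cc{H}om(\cc{C},\cc{C}at)(\cc{C}(\ff{D},-),\cc{C}(\ff{C},-))$ followed by the inclusion $i$ of \ref{repflexible}.

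The key step is to observe that the first factor is an \emph{isomorphism} of categories by \ref{yonedaff} (2-fully-faithfulness of the strict Yoneda 2-functor), while the second factor is exactly the inclusion $i$ of \ref{repflexible} taken with $\ff{F}=\cc{C}(\ff{C},-)$ and the representable $\cc{C}(\ff{D},-)$ in the source slot, hence an equivalence of categories by that corollary (which itself rests on the pseudo-Yoneda lemma \ref{pseudoYoneda}). A composite of an isomorphism and an equivalence is an equivalence, so the hom-action is an equivalence and the 2-functor is pseudo-fully-faithful. I expect the only point requiring genuine care to be bookkeeping rather than computation: one must track the variances so that passing to $\cc{H}om(\cc{C},\cc{C}at)^{op}$ places the source representable $\cc{C}(\ff{D},-)$ into the \emph{first} argument of $\cc{H}om_p$, which is precisely the slot in which \ref{repflexible} requires a representable, and one must confirm that the factorization through $i$ holds strictly because $\ff{f}^*$ is already 2-natural. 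Once the slots are aligned the conclusion is immediate.
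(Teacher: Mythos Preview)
Your proposal is correct and is precisely the argument the paper has in mind: the corollary is stated without proof immediately after \ref{repflexible}, and your factorization through the strict Yoneda isomorphism of \ref{yonedaff} composed with the inclusion equivalence of \ref{repflexible} (applied with $\ff{F}=\cc{C}(\ff{C},-)$ and the representable $\cc{C}(\ff{D},-)$ in the first slot) is exactly the intended deduction. Your variance bookkeeping is also right.
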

\subsection{Weak limits and colimits}\label{weak limits and colimits}

\vspace{1ex}

By \emph{weak} we understand any of the several ways universal properties can be relaxed in 2-categories. Note that pseudo-limits and pseudo-colimits (already considered in \cite{G3}) require isomorphisms, and have many advantages over bi-limits and bi-colimits, which only require equivalences. Their universal properties are both stronger and more convenient to use. On the other hand, in many situations bi-limits and bi-colimits are unavoidable and seems to be the right concept to consider.
The defining universal properties characterize bi-limits up to equivalence and pseudo-limits up to isomorphism.

\vspace{1ex}

\begin{sinnadastandard}{\bf Notation.} We consider pseudo-limits $\Lim{i\in \cc{I}}{\ff{F}i}$, and bi-limits
$\biLim{i\in \cc{I}}{\ff{F}i}$, of contravariant pseudo-functors, and their dual concepts, pseudo-colimits $\coLim{i\in \cc{I}}{\ff{F}i}$, and bi-colimits $\bicoLim{i\in \cc{I}}{\ff{F}i}$, of covariant \mbox{pseudo-functors.}
\end{sinnadastandard}

\begin{sinnadaitalica}{\bf Pseudo-cone.} \label{pseudo-cone}
Let $\ff{F}:\cc{I}^{op} \mr{} \cc{A}$ be a pseudo-functor and $\ff{A}$ an object of $\cc{A}$.
A \emph{pseudo-cone} for $\ff{F}$ with vertex $\ff{A}$ is a pseudo-natural transformation from the 2-functor which is constant at $\ff{A}$ to $\ff{F}$, i.e. it consists in a family of morphisms of $\cc{A}$, $\left\{\ff{A} \mr{\theta_i}\ff{F}i \right\}_{i\in \cc{I}}$
and a family of invertible 2-cells of $\cc{A}$,
\mbox{$\left\{\ff{F}u \theta_{j} \Mr{\theta_u} \theta_i \right\}_{i\mr{u} j \in \cc{I}}$}
  satisfying the following equations:

\begin{itemize}
 \item[PC0.] For each $i \in \cc{I}$,
$\; \theta_{id_{i}}\circ \alpha^{\F}_i \theta_i = id_{\theta_i}$, $\;$ i.e.

$$\vcenter{\xymatrix@C=-0pc{ id_{\F i} \dcell{\alpha^{\F}_i} && \theta_i \deq \\
\F id_i && \theta_i \\
& \theta_i \cl{\theta_{id_i}} }}
\vcenter{\xymatrix@C=-0pc{ \quad = \quad }}
\vcenter{\xymatrix@C=-0pc{ id_{\F i} && \theta_i \\ & \theta_i \cl{=} }}$$
% $$\vcenter{\xymatrix{\ff{F}i \ar@/^2ex/[r]^{id_{\ff{F}i}} \ar@{}[r]|{\Downarrow \ff{F}_i} \ar@/_2ex/[r]_{\ff{F}id_i} 
%                                    &
%                                    \ff{F}i \ar[r]^{\theta_i}
%                                    &
%                                    \ff{A}
%                                    }}
% \vcenter{\xymatrix@C=-.4pc{\quad = \quad \quad }}
%     \vcenter{
%     \xymatrix{
%       \ff{F}i \ar[dr]_{\ff{F}id_i} \ar[rr]^{\theta_i}
%       & \ar@{}[d]|{\Downarrow \theta_{id_i}} 
%       & \ff{A} 
%       \\
%       & \ff{F}i \ar[ru]_{\theta_i}
%       }} \ i.e.
% $$
% $$\vcenter{
%       \xymatrix@C=-.4pc{
% 	& \theta_{i} \dcellopb{\theta_{id_i}}
% 	\\ 
% 	\theta_{i}
% 	&&
% 	\ff{F}id_i }}
% \vcenter{\xymatrix@C=-.4pc{\quad \quad = \quad \quad  }						}
% \vcenter{\xymatrix@C=-.4pc{ 
% 	    &
% 	    \theta_{i} \dcellop{=} 
% 	    \\ 
% 	    \theta_{i} \deq
% 	    &&
% 	    id_{\ff{F}i} \dcell{\ff{F}_i} 
% 	    \\
% 	    \theta_i 
% 	    && \ff{F}id_i
% 	    }}$$

\item[PC1.] For each $i \mr{u} j \mr{v} k \in \cc{I}$, $\; \theta_u \circ \ff{F}u\theta_v= \theta_{vu} \circ \alpha^{\F}_{u,v} \theta_k$, $\;$ i.e.

%A151

% $$ \xymatrix{ & \F_k \ar[d]^{\F_v} \ar@/^6ex/[dd]^{\F_{vu}}_{\stackrel{\F_{u,v}}{\Rightarrow}} \ar@{}[dl]|{\Downarrow \theta_v} \\
% \C \ar@/^3ex/[ru]^{\theta_k} \ar@/_3ex/[dr]_{\theta_i} \ar@{}[dr]|{\Downarrow \theta_v} \ar[r]^{\theta_j} & \F_j \ar[d]^{\F_u} \\
% & \F_i}$$

$$\vcenter{\xymatrix@C=-0pc{ \F u \deq && \F v && \theta_k \\
\F u &&& \theta_j \cl{\theta_v} \\
& \theta_i \clunodos{\theta_u} }}
\vcenter{\xymatrix@C=-0pc{ \quad = \quad }}
\vcenter{\xymatrix@C=-0pc{ \F u && \F v && \theta_k \deq \\
& \F (vu) \cl{\alpha^{\F}_{u,v}} &&&  \theta_k \\
&& \theta_i\clunodos{\theta_{vu}} \\
}}$$

% $\hspace{3ex} \theta_k \ff{F}_{u,v}\theta_v\ff{F}_u \circ \theta_u = \theta_{vu}$
% 
% 
% $$\vcenter{
%   \xymatrix@R=.5pc
%     {&
%     \ff{F}i \ar@/_5ex/[dddd]_{\ff{F}vu} \ar[dd]_{\ff{F}u} \ar@/^4ex/[ddrr]^{\theta_i} 
%     && 
%     \\
%     & &
%     \Downarrow \theta_u 
%     & 
%     \\
%     & \ff{F}j \ar@{}[l]|{\stackrel{\ff{F}_{u,v}}{\Leftarrow}} \ar[dd]_{\ff{F}v} \ar[rr]^{\theta_{j}} 
%     && 
%     \ff{C} 
%     \\
%     & &
%     \Downarrow \theta_v 
%     &  
%     \\
%     & \ff{F}k \ar@/_4ex/[uurr]_{\theta_{k}}  
%     &&  
%     }}
%   \vcenter{
%     \xymatrix@C=-.4pc{\quad \quad = \quad \quad}} 
%     \vcenter{
%     \xymatrix{
%       \ff{F}i \ar[dd]_{\ff{F}vu}\ar@/^2ex/[dr]^{\theta_i}
%       & 
%       \\
%       \ar@{}[r]|{\Downarrow \theta_{vu}} 
%       & 
%       \ff{C} 
%       \\
%       \ff{F}k \ar@/_2ex/[ur]_{\theta_{k}} 
%       && 
%       }} \ i.e.
% $$
% 
% $$\vcenter{
%       \xymatrix@C=-.4pc{
% 	&&&& \theta_{i} \dcellopb{\theta_u}
% 	\\
% 	&& 
% 	\theta_{j} \dcellopb{\theta_v}
% 	&&&&
% 	\ff{F}u \did
% 	\\ 
% 	\theta_{k} \did
% 	&&&& 
% 	\ff{F}v \dl 
% 	& \dc{\ff{F}_{u,v}}
% 	& \ff{F}u \dr
% 	\\
% 	\theta_k 
% 	&&&&& \ff{F}vu 
% 	& }}
% \vcenter{\xymatrix@C=-.4pc{\quad \quad = \quad \quad  }						}
% \vcenter{\xymatrix@C=-.4pc{ 
% 	    &
% 	    \theta_{i} \dcellop{\theta_{vu}} 
% 	    \\ 
% 	    \theta_{k}
% 	    && 
% 	    \ff{F}vu 
% 	    }}$$
	    
\item[PC2.] For each $i\cellrd{u}{\alpha}{v} j \in \cc{I}$, $\; \theta_u=\theta_v \circ \ff{F}\alpha \theta_j$, $\;$ i.e.

%A152

$$\vcenter{\xymatrix@C=-0pc{ \F u && \theta_j \\ & \theta_i \cl{\theta_u} }}
\vcenter{\xymatrix@C=-0pc{ \quad = \quad }}
\vcenter{\xymatrix@C=-0pc{ \F u \dcell{\F \alpha} && \theta_j \deq \\
\F v && \theta_j \\
& \theta_i \cl{\theta_v} }}$$

% $\hspace{5ex} \theta_v = \theta_j\ff{F}\alpha \circ \theta_u$
% 
% 
% $$\vcenter{
%     \xymatrix@R=.5pc{
% 	\ff{F}i \ar[dd]_{\ff{F}v} \ar@/^4ex/[ddrr]^{\theta_i}
% 	&& 
% 	\\
% 	& 
% 	\Downarrow \theta_v 
% 	& 
% 	\\
% 	\ff{F}j \ar[rr]_{\theta_{j}} 
% 	&& 
% 	\ff{C} 
% 	}}
% \vcenter{\xymatrix@C=-.4pc{\quad \quad = \quad \quad}} 
% \vcenter{
%     \xymatrix@R=.5pc{
% 	\ff{F}i \ar@<-2ex>[dd]_{\ff{F}v} \ar@<2ex>[dd]^{\ff{F}u} \ar@/^4ex/[ddrr]^{\theta_i} 
% 	&& 
% 	\\
% 	\Ml{\ff{F}\alpha} 
%         & 
%         \Downarrow \theta_u 
%         & 
%         \\
%         \ff{F}j \ar[rr]_{\theta_{j}}  
%         && 
%         \ff{C} 
%         \\
%             }}\ \ \ \ \ \ \ i.e.
% 	  $$
% $$\vcenter{
%     \xymatrix@C=-.4pc{
% 	  &
% 	  \theta_{i} \dcellopb{\theta_v} 
% 	  \\
% 	  \theta_{j}
% 	  & &
% 	  \ff{F}v
% 	  }}
% 	  \vcenter{\xymatrix@C=-.4pc{\quad \quad = \quad \quad  }						}
% \vcenter{
%       \xymatrix@C=-.4pc{ 
% 	  &
% 	  \theta_i \dcellop{\theta_{u}} 
% 	  \\
% 	  \theta_{j} \did 
% 	  && 
% 	  \ff{F}u \dcellb{\ff{F}\alpha} 
% 	  \\ 
%           \theta_{j}      
%           && 
%           \ff{F}v 
% 	  }}$$
\end{itemize}
	  
A \emph{morphism of pseudo-cones} between $\theta$ and $\eta$ with the same vertex is a modification, i.e. a family of 2-cells of $\cc{A}$, $\left\{\theta_{i}\Mr{\rho_{i}} \eta_{i}\right\}_{i\in \cc{I}}$ satisfying the following equation:

\vspace{1ex}

\begin{itemize}
 \item[PCM.] For each $i\stackrel{u}{\rightarrow} j \in \cc{I}$, $\; \rho_i \circ \theta_u=\eta_u \circ \ff{F}u \rho_j$, $\;$ i.e.
%A153

$$\vcenter{\xymatrix@C=-0pc{\F u && \theta_j \\ & \theta_i \cl{\theta_u} \dcell{\rho_i} \\ & \eta_i }}
\vcenter{\xymatrix@C=-0pc{ \quad = \quad }}
\vcenter{\xymatrix@C=-0pc{ \F u \deq && \theta_j \dcell{\rho_j} \\ \F u && \eta_j \\ & \eta_i \cl{\eta_u}  }}$$
\end{itemize}

% $\;\eta_u \circ \rho_i = \rho_j \ff{F}u \circ \theta_u$
% 
% 
% 
% $$\vcenter{
%       \xymatrix@R=.5pc{
% 	  \ff{F}i \ar[dd]_{\ff{F}u} \ar@<1.5ex>@/^5ex/[ddrr]^{\theta_i}_{\Downarrow \rho_i} \ar@<-1.5ex>@/^5ex/[ddrr]_{\eta_i} 
% 	  && 
% 	  \\
% 	   \ar@{}[rr]_{\Downarrow \eta_u}
% 	  & 
% 	  & 
% 	  \\
%           \ff{F}j \ar[rr]_{\eta_{j}}  
%           && 
%           \ff{C} 
%           }}
% \vcenter{\xymatrix@C=-.4pc{\quad \quad = \quad \quad}} 
% \vcenter{
%       \xymatrix@R=.5pc{
% 	  \ff{F}i \ar[dd]_{\ff{F}u} \ar@/^4ex/[rrdd]^{\theta_i} 
% 	  && 
% 	  \\ 
%           \ar@{}[rr]^{\Downarrow \theta_u}
%           &  
%           & 
%           \\
%           \ff{F}j \ar@<-1.5ex>[rr]_{\eta_{j}} \ar@<1.5ex>[rr]^{\theta_{j}} \ar@{}[rr]|{\Downarrow \rho_{j}}  
%           && 
%           \ff{C} 
%           \\
%            }}\ \ \ \ \ \ i.e.
% 	  $$
% 	  $$\vcenter{
%       \xymatrix@C=-.4pc{
% 	  &\theta_{i} \dcellb{\rho_i} 
% 	  \\
% 	  &
% 	  \eta_i \dcellopb{\eta_u}
% 	  \\
% 	  \eta_{j}
% 	  & &
% 	  \ff{F}u
% 	  }}
% \vcenter{\xymatrix@C=-.4pc{\quad \quad = \quad \quad  }						}
% \vcenter{
%       \xymatrix@C=-.4pc{
% 	  &
% 	  \theta_{i} \dcellopb{\theta_{u}}
% 	  \\ 
% 	  \theta_{j} \dcellb{\rho_{j}}
% 	  &&
% 	  \ff{F}u \did
% 	  \\
% 	  \eta_{j} 
% 	  && 
% 	  \ff{F}u
% 	  }}$$
\vspace{1ex}

Pseudo-cones form a category
$\ff{PC}_\cc{A}(\ff{A},\ff{F}) = p\cc{H}om_p(\cc{I}^{op},\cc{A})(\ff{A}, \ff{F})$ furnished with a pseudo-cone $\ff{PC}_\cc{A}(\ff{A},\ff{F}) \mr{} \cc{A}(\ff{A},\, \ff{F}i)$,
%$\{\theta_i\}_{i\in \cc{I}} \mapsto \theta_i$,
 for the pseudo-functor
\mbox{$\cc{I}^{op} \mr{\cc{A}(\ff{A},\, \ff{F}(-))} \cc{CAT}$.}
\end{sinnadaitalica}

As a particular case, we have the notion of pseudo-cone over a 2-functor.

\begin{remark} \label{PCbifunctor} %${ }$
Since $p\cc{H}om_p(\cc{I}^{op},\cc{A})$ is a 2-category, it follows:

\begin{itemize}
 \item[a.] Pseudo-cones determine a 2-bifunctor
$(p\cc{H}om_p(\cc{I}^{op},\cc{A}) \times \cc{A})^{op}
\mr{\ff{PC}_\cc{A}} \cc{CAT}$.
\end{itemize}

From Remark \ref{Homisbifunctor}  it follows in particular:

\begin{itemize}
 \item[b.] A pseudo-functor $\cc{A} \mr{\ff{H}} \cc{B}$ induces a functor between the categories of pseudo-cones
$\ff{PC}_\cc{A}(\ff{F},\ff{A}) \mr{\ff{PC}_\ff{H}} \ff{PC}_\cc{B}(\ff{HF},\ff{HA})$.
\cqd
\end{itemize}
\end{remark}

\begin{sinnadaitalica}{\bf Pseudo-limit and bi-limit.} \label{colimits} $ $
The \emph{pseudo-limit} in $\cc{A}$ of the pseudo-functor \mbox{$\ff{F}:\cc{I}^{op}\mr{}\cc{A}$} is the universal pseudo-cone, denoted \mbox{$\Bigg\{\Lim{i\in \cc{I}}{\ff{F}i}\mr{\pi_i} \ff{F}i\Bigg\}_{i\in \cc{I}}$,} \mbox{$\Bigg\{
\vcenter{\xymatrix@C=-0pc@R=1pc{ \F u && \pi_j \\ & \pi_i \cl{\pi_u} }}
\Bigg\}_{i \mr{u} j \in \cc{I}}$,} in the sense that for each \mbox{$\ff{A}\in \cc{A}$,} post-composition with the $\pi_i$'s is an isomorphism of categories
\begin{equation}\label{isoplim}
\; \cc{A}(\ff{A},\Lim{i\in \cc{I}}{\ff{F}i}) \mr{\pi_*} \ff{PC}_\cc{A}(\ff{A},\ff{F}) 
\end{equation}

Equivalently, there is an isomorphism of categories
\mbox{$\cc{A}(\ff{A},\Lim{i\in \cc{I}}{\ff{F}i}) \mr{\cong} \Lim{i\in \cc{I}}{\cc{A}(\ff{A},\ff{F}i)}$} commuting with pseudo-cones. Remark that there is also an isomorphism of categories
\mbox{$\ff{PC}_\cc{A}(\ff{A},\ff{F}) \mr{\cong} \Lim{i\in \cc{I}}{\cc{A}(\ff{A},\ff{F}i)}$} (note that these isomorphisms are 2-natural in the variable $\A$).

\vspace{1ex}

Requiring $\pi_*$ to be an equivalence (which implies that also the other two isomorphisms above are equivalences) defines the notion of \mbox{\emph{bi-limit}} (note that these equivalences are pseudo-natural in the variable $\A$). Clearly, pseudo-limits are bi-limits.

\vspace{1ex}

We omit the explicit consideration of the dual concepts. \cqd
\end{sinnadaitalica}

As a particular case, we have pseudo-limits and bi-limits (and its dual concepts) of 2-functors.

\begin{remark}\label{isodeplimexplicito}
 As we are going to use the isomorphism \eqref{isoplim} in the following sections (and the equivalence in case of bi-limits), we are going to make the meaning of having them explicit. In the case of pseudo-limits it means that:

\begin{itemize}
 \item[-] Given a pseudo-cone $\left\{\ff{A} \mr{\theta_i} \ff{F}i\right\}_{i\in \cc{I}}$, $\left\{\ff{F}u \theta_j \Mr{\theta_u} \theta_i \right\}_{i\mr{u}j\in \cc{I}}$, there exists a unique morphism $\ff{A} \mr{\ff{f}}\Lim{i\in \cc{I}}{\ff{F}i} \in \cc{A}$ such that $\pi_i \ff{f}=\theta_i \ \forall\  i\in \cc{I}$ and $\pi_u\ff{f}=\theta_u \ \forall \ i\mr{u}j\in \cc{I}$.
 \item[-] And given a morphism of pseudo-cones $\left\{\theta_{i}\Mr{\rho_{i}} \eta_{i}\right\}_{i\in \cc{I}}$, there exists a unique 2-cell $\ff{A} \cellrd{\ff{f}}{\mu}{\ff{g}}\Lim{i\in \cc{I}}{\ff{F}i} \in \cc{A}$ such that $\pi_i \mu=\rho_i \ \forall\  i\in \cc{I}$.
\end{itemize}
 
\noindent In the case of bi-limits it means that:

\begin{itemize}
 \item[-] Given a pseudo-cone $\left\{\ff{A} \mr{\theta_i} \ff{F}i\right\}_{i\in \cc{I}}$, $\left\{\ff{F}u \theta_j \Mr{\theta_u} \theta_i \right\}_{i\mr{u}j\in \cc{I}}$, there exist a morphism $\ff{A} \mr{\ff{f}}\biLim{i\in \cc{I}}{\ff{F}i} \in \cc{A}$ and invertible 2-cells $\Bigg\{\ff{A}\cellrd{\pi_i \ff{f}}{\alpha_i}{\theta_i} \ff{F}i\Bigg\}_{i\in \cc{I}}$ such that $$\vcenter{\xymatrix@C=-0pc
       {
        \ff{F}u 
	& 
	& \pi_j 
	& & \ff{f} \did
	\\
	& \pi_i \cl{\pi_u} 
	& 
	& & \ff{f} 
	\\
	& & \theta_i \clunodos{\alpha_i}
        &
        }}
\vcenter{\xymatrix@C=-.6pc{\quad = \quad}} 
\vcenter{\xymatrix@C=-0pc
       {
        \ff{F}u \did
	&& \pi_j 
	& 
	& \ff{f} 
	\\
	\ff{F}u 
	&&
	& \theta_j \cl{\alpha_j} 
	&
	\\
	&&
        \theta_i \cldosuno{\theta_u}
        &
        }}
        \quad \forall \ i\mr{u}j \in \cc{I}.$$

\item[-]  And given a morphism of pseudo-cones $\left\{\theta_{i}\Mr{\rho_{i}} \eta_{i}\right\}_{i\in \cc{I}}$, there exists a unique 2-cell $\ff{A} \cellrd{\ff{f}}{\mu}{\ff{g}}\biLim{i\in \cc{I}}{\ff{F}i} \in \cc{A}$ such that $\pi_i \mu=\rho_i \ \forall\  i\in \cc{I}$. \qed
\end{itemize}

\end{remark}

% \begin{definition} \label{colimits}
% The \emph{pseudo-colimit} in $\cc{A}$ of the 2-functor $\ff{F}$  is the universal pseudo-cone, denoted \mbox{$\{\ff{F}i\mr{\lambda_i} \coLim{i\in \cc{I}}{\ff{F}i}\}_{i\in \cc{I}}$,} in the sense that \mbox{$\forall\ \ff{A}\in \cc{A}$,} pre-composition with the $\lambda_i$ is an isomorphism of categories
% $
% \; \cc{A}(\coLim{i\in \cc{I}}{\ff{F}i},\ff{A}) \mr{\lambda^*} \ff{PC}_\cc{A}(\ff{F},\ff{A}).
% $
% Equivalently, there is an isomorphism of categories
% \mbox{$\cc{A}(\coLim{i\in \cc{I}}{\ff{F}i},\ff{A}) \mr{\cong} \Lim{i\in \cc{I}^{op}}{\cc{A}(\ff{F}i,\ff{A})}$} commuting with the pseudo-cones. Remark that there is also an isomorphism of categories
% \mbox{$\ff{PC}_\cc{A}(\ff{F},\ff{A}) \mr{\cong} \Lim{i\in \cc{I}^{op}}{\cc{A}(\ff{F}i,\ff{A})}$}
% 
% \vspace{1ex}
% 
% Requiring $\lambda^*$ to be an equivalence (which implies that also the other two isomorphisms above are equivalences) defines the notion of \mbox{\emph{bi-colimit}.} Clearly, pseudo-colimits are bi-colimits.
% 
% \vspace{1ex}
% 
% We omit the explicit consideration of the dual concepts. \cqd
% \end{definition}

\vspace{1ex}

It is well known that, in $\cc{C}at$-enriched theory, strict limits and colimits are performed pointwise (if they exists in the codomain category). Here we establish this fact for pseudo-limits and pseudo-colimits in both strict and pseudo 2-functor 2-categories. Abusing notation we can say that the formula
$(\coLim{i\in \cc{I}}{\ff{F}_i)(\ff{C}}) = \coLim{i\in \cc{I}}{\ff{F}_i(\ff{C}})$ holds in both 2-categories. The verification of this is straightforward but requires some care. We also checked that both pseudo-limits and bi-limits (and its dual concepts) are performed pointwise in the 2-category of pseudo-functors.

\begin{proposition}\label{pointwisebi-limit}
Let $\cc{I}\mr{\ff{F}}\cc{A}$, $i\mapsto \ff{F}_i$ be a pseudo-functor where
$\cc{A}$ is either $\cc{H}om(\cc{C},\cc{D})$ or $\cc{H}om_p(\cc{C},\cc{D})$.
For each $\ff{C} \in \cc{C}$ let $\ff{F}_i\ff{C} \mr{\lambda^\ff{C}_i} \ff{L}\ff{C}$ be a pseudo-colimit pseudo-cone in $\cc{D}$ for the pseudo-functor
$\cc{I} \mr{\ff{F}} \cc{A} \mr{ev(-, \ff{C})} \cc{D}$ (where $ev$ is evaluation, see \ref{evaluation}). Then $\ff{LC}$ is 2-functorial in $\ff{C}$ in such a way that $\lambda^\ff{C}_i$ becomes 2-natural and $\ff{F}_i \mr{\lambda_i} \ff{L}$ is a pseudo-colimit pseudo-cone in
$\cc{A}$ in both cases.
By duality the same assertion holds for pseudo-limits.
\end{proposition}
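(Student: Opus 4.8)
The plan is to build $\ff{L}$ from the pointwise data and then deduce every required property from the \emph{uniqueness} clause of the pointwise universal property. Write $ev_\ff{C}=ev(-,\ff{C})\colon\cc{A}\mr{}\cc{D}$, which is a 2-functor by Remark~\ref{evaluation}, so that $ev_\ff{C}\circ\ff{F}\colon\cc{I}\mr{}\cc{D}$ is precisely the pseudo-functor whose pseudo-colimit cone is the given $\{\ff{F}_i\ff{C}\mr{\lambda^\ff{C}_i}\ff{L}\ff{C}\}_i$. First I would make $\ff{L}$ a 2-functor. For $\ff{C}\mr{\f}\ff{D}$ in $\cc{C}$ the family $\{\ff{F}_i\ff{C}\mr{\lambda^\ff{D}_i\,\ff{F}_i\f}\ff{L}\ff{D}\}_i$ is a pseudo-cocone over $ev_\ff{C}\circ\ff{F}$ with vertex $\ff{L}\ff{D}$; its structural $2$-cells are the pastings of the colimit $2$-cells $\lambda^\ff{D}_u$ with the naturality $2$-cells $(\ff{F}u)_\f$ of the arrows $\ff{F}u$ of $\cc{A}$ (identities when $\cc{A}=\cc{H}om(\cc{C},\cc{D})$, genuine invertible $2$-cells when $\cc{A}=\cc{H}om_p(\cc{C},\cc{D})$), and the cocone axioms PC0--PC2 follow from those for $\lambda^\ff{D}$ and the coherence of $\ff{F}$. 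The dual of the isomorphism \eqref{isoplim} then yields a unique $\ff{L}\ff{C}\mr{\ff{L}\f}\ff{L}\ff{D}$ with $\ff{L}\f\,\lambda^\ff{C}_i=\lambda^\ff{D}_i\,\ff{F}_i\f$; a $2$-cell $\f\Mr{\alpha}\g$ gives a morphism of pseudo-cocones $\{\lambda^\ff{D}_i\,\ff{F}_i\alpha\}_i$, hence a unique $\ff{L}\f\Mr{\ff{L}\alpha}\ff{L}\g$ with $\ff{L}\alpha\,\lambda^\ff{C}_i=\lambda^\ff{D}_i\,\ff{F}_i\alpha$. That $\ff{L}$ strictly preserves identities and both compositions is then immediate: in each equation the two candidate arrows (or $2$-cells) factor one and the same pseudo-cocone, so uniqueness forces them equal.

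By the defining equations $\ff{L}\f\,\lambda^\ff{C}_i=\lambda^\ff{D}_i\,\ff{F}_i\f$ and $\ff{L}\alpha\,\lambda^\ff{C}_i=\lambda^\ff{D}_i\,\ff{F}_i\alpha$, each $\lambda_i\colon\ff{F}_i\Mr{}\ff{L}$ has identity structural $2$-cells, hence is $2$-natural, i.e.\ an arrow of $\cc{A}$, and the $2$-cells $\lambda^\ff{C}_u$ assemble into modifications making $\{\lambda_i\}$ a pseudo-cocone in $\cc{A}$. It remains to prove universality: that for every $\ff{G}\in\cc{A}$ pre-composition with the $\lambda_i$ is an isomorphism from $\cc{A}(\ff{L},\ff{G})$ onto the category of pseudo-cocones from $\ff{F}$ with vertex $\ff{G}$ (the dual of \eqref{isoplim}). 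Given such a pseudo-cocone $\{\ff{F}_i\Mr{\theta_i}\ff{G}\}$, evaluation at each $\ff{C}$ produces, by the dual of Remark~\ref{PCbifunctor}, a pseudo-cocone $\{\ff{F}_i\ff{C}\mr{(\theta_i)_\ff{C}}\ff{G}\ff{C}\}$ in $\cc{D}$, whence a unique $\ff{L}\ff{C}\mr{\psi_\ff{C}}\ff{G}\ff{C}$; I would then check that the $\psi_\ff{C}$ constitute a transformation $\ff{L}\Mr{\psi}\ff{G}$, its structural $2$-cells $\psi_\f$ coming from the two-dimensional clause of the universal property and their coherence being forced by uniqueness. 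The same uniqueness shows $\psi$ is the unique arrow with $\psi\,\lambda_i=\theta_i$, and the analogous argument one dimension up handles morphisms of pseudo-cocones, giving fullness and faithfulness; so the comparison functor is an isomorphism of categories. The pseudo-limit statement then follows by duality, applying the colimit result in $\cc{C}^{op}$ and $\cc{D}^{op}$.

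The step I expect to be the main obstacle is the coherence bookkeeping in the pseudo case $\cc{A}=\cc{H}om_p(\cc{C},\cc{D})$: exhibiting $\psi$ as a genuine pseudo-natural transformation means producing the structural $2$-cells $\psi_\f$ and verifying PN0--PN2, and checking that the modifications built from the $\lambda^\ff{C}_u$ satisfy the modification axiom PM. Each such verification reduces, after pasting, to an equality of two $2$-cells that both factor a common pseudo-cocone, so it is forced by the uniqueness in \eqref{isoplim}; the real labour is arranging the pasted diagrams so that the elevator identity \eqref{basicelevator} applies. This is routine but delicate, and is exactly the ``care'' alluded to just before the statement.
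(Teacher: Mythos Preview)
Your proposal is correct and takes essentially the same approach as the paper: define $\ff{L}$ on arrows and $2$-cells via the pointwise universal property, deduce strict $2$-functoriality and $2$-naturality of the $\lambda_i$ from uniqueness, and then establish the universal property by assembling the pointwise comparison maps and checking they live in $\cc{A}$. The only cosmetic difference is that the paper packages the construction of $\ff{L}\f$ and $\ff{L}\alpha$ in one step via the Yoneda lemma (Corollary~\ref{yonedaff}) applied to the natural-in-$\ff{X}$ map $\cc{D}(\ff{LD},\ff{X})\to\cc{D}(\ff{LC},\ff{X})$, whereas you build them directly from the explicit pseudo-cocones; both routes rest on the same uniqueness clause you identify.
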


%=$

\begin{proof}
Given $\ff{C} \cellrd{\ff{f}}{\alpha}{\ff{g}}  \ff{D}$ in $\cc{C}$, evaluation determines a 2-cell in $p\cc{H}om_p(\cc{I}, \cc{D})$
$
\ff{F}\ff{C} \cellrd{\ff{F}\ff{f}}{\ff{F}\alpha}{\ff{F}\ff{g}}
\ff{F}\ff{D} =
ev(\ff{F}(\,\hbox{-}\,), \ff{C} \cellrd{\ff{f}}{\alpha}{\ff{g}}  \ff{D})$ 
(note that \mbox{$(\ff{FC})_i = \ff{F}_i\ff{C}$}, and similarly for $\ff{f}$, $\ff{g}$ and $\ff{\alpha}$). Then,  for each $\ff{X} \in \cc{D}$, it follows (from
%
%$:\, ev(\ff{F}, \ff{C}) \Mr{} ev(\ff{F}, \ff{D})$
%
\mbox{Remark \ref{PCbifunctor} a.)} that precomposing with this 2-cell determines a 2-cell (clearly 2-natural in the variable $\ff{X}$) in the right leg of the diagram below. Since the rows are isomorphisms, there is a unique 2-cell (also natural in the variable $\ff{X}$) in the left leg which makes the diagram commutative.
$$
\xymatrix@C=9.5ex
       {
        \cc{D}(\ff{LD}, \ff{X})
                     \ar[r]^<<<<<<<<<{(\lambda^\ff{D})^*}_<<<<<<<<{\cong}
                     \ar@<-2ex>[d]^{\;\Rightarrow}
                     \ar@<2ex>[d]
      & \ff{PC}_\cc{D}(\ff{F}\ff{D}, \ff{X})
                  \ar@<-2ex>[d]^{\;\Rightarrow}
                  \ar@<2ex>[d]
      \\
      \cc{D}(\ff{LC}, \ff{X})
                     \ar[r]^<<<<<<<<<{(\lambda^\ff{C})^*}_<<<<<<<<{\cong}
      & \ff{PC}_\cc{D}(\ff{F}\ff{C}, \ff{X})
      }
$$
By the Yoneda lemma (\ref{yonedaff}), the left leg is given by precomposing with a unique 2-cell in $\cc{D}$, that we denote
$\ff{L}\ff{C} \cellrd{\ff{L}\ff{f}}{\ff{L}\alpha}{\ff{L}\ff{g}}
                                                   \ff{L}\ff{D}$. It is clear by uniqueness that this determines a 2-functor
$\cc{C} \mr{\ff{L}} \cc{D}$.

Putting $\ff{X} = \ff{L}\ff{D}$ in the upper left corner and tracing the identity down the diagram yields the following commutative diagram of pseudo-cones in $\cc{D}$:
$$
\xymatrix@C=12ex@R=8ex
       {
        \ff{F}_i\ff{C}
               \ar[r]^<<<<<<<<<{\lambda_i^\ff{C}}
               \ar@<-2ex>[d]_{\ff{F}_i{\ff{f}}}
               \ar@<-2ex>[d]^<<<<<<{\ff{F}_i{\ff{\alpha}}}
                               ^<<<<<<<<<{\;\Rightarrow}
               \ar@<2ex>[d]^{\ff{F}_i{\ff{g}}}
      & \ff{L}\ff{C}
                     \ar@<-2ex>[d]_{\ff{L}{\ff{f}}}
               \ar@<-2ex>[d]^<<<<<<{\ff{L}{\ff{\alpha}}}
                               ^<<<<<<<<<{\;\Rightarrow}
                      \ar@<2ex>[d]^{\ff{L}{\ff{g}}}
     \\
       \ff{F}_i\ff{D}
                     \ar[r]^<<<<<<<<<{\lambda_i^\ff{D}}
      & \ff{L}\ff{D}
      }
$$
This shows that $\ff{L}$ is furnished with a pseudo-cone for $\ff{F}$ and that the $\lambda_i$ are \mbox{2-natural.} It only remains to check the universal property:

Let \mbox{$\cc{C} \mr{\ff{G}} \cc{D}$} be a 2-functor, consider the 2-functor $\cc{A} \mr{ev(-, \ff{C})} \cc{D}$. We have the following diagram, where the right leg is given by \mbox{Remark \ref{PCbifunctor} b.:}
$$
\xymatrix@C=8ex
       {
        \cc{A}(\ff{L}, \ff{G})
                     \ar[r]^<<<<<<<<<{\lambda^*}
                     \ar@<0.5ex>[d]_{ev(-, \ff{C})}
      & \ff{PC}_\cc{A}(\ff{F}, \ff{G})
                  \ar@<-0.5ex>[d]^{\ff{PC}_{ev(-, \ff{C})}}
      \\
      \cc{D}(\ff{LC}, \ff{GC})
                     \ar[r]^<<<<<<<{(\lambda^\ff{C})^*}_<<<<<<{\cong}
      & \ff{PC}_\cc{D}(\ff{F}\ff{C}, \ff{GC})
      }
$$
We prove now that the upper row is an isomorphism. Given
$\ff{F}_i \scellrd{\theta_i}{\rho_i}{\eta_i} \ff{G}$ in
$\ff{PC}_\cc{A}(\ff{F}, \ff{G})$, it follows there exists a unique
$\ff{L}\ff{C} \cellrd{\ff{\tilde{\theta}\ff{C}}}
                     {\ff{\tilde{\rho}\ff{C}}}
                     {\ff{\tilde{\eta}\ff{C}}}
                                                   \ff{G}\ff{C}$
in $\cc{D}(\ff{LC}, \ff{GC})$ such that
$\tilde{\rho}\ff{C}\, \lambda_i^\ff{C} = \rho_i\ff{C}$.
%
%$\ff{F}\ff{C} \mr{\lambda^\ff{C}}
%             \ff{L}\ff{C} \cellrd{\ff{\tilde{\theta}\ff{C}}}
%                                 {\ff{\tilde{\rho}\ff{C}}}
%                                 {\ff{\tilde{\eta}\ff{C}}}
%                                                     \ff{G}\ff{C}$ =
%$\ff{F}\ff{C} \cellrd{\ff{{\theta}\ff{C}}}
%                     {\ff{{\rho}\ff{C}}}
%                     {\ff{{\eta}\ff{C}}}
%                                                   \ff{G}\ff{C}$
%
It is necessary to show that this 2-cell actually lives in $\cc{A}$. This has to be checked for any
$\ff{C} \scellrd{\ff{f}}{\alpha}{\ff{g}}  \ff{D}$ in $\cc{C}$. In both cases it can be done considering the isomorphism
$
\xymatrix@C=6.5ex
       {
        \cc{D}(\ff{LC}, \ff{\ff{G}\ff{D}})
                     \ar[r]^<<<<<<<{(\lambda^\ff{C})^*}_<<<<<<{\cong}
        & \ff{PC}_\cc{D}(\ff{F}\ff{C}, \ff{G}\ff{D}).
      }
$
\end{proof}

\begin{remark}\label{pointwise en phomp}
A similar proof gives the result for $p\cc{H}om_p(\cc{C},\cc{D})$. It also can be checked, by changing the arguments just a little bit that bi-limits and bi-colimits are performed pointwise in $p\cc{H}om_p(\cc{C},\cc{D})$. We leave the details to the reader. \cqd
\end{remark}

\begin{definition}\label{bi-tensor}
 Let $\cc{A}$ be a 2-category, $\ff{C} \in \cc{A}$ and $\ff{E} \in \cc{C}at$. We define the bi-tensor $\ff{E} \tilde{\otimes}_{\cc{A}} \ff{C}$ as the object of $\cc{A}$ such that $\forall$ $\ff{D} \in \cc{A}$, there is an equivalence of categories pseudo-natural in $\D$ 
 $$\cc{C}(\ff{E} \tilde{\otimes}_{\cc{A}} \ff{C}, \ff{D}) \simeq \cc{C}at(\ff{E}, \cc{A}(\ff{C},\ff{D})).$$
 If this equivalences are isomorphisms 2-natural in $\D$, we call it pseudo-tensor and we denote it by $\otimes$ instead of $\tilde{\otimes}$. Pseudo-tensors are in fact the tensors of the 2-category seen as a $\cc{C}at$-enriched category.
 
 We omit to make the dual concept explicit.
\end{definition}
% 
% \begin{remark}
%  In fact, pseudo-tensors are tensors in the sense of categories enriched over $\cc{C}at$, but we choose to call them ``pseudo-tensors'' to be consistent with the terminology of pseudo-limits and bi-limits. 
% \end{remark}

It follows from the definition and the Yoneda lemmas (\ref{2Yoneda} and \ref{pseudoYoneda}):

\begin{proposition}\label{tensor funtorial}
For each category $\E$:
 \begin{enumerate}
  \item $\E \otimes_\cc{A} (-): \cc{A} \mr{} \cc{A}$ is a 2-functor.
  \item $\E \tilde{\otimes}_\cc{A} (-): \cc{A} \mr{} \cc{A}$ is a pseudo-functor. $\hfill \square$
 \end{enumerate}
\end{proposition}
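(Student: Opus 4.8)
The plan is to realise both tensors as objects obtained by transporting a single fixed 2-functor along the Yoneda 2-functor, and then to let the (pseudo-)fullness and faithfulness of Yoneda upgrade the pointwise universal property of Definition \ref{bi-tensor} into functoriality. First I would fix $\E$ and assemble the 2-functor to be represented. The hom-assignment $\cc{A}(-,-)\colon\cc{A}^{op}\times\cc{A}\mr{}\cc{C}at$ is a 2-functor, and composing it with the 2-functor $\cc{C}at(\E,-)\colon\cc{C}at\mr{}\cc{C}at$ gives a 2-functor $\cc{C}at(\E,\cc{A}(-,-))\colon\cc{A}^{op}\times\cc{A}\mr{}\cc{C}at$, contravariant in the first and covariant in the second variable. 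Currying in the second variable by the cartesian closed structure of Proposition \ref{cartesianclosed} yields a 2-functor $\Phi\colon\cc{A}\mr{}\cc{H}om(\cc{A},\cc{C}at)^{op}$ sending $\ff{C}$ to $\cc{C}at(\E,\cc{A}(\ff{C},-))$, which has exactly the same domain and codomain as the Yoneda 2-functor $\ff{y}^{(-)}$ of \ref{Y2functor} (case a.).

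With this in place the universal property reads transparently. For each $\ff{C}$ the defining isomorphism (respectively equivalence) of Definition \ref{bi-tensor}, being pointwise invertible (respectively a pointwise equivalence) and 2-natural (respectively pseudo-natural) in the remaining variable, is precisely an isomorphism $\ff{y}^{\E\otimes_{\cc{A}}\ff{C}}\cong\Phi(\ff{C})$ in $\cc{H}om(\cc{A},\cc{C}at)$ — respectively an equivalence $\ff{y}^{\E\tilde{\otimes}_{\cc{A}}\ff{C}}\simeq\Phi(\ff{C})$ in $\cc{H}om_p(\cc{A},\cc{C}at)$, using Remark \ref{equivalencias pointwise}. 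Writing $T(\ff{C})$ for $\E\otimes_{\cc{A}}\ff{C}$ (resp. $\E\tilde{\otimes}_{\cc{A}}\ff{C}$) and $\psi_{\ff{C}}$ for the comparison, this says $\ff{y}^{T(\ff{C})}\cong\Phi(\ff{C})$ (resp. $\simeq$) via $\psi_{\ff{C}}$.

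For part (1) I would invoke Corollary \ref{yonedaff}, by which $\ff{y}^{(-)}$ is 2-fully-faithful, so $\ff{y}\colon\cc{A}(\ff{C},\ff{C}')\mr{}\cc{H}om(\cc{A},\cc{C}at)^{op}(\ff{y}^{\ff{C}},\ff{y}^{\ff{C}'})$ is an isomorphism of categories. For an arrow $\ff{f}$ (resp. a 2-cell) of $\cc{A}$ I would conjugate $\Phi(\ff{f})$ by the isomorphisms $\psi$ to get a morphism $\psi_{\ff{C}'}^{-1}\,\Phi(\ff{f})\,\psi_{\ff{C}}$ between the representables $\ff{y}^{T(\ff{C})}$ and $\ff{y}^{T(\ff{C}')}$, and define $T(\ff{f})$ as its unique preimage under the displayed isomorphism. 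Since $\ff{y}^{(-)}$ is an isomorphism on hom-categories it reflects every equation, so the strict functoriality of $\Phi$ together with the cancellations $\psi^{-1}\psi=id$ forces $T$ to preserve composites, identities and the 2-cell structure on the nose; hence $T=\E\otimes_{\cc{A}}(-)$ is a 2-functor.

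For part (2) the single change is that $\ff{y}^{(-)}$, now regarded as landing in $\cc{H}om_p(\cc{A},\cc{C}at)^{op}$, is only \emph{pseudo}-fully-faithful (the corollary following Corollary \ref{repflexible}); thus the hom-functors $\ff{y}\colon\cc{A}(\ff{C},\ff{C}')\mr{}\cc{H}om_p(\cc{A},\cc{C}at)^{op}(\ff{y}^{\ff{C}},\ff{y}^{\ff{C}'})$ are equivalences rather than isomorphisms, and each $\psi_{\ff{C}}$ is only an equivalence with a chosen quasi-inverse $\overline{\psi_{\ff{C}}}$. I would now define $T(\ff{f})$ by asking $\ff{y}^{T(\ff{f})}\cong\overline{\psi_{\ff{C}'}}\,\Phi(\ff{f})\,\psi_{\ff{C}}$; since $\psi_{\ff{C}}\,\overline{\psi_{\ff{C}}}$ is only isomorphic to the identity, for composable $\ff{f},\ff{g}$ one obtains canonical invertible comparison 2-cells between $T(\ff{g})T(\ff{f})$ and $T(\ff{g}\ff{f})$, and likewise for identities, which are exactly the structural 2-cells $\alpha^{T}$ making $T=\E\tilde{\otimes}_{\cc{A}}(-)$ a pseudo-functor. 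The hard part, and the only step needing real care, is verifying the pseudo-functor coherence axioms of \ref{pseudo-functor} for these structural 2-cells; this should reduce to the coherence of the equivalences $\psi_{\ff{C}}$ and the triangle identities of the chosen adjoint equivalences, transported back through the equivalences of hom-categories, which create and reflect the required equalities of 2-cells uniquely.
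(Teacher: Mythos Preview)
Your proposal is correct and is exactly the approach the paper has in mind: the paper's proof is the one-line remark ``It follows from the definition and the Yoneda lemmas (\ref{2Yoneda} and \ref{pseudoYoneda})'', and what you have written is a careful unpacking of precisely that argument, transporting the 2-functoriality of $\Phi(\ff{C})=\cc{C}at(\E,\cc{A}(\ff{C},-))$ back through the (pseudo-)fully-faithful Yoneda 2-functor.
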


From \ref{tensor funtorial}, it can be verified the pointwise nature of pseudo-tensors and bi-tensors in the 2-functor and pseudo-functor 2-categories:

\begin{proposition}\label{tensorptoapto}
\comw{A} 
\begin{enumerate}
 \item Let $\cc{A}$ be either $\cc{H}om(\cc{C},\cc{D})$ or $\cc{H}om_p(\cc{C},\cc{D})$, $\F\in \cc{A}$ and $\ff{E} \in \cc{C}at$.  
%Then in $\cc{A}$ bitensors are constructed pointwise, i.e. if 
Then $\E \otimes_\cc{D} \F\X$ is a 2-functor in the variable $\X$ and determines a pseudo-tensor in $\cc{A}$. That is:

$$
 (\E \otimes_{\cc{A}} \F) (\ff{D}) = 
 \E  \otimes_{\cc{D}} \F \ff{D}
$$

\item Let $\cc{A}=p\cc{H}om_p(\cc{C},\cc{D})$, $\F\in \cc{A}$ and $\ff{E} \in \cc{C}at$. Then $\E \tilde{\otimes}_\cc{D} \F\X$ is a pseudo-functor in the variable $\X$ and determines a bi-tensor in $\cc{A}$. That is:

\begin{center}
$ \displaystyle
 (\E \tilde{\otimes}_{\cc{A}} \F) (\ff{D}) = 
\E  \tilde{\otimes}_{\cc{D}} \ff{F} \ff{D}$   
\end{center}
\vspace{-4ex}
\qed
\end{enumerate}
\end{proposition}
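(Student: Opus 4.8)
The plan is to mimic the pointwise argument of Proposition~\ref{pointwisebi-limit}, using Proposition~\ref{tensor funtorial} to obtain the functoriality statements with no extra work. In both parts I would define $\ff{L}$ on objects by $\ff{L}\X := \E \otimes_{\cc{D}} \F\X$ (respectively $\E \tilde{\otimes}_{\cc{D}} \F\X$ in the bi-tensor case). For part (1), $\F$ is a 2-functor and, by the first clause of Proposition~\ref{tensor funtorial}, $\E \otimes_{\cc{D}} (-): \cc{D} \mr{} \cc{D}$ is a 2-functor; hence the composite $\ff{L} = \big(\E \otimes_{\cc{D}} (-)\big) \circ \F$ is a 2-functor $\cc{C} \mr{} \cc{D}$. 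For part (2), $\F$ is a pseudo-functor and $\E \tilde{\otimes}_{\cc{D}} (-)$ is a pseudo-functor by the second clause, so $\ff{L}$ is a pseudo-functor. This already settles the ``functorial in the variable $\X$'' assertions, and it remains only to verify the universal property of Definition~\ref{bi-tensor} for $\ff{L}$ viewed as an object of $\cc{A}$.

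To that end I would fix $\G \in \cc{A}$ and, for each $\ff{C} \in \cc{C}$, form the square
$$\xymatrix@C=9ex{ \cc{A}(\ff{L},\G) \ar[r] \ar[d]_{ev(-,\ff{C})} & \cc{C}at(\E,\cc{A}(\F,\G)) \ar[d] \\ \cc{D}(\ff{L}\ff{C},\G\ff{C}) \ar[r]^{\cong} & \cc{C}at(\E,\cc{D}(\F\ff{C},\G\ff{C})) }$$
whose left leg is evaluation (\ref{evaluation}), whose right leg is $\cc{C}at(\E,-)$ applied to the evaluation functor $\cc{A}(\F,\G) \mr{} \cc{D}(\F\ff{C},\G\ff{C})$, and whose bottom row is the pointwise pseudo-tensor isomorphism that defines $\ff{L}\ff{C}$ (an equivalence in the bi-tensor case). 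With the comparison top row defined so that the square commutes, proving the proposition reduces to showing that this top row is an isomorphism in part (1) and an equivalence in part (2), 2-naturally (resp. pseudo-naturally) in $\G$.

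The core of the argument, and the step I expect to be the main obstacle, is exactly the one flagged in the proof of Proposition~\ref{pointwisebi-limit}: showing that the pointwise data assemble into a genuine arrow of $\cc{A}$. Given a functor $\E \mr{} \cc{A}(\F,\G)$, the bottom isomorphism produces at each $\ff{C}$ a unique morphism $\ff{L}\ff{C} \mr{} \G\ff{C}$ (and, on 2-cells of $\E$, a comparison modification). I would then check that the family $\{\ff{L}\ff{C} \mr{} \G\ff{C}\}$ is 2-natural (resp. pseudo-natural) by testing it against each $\ff{C} \cellrd{\ff{f}}{\alpha}{\ff{g}} \ff{D}$ of $\cc{C}$ and appealing to uniqueness in the pointwise isomorphism $\cc{D}(\ff{L}\ff{C},\G\ff{D}) \cong \cc{C}at(\E,\cc{D}(\F\ff{C},\G\ff{D}))$; compatibility with modifications of $\cc{A}$ is handled the same way. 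The real bookkeeping lies here, since one must transport the structural coherence 2-cells of $\F$, of $\G$ and of the tensor through these isomorphisms.

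Finally, for part (2) I would run the same diagram and the same assembly verbatim, replacing ``isomorphism'' by ``equivalence'' and ``2-natural'' by ``pseudo-natural'' throughout and using the pointwise bi-tensor equivalence, while keeping explicit track of the invertible structural 2-cells. The outcome is an equivalence $\cc{A}(\ff{L},\G) \simeq \cc{C}at(\E,\cc{A}(\F,\G))$ pseudo-natural in $\G$, which is precisely the statement that $\ff{L}$ is the bi-tensor $\E \tilde{\otimes}_{\cc{A}} \F$, with $\ff{L}\ff{D} = \E \tilde{\otimes}_{\cc{D}} \F\ff{D}$ as claimed.
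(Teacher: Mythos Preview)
Your approach is correct and is precisely the expansion the paper has in mind: the paper does not give a proof here at all, it merely remarks ``From \ref{tensor funtorial}, it can be verified the pointwise nature of pseudo-tensors and bi-tensors'' and closes with a \qed, leaving the verification to the reader. Your plan---using Proposition~\ref{tensor funtorial} for the functoriality and then running the pointwise-assembly argument of Proposition~\ref{pointwisebi-limit} with the defining (iso/equivalence) of the tensor in place of the pseudo-cone isomorphism---is exactly how that verification goes.
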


\vspace{1ex}

We make now precise what we do consider as \emph{preservation} properties of a \mbox{pseudo-functor}. We do it in the case of pseudo-colimits, bi-colimits, pseudo-tensors and bi-tensors, but the same clearly applies to dual concepts. 

\begin{definition} \label{preservation}
\comw{A}
\begin{enumerate}
 \item Let
$\cc{I} \mr{\ff{X}} \cc{C} \mr{\ff{H}} \cc{A}$ be any pseudo-functors. We say that $\ff{H}$ \emph{preserves} a pseudo-colimit (resp. bi-colimit) pseudo-cone \mbox{$\ff{X}_i \mr{\lambda_i} \ff{L}$} in
$\cc{C}$, if $\ff{H}\ff{X}_i \mr{\ff{H}\lambda_i} \ff{H}\ff{L}$  is a pseudo-colimit (resp. bi-colimit) pseudo-cone in $\cc{A}$. Equivalently, if the (usual) comparison arrow is an isomorphism (resp. an equivalence) in $\cc{A}$.

\item Let $\cc{C} \mr{\ff{H}} \cc{A}$ be any pseudo-functor. We say that $\ff{H}$ preserves a pseudo-tensor (respectively bi-tensor) $\E \otimes_{\cc{C}} \ff{C}$ in $\cc{C}$ (respectively $\E \tilde{\otimes}_{\cc{C}} \ff{C}$) if $\ff{H}(\E \otimes_{\cc{C}} \C)$ (respectively $\ff{H}(\E \tilde{\otimes}_{\cc{C}} \C)$) is the pseudo-tensor (respectively bi-tensor) $\E \otimes_{\cc{A}} \ff{H}\C$ (respectively $\E \tilde{\otimes}_{\cc{A}} \ff{H}\C$).
\end{enumerate}

\end{definition}

Note that by the very definition, 2-representable 2-functors preserve pseudo-limits and bi-limits. Also, from proposition \ref{pointwisebi-limit} it follows:

\begin{proposition} \label{yonedapreserves}
The Yoneda 2-functors in \ref{Y2functor} preserve \mbox{pseudo-limits.}
 \cqd
\end{proposition}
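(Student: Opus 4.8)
The plan is to deduce the statement from the pointwise computation of pseudo-limits in \ref{pointwisebi-limit} together with the fact, noted just above the proposition, that \mbox{2-representable} 2-functors preserve pseudo-limits. By the observation in \ref{Y2functor} that each Yoneda 2-functor is the other one for the dual 2-category, it suffices to treat the covariant case $\ff{y}_{(-)}:\cc{C}\mr{}\cc{H}om(\cc{C}^{op},\cc{C}at)$, $\ff{y}_{\ff{C}}=\cc{C}(-,\ff{C})$; the case of $\ff{y}^{(-)}$ then follows by applying the result to $\cc{C}^{op}$.

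So let $\ff{F}:\cc{I}^{op}\mr{}\cc{C}$ be a pseudo-functor with pseudo-limit $\ff{L}=\Lim{i\in\cc{I}}{\ff{F}i}$ and universal pseudo-cone $\ff{L}\mr{\pi_i}\ff{F}i$. First I would record the variances: for $i\mr{u}j$ in $\cc{I}$ one has $\ff{F}u:\ff{F}j\mr{}\ff{F}i$, hence $(\ff{F}u)_*:\cc{C}(-,\ff{F}j)\mr{}\cc{C}(-,\ff{F}i)$, so $i\mapsto\ff{y}_{\ff{F}i}=\cc{C}(-,\ff{F}i)$ is a pseudo-functor $\cc{I}^{op}\mr{}\cc{H}om(\cc{C}^{op},\cc{C}at)$ of the correct variance, and $\ff{y}_{\pi_i}=(\pi_i)_*$ assembles into a pseudo-cone with vertex $\ff{y}_{\ff{L}}=\cc{C}(-,\ff{L})$. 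The goal is to show this is a pseudo-limit pseudo-cone.

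The key step is to evaluate at a point and recognize a representable. For each $\ff{X}\in\cc{C}$ one has $ev(-,\ff{X})\circ\ff{y}_{(-)}=\cc{C}(\ff{X},-):\cc{C}\mr{}\cc{C}at$, the covariant \mbox{2-representable} 2-functor at $\ff{X}$, which preserves pseudo-limits (equivalently, this is the defining isomorphism \eqref{isoplim}, $\cc{C}(\ff{X},\ff{L})\mr{\cong}\Lim{i\in\cc{I}}{\cc{C}(\ff{X},\ff{F}i)}$, commuting with the projection pseudo-cones). Hence, evaluated at $\ff{X}$, the image cone $((\pi_i)_*)_{\ff{X}}=\cc{C}(\ff{X},\pi_i)$ is a pseudo-limit pseudo-cone in $\cc{C}at$. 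Thus $\ff{y}_{\ff{L}}$ with the cone $(\pi_i)_*$ is, objectwise and on cones, precisely the pointwise pseudo-limit; moreover the 2-functorial structure $\ff{X}\mapsto\cc{C}(\ff{X},\ff{L})$ is the one that \ref{pointwisebi-limit} manufactures, because \eqref{isoplim} is 2-natural in the vertex $\ff{X}$. By \ref{pointwisebi-limit} (in its dual form for pseudo-limits, with $\cc{A}=\cc{H}om(\cc{C}^{op},\cc{C}at)$) this pointwise object is the pseudo-limit in $\cc{H}om(\cc{C}^{op},\cc{C}at)$, which is exactly the assertion that $\ff{y}_{(-)}$ preserves the pseudo-limit.

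I do not anticipate a genuine obstacle: once \ref{pointwisebi-limit} is in hand the argument is definition-chasing. The only point demanding care is the bookkeeping of variances and the verification that the 2-natural-in-$\ff{X}$ isomorphism of \eqref{isoplim} matches the 2-functorial structure on $\ff{X}\mapsto\Lim{i\in\cc{I}}{\cc{C}(\ff{X},\ff{F}i)}$ built in \ref{pointwisebi-limit} via the Yoneda lemma; but this compatibility is precisely the content of \eqref{isoplim} commuting with pseudo-cones and being 2-natural, so it is granted. The bi-limit analogue would run verbatim using \ref{pointwise en phomp} in place of \ref{pointwisebi-limit}, although the proposition as stated claims only the pseudo-limit case.
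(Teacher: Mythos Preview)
Your proposal is correct and is exactly the argument the paper has in mind: the paper simply records that the statement follows from Proposition~\ref{pointwisebi-limit} together with the observation that 2-representable 2-functors preserve pseudo-limits, and you have spelled out precisely that reduction. There is nothing to add.
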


\vspace{1ex}

Recall that small pseudo-limits and pseudo-colimits indexed by a category of locally small categories exist and are locally small, as well that the 2-category $\cc{C}at$ of small categories has all small pseudo-limits and pseudo-colimits \mbox{(see for example \cite{G3}, \cite{BKP}, \cite{K2}).} 

\begin{sinnadastandard} \label{limincat}
We refer to the explicit construction of pseudo-limits of category valued \mbox{2-functors}, which is similar to the construction of pseudo-limits of category-valued functors in \cite[Expos\'e VI 6.]{G3}, see full details \mbox{in \cite{mitesis}.}
\end{sinnadastandard}

It is also key to our work the explicit construction of 2-filtered \mbox{pseudo-colimits}  of category valued 2-functors developed in \cite{DS}.
We recall this now.

Even though Dubuc and Street work with an alternate definition of 2-filtered \mbox{2-category} that is more suitable for their calculations, we are going to use the \mbox{following} equivalent one (see \cite{Data}) in the following sections: 

\begin{sinnadaitalica}\label{2filtered} {\bf 2-filtered.} \emph{\cite{K}}
Let $\cc{C}$ be a non-empty 2-category. $\cc{C}$ is said to be \emph{2-filtered} if the following axioms are satisfied:

\begin{enumerate}
 \item[F0.] Given two objects $\ff{C}$, $\ff{D}\in \cc{C}$, there exist an object $\ff{E} \in \cc{C}$ and arrows $\ff{C}\rightarrow \ff{E}$, $\ff{D}\rightarrow \ff{E}$.

\item[F1.] Given two arrows $\ff{C} \mrpair{\ff{f}}{\ff{g}} \ff{D}$, there exist an arrow $\ff{D}\mr{\ff{h}} \ff{E}$ and an invertible 2-cell $ \ff{C}\cellrd{\ff{h}\ff{f}}{\alpha}{\ff{h}\ff{g}} \ff{E}$.

\item[F2.] Given two 2-cells $\ff{C} \cellpairrd{}{\alpha}{\beta}{} \ff{D}$ there exists an arrow $\ff{D}\mr{\ff{h}} \ff{E}$ such that $\ff{h}\alpha=\ff{h}\beta$.
\end{enumerate}
%\vspace{1ex}

\noindent The dual notion of 2-cofiltered 2-category is given by the duals of axioms F0, F1 and F2.
\end{sinnadaitalica}

\begin{sinnadastandard} \label{defLF} {\bf Construction LL.} \cite{DS}
Let $\cc{I}$ be a 2-filtered \mbox{2-category} and \mbox{$\ff{F}:\cc{I}\rightarrow \cc{C}at$} a \mbox{2-functor.}
We define a category $\mathcal{L}(\ff{F})$ in two steps as follows:

\vspace{1ex}

\noindent \emph{\bf First step} (\cite[Definition 1.5]{DS}):

\begin{itemize}
 \item[]Objects: $(C,i)$ with $C \in \ff{F}i$.

\item[] Premorphisms: A premorphism between $(C,i)$ and $(D,j)$ is a triple $(u,r,v)$ where $i \mr{u} k$, $j \mr{v} k$ in $\cc{I}$ and $\ff{F}(u)(C)\mr{r} \ff{F}(v)(D)$ in $\ff{F}k$.

\item[] Homotopies: An homotopy between two premorphisms $(u_1,r_1,v_1)$ and $(u_2,r_2,v_2)$ is a quadruple $(w_1,w_2,\alpha,\beta)$ where $k_1\mr{w_1} k$, $k_2\mr{w_2} k$ are \mbox{1-cells} of $\mathcal{I}$ and \mbox{$w_1v_1\mr{\alpha} w_2v_2$,}
$w_1u_1\mr{\beta} w_2u_2$ are invertible 2-cells of $\mathcal{I}$ such that the following diagram commutes in $\ff{F}k$:
$$
\xymatrix@C=7ex
          {
           \ff{F}(w_1)\ff{F}(u_1)(C) = \ff{F}(w_1u_1)(C)
                \ar[r]^{\ff{F}(\beta)_C}
                \ar@<-6ex>[d]_{\ff{F}(w_1)(r_1)}
          &
           \ff{F}(w_2u_2)(C) = \ff{F}(w_2)\ff{F}(u_2)(C)
                \ar@<6ex>[d]^{\ff{F}(w_2)(r_2)}
          \\
           \ff{F}(w_1)\ff{F}(v_1)(D) = \ff{F}(w_1v_1)(D)
                \ar[r]_{\ff{F}(\alpha)_D}
          &
           \ff{F}(w_2v_2)(D) = \ff{F}(w_2)\ff{F}(v_2)(D)
          }
$$
\end{itemize}

We say that two premorphisms $r_1, r_2$ are equivalent if there is an \mbox{homotopy} between them. In that case, we write $r _1 \sim r_2$.

\vspace{1ex}

Equivalence is indeed an equivalence relation, and premorphisms can be (non uniquely) composed. Up to equivalence, composition is independent of the choice of representatives and of the choice of the composition between them. Since associativity holds and
identities exist, the following actually does define a category.

\vspace{1ex}

\noindent \emph{\bf Second step} (\cite[Definition 1.13]{DS}):

\begin{itemize}
 \item[] Objects: $(C, \; i)$ with $C \in Fi$.

 \item[] Morphisms: equivalence classes of premorphisms.

 \item[] Composition: defined by composing representative premorphisms.
\end{itemize}
\end{sinnadastandard}

\begin{proposition}\emph{\cite[Theorem 1.19]{DS}} \label{construccionDS}
Let $\cc{I}$ be a 2-filtered \mbox{2-category,} \mbox{$\ff{F}:\cc{I}\rightarrow \cc{C}at$} a 2-functor, $i \mr{u} j$ in $\cc{I}$ and $C \mr{r} D \in \ff{F}i$.
The \mbox{following} formulas define a pseudo-cone $\ff{F} \Mr{\lambda} \cc{L}(\ff{F})$:
$$
\lambda_i(C) =(C,i)
\hspace{3ex}
\lambda_i(r) =[i,r,i]
\hspace{3ex}
(\lambda_u)_C =[u,\ff{F}u(C),j]
$$
which is a pseudo-colimit for the 2-functor $\ff{F}$. \cqd
\end{proposition}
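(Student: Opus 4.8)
The plan is to prove the two halves of ``pseudo-colimit'' in turn: first that the displayed data form a pseudo-cocone $\ff{F} \Mr{\lambda} \cc{L}(\ff{F})$ (that is, a pseudo-cone for the dual, with vertex the constant 2-functor at $\cc{L}(\ff{F})$), and then that it is universal, i.e. that for every $\ff{A} \in \cc{C}at$ post-composition with the $\lambda_i$ is an \emph{isomorphism} of categories $\cc{C}at(\cc{L}(\ff{F}),\ff{A}) \mr{} \ff{PC}(\ff{F},\ff{A})$ onto the category of pseudo-cocones.

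First I would verify the (dualized) pseudo-cone axioms directly from Construction LL. Each $\lambda_i$ is a functor because $\ff{F}$ is a \emph{2-functor}: $\ff{F}(id_i)=id$ and $\ff{F}$ preserves composites strictly, so $[i,s,i]\circ[i,r,i]=[i,sr,i]$ and $[i,id_C,i]$ is an identity premorphism. The component $(\lambda_u)_C=[u,id_{\ff{F}u(C)},id_j]$ is invertible, with inverse the class $[id_j,id,u]$, as a one-line homotopy computation shows; its naturality in $C$ and the axioms PC0--PC2 then reduce to equalities of premorphism classes that are witnessed by the 2-filtered structure of $\cc{I}$ (the cells $\alpha^{\ff{F}}_{u,v}$ and $\alpha^{\ff{F}}_i$ occurring in the axioms are identities, again because $\ff{F}$ is strict).

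The core of the argument is the universal property. Given a pseudo-cocone $\mu$ with components $\ff{F}i \mr{\mu_i} \ff{A}$ and structural isos $(\mu_u)_C\colon \mu_i(C)\mr{}\mu_k(\ff{F}u(C))$, I would define a functor $\hat{\mu}\colon\cc{L}(\ff{F})\mr{}\ff{A}$ by $\hat\mu(C,i)=\mu_i(C)$ on objects and, on a premorphism $(u,r,v)\colon(C,i)\mr{}(D,j)$ (with $i\mr{u}k\ml{v}j$ and $\ff{F}u(C)\mr{r}\ff{F}v(D)$), by the composite $(\mu_v)_D\inv\circ\mu_k(r)\circ(\mu_u)_C$. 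The two facts to check are that this is constant on homotopy classes and that it carries composition of premorphisms to composition in $\ff{A}$; both follow from PC1 and PC2 applied to the legs $w_1,w_2$ and the 2-cells $\alpha,\beta$ of a homotopy, and to the 1-cells produced by F0 and F1 when forming a composite. One then reads off $\hat\mu\lambda_i=\mu_i$ and $\hat\mu\lambda_u=\mu_u$ from PC0, and uniqueness follows because every morphism of $\cc{L}(\ff{F})$ is obtained from a morphism of some $\ff{F}k$ by conjugating with the invertible cells $\lambda_u$. At the 2-cell level, a modification of pseudo-cocones $\rho\colon\mu\mr{}\nu$ induces the natural transformation with component $(\rho_i)_C$ at $(C,i)$; the modification axiom PCM is precisely the naturality condition against the generators $(\lambda_u)_C$, which gives the bijection on 2-cells and hence the claimed isomorphism of categories.

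The main obstacle is the simultaneous well-definedness and functoriality of $\hat\mu$: one must show that $(\mu_v)_D\inv\circ\mu_k(r)\circ(\mu_u)_C$ is unchanged both when $(u,r,v)$ is replaced by a homotopic premorphism and when the index $k$ is advanced along a 1-cell $k\mr{w}k'$, and that the composite of two representatives---a construction that itself depends on the choices supplied by F0 and F1---is sent to the honest composite in $\ff{A}$. Each of these amounts to a diagram chase in which a 2-filtered witness must cancel against the structural isos $(\mu_u)$; the delicate part is purely organizational, namely arranging these diagrams so the cancellations are transparent. The strictness of $\ff{F}$ is what keeps the bookkeeping clean, since no coherence cells $\alpha^{\ff{F}}$ intervene.
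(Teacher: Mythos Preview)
The paper does not give a proof of this proposition: it is quoted verbatim from \cite[Theorem~1.19]{DS} and closed with a \cqd. Your outline is correct and is essentially the argument of the cited Dubuc--Street paper: verify the pseudo-cocone axioms from the premorphism calculus, then for the universal property send a pseudo-cocone $\mu$ to $\hat\mu(C,i)=\mu_i(C)$, $\hat\mu[u,r,v]=(\mu_v)_D^{-1}\circ\mu_k(r)\circ(\mu_u)_C$, and check homotopy-invariance and functoriality using PC1 and PC2; the 2-cell part is exactly as you describe.
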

\subsection{2-cofinal 2-functors}\label{2-cofinal 2-functors}

Propositions \ref{MJ} and \ref{phi} are key to prove reindexing properties for 2-pro-objects in section \ref{Mtrick}. In order to state and prove them, we give the following definition:

\begin{definition}
Let $\F:\cc{I}\rightarrow \cc{J}$ be a pseudo-functor (as a particular case, $\F$ might be a 2-functor) with $\cc{I}$ a 2-filtered 2-category. We say that $\F$ is \emph{2-cofinal} if it has the following properties:

\begin{itemize}

\item[CF0.] Given $j \in \cc{J}$, there exist $i \in \cc{I}$ and a morphism $j \rightarrow \F i \in \cc{J}$.
  
\item[CF1.] Given $j\in \cc{J}$, $i\in \cc{I}$ and $j\mrpair{a}{b}\F i \in \cc{J}$, there exist $i\stackrel{u}\rightarrow i'\ \in \cc{I}$ and an invertible 2-cell $j \cellrd{\F(u)a}{\alpha}{\F(u)b}\F i' \in \cc{J}$. 

\item[CF2.] Given $j\in \cc{J}$, $i\in \cc{I}$ and $j\cellpairrd{a}{\alpha}{\beta}{b} \F i \in \cc{J}$, there exists $i\stackrel{u}\rightarrow i'\ \in \cc{I}$ such that $\F(u)\alpha=\F(u)\beta$. 

\end{itemize}

\end{definition}

\begin{remark}\label{cofinal implica filtrante}
If $\F:\cc{I}\rightarrow \cc{J}$ is a 2-cofinal pseudo-functor, then $\cc{J}$ is also 2-filtered.
\end{remark}

\begin{proof}
The proof is straightforward.
\end{proof}

%\begin{proposition}
%Let $\F: \cc{I}\rightarrow \cc{J}$ be a 2-functor with $\cc{I}$ a 2-filtered 2-category such that the following three conditions are satisfied:
%\begin{itemize}
%\item[CF0] Given $j\in \cc{J}$, there exists $i\in \cc{I}$ and a morphism $j\rightarrow \F(i)$.
%\item[CF1] Given $j\in \cc{J}$, $i\in \cc{I}$ and $j\mrpair{u}{v}\F(i)$, there exists $i\stackrel{w}\rightarrow i'\ \in \cc{I}$ such that $\F(w)u\cong\F(w)v$. 
%\item[CF2] Given $j\in \cc{J}$, $i\in \cc{I}$ and $j\cellpairrd{u}{\alpha}{\beta}{v} \F(i)$, there exists $i\stackrel{w}\rightarrow i'\ \in \cc{I}$ such that $\F(w)\alpha=\F(w)\beta$. 
%\end{itemize}
%Then $\cc{J}$ is 2-filtered and $\F$ is 2-cofinal.
%\end{proposition}
\begin{proposition}\label{obs1}
Let $\F:\cc{I}\rightarrow \cc{J}$ be a 2-cofinal pseudo-functor. Then, for each $\vcenter{\xymatrix@R=-.5pc{ & \F i\\
                                                                                                         j\ar[ru]^{a} \ar[rd]_{b} & \\
                                                                                                          & \F i'}} \in \cc{J}$, 
there are morphisms $\vcenter{\xymatrix@R=-.5pc{i\ar[rd]^u & \\
                                    & i'' \\
                                    i'\ar[ru]_v & }} \in \cc{I}$ and an invertible 2-cell $$\vcenter{\xymatrix@R=.5pc{& \F i\ar[rd]^{\F u} & \\
                                                                                     j \ar[ru]^{a} \ar[rd]_{b} \ar@{}[rr]|{\Downarrow \; \alpha} &  & \F i''\\
                                                                                     & \F i' \ar[ru]_{\F v} & }}$$
\end{proposition}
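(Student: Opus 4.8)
The plan is to reduce the statement to an instance of axiom CF1 by first merging the two codomains $\F i$ and $\F i'$ into a single $\F i_0$, and then to repair the resulting $2$-cell using the pseudo-functoriality constraints of $\F$. The hypothesis that $\cc{I}$ is $2$-filtered is what makes this merging possible, and it is the only role that axiom F0 plays.

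First I would invoke axiom F0 of the definition of $2$-filtered (\ref{2filtered}) for the objects $i, i' \in \cc{I}$: there exist an object $i_0 \in \cc{I}$ and arrows $i \mr{p} i_0$, $i' \mr{q} i_0$. Whiskering with $\F$ and composing in $\cc{J}$ then produces a parallel pair $\F(p)\,a,\ \F(q)\,b : j \rightrightarrows \F i_0$. Next I would apply property CF1 of the $2$-cofinal pseudo-functor $\F$ to this pair (with the object $i_0 \in \cc{I}$): this yields a morphism $i_0 \mr{w} i''$ in $\cc{I}$ together with an invertible $2$-cell $\F(w)\F(p)\,a \Mr{\gamma} \F(w)\F(q)\,b$ in $\cc{J}$.

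Setting $u = w p : i \to i''$ and $v = w q : i' \to i''$, it only remains to turn $\gamma$ into an invertible $2$-cell $\F(u)\,a \Rightarrow \F(v)\,b$. This is where pseudo-functoriality enters: the structural invertible $2$-cells $\alpha^{\F}_{p,w} : \F(w)\F(p) \Rightarrow \F(wp)=\F(u)$ and $\alpha^{\F}_{q,w} : \F(w)\F(q) \Rightarrow \F(wq)=\F(v)$ let me define
$$\alpha \;=\; (\alpha^{\F}_{q,w}\, b) \circ \gamma \circ (\alpha^{\F}_{p,w}\, a)\inv,$$
which is an invertible $2$-cell $\F(u)\,a \Rightarrow \F(v)\,b$, being a vertical composite of invertible $2$-cells (when $\F$ is a strict $2$-functor these constraints are identities and one simply takes $\alpha = \gamma$). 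I do not expect any serious obstacle; the only point requiring care is bookkeeping the directions of the whiskered constraints $\alpha^{\F}_{p,w}\,a$ and $\alpha^{\F}_{q,w}\,b$ so that the three $2$-cells compose with matching source and target, which the elevator calculus of the introduction renders transparent.
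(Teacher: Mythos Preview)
Your proof is correct and follows exactly the route the paper indicates: the paper's proof reads in its entirety ``It is straightforward from F0 and CF1,'' and you have spelled out precisely that straightforward argument, including the bookkeeping with the pseudo-functor constraints $\alpha^{\F}_{p,w}$ and $\alpha^{\F}_{q,w}$ that the paper leaves implicit.
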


\begin{proof}
It is straightforward from F0 and CF1.
\end{proof}

\begin{corollary}\label{corolario2}
Let $\F:\cc{I}\rightarrow \cc{J}$ be a 2-cofinal pseudo-functor. Given $\F i \stackrel{a}\rightarrow \F i' \in \cc{J}$, there are morphisms $\vcenter{\xymatrix@R=-.5pc{i\ar[rd]^u & \\
                                    & i'' \\
                                    i'\ar[ru]_v & }} \in \cc{I}$ and an invertible 2-cell $\vcenter{\xymatrix@R=.5pc{& \F i \ar[rd]^{\F u} & \\
                                                                                     \F i \ar[ru]^{id} \ar[rd]_{a} \ar@{}[rr]|{\Downarrow \; \alpha} &  & \F i''\\
                                                                                     & \F i' \ar[ru]_{\F v} & }}$ (i.e. an invertible 2-cell $\F i\cellrd{\F u}{\alpha}{\F(v)a}\F i'' $).
\cqd
\end{corollary}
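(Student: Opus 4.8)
The plan is to obtain this as an immediate special case of Proposition \ref{obs1}. The only thing to arrange is to present the single arrow $\F i \stackrel{a}\rightarrow \F i'$ as one leg of a span out of a common vertex, so that the hypotheses of Proposition \ref{obs1} are met. The key observation is that Proposition \ref{obs1} already allows the two legs of the span to land in \emph{different} images $\F i$ and $\F i'$ (it packages the use of F0 together with CF1), which is exactly what permits one of those legs to be the identity here.

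Concretely, I would take the vertex to be $j := \F i$ and consider the span whose two legs are the identity $\F i \stackrel{id}\rightarrow \F i$ and the given arrow $\F i \stackrel{a}\rightarrow \F i'$, i.e. the configuration
$$\vcenter{\xymatrix@R=-.5pc{ & \F i\\
                             \F i\ar[ru]^{id} \ar[rd]_{a} & \\
                             & \F i'}}$$
This is precisely of the shape to which Proposition \ref{obs1} applies: that proposition imposes no condition on the apex beyond being an object of $\cc{J}$, and here the apex $\F i$ simply happens to lie in the image of $\F$, which causes no difficulty.

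Applying Proposition \ref{obs1} to this span then yields arrows $i \stackrel{u}\rightarrow i''$ and $i' \stackrel{v}\rightarrow i''$ in $\cc{I}$ together with an invertible 2-cell $\F(u)\, id \Mr{\alpha} \F(v)\, a$. Since $\F(u)\, id = \F u$, this is exactly an invertible 2-cell $\F i \cellrd{\F u}{\alpha}{\F(v)a} \F i''$, which is the desired conclusion. I do not expect any real obstacle here: once the correct span is chosen the statement reduces verbatim to Proposition \ref{obs1}, and the entire content of the corollary is the observation that collapsing one leg of the span to the identity is a legitimate instance of that proposition.
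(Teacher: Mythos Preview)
Your proposal is correct and is exactly the specialization the paper has in mind: the corollary is stated immediately after Proposition~\ref{obs1} with no proof beyond the ``$\Box$'' symbol, and the intended derivation is precisely to take the span with apex $j=\F i$, legs $id$ and $a$, and apply Proposition~\ref{obs1}.
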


\begin{lemma}\label{lemita3}
Let $\F:\cc{I}\rightarrow \cc{J}$ be a 2-cofinal pseudo-functor. Then, given $i\mrpair{u}{v}i' \in \cc{I}$ and an invertible 2-cell $\F i \cellrd{\F u}{\alpha}{\F v}\F i' \in \cc{J}$, there exist $i'\stackrel{w}\rightarrow i'' \in \cc{I}$ and an invertible 2-cell $i\cellrd{wu}{\delta}{wv}i'' \in \cc{I}$ such that $\F(w)\alpha=\F\delta$. 
\end{lemma}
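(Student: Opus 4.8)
The plan is to manufacture a candidate lift of $\alpha$ inside $\cc{I}$ using the 2-filteredness of $\cc{I}$, and then, after whiskering, correct its image to be exactly $\F w\,\alpha$ by invoking the 2-cofinal axiom CF2 (the only axiom that lets one equalize parallel 2-cells of $\cc{J}$ that are not both images of $\cc{I}$-cells). First I would apply axiom F1 (see \ref{2filtered}) to the parallel pair $u,v : i \to i'$ in the 2-filtered 2-category $\cc{I}$. This yields an arrow $i' \mr{h} k \in \cc{I}$ together with an invertible 2-cell $\gamma : hu \Rightarrow hv$ in $\cc{I}$. Note that $\gamma$ is a genuine $\cc{I}$-cell, but a priori its image need not match $\alpha$; that is exactly what the cofinality will repair.

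Next I would compare the two natural candidates for a 2-cell $\F(hu) \Rightarrow \F(hv)$ in $\cc{J}$: on one hand $\F\gamma$, and on the other the whiskered cell $\F h\,\alpha$ transported along the structural isomorphisms of the pseudo-functor, namely $\widetilde{\F h\,\alpha} := \alpha^{\F}_{v,h} \circ \F h\,\alpha \circ (\alpha^{\F}_{u,h})\inv$. (When $\F$ is a 2-functor the structural cells are identities and this is simply $\F h\,\alpha$.) These are two parallel invertible 2-cells between the arrows $\F(hu),\F(hv) : \F i \to \F k$, so axiom CF2, applied with $j = \F i$ and with object $k \in \cc{I}$, produces an arrow $k \mr{w'} i'' \in \cc{I}$ such that $\F w'\,(\F\gamma) = \F w'\,(\widetilde{\F h\,\alpha})$. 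I would then set $w := w'h : i' \to i''$ and $\delta := w'\gamma : wu \Rightarrow wv$. Since $\gamma$ is invertible and whiskering preserves invertibility, $\delta$ is invertible, as required.

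It then remains to verify the asserted equation $\F\delta = \F w\,\alpha$. In the 2-functor case this is immediate: $\F\delta = \F(w'\gamma) = \F w'\,\F\gamma = \F w'\,(\F h\,\alpha) = \F(w'h)\,\alpha = \F w\,\alpha$, using only strict functoriality and the interchange law for whiskering. I expect the one genuinely delicate point to be the pseudo-functor bookkeeping in the general case: the identity $\F(w'\gamma) = \F w'\,\F\gamma$ and the reassembly $\F w'\,\F h = \F w$ hold only up to the coherence isomorphisms $\alpha^{\F}_{u,w'h}$, $\alpha^{\F}_{v,w'h}$, $\alpha^{\F}_{\cdot,\cdot}$ relating the iterated whiskerings, and one must check, via the associativity (hexagon) axiom for $\alpha^{\F}$ together with its naturality against 2-cells, that precisely these structural cells are the ones built into $\widetilde{\F h\,\alpha}$, so that they cancel and leave $\F w\,\alpha$. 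This is a routine but careful elevator-calculus computation; apart from it, the argument is the clean two-step scheme ``F1 in $\cc{I}$, then CF2''.
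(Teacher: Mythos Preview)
Your proof is correct and follows precisely the approach the paper indicates: its entire proof reads ``It is straightforward from F1 and CF2.'' You have supplied the details of that two-step scheme (F1 to produce $\gamma$, then CF2 to force $\F w'\,\F\gamma = \F w'\,(\F h\,\alpha)$), and your verification in the 2-functor case is clean and complete.

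One small remark: your care about the pseudo-functor coherence is in fact more than the paper itself provides. Note that the equation $\F(w)\alpha = \F\delta$ in the statement does not literally type-check for a genuine pseudo-functor, since the left side lives between $\F(w)\F(u)$ and $\F(w)\F(v)$ while the right side lives between $\F(wu)$ and $\F(wv)$; the lemma is only applied in the paper (in Theorem~\ref{T_Cat}) with $\F$ a 2-functor, where the issue disappears. So your ``routine but careful elevator-calculus computation'' is not actually needed for the paper's purposes, and your 2-functor argument already suffices.
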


\begin{proof}
It is straightforward from F1 and CF2.
\end{proof}

\vspace{2ex}

The following lemmas are used in the proof of \ref{T_Cat}.

\begin{lemma}\label{lemita1}
Let $\cc{J}$ be a 2-filtered 2-category and $\ff{G}:\cc{J}\rightarrow \cc{C}at$ a 2-functor. Let $(y,j)\stackrel{[a,r,b]}\longrightarrow (y',j')$ be a morphism in $\coLim{j\in \cc{J}}{\G j}$, $a'$, $b'$, $c$ morphisms in $\cc{J}$ and $\alpha$, $\beta$ invertible 2-cells as in the following diagram:

$$\vcenter{\xymatrix@R=.35pc@C=.8pc{j \ar@/^1.5pc/[rrrdd]^{a'} \ar[rdd]^{a} \ar@{}[rrrd]|{\Downarrow \; \alpha} & & & \\ 
                                                               & & & \\
                                                               & j'' \ar[rr]^{c} & & j''' \\
                                                               \ar@{}[rrr]|{\Downarrow \; \beta} & & & \\
                                                               j' \ar@/_1.5pc/[rrruu]_{b'} \ar[ruu]_{b} & & & }}$$
                                                               
Then $[a,r,b]=[a',s,b']$ where $s$ is defined as the composition $\G(\beta)_{y'}\G(c)(r)\G(\alpha)_y$
\end{lemma}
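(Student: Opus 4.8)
The plan is to prove the equality $[a,r,b]=[a',s,b']$ in $\coLim{j\in\cc{J}}{\G j}$ by exhibiting an explicit \emph{homotopy} between the two representing premorphisms, in the sense of the first step of Construction LL (\ref{defLF}); since homotopic premorphisms define the same morphism, this is exactly what is required. First I would record the data. The premorphism $(a,r,b)$ goes from $(y,j)$ to $(y',j')$ with common target $j''$, so that $r:\G(a)(y)\to\G(b)(y')$ in $\G j''$; and $(a',s,b')$ has common target $j'''$, so that $s:\G(a')(y)\to\G(b')(y')$ in $\G j'''$. The diagram supplies a morphism $c:j''\to j'''$ together with invertible $2$-cells which, in the directions forced by the typing of $s$, are $\alpha: a'\Mr{}ca$ and $\beta: cb\Mr{}b'$ in $\cc{J}$; note that $s:\G(a')(y)\to\G(b')(y')$ is then a well-typed premorphism leg because $\G(\alpha)_y:\G(a')(y)\to\G(ca)(y)$, $\G(c)(r):\G(ca)(y)\to\G(cb)(y')$ and $\G(\beta)_{y'}:\G(cb)(y')\to\G(b')(y')$ compose.

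The key step is to choose the witnessing homotopy to be the quadruple $(w_1,w_2,\alpha_h,\beta_h)=(c,\ id_{j'''},\ \beta,\ \alpha^{-1})$, with common target $k=j'''$. The typing is correct: $w_1=c:j''\to k$ and $w_2=id_{j'''}:j'''\to k$ join the two targets $j''$ and $j'''$. For the $2$-cells, the homotopy definition asks for an invertible cell relating the codomain legs, namely $\alpha_h: w_1b=cb\Mr{}b'=w_2b'$, which is precisely $\beta$; and an invertible cell relating the domain legs, $\beta_h: w_1a=ca\Mr{}a'=w_2a'$, which is precisely $\alpha^{-1}$ (here I use that $\alpha$ is invertible). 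Both are invertible, as required.

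It remains to check that the homotopy square commutes in $\G j'''$. Using that $\G$ is a \emph{strict} $2$-functor, so that $\G(c)\G(a)=\G(ca)$, $\G(id_{j'''})=id$, and $\G(\alpha^{-1})=\G(\alpha)^{-1}$, commutativity of the square reduces to the single equation
$$s\circ\G(\alpha)^{-1}_y=\G(\beta)_{y'}\circ\G(c)(r).$$
This is immediate from the defining formula $s=\G(\beta)_{y'}\circ\G(c)(r)\circ\G(\alpha)_y$: composing on the right with $\G(\alpha)^{-1}_y$ cancels the final factor $\G(\alpha)_y\circ\G(\alpha)^{-1}_y=id$. Hence $(c,id_{j'''},\beta,\alpha^{-1})$ is a genuine homotopy, giving $[a,r,b]=[a',s,b']$. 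There is essentially no deep obstacle here: the whole content is selecting the right homotopy data. The only points demanding care are getting the orientations of $\alpha_h$ and $\beta_h$ to match the source and target legs of the two premorphisms (which is what forces the use of $\alpha^{-1}$ rather than $\alpha$), and invoking strictness of $\G$ so that $1$-cell composites and $2$-cell applications hold on the nose; once $s$ is unwound, the square's commutativity is a one-line cancellation.
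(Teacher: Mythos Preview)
Your argument is correct: the quadruple $(c,\,id_{j'''},\,\beta,\,\alpha^{-1})$ is a well-typed homotopy in the sense of Construction~\ref{defLF}, and the commutativity of the required square is exactly the defining equation for $s$ after cancelling $\G(\alpha)_y\circ\G(\alpha^{-1})_y$. Your care with the orientations of the $2$-cells and with strictness of $\G$ is on point.

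The paper does not spell out this homotopy; it simply invokes Lemma~1.18 of \cite{DS}, which packages precisely this kind of manipulation. So your approach is the same idea made explicit and self-contained, rather than deferred to the cited reference. Either route is fine; yours has the advantage of not requiring the reader to chase the external lemma, while the paper's one-line citation keeps the exposition lean since the surrounding lemmas (e.g.\ \ref{lemita2}) use similar machinery repeatedly.
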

\begin{proof}
It is straightforward from Lemma 1.18 of \cite{DS}.
\end{proof}

\begin{corollary}\label{corolario1}
As a particular case (take $a'=ca$, $b'=cb$ and $\alpha$ and $\beta$ the corresponding identities), we have that given $(a,r,b)$ and $c$ as in the previous proposition, $[a,r,b]=[ca,\G(c)(r),cb]$.   
\cqd
\end{corollary}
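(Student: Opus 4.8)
The plan is to prove the final result by first establishing the substantive statement (Lemma~\ref{lemita1}) and then reading off Corollary~\ref{corolario1} as the special case where $\alpha$ and $\beta$ are identities. Throughout I work inside $\cc{L}(\G)=\coLim{j\in\cc{J}}{\G j}$ as constructed in~\ref{defLF}, whose morphisms are homotopy classes of premorphisms. The key structural fact I would lean on is that $\G$ is a \emph{strict} $2$-functor, so $\G(w)\G(u)=\G(wu)$ holds on the nose and $\G$ sends identity $2$-cells to identities; this is exactly what makes the homotopy square of~\ref{defLF} type-check without coherence corrections.

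First I would verify that $(a',s,b')$ is a legitimate premorphism from $(y,j)$ to $(y',j')$. Reading orientations off the diagram, $\alpha\colon a'\Rightarrow ca$ and $\beta\colon cb\Rightarrow b'$, so $\G(\alpha)_y\colon \G(a')(y)\to\G(ca)(y)$, then $\G(c)(r)\colon \G(ca)(y)\to\G(cb)(y')$, then $\G(\beta)_{y'}\colon \G(cb)(y')\to\G(b')(y')$ compose to the arrow $s\colon \G(a')(y)\to\G(b')(y')$ in $\G j'''$, with common codomain $j'''$. So $(a',s,b')$ indeed represents a morphism $(y,j)\to(y',j')$.

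The heart of the argument is to exhibit a homotopy between $(a,r,b)$ (codomain $j''$) and $(a',s,b')$ (codomain $j'''$). By~\ref{defLF} such a homotopy is a quadruple $(w_1,w_2,\phi,\psi)$ with $w_1\colon j''\to k$, $w_2\colon j'''\to k$ and invertible $2$-cells $\psi\colon w_1a\Rightarrow w_2a'$, $\phi\colon w_1b\Rightarrow w_2b'$ making the square of~\ref{defLF} commute (here $\psi,\phi$ play the roles of the construction's source-side and target-side $2$-cells respectively). I would take $k=j'''$, $w_1=c$, $w_2=\mathrm{id}_{j'''}$, $\psi=\alpha^{-1}\colon ca\Rightarrow a'$ and $\phi=\beta\colon cb\Rightarrow b'$. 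The required commutativity then reads $s\circ(\G(\alpha)_y)^{-1}=\G(\beta)_{y'}\circ\G(c)(r)$, and since $s=\G(\beta)_{y'}\,\G(c)(r)\,\G(\alpha)_y$ by definition, this holds after cancelling $\G(\alpha)_y(\G(\alpha)_y)^{-1}$. Hence $[a,r,b]=[a',s,b']$, which is Lemma~\ref{lemita1}; this is presumably what \cite[Lemma 1.18]{DS} packages, the moral being that postcomposing a premorphism's codomain by $c$ and reparametrising by $\alpha,\beta$ leaves its class unchanged.

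Finally, Corollary~\ref{corolario1} follows at once by specialising to $a'=ca$, $b'=cb$, $\alpha=\mathrm{id}_{ca}$ and $\beta=\mathrm{id}_{cb}$: the formula for $s$ collapses to $s=\G(c)(r)$ because $\G$ preserves identity $2$-cells, giving $[a,r,b]=[ca,\G(c)(r),cb]$. I do not expect a genuine obstacle here; the only delicate point is orientation bookkeeping, namely keeping the directions of $\alpha$ and $\beta$ consistent with the composite defining $s$ so that the homotopy square commutes strictly rather than up to a stray inverse.
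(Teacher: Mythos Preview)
Your proof is correct and follows the same approach as the paper: establish Lemma~\ref{lemita1} and then specialize. The paper simply cites \cite[Lemma 1.18]{DS} for the lemma, while you unpack it by exhibiting the explicit homotopy $(c,\mathrm{id}_{j'''},\beta,\alpha^{-1})$; your orientation bookkeeping and the verification of the homotopy square are accurate, and the specialization to identities is exactly how the paper states the corollary.
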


\begin{lemma}\label{lemita2}
Let $\cc{J}$ be a 2-filtered 2-category and $\G:\cc{J}\rightarrow \cc{C}at$ a 2-functor. Let $(y,j)\stackrel{[a,r,b]}\longrightarrow (y',j')$  be a morphism in  $\coLim{j\in \cc{J}}{\G j}$ and suppose that we have a configuration as follows:

$$\vcenter{\xymatrix@R=.8pc{
j \ar@/^1.5pc/[rr]^{a_0} \ar[rd]^{a} \ar@{}[rr]|{\cong \; \Downarrow \; \alpha} & & j_0 \ar[rd]^{a_2} & \\ 
 & j'' \ar[ru]^{a_1} \ar[rd]_{b_1} \ar@{}[rr]|{\cong \; \Downarrow \; \gamma} & & j_2 \\
j' \ar@/_1.5pc/[rr]_{b_0} \ar[ru]_{b} \ar@{}[rr]|{\cong \; \Uparrow \; \beta} &  & j_1 \ar[ru]_{b_2} & \\  }}$$ 

Then $[a,r,b]=[a_2a_0,s,b_2b_0]$ where $s$ is defined as the composition $\G(b_2\beta^{-1})_{y'}\G(\gamma b)_{y'}\G(a_2a_1)(r)\G(a_2\alpha)_y$.
\end{lemma}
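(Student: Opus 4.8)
The plan is to derive this directly from Lemma \ref{lemita1}, which already handles the simpler situation of a single comparison arrow $c$ out of $j''$ together with two triangles. Here the configuration is more elaborate—two legs $a_2a_1$ and $b_2b_1$ out of $j''$ glued by $\gamma$—so the first step is to collapse it into the shape required by \ref{lemita1} by choosing the comparison arrow $c := a_2a_1 : j'' \to j_2$ and setting $a' := a_2a_0$, $b' := b_2b_0$, with $j_2$ playing the role of the object $j'''$ there. Note that $a$, $b$, $r$ are exactly the data of the hypothesis, so no reshuffling of the premorphism is needed.

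Next I would produce the two invertible 2-cells that \ref{lemita1} demands, namely one of type $a' \Rightarrow ca$ and one of type $cb \Rightarrow b'$. For the first I would simply whisker the given $\alpha : a_0 \Rightarrow a_1 a$ by $a_2$ on the left, obtaining $a_2\alpha : a_2a_0 \Rightarrow a_2a_1a = ca$. For the second I would paste two pieces: whisker $\gamma : a_2a_1 \Rightarrow b_2b_1$ by $b$ on the right to get $\gamma b : a_2a_1 b \Rightarrow b_2b_1 b$, whisker $\beta^{-1} : b_1 b \Rightarrow b_0$ by $b_2$ on the left to get $b_2\beta^{-1} : b_2b_1 b \Rightarrow b_2b_0$, and compose vertically to obtain $\beta' := (b_2\beta^{-1})\circ(\gamma b) : a_2a_1 b = cb \Rightarrow b_2b_0 = b'$. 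All of these are invertible because $\alpha,\beta,\gamma$ are and whiskering and vertical composition preserve invertibility.

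With these choices, Lemma \ref{lemita1} yields $[a,r,b] = [a_2a_0,\, s',\, b_2b_0]$ where $s' = \G(\beta')_{y'} \circ \G(c)(r) \circ \G(a_2\alpha)_y$. The last step is to recognize $s'$ as the stated $s$: since $\G$ is a 2-functor it preserves whiskering and vertical composition of 2-cells, so $\G(c)(r) = \G(a_2a_1)(r)$ and $\G(\beta')_{y'} = \G(b_2\beta^{-1})_{y'} \circ \G(\gamma b)_{y'}$; substituting gives exactly $s = \G(b_2\beta^{-1})_{y'}\,\G(\gamma b)_{y'}\,\G(a_2a_1)(r)\,\G(a_2\alpha)_y$.

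The only genuine subtlety—and the step I would be most careful about—is tracking the orientations of the given 2-cells so that the constructed 2-cells point the correct way for \ref{lemita1}. In particular the $\Uparrow$ decorating $\beta$ means $\beta : b_0 \Rightarrow b_1 b$, which is why $\beta^{-1}$ (and not $\beta$) appears in $\beta'$ and hence in $s$; getting this direction right is precisely what makes the two sub-2-cells of $\beta'$ composable and makes $\beta'$ land as $cb \Rightarrow b'$ rather than the reverse. Beyond this bookkeeping there is no real obstacle: the statement is essentially a ``composite'' instance of \ref{lemita1}, and the reduction requires no fresh appeal to the 2-filteredness axioms F0--F2, since all the indexing data is already supplied in the hypothesis.
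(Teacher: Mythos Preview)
Your proposal is correct and follows exactly the paper's approach: the paper's proof consists of the single line ``In \ref{lemita1}, take $a':=a_2a_0$, $b':=b_2b_0$, $c:=a_2a_1$, $\alpha:=a_2\alpha$ and $\beta:=b_2\beta^{-1}\circ\gamma b$,'' which is precisely the substitution you carry out and unpack in detail.
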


\begin{proof}
In \ref{lemita1}, take $a':=a_2a_0$, $b':=b_2b_0$, $c:=a_2a_1$, $\alpha:=a_2\alpha$ and \mbox{$\beta:=b_2\beta^{-1}\circ\gamma b$.}
\end{proof}

\begin{theorem}\label{T_Cat}
Let $\F: \cc{I}\rightarrow \cc{J}$ be a 2-cofinal 2-functor and $\ff{G}:\cc{J}\rightarrow \cc{C}at$ a 2-functor. Then the canonical morphism 
$$\coLim{i\in \cc{I}}{\ff{G}\F i}\stackrel{\ff{h}}\longrightarrow \coLim{j\in \cc{J}}{\ff{G}j}$$

\noindent is an equivalence of categories.
\end{theorem}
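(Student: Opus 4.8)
The plan is to compute both pseudo-colimits by the explicit Dubuc--Street construction LL (\ref{defLF}, \ref{construccionDS}), so that the objects of $\coLim{i\in\cc{I}}{\ff{G}\F i}$ are pairs $(x,i)$ with $x\in\ff{G}\F i$ and the morphisms are equivalence classes $[u,r,v]$ of premorphisms, and likewise for $\coLim{j\in\cc{J}}{\ff{G}j}$. Since $\F$ is a 2-functor we have $\ff{G}\F(u)=\ff{G}(\F u)$ strictly, so the canonical functor is described on representatives by $\ff{h}(x,i)=(x,\F i)$ and $\ff{h}[u,r,v]=[\F u,r,\F v]$; it is well defined on homotopy classes because $\F$ carries any homotopy $(w_1,w_2,\alpha,\beta)$ in $\cc{I}$ to the homotopy $(\F w_1,\F w_2,\F\alpha,\F\beta)$ in $\cc{J}$, the relevant square still commuting as $\ff{G}\F=\ff{G}\circ\F$. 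It then suffices to check that $\ff{h}$ is essentially surjective, full and faithful.

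Essential surjectivity is the direct use of CF0: given $(y,j)$, choose $j\mr{a}\F i$; then $(a,\mathrm{id},\mathrm{id})$ is a premorphism $(y,j)\to(\ff{G}(a)(y),\F i)=\ff{h}(\ff{G}(a)(y),i)$, and it is invertible because its middle datum is an identity (its inverse is represented by $(\mathrm{id},\mathrm{id},a)$). Hence every object of the target is isomorphic to one in the image of $\ff{h}$.

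For fullness, start from a morphism $[a,s,b]\colon(x,\F i)\to(x',\F i')$, with $\F i\mr{a}l\ml{b}\F i'$ in $\cc{J}$. First apply CF0 to $l$ to obtain $l\mr{c}\F i_0$, and rewrite $[a,s,b]=[ca,\ff{G}(c)(s),cb]$ by Corollary \ref{corolario1}; now both legs land on an object of the image. Apply Corollary \ref{corolario2} to $ca\colon\F i\to\F i_0$ and to $cb\colon\F i'\to\F i_0$ to produce arrows $i\mr{u}i''$, $i_0\mr{v}i''$, $i'\mr{u'}i'''$, $i_0\mr{v'}i'''$ of $\cc{I}$ together with invertible 2-cells $\F u\cong\F(v)(ca)$ and $\F u'\cong\F(v')(cb)$. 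Using that $\cc{I}$ is 2-filtered (Remark \ref{cofinal implica filtrante}), choose $i''\mr{p}i_1$, $i'''\mr{q}i_1$ (F0) and then $i_1\mr{w}i_2$ with an invertible 2-cell $wpv\cong wqv'$ in $\cc{I}$ (F1). Setting $m=wpu$ and $n=wqu'$ one obtains, after transporting $s$ along the structural 2-cells just produced (this is exactly the bookkeeping packaged by Lemmas \ref{lemita1} and \ref{lemita2}), a premorphism $(m,r,n)$ of $\coLim{i\in\cc{I}}{\ff{G}\F i}$ with $\ff{h}[m,r,n]=[a,s,b]$.

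Faithfulness is the crux, and the main obstacle. Suppose $(u_1,r_1,v_1)$ and $(u_2,r_2,v_2)$ are premorphisms of $\coLim{i\in\cc{I}}{\ff{G}\F i}$ whose images become homotopic in $\coLim{j\in\cc{J}}{\ff{G}j}$, witnessed by arrows $W_1,W_2$ and invertible 2-cells $W_1\F v_1\cong W_2\F v_2$, $W_1\F u_1\cong W_2\F u_2$ of $\cc{J}$. I must reflect this whole homotopy back into $\cc{I}$. I would first push the common target of $W_1,W_2$ into the image of $\F$ (CF0) and lift $W_1,W_2$ to arrows of $\cc{I}$ up to the usual invertible 2-cells (Corollary \ref{corolario2}, Proposition \ref{obs1}); this turns the homotopy 2-cells into invertible 2-cells between images of parallel arrows of $\cc{I}$. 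The delicate point is that a 2-cell of the form $\F(-)\cong\F(-)$ need not itself come from a 2-cell of $\cc{I}$; here Lemma \ref{lemita3} is exactly the tool, since after postcomposing with a further arrow of $\cc{I}$ it reflects such a 2-cell to an honest invertible 2-cell of $\cc{I}$ with matching image. Applying this to both comparison 2-cells, and then using F2 (via CF2) to force the homotopy square to commute in $\ff{G}\F(\,\cdot\,)$ after one last enlargement, produces a genuine homotopy in $\cc{I}$, so that $[u_1,r_1,v_1]=[u_2,r_2,v_2]$ already in $\coLim{i\in\cc{I}}{\ff{G}\F i}$. The whole difficulty of the argument is concentrated in organizing these successive reflections coherently and in keeping the two legs (and their comparison 2-cells) synchronized under the repeated use of 2-filteredness.
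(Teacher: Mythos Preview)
Your approach is essentially identical to the paper's: both compute the pseudo-colimits via the Dubuc--Street construction and then verify essential surjectivity from CF0, fullness via CF0 together with Corollaries~\ref{corolario1}, \ref{corolario2}, Proposition~\ref{obs1} and Lemma~\ref{lemita2}, and faithfulness by first reducing the $\cc{J}$-homotopy to one whose legs lie in the image of $\F$ and then reflecting its 2-cells into $\cc{I}$ via Lemma~\ref{lemita3}. One small correction: in the last step you do not need (and cannot use) F2/CF2 to make the homotopy square commute---that square is an equation of \emph{morphisms} in the category $\ff{G}\F(i'')$, not of 2-cells, and it holds automatically because Lemma~\ref{lemita3} gives $\F\delta=\F(w)\alpha$, so after applying $\ff{G}$ the components coincide with those coming from the original commuting square; this is exactly what the paper sweeps into ``it can be checked''. (Also, $\cc{I}$ being 2-filtered is part of the hypothesis on $\F$, not Remark~\ref{cofinal implica filtrante}, which concerns $\cc{J}$.)
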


\begin{proof}
First of all, let's note that $\ff{h}(x,i)=(x,\F i)\ \forall\ (x,i)\in \coLim{i\in \cc{I}}{\G\F i}$ and \mbox{$\ff{h}([u,r,v])=[\F(u),r,\F(v)]\ \forall\ (x,i)\stackrel{[u,r,v]}\longrightarrow (x',i')\in \coLim{i\in \cc{I}}{\G\F i}$.}

Now, recall that is enough to check that $\ff{h}$ is essentially surjective on objects and full and faithful (\cite{ML}). 

\begin{itemize}
  \item[-] $\ff{h}$ is essentially surjective on objects: Let $(y,j)\in \coLim{j\in \cc{J}}{\G j}$. By CF0, \mbox{$\exists \ j\stackrel{a}\rightarrow \F i\in \cc{J}$} and clearly $\ff{h}(\G(a)(y),i)\cong(y,j)$ in $\coLim{j\in \cc{J}}{\G j}$. 
\item[-] $\ff{h}$ is full: Let $(x,\F i)\stackrel{[a,r,b]}\longrightarrow (x',\F i'))\in \coLim{j\in \cc{J}}{\G j}$ where $\vcenter{\xymatrix@R=-0.5pc{\F i \ar[rd]^{a} & \\ & j \\ \F i' \ar[ru]_{b} & }} \in \cc{J}$ and \mbox{$\G(a)(x)\mr{r}\G(b)(x')$.} By CF0, $\exists \ j\stackrel{c}\rightarrow \F i''\in \cc{J}$. Then, by \ref{corolario1}, \mbox{$[a,r,b]=[ca,\G(c)(r),cb]$,} so without loss of generality, we can suppose $j=\F i''$.

By using \ref{corolario2} for $a$ and $b$ respectively and by \ref{obs1}, we have the following \mbox{configuration:}

$$\xymatrix@R=.8pc{
\F i  \ar@/^1.5pc/[rr]^{\F u_0 } \ar[rd]^{a} \ar@{}[rr]|{\cong \; \Downarrow \; \alpha } & & \F i_0  \ar[rd]^{\F u_2 } & \\ 
 & \F i''  \ar[ru]^{\F u_1 } \ar[rd]_{\F v_1 }  \ar@{}[rr]|{\cong \; \Downarrow \; \gamma} && \F i_2  \\
\F i'  \ar@/_1.5pc/[rr]_{\F v_0 } \ar[ru]_{b} \ar@{}[rr]|{\cong \; \Uparrow \; \beta} && \F i_1  \ar[ru]_{\F v_2 } & \\  }$$

Now, by \ref{lemita2}, $[a,r,b]=\ff{h}([u_2u_0,s,v_2v_0])$ for some $s$.

\item[-] $\ff{h}$ is faithful:
Suppose $(x,i)\mrpair{[u_0,r_0,v_0]}{[u_1,r_1,v_1]}(x',i')\in \coLim{i\in \cc{I}}{\G\F i}$ (where $\vcenter{\xymatrix@R=-0.5pc{i\ar[rd]^{u_0} & \\ & i_0 \\ i'\ar[ru]_{v_0} & }}$ and $\vcenter{\xymatrix@R=-0.5pc{i\ar[rd]^{u_1} & \\ & i_1 \\ i'\ar[ru]_{v_1} & }} \in \cc{I}$) such that $\ff{h}([u_0,r_0,v_0])=\ff{h}([u_1,r_1,v_1])$ in $\coLim{j\in \cc{J}}{\G j}$. Then, there exists $\vcenter{\xymatrix@R=-0.5pc{\F i_0 \ar[rd]^{a} & \\ & j \\ \F i_1 \ar[ru]_{b} & }}$ and invertible 2-cells $a\F u_0 \stackrel{\alpha_0}\Rightarrow b\F u_1$, $a\F v_0 \stackrel{\beta_0}\Rightarrow b\F v_1 \in \cc{J}$ such that the following diagram commutes

\begin{equation}\label{*}
\xymatrix{\G(a)\G(\F(u_0))(x)\ar[r]^{\G(\alpha_0)_x} \ar[d]_{\G(a)(r_0)} & \G(b)\G(\F(u_1))(x) \ar[d]^{\G(b)(r_1)} \\
          \G(a)\G(\F(v_0))(x')\ar[r]^{\G(\beta_0)_{x'}} & \G(b)\alpha(\F(v_1))(x')  }
\end{equation}

By CF0, $\exists \ j\stackrel{c}\rightarrow \F i_2 \in \cc{J}$. By using \ref{corolario2} two times for $\F i_0 \stackrel{ca}\rightarrow\F i_2$ and \mbox{$\F i_1 \stackrel{cb}\rightarrow\F i_2$} respectively, we have $\vcenter{\xymatrix@R=-0.5pc{i_0\ar[rd]^{u_2} & \\ & i_3 \\ i_2\ar[ru]_{v_2} & }}$, $\vcenter{\xymatrix@R=-0.5pc{i_1\ar[rd]^{u_3} & \\ & i_4 \\ i_2\ar[ru]_{v_3} & }} \in \cc{I}$ and invertible \mbox{2-cells} \mbox{$\F u_2 \stackrel{\alpha_1}\Rightarrow\F (v_2) ca$} and $\F u_3 \stackrel{\beta_1}\Rightarrow\F(v_3) cb \in \cc{J}$.

Now, consider $\vcenter{\xymatrix@R=-0.5pc{& \F i_3  \\ \F i_2 \ar[ru]^{\F v_2 }\ar[rd]_{\F v_3 } & \\ & \F i_4 }} \in \cc{J}$. By \ref{obs1}, we have $\vcenter{\xymatrix@R=-0.5pc{i_3\ar[rd]^{u_4} & \\ & i_5 \\ i_4\ar[ru]_{v_4} & }} \in \cc{I}$ and an invertible 2-cell $\F u_4 \F v_2 \stackrel{\gamma_0}\Rightarrow \F v_4 \F v_3 \in \cc{J}$.

Then we have the following configuration

$$\xymatrix@R=.8pc@C=.5pc{\F i \ar[rd]^{\F u_0 } \\
                          & \F i_0 \ar[ddr]^{a}\ar@/^1.5pc/[rrrrd]^{\F u_2 } \\ 
                          \F i' \ar[ru]_{\F v_0 } & & \ar@{}[rr]|{\cong \; \Downarrow \; \alpha_1} &  & & \F i_3  \ar[rd]^{\F u_4 } \\ 
                          & & j \ar[rr]^{c} & & \F i_2  \ar[rd]_{\F v_3 } \ar[ru]^{\F v_2 } \ar@{}[rr]|{\cong \; \Downarrow \; \gamma_0 } & & \F i_5  \\ 
                          \F i  \ar[rd]^{\F u_1 } & & \ar@{}[rr]|{\cong \; \Downarrow \; \beta_1} & & & \F i_4  \ar[ru]_{\F v_4 } \\
                          & \F i_1  \ar[ruu]_{b} \ar@/_1.5pc/[rrrru]_{\F u_3 } \\
                          \F i' \ar[ru]_{\F v_1 }}$$

Then we have $\vcenter{\xymatrix@R=-0.5pc{i_0\ar[rd]^{u_4u_2} \\ & i_5 \\ i_1\ar[ru]_{v_4u_3}}} \in \cc{I}$ and invertible 2-cells $\F(u_5u_0)\stackrel{\alpha_2}\Rightarrow \F(v_5u_1)$, \mbox{$\F(u_5v_0)\stackrel{\beta_2}\Rightarrow \F(v_5v_1) \in \cc{J}$} given by the following compositions:

$$\xymatrix@C=-0pc{\F u_4  \did && \F u_2  \op{\alpha_1} && \F u_0  \did \\
						\F u_4  \ar@<-2pt>@{-}[d]+<4pt,8pt> \ar@{}[dr]|{\gamma_0} & \F v_2  \ar@<2pt>@{-}[d]+<-4pt,8pt> & c \did & \ \ a \ \ \ar@<-2pt>@{-}[d]+<4pt,8pt> \ar@{}[dr]|{\alpha_0} & \F u_0  \ar@<2pt>@{-}[d]+<-4pt,8pt> \\
						\F v_4  \did & \F v_3  & c & \ \ b \ \ & \F u_1  \did \\
						\F v_4  && \F u_3  \cl{{\beta_1}^{-1}} && \F u_1   } \ \xymatrix@C=-.4pc{\\ \\,} \ \ \ \
\xymatrix@C=-0pc{\F u_4  \did && \F u_2  \op{\alpha_1} && \F v_0  \did \\
									\F u_4  \ar@<-2pt>@{-}[d]+<4pt,8pt> \ar@{}[dr]|{\gamma_0} & \F v_2  \ar@<2pt>@{-}[d]+<-4pt,8pt> & c \did & \ \ a \ \ \ar@<-2pt>@{-}[d]+<4pt,8pt> \ar@{}[dr]|{\beta_0} & \F v_0  \ar@<2pt>@{-}[d]+<-4pt,8pt> \\
						\F v_4  \did & \F v_3  & c & \ \ b \ \ & \F v_1  \did \\
						\F v_4  && \F u_3  \cl{{\beta_1}^{-1}} && \F v_1   }$$

It can be checked that $(\F u_5,\F(v_5)\alpha_2,\beta_2)$ is an homotopy between $(\F u_0,r_0,\F v_0)$ and $(\F u_1,r_1,\F v_1)$, so without loss of generality, we can suppose from the beginning that the homotopy between $\h([u_0,r_0,v_0])$ and $\h([u_1,r_1,v_1])$ has the form $(\F u,\F v,\alpha_0,\beta_0))$ where $\vcenter{\xymatrix@R=-0.5pc{i_0\ar[rd]^{u} \\ & i_2 \\ i_1\ar[ru]_v}} \in \cc{I}$.

Let's apply \ref{lemita3} to the triplets $uu_0, vu_1, \alpha_0$ and $uv_0,vv_1,\beta_0$ respectively. So, we have $i_2\stackrel{w_0}\rightarrow i_3$, $i_2\stackrel{z_0}\rightarrow i_4$ and invertible 2-cells $i \cellrd{w_0uu_0}{\delta_0}{w_0vu_1} i_3$, $i' \cellrd{z_0uv_0}{\nu_0}{z_0vv_1} i_4 \in \cc{I}$ such that $\F(w_0)\alpha_0=\delta_0$ and $\F(z_0)\beta_0=\nu_0$.

Then, by \ref{obs1}, there exist morphisms $\vcenter{\xymatrix@R=-0.5pc{i_3\ar[rd]^{w_1} \\ & i_5 \\ i_4\ar[ru]_{z_1}}} \in \cc{I}$ and an invertible 2-cell $\F(i_2) \cellrd{\F(w_1w_0)}{\alpha}{\F(z_1z_0)} \F(i_5) \in \cc{J}$ and, by \ref{lemita3}, we have $i_5\stackrel{w_2}\rightarrow i''$ and an invertible 2-cell $i_2\cellrd{w_2w_1w_0}{\delta}{w_2z_1z_0} i'' \in \cc{I}$ such that $\F(w_2)\alpha=\delta$. 

It can be checked that $(w_2w_1w_0u,w_2z_1z_0v,(\delta vu_1)\circ (w_2w_1\delta_0),(w_2z_1\nu_0)\circ (\delta uv_0))$ is an homotopy between $(u_0,r_0,v_0)$ and $(u_1,r_1,v_1)$ which concludes the proof. 
\end{itemize}                     
\end{proof}

The purpose of the following is twofold. First to construct a cofinite and filtered poset with a unique initial object $\ff{M}(\cc{J})$ associated to a 2-filtered 2-category (\ref{defMJ} and \ref{MJ}). And second, to prove that there is a 2-cofinal 2-functor $\ff{M}(\cc{J})\stackrel{\F}\rightarrow \cc{J}$ (\ref{phi}). This is a \mbox{2-categorical} version  of a result of Deligne \cite[Expose I,8.1.6]{G2}, see also \cite{EH} Mardesick trick. This results are key to prove reindexing properties of 2-pro-objects in section \ref{Mtrick}.    

\begin{definition}\label{defMJ}

\noindent \begin{enumerate}
\item A diagram in a 2-category $\cc{J}$ is a functor $\ff{C} \mr{f} \cc{J}$ from a category $\ff{C}$ to the underlying category of $\cc{J}$. It is said to be finite if $\ff{C}$ is a finite category.
 
 \item $\ff{C}\stackrel{f}\longrightarrow \cc{J}$ is a subdiagram of $\ff{D}\stackrel{g}\longrightarrow \cc{J}$ if there is an injective (on objects and on morphisms) functor $\ff{C}\stackrel{h}\longrightarrow \ff{D}$ such that $gh=f$. If $h$ is an isomorphism of categories we say that the diagrams are isomorphic.
\end{enumerate}
\end{definition}

\begin{remark}
Final objects $c \in \ff{C}$, $d \in \ff{D}$ correspond under isomorphism of diagrams. That is, $h(c) = d$ (thus $f(c) = g(d)$ in $\cc{J}$).
\cqd
\end{remark}

\begin{definition}
Let $\cc{J}$ be a 2-category. We denote by $\ff{M}(\cc{J})$ the poset of equivalence classes (under isomorphism) of finite diagrams over 
$\cc{J}$ ordered by the subdiagram relation (in the sense of subsets, not injections). We assume that all index categories $\ff{C}$ in $\ff{M}(\cc{J})$ have a chosen empty final object denoted 
$*_{\ff{C}}$.  
\end{definition}

\begin{proposition}\label{MJ}
Let $\cc{J}$ be a 2-filtered 2-category. $\ff{M}(\cc{J})$ is cofinite, filtered and has a unique initial object.
\end{proposition}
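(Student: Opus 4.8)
The plan is to check the three asserted properties of $\ff{M}(\cc{J})$ — cofiniteness, the existence of a unique initial object, and filteredness — one at a time, after two preliminary simplifications. First, since a diagram into $\cc{J}$ is by Definition \ref{defMJ} merely a functor into the \emph{underlying category} of $\cc{J}$, the 2-cells of $\cc{J}$ are irrelevant here, and the only instance of 2-filteredness I will actually invoke is axiom F0. Second, for a poset ``filtered'' means simply upward directed (nonempty, with an upper bound for each pair), because any two parallel arrows in a poset coincide and so the coequalizing axiom is vacuous. That $\ff{M}(\cc{J})$ is a poset is immediate: ordering by inclusion of the underlying collections of objects and arrows (``in the sense of subsets'') is reflexive and transitive, and becomes antisymmetric once we pass to isomorphism classes.

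Cofiniteness is the combinatorial core. Given a finite diagram $\ff{C}\mr{f}\cc{J}$, every subdiagram of it is the restriction of $f$ to a subcategory of $\ff{C}$; as $\ff{C}$ has only finitely many objects and arrows it has only finitely many subcategories, so there are only finitely many classes $[g]\le[f]$, which is exactly cofiniteness. For the unique initial object I take the empty diagram, whose index category consists of nothing but the chosen empty final object $*$. By the convention fixing $*$, this diagram includes into every finite diagram as a subdiagram (its final object being carried into the final object of the target), so it is itself an element and a lower bound for all of $\ff{M}(\cc{J})$, hence the least element; and the least element of a poset is its unique initial object.

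It remains to produce upper bounds. Given $\ff{C}_1\mr{f_1}\cc{J}$ and $\ff{C}_2\mr{f_2}\cc{J}$ with final objects $*_1,*_2$ and $\ff{A}_k:=f_k(*_k)$, I would build an index category $\ff{C}$ from the disjoint union $\ff{C}_1\sqcup\ff{C}_2$ by adjoining a single new object $*$ that is terminal (a unique arrow from every object to $*$). Axiom F0 furnishes $\ff{E}\in\cc{J}$ together with arrows $\ff{A}_1\to\ff{E}\leftarrow\ff{A}_2$; setting $f(*)=\ff{E}$, sending each new arrow $*_k\to *$ to $\ff{A}_k\to\ff{E}$, and sending every remaining new arrow $x\to *$ to the corresponding composite, defines a functor $\ff{C}\mr{f}\cc{J}$ into which both $f_1$ and $f_2$ embed as subdiagrams via the inclusions of $\ff{C}_1$ and $\ff{C}_2$ (note that these inclusions do not carry $*_k$ to $*$, which is permitted for subdiagrams although not for isomorphisms of diagrams, cf. the Remark following Definition \ref{defMJ}). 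Thus $[f]$ is a common upper bound, and together with nonemptiness $\ff{M}(\cc{J})$ is filtered. I expect the main obstacle to be precisely this construction: one must verify that terminality of $*$ in $\ff{C}$ forces each composite arrow $x\to *$ to be independent of how it is read off, so that $f$ is a genuine functor, and that each $f_k$ is recovered verbatim as a subcollection of $f$ in the subset ordering.
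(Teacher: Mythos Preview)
Your proof is correct and follows essentially the same approach as the paper: the paper dismisses cofiniteness and the initial object as ``clear'' and then constructs the upper bound exactly as you do, by adjoining a new terminal object to the disjoint union and using F0 to supply its image together with the two structure arrows. Your additional remarks (that only F0 is used, that filtered reduces to upward directed for posets, and the subtlety about final objects not being preserved by subdiagram inclusions) are correct elaborations of points the paper leaves implicit.
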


\begin{proof}
Clearly $\ff{M}(\cc{J})$ is cofinite and has a unique initial object. Let's check that it is filtered: 
Let \mbox{$\ff{C}\stackrel{f}\longrightarrow \cc{J}$} and 
$\ff{D}\stackrel{g}\longrightarrow \cc{J}\in \ff{M}(\cc{J})$. 
Consider the category $\ff{E}$ disjoint union of $\ff{C}$ and $\ff{D}$, and an additional object $*$ together with one morphism  $c \to *$ from each object $ c\in \ff{C}$ or $c\in \ff{D}$. Clearly $* = *_\ff{E}$. Since $\cc{J}$ is 2-filtered, we have 
$\vcenter
     {
      \xymatrix@R=-.5pc
            { f(*_\ff{C}) \ar[rd]^a & \\ 
           &  j 
            \\ g(*_\ff{D})\ar[ru]_b & 
            }
     }
\in \cc{J}$.  
For each $c\in \ff{C}$ (resp. $c \in \ff{D}$), 
$\exists \, ! \  c \stackrel{r_c}\rightarrow *_\ff{C}$ 
(resp. $\exists \, ! \, c \stackrel{r_c}\rightarrow *_\ff{D}$). 
Consider the diagram $\ff{E}\stackrel{h}\longrightarrow \cc{J}$ defined by $h = f$ on $\ff{C}$, $h = g$ on $\ff{D}$, $h(*)=j$, 
$h(c \to *) = a \circ f(r_c)$ for $c\in \ff{C}$, and 
$h(c \to *) = b \circ g(r_c)$ for $c\in \ff{D}$.

It is clear that this diagram is above 
$\ff{C}\stackrel{f}\longrightarrow \cc{J}$ and 
$\ff{D}\stackrel{g}\longrightarrow \cc{J}$.
\end{proof}

\begin{proposition}\label{phi}
Let $\cc{J}$ be a 2-filtered 2-category. There is a 2-cofinal 2-functor $\ff{M}(\cc{J})\stackrel{\F}\rightarrow \cc{J}$ where $\ff{M}(\cc{J})$ is the poset defined in \ref{defMJ} (we are considering $\ff{M}(\cc{J})$ as a trivial 2-category). 
\end{proposition}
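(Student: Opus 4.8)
The plan is to define $\F$ by evaluation at the chosen final object and to deduce the three cofinality axioms directly from the $2$-filtered axioms F0, F1, F2 of \ref{2filtered}. On objects, set $\F(\ff{C}\mr{f}\cc{J}) = f(*_\ff{C})$, the value of the diagram at its chosen final object; this is well defined on isomorphism classes because, by the Remark preceding the definition of $\ff{M}(\cc{J})$, an isomorphism of diagrams carries final objects to final objects, so $f(*_\ff{C}) = g(*_\ff{D})$ whenever the two diagrams are isomorphic. On morphisms, given $i\leq i'$ represented by a subdiagram inclusion $\ff{C}\mr{h}\ff{D}$ with $gh=f$, let $\iota\colon h(*_\ff{C})\to *_\ff{D}$ be the unique arrow of $\ff{D}$ into its final object and put $\F(i\leq i') = g(\iota)\colon f(*_\ff{C})\to g(*_\ff{D})$.

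First I would check that $\F$ is a $2$-functor. Since $\ff{M}(\cc{J})$ is a poset viewed as a trivial $2$-category there are no non-identity $2$-cells, so only functoriality on the underlying categories needs verification. For $i\leq i'\leq i''$ represented by inclusions $\ff{C}\mr{h_1}\ff{D}\mr{h_2}\ff{K}$ with $gh_1=f$ and $kh_2=g$, one expands $\F(i'\leq i'')\circ \F(i\leq i')$ using $g = k h_2$ and finds it equals $k$ applied to an arrow $(h_2h_1)(*_\ff{C})\to *_\ff{K}$; since $*_\ff{K}$ is final this arrow is the unique such, which is precisely what computes $\F(i\leq i'')$. Identities are preserved because the unique endomorphism of a final object is the identity. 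Hence $\F$ is a $2$-functor, and functoriality rests entirely on the universal property of the final objects $*_\ff{C}$.

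Next I would verify the cofinality axioms. For CF0, given $j\in\cc{J}$ take $i$ to be the one-object diagram with value $j$ (its unique object being its own final object); then $\F i = j$ and $id_j\colon j\to \F i$ does the job. For CF1 and CF2 the common device is an extension: given a parallel pair $j\mrpair{a}{b}\F i$ with $\F i = f(*_\ff{C})$, apply F1 (resp.\ F2) of \ref{2filtered} to obtain a morphism $f(*_\ff{C})\mr{\ff{h}}\ff{E}$ together with an invertible $2$-cell $\ff{h}a\Mr{\alpha}\ff{h}b$ (resp.\ with $\ff{h}\alpha=\ff{h}\beta$ for a given pair $j\cellpairrd{a}{\alpha}{\beta}{b}\F i$). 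I then build a new finite diagram $i'$ by adjoining to $\ff{C}$ a fresh final object $*'$, sending $*'$ to $\ff{E}$ and the unique arrow $*_\ff{C}\to *'$ to $\ff{h}$. The inclusion $\ff{C}\hookrightarrow\ff{C}'$ exhibits $i$ as a subdiagram of $i'$, hence a morphism $u\colon i\to i'$ of $\ff{M}(\cc{J})$, and by the definition of $\F$ on morphisms $\F(u)=\ff{h}$. Thus $\alpha$ is an invertible $2$-cell $\F(u)a\Mr{}\F(u)b$, giving CF1, while $\F(u)\alpha=\ff{h}\alpha=\ff{h}\beta=\F(u)\beta$ gives CF2.

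The routine but delicate points, where I expect the real work to lie, are two. First, one must check that adjoining $*'$ really produces a legitimate finite diagram in which $*'$ is terminal: every object $x$ of $\ff{C}$ acquires the unique arrow $x\to *_\ff{C}\to *'$, whose image under $f'$ is forced to be $\ff{h}\circ f(x\to *_\ff{C})$, so that $f'$ is a well-defined functor and $\ff{C}$ sits inside $\ff{C}'$ as a genuine subdiagram with unchanged values. Second, and most importantly, the well-definedness of $\F$ on the order relation and its functoriality both hinge on the uniqueness of arrows into the chosen final objects; this is the one place where the structure of $\ff{M}(\cc{J})$ (finite diagrams equipped with a chosen final object) is used in an essential way, and it is what makes the passage from F0, F1, F2 to CF0, CF1, CF2 immediate.
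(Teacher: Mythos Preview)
Your proposal is correct and follows essentially the same approach as the paper's own proof: define $\F$ by evaluation at the chosen final object, use uniqueness of maps into final objects for functoriality, and verify CF0, CF1, CF2 by the one-object diagram and by adjoining a fresh terminal object after invoking F1 or F2. The only cosmetic difference is that the paper freely adds one arrow $c\to *$ from each object of $\ff{C}$, whereas you add a single arrow $*_\ff{C}\to *'$ and obtain the rest as composites through $*_\ff{C}$; both yield the same extended diagram with $\F(u)=\ff{h}$.
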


\begin{proof}
The 2-functor $\F$ is defined as follows:

\begin{itemize}
 \item[-] $\F(\ff{C}\stackrel{f}\longrightarrow \cc{J})=f(*_\ff{C})$.

 \item[-] If $\ff{C}\stackrel{f}\longrightarrow \cc{J}$ is a subdiagram of 
$\ff{D}\stackrel{g}\longrightarrow \cc{J}$ via $\ff{C}\mr{\ff{h}}\ff{D}$, 
$\exists \ !\ h(*_\ff{C}) \stackrel{r}\rightarrow *_\ff{D}$. Then 
$\F(\ff{C}\stackrel{f}\rightarrow \cc{J}$ $\;\leq\;\ff{D}\stackrel{g}\rightarrow \cc{J})=f(*_\ff{C})=g(h(*_\ff{C})) \stackrel{g(r)}\rightarrow g(*_\ff{D})$.

 \item[-] The 2-cells are the identities so they go to the corresponding identities.     

\end{itemize}
Let's check that $\F$ is 2-cofinal:

\begin{itemize}
 \item[CF0.] Let $j \in \cc{J}$. Then $\F(\{*\}\stackrel{j}\rightarrow \cc{J}) = j$. 

\item[CF1.] Let $j \in \cc{J}$, 
$\ff{C}\stackrel{f}\longrightarrow \cc{J}\in \ff{M}(\cc{J})$ and $j\mrpair{a}{b}f(*_\ff{C}) \in \cc{J}$. Since $\cc{J}$ is 2-filtered, we have $f(*_\ff{C})\stackrel{e}\rightarrow j' \in \cc{J}$ and an invertible 2-cell $j \cellrd{e a}{\alpha} {e b} j' \in \cc{J}$. Consider the category 
$\ff{D}$ disjoint union of $\ff{C}$ and $\{*\}$, with a morphism from each object of $\ff{C}$ to $*$. Clearly $* = *_\ff{D}$. Consider the diagram 
$\ff{D}\stackrel{g}\longrightarrow \cc{J}$ where $g$ is defined by $g = f$ in $\ff{C}$, $g(*) = j'$, and $g(c \to *) =e \circ f(r_c)$, where $r_c$ is the unique morphism $c \to *_\ff{C} $ in $\ff{C}$.
$\ff{C}\stackrel{f}\longrightarrow \cc{J}$ is a subdiagram of 
$\ff{D}\stackrel{g}\longrightarrow \cc{J}$ and 
$\ff{C}\stackrel{f}\longrightarrow \cc{J}$ $\;\leq\;$ 
$\ff{D}\stackrel{g}\longrightarrow \cc{J}$ is sent by $\F$ to 
$f(*_\ff{C})\stackrel{e}\rightarrow g(*) = j'$.

\item[CF2.] Let $j\in \cc{J}$, $\ff{C}\stackrel{f}\longrightarrow \cc{J}$ in $\ff{M}(\cc{J})$, and $j \cellpairrd{a}{\alpha}{\beta}{b} f(*_\ff{C})$ in 
$\cc{J}$. Since $\cc{J}$ is 2-filtered, we have $f(*_\ff{C})\stackrel{e}\rightarrow j'$ in $\cc{J}$ such that 
$e\,\alpha = e\,\beta$. The proof follows in the same way that for CF1.
\end{itemize}
\end{proof}
\subsection{2-functor associated to a pseudo-functor}\label{A sombrero}

In this subsection we establish a result of independent interest and that will be needed to prove that $\Prop{C}$ is a closed 2-bmodel 2-category (see \ref{2-closed}) provided that $\cc{C}$ is (\ref{Propde2modelos}). Our construction of $\hat{\ff{A}}$ and $\ff{T}$ are inspired in the constructions for the same purpose that can be found in \cite{GRAY} or \cite{DH}. We think the construction made in \cite{GRAY} has a slight mistake because the value of $\ff{T}$ in 2-cells is not considered. 
Our case is simpler because we are only interested in the case of filtered categories and we consider pseudo-functors instead of lax-functors. A reference to the validity of this result is made in \cite{L}. We are going to use the results of this subsection only for cofinite filtered posets. 

\begin{proposition}\label{proposition_a}
 Any category $\ff{A}$ together with a class $\ff{B}$ of pairs of arrows $A \mrpair{f}{g} B \in \ff{A}$ closed under composition and containing all pairs with $f=g$ (note that $\ff{B}$ is a category) determine a 2-category $\hat{\ff{A}}$ as follows:
 
 Objects and arrows are those of $\ff{A}$ and we add a 2-cell $A \cellrd{f}{\theta_{g,f}}{g} B$ for each pair $A \mrpair{f}{g} B \in \ff{B}$, subject to the equations

\begin{itemize}
  \item[-] $\theta_{f,f}=id_f$
  \item[-] $\theta_{h,g}\circ \theta_{g,f}=\theta_{h,f}$ \begin{equation}\label{ecuacionestheta} \end{equation}
  \vspace{-1.1cm}   
  \item[-] $\theta_{g',f'}\theta_{g,f}=\theta_{g'g,f'f}$
 \end{itemize}

 Note that $\theta_{g,f}^{-1}=\theta_{f,g}$ (every 2-cell is invertible).  \cqd 
\end{proposition}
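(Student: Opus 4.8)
The plan is to verify the 2-category axioms directly, exploiting the crucial feature that the construction is \emph{locally thin}: between any two parallel arrows $f,g\colon A\to B$ of $\ff{A}$ there is at most one 2-cell of $\hat{\ff{A}}$, namely $\theta_{g,f}$, which is present exactly when $(f,g)\in\ff{B}$. Once this is noticed, every axiom of a 2-category that is an \emph{equation between 2-cells} — associativity and unit laws for vertical composition, associativity and unit laws for horizontal composition, and the interchange law — holds for free: both sides of any such equation are 2-cells sharing the same source and target 1-cell, hence they coincide. Thus the whole proof reduces to two tasks: exhibiting the composition and unit operations, and checking that each proposed composite is actually \emph{defined}, i.e. that the pair indexing it again lies in $\ff{B}$.

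First I would treat the hom-categories. For fixed $A,B$ I let $\hat{\ff{A}}(A,B)$ have the arrows $f\colon A\to B$ as objects and a single arrow $\theta_{g,f}\colon f\to g$ whenever $(f,g)\in\ff{B}$. Reflexivity of $\ff{B}$ (it contains every pair $(f,f)$) provides the identities $\theta_{f,f}=id_f$, and the rule $\theta_{h,g}\circ\theta_{g,f}=\theta_{h,f}$ defines vertical composition. For this to land in $\hat{\ff{A}}(A,B)$ one needs $(f,h)\in\ff{B}$ whenever $(f,g),(g,h)\in\ff{B}$; that is, the relation ``$(f,g)\in\ff{B}$'' must be transitive on each hom-set $\ff{A}(A,B)$. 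Together with reflexivity this makes $\hat{\ff{A}}(A,B)$ a preorder, hence automatically an associative and unital category.

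Next I would define horizontal composition as the functor $\hat{\ff{A}}(B,C)\times\hat{\ff{A}}(A,B)\to\hat{\ff{A}}(A,C)$ sending $(g,f)\mapsto gf$ on objects and $(\theta_{g',f'},\theta_{g,f})\mapsto\theta_{g'g,f'f}$ on 2-cells, the latter being the displayed equation $\theta_{g',f'}\theta_{g,f}=\theta_{g'g,f'f}$. Here well-definedness is exactly the hypothesis that $\ff{B}$ is closed under composition: from $(f,g)\in\ff{B}$ and $(f',g')\in\ff{B}$ one obtains $(f'f,g'g)\in\ff{B}$. Whiskering by an arrow is the special case in which one of the two factors is an identity 2-cell. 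Functoriality of this operation (preservation of identities via $\theta_{g,g}\,\theta_{f,f}=\theta_{gf,gf}=id_{gf}$, and preservation of vertical composites) together with the interchange law then require nothing further: they are equalities of 2-cells with matching boundaries and are therefore automatic by local thinness, the horizontal unit being $id_{id_A}=\theta_{id_A,id_A}$.

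Finally, for the closing remark that every 2-cell is invertible I would use symmetry of $\ff{B}$: if $(f,g)\in\ff{B}$ then $(g,f)\in\ff{B}$, so $\theta_{f,g}$ exists and $\theta_{f,g}\circ\theta_{g,f}=\theta_{f,f}=id_f$, $\theta_{g,f}\circ\theta_{f,g}=\theta_{g,g}=id_g$, whence $\theta_{g,f}^{-1}=\theta_{f,g}$. The main, and indeed essentially the only, obstacle is the bookkeeping of well-definedness in the two composition steps: closure of $\ff{B}$ under composition handles horizontal composites, while vertical composition and the invertibility remark additionally require that the relation cut out by $\ff{B}$ be an equivalence relation on each hom-set compatible with composition — precisely the content of the parenthetical ``$\ff{B}$ is a category'' read as a congruence on $\ff{A}$. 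Once these closure properties are in hand, local thinness disposes of all the remaining coherence in one stroke.
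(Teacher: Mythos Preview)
The paper does not actually prove this proposition: it is stated as a construction and closed with the end-of-statement box, leaving the verification to the reader. Your argument is exactly the kind of verification intended, and the organizing observation --- that $\hat{\ff{A}}$ is \emph{locally thin}, so every 2-cell equation with matching boundary holds automatically --- is the right way to dispose of associativity, units, and interchange in one stroke.

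Your treatment of well-definedness is also on target, and you are right to flag that the literal hypotheses (``closed under composition'' and ``contains all pairs with $f=g$'') by themselves only guarantee that horizontal composites of generators are again generators; the vertical composite $\theta_{h,g}\circ\theta_{g,f}=\theta_{h,f}$ and the inverse $\theta_{g,f}^{-1}=\theta_{f,g}$ tacitly require that the relation cut out by $\ff{B}$ on each hom-set be transitive and symmetric. Reading the parenthetical ``$\ff{B}$ is a category'' as shorthand for ``$\ff{B}$ is a congruence on $\ff{A}$'' is a reasonable interpretation, and it is exactly what holds in the paper's sole application (Proposition~\ref{Asombrero}), where $\ff{B}$ is given by equality of composites and is manifestly an equivalence relation compatible with composition. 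One could alternatively read the proposition as a generators-and-relations presentation, in which case composites like $\theta_{h,g}\circ\theta_{g,f}$ exist as formal 2-cells regardless and the displayed equations are imposed whenever all terms are generators; the closing remark about inverses then still needs symmetry of $\ff{B}$. Either way, your identification of the needed closure properties is the substantive content, and once they are in hand your local-thinness argument finishes the job.
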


A lax-functor is defined by the same data that a pseudo-functor but without requiring the structural 2-cells to be invertible.

Let $\cc{A}\mr{\ff{F}}\cc{C}$ be a lax-functor. Then, given any tuple of composable arrows, iterating structural 2-cells determines 2-cells from the composition of the values of $\ff{F}$ to the value of $\ff{F}$ in the composition. It easily follows from the associativity axiom that all possible iterations are equal. Thus:

\begin{proposition}\label{lemitadepseudofuntores}
Given any tuple $\ff{f}=(\ff{f}_1,\ff{f}_2,...,\ff{f}_n)$ of composable arrows \mbox{$\ff{A}_0\mr{\ff{f}_1}\ff{A}_1...\mr{\ff{f}_n} \ff{A}_{n} \in \cc{A}$}, there is a well defined (structural) 2-cell $\ff{F}\ff{A}_0 \cellrd{\ff{F}\ff{f}_n...\ff{F}\ff{f}_1}{\theta_{\ff{f}}}{\ff{F}(\ff{f}_n...\ff{f}_1)} \ff{F}\ff{A}_n$. \cqd  
\end{proposition}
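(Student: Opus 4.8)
The plan is to prove, by induction on $n$, the stronger statement that \emph{every} iteration of the binary structural 2-cells $\alpha^{\ff{F}}_{-,-}$ applied to the horizontal composite $\ff{F}\ff{f}_n\cdots\ff{F}\ff{f}_1$ produces one and the same 2-cell into $\ff{F}(\ff{f}_n\cdots\ff{f}_1)$; that common value is then christened $\theta_{\ff{f}}$. First I would make the notion of iteration precise: an iteration is a complete bracketing of the word $\ff{f}_n\cdots\ff{f}_1$ together with a choice of iteration for each of its two top-level brackets, the final step being a single application of $\alpha^{\ff{F}}_{\ff{f}_k\cdots\ff{f}_1,\ \ff{f}_n\cdots\ff{f}_{k+1}}$ at the splitting position $k$, horizontally whiskered by the identities of the two already-formed factors $\ff{F}(\ff{f}_n\cdots\ff{f}_{k+1})$ and $\ff{F}(\ff{f}_k\cdots\ff{f}_1)$. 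The base cases $n=1$ (where $\theta_{\ff{f}}=id_{\ff{F}\ff{f}_1}$) and $n=2$ (where $\theta_{\ff{f}}=\alpha^{\ff{F}}_{\ff{f}_1,\ff{f}_2}$ is the unique iteration) are immediate.

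For the inductive step I would argue as follows. By the induction hypothesis the two sub-iterations producing $\ff{F}(\ff{f}_n\cdots\ff{f}_{k+1})$ and $\ff{F}(\ff{f}_k\cdots\ff{f}_1)$ are each uniquely determined, so an iteration of the whole tuple is completely pinned down by its top-level splitting position $k\in\{1,\dots,n-1\}$. It therefore suffices to show that the resulting 2-cell does not depend on $k$, and since any two splitting positions are joined by a chain of consecutive ones, it is enough to compare a split at $k$ with a split at $k+1$. Here I would use the freedom granted by the induction hypothesis to arrange the sub-iterations so that the comparison is localized: for the split at $k$, choose the left factor $\ff{F}(\ff{f}_n\cdots\ff{f}_{k+1})$ to be formed by first building $\ff{F}(\ff{f}_n\cdots\ff{f}_{k+2})$ and then merging in $\ff{F}\ff{f}_{k+1}$; for the split at $k+1$, choose the right factor $\ff{F}(\ff{f}_{k+1}\cdots\ff{f}_1)$ to be formed by first building $\ff{F}(\ff{f}_k\cdots\ff{f}_1)$ and then merging in $\ff{F}\ff{f}_{k+1}$.

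With these choices the difference between the two iterations is exactly an instance of the associativity axiom for $\ff{F}$ (\ref{pseudo-functor}) applied to the composable triple $\ff{h}=\ff{f}_n\cdots\ff{f}_{k+2}$, $\ff{g}=\ff{f}_{k+1}$, $\ff{f}=\ff{f}_k\cdots\ff{f}_1$ --- the axiom being stated there for an arbitrary composable triple, not merely for atomic arrows, so it applies verbatim. The only bookkeeping is to check that this associativity square, horizontally whiskered by the identities on the factors $\ff{F}(\ff{f}_n\cdots\ff{f}_{k+2})$ and $\ff{F}(\ff{f}_k\cdots\ff{f}_1)$ common to both sides, composes with the (by induction unique) sub-2-cells to yield precisely the two candidate iterations; this is a routine application of the elevator calculus (the interchange identity \eqref{basicelevator}). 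This establishes equality for consecutive splits, hence for all splits, completing the induction and defining $\theta_{\ff{f}}$ unambiguously.

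The main obstacle is not conceptual but organizational: cleanly formalizing the recursive notion of ``iteration'' and verifying that the whiskered associativity axiom genuinely matches the difference between two adjacent splits once the common sub-2-cells are stripped away. In effect this is the Mac Lane associativity-coherence argument transcribed to the structural 2-cells of a lax functor, and the whiskering and interchange bookkeeping is where one must be attentive; the unit 2-cells $\alpha^{\ff{F}}_{\ff{C}}$ play no role, since no identity arrows occur among the $\ff{f}_i$.
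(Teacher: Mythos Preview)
Your proof is correct and is precisely a fleshed-out version of the paper's argument: the paper simply states, just before the proposition, that ``it easily follows from the associativity axiom that all possible iterations are equal'' and then places the $\Box$ at the end of the statement itself. Your inductive comparison of adjacent top-level splits via the associativity axiom for a lax functor is exactly the Mac Lane-style coherence argument that substantiates this one-line claim.
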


\begin{proposition}
 Let $\cc{A}\mr{\ff{F}}\cc{C}$ be a pseudo-functor and $\ff{f}=(\ff{f}_1,\ff{f}_2,...,\ff{f}_n)$, \mbox{$\ff{g}=(\ff{g}_1,\ff{g}_2,...,\ff{g}_m)$} with $\ff{A}\mr{\ff{f}_1}\ff{A}_1...\mr{\ff{f}_n} \ff{B}$, $\ff{A}\mr{\ff{g}_1}\ff{B}_1...\mr{\ff{g}_m} \ff{B} \in \cc{A}$ be such that $\ff{f}_n ...\ff{f}_2 \ff{f}_1=\ff{g}_m ...\ff{g}_2 \ff{g}_1$. Then there is a well defined 2-cell $\ff{F}\ff{A}\cellrd{\ff{F}\ff{f}_n ... \ff{F}\ff{f}_1}{\theta_{\ff{g},\ff{f}}}{\ff{F}\ff{g}_m ...\ff{F}\ff{g}_1} \ff{F}\ff{B}$. These 2-cells satisfy equations \eqref{ecuacionestheta}. 

\end{proposition}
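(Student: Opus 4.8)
The plan is to define the comparison 2-cell by vertically composing the two structural 2-cells supplied by Proposition~\ref{lemitadepseudofuntores}. Writing $\overline{\ff{f}} = \ff{f}_n\cdots\ff{f}_1$ and $\overline{\ff{g}} = \ff{g}_m\cdots\ff{g}_1$, so that $\overline{\ff{f}}=\overline{\ff{g}}$ by hypothesis, Proposition~\ref{lemitadepseudofuntores} gives structural 2-cells $\theta_{\ff{f}}\colon \F\ff{f}_n\cdots\F\ff{f}_1 \Rightarrow \F\overline{\ff{f}}$ and $\theta_{\ff{g}}\colon \F\ff{g}_m\cdots\F\ff{g}_1 \Rightarrow \F\overline{\ff{g}}=\F\overline{\ff{f}}$, both invertible since $\F$ is a pseudo-functor. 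I would then set
\[
\theta_{\ff{g},\ff{f}} \;=\; \theta_{\ff{g}}\inv \circ \theta_{\ff{f}} \colon\ \F\ff{f}_n\cdots\F\ff{f}_1 \Rightarrow \F\ff{g}_m\cdots\F\ff{g}_1 .
\]
This is well defined and invertible (with inverse $\theta_{\ff{f},\ff{g}}$) precisely because, by Proposition~\ref{lemitadepseudofuntores}, each $\theta_{\ff{f}}$ is independent of the chosen iteration of structural cells.

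The first two equations of \eqref{ecuacionestheta} are then immediate cancellations. Taking $\ff{g}=\ff{f}$ gives $\theta_{\ff{f},\ff{f}}=\theta_{\ff{f}}\inv\circ\theta_{\ff{f}}=id$. For three tuples $\ff{f},\ff{g},\ff{h}$ with a common composite, the middle structural cell cancels:
\[
\theta_{\ff{h},\ff{g}}\circ\theta_{\ff{g},\ff{f}} = (\theta_{\ff{h}}\inv\circ\theta_{\ff{g}})\circ(\theta_{\ff{g}}\inv\circ\theta_{\ff{f}}) = \theta_{\ff{h}}\inv\circ\theta_{\ff{f}} = \theta_{\ff{h},\ff{f}} .
\]

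The real work, and the step I expect to be the main obstacle, is the horizontal-composition law $\theta_{\ff{g}',\ff{f}'}\,\theta_{\ff{g},\ff{f}} = \theta_{\ff{g}'\ff{g},\,\ff{f}'\ff{f}}$, since here the two sides are assembled from structural cells of differently sliced tuples. I would first record the coherence factorization: for a concatenation $\ff{f}'\ff{f}$ the uniqueness of iterated structural cells (the associativity coherence invoked just before Proposition~\ref{lemitadepseudofuntores}) yields $\theta_{\ff{f}'\ff{f}} = \alpha^{\F}_{\overline{\ff{f}},\overline{\ff{f}'}}\circ(\theta_{\ff{f}'}\theta_{\ff{f}})$, and likewise for $\ff{g}'\ff{g}$. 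Because $\overline{\ff{f}}=\overline{\ff{g}}$ and $\overline{\ff{f}'}=\overline{\ff{g}'}$, the two composition constraints coincide, $\alpha^{\F}_{\overline{\ff{f}},\overline{\ff{f}'}}=\alpha^{\F}_{\overline{\ff{g}},\overline{\ff{g}'}}=:\alpha$.

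Then, using that the inverse of a horizontal composite of invertible 2-cells is the horizontal composite of the inverses, and finally the interchange law \eqref{basicelevator},
\[
\theta_{\ff{g}'\ff{g},\,\ff{f}'\ff{f}} = (\theta_{\ff{g}'}\theta_{\ff{g}})\inv\circ\alpha\inv\circ\alpha\circ(\theta_{\ff{f}'}\theta_{\ff{f}}) = (\theta_{\ff{g}'}\inv\theta_{\ff{g}}\inv)\circ(\theta_{\ff{f}'}\theta_{\ff{f}}) = (\theta_{\ff{g}'}\inv\circ\theta_{\ff{f}'})(\theta_{\ff{g}}\inv\circ\theta_{\ff{f}}) = \theta_{\ff{g}',\ff{f}'}\,\theta_{\ff{g},\ff{f}} ,
\]
as required. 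The only genuinely delicate point is the index bookkeeping in the factorization lemma, which is safely absorbed by the coherence already established for Proposition~\ref{lemitadepseudofuntores}; everything else is vertical/horizontal cancellation and interchange.
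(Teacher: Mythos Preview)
Your proposal is correct and uses exactly the same definition as the paper, $\theta_{\ff{g},\ff{f}} = \theta_{\ff{g}}^{-1}\circ\theta_{\ff{f}}$; in fact the paper's proof consists of that single line and leaves the verification of equations~\eqref{ecuacionestheta} to the reader. Your additional checks of the three equations, including the use of the factorization $\theta_{\ff{f}'\ff{f}} = \alpha^{\F}_{\overline{\ff{f}},\overline{\ff{f}'}}\circ(\theta_{\ff{f}'}\theta_{\ff{f}})$ and interchange for the horizontal law, are the natural way to fill in what the paper omits.
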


\begin{proof}
Define $\theta_{\ff{g},\ff{f}}= \theta_{\ff{g}}^{-1} \circ \theta_{\ff{f}}$.
\end{proof}

%It is convenient in the following proposition to denote the structural 2-cells with the same letter than the function on objects and on arrows. 

\begin{proposition}\label{Asombrero}
 Let $\ff{A}$ be a category. There exist a 2-category $\hat{\ff{A}}$ and a pseudo-functor $\ff{A}\mr{\ff{T}}\hat{\ff{A}}$ such that for each 2-category $\cc{C}$, there is an isomorphism of \mbox{2-categories} 
 
 $$\cc{H}om_p(\hat{\ff{A}},\cc{C}) \mr{\ff{T}^*} p\cc{H}om_p(\ff{A},\cc{C})$$
 
 $$\xymatrix{\ff{A} \ar[rr]^{\ff{T}} \ar[dr]_{\F} 
             && \hat{\ff{A}} \ar@{-->}[dl]^{\exists ! \hat{\F}}
             \\
             & \cc{C}
             & }$$

Furthermore, if $\ff{A}$ is filtered, then $\ff{T}$ is 2-cofinal. 
\end{proposition}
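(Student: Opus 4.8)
The plan is to realize $\hat{\ff{A}}$ as an instance of the construction in Proposition \ref{proposition_a}, applied not to $\ff{A}$ itself but to the free category on its underlying graph. Concretely, let $\ff{P}$ be the category whose objects are those of $\ff{A}$ and whose arrows $A \to B$ are finite strings $(\ff{f}_1,\ldots,\ff{f}_n)$ of composable arrows of $\ff{A}$ (the empty string being the identity), with composition given by concatenation. Let $\ff{B}$ consist of all parallel pairs of strings $(\ff{f},\ff{g})$ whose composites in $\ff{A}$ coincide, i.e. $\ff{f}_n \cdots \ff{f}_1 = \ff{g}_m \cdots \ff{g}_1$. This $\ff{B}$ contains every diagonal pair and is closed under concatenation, so Proposition \ref{proposition_a} produces a 2-category, which I take to be $\hat{\ff{A}}$; between two parallel strings there is a unique invertible 2-cell, present exactly when they have equal composite. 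I then define $\ff{T}\colon \ff{A} \to \hat{\ff{A}}$ to be the identity on objects and to send an arrow $\ff{f}$ to the length-one string $(\ff{f})$. Since $\ff{T}\ff{g}\,\ff{T}\ff{f} = (\ff{f},\ff{g})$ while $\ff{T}(\ff{g}\ff{f}) = (\ff{g}\ff{f})$ have the same composite, I set $\alpha^{\ff{T}}_{\ff{f},\ff{g}} = \theta_{(\ff{g}\ff{f}),(\ff{f},\ff{g})}$ and $\alpha^{\ff{T}}_{A} = \theta_{(id_A),()}$; the equations \eqref{ecuacionestheta} for the $\theta$'s are exactly the coherence axioms making $\ff{T}$ a pseudo-functor, and a genuinely non-strict one.

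The heart of the proof is the bijection on objects expressing the universal property. Given a pseudo-functor $\F\colon \ff{A} \to \cc{C}$, I would define $\hat{\F}\colon \hat{\ff{A}} \to \cc{C}$ by $\hat{\F}A = \F A$, by sending a string to the strict composite $\hat{\F}(\ff{f}_1,\ldots,\ff{f}_n) = \F\ff{f}_n \cdots \F\ff{f}_1$ (and the empty string to the identity), and by sending the 2-cell $\theta_{\ff{g},\ff{f}}$ to the 2-cell $\theta^{\F}_{\ff{g},\ff{f}}$ built from the iterated structural cells of $\F$ in the proposition immediately preceding \ref{Asombrero}. That this $\hat{\F}$ is a strict 2-functor is precisely the statement that the $\theta^{\F}_{\ff{g},\ff{f}}$ satisfy \eqref{ecuacionestheta}: the first two equations give compatibility with vertical composition and identities of 2-cells, the third with horizontal composition. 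A direct check gives $\hat{\F}\ff{T} = \F$, the structural cell of $\hat{\F}\ff{T}$ at $(\ff{f},\ff{g})$ being $\theta^{\F}_{(\ff{g}\ff{f}),(\ff{f},\ff{g})} = \alpha^{\F}_{\ff{f},\ff{g}}$. Uniqueness follows because any strict $\hat{\G}$ with $\hat{\G}\ff{T} = \F$ is forced on objects and length-one strings, hence on all strings by strict preservation of composition, and hence on every $\theta_{\ff{g},\ff{f}}$, since these factor through the generating composition cells via Proposition \ref{lemitadepseudofuntores}.

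To upgrade the object bijection to an isomorphism of 2-categories I would show that $\ff{T}^*$, which is precomposition with $\ff{T}$ and hence a 2-functor by Remark \ref{Homisbifunctor}, restricts to an isomorphism on each hom-category $\cc{H}om_p(\hat{\ff{A}},\cc{C})(\hat{\F},\hat{\G}) \to p\cc{H}om_p(\ff{A},\cc{C})(\F,\G)$. For this I would check that a pseudo-natural transformation $\hat{\F} \Rightarrow \hat{\G}$ is determined by, and freely reconstructible from, its components $\hat{\theta}_A$ and its 2-cells $\hat{\theta}_{(\ff{f})}$ at length-one strings: since $\hat{\F}$ and $\hat{\G}$ are strict, axiom PN1 forces $\hat{\theta}_{(\ff{f}_1,\ldots,\ff{f}_n)}$ to be the pasting of the $\hat{\theta}_{(\ff{f}_i)}$, while axiom PN2 against the cells $\theta_{\ff{g},\ff{f}}$ holds automatically because $\hat{\F}$ and $\hat{\G}$ send these to the coherence cells $\theta^{\F},\theta^{\G}$. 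The analogous and easier statement for modifications gives bijectivity on 2-cells. I expect this hom-category step to be the main obstacle: it is not deep but requires careful bookkeeping to verify that the reconstructed data satisfies all of PN0--PN2 and PM, and that the correspondence is inverse to $\ff{T}^*$ on the nose, not merely up to isomorphism, as an isomorphism of 2-categories demands.

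Finally, for the filtered case I would note that $\ff{A}$, viewed as a trivial 2-category, is 2-filtered in the sense of \ref{2filtered} precisely when it is filtered, since the only 2-cells are identities (F2 is vacuous and F1 reduces to coequalizing parallel arrows by a further arrow). As $\ff{T}$ is the identity on objects, CF0 holds with identities. For CF1, given parallel strings $a,b\colon j \to \ff{T}i$ in $\hat{\ff{A}}$ with composites $\bar a,\bar b\colon j \to i$ in $\ff{A}$, filteredness yields $u\colon i \to i'$ with $u\bar a = u\bar b$; then $\ff{T}(u)a$ and $\ff{T}(u)b$ have equal composite and so are joined by the required invertible 2-cell. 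CF2 is immediate, because parallel arrows of $\hat{\ff{A}}$ admit at most one 2-cell, so any two 2-cells $a \Rightarrow b$ already coincide. This establishes the 2-cofinality of $\ff{T}$ and completes the plan.
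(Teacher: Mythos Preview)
Your proposal is correct and follows essentially the same approach as the paper: the construction of $\hat{\ff{A}}$ as the free category on the underlying graph of $\ff{A}$ equipped with the unique-2-cell structure of Proposition~\ref{proposition_a}, the definition of $\ff{T}$ and $\hat{\F}$, and the verification of the isomorphism on objects, morphisms and modifications all match the paper's argument. Your justification for CF2 (at most one 2-cell between parallel arrows in $\hat{\ff{A}}$, so $\alpha=\beta$ already) is in fact more precise than the paper's terse ``vacuous since $\ff{A}$ is a category,'' which really amounts to the same observation.
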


\begin{proof}
 We define $\hat{\ff{A}}$ as follows:

\begin{itemize}
 \item[-] Objects of $\hat{\ff{A}}$ are the objects of $\ff{A}$.
 
 \item[-] Morphisms of $\hat{\ff{A}}$ are tuples of composable morphisms of $\ff{A}$. More explicitly, a morphism $A\mr{\f}B$ is a tuple $\f=(f_1,f_2,...,f_n)$ with $A\mr{f_1}A_1...\mr{f_n} B$, $n\geq 0$. 
 
 \item[-] We consider the empty tuple $\emptyset=(-)$ corresponding to $n=0$ as an arrow $A\mr{\emptyset_{A}} A$ for every object $A\in \ff{A}$. 

 \item[-] Composition is given by reverse juxtaposition (with identities $id_A=\emptyset_A$), i.e. $(g_1,g_1,...,g_m)(f_1,f_2,...,f_n)=(f_1,f_2,...,f_n,g_1,g_2,...,g_m)$.

\end{itemize}

We then apply the construction of proposition \ref{proposition_a} with $A \mrpair{\f}{\g} B \in \ff{B}$ iff \mbox{$f_n ... f_1=g_m ... g_1$}, $\f=\emptyset_A$, $\g=(id_A)$ or $\f=(id_A)$, $\g=\emptyset_A$.

\vspace{1ex}

We define $\ff{A}\mr{\ff{T}}\hat{\ff{A}}$ as follows:
 
\begin{itemize}
 \item[-] $\ff{T} A=A$

 \item[-] $\ff{T}(A\mr{f}B)=(f)$. 

\end{itemize}

Note that since $A\mrpair{\emptyset_A}{(id_A)}A \in \ff{B}$ and $A \mrpair{(f,g)}{(gf)} C \in \ff{B}$. Then we have invertible 2-cells $\alpha^{\ff{T}}_A=\theta_{(id_A),\emptyset_A}: id_A\Mr{}(id_A)$ and for $A\mr{f}B\mr{g}C$, $\alpha^{\ff{T}}_{f,g}=\theta_{(gf),(f,g)}: (g)(f)\Mr{} (gf)$.
It immediately follows that $\ff{T}$ given by this data is actually a pseudo-functor.
 
 \vspace{2ex}
 
 Let's check that $\ff{T}^*$ is an isomorphism of 2-categories:
 
\begin{itemize}
 \item[-] On objects: Let $\F \in p\cc{H}om_p(\ff{A},\cc{C})$. We define $\hat{\F}$ as follows:
 
 $$\hat{\F}A=\F A, \quad \hat{\F}(A\mr{\f}B)=\F f_n...\F f_1, \quad \hat{\F}\theta_{\g,\f}=\theta_{\g}^{-1}\circ \theta_{\f} \quad \hbox{ (see \ref{lemitadepseudofuntores}).}$$
 
 It can be easily checked that this data defines a 2-functor $\hat{\F}$ which is unique such that $\hat{\F}\ff{T}=\F$. 
 
 \item[-] On morphisms: Let $\ff{F} \mr{\mu} \ff{G} \in p\cc{H}om_p(\ff{A},\cc{C})$. We define $\hat{\mu}_A=\mu_A$ and $\hat{\mu}_{\f}=\mu_{f_n}\ff{F}f_{n-1}...\ff{F}f_1 \circ \ff{G}f_n \mu_{f_{n-1}} \ff{F}f_{n-1}...\ff{F}f_1\circ ... \circ \ff{G}f_n \ff{G}f_{n-1} ... \ff{G}f_2 \mu_{f_1}$. It can be easily checked that this data gives the unique pseudo-natural transformation such that $\hat{\mu}\ff{T}=\mu$.
 
 \item[-] On 2-cells: Let $\ff{F} \cellrd{\mu}{\rho}{\mu'} \ff{G} \in p\cc{H}om_p(\ff{A},\cc{C})$. We define $\hat{\rho}_A=\rho_A$. It can be easily checked that this data gives the unique modification such that $\hat{\rho} \ff{T}=\rho$.
\end{itemize}
 
 %\vspace{2ex}

Finally, let's check that $\ff{T}$ is 2-cofinal in case $\ff{A}$ is filtered. CF0 is clear and CF2 is vacuous since $\ff{A}$ is a category. CF1: Given $A\in \hat{\ff{A}}$, $B\in \ff{A}$ and two morphisms $A \mrpair{\f=(f_1,...,f_n)}{\g=(g_1,...,g_{m})} B \in \hat{\ff{A}}$, since $\ff{A}$ is filtered, $\exists \ B\mr{h}C \in \ff{A}$ such that $hf_1...f_n=hg_1...g_{m}$. Then $B \mrpair{(f_1,...,f_n,h)}{(g_1,...,g_m,h)} C  \in \ff{B}$ and thus we have an invertible 2-cell $\theta_{(h)\g,(h)\f}: (h)\f \Mr{} (h) \g \in \hat{\ff{A}}$.
 \end{proof}

\begin{remark}
In particular, from \ref{cofinal implica filtrante}, we have that if $\ff{A}$ is filtered, then $\hat{\ff{A}}$ is 2-filtered. 
$\hfill \square$
% \begin{flushright}
% \cqd 
% \end{flushright}

\end{remark} 
 
\begin{remark}
 $\widehat{\ff{A}^{op}}=\hat{\ff{A}}^{op}$. \cqd
\end{remark}

\begin{corollary}\label{pointwise en sombrero}
 Let $\ff{A}$ be a category. Then the 2-category $\cc{H}om_p(\hat{\ff{A}},\cc{C})$ has all bi-limits of pseudo-functors and bi-cotensors and they are computed pointwise. The dual assertion also holds.
\end{corollary}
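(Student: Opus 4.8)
The plan is to transport the pointwise computation of weak limits across the isomorphism of 2-categories $\ff{T}^*\colon \cc{H}om_p(\hat{\ff{A}},\cc{C}) \mr{\cong} p\cc{H}om_p(\ff{A},\cc{C})$ furnished by Proposition \ref{Asombrero}. Since $\ff{T}^*$ is an \emph{isomorphism} of 2-categories (not merely a pseudo- or 2-equivalence), it preserves and reflects every 2-categorical universal construction: the defining universal property of a bi-limit or a bi-cotensor is expressed through equivalences of hom-categories that are 2-natural (resp. pseudo-natural) in the varying object, and $\ff{T}^*$ induces isomorphisms of all such hom-categories. Hence $\cc{H}om_p(\hat{\ff{A}},\cc{C})$ has a given bi-limit of pseudo-functors or a given bi-cotensor precisely when $p\cc{H}om_p(\ff{A},\cc{C})$ does, and the two objects correspond under $(\ff{T}^*)^{-1}$.

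First I would invoke Remark \ref{pointwise en phomp}, which states that bi-limits are computed pointwise in $p\cc{H}om_p(\ff{A},\cc{C})$, together with the dual of Proposition \ref{tensorptoapto}(2), which supplies the pointwise nature of bi-cotensors in $p\cc{H}om_p(\ff{A},\cc{C})$ (the existence of the relevant bi-limits and bi-cotensors in $\cc{C}$ being exactly the hypothesis under which those cited results apply). This settles both existence and pointwise computation on the $p\cc{H}om_p(\ff{A},\cc{C})$ side, leaving only the passage back along $\ff{T}^*$.

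The single point that needs care --- and it is immediate --- is the matching of the two meanings of \emph{pointwise}: evaluation in $p\cc{H}om_p(\ff{A},\cc{C})$ is taken at objects of $\ff{A}$, while in $\cc{H}om_p(\hat{\ff{A}},\cc{C})$ it is taken at objects of $\hat{\ff{A}}$. By the construction in Proposition \ref{Asombrero} the 2-categories $\ff{A}$ and $\hat{\ff{A}}$ have the same objects and $\ff{T}$ is the identity on objects, so for each object $A$ and each $\ff{G}\in \cc{H}om_p(\hat{\ff{A}},\cc{C})$ one has $(\ff{T}^*\ff{G})(A)=\ff{G}(\ff{T}A)=\ff{G}(A)$; that is, evaluation at $A$ commutes strictly with $\ff{T}^*$. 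Consequently the pointwise bi-limit (resp.\ bi-cotensor) of a diagram in $p\cc{H}om_p(\ff{A},\cc{C})$ is carried by $(\ff{T}^*)^{-1}$ to the object of $\cc{H}om_p(\hat{\ff{A}},\cc{C})$ whose value at every $A$ is the bi-limit (resp.\ bi-cotensor) of the evaluated diagram in $\cc{C}$, which is exactly what is to be shown.

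Finally, for the dual assertion --- bi-colimits and bi-tensors computed pointwise --- I would run the identical argument using the bi-colimit half of Remark \ref{pointwise en phomp} and the bi-tensor case of Proposition \ref{tensorptoapto}(2). I do not expect a genuine obstacle: the entire content is a transport of structure along the isomorphism $\ff{T}^*$, and the only verification required is the trivial compatibility of the two evaluation functors recorded above.
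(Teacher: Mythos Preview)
Your proposal is correct and follows essentially the same approach as the paper: the paper's proof simply says ``follows immediately from \ref{Asombrero} plus \ref{pointwisebi-limit} and \ref{tensorptoapto},'' which is exactly your transport-along-$\ff{T}^*$ argument. Your citation of Remark~\ref{pointwise en phomp} rather than Proposition~\ref{pointwisebi-limit} is arguably the more precise reference for the bi-limit case, and your explicit observation that $\ff{T}$ is the identity on objects (so ``pointwise'' means the same thing on both sides) fills in the one detail the paper leaves implicit.
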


\begin{proof}
 The proof follows immediately from \ref{Asombrero} plus \ref{pointwisebi-limit} and \ref{tensorptoapto}.
\end{proof}
\subsection{Further results.}\label{further results}

A. Joyal pointed to us the notion of \emph{flexible} functors, related with some of our results on pseudo-colimits of representable 2-functors. We \mbox{recall} now this notion since it bears some significance for the concept of \mbox{2-pro-object} developed in this thesis. Any \mbox{2-pro-object} determines a 2-functor which is flexible, and some of our results find their right place stated in the context of flexible 2-functors.

\vspace{1ex}

\begin{sinnadastandard}
{\bf Warning.} \emph{In this subsection 2-categories are assumed to be locally small, except the illegitimate constructions $\cc{H}om$ and $\cc{H}om_p$.}
\end{sinnadastandard}

The inclusion $\cc{H}om(\cc{C},\cc{C}at) \mr{i} \cc{H}om_p(\cc{C},\cc{C}at)$ has a left adjoint $(-)' \dashv i$, we refer the reader to \cite{BKP}. The 2-natural counit of this adjunction $\ff{F}' \Mr{\varepsilon_\ff{F}} \ff{F}$ is an equivalence in $\cc{H}om_p(\cc{C},\cc{C}at)$, with a section given by the pseudo-natural unit $\ff{F} \Mr{\eta_\ff{F}} \ff{F'}$,
$\varepsilon_\ff{F} \eta_\ff{F} = 1_\ff{F}$,
$\eta_\ff{F} \varepsilon_\ff{F} \cong 1_\ff{F'}$, \mbox{[\citealp{BKP}, Proposition 4.1.]}

\begin{definition}[]\emph{[\citealp{BKP}, Proposition 4.2]} \label{flexible}
A 2-functor $\cc{C} \mr{\ff{F}} \cc{C}at$ is \emph{flexible} if the counit
$\ff{F'} \Mr{\varepsilon_\ff{F}} \ff{F}$ has a 2-natural section $\ff{F} \Mr{\lambda} \ff{F'}$,
$\varepsilon_\ff{F} \lambda = 1_\ff{F}$,
\mbox{$\lambda \varepsilon_\ff{F} \cong 1_\ff{F'}$}, which determines an equivalence in $\cc{H}om(\cc{C},\cc{C}at)$.
\end{definition}

We state now a useful characterization of flexible 2-functors $\ff{F}$ independent of the left adjoint $(-)'$, the proof will appear elsewhere \cite{DD}.

\begin{proposition} \label{flexiblechar}
A 2-functor $\cc{C} \mr{\ff{F}} \cc{C}at$ is flexible
$\iff$ for all \mbox{2-functors} $\ff{G}$, the inclusion $\cc{H}om(\cc{C},\cc{C}at)(\ff{F},\ff{G}) \mr{i_\ff{G}} \cc{H}om_p(\cc{C},\cc{C}at)(\ff{F},\ff{G})$ has a retraction
$\alpha_\ff{G}$ natural in $\ff{G}$, $\alpha_\ff{G} i_\ff{G} = id$, $i_\ff{G} \alpha_\ff{G} \cong id$, which determines an equivalence of categories. \cqd
\end{proposition}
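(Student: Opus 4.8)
The plan is to exploit the adjunction $(-)' \dashv i$ through its induced isomorphism of categories, natural in $\ff{G}$,
$$\Phi_\ff{G}: \cc{H}om(\cc{C},\cc{C}at)(\ff{F}',\ff{G}) \mr{\cong} \cc{H}om_p(\cc{C},\cc{C}at)(\ff{F},\ff{G}),$$
given on objects by $\Phi_\ff{G}(\psi) = \psi \circ \eta_\ff{F}$ with inverse $\Phi_\ff{G}^{-1}(\phi) = \varepsilon_\ff{G} \circ \phi'$, together with the triangle identity $\varepsilon_\ff{F} \eta_\ff{F} = 1_\ff{F}$ and the naturality of the counit $\varepsilon$. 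I will also use the elementary but decisive observation, immediate from the definition of modification, that $i$ is locally fully faithful: a modification between transformations that happen to be 2-natural satisfies axiom PM iff it satisfies the strict condition $\rho_\ff{D}\ff{F}\ff{f} = \ff{G}\ff{f}\rho_\ff{C}$, so an invertible modification in $\cc{H}om_p$ between 2-natural transformations is automatically one in $\cc{H}om$.

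For $(\Rightarrow)$ I would assume $\ff{F}$ flexible with 2-natural section $\ff{F}\Mr{\lambda}\ff{F}'$, and set $\alpha_\ff{G} := \lambda^* \circ \Phi_\ff{G}^{-1}$, i.e. $\alpha_\ff{G}(\phi) = (\varepsilon_\ff{G}\circ\phi')\circ\lambda$, which is 2-natural and so lands in $\cc{H}om(\ff{F},\ff{G})$; naturality in $\ff{G}$ is inherited from that of $\Phi$ and $\lambda$. To check $\alpha_\ff{G} i_\ff{G} = id$ I would take $\psi$ 2-natural and apply counit naturality $\varepsilon_\ff{G}\circ\psi' = \psi\circ\varepsilon_\ff{F}$ followed by $\varepsilon_\ff{F}\lambda = 1_\ff{F}$, obtaining $\alpha_\ff{G}(\psi)=\psi$. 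For $i_\ff{G}\alpha_\ff{G}\cong id$ I would first prove $\lambda\cong\eta_\ff{F}$ in $\cc{H}om_p$ by whiskering $\eta_\ff{F}\varepsilon_\ff{F}\cong 1_{\ff{F}'}$ with $\lambda$ and invoking $\varepsilon_\ff{F}\lambda = 1_\ff{F}$, then whisker this iso by $\Phi_\ff{G}^{-1}(\phi)$ to get $\Phi_\ff{G}^{-1}(\phi)\circ\lambda\cong\Phi_\ff{G}^{-1}(\phi)\circ\eta_\ff{F} = \phi$, so that the retraction indeed determines an equivalence.

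For $(\Leftarrow)$ I would instantiate the hypothesis at $\ff{G} = \ff{F}'$ and define $\lambda := \alpha_{\ff{F}'}(\eta_\ff{F})$, a 2-natural transformation $\ff{F}\Mr{}\ff{F}'$. Naturality of $\alpha$ in $\ff{G}$ applied to the 2-natural $\varepsilon_\ff{F}: \ff{F}'\Mr{}\ff{F}$ yields $\varepsilon_\ff{F}\circ\lambda = \alpha_\ff{F}(\varepsilon_\ff{F}\circ\eta_\ff{F}) = \alpha_\ff{F}(i_\ff{F}(1_\ff{F})) = 1_\ff{F}$, where the middle step is the triangle identity and the last is $\alpha_\ff{F}i_\ff{F} = id$. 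From $i_{\ff{F}'}\alpha_{\ff{F}'}\cong id$ I get $\lambda\cong\eta_\ff{F}$ in $\cc{H}om_p$, whence $\lambda\varepsilon_\ff{F}\cong\eta_\ff{F}\varepsilon_\ff{F}\cong 1_{\ff{F}'}$; by the local full faithfulness of $i$ this invertible modification already lives in $\cc{H}om$, so $\lambda$ is a 2-natural section of $\varepsilon_\ff{F}$ determining an equivalence in $\cc{H}om(\cc{C},\cc{C}at)$, i.e. $\ff{F}$ is flexible in the sense of Definition \ref{flexible}.

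The main obstacle I anticipate is the bookkeeping of the two ambient 2-categories: every composite and every invertible 2-cell must be tracked as living in $\cc{H}om$ or in $\cc{H}om_p$, and the whole argument turns on the passage between them. The decisive point is precisely the local full faithfulness of $i$, which upgrades the $\cc{H}om_p$-iso $\lambda\varepsilon_\ff{F}\cong 1_{\ff{F}'}$ to a genuine $\cc{H}om$-iso and thereby converts a pseudo-equivalence into the strict equivalence required by the definition of flexibility.
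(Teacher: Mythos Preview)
The paper does not contain a proof of this proposition: immediately before stating it, the text reads ``the proof will appear elsewhere \cite{DD}'', and the proposition is closed with a bare \cqd. There is therefore nothing in the paper to compare your proposal against.

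That said, your argument is sound and is the natural one. The adjunction isomorphism $\Phi_\ff{G}$, the triangle identity $\varepsilon_\ff{F}\eta_\ff{F}=1_\ff{F}$, naturality of $\varepsilon$, and the local full faithfulness of the inclusion $i$ on 2-cells (which you correctly extract from axiom PM when both structural 2-cells $\theta_\ff{f},\eta_\ff{f}$ are identities) are exactly the ingredients needed, and both directions go through as you describe. One small remark: the paper advertises the \emph{statement} as being ``independent of the left adjoint $(-)'$'', not the proof; your use of $(-)'$ in the argument is entirely appropriate and unavoidable given that flexibility is defined in terms of $\varepsilon_\ff{F}$.
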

Let $\cc{H}om(\cc{C},\cc{C}at)_f$ and  $\cc{H}om_p(\cc{C},\cc{C}at)_f$ be the subcategories whose objects are the flexible 2-functors. We have the following corollaries:
\begin{corollary} \label{flexible2=p}
The 2-categories $\cc{H}om(\cc{C},\cc{C}at)_f$ and $\cc{H}om_p(\cc{C},\cc{C}at)_f$ are \emph{pseudoequivalent} in the sense they have the same objects and retract equivalent hom categories. \cqd
\end{corollary}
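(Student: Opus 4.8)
The plan is to read off both required properties directly from the characterization \ref{flexiblechar}, since the genuinely analytic content has already been isolated there (with its proof deferred to \cite{DD}); the corollary is then a bookkeeping matter of matching definitions.

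First I would dispose of the ``same objects'' clause. By construction $\cc{H}om(\cc{C},\cc{C}at)_f$ and $\cc{H}om_p(\cc{C},\cc{C}at)_f$ are the (full) sub-2-categories of $\cc{H}om(\cc{C},\cc{C}at)$ and $\cc{H}om_p(\cc{C},\cc{C}at)$, respectively, whose objects are the flexible 2-functors. Hence their object classes are literally equal, and the identity-on-objects inclusion $\cc{H}om(\cc{C},\cc{C}at)_f \mr{} \cc{H}om_p(\cc{C},\cc{C}at)_f$ is the obvious restriction of the canonical inclusion between the ambient functor 2-categories. This settles the first half of the statement with no further work.

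Next I would fix two flexible 2-functors $\ff{F}$ and $\ff{G}$ and examine the hom categories. By fullness, the hom category from $\ff{F}$ to $\ff{G}$ in $\cc{H}om(\cc{C},\cc{C}at)_f$ is exactly $\cc{H}om(\cc{C},\cc{C}at)(\ff{F},\ff{G})$, and on the pseudo side it is $\cc{H}om_p(\cc{C},\cc{C}at)(\ff{F},\ff{G})$; the inclusion acts on these as the comparison functor $i_\ff{G}$ of \ref{flexiblechar}. Since $\ff{F}$ is flexible, \ref{flexiblechar} furnishes a retraction $\alpha_\ff{G}$, natural in $\ff{G}$, with $\alpha_\ff{G}\, i_\ff{G} = id$ and $i_\ff{G}\, \alpha_\ff{G} \cong id$, which is moreover an equivalence of categories. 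This is precisely the assertion that the two hom categories are retract equivalent, and the naturality of $\alpha_\ff{G}$ in the variable $\ff{G}$ is exactly what guarantees that these retractions cohere with horizontal composition, so that the comparison is a pseudoequivalence in the sense stipulated in the statement rather than a family of unrelated pointwise equivalences.

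The hard part is thus not located in this corollary at all: all the difficulty sits in \ref{flexiblechar}, where one must produce the retraction $\alpha_\ff{G}$ independently of the left adjoint $(-)'$ and verify its naturality. Granting that proposition, the only thing one must be careful about here is the bookkeeping that the full-subcategory hom categories are the same as the ambient ones and that the inclusion restricts to $i_\ff{G}$; once that identification is made, the corollary follows immediately. \cqd
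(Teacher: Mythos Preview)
Your proposal is correct and matches the paper's approach exactly: the corollary is marked \cqd in the paper with no proof text, as it is an immediate consequence of Proposition \ref{flexiblechar}, and your argument unpacks precisely this immediate consequence. Your remark about naturality in $\ff{G}$ ensuring coherence with composition is a slight over-elaboration, since the corollary as stated only asks for retract-equivalent hom categories, but it does no harm and anticipates the sentence the paper adds just after the corollary.
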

By \ref{peqsiipf&f} the inclusion 2-functor
$\cc{H}om(\cc{C},\cc{C}at)_f \mr{} \cc{H}om_p(\cc{C},\cc{C}at)_f$ has the identity (on objects) as a retraction pseudo-quasi-inverse, with the equality as the invertible pseudo-natural transformation $\ff{F} \mr{=} \ff{F}$ in  $\cc{H}om_p(\cc{C},\cc{C}at)_f$.

An important property of flexible 2-functors, false in general, is the \mbox{following}:
\begin{corollary}\label{eqencadaCeseq}
Let $\theta:\ff{G}\Rightarrow \ff{F}\in \cc{H}om(\cc{C},\cc{C}at)_f$ be such that $\theta_\ff{C}:\ff{G}\ff{C}\rightarrow \ff{F}\ff{C}$ is an equivalence of categories for each $\ff{C}\in \cc{C}$. Then, $\theta$ is an equivalence in $\cc{H}om(\cc{C},\cc{C}at)_f$.
\end{corollary}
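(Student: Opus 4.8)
The plan is to lift $\theta$ to the pseudo-functor setting, where pointwise equivalences are automatically equivalences, and then to pull the resulting quasi-inverse back into $\cc{H}om(\cc{C},\cc{C}at)_f$ using the flexibility hypothesis as packaged in Corollary \ref{flexible2=p}.

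First I would regard $\theta$ as a morphism of $\cc{H}om_p(\cc{C},\cc{C}at)$ via the inclusion $\cc{H}om(\cc{C},\cc{C}at) \to \cc{H}om_p(\cc{C},\cc{C}at)$ (a $2$-natural transformation being in particular pseudo-natural). Since each $\theta_{\ff{C}}$ is an equivalence in $\cc{C}at$, Remark \ref{equivalencias pointwise} gives immediately that $\theta$ is an equivalence in $\cc{H}om_p(\cc{C},\cc{C}at)$; as $\ff{F}$ and $\ff{G}$ are flexible, it is in fact an equivalence in the subcategory $\cc{H}om_p(\cc{C},\cc{C}at)_f$.

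Next I would invoke Corollary \ref{flexible2=p}: the inclusion $J\colon \cc{H}om(\cc{C},\cc{C}at)_f \to \cc{H}om_p(\cc{C},\cc{C}at)_f$ is the identity on objects and its action on hom-categories is a retract equivalence, hence an equivalence of categories. Thus $J$ is pseudo-fully-faithful. The corollary then reduces to the general $2$-categorical fact that \emph{a pseudo-fully-faithful $2$-functor reflects equivalences}: since $J\theta = \theta$ is an equivalence in the target, $\theta$ is an equivalence in the source $\cc{H}om(\cc{C},\cc{C}at)_f$.

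Carrying out this reflection is the only genuine work, and I expect it to be the main obstacle (though it is standard bookkeeping). I would argue as follows. Write $J_{\ff{F},\ff{G}}$, $J_{\ff{F},\ff{F}}$, $J_{\ff{G},\ff{G}}$ for the relevant hom-functors, each an equivalence of categories. Choose a pseudo-inverse $\phi\colon \ff{F}\Rightarrow\ff{G}$ of $\theta$ in $\cc{H}om_p(\cc{C},\cc{C}at)_f$ together with invertible modifications $\theta\phi \cong id_{\ff{F}}$ and $\phi\theta \cong id_{\ff{G}}$. Using essential surjectivity of $J_{\ff{F},\ff{G}}$, find a $2$-natural $\psi\colon \ff{F}\Rightarrow \ff{G}$ in $\cc{H}om(\cc{C},\cc{C}at)_f$ with an invertible modification $J\psi \cong \phi$. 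Composing modifications yields invertible $2$-cells $J(\theta\psi) \cong id_{J\ff{F}} = J(id_{\ff{F}})$ and $J(\psi\theta) \cong J(id_{\ff{G}})$ in $\cc{H}om_p(\cc{C},\cc{C}at)_f$. Finally, since $J_{\ff{F},\ff{F}}$ and $J_{\ff{G},\ff{G}}$ are fully faithful and fully faithful functors reflect isomorphisms, these descend to invertible modifications $\theta\psi \cong id_{\ff{F}}$ and $\psi\theta \cong id_{\ff{G}}$ in $\cc{H}om(\cc{C},\cc{C}at)_f$, exhibiting $\psi$ as a quasi-inverse of $\theta$. The points requiring care are exactly that the hom-functors are fully faithful (immediate from being equivalences) and that reflecting invertibility of the comparison $2$-cells is enough to recover the equivalence data defining $\theta^{-1}$.
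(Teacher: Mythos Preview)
Your proof is correct and follows essentially the same route as the paper's: lift to $\cc{H}om_p$ via Remark~\ref{equivalencias pointwise}, use flexibility to replace the pseudo-natural quasi-inverse by a $2$-natural one, and then descend the invertible modifications back to $\cc{H}om$. The only difference is packaging: the paper invokes Proposition~\ref{flexiblechar} directly to obtain the $2$-natural $\eta\cong\eta'$, while you go through Corollary~\ref{flexible2=p} and the general fact that pseudo-fully-faithful $2$-functors reflect equivalences. One minor simplification you could note: for the final descent of the invertible modifications $\theta\psi\cong id_{\ff{F}}$ and $\psi\theta\cong id_{\ff{G}}$, you do not actually need flexibility of $\ff{F}$ or $\ff{G}$; the inclusion $\cc{H}om(\cc{C},\cc{C}at)\to\cc{H}om_p(\cc{C},\cc{C}at)$ is always fully faithful on $2$-cells (a modification between $2$-natural transformations is the same in both $2$-categories), so those isomorphisms pass over automatically. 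Flexibility is only needed to produce the $2$-natural $\psi$.
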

\begin{proof}
It is easy to check that there is a pseudo-natural transformation \mbox{$\eta':\ff{F}\Rightarrow \ff{G}$} such that $\theta\eta'\cong \ff{F}$ and $\eta'\theta\cong \ff{G}$ in $\cc{H}om_p(\ff{F},\ff{F})$ and $\cc{H}om_p(\ff{G},\ff{G})$ respectively. Now, by \ref{flexiblechar}, there is a 2-natural transformation $\eta:\ff{F}\Rightarrow \ff{G}$ such that $\eta\cong \eta'$ in $\cc{H}om_p(\ff{F},\ff{G})$. Then, $\theta\eta\cong \ff{F}$ and $\eta\theta\cong \ff{G}$ in $\cc{H}om(\ff{F},\ff{F})$ and $\cc{H}om(\ff{G},\ff{G})$ respectively and so $\theta$ is an equivalence in $\cc{H}om(\cc{C},\cc{C}at)$.
\end{proof}

\begin{proposition} \label{colimflexible}
Small pseudo-colimits of flexible 2-functors are \mbox{flexible.}
\end{proposition}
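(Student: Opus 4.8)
The plan is to verify the characterization \ref{flexiblechar} for the pseudo-colimit $\ff{L}=\coLim{i\in\cc{I}}{\ff{F}_i}$ of a small diagram of flexible $2$-functors $\ff{F}_i\colon\cc{C}\to\cc{C}at$. First I would record, using \ref{pointwisebi-limit}, that $\ff{L}$ is computed pointwise and is \emph{simultaneously} the pseudo-colimit of the $\ff{F}_i$ in $\cc{H}om(\cc{C},\cc{C}at)$ and in $\cc{H}om_p(\cc{C},\cc{C}at)$. Hence the defining isomorphisms of pseudo-colimits give, for every $2$-functor $\ff{G}$, isomorphisms of categories
$$\cc{H}om(\cc{C},\cc{C}at)(\ff{L},\ff{G})\;\cong\;\Lim{i\in\cc{I}}{\cc{H}om(\cc{C},\cc{C}at)(\ff{F}_i,\ff{G})},\qquad \cc{H}om_p(\cc{C},\cc{C}at)(\ff{L},\ff{G})\;\cong\;\Lim{i\in\cc{I}}{\cc{H}om_p(\cc{C},\cc{C}at)(\ff{F}_i,\ff{G})},$$
natural in $\ff{G}$ and intertwining the inclusion $\iota_\ff{G}$ for $\ff{L}$ with the pseudo-limit of the inclusions $\iota^i_\ff{G}$ for the $\ff{F}_i$; the latter form a strictly $\cc{I}^{op}$-indexed family, the transition functors of the pseudo-limit being precomposition with the $\ff{F}u$.

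Next I would exploit flexibility of each factor. By \ref{flexiblechar} each $\iota^i_\ff{G}$ is an equivalence carrying a strict retraction $\alpha^i_\ff{G}$ natural in $\ff{G}$; since a pseudo-limit of a pseudo-natural transformation that is an equivalence at every index is again an equivalence (by \ref{equivalencias pointwise} applied in the index variable), $\iota_\ff{G}=\Lim{i}{\iota^i_\ff{G}}$ is already an equivalence for every $\ff{G}$. To upgrade this to the \emph{strict} natural retraction demanded by \ref{flexiblechar}, I would pass through the adjunction $(-)'\dashv i$: under the natural isomorphism $\cc{H}om_p(\ff{F}_i,\ff{G})\cong\cc{H}om(\ff{F}_i',\ff{G})$ the inclusion $\iota^i_\ff{G}$ becomes $\varepsilon_{\ff{F}_i}^{*}$, and its retraction becomes $\lambda_i^{*}$, where $\lambda_i$ is a $2$-natural section of the counit $\varepsilon_{\ff{F}_i}$ provided by flexibility. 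Because $(-)'$ is a left $2$-adjoint it preserves the pseudo-colimit, so $\ff{L}'\cong\coLim{i}{\ff{F}_i'}$ and $\varepsilon_\ff{L}\cong\coLim{i}{\varepsilon_{\ff{F}_i}}$; any $2$-natural section $\lambda$ of $\varepsilon_\ff{L}$ then yields the required retraction $\alpha_\ff{G}=\lambda^{*}$, natural in $\ff{G}$, with $\alpha_\ff{G}\iota_\ff{G}=(\varepsilon_\ff{L}\lambda)^{*}$.

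The hard part, and the step I expect to be the main obstacle, is producing this $2$-natural section $\lambda\colon\ff{L}\to\ff{L}'$ with $\varepsilon_\ff{L}\lambda=1_\ff{L}$: the $\lambda_i$ are given only object-wise and need not be natural in the index $i$, so they do \emph{not} automatically form a pseudo-cocone $\{\ff{F}_i\to\ff{L}'\}$ and cannot be assembled as $\coLim{i}{\lambda_i}$ without first arranging compatibility with the transition functors. I would resolve this by a bootstrap that uses the equivalence already obtained in the second step: essential surjectivity of $\iota_{\ff{L}'}$ lifts the pseudo-natural unit section $\eta_\ff{L}\colon\ff{L}\to\ff{L}'$ (which always satisfies $\varepsilon_\ff{L}\eta_\ff{L}=1_\ff{L}$) to a $2$-natural $\lambda$ with $\iota\lambda\cong\eta_\ff{L}$, and then fullness–faithfulness of $\iota_\ff{L}$ transports $\varepsilon_\ff{L}\lambda\cong 1_\ff{L}$ down to $\cc{H}om(\cc{C},\cc{C}at)$, with $\lambda$ an equivalence since $\eta_\ff{L}$ is. The genuinely delicate point is then to promote the isomorphism $\varepsilon_\ff{L}\lambda\cong 1_\ff{L}$ to an honest strict splitting (equivalently, to verify directly the $\cc{I}^{op}$-naturality that legitimizes $\coLim{i}{\lambda_i}$ modulo the pseudo-colimit coherence cells); once this coherence is settled, \ref{flexiblechar} delivers that $\ff{L}$ is flexible.
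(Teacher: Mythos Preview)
Your first two steps track the paper's argument: expressing both hom-categories as pseudo-limits over $\cc{I}^{op}$ and identifying the inclusion $\iota_\ff{G}$ with the pseudo-limit of the componentwise inclusions $\iota^i_\ff{G}$. The paper's entire proof consists of exactly this display together with the remark that the pseudo-limit of retract equivalences (natural in $\ff{G}$) is again a retract equivalence (natural in $\ff{G}$), which it leaves as ``not difficult to check''. That check is elementary and stays entirely inside Proposition~\ref{flexiblechar}: each $\iota^i_\ff{G}$ is a full-subcategory inclusion which is an equivalence, hence the pseudo-limit $\iota_\ff{G}$ is again injective on objects, fully faithful, and essentially surjective (given a pseudo-cone $\{b_i,\psi_u\}$, set $a_i=\alpha^i_\ff{G}(b_i)$ and transport the coherence cells along the isomorphisms $\iota^i_\ff{G}\alpha^i_\ff{G}\cong id$; fully-faithfulness of $\iota^i_\ff{G}$ pulls the transported cells back into the subcategory). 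An injective-on-objects equivalence admits a strict retraction, and this retraction inherits naturality in $\ff{G}$ from that of the $\alpha^i_\ff{G}$ and of the isomorphisms $\iota^i_\ff{G}\alpha^i_\ff{G}\cong id$. No naturality in $i$ of the $\alpha^i$ is used anywhere.

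Your third paragraph takes an unnecessary detour through Definition~\ref{flexible}, and this is where the gap appears. You obtain a $2$-natural $\lambda\colon \ff{L}\to\ff{L}'$ with $\varepsilon_\ff{L}\lambda\cong 1_\ff{L}$, but Definition~\ref{flexible} demands the strict equality $\varepsilon_\ff{L}\lambda = 1_\ff{L}$; you correctly flag this as ``the genuinely delicate point'' but provide no mechanism to close it. Promoting that isomorphism to an equality would require post-composition with $\varepsilon_\ff{L}$ to be an isofibration on $2$-natural transformations, which is not available from the stated hypotheses (it would depend on the concrete construction of $(-)'$ in \cite{BKP}). The paper sidesteps this completely by never leaving the characterization \ref{flexiblechar}: once you have the pseudo-limit display of your second paragraph, the direct verification above finishes the proof with no appeal to $\varepsilon_\ff{L}$, $\eta_\ff{L}$, or the individual sections $\lambda_i$.
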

\begin{proof}
Let $\ff{F} = \coLim{i \in \cc{I}}{\ff{F} i}$, where each $\ff{F} i$ is flexible, and let $\ff{G}$ be any other 2-functor. Set $\cc{A} = \cc{H}om(\cc{C},\cc{C}at)$ and
$\cc{A}_p = \cc{H}om_p(\cc{C},\cc{C}at)$. Then:
$$
\cc{A}(\ff{F}, \ff{G}) \cong
\Lim{i \in \cc{I}}{\cc{A}(\ff{F} i,\, \ff{G})} \mr{i}
\Lim{i \in \cc{I}}{\cc{A}_p(\ff{F} i,\, \ff{G})} \cong
\cc{A}_p(\ff{F}, \ff{G}).
$$
The two isomorphisms are given by definition  \ref{colimits}. The arrow $i$ is the pseudo-limit of the equivalences with retraction quasi-inverses corresponding to each $\ff{F} i$. It is not difficult to check that $i$  is also such an equivalence.
\end{proof}
It follows also from \ref{flexiblechar} that the pseudo-Yoneda lemma (\ref{pseudoYoneda}, \ref{repflexible}) says that the representable 2-functors are flexible, so we have:
\begin{corollary}\label{pseudo=2forpro}
Small pseudo-colimits of representable 2-functors are \mbox{flexible.} \cqd
\end{corollary}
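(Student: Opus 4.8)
The plan is to combine two facts that are already in place: that every representable 2-functor is flexible, and that flexibility is stable under small pseudo-colimits (Proposition \ref{colimflexible}). Since \ref{colimflexible} is stated and proved just above, the corollary reduces to checking that representables are flexible and then applying it.

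First I would verify that each representable 2-functor $\cc{C}(\ff{C},-)$ is flexible by means of the characterization in Proposition \ref{flexiblechar}. That characterization requires, for the fixed 2-functor whose flexibility is in question, that for all 2-functors $\ff{G}$ the inclusion $\cc{H}om(\cc{C},\cc{C}at)(\cc{C}(\ff{C},-),\ff{G}) \mr{i_\ff{G}} \cc{H}om_p(\cc{C},\cc{C}at)(\cc{C}(\ff{C},-),\ff{G})$ admit a retraction $\alpha_\ff{G}$, natural in $\ff{G}$, with $\alpha_\ff{G} i_\ff{G} = id$ and $i_\ff{G}\alpha_\ff{G} \cong id$, and determining an equivalence of categories. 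But this is exactly the assertion of Corollary \ref{repflexible} — itself extracted from the pseudo-Yoneda lemma \ref{pseudoYoneda} via \ref{2Yoneda} — once one renames the free variable $\ff{F}$ of \ref{repflexible} as $\ff{G}$. Thus the hypotheses of \ref{flexiblechar} hold for $\ff{F} = \cc{C}(\ff{C},-)$, and every representable 2-functor is flexible. (This is precisely the remark made in the sentence preceding the corollary.)

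Finally I would invoke \ref{colimflexible}: since a small pseudo-colimit of flexible 2-functors is again flexible, and each representable 2-functor is flexible by the previous step, it follows immediately that any small pseudo-colimit of representable 2-functors is flexible, which is the desired statement.

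I do not expect any genuine obstacle, the result being a direct composition of \ref{flexiblechar}, \ref{repflexible}, and \ref{colimflexible}. The only point demanding minor care is the bookkeeping of variables: the naturality in \ref{repflexible} is in the \emph{target} 2-functor, and one must confirm that this matches the universally quantified $\ff{G}$ appearing in the characterization \ref{flexiblechar}, so that the retraction data supplied by \ref{repflexible} is literally the data called for by \ref{flexiblechar}.
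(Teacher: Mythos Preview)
Your proposal is correct and follows exactly the paper's approach: the sentence preceding the corollary states that \ref{flexiblechar} together with the pseudo-Yoneda lemma (\ref{pseudoYoneda}, \ref{repflexible}) shows representables are flexible, and the corollary is then immediate from \ref{colimflexible}.
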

Note that  \ref{colimflexible} and \ref{pseudo=2forpro} hold for any pseudo-colimit that may exist.

\pagebreak
\begin{center}
 {\bf Resumen en castellano de la secci\'on \ref{prelims}}
\end{center}

En esta secci\'on se fija la notaci\'on que se va a usar a lo largo de toda la tesis y se enuncian las definiciones y los resultados b\'asicos de la teor\'ia de 2-categor\'ias necesarios para este trabajo. 

La mayor\'ia de estos resultados son conocidos. Para aquellos que no hemos encontrado en la literatura, damos demostraciones detalladas. 

En \ref{weak limits and colimits} probamos que los pseudo-l\'imites (c\'onicos) en las 2-categor\'ias de 2-funtores $\cc{H}om(\cc{C},\cc{D})$, $\cc{H}om_p(\cc{C},\cc{D})$ y $p\cc{H}om_p(\cc{C},\cc{D})$ (definici\'on \ref{ccHom}) y los bi-l\'imites en $p\cc{H}om_p(\cc{C},\cc{D})$ se calculan punto a punto. 

En \ref{2-cofinal 2-functors} definimos la noci\'on de pseudo-funtor 2-cofinal entre 2-categor\'ias y probamos ciertas propiedades que usaremos en la secci\'on \ref{Mtrick} para demostrar las propiedades de 
reindexaci\'on de 2-pro-objetos. All\'i construimos un poset cofinito y filtrante con un \'unico objeto inicial $\ff{M}(\cc{J})$ asociado a una 2-categor\'ia 2-filtrante (\ref{defMJ} and \ref{MJ}) y probamos que se tiene un 2-funtor 2-cofinal $\ff{M}(\cc{J})\stackrel{\F}\rightarrow \cc{J}$ (\ref{phi}). 

En \ref{A sombrero} construimos un 2-funtor asociado via un pseudo-funtor 2-cofinal a un pseudo-funtor dado. Este resultado tiene inter\'es independiente y ser\'a usado en la secci\'on \ref{2-modelos}. Nuestra construcci\'on de $\hat{\ff{A}}$ y $\ff{T}$ fueron inspiradas en las construcciones hechas en \cite{GRAY} o \cite{DH}.

A. Joyal nos se\~nal\'o la noci\'on de funtores flexibles, relacionada con algunos resultados de esta tesis acerca de pseudo-col\'imites de 2-funtores representables. Recordamos en \ref{further results} esta noci\'on ya que tiene relevancia para el concepto de \mbox{2-pro-objeto} desarrollado en esta tesis. Todo \mbox{2-pro-objeto} determina un 2-funtor flexible, y algunos de nuestros resultados tienen su enunciado correcto en el contexto de 2-funtores flexibles.

\pagebreak

\section{2-Pro-objects}\label{2-Pro-objects}

{\bf Warning:} \emph{In this section 2-categories are assumed to be locally small, except illegitimate constructions as $\cc{H}om$, $\cc{H}om_p$ for large $\cc{C}$
or
$2\hbox{-}\cc{CAT}$.}

\vspace{1ex}

Some of the main results of this thesis are in this section. In \ref{def2pro} we define the \mbox{2-category} of 2-pro-objects of a 2-category $\cc{C}$ and establish the basic formula for morphisms and 2-cells of this \mbox{2-category.} Then, in \ref{lemas2pro}, we develop the notion of a morphism and a \mbox{2-cell} in $\cc{C}$ \emph{representing} a morphism and a 2-cell in $\Pro{C}$ respectively, inspired in the 1-dimensional notion of an arrow  representing a morphism of pro-objects found in \cite{AM}. We use this in \ref{pseudo-limitsen2pro} to construct the 2-filtered 2-category that serves as the index \mbox{2-category} for the 2-cofiltered pseudo-limit of \mbox{2-pro-objects.} This is also inspired in a \mbox{construction} for the same purpose found in \cite{AM}. We were forced to appeal to this complicated construction because the conceptual treatment of this problem found in \cite{G2} does not apply in the \mbox{2-categorical} case. This is because a \mbox{2-functor} is not the pseudo-colimit 
indexed by its 2-category of elements of 2-representable 2-functors. 
Finally, in \ref{pu2pro} we prove the universal property of $\Pro{C}$.

\subsection{Definition of the 2-category of 2-pro-objects}\label{def2pro}

In this subsection we define the 2-category of 2-pro-objects of a fixed \mbox{2-category} and prove its basic properties. A 2-pro-object over a \mbox{2-category} $\cc{C}$ will be a small \mbox{2-cofiltered} diagram in $\cc{C}$ and it will be the pseudo-limit of its own diagram in the \mbox{2-category} $\Pro{C}$.

\begin{definition} \label{2proc}
Let $\cc{C}$ be a 2-category. We define the 2-category of \mbox{2-pro-objects} of $\cc{C}$, which we denote by $2$-$\cc{P}ro(\cc{C})$, as follows:
\begin{enumerate}
\item
Its objects are the 2-functors
$\cc{I}^{op}\mr{\ff{X}}\cc{C}$,
$\ff{X} = (\ff{X}_i,\, \ff{X}_u,\, \ff{X}_\alpha)_{i,\, u,\, \alpha \in \cc{I}}$, with
$\cc{I}$ a small \mbox{2-filtered} 2-category. Often we are going to abuse the notation by saying \mbox{$\ff{X} = (\ff{X}_i)_{i\in \cc{I}}$.}
\item
If $\ff{X}=(\ff{X}_i)_{i\in \cc{I}}$ and $\ff{Y}=(\ff{Y}_j)_{j\in \cc{J}}$ are two 2-pro-objects,
$$
\Pro{C}(\ff{X},\ff{Y})=\cc{H}om(\cc{C},\, \cc{C}at)^{op}(\Lim{i \in \cc{I}}{\cc{C}(\ff{X}_i,-)}, \Lim{j\in \cc{J}}{\cc{C}(\ff{Y}_j,-)})
$$
$$
=\cc{H}om(\cc{C},\, \cc{C}at)(\coLim{j \in \cc{J}}{\cc{C}(\ff{Y}_j,-)},\coLim{i \in \cc{I}}{\cc{C}(\ff{X}_i,-)})
$$
\end{enumerate}

Compositions are given by the corresponding compositions in the \mbox{2-category} $\cc{H}om(\cc{C},\, \cc{C}at)^{op}$ so it is easy to check that $2$-$\cc{P}ro(\cc{C})$ is indeed a 2-category.
\end{definition}

\begin{sinnadastandard} {\bf Notation.}
 We are going to use the subindex notation to denote the evaluation of 2-pro-objects.
\end{sinnadastandard}

\begin{proposition}\label{eqconloscolimderep}
By definition there is a 2-fully-faithful \mbox{2-functor}
\mbox{$2\hbox{-}\cc{P}ro(\cc{C}) \mr{\ff{L}} \cc{H}om(\cc{C},\, \cc{C}at)^{op}$.}
Thus, there is a contravariant \mbox{2-equivalence} of \mbox{2-categories} \mbox{$2$-$\cc{P}ro(\cc{C}) \mr{\ff{L}} \cc{H}om(\cc{C},\, \cc{C}at)^{op}_{fc}$}, where $\cc{H}om(\cc{C},\, \cc{C}at)_{fc}$ stands for the full subcategory of $\cc{H}om(\cc{C},\, \cc{C}at)$ whose objects are those \mbox{2-functors} which are small 2-filtered pseudo-colimits of \mbox{representable} 2-functors. \mbox{However,} it is important to note that this equivalence is not injective on \mbox{objects.} \cqd
\end{proposition}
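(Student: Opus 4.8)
The plan is to read the 2-fully-faithful 2-functor $\ff{L}$ straight off Definition \ref{2proc}, and then to obtain the contravariant 2-equivalence by identifying the essential image of $\ff{L}$ with $\cc{H}om(\cc{C},\cc{C}at)^{op}_{fc}$ and invoking Proposition \ref{eqsiif&f}.

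First I would define $\ff{L}$ on objects by $\ff{L}(\ff{X})=\Lim{i\in\cc{I}}{\cc{C}(\ff{X}_i,-)}$, the pseudo-limit in $\cc{H}om(\cc{C},\cc{C}at)^{op}$ (equivalently the pseudo-colimit $\coLim{i\in\cc{I}}{\cc{C}(\ff{X}_i,-)}$ in $\cc{H}om(\cc{C},\cc{C}at)$), which exists and is computed pointwise by Proposition \ref{pointwisebi-limit} together with the existence of small pseudo-colimits in $\cc{C}at$ recalled in \ref{weak limits and colimits}. On hom-categories, Definition \ref{2proc} \emph{stipulates} $\Pro{C}(\ff{X},\ff{Y})=\cc{H}om(\cc{C},\cc{C}at)^{op}(\ff{L}\ff{X},\ff{L}\ff{Y})$ and declares composition in $\Pro{C}$ to be that of $\cc{H}om(\cc{C},\cc{C}at)^{op}$. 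Hence $\ff{L}$, acting as the identity on each hom-category, is automatically a 2-functor (it preserves composition and identities by fiat), and it is 2-fully-faithful in the sense of Definition \ref{2f&f} tautologically. This proves the first sentence.

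For the 2-equivalence I would restrict $\ff{L}$ to the full sub-2-category $\cc{H}om(\cc{C},\cc{C}at)^{op}_{fc}$; fullness of this inclusion keeps $\ff{L}$ 2-fully-faithful, so by Proposition \ref{eqsiif&f} it suffices to show $\ff{L}$ is essentially surjective onto $_{fc}$. One direction is immediate: $\ff{L}(\ff{X})=\coLim{i\in\cc{I}}{\cc{C}(\ff{X}_i,-)}$ is a small 2-filtered pseudo-colimit of representables because $\cc{I}$ is 2-filtered, so the essential image lands in $_{fc}$. Conversely, given $\ff{F}\in{}_{fc}$, I write $\ff{F}\cong\coLim{i\in\cc{I}}{\ff{G}(i)}$ for a 2-functor $\ff{G}:\cc{I}\to\cc{H}om(\cc{C},\cc{C}at)$ with $\cc{I}$ small 2-filtered and each $\ff{G}(i)$ representable; transporting along chosen isomorphisms $\ff{G}(i)\cong\cc{C}(\ff{C}_i,-)$ (isomorphic diagrams have isomorphic pseudo-colimits) I may assume $\ff{G}(i)=\cc{C}(\ff{C}_i,-)$ strictly. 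I then lift $\ff{G}$ along the Yoneda 2-functor $\cc{C}\mr{\ff{y}}\cc{H}om(\cc{C},\cc{C}at)^{op}$: since $\ff{y}$ is 2-fully-faithful by Corollary \ref{yonedaff}, each isomorphism $\cc{C}(\ff{C}_{i'},\ff{C}_i)\cong\cc{H}om(\cc{C},\cc{C}at)(\cc{C}(\ff{C}_i,-),\cc{C}(\ff{C}_{i'},-))$ transports the action of $\ff{G}$ on 1-cells and 2-cells back into $\cc{C}$, and faithfulness of $\ff{y}$ forces this transported data to respect composition and identities strictly. This yields a 2-functor $\ff{X}:\cc{I}^{op}\to\cc{C}$ with $\ff{X}_i=\ff{C}_i$ and $\cc{C}(\ff{X}_i,-)=\ff{G}(i)$, i.e. a 2-pro-object with $\ff{L}(\ff{X})=\coLim{i\in\cc{I}}{\ff{G}(i)}\cong\ff{F}$. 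Essential surjectivity, hence the 2-equivalence, follows; the contravariance is precisely the $op$ in the codomain, and failure of injectivity on objects is clear because a pseudo-colimit determines its indexing diagram only up to the isomorphisms just used.

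The step I expect to be the main obstacle is the Yoneda lifting: one must track the several variance flips carefully — the $op$ on $\cc{I}$, the $op$ on $\cc{H}om(\cc{C},\cc{C}at)$, and the contravariance of $\ff{C}\mapsto\cc{C}(\ff{C},-)$ — so that a morphism $\ff{G}(u):\cc{C}(\ff{C}_i,-)\to\cc{C}(\ff{C}_{i'},-)$ really does correspond under $\ff{y}$ to an arrow $\ff{X}_u:\ff{X}_{i'}\to\ff{X}_i$ of $\cc{C}$ in the direction appropriate to a functor defined on $\cc{I}^{op}$, and to confirm the transported assignment is genuinely (strictly) 2-functorial rather than merely functorial up to isomorphism. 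The remaining verifications — pointwise existence of the defining pseudo-limits and iso-invariance of pseudo-colimits under the diagram transport — are routine given the results of \ref{weak limits and colimits}.
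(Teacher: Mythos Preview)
Your proposal is correct and follows the only natural route: the 2-fully-faithfulness is tautological from Definition~\ref{2proc}, and the 2-equivalence with $\cc{H}om(\cc{C},\cc{C}at)^{op}_{fc}$ follows from essential surjectivity via the 2-fully-faithful Yoneda 2-functor and Proposition~\ref{eqsiif&f}. The paper itself gives no proof at all---the \cqd{} appears inside the statement---so you have simply spelled out what the paper treats as immediate from the definition; your care with the variance flips in the Yoneda lifting is appropriate but would be regarded as routine here.
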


From Corollary \ref{pseudo=2forpro} it follows:

\begin{proposition} \label{proisflexible}
For any 2-pro-object $\ff{X}$, the corresponding \mbox{2-functor} $\ff{LX}$ is flexible. \cqd

\end{proposition}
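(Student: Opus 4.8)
The plan is to recognize $\ff{LX}$ as a small 2-filtered pseudo-colimit of representable 2-functors and then read off the conclusion from the machinery already assembled in \ref{further results}. First I would simply unwind the definitions. By Definition \ref{2proc}, a 2-pro-object is a 2-functor $\cc{I}^{op}\mr{\ff{X}}\cc{C}$ with $\cc{I}$ a small 2-filtered 2-category, and by Proposition \ref{eqconloscolimderep} its image $\ff{LX}$ under $\ff{L}$ is, as an object of $\cc{H}om(\cc{C},\cc{C}at)$, precisely the pseudo-colimit $\coLim{i\in\cc{I}}{\cc{C}(\ff{X}_i,-)}$ (this is exactly the description of the essential image $\cc{H}om(\cc{C},\cc{C}at)_{fc}$ given there). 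Since $\cc{I}$ is small and 2-filtered, this exhibits $\ff{LX}$ as a small pseudo-colimit of the representable 2-functors $\cc{C}(\ff{X}_i,-)$.

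With $\ff{LX}$ so presented, the statement is immediate from Corollary \ref{pseudo=2forpro}, which asserts precisely that small pseudo-colimits of representable 2-functors are flexible; hence $\ff{LX}$ is flexible. Note that flexibility, as in Definition \ref{flexible}, is a property of the 2-functor $\cc{C}\mr{}\cc{C}at$ itself, so the contravariance carried by $\ff{L}$ landing in $\cc{H}om(\cc{C},\cc{C}at)^{op}$ is irrelevant to the claim: what matters is only that the underlying 2-functor is the pseudo-colimit above.

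There is essentially no obstacle at this level, since the substantive work is packaged upstream. If one preferred not to cite \ref{pseudo=2forpro} as a black box, the two ingredients to reassemble would be: (i) each representable $\cc{C}(\ff{X}_i,-)$ is flexible, which follows from the pseudo-Yoneda lemma (\ref{pseudoYoneda}, \ref{repflexible}) via the left-adjoint-free characterization of flexibility in Proposition \ref{flexiblechar}; and (ii) pseudo-colimits of flexible 2-functors are again flexible, which is Proposition \ref{colimflexible}. Chaining (i) into (ii) reproduces Corollary \ref{pseudo=2forpro} and hence the claim. The only point that deserves a moment's care is the bookkeeping of Proposition \ref{eqconloscolimderep}, namely confirming that the object defined as the pseudo-limit in $\cc{H}om(\cc{C},\cc{C}at)^{op}$ is genuinely the pseudo-colimit $\coLim{i\in\cc{I}}{\cc{C}(\ff{X}_i,-)}$ in $\cc{H}om(\cc{C},\cc{C}at)$, so that Corollary \ref{pseudo=2forpro} applies on the nose.
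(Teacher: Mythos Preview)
Your proposal is correct and follows essentially the same approach as the paper: the paper simply states that the result follows from Corollary \ref{pseudo=2forpro}, and your argument unwinds exactly this, identifying $\ff{LX}=\coLim{i\in\cc{I}}{\cc{C}(\ff{X}_i,-)}$ via Definition \ref{2proc} and Proposition \ref{eqconloscolimderep} and then invoking that corollary.
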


\begin{remark}\label{proppseudoeqapro}
If we use pseudo-natural transformations to define morphisms of \mbox{2-pro-objects} we obtain a 2-category $2$-$\cc{P}ro_p(\cc{C})$, which anyway, by \ref{proisflexible}, results pseudoequivalent  (see \ref{flexible2=p}) to
$2$-$\cc{P}ro(\cc{C})$, with the same objects and retract equivalent hom categories. We think our choice of morphisms, which is much more convenient to use, will prove to be the good one for the applications. Nevertheless, this other version is unavoidable to prove that $\Pro{C}$ has a closed 2-bmodel structure (see section \ref{2-modelos}) due to the nature of the axioms of closed 2-bmodel 2-category where commutativities are non-strict but only holds up to invertible 2-cells.
\end{remark}

\begin{remark}
The assertion from \ref{eqconloscolimderep} also holds replacing $\Pro{C}$ for $\Prop{C}$ and $\cc{H}om(\cc{C},\cc{C}at)$ for $\cc{H}om_p(\cc{C},\cc{C}at)$. \cqd
\end{remark}

Next we establish the basic formula which is essential in many \mbox{computations} in the $2$-category $2$-$\cc{P}ro(\cc{C})$:
\begin{proposition}\label{iso}
There is an isomorphism of categories:
\addtocounter{equation}{-1}
\begin{equation} \label{basica2}
  \Pro{C}(\ff{X},\ff{Y})\cong \Lim{j\in \cc{J}}{\coLim{i\in \cc{I}}
  {\cc{C}(\ff{X}_i,\ff{Y}_j)}}
\end{equation}
\end{proposition}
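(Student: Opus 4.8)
The plan is to compute the defining hom-category of Definition \ref{2proc} one pseudo-colimit at a time, peeling off the domain colimit first and then invoking the Yoneda lemma together with the pointwise nature of weak colimits. Write $\cc{A} = \cc{H}om(\cc{C},\cc{C}at)$ and set $\ff{F} = \coLim{i\in\cc{I}}{\cc{C}(\ff{X}_i,-)} \in \cc{A}$. By Definition \ref{2proc} we have $\Pro{C}(\ff{X},\ff{Y}) = \cc{A}(\coLim{j\in\cc{J}}{\cc{C}(\ff{Y}_j,-)}, \ff{F})$, so the whole task reduces to analyzing maps out of the pseudo-colimit $\coLim{j\in\cc{J}}{\cc{C}(\ff{Y}_j,-)}$.

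First I would apply the universal property of the pseudo-colimit in the domain variable. Dualizing the defining isomorphism \eqref{isoplim} of \ref{colimits} (a pseudo-colimit in $\cc{A}$ is a pseudo-limit in $\cc{A}^{op}$), post-composition with the colimit cone gives an isomorphism of categories, natural in $\ff{F}$,
$$\cc{A}(\coLim{j\in\cc{J}}{\cc{C}(\ff{Y}_j,-)}, \ff{F}) \;\cong\; \Lim{j\in\cc{J}}{\cc{A}(\cc{C}(\ff{Y}_j,-), \ff{F})},$$
where the diagram on the right is the contravariant pseudo-functor $\cc{J}^{op} \mr{} \cc{C}at$ sending $j$ to $\cc{A}(\cc{C}(\ff{Y}_j,-),\ff{F})$ and an arrow $u\colon j\mr{} j'$ to precomposition with $\cc{C}(\ff{Y}_u,-)$.

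Next I would identify each term of this pseudo-limit. The Yoneda lemma (\ref{2Yoneda}) supplies an isomorphism of categories $\cc{A}(\cc{C}(\ff{Y}_j,-),\ff{F}) \cong \ff{F}(\ff{Y}_j)$, natural in $\ff{F}$ and, crucially, natural in the representing object, hence compatible with the transition maps induced by the arrows $\ff{Y}_u\colon \ff{Y}_{j'}\mr{}\ff{Y}_j$. Then, since $\ff{F} = \coLim{i\in\cc{I}}{\cc{C}(\ff{X}_i,-)}$ and pseudo-colimits in $\cc{H}om(\cc{C},\cc{C}at)$ are computed pointwise (Proposition \ref{pointwisebi-limit}), evaluation at $\ff{Y}_j$ yields $\ff{F}(\ff{Y}_j) \cong \coLim{i\in\cc{I}}{\cc{C}(\ff{X}_i,\ff{Y}_j)}$, again naturally in $j$. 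Composing these two produces an isomorphism between the contravariant $\cc{J}^{op}$-diagrams $j \mapsto \cc{A}(\cc{C}(\ff{Y}_j,-),\ff{F})$ and $j \mapsto \coLim{i\in\cc{I}}{\cc{C}(\ff{X}_i,\ff{Y}_j)}$.

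Finally, since forming a pseudo-limit is functorial on diagrams and preserves isomorphisms of diagrams, I would apply $\Lim{j\in\cc{J}}{}$ to this isomorphism and chain it with the first step to obtain
$$\Pro{C}(\ff{X},\ff{Y}) \cong \Lim{j\in\cc{J}}{\cc{A}(\cc{C}(\ff{Y}_j,-),\ff{F})} \cong \Lim{j\in\cc{J}}{\coLim{i\in\cc{I}}{\cc{C}(\ff{X}_i,\ff{Y}_j)}},$$
which is exactly \eqref{basica2}. The main obstacle is not any single isomorphism, since each is a direct citation, but rather verifying that the Yoneda and pointwise-evaluation isomorphisms respect the pseudo-cone structure in the index $j$, that is, that they are genuinely natural in $j$ and commute with the structural invertible $2$-cells $\theta_u$ governing the pseudo-limit, so that they legitimately assemble into a single isomorphism at the level of the pseudo-limit categories; this compatibility bookkeeping is where the care is required.
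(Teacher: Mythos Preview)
Your proof is correct and follows essentially the same route as the paper: unfold the definition, use the universal property of the domain pseudo-colimit (\ref{colimits}) to turn it into a pseudo-limit, and then apply the Yoneda lemma (\ref{2Yoneda}) termwise. The only difference is cosmetic: the paper collapses your last two steps into a single citation of \ref{2Yoneda}, treating the identification $\ff{F}(\ff{Y}_j)=\coLim{i\in\cc{I}}{\cc{C}(\ff{X}_i,\ff{Y}_j)}$ as part of the definition of the pseudo-colimit rather than as a separate appeal to \ref{pointwisebi-limit}, and it does not spell out the naturality-in-$j$ bookkeeping that you flag as the main care point.
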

\begin{proof}
\begin{multline*}
\Pro{C}(\ff{X},\ff{Y}) \;=\; \cc{H}om(\cc{C},\, \cc{C}at)(\coLim{j \in \cc{J}}{\cc{C}(\ff{Y}_j,-)}, \;\coLim{i \in \cc{I}}{\cc{C}(\ff{X}_i,-))} \;\cong\;
\\
\Lim{j\in \cc{J}}{\cc{H}om(\cc{C},\, \cc{C}at)(\cc{C}(\ff{Y}_j,-), \;\coLim{i \in \cc{I}}{\cc{C}(\ff{X}_i,-))}} \;\cong\;
\Lim{j\in \cc{J}}{\coLim{i\in \cc{I}}{\cc{C}(\ff{X}_i,\ff{Y}_j)}}
\end{multline*}

The first isomorphism is due to \ref{colimits} and the second one to \ref{2Yoneda}.
\end{proof}

\begin{remark}\label{formula en prop}
 In the case of $\Prop{C}$, formula \eqref{basica2} is an equivalence of categories instead of an isomorphism since the second $\cong$ is only an equivalence (see \ref{pseudoYoneda}). \cqd
\end{remark}

\begin{corollary}
The 2-category $\Pro{C}$ is locally small. \cqd
\end{corollary}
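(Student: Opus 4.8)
The plan is to read the statement off directly from the basic formula \eqref{basica2} of Proposition \ref{iso}, reducing the question to the known size behaviour of small weak (co)limits in $\cc{C}at$. First I would recall the standing data: since we restrict throughout to \emph{small} 2-pro-objects, the index 2-categories $\cc{I}$ and $\cc{J}$ attached to $\ff{X}=(\ff{X}_i)_{i\in\cc{I}}$ and $\ff{Y}=(\ff{Y}_j)_{j\in\cc{J}}$ are small, and, by the standing convention of this section, $\cc{C}$ is locally small, so that each hom-category $\cc{C}(\ff{X}_i,\ff{Y}_j)$ is a small category. The formula supplies an isomorphism of categories
$$\Pro{C}(\ff{X},\ff{Y})\;\cong\;\Lim{j\in \cc{J}}{\coLim{i\in \cc{I}}{\cc{C}(\ff{X}_i,\ff{Y}_j)}},$$
so, since smallness is invariant under isomorphism of categories, it suffices to prove that the right-hand side is a small category.

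Next I would carry out a two-stage smallness argument, invoking the closure properties of $\cc{C}at$ recalled just before \ref{limincat}, namely that the 2-category $\cc{C}at$ of small categories admits all small pseudo-limits and small pseudo-colimits and that these are again small. For each fixed $j\in\cc{J}$, the inner term $\coLim{i\in \cc{I}}{\cc{C}(\ff{X}_i,\ff{Y}_j)}$ is a pseudo-colimit indexed by the small 2-category $\cc{I}$ of a diagram of small categories, hence is itself a small category. These small categories assemble, 2-functorially in $j$, into a diagram indexed by the small 2-category $\cc{J}$, and the outer pseudo-limit $\Lim{j\in \cc{J}}{(-)}$ of this diagram is then small for the same reason. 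This gives smallness of the right-hand side, and therefore of $\Pro{C}(\ff{X},\ff{Y})$.

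There is no genuine obstacle here; the entire content of the corollary is already packaged into the formula \eqref{basica2} together with the size behaviour of small weak (co)limits in $\cc{C}at$. The only point deserving a word of care is the bookkeeping of universes: one must check that the indexing data is genuinely \emph{small} rather than merely legitimate, which is precisely what the restriction to small 2-pro-objects secures, and that the pseudo-(co)limits invoked are the small ones whose existence and smallness are the cited standard facts.
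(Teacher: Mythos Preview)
Your proposal is correct and matches the paper's approach: the paper gives no explicit proof (just the \cqd), since the statement is immediate from the basic formula \eqref{basica2} together with the smallness of the index 2-categories and the closure of $\cc{C}at$ under small pseudo-(co)limits, exactly as you spell out.
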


\begin{corollary} \label{CisPro}
There is a canonical 2-fully-faithful 2-functor
\mbox{$\cc{C}\mr{c} 2$-$\cc{P}ro(\cc{C})$} which sends an object of $\cc{C}$ into the corresponding \mbox{2-pro-object} with index \mbox{2-category} $\{*\}$. Since this 2-functor is also injective on objects, we can identify $\cc{C}$ with a 2-full subcategory of
$2\hbox{-}\cc{P}ro(\cc{C})$.
\cqd
\end{corollary}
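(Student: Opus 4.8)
The plan is to realise $c$ as the restriction of the Yoneda embedding, using the basic formula of Proposition \ref{iso} to identify the relevant hom-categories. First I would define $c$ on objects exactly as stated: given $\ff{C}\in\cc{C}$, let $c\ff{C}$ be the 2-pro-object whose index 2-category is the terminal one $\{*\}$ (one object, its identity arrow, its identity 2-cell) and whose underlying 2-functor $\{*\}^{op}\mr{}\cc{C}$ selects $\ff{C}$. Recalling from Proposition \ref{eqconloscolimderep} the 2-fully-faithful 2-functor $\ff{L}\colon\Pro{C}\mr{}\cc{H}om(\cc{C},\cc{C}at)^{op}$, with $\ff{L}\ff{X}$ corresponding to $\coLim{i\in\cc{I}}{\cc{C}(\ff{X}_i,-)}$, the key observation is that a pseudo-colimit over $\{*\}$ is just its single value, so with the canonical choice
$$\ff{L}(c\ff{C})\;=\;\coLim{*}{\cc{C}(\ff{C},-)}\;=\;\cc{C}(\ff{C},-)\;=\;\ff{y}^{\ff{C}},$$
where $\ff{y}^{(-)}\colon\cc{C}\mr{}\cc{H}om(\cc{C},\cc{C}at)^{op}$ is the Yoneda 2-functor of \ref{Y2functor}.

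Next I would define $c$ on arrows and 2-cells as the unique data making $\ff{L}\circ c=\ff{y}^{(-)}$ hold: since $\ff{L}$ is 2-fully-faithful, each $\ff{L}_{c\ff{C},c\ff{D}}\colon\Pro{C}(c\ff{C},c\ff{D})\mr{}\cc{H}om(\cc{C},\cc{C}at)^{op}(\ff{y}^{\ff{C}},\ff{y}^{\ff{D}})$ is an isomorphism of categories, and I set $c_{\ff{C},\ff{D}}=\ff{L}_{c\ff{C},c\ff{D}}^{\,-1}\circ\ff{y}^{(-)}_{\ff{C},\ff{D}}$. Because $\ff{L}$ is locally an isomorphism of categories and $\ff{y}^{(-)}$ is a (strict) 2-functor, these transported functors automatically respect horizontal and vertical composition and identities, so $c$ is a genuine 2-functor; concretely, $c\ff{f}$ is the morphism of 2-pro-objects corresponding under \ref{iso} to $\ff{f}\in\cc{C}(\ff{C},\ff{D})=\Lim{*}{\coLim{*}{\cc{C}(\ff{C},\ff{D})}}$, and similarly on 2-cells.

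For 2-fully-faithfulness I would argue that $\ff{y}^{(-)}_{\ff{C},\ff{D}}$ is an isomorphism of categories by Corollary \ref{yonedaff}, and since it factors as $\ff{L}_{c\ff{C},c\ff{D}}\circ c_{\ff{C},\ff{D}}$ with $\ff{L}_{c\ff{C},c\ff{D}}$ an isomorphism, the functor $c_{\ff{C},\ff{D}}$ is itself an isomorphism of categories; this is precisely 2-fully-faithfulness of $c$. Injectivity on objects is immediate, since $c\ff{C}=c\ff{D}$ forces the underlying 2-functors $\{*\}^{op}\mr{}\cc{C}$ to coincide, whence $\ff{C}=\ff{D}$. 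A 2-functor that is 2-fully-faithful and injective on objects identifies its domain with the 2-full subcategory of $\Pro{C}$ spanned by the objects $c\ff{C}$, which is the asserted identification. The only point requiring care — the main, though purely formal, obstacle — is the verification in the second step that transporting the strict 2-functor $\ff{y}^{(-)}$ through the local isomorphisms $\ff{L}_{-,-}$ preserves the full 2-categorical structure; building $c$ directly from the chain of isomorphisms in \ref{iso} (coming from \ref{colimits} and \ref{2Yoneda}) would demand checking exactly the same compatibilities by hand.
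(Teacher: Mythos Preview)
Your proposal is correct and is essentially the approach implicit in the paper: the corollary is stated immediately after Proposition~\ref{iso} with no explicit proof (just the \cqd), the intended argument being that when both index 2-categories are $\{*\}$ the basic formula collapses to $\Pro{C}(c\ff{C},c\ff{D})\cong\cc{C}(\ff{C},\ff{D})$. Your route via the factorisation $\ff{L}\circ c=\ff{y}^{(-)}$ simply makes explicit the two ingredients (the Yoneda isomorphism of \ref{2Yoneda} and the definition of $\ff{L}$) that go into the proof of \ref{iso}, so the two arguments are the same up to presentation.
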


Where there is no risk of confusion, we will omit to indicate notationally this identification. By the very definition of $2\hbox{-}\cc{P}ro(\cc{C})$ it follows:

\begin{proposition}\label{X=lim}
If $\ff{X}=(\ff{X}_i)_{i\in \cc{I}}$ is any 2-pro-object of $\cc{C}$, then
\mbox{$\ff{X}=\Lim{i\in \cc{I}}{\ff{X}_i}\;$} in \mbox{$2$-$\cc{P}ro(\cc{C})$.} $\ff{X}$ is equipped with a pseudo-cone structure, $\left\{\ff{X} \mr{\pi_i} \ff{X}_i\right\}_{i\in \cc{I}}$, $\left\{\ff{X}_u \, \pi_j\Mr{\pi_u} \pi_i\right\}_{i \mr{u} j \, \in \cc{I}}$.
%projections, for each
%$i \in \cc{I}$, $\ff{X} \mr{\pi_i} \ff{X}_i$, and a pseudo-cone structure,
%for each $i \mr{u} j \, \in \cc{I}$, invertible $2$-cells
%$\ff{X} \cellrd{\ff{X}_u \, \pi_j}{\pi_u}{ \pi_i}  \ff{X}_i$.

 \vspace{1ex}

Under the isomorphism
$\Pro{C}(\ff{X},\, \ff{X}_i) \cong \coLim{k \in
\cc{I}}{\cc{C}(\ff{X}_k,\, \ff{X}_i})$ \eqref{basica2},
projections \mbox{$\ff{X} \mr{\pi_i} \ff{X}_i$}
correspond to objects $(id_{\ff{X}_i},\, i)$ in construction \ref{defLF}. \cqd

\end{proposition}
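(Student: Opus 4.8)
The plan is to transport the entire question along the $2$-fully-faithful $2$-functor $\Pro{C}\mr{\ff{L}}\cc{H}om(\cc{C},\cc{C}at)^{op}$ of \ref{eqconloscolimderep}, reducing the universal property to the \emph{definition} of $\ff{LX}$. Consider the diagram $\cc{I}^{op}\mr{}\Pro{C}$, $i\mapsto \ff{X}_i$, where each $\ff{X}_i$ is regarded as a $2$-pro-object via $c$ (\ref{CisPro}); write $c\ff{X}$ for this diagram. Composing with $\ff{L}$ gives $\cc{I}^{op}\mr{}\cc{H}om(\cc{C},\cc{C}at)^{op}$, $i\mapsto\cc{C}(\ff{X}_i,-)$, whose legs in $\cc{H}om(\cc{C},\cc{C}at)$ are covariant in $i$ since $\ff{X}$ is contravariant. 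Because $\cc{I}$ is $2$-filtered, Construction LL and \ref{construccionDS} produce the pseudo-colimit $\coLim{i\in\cc{I}}{\cc{C}(\ff{X}_i,-)}$ (computed pointwise by \ref{pointwisebi-limit}) with universal pseudo-cone $\lambda$ satisfying $\lambda_i(C)=(C,i)$ and $(\lambda_u)_C=[u,\ff{X}_u(C),j]$; by the very definition in \ref{2proc} this object \emph{is} $\ff{LX}$. Read in $\cc{H}om(\cc{C},\cc{C}at)^{op}$, this is exactly the statement that $\ff{LX}$ with $\lambda$ is the pseudo-limit of $\ff{L}c\ff{X}$.

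First I would establish that $\ff{L}$, being $2$-fully-faithful, reflects pseudo-limits. Since $\ff{L}$ induces isomorphisms of hom-categories, and since pseudo-cones together with their morphisms are assembled entirely from the arrows and $2$-cells of these hom-categories subject to the equations PC0--PC2 and PCM of \ref{pseudo-cone} (cf.\ Remark \ref{PCbifunctor}b.\ applied to $\ff{H}=\ff{L}$, which $\ff{L}$ both preserves and reflects), $\ff{L}$ induces an isomorphism of categories $\ff{PC}_{\Pro{C}}(\ff{Z},c\ff{X})\mr{\cong}\ff{PC}_{\cc{H}om(\cc{C},\cc{C}at)^{op}}(\ff{LZ},\ff{L}c\ff{X})$ for every $2$-pro-object $\ff{Z}$. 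Transporting $\lambda$ back along this isomorphism equips $\ff{X}$ with the claimed pseudo-cone structure $\{\ff{X}\mr{\pi_i}\ff{X}_i\}$, $\{\ff{X}_u\,\pi_j\Mr{\pi_u}\pi_i\}$, namely $\pi_i=\ff{L}^{-1}(\lambda_i)$ and $\pi_u=\ff{L}^{-1}(\lambda_u)$.

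Next I would verify the universal property \eqref{isoplim} of \ref{colimits}. For each $\ff{Z}$ the isomorphism $\Pro{C}(\ff{Z},\ff{X})\cong\cc{H}om(\cc{C},\cc{C}at)^{op}(\ff{LZ},\ff{LX})$ coming from $2$-fully-faithfulness, together with the pseudo-cone isomorphism just obtained, intertwine post-composition by $\pi$ with post-composition by $\lambda$, because $\ff{L}$ is a $2$-functor and $\pi=\ff{L}^{-1}\lambda$. Since $\lambda$ is a pseudo-limit pseudo-cone, $\lambda_*$ is an isomorphism of categories, hence so is $\pi_*$; this is precisely the assertion $\ff{X}=\Lim{i\in\cc{I}}{\ff{X}_i}$ with projections $\pi_i$. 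For the final claim, setting $\ff{Y}=\ff{X}_i$ in \eqref{basica2} gives $\Pro{C}(\ff{X},\ff{X}_i)\cong\coLim{k\in\cc{I}}{\cc{C}(\ff{X}_k,\ff{X}_i)}$, under which $\pi_i$ corresponds to $\lambda_i$; the Yoneda isomorphism (\ref{2Yoneda}) sends $\lambda_i$ to $\lambda_i^{\ff{X}_i}(id_{\ff{X}_i})=(id_{\ff{X}_i},i)$ by \ref{construccionDS}.

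The main obstacle is not computational but bookkeeping: one must keep the $op$-duality straight and match the covariance of the legs $\lambda_i$ against the contravariance of the diagram $\ff{X}$, and check carefully that the induced functor on pseudo-cone categories is genuinely an isomorphism (and not merely an equivalence) so that the \emph{pseudo}-limit universal property, which demands an isomorphism $\pi_*$ rather than an equivalence, is transported intact. Everything beyond this is forced by the $2$-fully-faithfulness of $\ff{L}$ and the definition of $\ff{LX}$, so no genuine calculation is required.
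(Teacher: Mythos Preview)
Your argument is correct and is precisely the unpacking the paper intends: the proposition is stated with no proof beyond the box and the preceding sentence ``By the very definition of $2\hbox{-}\cc{P}ro(\cc{C})$ it follows,'' and what you have written is exactly that definition read through the $2$-fully-faithful $\ff{L}$. The only remark is that your careful discussion of reflecting pseudo-limits, while sound, is more than the paper deems necessary---since the hom-categories of $\Pro{C}$ are \emph{defined} to be those of $\cc{H}om(\cc{C},\cc{C}at)^{op}$, the isomorphism $\pi_*\cong\lambda_*$ is literally tautological rather than a transport along a $2$-fully-faithful functor.
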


\begin{remark}
 The previous proposition also holds in $\Prop{C}$. \cqd
\end{remark}

Note that from proposition \ref{X=lim} it follows:

\begin{remark} \label{XisLim}
Given any two pro-objects $\ff{X}, \, \ff{Z} \, \in $ 2-$\cc{P}ro(\cc{C})$, there is an isomorphism of categories
2-$\cc{P}ro(\cc{C})(\ff{Z},\, \ff{X}) \mr{\cong}
\ff{PC}_{2\hbox{-}\cc{P}ro(\cc{C})}(\ff{Z},\, c\ff{X})$, where
$\ff{PC}_{2\hbox{-}\cc{P}ro(\cc{C})}$ is the category of pseudo-cones for the 2-functor $c\ff{X}$ with \mbox{vertex
$\ff{Z}$.}
\end{remark}
\vspace{1ex}

It is important to note that when $\Lim{i\in \cc{I}}{\ff{X}_i}$ exists in $\cc{C}$, this pseudo-limit would not be isomorphic to $\ff{X}$ in $2$-$\cc{P}ro(\cc{C})$. In general, the functor $c$ does not preserve 2-cofiltered pseudo-limits, in fact, it will preserve them only when $\cc{C}$ is already $\Pro{C}$, that is, when $c$ is an equivalence.

\subsection{Lemmas to compute with 2-pro-objects.}\label{lemas2pro}

In this subsection, we establish technical lemmas to be used in computations with 2-pro-objects.

\begin{definition} \label{representa} ${}$

\begin{enumerate}
 \item Let $\ff{X} \mr{\ff{f}} \ff{Y}$ be an arrow in $2$-$\cc{P}ro(\cc{C})$. We say that a pair $(\ff{r},\varphi)$ \emph{represents} $\ff{f}$, if $\varphi$ is an invertible $2$-cell $\ff{X} \cellrd{\ff{r} \, \pi_i}{\varphi}{\pi_j \, \ff{f}} \ff{Y}_j$. That is, if
we have the following diagram in $2$-$\cc{P}ro(\cc{C})$:
$$
\xymatrix@C=0.8pc@R=0.8pc
           {
            \ff{X} \ar@{}[ddrr]|{\cong \; \Downarrow \; \varphi} \ar[rr]^{\pi_i}  \ar[dd]_{\ff{f}}
            & & \ff{X}_i \ar[dd]^{\ff{r}}
           \\
            & 
            &
           \\ \ff{Y} \ar[rr]_{\pi_j}
            & & \ff{Y}_j
           }
$$

\vspace{-2ex}

\item Let $\ff{X} \cellrd{\ff{f}}{\alpha}{\ff{g}} \ff{Y} \in \Pro{C}$ and
$\;\ff{X}_i \cellrd{\ff{r}}{\theta}{\ff{s}} \ff{Y}_j \in \cc{C}$ as in the following diagram:
$$
\xymatrix@C=9ex@R=7ex
        {
         \ff{X}   \ar@<1.2ex>[d]^{\ff{g}}
             \ar@{}@<-1.3ex>[d]^{\stackrel{\alpha}{\Rightarrow}}
             \ar@<-1.1ex>[d]_{\ff{f}}
             \ar[r]^{\pi_i}
         & \ff{X}_i \ar@<1.2ex>[d]^{\ff{s}}
             \ar@{}@<-1.3ex>[d]^{\stackrel{\theta}{\Rightarrow}}
             \ar@<-1.1ex>[d]_{\ff{r}}
        \\
         \ff{Y} \ar[r]_{\pi_j}             
         & \ff{Y}_j
        }
$$
We say that $(\theta, \ff{r}, \varphi, \ff{s}, \psi)$ \emph{represents} $\alpha$ if $(\ff{r},\varphi)$ represents $\ff{f}$, $(\ff{s},\psi)$ represents $\ff{g}$,
and the following equality holds in $2$-$\cc{P}ro(\cc{C})$:
% $$\vcenter{\xymatrix
%         {\ff{r} \pi_i
%            \ar@{=>}[r]^{\varphi}_\cong
%                   \ar@{=>}[d]_{\theta\pi_i}
%         & \pi_j \ff{f} \ar@{=>}[d]^{\pi_j\alpha}
%         \\
%         \ff{s} \pi_i  
%                   \ar@{=>}[r]_{\cong}^{\psi}
%         & \pi_j \ff{g}} }
%         \vcenter{\xymatrix@C=-.4pc{\quad \quad i.e. \quad \quad  }}
$$\vcenter{\xymatrix@C=0ex
          {
           \ff{r} \dcell{\theta} && \pi_i \deq
          \\
           \ff{s} \dl & \dc{\psi} & \pi_i \dr
          \\
           \pi_j && \ff{g} 
          }}
\vcenter{\xymatrix{\comw{\pi_j \ff{f}} \\ = \\ \comw{\pi_j \ff{f}} }}
\vcenter{\xymatrix@C=0ex
          {
           \ff{r} \dl & \dc{\varphi} & \dr \pi_i
          \\
           \pi_j  \deq && \ff{f} \dcell{\alpha} 
          \\
           \pi_j && \ff{g}
          }}$$

%         \vcenter{\xymatrix@C=0ex
%           {
%            \pi_j \dl & \dc{\varphi} & \dr \ff{f}
%           \\
%            \ff{r} \dcell{\theta} && \pi_i \deq
%           \\
%            \ff{s} && \pi_i
%           }}
% \vcenter{\xymatrix{\comw{\pi_j \ff{f}} \\ = \\ \comw{\pi_j \ff{f}} }}
% \vcenter{\xymatrix@C=0ex
%           {
%            \pi_j \deq && \ff{f} \dcell{\alpha}
%           \\
%            \pi_j \dl & \dc{\psi} & \ff{g} \dr
%           \\
%            \ff{s} && \pi_i
%           }}$$
          
That is, $\theta \pi_i = \pi_j \alpha$  ``modulo'' a pair of invertible 2-cells $\varphi,\, \psi$.

\vspace{1ex}

% Clearly, if $\alpha$ is invertible, then so is $\theta$.

\end{enumerate}

\end{definition}

\begin{remark}
Same definitions may be given in $\Prop{C}$.  \cqd
\end{remark}

\begin{proposition} \label{idrepresenta} ${}$
Let $\ff{X} = (\ff{X}_i)_{i\in \cc{I}}$ and
$\ff{Y} = (\ff{Y}_j)_{j\in \cc{J}}$ be any two 2-pro-objects.

\begin{enumerate}

\item Let $\ff{X} \mr{\ff{f}} \ff{Y} \in \Pro{C}$. Then, for any $j \in \cc{J}$ there exist
$i \in \cc{I}$ and $\ff{X}_i \mr{\ff{r}} \ff{Y}_j \in \cc{C}$, such that $(\ff{r},id)$ represents $\ff{f}$.

\item Let $\ff{X} \cellrd{\ff{f}}{\alpha}{\ff{g}} \ff{Y} \in \Pro{C}$. Then, for any $j \in \cc{J}$ there exist
$i \in \cc{I}$, $\ff{X}_i \cellrd{\ff{r}}{\theta}{\ff{s}} \ff{Y}_j \in \cc{C}$, and \mbox{appropriate} invertible 2-cells $\varphi$ and $\psi$ such that $(\theta,\ff{r},\varphi,\ff{s},\psi)$ represents $\alpha$.

Observe that in case $\alpha$ is invertible, one can choose a representative with an invertible $\theta$.
\end{enumerate}

\end{proposition}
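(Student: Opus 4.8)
The plan is to reduce both statements to the basic formula \eqref{basica2} together with the explicit construction of 2-filtered pseudo-colimits of \ref{defLF}--\ref{construccionDS}. I fix $j \in \cc{J}$ throughout and work inside the category $\coLim{i\in\cc{I}}{\cc{C}(\ff{X}_i,\ff{Y}_j)}$, which by \eqref{basica2} (applied with the one-object index 2-category of $\ff{Y}_j$) is isomorphic to $\Pro{C}(\ff{X},\ff{Y}_j)$, and which is the $j$-th component of the limit computing $\Pro{C}(\ff{X},\ff{Y})$. By \ref{defLF} its objects are pairs $(\ff{r},i)$ with $\ff{r}:\ff{X}_i\to\ff{Y}_j$ in $\cc{C}$, and I will repeatedly use \ref{X=lim}, according to which the projection $\pi_i$ corresponds to the object $(id_{\ff{X}_i},i)$.

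For part (1): given $\ff{X}\mr{\ff{f}}\ff{Y}$, the composite $\pi_j\,\ff{f}$ is an object of $\Pro{C}(\ff{X},\ff{Y}_j)$ and hence corresponds under \eqref{basica2} to some $(\ff{r},i)$, which defines $\ff{X}_i\mr{\ff{r}}\ff{Y}_j$. I would then check that $(\ff{r},id)$ represents $\ff{f}$, i.e. that $\ff{r}\,\pi_i=\pi_j\,\ff{f}$ \emph{on the nose}. Indeed, post-composition with $\ff{r}$ corresponds under \eqref{basica2} to the functor induced on pseudo-colimits by $\ff{r}_*$; applying it to the object $(id_{\ff{X}_i},i)$ representing $\pi_i$ yields $(\ff{r},i)$, which is exactly the object representing $\pi_j\,\ff{f}$. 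As \eqref{basica2} is an isomorphism of categories, equality of the corresponding objects gives $\ff{r}\,\pi_i=\pi_j\,\ff{f}$, so $\varphi=id$ works.

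For part (2): the image of $\alpha$ under \eqref{basica2} is a morphism of the limit category whose $j$-th component is a morphism $\pi_j\alpha$ in $\coLim{i\in\cc{I}}{\cc{C}(\ff{X}_i,\ff{Y}_j)}$, from the object $(\ff{r}_0,i_0)$ representing $\pi_j\,\ff{f}$ to the object $(\ff{s}_0,i_1)$ representing $\pi_j\,\ff{g}$ (both obtained from part (1)). By \ref{defLF} this morphism is a class $[u,\theta_0,v]$ with $i_0\mr{u}k$, $i_1\mr{v}k$ in $\cc{I}$ and a $2$-cell $\theta_0:\ff{r}_0\,\ff{X}_u\Rightarrow\ff{s}_0\,\ff{X}_v$ in $\cc{C}(\ff{X}_k,\ff{Y}_j)$. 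I then set $i:=k$, $\ff{r}:=\ff{r}_0\,\ff{X}_u$, $\ff{s}:=\ff{s}_0\,\ff{X}_v$ and $\theta:=\theta_0$, and I use the pseudo-cone structure of $\ff{X}=\Lim{i\in\cc{I}}{\ff{X}_i}$ from \ref{X=lim} to define the invertible comparisons $\varphi:=\ff{r}_0\,\pi_u:\ff{r}\,\pi_k\Rightarrow\ff{r}_0\,\pi_{i_0}=\pi_j\,\ff{f}$ and $\psi:=\ff{s}_0\,\pi_v:\ff{s}\,\pi_k\Rightarrow\ff{s}_0\,\pi_{i_1}=\pi_j\,\ff{g}$, invertible since the $\pi_u$ are. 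Part (1) then already guarantees that $(\ff{r},\varphi)$ represents $\ff{f}$ and $(\ff{s},\psi)$ represents $\ff{g}$.

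The hard part — the step I would spend most care on — is the coherence equation of \ref{representa}(2), which after substitution reads $(\ff{s}_0\,\pi_v)\circ(\theta_0\,\pi_k)=(\pi_j\alpha)\circ(\ff{r}_0\,\pi_u)$. Establishing it amounts to showing that the abstract colimit morphism $[u,\theta_0,v]$, transported back across \eqref{basica2}, is precisely the $2$-cell $(\ff{s}_0\,\pi_v)\circ(\theta_0\,\pi_k)\circ(\ff{r}_0\,\pi_u)^{-1}$; this is the naturality of \eqref{basica2} with respect to composition and to the pseudo-cone structure, and I would prove it by tracing the pseudo-Yoneda lemma (\ref{2Yoneda}) and the pseudo-colimit pseudo-cone of \ref{construccionDS} through the elevator calculus. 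Finally, for the addendum: when $\alpha$ is invertible so is $\pi_j\alpha$ in $\coLim{i\in\cc{I}}{\cc{C}(\ff{X}_i,\ff{Y}_j)}$, and an isomorphism of a $2$-filtered pseudo-colimit admits, after passing to a later index via \ref{lemita3}, a representative premorphism whose $2$-cell $\theta_0$ is itself invertible; choosing this representative makes $\theta$ invertible.
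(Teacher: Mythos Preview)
Your approach is essentially identical to the paper's one-line proof: both reduce to the basic formula \eqref{basica2} together with the explicit pseudo-colimit construction of \ref{defLF}--\ref{construccionDS}, reading off $(\ff{r},i)$ from the object representing $\pi_j\ff{f}$ and $(u,\theta_0,v)$ from the premorphism representing $\pi_j\alpha$; you simply spell out the details the paper leaves implicit. One minor slip: your citation of \ref{lemita3} for the invertibility addendum is off --- that lemma is about 2-cofinal pseudo-functors --- but the underlying claim (that an isomorphism in a 2-filtered pseudo-colimit of categories admits, after passing to a later index, a representative premorphism with invertible middle arrow) is correct and is part of the \cite{DS} machinery the paper is invoking.
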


\begin{proof}

Consider $\ff{X}\cellrd{\pi_j \ff{f}}{\pi_j\alpha}{\pi_j \ff{g}}\ff{Y}_j$ and use formula \ref{iso} plus the constructions of pseudo-limits (\ref{limincat}) and  2-filtered pseudo-colimits of categories (\ref{defLF}).
\end{proof}

 From the previous proposition plus the pseudo-equivalence of \ref{proppseudoeqapro}, it follows:

 \begin{proposition}\label{idrepresentap}
Let $\ff{X} = (\ff{X}_i)_{i\in \cc{I}}$ and
$\ff{Y} = (\ff{Y}_j)_{j\in \cc{J}}$ be any two 2-pro-objects.

\begin{enumerate}

\item Let $\ff{X} \mr{\ff{f}} \ff{Y} \in \Prop{C}$. Then, for any $j \in \cc{J}$ there exist
$i \in \cc{I}$, $\ff{X}_i \mr{\ff{r}} \ff{Y}_j \in \cc{C}$ and an invertible 2-cell $\varphi$, such that $(\ff{r},\varphi)$ represents $\ff{f}$.

\item Let $\ff{X} \cellrd{\ff{f}}{\alpha}{\ff{g}} \ff{Y} \in \Prop{C}$. Then, for any $j \in \cc{J}$ there exist
$i \in \cc{I}$, $\ff{X}_i \cellrd{\ff{r}}{\theta}{\ff{s}} \ff{Y}_j \in \cc{C}$, and \mbox{appropriate} invertible 2-cells $\varphi$ and $\psi$ such that $(\theta,\ff{r},\varphi,\ff{s},\psi)$ represents $\alpha$.

\end{enumerate}\cqd
\end{proposition}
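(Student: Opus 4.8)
The plan is to deduce Proposition~\ref{idrepresentap} directly from Proposition~\ref{idrepresenta} by transporting the representing data along the pseudo-equivalence $\Pro{C} \to \Prop{C}$ established in Remark~\ref{proppseudoeqapro}. First I would recall the setup: by \ref{proppseudoeqapro} the two 2-categories $\Pro{C}$ and $\Prop{C}$ have the same objects and retract equivalent hom categories, the equivalence coming from the flexible nature of $\ff{L}\ff{X}$ (\ref{proisflexible}). Concretely, for fixed $\ff{X},\ff{Y}$ the inclusion $\Pro{C}(\ff{X},\ff{Y}) \to \Prop{C}(\ff{X},\ff{Y})$ is an equivalence of categories with a retraction, by \ref{flexiblechar} applied to $\ff{L}\ff{X}$ (equivalently, this is formula \eqref{basica2} being an isomorphism in $\Pro{C}$ versus only an equivalence in $\Prop{C}$, as noted in Remark~\ref{formula en prop}). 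The key point is that every morphism $\ff{X}\mr{\ff{f}}\ff{Y}$ in $\Prop{C}$ is isomorphic (via an invertible 2-cell in $\Prop{C}$) to the image of a morphism $\ff{X}\mr{\ff{f}'}\ff{Y}$ in $\Pro{C}$, and similarly every 2-cell $\alpha$ in $\Prop{C}$ transports to a 2-cell in $\Pro{C}$ up to these canonical isomorphisms.

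For part~(1), given $\ff{X}\mr{\ff{f}}\ff{Y}\in\Prop{C}$, I would first replace $\ff{f}$ by an isomorphic $\ff{f}'$ coming from $\Pro{C}$, say via an invertible 2-cell $\ff{f}\Mr{\kappa}\ff{f}'$ in $\Prop{C}$. Applying \ref{idrepresenta}(1) to $\ff{f}'$ yields, for the given $j\in\cc{J}$, some $i\in\cc{I}$ and $\ff{X}_i\mr{\ff{r}}\ff{Y}_j$ with $(\ff{r},id)$ representing $\ff{f}'$, i.e. an equality $\ff{r}\,\pi_i = \pi_j\,\ff{f}'$ in $\Pro{C}$. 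Pasting the invertible 2-cell $\pi_j\kappa$ onto this equality produces an invertible 2-cell $\ff{X}\cellrd{\ff{r}\,\pi_i}{\varphi}{\pi_j\,\ff{f}}\ff{Y}_j$ in $\Prop{C}$, which is exactly the statement that $(\ff{r},\varphi)$ represents $\ff{f}$. The essential difference from the strict case is that $\varphi$ need no longer be the identity, precisely because the projection identities and Yoneda isomorphism only hold up to invertible 2-cell in $\Prop{C}$; this accounts for the weaker conclusion.

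For part~(2), the strategy is parallel: given $\ff{X}\cellrd{\ff{f}}{\alpha}{\ff{g}}\ff{Y}\in\Prop{C}$, transport $\ff{f},\ff{g}$ and $\alpha$ across the equivalence to obtain $\ff{f}',\ff{g}',\alpha'$ living in $\Pro{C}$ together with connecting invertible 2-cells. Applying \ref{idrepresenta}(2) to $\alpha'$ gives $i\in\cc{I}$, arrows $\ff{X}_i\cellrd{\ff{r}}{\theta}{\ff{s}}\ff{Y}_j$ and invertible 2-cells $\varphi',\psi'$ with $(\theta,\ff{r},\varphi',\ff{s},\psi')$ representing $\alpha'$. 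I would then absorb the connecting 2-cells from the transport into $\varphi'$ and $\psi'$ to obtain the required $\varphi,\psi$ for $\alpha$ itself, verifying the pasting equation of \ref{representa}(2) by an elevator-calculus manipulation that shuffles the invertible comparison 2-cells past $\theta$, $\alpha$, $\varphi$, $\psi$. The invertibility clause (that one may take $\theta$ invertible when $\alpha$ is) is inherited from \ref{idrepresenta}(2) since the transport isomorphisms are invertible and invertibility is preserved under pasting.

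The main obstacle I anticipate is the bookkeeping in part~(2): I must check that the pasting identity characterizing ``$(\theta,\ff{r},\varphi,\ff{s},\psi)$ represents $\alpha$'' survives the transport, i.e. that composing the known equation for $\alpha'$ with the coherence 2-cells relating $\ff{f},\ff{f}'$ and $\ff{g},\ff{g}'$ indeed reassembles into the required equation for $\alpha$. This is purely a diagram-chase in the elevator calculus of \eqref{basicelevator}, using naturality of the comparison 2-cells (a modification condition, \ref{representa}), but it requires care to route every invertible 2-cell to the correct side. None of this is conceptually deep beyond \ref{idrepresenta} and the pseudo-equivalence \ref{proppseudoeqapro}; the content is entirely in organizing the weakened coherences, which is exactly why the statement is phrased as an equivalence rather than an isomorphism.
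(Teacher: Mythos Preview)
Your proposal is correct and follows exactly the paper's approach: the paper simply states that the result follows from Proposition~\ref{idrepresenta} together with the pseudo-equivalence of Remark~\ref{proppseudoeqapro}, and you have spelled out precisely how that transport works. Your treatment is in fact more detailed than the paper's one-line justification; the bookkeeping you anticipate in part~(2) is routine and the paper does not elaborate on it either.
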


\begin{lemma}\label{lema1}
Let
$\ff{X}=(\ff{X}_i)_{i\in \cc{I}}$ be a 2-pro-object, let $\ff{X}_i\mr{\ff{r}} \ff{C}$,
\mbox{$\ff{X}_{j}\mr{\ff{s}} \ff{C}\in \cc{C}$}, and
$\ff{X} \cellrd{\ff{r} \pi_i}{\alpha}{\ff{s} \pi_{j}} \ff{C}\in 2$-$\cc{P}ro(\mathcal{C})$. Then, $\exists \vcenter{\xymatrix@R=-0.5pc{i \ar[rd]^{u} &\\& k \\ j\ar[ru]_{v}}} \in \cc{I}$ and $\ff{X}_{k} \cellrd{\ff{r} \ff{X}_u}{\theta}{\ff{s} \ff{X}_v} \ff{C} \in \cc{C}$ such that \mbox{$\alpha \circ \fr \pi_u=\s \pi_v \circ \theta \pi_k$} in $\Pro{C}$:

$$ \vcenter{\xymatrix@R=.75pc@C=2.5pc{ & \X_k \ar[dd]^{\X_v} \ar[r]^{\X_u} \ar@{}[ddl]|{\Downarrow \; \pi_v} \ar@{}[ddr]|{\Downarrow \ \theta} & \X_i \ar[dd]_{\fr} \\
X \ar[ur]^{\pi_k} \ar[dr]_{\pi_j} \\
& \X_j \ar[r]_{\s} & \C }}
\vcenter{\xymatrix{ \quad = \quad }}
\vcenter{\xymatrix@C=2.5pc{ & \X_k \ar@/_3ex/[dl]_{\pi_k}  \ar@{}[dl]|{\stackrel{\pi_u}{\Rightarrow}} \ar[d]^{\X_u} \\
\X \ar[d]_{\pi_j} \ar@{}[dr]|{\Downarrow \; \alpha} \ar[r]^{\pi_i} & \X_i \ar[d]^{\fr} \\
\X_j \ar[r]_{\s} & \C }}
\vcenter{\xymatrix@C=-.4pc{\quad \quad i.e. \quad \quad  }}  $$

% $$
% \vcenter
%   {
%    \xymatrix
%       {
%         \ff{r}\,\ff{X}_u \pi_{k}  \ar@{=>}[r]^{\ff{r} \pi_u}_\cong
%                        \ar@{=>}[d]_{\theta\pi_{k}}
%       & \ff{r}\,\pi_i
%                        \ar@{=>}[d]^{\alpha}
%       \\
%         \ff{s} \ff{X}_v  \pi_{k} \ar@{=>}[r]^{\ff{s}\pi_v}_\cong
%       & \ff{s} \pi_{j}
%       }
%   }

$$\vcenter{\xymatrix@C=-0pc{ \fr \dl & \dc{\theta} & \dr \X_u && \pi_k \deq \\
\s \deq & &  \X_v && \pi_k \\
\s &&& \pi_j \cl{\pi_v}  }}
\vcenter{\xymatrix@C=-0pc{ \quad = \quad }}
\vcenter{\xymatrix@C=-0pc{ \fr \deq & &  \X_u && \pi_k \\
\fr \dl & \dcr{\alpha} && \dr \pi_i \cl{\pi_u} \\
\s &&& \pi_j }}
$$  
Observe that in case $\alpha$ is invertible, one can choose $\theta$ to be invertible.
\end{lemma}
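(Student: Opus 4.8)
The plan is to transport the entire computation into the explicit description of the hom-category $\Pro{C}(\ff{X},\ff{C})$ as a 2-filtered pseudo-colimit of categories, extract the data $u,v,k,\theta$ from a representing premorphism, and then verify the asserted equality at the level of premorphisms. First I would specialize the basic formula \eqref{basica2} (Proposition \ref{iso}) to the one-point target $\ff{Y}=c\ff{C}$, obtaining an isomorphism $\Pro{C}(\ff{X},\ff{C})\cong\coLim{i\in\cc{I}}{\cc{C}(\ff{X}_i,\ff{C})}=\cc{L}(\ff{F})$, where $\ff{F}i=\cc{C}(\ff{X}_i,\ff{C})$ (so $\ff{F}u(\ff{r})=\ff{r}\ff{X}_u$) and $\cc{L}(\ff{F})$ is Construction LL (\ref{defLF}) with pseudo-cone $\lambda$ as in \ref{construccionDS}. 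Under this isomorphism the objects $\ff{r}\pi_i$ and $\ff{s}\pi_j$ correspond to $(\ff{r},i)$ and $(\ff{s},j)$ (using \ref{X=lim}, which identifies $\pi_i$ with $(id_{\ff{X}_i},i)$, together with the fact that left whiskering by $\ff{r}$ is postcomposition, hence the induced functor on colimits). Thus $\alpha$ is a morphism $(\ff{r},i)\to(\ff{s},j)$ in $\cc{L}(\ff{F})$, and by definition of the morphisms of $\cc{L}(\ff{F})$ it is represented by a premorphism $[u,\theta,v]$ with $i\mr{u}k$, $j\mr{v}k$ in $\cc{I}$ and $\theta\colon\ff{F}u(\ff{r})=\ff{r}\ff{X}_u\to\ff{F}v(\ff{s})=\ff{s}\ff{X}_v$ in $\ff{F}k=\cc{C}(\ff{X}_k,\ff{C})$. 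This already supplies the required $u,v,k$ and $\theta\colon\ff{r}\ff{X}_u\Rightarrow\ff{s}\ff{X}_v$.

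Next I would read off the three whiskered 2-cells as premorphisms. The projection 2-cell $\pi_u\colon\ff{X}_u\pi_k\Rightarrow\pi_i$ is a morphism $(\ff{X}_u,k)\to(id_{\ff{X}_i},i)$ in $\coLim{m}{\cc{C}(\ff{X}_m,\ff{X}_i)}$; since its source and target indices are $k$ and $i$, the direction forces it to be $[id_k,\,id_{\ff{X}_u},\,u]$ (the inverse of the structural $(\lambda_u)_{id}$ of \ref{construccionDS}). Whiskering by $\ff{r}$, i.e. applying the postcomposition functor $\ff{r}\circ-$ levelwise, gives $\ff{r}\pi_u=[id_k,\,id_{\ff{r}\ff{X}_u},\,u]\colon(\ff{r}\ff{X}_u,k)\to(\ff{r},i)$, and likewise $\ff{s}\pi_v=[id_k,\,id_{\ff{s}\ff{X}_v},\,v]\colon(\ff{s}\ff{X}_v,k)\to(\ff{s},j)$. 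Finally $\theta\pi_k$ is the whiskering of $\theta\in\ff{F}k$ by $\pi_k$, which is exactly $\lambda_k(\theta)=[id_k,\,\theta,\,id_k]\colon(\ff{r}\ff{X}_u,k)\to(\ff{s}\ff{X}_v,k)$.

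Then I would compute both composites in $\cc{L}(\ff{F})$. For $\alpha\circ\ff{r}\pi_u$ the two arrows to be amalgamated are the second arrow $u\colon i\to k$ of $\ff{r}\pi_u=[id_k,id,u]$ and the first arrow $u\colon i\to k$ of $\alpha=[u,\theta,v]$; these literally coincide, so the cocone completion in the composition rule of \ref{defLF} may be taken trivial (common refinement $k$, connecting maps the identities, comparison the identity), and the composite reduces to vertical composition of the middles, $[id_k,\,\theta\circ id,\,v]=[id_k,\theta,v]$. For $\ff{s}\pi_v\circ\theta\pi_k$ the arrows to be amalgamated are both $id_k\colon k\to k$, so again the completion is trivial and the composite is $[id_k,\,id\circ\theta,\,v]=[id_k,\theta,v]$. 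Hence both sides equal $[id_k,\theta,v]$ in $\cc{L}(\ff{F})$, and transporting back along \eqref{basica2} yields $\alpha\circ\ff{r}\pi_u=\ff{s}\pi_v\circ\theta\pi_k$ in $\Pro{C}$, as desired. I expect the main obstacle to be precisely this bookkeeping: keeping the contravariance of $\ff{X}$ and the variances of the hom-functors straight so that each whiskered projection is correctly identified with a premorphism, and checking that in each composition the two arrows to be merged genuinely coincide so that no nontrivial refinement (and hence no extra structural 2-cell) intervenes; once these coincidences are established the equality is immediate.

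For the closing remark, if $\alpha$ is invertible then $[u,\theta,v]$ is an isomorphism of $\cc{L}(\ff{F})$; since an isomorphism in a 2-filtered pseudo-colimit of categories becomes invertible at a finite stage, there is $w\colon k\to k'$ with $\ff{F}w(\theta)=\theta\ff{X}_w$ invertible, and by Corollary \ref{corolario1} (or Lemma \ref{lemita1}) we may replace the representative by $[wu,\,\theta\ff{X}_w,\,wv]$, whose middle 2-cell is invertible. Running the same premorphism computation with this representative then verifies the equation with an invertible $\theta$, completing the proof.
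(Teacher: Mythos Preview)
Your proof is correct and follows the same approach as the paper: identify $\alpha$ with a premorphism $[u,\theta,v]$ in the 2-filtered pseudo-colimit via formula \eqref{basica2} and Construction \ref{defLF}. The paper's proof is a single sentence asserting that the equation follows once this identification is made; you have supplied the premorphism-level verification of that equation (and of the invertibility clause) which the paper leaves implicit.
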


\begin{proof}
By formula $\ref{iso}$ and the construction of 2-filtered pseudo-colimits (\ref{defLF}), $\alpha$ is given by $(\ff{r},i)\mr{[u,\theta,v]} (\ff{s},j)\in \coLim{i\in \cc{I}}{\cc{C}(\ff{X}_i,\ff{C})}$ . Thus, $\exists \vcenter{\xymatrix@R=-0.5pc{i \ar[rd]^{u} &\\& k \\ j\ar[ru]_{v}}} \in \cc{I}$ and $\ff{X}_{k} \cellrd{\ff{r} \ff{X}_u}{\theta}{\ff{s} \ff{X}_v} \ff{C} \in \cc{C}$ such that \mbox{$\alpha \circ \fr \pi_u=\s \pi_v \circ \theta \pi_k$}, as we wanted to prove.

% $\vcenter{\xymatrix@C=-2ex
%           {
%            \ff{r} \deq & & & \pi_i \dcellop{\pi_u} & 
%           \\
%            \ff{r} \dl & \comw{\ff{X}_u} \dc{\theta} & \ff{X}_u \dr & & \pi_k \deq
%           \\
%            \ \ff{s} \  && \ff{X}_v & & \pi_k
%           }}
% \vcenter{\xymatrix{\comw{\pi_j \ff{f}} \\ = \\ \comw{\pi_j \ff{f}} }}
% \vcenter{\xymatrix@C=-2ex
%           {
%            \ff{r} \dl && \ar@{}[dl]|{\alpha} & \pi_i \dr 
%           \\
%            \ff{s} \deq &  &  & \pi_j \dcellop{\pi_v}
%           \\
%            \ff{s} & \comw{\ff{X}_v} & \ff{X}_v & & \pi_k
%           }}$

% $\vcenter{\xymatrix@C=-0pc{ \fr \dl & \dc{\theta} & \dr \X_u && \pi_k \deq \\
% \s \deq & &  \X_v && \pi_k \\
% \s &&& \pi_j \cl{\pi_v}  }}
% \vcenter{\xymatrix@C=-0pc{ \quad = \quad }}
% \vcenter{\xymatrix@C=-0pc{ \fr \deq & &  \X_u && \pi_k \\
% \fr \dl & \dcr{\alpha} && \dr \pi_i \cl{\pi_u} \\
% \s &&& \pi_j }}$
\end{proof}

The following is an immediate consequence of \cite[Lemma 2.2.]{DS}

\begin{remark} \label{u=v}
If $i=j$, then one can choose $u=v$. \cqd
\end{remark}

\begin{remark}\label{lema1p}
From the previous proposition plus the pseudo-equivalence of \ref{proppseudoeqapro}, it follows that the previous lemma also holds in $\Prop{C}$, and so also \ref{lemorphismaraM_f} and \ref{lema2}. \cqd
\end{remark}

\vspace{1ex}

The following two lemmas will be used to prove reindexing properties of \mbox{2-pro-objects} in section \ref{Mtrick} and will be also needed in section \ref{2-modelos}:

\begin{lemma}\label{lemorphismaraM_f}
 Let $\ff{X}\mr{\ff{f}} \ff{Y}$ be a morphism in $\Pro{C}$, $\ff{X}_i\mr{\ff{r}}\ff{Y}_j$, $\ff{X}_{i'}\mr{\ff{s}}\ff{Y}_{j'}\in \cc{C}$ and $\varphi$, $\psi$ invertible 2-cells in $\cc{C}$ such that $(\ff{r},\varphi)$ and $(\ff{s},\psi)$ both represent $\ff{f}$ and there are morphisms $i\mr{u} i'\in \cc{I}$, $j\mr{a}j' \in \cc{J}$. Then there are morphisms $\vcenter{\xymatrix@R=-0.5pc{i\ar[rd]^v \\
                           & i''\\
                           i' \ar[ru]_w}} \in \cc{I}$ 
and an invertible 2-cell $\vcenter{\xymatrix@R=1pc@C=-.5pc{\ff{Y}_{a} \ar@{-}[dr] &&\ff{s} \dc{\theta} & &\ff{X}_w \ar@{-}[dl] \\
& \ff{r} & & \ff{X}_v }}$ $\in \cc{C} $ such that $\varphi \circ \fr \pi_v \circ \theta \pi_{i''}=\pi_a \f \circ \Y_a \psi \circ \Y_a \s \pi_w$, i.e.

%A91

$$\vcenter{\xymatrix@C=-0pc{ \Y_a \ar@{-}[dr] && \s \dc{\theta} && \X_w \ar@{-}[dl] & \pi_{i''} \deq \\
& \fr \deq && \X_v  && \pi_{i''} \\
& \fr \dl & \dcr{\varphi} && \dr \pi_i \cl{\pi_v} \\
& \pi_j &&& \f }}
\vcenter{\xymatrix@C=-0pc{ \quad = \quad }}
\vcenter{\xymatrix@C=-0pc{ \Y_a \deq && \s \deq && \X_w && \pi_{i''} \\
\Y_a \deq && \s \dl & \dcr{\psi} && \dr \pi_{i'} \cl{\pi_w} \\
\Y_a && \pi_{j'} &&& \f \deq \\
& \pi_j \cl{\pi_a} &&&& \f}}$$

% $$\vcenter{\xymatrix@C=-0.5pc{ & \pi_j\dcellop{\pi_a} &&&& \ff{f} \deq \\
%                     \ff{Y}_a \deq & & \pi_{j'}\dl &\ar@{}[d]^{\psi} && \ff{f} \dr \\
%                     \ff{Y}_a \deq & & \ff{s} \deq &&& \pi_{i'} \dcellop{\pi_w} \\
%                     \ff{Y}_a \ar@{-}[dr] && \ff{s} \dc{\theta} && \ff{X}_w \ar@{-}[dl] && \pi_{i''} \deq\\
%                     & \ff{r} && \ff{X}_v &&& \pi_{i''}}}
% \vcenter{\xymatrix@C=-.3pc{\quad \quad = \quad \quad  }						}
% \vcenter{\xymatrix@C=-0.5pc{\pi_j \dl & \dc{\varphi} & \ff{f} \dr \\
%                             \ff{r} \deq && \pi_i \dcellop{\pi_v} \\
%                             \ff{r} & \ff{X}_v && \pi_{i''}} }$$

\end{lemma}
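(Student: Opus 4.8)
The engine of the proof is Lemma \ref{lema1}, applied with the fixed object $\ff{Y}_j$ as target. I regard both representatives as arrows landing in $\ff{Y}_j$: on one side $\ff{X}_i \mr{\fr} \ff{Y}_j$, and on the other the composite $\ff{X}_{i'} \mr{\s} \ff{Y}_{j'} \mr{\ff{Y}_a} \ff{Y}_j$. The plan is to manufacture a single invertible $2$-cell between the corresponding legs $\fr\,\pi_i$ and $\ff{Y}_a\,\s\,\pi_{i'}$ in $\Pro{C}(\ff{X},\ff{Y}_j)$, feed it to Lemma \ref{lema1}, and then transcribe the resulting identity into the one asserted.

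First I would build that $2$-cell. Recall from \ref{X=lim} that $\ff{Y}=\Lim{j\in\cc{J}}{\ff{Y}_j}$ carries its pseudo-cone structure, with structural $2$-cell $\pi_a\colon \ff{Y}_a\,\pi_{j'}\Mr{}\pi_j$, which is invertible by \ref{pseudo-cone}. Define
\[
\alpha \;=\; (\ff{Y}_a\psi)^{-1}\circ(\pi_a\,\f)^{-1}\circ\varphi \colon \fr\,\pi_i \Mr{} \ff{Y}_a\,\s\,\pi_{i'},
\]
the vertical pasting of $\varphi\colon \fr\,\pi_i\Mr{}\pi_j\f$, the inverse of $\pi_a\,\f\colon \ff{Y}_a\,\pi_{j'}\f\Mr{}\pi_j\f$, and the inverse of $\ff{Y}_a\psi\colon \ff{Y}_a\,\s\,\pi_{i'}\Mr{}\ff{Y}_a\,\pi_{j'}\f$. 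Since $\varphi$, $\psi$ and $\pi_a$ are all invertible, so is $\alpha$.

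Next I would invoke the invertible case of Lemma \ref{lema1} for the arrows $\fr$ (index $i$) and $\ff{Y}_a\s$ (index $i'$) together with the $2$-cell $\alpha$. This produces a common index, which I rename $i''$, morphisms $i\mr{v}i''$ and $i'\mr{w}i''$ in $\cc{I}$, and an invertible $2$-cell $\theta'\colon \fr\,\ff{X}_v\Mr{}\ff{Y}_a\,\s\,\ff{X}_w$ in $\cc{C}$ satisfying $\alpha\circ\fr\,\pi_v=\ff{Y}_a\,\s\,\pi_w\circ\theta'\,\pi_{i''}$. I then set $\theta:=(\theta')^{-1}\colon \ff{Y}_a\,\s\,\ff{X}_w\Mr{}\fr\,\ff{X}_v$, which is exactly the $2$-cell whose existence the statement requires.

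It remains to recover the stated equation. Substituting the definition of $\alpha$ into Lemma \ref{lema1}'s equality and composing on the left with $(\pi_a\,\f)\circ(\ff{Y}_a\psi)$ cancels the two inverse factors, giving $\varphi\circ\fr\,\pi_v=\pi_a\,\f\circ\ff{Y}_a\psi\circ\ff{Y}_a\,\s\,\pi_w\circ\theta'\,\pi_{i''}$; composing on the right with $(\theta'\,\pi_{i''})^{-1}=\theta\,\pi_{i''}$ (whiskering by $\pi_{i''}$ is a functor, hence commutes with inverses and with vertical composition) yields precisely $\varphi\circ\fr\,\pi_v\circ\theta\,\pi_{i''}=\pi_a\,\f\circ\ff{Y}_a\psi\circ\ff{Y}_a\,\s\,\pi_w$, i.e. the asserted elevator identity. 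The only genuine care needed is to keep the domains and codomains of all whiskered cells aligned throughout; the elevator calculus makes these rearrangements transparent, and the single point that must be verified is that $\pi_a$ is invertible, so that $\alpha$ is invertible and the invertible version of Lemma \ref{lema1} legitimately provides an invertible $\theta'$, allowing the passage to $\theta=(\theta')^{-1}$. (Note that the given $j\mr{a}j'$ is used to form $\ff{Y}_a$ and $\pi_a$, whereas $i\mr{u}i'$ plays no role in this construction; via Remark \ref{u=v} it could, if desired, be used to force $v=w$.)
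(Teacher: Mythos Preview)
Your proof is correct and follows precisely the paper's approach: apply Lemma~\ref{lema1} with target $\ff{Y}_j$, first arrow $\fr$ (index $i$), second arrow $\ff{Y}_a\ff{s}$ (index $i'$), and the invertible $2$-cell $\alpha=(\ff{Y}_a\psi)^{-1}\circ(\pi_a\ff{f})^{-1}\circ\varphi$; then invert the resulting $\theta'$ and rearrange. You are in fact more careful than the paper, which leaves the inversion of $\theta'$ and the final rearrangement implicit. One minor quibble: your parenthetical that Remark~\ref{u=v} could be used to force $v=w$ is not quite right as stated, since that remark requires the two indices to coincide, which $i$ and $i'$ need not; your observation that the hypothesis $i\mr{u}i'$ plays no role in the argument is, however, correct.
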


\begin{proof}
 In \ref{lema1}, take $\ff{X}:=\ff{X}$, $\ff{X}_i:=\ff{X}_i$, $\ff{X}_j:=\ff{X}_{i'}$, $\ff{C}:=\ff{Y}_j$, $\ff{r}:=\ff{r}$, $\ff{r}:=\ff{Y}_a \ff{s}$ and $\alpha:=
 %\ff{Y}_a\psi^{-1} \circ \pi_a^{-1} f \circ \varphi$
 \vcenter{\xymatrix@R=1.2pc@C=-0.1pc{ & \fr \dl & \dcr{\varphi} && \dr \pi_i \\
 & \pi_j \op{\pi_a\inv} &&& \f \deq \\
 \Y_a \deq && \pi\sj \dl & \dc{\psi\inv} & \dr \f \\
 \Y_a && \s && \pi\si }}$ .
\end{proof}

\begin{lemma}\label{lemorphismaraM_D}
 Let $\left\{\ff{X}\mr{\ff{f}_l} \ff{Y}^l\right\}_{l=1,...,k}$ be a finite family of morphisms in $\Pro{C}$ with $\ff{X}$ indexed by $\cc{I}$ and for every $l=1,...,k$, $\ff{Y}^l$ indexed by $\cc{J}$. Consider $\left\{\ff{X}_i\mr{\ff{r}_l}\ff{Y}^l_j\right\}_{l=1,...,k}$, $\left\{\ff{X}_{i'}\mr{\ff{s}_l}\ff{Y}^l_{j'}\right\}_{l=1,...,k}\in \cc{C}$ and invertible 2-cells $\left\{\varphi_l\right\}_{l=1,...k}$, $\left\{\psi_l\right\}_{l=1,...k}$ in $\cc{C}$ such that $(\ff{r}_l,\varphi_l)$ and $(\ff{s}_l,\psi_l)$ both represent $\ff{f}_l\ \forall \ l=1,...,k$ and there are morphisms \mbox{$i\mr{u} i'\in \cc{I}$, $j\mr{a}j' \in \cc{J}$.} Then there are morphisms $\vcenter{\xymatrix@R=-0.5pc{i\ar[rd]^v \\
                           & i''\\
                           i' \ar[ru]_w}} \in \cc{I}$ 
and invertible \mbox{2-cells} $\vcenter{\xymatrix@R=1pc@C=-.5pc{\ff{Y}^l_{a} \ar@{-}[dr] &&\ff{s}_l \dc{\theta_l} & &\ff{X}_w \ar@{-}[dl] \\
                                                     & \ff{r}_l & & \ff{X}_v }} \in \cc{C} \ \forall \ l=1,...k$ such that for each $l=1,...k$, \mbox{$\varphi_l \circ \fr_l \pi_v \circ \theta_l \pi_{i''}=\pi_a \f \circ \Y_a^l \psi_l \circ \Y_a^l \s_l \pi_w$,} i.e.

%A92

$$\vcenter{\xymatrix@C=-0pc{ \Y_a^l \ar@{-}[dr] && \s_l \dc{\theta_l} && \X_w \ar@{-}[dl] & \pi_{i''} \deq \\
& \fr_l \deq && \X_v  && \pi_{i''} \\
& \fr_l \dl & \dcr{\varphi_l} && \dr \pi_i \cl{\pi_v} \\
& \pi_j &&& \f_l }}
\vcenter{\xymatrix@C=-0pc{ \quad = \quad }}
\vcenter{\xymatrix@C=-0pc{ \Y_a^l \deq && \s_l \deq && \X_w && \pi_{i''} \\
\Y_a^l \deq && \s_l \dl & \dcr{\psi_l} && \dr \pi_{i'} \cl{\pi_w} \\
\Y_a^l && \pi_{j'} &&& \f_l \deq \\
& \pi_j \cl{\pi_a} &&&& \f_l}}$$

% $$\vcenter{\xymatrix@C=-0.5pc{ & \pi_j\dcellop{\pi_a} &&&& \ff{f}_l \deq \\
%                     \ff{Y}^l_a \deq & & \pi_{j'}\dl &\ar@{}[d]^{\psi_l} && \ff{f}_l \dr \\
%                     \ff{Y}_a^l \deq & & \ff{s}_l \deq &&& \pi_{i'} \dcellop{\pi_w} \\
%                     \ff{Y}_a^l \ar@{-}[dr] && \ff{s}_l \dc{\theta_l} && \ff{X}_w \ar@{-}[dl] && \pi_{i''} \deq\\
%                     & \ff{r}_l && \ff{X}_v &&& \pi_{i''}}}
% \vcenter{\xymatrix@C=-.3pc{\quad \quad = \quad \quad  }						}
% \vcenter{\xymatrix@C=-0.5pc{\pi_j \dl & \dc{\varphi_l} & \ff{f}_l \dr \\
%                             \ff{r}_l \deq && \pi_i \dcellop{\pi_v} \\
%                             \ff{r}_l & \ff{X}_v && \pi_{i''}} }$$

\end{lemma}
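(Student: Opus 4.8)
The plan is to bootstrap from the single–morphism statement \ref{lemorphismaraM_f} and then coalesce the finitely many cocones it produces into a single one, exploiting the 2-filteredness of $\cc{I}$. First I would apply \ref{lemorphismaraM_f} to each $\ff{f}_l$ individually, with the given data $(\ff{r}_l,\varphi_l)$, $(\ff{s}_l,\psi_l)$, $u$ and $a$. This yields, for every $l=1,\dots,k$, morphisms $i\mr{v_l}i''_l$, $i'\mr{w_l}i''_l$ in $\cc{I}$ and an invertible 2-cell $\theta_l$ over $(v_l,w_l,i''_l)$ satisfying the desired equation, but with the index object $i''_l$ and the legs $v_l,w_l$ depending on $l$. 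Invertibility of each $\theta_l$ is automatic here, because the 2-cell fed into \ref{lema1} is built solely out of the invertible data $\varphi_l,\psi_l,\pi_a$, so the corresponding element of the 2-filtered pseudo-colimit (\ref{defLF}) is represented by an invertible premorphism.

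The heart of the argument is to replace the $k$ cocones $(v_l,w_l,i''_l)$ by a single one $(v,w,i'')$. I would do this by induction on $k$, merging two cocones at a time. Given a cocone $(\bar v,\bar w,\bar\imath)$ carrying $\theta_1,\dots,\theta_{k-1}$ and the fresh cocone $(v_k,w_k,i''_k)$ carrying $\theta_k$, axiom F0 of \ref{2filtered} produces an object $m$ together with $\bar\imath\mr{s}m$ and $i''_k\mr{t}m$. Applying axiom F1 first to the parallel pair $s\bar v,\,tv_k\colon i\to m$ and then to the resulting pair out of $i'$ gives a further object $i''$, a morphism $e\colon m\to i''$, and invertible 2-cells $es\bar v\cong etv_k$ and $es\bar w\cong etw_k$ in $\cc{I}$. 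I then set $v:=es\bar v$, $w:=es\bar w$, obtaining the common legs $i\mr{v}i''\ml{w}i'$. Only F0 and F1 are needed, since for the 1-cells I only require invertibility up to 2-cells rather than strict equality (F2 would enter only if one insisted on equalizing 2-cells).

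Finally I would transport all the 2-cells to the common stage. Because a 2-pro-object is a genuine 2-functor (\ref{2proc}), whiskering with $\ff{X}$ of the merging morphisms is strictly functorial; so I whisker $\theta_1,\dots,\theta_{k-1}$ with $\ff{X}_{es}$ and $\theta_k$ with $\ff{X}_{et}$, and then absorb the images under $\ff{X}$ of the invertible 2-cells $es\bar v\cong etv_k$, $es\bar w\cong etw_k$ to land every $\theta_l$ over the same pair $(v,w)$; invertibility is preserved throughout. It remains to check that the equation survives this transport. For this I would whisker each previously established equation by the appropriate projection and rewrite $\pi_{v},\pi_{w},\pi_{i''}$ in terms of $\pi_{\bar v},\pi_{\bar w},\pi_{\bar\imath}$ (resp. $\pi_{v_k},\pi_{w_k},\pi_{i''_k}$) using the composition law $\pi_{yx}=\pi_x\circ\ff{X}_x\pi_y$ for the projections of \ref{X=lim} (an equality, since $\ff{X}$ is a 2-functor); the displayed identity then follows by an elevator computation from the equation already known on each summand. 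The main obstacle is precisely this last bookkeeping: keeping the invertible comparison 2-cells coherent while passing through the merge, so that the single common $(v,w,i'')$ simultaneously validates all $k$ equations. The reduction to \ref{lemorphismaraM_f} and the finite, two-at-a-time use of 2-filteredness make everything else routine.
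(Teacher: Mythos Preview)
Your proposal is correct and follows essentially the same route as the paper: induction on $k$, with the base case given by \ref{lemorphismaraM_f} and the inductive step consisting of merging two cocones over $i,i'$ in $\cc{I}$ using F0 and F1, then transporting the $\theta_l$'s to the common stage via whiskering and the invertible comparison 2-cells. The paper organizes the induction slightly differently (it carries a single cocone for $l=1,\dots,k$ through the inductive hypothesis rather than first producing all $k$ cocones and then merging), and it spells out the final ``bookkeeping'' step you flag as the main obstacle by explicit elevator computations invoking axiom PC2 for the pseudo-cone $\pi$; but the substance is the same.
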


\begin{proof}
We are going to proceed by induction in $k$. 
For $k=1$, use \ref{lemorphismaraM_f}.
%Suppose that the assertion is true for some $k$. We are going to prove that then is true for $k+1$:

\noindent $k \Rightarrow k+1$:
by inductive hypothesis, $\exists \ \vcenter{\xymatrix@R=-0.5pc{i \ar[rd]^{v_0} \\
                                                                      & i_0 \\
                                                                      i' \ar[ru]_ {w_0}}} \in \cc{I}$ and, for each $l=1,...k$, an invertible 2-cell $\vcenter{\xymatrix@R=1pc@C=-.5pc{\ff{Y}^l_{a} \ar@{-}[dr] &&\ff{s}_l \dc{\tilde{\theta}_l} & &\ff{X}_{w_0} \ar@{-}[dl] \\
                                                     & \ff{r}_l & & \ff{X}_{v_0} }} \in \cc{C}$ such that 

\begin{equation}\label{hastak}
%A93
\vcenter{\xymatrix@C=-0pc{ \Y_a^l \ar@{-}[dr] && \s_l \dc{\tilde{\theta}_l} && \X_{w_0} \ar@{-}[dl] & \pi_{i_0} \deq \\
& \fr_l \deq && \X_{v_0}  && \pi_{i_0} \\
& \fr_l \dl & \dcr{\varphi_l} && \dr \pi_i \cl{\pi_{v_0}} \\
& \pi_j &&& \f_l }}
\vcenter{\xymatrix@C=-0pc{ \quad = \quad }}
\vcenter{\xymatrix@C=-0pc{ \Y_a^l \deq && \s_l \deq && \X_{w_0} && \pi_{i_0} \\
\Y_a^l \deq && \s_l \dl & \dcr{\psi_l} && \dr \pi_{i'} \cl{\pi_{w_0}} \\
\Y_a^l && \pi_{j'} &&& \f_l \deq \\
& \pi_j \cl{\pi_a} &&&& \f_l}}
%\vcenter{\xymatrix@C=-0.5pc{ & \pi_j\dcellop{\pi_a} &&&& \ff{f}_l \deq \\
%                     \ff{Y}^l_a \deq & & \pi_{j'}\dl &\ar@{}[d]^{\psi_l} && \ff{f}_l \dr \\
%                     \ff{Y}_a^l \deq & & \ff{s}_l \deq &&& \pi_{i'} \dcellop{\pi_{w_0}} \\
%                     \ff{Y}_a^l \ar@{-}[dr] && \ff{s}_l \dc{\tilde{\theta}_l} && \ff{X}_{w_0} \ar@{-}[dl] && \pi_{i_0} \deq\\
%                     & \ff{r}_l && \ff{X}_{v_0} &&& \pi_{i_0}}}
% \vcenter{\xymatrix@C=-.3pc{\quad \quad = \quad \quad  }						}
% \vcenter{\xymatrix@C=-0.5pc{\pi_j \dl & \dc{\varphi_l} & \ff{f}_l \dr \\
%                             \ff{r}_l \deq && \pi_i \dcellop{\pi_{v_0}} \\
%                             \ff{r}_l & \ff{X}_{v_0} && \pi_{i_0}} }
\end{equation}
                            
\noindent Also, $\exists \ \vcenter{\xymatrix@R=-0.5pc{i \ar[rd]^{v_1} \\
                                                                      & i_1 \\
                                                                      i' \ar[ru]_ {w_1}}} \in \cc{I}$ and an invertible 2-cell $\vcenter{\xymatrix@R=1pc@C=-.8pc{\ff{Y}^{k+1}_{a} \ar@{-}[dr] &&\ff{s}_{k+1} \dc{\tilde{\theta}_{k+1}} & &\ff{X}_{w_1} \ar@{-}[dl] \\
                                                     & \ff{r}_{k+1} & & \ff{X}_{v_1} }} \in \cc{C}$ such that

\begin{equation}\label{k+1}
%A94
\vcenter{\xymatrix@C=-0pc{ \Y_a^{k+1} \ar@{-}[dr] && \s_{k+1} \dc{\tilde{\theta}_{k+1}} && \X_{w_1} \ar@{-}[dl] & \pi_{i_1} \deq \\
& \fr_{k+1} \deq && \X_{v_1}  && \pi_{i_1} \\
& \fr_{k+1} \dl & \dcr{\varphi_{k+1}} && \dr \pi_i \cl{\pi_{v_1}} \\
& \pi_j &&& \f_{k+1} }}
\vcenter{\xymatrix@C=-0pc{ \quad = \quad }}
\vcenter{\xymatrix@C=-0pc{ \Y_a^{k+1} \deq && \s_{k+1} \deq && \X_{w_1} && \pi_{i_1} \\
\Y_a^{k+1} \deq && \s_{k+1} \dl & \dcr{\psi_{k+1}} && \dr \pi_{i'} \cl{\pi_{w_1}} \\
\Y_a^{k+1} && \pi_{j'} &&& \f_{k+1} \deq \\
& \pi_j \cl{\pi_a} &&&& \f_{k+1}}}
% \vcenter{\xymatrix@C=-0.5pc{ & \pi_j\dcellop{\pi_a} &&&& \ff{f}_{k+1} \deq \\
%                     \ff{Y}^{k+1}_a \deq & & \pi_{j'}\dl &\ar@{}[d]^{\psi_{k+1}} && \ff{f}_{k+1} \dr \\
%                     \ff{Y}_a^{k+1} \deq & & \ff{s}_{k+1} \deq &&& \pi_{i'} \dcellop{\pi_{w_1}} \\
%                     \ff{Y}_a^{k+1} \ar@{-}[dr] && \ff{s}_{k+1} \dc{\tilde{\theta}_{k+1}} && \ff{X}_{w_1} \ar@{-}[dl] && \pi_{i_1} \deq\\
%                     & \ff{r}_{k+1} && \ff{X}_{v_1} &&& \pi_{i_1}}}
% \vcenter{\xymatrix@C=-.3pc{\quad \quad = \quad \quad  }						}
% \vcenter{\xymatrix@C=-0.5pc{\pi_j \dl & \dc{\varphi_{k+1}} & \ff{f}_{k+1} \dr \\
%                             \ff{r}_{k+1} \deq && \pi_i \dcellop{\pi_{v_1}} \\
%                             \ff{r}_{k+1} & \ff{X}_{v_1} && \pi_{i_1}} }
\end{equation}  

\noindent Since $\cc{I}$ is 2-filtered, $\exists \ \vcenter{\xymatrix@R=-0.5pc{i_0 \ar[rd]^{v_2} \\
                                                                      & i_2 \\
                                                                      i_1 \ar[ru]_ {w_2}}}$, morphisms $i_2\mr{u_0}i_3$, $i_2\mr{u_1}i_4$ and \mbox{invertible} 2-cells $i \cellrd{ w_2 v_1}{\mu_0}{u_0 v_2 v_0} i_3$, $i'\cellrd{ v_2 w_0}{\mu_1}{u_1 w_2 w_1} i_4 \in \cc{I}$, $\vcenter{\xymatrix@R=-0.5pc{i_3 \ar[rd]^{v_3} \\
                                                                      & i_5 \\
                                                                      i_4 \ar[ru]_ {w_3}}} $, a morphism $i_5\mr{u_2} i''$ and an \mbox{invertible} 2-cell $i_2 \cellrd{ w_3 u_1}{\mu_2}{u_2 v_3 u_0} i'' \in \cc{I}$.
\vspace{1ex}

Consider $v=u_2 v_3 u_0 v_2 v_0$ and $w=u_2 w_3 u_1 v_2 w_0$, for each $l=1,..,k$ 
$$\theta_l= \!\!\!\!\! %A95
\vcenter{\xymatrix@C=-0.3pc{\ff{Y}^l_{a} \ar@{-}[dr] &&\ff{s}_l \dc{\tilde{\theta}_l} && \ff{X}_{w_0} \ar@{-}[dl] && \X_{v_2} \deq && \X_{u_1} \deq && \X_{w_3} \deq && \X_{u_2} \deq \\
                           & \ff{r}_l \deq && \ff{X}_{v_0} \deq &&& \X_{v_2} \deq && \X_{u_1} \dl && \X_{w_3} \dc{\X_{\mu_2}} && \dr \X_{u_2} \\
			   & \fr_l && \X_{v_0} &&& \X_{v_2} && \X_{u_0} && \X_{v_3} && \X_{u_2} }}
\mbox{ and } \theta_{k+1}= \!\!\!\!\!\! %A96
\vcenter{\xymatrix@C=-0.3pc{ \Y_a^{k+1} \deq && \!\!\! \s_{k+1} \!\!\! \deq && \X_{w_0} \dl && \X_{v_2} \dc{\X_{\mu_1}} && \X_{u_1} \dr&&  \X_{w_3} \deq && \X_{u_2} \deq \\
\Y_a^{k+1} \ar@{-}[dr] && \!\!\! \s_{k+1} \!\!\! \dc{\tilde{\theta}_{k+1}} && \X_{w_1} \ar@{-}[dl] && \X_{w_2} \deq && \X_{u_1} \deq && \Xwt \deq && \Xud \deq \\
& \fr\kk \deq && \Xvu \deq &&& \Xwd \deq && \Xuu \dl && \Xwt \dc{\X_{\mu_2}} && \Xud \dr \\
& \fr\kk \deq && \Xvu \dl && \dcr{\X_{\mu_0}} & \Xwd && \Xuc \dr && \Xvt \deq && \Xud \deq \\
& \fr\kk && \Xvc &&& \Xvd && \Xuc && \Xvt && \Xud}}$$

Let's check that this data satisfies the desired property:

For $l=1,...,k$: 

%Cambiar por A97

$$\vcenter{\xymatrix@C=-0.3pc{  
\ff{Y}_a^l \ar@{-}[dr] &&  \ff{s}_l \dc{\tilde{\theta}_l} && \ff{X}_{w_0} \ar@{-}[dl] && \ff{X}_{v_2} \did && \ff{X}_{u_1} \did && \ff{X}_{w_3} \did && \ff{X}_{u_2} \did && \pi_{i''} \did  \\
& \ff{r}_l \did && \ff{X}_{v_0} \did &&& \ff{X}_{v_2} \did && \ff{X}_{u_1} \dl && \ff{X}_{w_3} \dc{\ff{X}_{\mu_2}} && \ff{X}_{u_2} \dr && \pi_{i''} \did  \\
& \ff{r}_l \did && \ff{X}_{v_0} \did &&& \ff{X}_{v_2} \did && \ff{X}_{u_0} \did && \ff{X}_{v_3} \did && \ff{X}_{u_2} && \pi_{i''}   \\
& \ff{r}_l \did && \ff{X}_{v_0} \did &&& \ff{X}_{v_2} \did && \ff{X}_{u_0} \did && \ff{X}_{v_3} \ar@{-}[drr] && \dc{\pi_{v_3}} & \pi_{i_5} \cl{\pi_{u_2}} \ar@{-}[dl] \\
& \ff{r}_l \did && \ff{X}_{v_0} \did &&& \ff{X}_{v_2} \did && \ff{X}_{u_0} \ar@{-}[drr] && \dc{\pi_{u_0}} && \pi_{i_3} \ar@{-}[dll] \\
& \ff{r}_l \did && \ff{X}_{v_0} \did &&& \ff{X}_{v_2} \ar@{-}[drr] && \dc{\pi_{v_2}} && \pi_{i_2} \ar@{-}[dll] \\
& \ff{r}_l \did && \ff{X}_{v_0} \ar@{-}[drrr] && \dcr{\pi_{v_0}} &&& \pi_{i_0} \ar@{-}[dll] \\
& \ff{r}_l \dl && \dcr{\varphi_l} &&& \dr \pi_i \\
& \pi_j &&&&& \f_l}}
\vcenter{\xymatrix@C=-.3pc{\ \ = \ \ \ \ } }
\vcenter{\xymatrix@C=-0.3pc{
\Y_a^l \deq && \s_l \deq && \Xwc \deq && \Xvd \deq && \Xuu \deq && \Xwt \deq && \Xud && \pi_{i''} \\
\Y_a^l \deq && \s_l \deq && \Xwc \deq && \Xvd \deq && \Xuu \deq && \Xwt \ar@{-}[drr] && \dc{\pi_{w_3}} & \ar@{-}[dl] \pi_{i_5} \cl{\pi_{u_2}} \\
\Y_a^l \deq && \s_l \deq && \Xwc \deq && \Xvd \deq && \Xuu \ar@{-}[drr] && \dc{\pi_{u_1}} && \ar@{-}[dll] \pi_{i_4} \\
\Y_a^l \deq && \s_l \deq && \Xwc \deq && \Xvd \ar@{-}[drr] && \dc{\pi_{v_2}} && \ar@{-}[dll] \pi_{i_2} \\
\Y_a^l \deq && \s_l \deq && \Xwc \ar@{-}[drr] && \dc{\pi_{w_0}} && \ar@{-}[dll] \pi_{i_0} \\
\Y_a^l \deq && \s_l \dl && \dc{\psi_l} && \dr \pi_{i'} \\
\Y_a^l && \pi_{j'} &&&& \f_l \deq \\
& \pi_j \cl{\pi_a} &&&&& \f_l}}$$

\vspace{2ex}

\noindent Where the equality is due to \eqref{hastak} plus axiom PC2 of pseudo-cones.

%\pagebreak 

\vspace{2ex}

\noindent For $l=k+1$:

%cambiar por A98

\vspace{-4cm}

$$\vcenter{\xymatrix@R=.9pc@C=-0.3pc{
\Y_a^{k+1} \deq && \!\!\!\!\! \s_{k+1} \!\!\!\!\! \deq && \Xwc \dl && \Xvd \dc{\X_{\mu_1}} && \dr \Xuu && \Xwt \deq && \Xud \deq && \pi_{i''} \deq \\
\Y_a^{k+1} \ar@{-}[dr] && \!\!\!\!\! \s_{k+1} \!\!\!\!\! \dc{\tilde{\theta}\kk} && \Xwu \ar@{-}[dl] && \Xwd \deq && \Xuu \deq && \Xwt \deq && \Xud \deq && \pi_{i''} \deq \\
& \fr\kk \deq && \Xvu \deq &&& \Xwd \deq && \Xuu \dl && \Xwt \dc{\X_{\mu_2}} && \dr \Xud && \pi_{i''} \deq \\
& \fr\kk \deq && \Xvu \dl && \dcr{\X_{\mu_0}} & \Xwd && \dr \Xuc && \Xvt \deq && \Xud \deq && \pi_{i''} \deq  \\
& \fr\kk \deq && \Xvc \deq &&& \Xvd \deq && \Xuc \deq && \Xvt \deq && \Xud && \pi_{i''} \\
& \fr\kk \deq && \Xvc \deq &&& \Xvd \deq && \Xuc \deq && \Xvt \ardrr & \dcr{\pi_{v_3}} && \ardl \!\!\! \pi_{i_5} \!\!\! \clb{\pi_{u_2}} \\
& \fr\kk \deq && \Xvc \deq &&& \Xvd \deq && \Xuc \ardrr && \dc{\pi_{u_0}} && \ardll \pi_{i_3} \\
& \fr\kk \deq && \Xvc \deq &&& \Xvd \ardrr && \dc{\pi_{v_2}} && \ardll \pi_{i_2} \\
& \fr\kk \deq && \Xvc \ardrr &&& \dc{\pi_{v_0}} && \ardll \pi_{i_0} \\
& \fr\kk \dl && \dcr{\varphi\kk} &&& \dr \pi_i \\
& \pi_j &&&&& \f\kk }}
\vcenter{\xymatrix@C=-.3pc{ = } }
\vcenter{\xymatrix@R=.9pc@C=-0.3pc{ \Y_a^{k+1} \deq && \!\!\!\!\! \s_{k+1} \!\!\!\!\! \deq && \Xwc \dl && \Xvd \dc{\X_{\mu_1}} && \dr \Xuu && \Xwt \deq && \Xud \deq && \pi_{i''} \deq \\
\Y_a^{k+1} \ar@{-}[dr] && \!\!\!\!\! \s_{k+1} \!\!\!\!\! \dc{\tilde{\theta}\kk} && \Xwu \ar@{-}[dl] && \Xwd \deq && \Xuu \deq && \Xwt \deq && \Xud \deq &&  \pi_{i''} \deq \\
& \fr\kk \deq && \Xvu \deq &&& \Xwd \deq  && \Xuu \dl && \Xwt \dc{\X_{\mu_2}} && \dr \Xud &&  \pi_{i''} \deq \\
& \fr\kk \deq && \Xvu \deq &&& \Xwd \deq  && \Xuc \deq && \Xvt \deq && \Xud && \pi_{i''} \\
& \fr\kk \deq && \Xvu \deq &&& \Xwd \deq  && \Xuc \deq && \Xvt \ardrr & \dcr{\pi_{v_3}} && \ardl \!\!\! \pi_{i_5} \!\!\! \clb{\pi_{u_2}} \\
& \fr\kk \deq && \Xvu \deq &&& \Xwd \deq  && \Xuc \ardrr && \dc{\pi_{u_0}} && \ardll \pi_{i_3} \\
& \fr\kk \deq && \Xvu \deq &&& \Xwd \ardrr && \dc{\pi_{w_2}} && \ardll \pi_{i_2} \\
& \fr\kk \deq && \Xvu \ardrrr && \dcr{\pi_{v_1}} &&& \ardll \pi_{i_1} \\
& \fr\kk \dl && \dcr{\varphi\kk} &&& \dr \pi_i \\
& \pi_j &&&&& \f\kk }}
= $$

\vspace{-1cm}

$$%\vcenter{\xymatrix@C=-.3pc{ = } }
\vcenter{\xymatrix@R=.9pc@C=-0.3pc{ \Y_a^{k+1} \deq && \!\!\!\!\! \s_{k+1} \!\!\!\!\! \deq && \Xwc \dl && \Xvd \dc{\X_{\mu_1}} && \dr \Xuu && \Xwt \deq && \Xud \deq && \pi_{i''} \deq \\
\Y_a^{k+1} \ar@{-}[dr] && \!\!\!\!\! \s_{k+1} \!\!\!\!\! \dc{\tilde{\theta}\kk} && \Xwu \ar@{-}[dl] && \Xwd \deq && \Xuu \deq && \Xwt \deq && \Xud \deq && \pi_{i''} \deq \\
& \fr\kk \deq && \Xvu \deq &&&  \Xwd \deq && \Xuu \deq && \Xwt \deq && \Xud && \pi_{i''} \\
& \fr\kk \deq && \Xvu \deq &&&  \Xwd \deq && \Xuu \deq && \Xwt \ardrr & \dcr{\pi_{w_3}} && \ardl \!\!\! \pi_{i_5} \!\!\!  \clb{\pi_{u_2}} \\
= & \fr\kk \deq && \Xvu \deq &&&  \Xwd \deq && \Xuu \ardrr && \dc{\pi_{u_1}} && \ardll \pi_{i_4} \\
& \fr\kk \deq && \Xvu \deq &&& \Xwd \ardrr && \dc{\pi_{w_2}} && \ardll \pi_{i_2} \\
& \fr\kk \deq && \Xvu \ardrrr && \dcr{\pi_{v_1}} &&& \ardll \pi_{i_1} \\
& \fr\kk \dl && \dcr{\varphi\kk} &&& \dr \pi_i \\
& \pi_j &&&&& \f\kk }}
\vcenter{\xymatrix@C=-.3pc{ = } }
\vcenter{\xymatrix@R=.9pc@C=-0.3pc{ \Y_a^{k+1} \deq && \!\!\!\!\! \s_{k+1} \!\!\!\!\! \deq && \Xwc \dl && \Xvd \dc{\X_{\mu_1}} && \dr \Xuu && \Xwt \deq && \Xud \deq && \pi_{i''} \deq \\
\Y_a^{k+1} \deq && \!\!\!\!\! \s_{k+1} \!\!\!\!\! \deq && \Xwu \deq && \Xwd \deq && \Xuu \deq && \Xwt \deq && \Xud && \pi_{i''}  \\
\Y_a^{k+1} \deq && \s_{k+1} \deq && \Xwu \deq && \Xwd \deq && \Xuu \deq && \Xwt \ardrr && \dc{\pi_{w_3}} & \ardl \!\!\! \pi_{i_5} \!\!\! \clb{\pi_{u_2}} \\
\Y_a^{k+1} \deq && \s_{k+1} \deq && \Xwu \deq && \Xwd \deq && \Xuu \ardrr && \dc{\pi_{u_1}} && \ardll \pi_{i_4} \\
\Y_a^{k+1} \deq && \s_{k+1} \deq && \Xwu \deq && \Xwd \ardrr && \dc{\pi_{w_2}} && \ardll \pi_{i_2} &&&& \\
\Y_a^{k+1} \deq && \s_{k+1} \deq && \Xwu \ardrr && \dc{\pi_{w_1}} && \ardll \pi_{i_1} \\
\Y_a^{k+1} \deq && \s\kk \dl && \dc{\psi\kk} && \dr \pi_{i'} \\
\Y_a^{k+1} && \pi_{j'} &&&& \f\kk \deq \\
& \pi_j \cl{\pi_a} &&&&& \f\kk }} 
= $$

$$\vcenter{\xymatrix@C=-.3pc{ = } }
\vcenter{\xymatrix@R=1pc@C=-0.3pc{ \Y_a^{k+1} \deq && \s_{k+1} \deq && \Xwc \deq && \Xvd \deq && \Xuu \deq && \Xwt \deq && \Xud && \pi_{i''} \\
\Y_a^{k+1} \deq && \s_{k+1} \deq && \Xwc \deq && \Xvd \deq && \Xuu \deq && \Xwt \ardrr && \dc{\pi_{w_3}} & \ardl \pi_{i_5} \cl{\pi_{u_2}} \\
\Y_a^{k+1} \deq && \s_{k+1} \deq && \Xwc \deq && \Xvd \deq && \Xuu \ardrr && \dc{\pi_{u_1}} && \ardll \pi_{i_4} \\
\Y_a^{k+1} \deq && \s_{k+1} \deq && \Xwc \deq && \Xvd \ardrr && \dc{\pi_{v_2}} && \ardll \pi_{i_2} \\
\Y_a^{k+1} \deq && \s_{k+1} \deq && \Xwc \ardrr && \dc{\pi_{w_0}} && \ardll \pi_{i_0} \\
\Y_a^{k+1} \deq && \s\kk \dl && \dc{\psi\kk} && \dr \pi_{i'} \\
\Y_a^{k+1} && \pi_{j'} &&&& \f\kk \deq \\
& \pi_j \cl{\pi_a} &&&&& \f\kk }}$$

\noindent Where the first, the second and the last equalities are due to elevators calculus plus axiom PC2 of pseudo-cones and the third one holds by elevators calculus plus \eqref{k+1}.
\end{proof}

\begin{lemma}\label{lema4}

Let $\ff{X}=(\ff{X}_i)_{i\in \cc{I}}$ be a 2-pro-object and $\ff{X}_i\cellpairrd{\ff{f}}{\theta}{\theta'}{\ff{g}}\ff{C} \in \cc{C}$ such that $\theta\pi_i=\theta'\pi_i$ in $2$-$\cc{P}ro(\cc{C})$. Then $\exists \ i\mr{u} i' \in \cc{I}$ such that $\theta \ff{X}_u=\theta' \ff{X}_u$.
\end{lemma}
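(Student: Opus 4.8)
The plan is to transport the hypothesis through the basic formula \eqref{basica2} and read it off inside the explicit model of a $2$-filtered pseudo-colimit of categories. Taking $\ff{Y}=\ff{C}$ (regarded as a $2$-pro-object with index $2$-category $\{*\}$ via \ref{CisPro}), the formula \eqref{basica2} specializes to an isomorphism of categories
\[
\Pro{C}(\ff{X},\ff{C}) \;\cong\; \coLim{i\in \cc{I}}{\cc{C}(\ff{X}_i,\ff{C})},
\]
whose right-hand side is computed by Construction LL (\ref{defLF}), with pseudo-cone $\lambda$ as in \ref{construccionDS}. Under this isomorphism, exactly as in the proof of \ref{lema1} and using \ref{X=lim}, the morphisms $\ff{f}\pi_i$, $\ff{g}\pi_i$ correspond to the objects $(\ff{f},i)$, $(\ff{g},i)$, while the $2$-cells $\theta\pi_i$ and $\theta'\pi_i$ correspond to the morphisms $\lambda_i(\theta)=[i,\theta,i]$ and $\lambda_i(\theta')=[i,\theta',i]$. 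Hence the hypothesis $\theta\pi_i=\theta'\pi_i$ becomes the equality $[i,\theta,i]=[i,\theta',i]$, i.e. the premorphisms $(id_i,\theta,id_i)$ and $(id_i,\theta',id_i)$ are homotopic in the sense of \ref{defLF}.

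I would then unravel this homotopy. It provides arrows $i\mr{w_1}k$, $i\mr{w_2}k$ in $\cc{I}$ together with invertible $2$-cells $\alpha,\beta:w_1\Rightarrow w_2$ of $\cc{I}$ such that the defining square of \ref{defLF} commutes in $\cc{C}(\ff{X}_k,\ff{C})$; writing out $\ff{F}=\cc{C}(\ff{X}_{(-)},\ff{C})$ and recalling that $\ff{X}:\cc{I}^{op}\to\cc{C}$ sends $u$ to $\ff{X}_u$ and $\beta$ to $\ff{X}_\beta:\ff{X}_{w_1}\Rightarrow \ff{X}_{w_2}$, this square reads
\[
\theta'\ff{X}_{w_2}\circ \ff{f}\ff{X}_\beta \;=\; \ff{g}\ff{X}_\alpha\circ \theta\ff{X}_{w_1}.
\]
Applying the interchange law \eqref{basicelevator} to the right-hand side, which rewrites $\ff{g}\ff{X}_\alpha\circ\theta\ff{X}_{w_1}$ as $\theta\ff{X}_{w_2}\circ \ff{f}\ff{X}_\alpha$ (both being the horizontal composite $\theta\ff{X}_\alpha$), and then cancelling the invertible $2$-cell $\ff{f}\ff{X}_\beta$ using that $\ff{X}$ is a $2$-functor, gives $\theta'\ff{X}_{w_2}=\theta\ff{X}_{w_2}\circ \ff{f}\ff{X}_\gamma$, where $\gamma:=\alpha\circ\beta^{-1}:w_2\Rightarrow w_2$ is an invertible $2$-cell of $\cc{I}$.

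Finally I would remove the spurious factor $\ff{f}\ff{X}_\gamma$ by $2$-filteredness. Applying axiom F2 of \ref{2filtered} to the parallel $2$-cells $\gamma,\,id_{w_2}:w_2\Rightarrow w_2$ yields $k\mr{h}k'$ with $h\gamma=h\,id_{w_2}=id_{hw_2}$. Whiskering the last equality on the right by $\ff{X}_h$ and using $2$-functoriality, $\ff{f}\ff{X}_\gamma\ff{X}_h=\ff{f}\ff{X}_{h\gamma}=id$, so that $\theta'\ff{X}_{w_2}\ff{X}_h=\theta\ff{X}_{w_2}\ff{X}_h$. Setting $u:=hw_2:i\to k'$, for which $\ff{X}_u=\ff{X}_{w_2}\ff{X}_h$, gives $\theta\ff{X}_u=\theta'\ff{X}_u$, as required.

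I expect the main obstacle to be purely bookkeeping: correctly identifying $\theta\pi_i$ with the premorphism $[i,\theta,i]$ (which rests on \ref{X=lim} and the naturality of \eqref{basica2}), and tracking the orientation of the $2$-cells $\ff{X}_\alpha,\ff{X}_\beta,\ff{X}_\gamma$ through the contravariance of $\ff{X}:\cc{I}^{op}\to\cc{C}$. Once the homotopy square is transcribed correctly, the interchange-plus-F2 argument is routine.
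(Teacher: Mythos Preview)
Your proposal is correct and takes essentially the same approach as the paper: the paper's proof is a one-line citation to formula \eqref{basica2} together with \cite[Lemma 1.20]{DS}, and your argument is precisely an explicit unpacking of that citation, unraveling the homotopy relation in Construction~LL and invoking axiom~F2 exactly as that lemma does. The only thing you add is the detail the paper delegates to the reference.
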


\begin{proof}
It follows from \ref{basica2} and [\citealp{DS}, Lemma 1.20.]
\end{proof}

The following lemma will also be used in section \ref{Mtrick}:

\begin{lemma}\label{lema2paraM_D}
Let $\ff{X}=(\ff{X}_i)_{i\in \cc{I}}$ be a 2-pro-object and $\Bigg\{\ff{X}_i\cellpairrd{\ff{f}_l}{\theta_l}{\theta'_l}{\ff{g}_l}\ff{C}\Bigg\}_{l=1,...k} \in \cc{C}$ be such that $\theta_l\pi_i=\theta'_l\pi_i \ \forall \ l=1,...k$ in $2$-$\cc{P}ro(\cc{C})$. Then $\exists \ i\mr{u} i' \in \cc{I}$ such that $\theta_l \ff{X}_u=\theta'_l \ff{X}_u$ \mbox{$\forall \ l=1,...k$.}
\end{lemma}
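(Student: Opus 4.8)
The plan is to reduce the uniform statement to the single-pair case \ref{lema4} by an induction on $k$, using the $2$-filteredness of $\cc{I}$ to amalgamate the arrows produced at each stage. First I would record an elementary stability observation: since $\ff{X}:\cc{I}^{op}\to \cc{C}$ is a \emph{strict} $2$-functor, for composable $i\mr{w} i'\mr{s} i''$ in $\cc{I}$ one has $\ff{X}_{sw}=\ff{X}_w\ff{X}_s$, and horizontal composition is associative; hence for a fixed index $l$, if an arrow $w$ out of $i$ satisfies $\theta_l\ff{X}_w=\theta'_l\ff{X}_w$ then $\theta_l\ff{X}_{sw}=(\theta_l\ff{X}_w)\ff{X}_s=(\theta'_l\ff{X}_w)\ff{X}_s=\theta'_l\ff{X}_{sw}$ for every postcomposition $s$. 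In other words, the property ``$w$ equalizes the $l$-th pair'' is preserved under postcomposition.

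For $k=1$ the claim is exactly \ref{lema4}. Assume it for families of size $k$. Given $k+1$ pairs, apply the inductive hypothesis to the first $k$ of them to obtain $i\mr{u}i'$ with $\theta_l\ff{X}_u=\theta'_l\ff{X}_u$ for $l=1,\dots,k$, and apply \ref{lema4} to the last pair (whose hypothesis $\theta_{k+1}\pi_i=\theta'_{k+1}\pi_i$ is given) to obtain $i\mr{v}i''$ with $\theta_{k+1}\ff{X}_v=\theta'_{k+1}\ff{X}_v$. By axiom F0 of \ref{2filtered} there is an object $i^{*}$ with arrows $i'\mr{w'}i^{*}$ and $i''\mr{w''}i^{*}$; set $a=w'u$ and $b=w''v$, so that $a,b:i\to i^{*}$ are parallel, $a$ equalizes the pairs $l\le k$ and $b$ equalizes the pair $l=k+1$ (by the stability observation). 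By axiom F1 applied to $a,b$ there are $i^{*}\mr{h}i^{**}$ and an invertible $2$-cell $ha\Mr{\alpha}hb$ in $\cc{I}$. I claim $u^{*}:=ha$ works for every $l=1,\dots,k+1$.

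For $l\le k$ this is immediate, since $u^{*}=ha$ is a postcomposition of $a$, which already equalizes those pairs. For $l=k+1$ I would transport the equation across the invertible $2$-cell $\ff{X}_\alpha:\ff{X}_{ha}\Mr{}\ff{X}_{hb}$ (invertible because $\alpha$ is, and $\ff X$ preserves invertibility) by means of the interchange identity \ref{basicelevator}. Writing the horizontal composite $\theta_{k+1}\ff{X}_\alpha$ in its two forms gives
\[
(\ff{g}_{k+1}\ff{X}_\alpha)\circ(\theta_{k+1}\ff{X}_{ha})=(\theta_{k+1}\ff{X}_{hb})\circ(\ff{f}_{k+1}\ff{X}_\alpha),
\]
and the analogous identity holds with $\theta'_{k+1}$ in place of $\theta_{k+1}$. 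Since $b$, and hence $hb$, equalizes the last pair, $\theta_{k+1}\ff{X}_{hb}=\theta'_{k+1}\ff{X}_{hb}$, so the two right-hand sides coincide; therefore $(\ff{g}_{k+1}\ff{X}_\alpha)\circ(\theta_{k+1}\ff{X}_{ha})=(\ff{g}_{k+1}\ff{X}_\alpha)\circ(\theta'_{k+1}\ff{X}_{ha})$. As $\ff{g}_{k+1}\ff{X}_\alpha$ is invertible, cancelling it on the left yields $\theta_{k+1}\ff{X}_{ha}=\theta'_{k+1}\ff{X}_{ha}$, completing the induction.

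The only genuinely delicate point is this final transport step: one must invoke F1 (rather than F2) precisely because $a$ and $b$ are distinct parallel arrows, and it is the \emph{invertibility} of the resulting $2$-cell $\alpha$ — and hence of its whiskering $\ff{g}_{k+1}\ff{X}_\alpha$ — that makes the left-cancellation legitimate. Everything else is routine bookkeeping with the elevator calculus and the strict functoriality of $\ff{X}$.
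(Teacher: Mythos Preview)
Your proof is correct and follows essentially the same route as the paper's: induction on $k$ with base case \ref{lema4}, then amalgamate the two arrows from the inductive hypothesis and the $(k+1)$-st instance using the $2$-filteredness of $\cc{I}$. The paper is terser—it collapses your F0-then-F1 maneuver into a single appeal to $2$-filteredness and dismisses the transport across $\ff{X}_\alpha$ with ``it is easy to check''—but your more detailed interchange-and-cancel argument is exactly what that phrase hides.
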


\begin{proof}
We are going to proceed by induction in $k$. 
For $k=1$, use \ref{lema4}.

\vspace{1ex}

\noindent $k \Rightarrow k+1$: %Suppose that the assertion is true for some $k$. We are going to prove that then is true for $k+1$:
By inductive hypothesis, $\exists \ i\mr{u_0} i_0 \in \cc{I}$ such that $\theta_l \ff{X}_{u_0}=\theta'_l \ff{X}_{u_0} \ \forall \ l=1,...,k$ and $\exists \ i\mr{u_1} i_1 \in \cc{I}$ such that $\theta_{k+1} \ff{X}_{u_1}=\theta'_{k+1} \ff{X}_{u_1}$.

Since $\cc{I}$ is 2-filtered, we have morphisms $i_0 \mr{v_0} i'$, $i_1 \mr{v_1} i'$ and an invertible 2-cell $i \cellrd{v_0 u_0}{\mu}{ v_1 u_1} i' \in \cc{I}$. 

It is easy to check that $u=v_0 u_0$ satisfies the desired property. 
\end{proof}

\begin{remark}
The previous two lemmas (and so the following one) also hold in $\Prop{C}$ (have in mind the pseudo-equivalence of \ref{proppseudoeqapro}). \cqd
\end{remark}

\begin{lemma}\label{lema5}
Let $\ff{X} \cellrd{\ff{f}}{\alpha}{\ff{g}} \ff{Y}$ in $2$-$\cc{P}ro(\cc{C})$ and
$\ff{X}_i \cellpairrd{\ff{r}}{\theta}{\theta'}{\ff{s}} \ff{Y}_j$ in $\cc{C}$ such that $(\theta,\,\ff{r},\,\varphi,\,\ff{s},\,\psi)$ and $(\theta',\,\ff{r},\,\varphi,\,\ff{s},\,\psi)$ both represent $\alpha$. Then, there exists $i \mr{u} i' \in \cc{I}$ such that $\theta \ff{X}_u=\theta' \ff{X}_u$.
\end{lemma}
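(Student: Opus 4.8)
The plan is to reduce the statement directly to Lemma \ref{lema4} by peeling off the invertible 2-cells that appear in the definition of ``represents''. First I would write out, via Definition \ref{representa}(2), what it means for $(\theta,\ff{r},\varphi,\ff{s},\psi)$ and for $(\theta',\ff{r},\varphi,\ff{s},\psi)$ to represent $\alpha$. Reading the elevator-calculus equation top-to-bottom, each representation condition becomes a plain identity of vertical composites, namely $\psi \circ \theta\pi_i = \pi_j\alpha \circ \varphi$ for the first representative and $\psi \circ \theta'\pi_i = \pi_j\alpha \circ \varphi$ for the second. The crucial observation is that the two representatives share the very same $\ff{r}$, $\ff{s}$, $\varphi$ and $\psi$; only the ``diagonal'' 2-cell differs. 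Hence the right-hand sides of the two equations are literally the same 2-cell $\pi_j\alpha \circ \varphi$.

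Consequently $\psi \circ \theta\pi_i = \psi \circ \theta'\pi_i$. Since $\psi$ is invertible, I would compose both sides on the left with $\psi\inv$ to cancel it, obtaining $\theta\pi_i = \theta'\pi_i$ as 2-cells $\ff{r}\pi_i \Rightarrow \ff{s}\pi_i$ in $\Pro{C}$. Here $\pi_i$ is the projection $\ff{X}\mr{\pi_i}\ff{X}_i$ coming from the pseudo-cone structure of \ref{X=lim}, and $\ff{r},\ff{s}:\ff{X}_i\to\ff{Y}_j$ together with $\theta,\theta'$ are regarded as living in $\Pro{C}$ through the embedding $c$ of \ref{CisPro}.

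At this point the hypotheses of Lemma \ref{lema4} are met, taking $\ff{C}:=\ff{Y}_j$, $\ff{f}:=\ff{r}$, $\ff{g}:=\ff{s}$ and the pair of 2-cells $\theta,\theta':\ff{r}\Rightarrow\ff{s}$ in $\cc{C}$: we have just shown $\theta\pi_i=\theta'\pi_i$ in $\Pro{C}$. Invoking that lemma yields a morphism $i\mr{u}i'\in\cc{I}$ with $\theta\,\ff{X}_u=\theta'\,\ff{X}_u$, which is exactly the desired conclusion.

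There is essentially no hard step here: the entire content is the cancellation of the invertible $\psi$ followed by an application of the already-established Lemma \ref{lema4}. The only point requiring a little care --- and the closest thing to an obstacle --- is the bookkeeping needed to extract the clean identity $\psi \circ \theta\pi_i = \pi_j\alpha \circ \varphi$ from the elevator form of Definition \ref{representa}(2), and to confirm that it is precisely $\varphi$ and $\psi$ (and not $\theta$) that are held fixed across the two representatives, so that the shared right-hand side genuinely forces $\theta\pi_i=\theta'\pi_i$ after cancelling $\psi$. \cqd
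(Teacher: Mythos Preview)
Your proof is correct and follows essentially the same approach as the paper: both arguments use the shared data $\ff{r},\ff{s},\varphi,\psi$ in Definition~\ref{representa}(2) together with the invertibility of $\psi$ (and $\varphi$) to conclude $\theta\pi_i=\theta'\pi_i$, and then apply Lemma~\ref{lema4}. The paper's version is simply terser, stating the deduction $\theta\pi_i=\theta'\pi_i$ in one line without writing out the cancellation you made explicit.
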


\begin{proof}
Since both $(\theta,\ff{r},\varphi,\ff{s},\psi)$ and $(\theta',\ff{r},\varphi,\ff{s},\psi)$ represents $\alpha$, and $\varphi\hbox{, }\psi$ are invertible, it follows that  $\theta\pi_i=\theta'\pi_i$. Then, by \ref{lema4}, there exists $i \mr{u} i' \in \cc{I}$ such that $\theta \ff{X}_u=\theta' \ff{X}_u$.
\end{proof}

\begin{lemma}\label{lema2}
Let $\ff{X} \cellrd{\ff{f}}{\alpha}{\ff{g}} \ff{Y} \in $ $2$-$\cc{P}ro(\cc{C})$, $(\ff{r},\varphi)$ representing $\ff{f}$, \mbox{$\ff{X}_i\mr{\ff{r}} \ff{Y}_j$} and $(\ff{s},\psi)$ representing $\ff{g}$, $\ff{X}_{i'}\mr{\ff{s}} \ff{Y}_j$. Then, $\exists \vcenter{\xymatrix@R=-0.5pc{i \ar[rd]^{u} &\\& k \\ i'\ar[ru]_{v}}} \in \cc{I}$ and $\ff{X}_{k} \cellrd{\ff{r}\ff{X}_u}{\theta}{\ff{s}\ff{X}_v} \ff{Y}_j \in \cc{C}$  such that \mbox{$(\theta,\; \ff{r} \ff{X}_u,\; \ff{r} \pi_u \circ \varphi, \; \ff{s}\ff{X}_v, \; \ff{s} \pi_v \circ \psi)$} represents $\alpha$.
% Clearly, if $\alpha$ is invertible, then so is $\theta$.
Observe that in case $\alpha$ is invertible, one can choose $\theta$ to be invertible.
\end{lemma}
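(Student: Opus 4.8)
The plan is to reduce the whole statement to Lemma \ref{lema1}. The hypotheses supply two invertible $2$-cells $\varphi\colon \fr\pi_i \Rightarrow \pi_j\f$ and $\psi\colon \s\pi_{i'} \Rightarrow \pi_j\g$ in $\Pro{C}$ (recall Definition \ref{representa}(1)). Since $\alpha\colon \f\Rightarrow\g$, whiskering by the projection $\pi_j$ gives $\pi_j\alpha\colon \pi_j\f \Rightarrow \pi_j\g$. I would then form the vertical pasting of $\varphi$, $\pi_j\alpha$ and $\psi\inv$, obtaining an invertible-or-not $2$-cell
$$\beta \;=\; \psi\inv \circ \pi_j\alpha \circ \varphi \colon\; \fr\pi_i \Rightarrow \s\pi_{i'} \quad\text{in } \Pro{C},$$
i.e. a $2$-cell $\X\to\Y_j$ of exactly the shape to which Lemma \ref{lema1} applies.

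First I would apply Lemma \ref{lema1} with its ambient pro-object equal to $\ff{X}$, its target object $\ff{C}$ equal to $\Y_j$, its two arrows equal to $\fr\colon\X_i\to\Y_j$ and $\s\colon\X_{i'}\to\Y_j$, and its $2$-cell equal to $\beta$. This produces morphisms $i\stackrel{u}\to k$, $i'\stackrel{v}\to k$ in $\cc{I}$ together with a $2$-cell $\theta\colon \fr\X_u\Rightarrow \s\X_v$ in $\cc{C}$ satisfying the identity $\s\pi_v\circ\theta\pi_k=\beta\circ\fr\pi_u$ in $\Pro{C}$. For the last assertion, note that $\beta$ is a composite of the invertible $\varphi,\psi\inv$ with $\pi_j\alpha$, so when $\alpha$ is invertible $\beta$ is invertible, and then the final clause of Lemma \ref{lema1} lets me choose $\theta$ invertible as well.

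It then remains to verify that the tuple $(\theta,\ \fr\X_u,\ \fr\pi_u\circ\varphi,\ \s\X_v,\ \s\pi_v\circ\psi)$ represents $\alpha$ in the sense of Definition \ref{representa}(2). The two ``representing'' clauses are immediate: $\fr\pi_u\circ\varphi$ and $\s\pi_v\circ\psi$ are vertical composites of the invertible $\varphi,\psi$ with the structural pseudo-cone $2$-cells $\pi_u,\pi_v$ (which are invertible), so they are the required invertible $2$-cells $\fr\X_u\pi_k\Rightarrow\pi_j\f$ and $\s\X_v\pi_k\Rightarrow\pi_j\g$ witnessing that $(\fr\X_u,\fr\pi_u\circ\varphi)$ represents $\ff{f}$ and $(\s\X_v,\s\pi_v\circ\psi)$ represents $\ff{g}$.

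The single genuine computation — and the step I expect to be the main, though entirely routine, obstacle — is the defining equality of Definition \ref{representa}(2): that the two pastings $\fr\X_u\pi_k\Rightarrow\pi_j\g$ agree. I would carry this out by elevator calculus. Starting from the left pasting (first $\theta\pi_k$, then $\s\pi_v$, then $\psi$), I substitute the Lemma \ref{lema1} identity $\s\pi_v\circ\theta\pi_k=\beta\circ\fr\pi_u$ to replace the top two cells, and then expand $\beta=\psi\inv\circ\pi_j\alpha\circ\varphi$; the adjacent pair $\psi\inv,\psi$ cancels by invertibility, and both sides reduce to the single composite of $\fr\pi_u$, $\varphi$ and $\pi_j\alpha$, which is exactly the right pasting. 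I would present this as one displayed chain of equalities, the only bookkeeping being the whiskering of the definition of $\beta$ by $\fr\pi_u$ and the cancellation of $\psi$; this is the same style of argument used in the proofs of \ref{lemorphismaraM_f} and \ref{lema5}.
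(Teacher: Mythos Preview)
Your proposal is correct and follows essentially the same route as the paper: form the composite $\beta=\psi^{-1}\circ\pi_j\alpha\circ\varphi$, feed it into Lemma \ref{lema1} with $\ff{C}=\ff{Y}_j$, and then verify the representation equation by expanding $\beta$ and cancelling $\psi$ against $\psi^{-1}$. The paper presents this verification as a single elevator-calculus display rather than spelling out the substitution step, but the content is identical, including the invertibility clause.
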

\begin{proof}
In lemma $\ref{lema1}$, take $\ff{C}=\ff{Y}_j$, and
$\alpha = %A100
\vcenter{\xymatrix@R=1pc@C=-0pc{  \fr \dl & \dc{\varphi} & \dr \pi_i  \\
\pi_j \deq && \f \dcell{\alpha} \\
\pi_j \dl & \dc{\psi^{-1}} & \dr \ff{g} \\
\s && \pi_{i'}  }}$. Then, $\exists \vcenter{\xymatrix@R=-0.5pc{i \ar[rd]^{u} &\\& k \\ i'\ar[ru]_{v}}} \in \cc{I}$ and $\ff{X}_{k} \cellrd{\ff{r}\ff{X}_u}{\theta}{\ff{s}\ff{X}_v} \ff{Y}_j \in \cc{C}$ such that %A99
 $$\vcenter{\xymatrix@C=-0pc{ \fr \dl & \dc{\theta} & \dr \X_u && \pi_k \deq \\
 \s \deq && \X_v && \pi_k \\
 \s &&& \pi_{i'} \cl{\pi_v}  }}
\vcenter{\xymatrix@C=-0pc{ \quad = \quad }}
\vcenter{\xymatrix@C=-0pc{ \fr \deq && \X_u && \pi_k \\
\fr \dl & \dcr{\varphi} && \dr \pi_i \cl{\pi_u} \\
\pi_j \deq &&& \f \dcell{\alpha} \\
\pi_j \dl & \dcr{\psi^{-1}} && \dr \ff{g} \\
\s &&& \pi_{i'} }}, \hbox{ i.e. }
   %or equivalently
% A101
 \vcenter{\xymatrix@C=-0pc{ \fr \dl & \dc{\theta} & \dr \X_u && \pi_k \deq \\
 \s \deq && \X_v && \pi_k \\
 \s \dl & \dcr{\psi} && \dr \pi_{i'} \cl{\pi_v}  \\
 \pi_j &&& \ff{g} }}
\vcenter{\xymatrix@C=-0pc{ \quad = \quad }}
\vcenter{\xymatrix@C=-0pc{ \fr \deq && \X_u && \pi_k \\
\fr \dl & \dcr{\varphi} && \dr \pi_i \cl{\pi_u} \\
\pi_j \deq &&& \f \dcell{\alpha} \\
\pi_j &&& \ff{g} }}$$

% the following diagram commutes in $\Pro{C}$:
% \vspace{-2ex}
% 
% $$
% \xymatrix@R=1ex
%        {
%         & {\ff{r} \pi_i} \ar@{=>}[r]^{\ff{r} \pi_u}
%         & {\ff{r} \ff{X}_u \pi_k}
%                          \ar@{=>}[rd]^{\theta \pi_k}
%         \\
%           {\pi_j \ff{f}} \ar@{=>}[ru]^{\varphi}
%                          \ar@{=>}[rd]^{\pi_j \alpha}
%         &&& {\ff{s} \ff{X}_v \pi_k}
%         \\
%         & {\pi_j \ff{g}} \ar@{=>}[r]^{\psi}
%         & {\ff{s} \pi_{i'}} \ar@{=>}[ru]^{\ff{s} \pi_v}
%        }
%        \vcenter{\xymatrix@C=-.4pc{\quad \quad i.e. \quad \quad  }}

\vspace{1ex}

This proves that $(\theta,\; \ff{r} \ff{X}_u,\; \ff{r} \pi_u \circ \varphi, \; \ff{s}\ff{X}_v, \; \ff{s} \pi_v \circ \psi)$ \mbox{represents $\alpha$.}
\end{proof}
From remark \ref{u=v} we have:
\begin{remark}
If $i=i'$, then one can choose $u=v$. \cqd
\end{remark}

\subsection{2-cofiltered pseudo-limits in $2\hbox{-}\cc{P}ro(\cc{C})$.}\label{pseudo-limitsen2pro}

Let $\cc{J}$ be a small  2-filtered 2-category and
$\cc{J}^{op} \mr{\ff{X}}  2\hbox{-}\cc{P}ro(\cc{C})$ a \mbox{2-functor,}
\mbox{$\ff{X}^j = (\ff{X}^j_i)_{i\in \cc{I}_j}$,} $\cc{I}_j^{op} \mr{\ff{X}^j} \cc{C}$. Recall (\ref{X=lim}) that for each $j$ in $\cc{J}$, $\ff{X}^j$ is equipped with a pseudo-limit pseudo-cone $\left\{\pi^j_i\right\}_{i \in \cc{I}_j}$,
$\left\{\pi^j_u\right\}_{i \mr{u} i' \in \cc{I}_j}$ for the 2-functor
$\ff{X}^j$. We are using the supra-index notation to denote the evaluation of $\X$.

We are going to construct a 2-pro-object which is going to be the pseudo-limit of
$\ff{X}$ in $2\hbox{-} \cc{P}ro(\cc{C})$. First we construct its index category.

\begin{definition}\label{kequis}
Let $\cc{K}_\ff{X}$ be the 2-category consisting on:
\begin{enumerate}
	\item[] 0-cells of $\cc{K}_\ff{X}$: $(i,j)$, where $j\in \cc{J}$, $i\in \cc{I}_j$.
	\item[] 1-cells of $\cc{K}_\ff{X}$: $(i,j)\mr{(a,\ff{r},\varphi)} (i',j')$, where $j\smr{a} j'\in \cc{J}$, $\ff{X}_{i'}^{j'}\smr{\ff{r}} \ff{X}_{i}^{j} \in \cc{C}$ are such that $(\ff{r},\varphi)$ represents $\ff{X}^a$.
	\item[] 2-cells of $\cc{K}_\ff{X}$: $(a,\ff{r},\varphi)\Mr{(\alpha,\theta)}(b,\ff{s},\psi)$, where $a \Mr{\alpha} b \in \cc{J}$ and $(\theta,\ff{r},\varphi,\ff{s},\psi)$ represents $\ff{X}^\alpha$.
\end{enumerate}

The 2-category structure is given as follows:

$$
\xymatrix@C=6pc
             {
              (i,j) \ar@<4.8ex>[r]^{(a,\ff{r},\varphi)}
                    \ar@{}@<3.5ex>[r]|{\Downarrow(\alpha,\theta)}
                    \ar[r]^{(b,\ff{s},\psi)}
                    \ar@{}@<-1.3ex>[r]|{\Downarrow(\beta,\eta)}
                    \ar@<-4.8ex>[r]^{(c,\ff{t},\phi)}
                 &
             (i',j') \ar@<4.8ex>[r]^{(a',\ff{r}',\varphi')}
                     \ar@{}@<3.5ex>[r]|{\Downarrow(\alpha',\theta')}
                     \ar[r]^{(b', \ff{s}',\psi')}
                     \ar@{}@<-1.3ex>[r]|{\Downarrow(\beta',\eta')}
                     \ar@<-4.8ex>[r]^{(c', \ff{t}',\phi')}
                 &
             (i'',j'')
             }
$$

\begin{enumerate}
\item[-] $(a',\ff{r}',\varphi')(a,\ff{r},\varphi)=(a' a,\ff{r}\ff{r}', %A218
\vcenter{\xymatrix@C=-0.2pc@R=0.5pc{  \fr \deq && \fr' \dl & \dc{\varphi'} & \dr \pi\sii \\
\fr \dl & \dc{\varphi} & \dr \pi\si && \X\ta \deq \\
\pi_i \deq && \X^a && \X\ta \\
\pi_i &&& \X^{a'a} \cl{=} }}  
)$

\item[-] $(\alpha',\theta')(\alpha,\theta)=(\alpha'\alpha,\theta\theta')$

\item[-] $(\beta,\eta)\circ (\alpha,\theta)=(\beta\circ \alpha,\eta\circ\theta)$

\end{enumerate}
One can easily check that the structure so defined is indeed a 2-category, which  is clearly small.\end{definition}

\begin{proposition}
The 2-category $\cc{K}_\ff{X}$ is 2-filtered.
\end{proposition}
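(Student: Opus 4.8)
The plan is to verify the three filtering axioms F0, F1, F2 from \ref{2filtered} directly for $\cc{K}_\ff{X}$, exploiting the fact that both $\cc{J}$ and each fiber $\cc{I}_j$ are $2$-filtered, together with the representation lemmas from subsection \ref{lemas2pro}. Throughout, a $1$-cell $(a,\ff{r},\varphi)$ carries an arrow $a$ of $\cc{J}$ in the ``base'' direction and a representative $\ff{r}$ of $\ff{X}^a$ in the ``fiber'' direction, so each axiom will be handled by first solving the problem downstairs in $\cc{J}$, then lifting the solution to the fibers using the structure of $\ff{X}^a$ as a pro-morphism and the representability facts (\ref{idrepresenta}, \ref{lema1}, \ref{lema2}).

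First I would check F0. Given two objects $(i_0,j_0)$ and $(i_1,j_1)$, since $\cc{J}$ is $2$-filtered there exist $j\in\cc{J}$ and arrows $j_0\mr{a}j$, $j_1\mr{b}j$. Now $\ff{X}^a:\ff{X}^{j}\mr{}\ff{X}^{j_0}$ is a morphism in $\Pro{C}$ (note the variance: $\cc{J}^{op}\mr{\ff{X}}\Pro{C}$), so by \ref{idrepresenta}(1) applied at the index $i_0\in\cc{I}_{j_0}$ there exist $i'\in\cc{I}_{j}$ and a representative $(\ff{r},\varphi)$ of $\ff{X}^a$ with codomain $\ff{X}^{j_0}_{i_0}$; similarly $\ff{X}^b$ has a representative $(\ff{s},\psi)$ with codomain $\ff{X}^{j_1}_{i_1}$ at some $i''\in\cc{I}_{j}$. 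Using F0 in the fiber $\cc{I}_{j}$ to find a common $i\in\cc{I}_{j}$ above $i'$ and $i''$, and precomposing the representatives with the appropriate $\ff{X}_u$, $\ff{X}_v$ (as in the composition formula of \ref{kequis} and Lemma \ref{lema2}), I obtain $1$-cells $(i,j)\mr{}(i_0,j_0)$ and $(i,j)\mr{}(i_1,j_1)$ in $\cc{K}_\ff{X}$, establishing F0.

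Next, F1 and F2 follow the same two-layer pattern. For F1, given parallel $1$-cells $(a,\ff{r},\varphi),(b,\ff{s},\psi):(i,j)\mr{}(i',j')$, I first use F1 in $\cc{J}$ to get $j'\mr{c}j''$ and an invertible $2$-cell $ca\Mr{\alpha}cb$; then I must produce a $1$-cell of $\cc{K}_\ff{X}$ out of $(i',j')$ and a $2$-cell of $\cc{K}_\ff{X}$ witnessing the coincidence, which amounts to finding a representative of $\ff{X}^{c}$ and a representation of $\ff{X}^{\alpha}$ compatible with the given $\ff{r},\ff{s}$ — this is exactly what Lemma \ref{lemorphismaraM_f} (and the representability of $2$-cells, \ref{idrepresenta}(2)) is designed to deliver. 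For F2, given two $2$-cells $(\alpha,\theta),(\beta,\eta)$ between the same pair of $1$-cells, I would apply F2 in $\cc{J}$ to coequalize $\alpha$ and $\beta$ after postcomposing with some $c$, and then apply the fiber-level uniqueness Lemma \ref{lema5} (together with \ref{lema4}/\ref{lema2paraM_D}) to coequalize $\theta$ and $\eta$ by a further arrow in $\cc{I}_{j''}$.

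The main obstacle will be F1 and F2, not F0: the genuine difficulty is the bookkeeping needed to convert the data produced downstairs in $\cc{J}$ into honest $1$- and $2$-cells of $\cc{K}_\ff{X}$, i.e. constructing the required representatives $(\ff{r},\varphi)$ and verifying the representation equalities of \ref{representa} for $\ff{X}^{\alpha}$ up to the invertible structural $2$-cells $\varphi,\psi$. This is where Lemmas \ref{lemorphismaraM_f} and \ref{lema5} carry the weight, since they precisely encode how a representative can be ``transported'' along an arrow of $\cc{I}$ while keeping the representation condition intact. Once these lemmas are invoked, each axiom reduces to an elevator-calculus compatibility check of the kind already performed in \ref{lema2}, so I expect the remaining verifications to be routine though notationally heavy.
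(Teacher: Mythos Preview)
Your overall two-layer strategy (solve in $\cc{J}$, then lift to the fiber $\cc{I}_{j}$ using the representation lemmas) is exactly the paper's approach, and your use of \ref{lema5} for F2 matches the paper. Two small corrections are worth flagging. In F0 you have the arrows reversed: with $a:j_0\to j$ the resulting $1$-cell of $\cc{K}_\ff{X}$ goes $(i_0,j_0)\to(i,j)$, not $(i,j)\to(i_0,j_0)$; F0 asks for arrows \emph{into} a common object, and your construction does produce that once the direction is fixed. For F1 the key lemma is \ref{lema2}, not \ref{lemorphismaraM_f}: after postcomposing with a representative $(\ff{t},id)$ of $\ff{X}^c$ you have representatives of the \emph{distinct} arrows $\ff{X}^{ca}$ and $\ff{X}^{cb}$, and you need a $\theta$ representing the $2$-cell $\ff{X}^\alpha$ between them; that is precisely the content of \ref{lema2}, whereas \ref{lemorphismaraM_f} compares two representatives of the \emph{same} morphism. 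With these adjustments your sketch coincides with the paper's proof.
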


\vspace{-2ex}

\begin{proof}

%\vspace{-1ex}
\begin{itemize}
 \item[F0.] Let $(i,j)$,$(i',j')\in \cc{K}_\ff{X}$. Since $\cc{J}$ is 2-filtered, $\exists \vcenter{\xymatrix@R=-0.5pc{j \ar[rd]^{a} &\\& j'' \\ j'\ar[ru]_{b}}}\in \cc{J}$. By \ref{idrepresenta}, \mbox{$\exists\  \ff{X}_{i_1}^{j''}\mr{\ff{r}_1}{} \ff{X}_i^j$} and $ \ff{X}_{i_2}^{j''}\mr{\ff{r}_2}{} \ff{X}_{i'}^{j'} \in \cc{C}$ such that $(\ff{r}_1,id)$ represents $\ff{X}^{a}$ and $(\ff{r}_2,id)$ represents $\ff{X}^{b}$.
Since $\cc{I}_{j''}$ is 2-filtered, $\exists \vcenter{\xymatrix@R=-0.5pc{i_1 \ar[rd]^{u} &\\ & i'' \\ i_2\ar[ru]_{v}}}\in \cc{I}_{j''}$. Then, we have the following situation in $\cc{K}_\ff{X}$ which concludes the proof of axiom $F0$:
$$
\xymatrix@R=.3pc@C=6pc
         {
           (i,j) \hspace{2ex} \ar[rd]^*[l]
                 {
                  \hspace{-5ex} (a,\ff{r}_1 \ff{X}^{j''}_{u}, \ff{r}_1 \pi_u^{j''} ) %A219esta aca abajo
                 }
           &
           \\
           & \hspace{3ex} (i'',j'')
           \\ (i',j') \hspace{2ex} \ar[ru]_*[l]
                 {
                  \hspace{-5ex} (b,\ff{r}_2 \ff{X}^{j''}_{v}, \ff{r}_2 \pi_v^{j''} ) %A220esta aca abajo
                 }
          }
$$

\noindent Note that
$\ff{r}_1 \pi_u^{j''} =\vcenter{\xymatrix@R=1pc@C=-0.2pc{ \fr_1 \deq && \X_u\tjj && \pi\sii\tjj \\
		    \fr_1 \dl & \dcr{=} && \dr \pi_{i_1}\tjj \cl{\pi_u\tjj} \\
		    \pi_i^j &&& \X^a }}$
and 
$\ff{r}_2 \pi_v^{j''} =\vcenter{\xymatrix@R=1pc@C=-0.2pc{ \fr_2 \deq && \X_v\tjj && \pi\sii\tjj \\
		    \fr_2 \dl & \dcr{=} && \dr \pi_{i_2}\tjj \cl{\pi_v\tjj} \\
		    \pi\si\tj &&& \X^b }}$

\item[F1.] Let $\xymatrix{(i,j)\ar@<1ex>[r]^{(a,\ff{r},\varphi)} \ar@<-1ex>[r]_{(b,\ff{s},\psi)} & (i',j')}\in \cc{K}_\ff{X}$. Since $\cc{J}$ is 2-filtered, \mbox{$\exists \  j'\mr{c} j''$} and an invertible 2-cell $j \cellrd{ca}{\alpha}{cb} j'' \in \cc{J}$. By \ref{idrepresenta}, \mbox{$\exists \ \ff{X}_{k}^{j''}\mr{\ff{t}} \ff{X}_{i'}^{j'} \in \cc{C}$} such that $(\ff{t},id)$ \mbox{represents} $\ff{X}^c$. Then $(\ff{r}\ff{t}, %A221
\vcenter{\xymatrix@C=-0.2pc@R=0.5pc{  \fr \deq && \ft \dl & \dc{=} & \dr \pi_k\tjj \\
\fr \dl & \dc{\varphi} & \dr \pi\si\tj && \X^c \deq \\
\pi_i^j \deq && \X^a && \X^c \\
\pi_i^j &&& \X\tca \cl{=} }}  
)$ represents $\ff{X}^{ca}$ and $(\ff{s}\ff{t}, %A222
\vcenter{\xymatrix@C=-0.2pc@R=0.5pc{  \s \deq && \ft \dl & \dc{=} & \dr \pi_k\tjj \\
\s \dl & \dc{\psi} & \dr \pi\si\tj && \X^c \deq \\
\pi_i^j \deq && \X^b && \X^c \\
\pi_i^j &&& \X\tca \cl{=} }}  
)$ \mbox{represents} $\ff{X}^{cb}$, so, by \ref{lema2},
there \mbox{exists} \mbox{$k \mr{w} i''\in \cc{I}_{j''}$} and an invertible 2-cell \mbox{$\ff{X}^{j''}_{i''} \cellrd{\ff{r}\ff{t}\ff{X}^{j''}_{w}}{\theta}{\ff{s}\ff{t}\ff{X}^{j''}_{w}} \ff{X}^j_i \in \cc{C}$} such that
\mbox{$(\theta,\,\ff{r}\ff{t}\ff{X}^{j''}_{w},\, %A106
\vcenter{\xymatrix@C=-0.2pc@R=0.5pc{  \fr \deq && \ft \deq & \X_w\tjj && \pi\sii\tjj \\
\fr \deq && \ft \dl & \dc{=} & \dr \pi_k\tjj \cl{\pi_w\tjj} \\
\fr \dl & \dc{\varphi} & \dr \pi\si\tj && \X^c \deq \\
\pi_i^j \deq && \X^a && \X^c \\
\pi_i^j &&& \X\tca \cl{=} }}
,\,\ff{s}\ff{t}\ff{X}^{j''}_{w},\, %A107
\vcenter{\xymatrix@C=-0.2pc@R=0.5pc{  \s \deq && \ft \deq & \X_w\tjj && \pi\sii\tjj \\
\s \deq && \ft \dl & \dc{=} & \dr \pi_k\tjj \cl{\pi_w\tjj} \\
\s \dl & \dc{\psi} & \dr \pi\si\tj && \X^c \deq \\
\pi_i^j \deq && \X^b && \X^c \\
\pi_i^j &&& \X\tcb \cl{=} }}
)$}
represents $\ff{X}^{\alpha}$.
Then we have an invertible 2-cell in $\cc{K}_\ff{X}$ $\xymatrix{(i,j)\ar@<1.5ex>[rr]^{(c,\ff{t}\ff{X}^{j''}_{w},t\pi_w^{j''})(a,\ff{r},\varphi)} \ar@<-1.5ex>[rr]_{(c,\ff{t}\ff{X}^{j''}_{w},\ff{t}\pi_{w}^{j''})(b,\ff{s},\psi)} \ar@{}[rr]|{ \Downarrow \; (\alpha,\theta)} && (i'',j'')}$ which concludes the proof of axiom $F1$.

\item[F2.] Let $\xymatrix@C=2.5pc{(i,j)\ar@<1.5ex>[rr]^{(a,\ff{r},\varphi)} \ar@<-1.5ex>[rr]_{(b,\ff{s}, \psi)} \ar@{}[rr]|{\Downarrow \; (\alpha,\theta) \;\; \Downarrow \; (\alpha',\theta')} & & (i',j')} \in \cc{K}_\ff{X}$. Since $\cc{J}$ is \mbox{2-filtered,} \mbox{$\exists\ j' \mr{c} j'' \in \cc{J}$} such that \mbox{$c\alpha=c\alpha'$}. Also, by \ref{idrepresenta}, $\exists\  \ff{X}_{k}^{j''}\mr{\ff{t}} \ff{X}_{i'}^{j'} \in \cc{C}$ such that $(\ff{t},id)$ represents $\ff{X}^c$. Then, it is easy to check that $(\ff{t},\ff{t},id,\ff{t},id)$ represents $\ff{X}^c$ and therefore we have that $(\theta \ff{t},\ff{r}\ff{t}, %A221
\vcenter{\xymatrix@C=-0.2pc@R=0.5pc{  \fr \deq && \ft \dl & \dc{=} & \dr \pi_k\tjj \\
\fr \dl & \dc{\varphi} & \dr \pi\si\tj && \X^c \deq \\
\pi_i^j \deq && \X^a && \X^c \\
\pi_i^j &&& \X\tca \cl{=} }}  ,\ff{s}\ff{t}, %A222
\vcenter{\xymatrix@C=-0.2pc@R=0.5pc{  \s \deq && \ft \dl & \dc{=} & \dr \pi_k\tjj \\
\s \dl & \dc{\psi} & \dr \pi\si\tj && \X^c \deq \\
\pi_i^j \deq && \X^b && \X^c \\
\pi_i^j &&& \X\tca \cl{=} }} )$ and $(\theta'\ff{t},\ff{r}\ff{t}, %A221
\vcenter{\xymatrix@C=-0.2pc@R=0.5pc{  \fr \deq && \ft \dl & \dc{=} & \dr \pi_k\tjj \\
\fr \dl & \dc{\varphi} & \dr \pi\si\tj && \X^c \deq \\
\pi_i^j \deq && \X^a && \X^c \\
\pi_i^j &&& \X\tca \cl{=} }}, \ff{s}\ff{t}, %A222 
\vcenter{\xymatrix@C=-0.2pc@R=0.5pc{  \s \deq && \ft \dl & \dc{=} & \dr \pi_k\tjj \\
\s \dl & \dc{\psi} & \dr \pi\si\tj && \X^c \deq \\
\pi_i^j \deq && \X^b && \X^c \\
\pi_i^j &&& \X\tca \cl{=} }} )$ both represent $\ff{X}^{c\alpha}$: 

%Cambiar por A108

$$\vcenter{\xymatrix@C=0pc@R=1.5pc{  \fr \dcell{\theta} && \ft \deq && \pi_k\tjj \deq \\
\s \deq && \ft \dl & \dc{=} & \dr \pi_k\tjj \\
\s \dl & \dc{\psi} & \dr \pi\si\tj && \X^c \deq \\
\pi_i^j \deq && \X^b && \X^c \\
\pi_i^j &&& \X\tcb \cl{=} }}
\vcenter{\xymatrix@C=0pc{ \quad = \quad}}
\vcenter{\xymatrix@C=0pc@R=1.5pc{ \fr \deq&& \ft \dl & \dc{=} & \dr \pi_k\tjj \\
\fr \dl & \dc{\varphi} & \dr \pi\si\tj && \X^c \deq \\
\pi_i^j \deq && \X^a \dcell{\X^{\alpha}} && \X^c \deq \\
\pi_i^j \deq && \X^b && \X^c \\
\pi_i^j  &&& \X\tcb \cl{=} }}
\vcenter{\xymatrix@C=0pc{ \quad = \quad}}
\vcenter{\xymatrix@C=0pc@R=1.5pc{ \fr \deq&& \ft \dl & \dc{=} & \dr \pi_k\tjj \\
\fr \dl & \dc{\varphi} & \dr \pi\si\tj && \X^c \deq \\
\pi_i^j \deq && \X^a && \X^c \\
\pi_i^j \deq &&& \X\tca \cl{=} \dcellb{\X^{c\alpha}} \\
\pi_i^j  &&& \X\tcb}}$$

% $$ \vcenter{\xymatrix@C=-0.3pc {\pi_i \dl & \dc{\varphi} & \ff{X}^a \dr && \ff{X}^c \deq \\
% 			      \ff{r} \deq && \pi_{i'} \dl & \dc{=} & \ff{X}^c \dr \\
% 			      \ff{r} \dcell{\theta} && \ff{t} \deq & \! \comw{r} \! & \pi_k \deq \\
% 			      \ff{s} && \ff{t} && \pi_k }}
%  \vcenter{\xymatrix@C=1pc { \ \ \  = \ \ \  }}
%   \vcenter{\xymatrix@C=-.3pc {\pi_i \dl & \dc{\varphi} & \ff{X}^a \dr && \ff{X}^c \deq \\
% 			      \ff{r} \dcell{\theta} && \pi_{i'} \deq &  & \ff{X}^c \deq \\   
% 			      \ff{s}  \deq && \pi_{i'} \dl & \dc{=} & \ff{X}^c \dr\\
% 			      \ff{s} && \ff{t} & \! \comw{r} \! & \pi_k  }}
%    \vcenter{\xymatrix@C=1pc { \ \ \  = \ \ \  }}
%     \vcenter{\xymatrix@C=-.3pc { \pi_i \deq && \ff{X}^a \dcellb{\ff{X}^{\alpha} } && \ff{X}^c \deq \\
% 				  \pi_i \dl & \dc{\psi} & \ff{X}^b \dr && \ff{X}^c \deq \\
% 				  \ff{s} \deq && \pi_{i'} \dl &\dc{=} & \ff{X}^c \dr \\
% 				  \ff{s} && \ff{t} & \! \comw{r} \! & \pi_k }}$$
\noindent where the first equality is due to elevators calculus plus the fact that $(\theta,\ff{r},\varphi,\ff{s},\psi)$ represents $\ff{X}^\alpha$.
				  
Then, by \ref{lema5}, $\exists \, k \mr{w} i'' \in \cc{I}_{j''}$ such that \mbox{$\theta  \ff{t} \ff{X}^{j''}_{w}=\theta'\ff{t}\ff{X}^{j''}_{w}$}, so \mbox{$(c,\ff{t}\ff{X}^{j''}_{w},\ff{t}\pi_{w})(\alpha,\theta)=(c,\ff{t}\ff{X}^{j''}_{w},t\pi_{w})(\alpha',\theta')$,} which concludes the proof of axiom $F2$.
\end{itemize}
\end{proof}

\begin{proposition}\label{teo} Let $\widetilde{\ff{X}}$ be the 2-pro-object
$\cc{K}_\ff{X}^{op} \mr{\widetilde{\ff{X}}} \cc{C}$ defined by
\mbox{$\widetilde{\ff{X}}_{(i, j)} = \ff{X}^j_i$,}
$\widetilde{\ff{X}}_{(a,\ff{r}, \varphi)} = \ff{r}$, and
$\widetilde{\ff{X}}_{(\alpha, \theta)} = \theta$.
Then the following equation holds in $2$-$\cc{P}ro(\cc{C})$:
$$\widetilde{\ff{X}}=\Lim{j\in \cc{J}}{\ff{X}^j}$$
\end{proposition}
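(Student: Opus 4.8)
The plan is to verify the universal property of the pseudo-limit directly against Definition \ref{colimits}, i.e.\ to show that for every $\ff{Z} \in \Pro{C}$ post-composition with a suitable pseudo-cone $\{\widetilde{\ff{X}} \mr{\Pi^j} \ff{X}^j\}_{j\in \cc{J}}$ induces an isomorphism of categories $\Pro{C}(\ff{Z}, \widetilde{\ff{X}}) \mr{\cong} \ff{PC}_{\Pro{C}}(\ff{Z}, \ff{X})$. First I would define the candidate projections. For each $j\in \cc{J}$ and each $i \in \cc{I}_j$, the object $(i,j) \in \cc{K}_\ff{X}$ carries a canonical morphism $\widetilde{\ff{X}} \mr{\pi_{(i,j)}} \widetilde{\ff{X}}_{(i,j)} = \ff{X}^j_i$ coming from \ref{X=lim}; assembling these over $i \in \cc{I}_j$ and checking the pseudo-cone coherences (PC0--PC2 of \ref{pseudo-cone}) against the $\cc{I}_j$-structure yields a morphism $\widetilde{\ff{X}} \mr{\Pi^j} \ff{X}^j$ in $\Pro{C}$, essentially because a cone over $\ff{X}^j$ out of $\widetilde{\ff{X}}$ is the same as a morphism into the pseudo-limit $\ff{X}^j = \Lim{i}{\ff{X}^j_i}$. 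The structural 2-cells $\Pi^a$ for $j \mr{a} j'$ are forced by the first coordinate of the $1$-cells $(a,\ff{r},\varphi)$ of $\cc{K}_\ff{X}$: these are precisely built so that $(\ff{r},\varphi)$ represents $\ff{X}^a$, which is exactly the data of an invertible comparison $\ff{X}^a \Pi^{j'} \Mr{\Pi^a} \Pi^j$ in $\Pro{C}$.

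Next I would produce the comparison functor and its inverse explicitly. Given a pseudo-cone $\{\ff{Z} \mr{\theta^j} \ff{X}^j\}_{j}$ for $\ff{X}$ with vertex $\ff{Z}$, for each $j$ the morphism $\theta^j$ is a morphism of $2$-pro-objects, so by \ref{idrepresenta} for each $i \in \cc{I}_j$ it has a representative $\ff{Z}_{?} \mr{t_i^j} \ff{X}^j_i$; the collection of these, indexed by the objects $(i,j) \in \cc{K}_\ff{X}$, should assemble into a single morphism $\ff{Z} \mr{\widetilde{\theta}} \widetilde{\ff{X}}$ via the defining formula \ref{iso} applied to $\Pro{C}(\ff{Z}, \widetilde{\ff{X}}) \cong \Lim{(i,j)\in \cc{K}_\ff{X}}{\coLim{?}{\cc{C}(\ff{Z}_?, \ff{X}^j_i)}}$. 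The key point is that a morphism into $\widetilde{\ff{X}}$ is, by \ref{XisLim}, exactly a pseudo-cone over the $\cc{K}_\ff{X}$-diagram $(i,j) \mapsto \ff{X}^j_i$, and the data of such a pseudo-cone is interchangeable with the data of a $\cc{J}$-indexed family of pseudo-cones over each $\ff{X}^j$ — which is what a pseudo-cone for $\ff{X}$ is. I would make this interchange precise: the $1$-cells $(a,\ff{r},\varphi)$ of $\cc{K}_\ff{X}$ encode the $\cc{J}$-transition data plus compatibility (the representing condition), the $2$-cells $(\alpha,\theta)$ encode the $2$-cells of $\cc{J}$ plus their representing data, and each coherence axiom of a $\cc{K}_\ff{X}$-pseudo-cone unpacks into the corresponding coherence of a $\cc{J}$-pseudo-cone-of-$\cc{I}_j$-cones.

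The bulk of the verification is thus a bookkeeping bijection at two levels: on $1$-cells (showing $\Pi$-postcomposition is a bijection on objects of the hom-categories) and on $2$-cells (showing it is full and faithful). For the $1$-cell bijection I would use \ref{idrepresenta} to obtain representatives and \ref{lema2} together with \ref{lemorphismaraM_f} to show that different choices of representatives give the same morphism into $\widetilde{\ff{X}}$, so the assignment $\{\theta^j\} \mapsto \widetilde{\theta}$ is well-defined, and that $\Pi^j \widetilde{\theta} = \theta^j$ on the nose, establishing both surjectivity and injectivity. For the $2$-cell statement I would run the analogous argument one dimension up, using \ref{lema5} and \ref{lema2paraM_D} to guarantee that $2$-cells agree after passing to a common refinement index in the filtered $\cc{I}_j$'s. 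Throughout, the isomorphism (rather than mere equivalence) in formula \ref{basica2} is what lets me conclude a genuine pseudo-limit and not just a bi-limit.

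The hard part will be the coherence of the interchange of indexing: reorganizing a pseudo-cone over the single $2$-filtered $2$-category $\cc{K}_\ff{X}$ into a $\cc{J}$-family of pseudo-cones over the $\cc{I}_j$, and back, while tracking that the representing-data conditions baked into the $1$- and $2$-cells of $\cc{K}_\ff{X}$ correspond exactly to the PC1/PC2 coherences demanded of a pseudo-cone for $\ff{X}$. Concretely, the invertible $2$-cells $\varphi$, $\psi$ attached to the morphisms and $2$-cells of $\cc{K}_\ff{X}$ are there precisely to absorb the non-strictness of the representing relation, and verifying that the elevator-calculus identities defining composition in \ref{kequis} translate into the pasting identities PC1 and PC2 is where the genuine $2$-categorical work lies; the filtered lemmas of \ref{lemas2pro} are exactly the tools that make these translations reversible. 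Everything else — the construction of $\Pi^j$, the definition of $\widetilde{\theta}$, and the routine checks that the two assignments are mutually inverse — is then a matter of careful but unsurprising diagram chasing.
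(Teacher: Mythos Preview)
Your overall strategy is the paper's: identify a morphism $\ff{Z}\to\widetilde{\ff{X}}$ with a $\cc{K}_\ff{X}$-pseudo-cone (via \ref{XisLim}) and show this is the same data as a $\cc{J}$-pseudo-cone for $\ff{X}$, with the hard part being the coherence across choices of representatives. Two points are worth correcting. First, your detour through representatives $t_i^j:\ff{Z}_?\to\ff{X}^j_i$ in $\cc{C}$ is unnecessary and creates extra well-definedness obligations: the paper simply sets $\ff{h}_{(i,j)}=\pi^j_i\ff{h}_j$ in $\Pro{C}$, which is automatically well-defined and immediately gives the $\cc{K}_\ff{X}$-pseudo-cone. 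Second, the lemmas you cite (\ref{lemorphismaraM_f}, \ref{lema2paraM_D}) are not the tools the paper uses here (they belong to Section~\ref{Mtrick}); the genuine technical core is the paper's Claim~1, which shows that for any two representatives $(\ff{r},\varphi)$, $(\ff{s},\psi)$ of $\ff{X}^a$ landing in the same $\ff{X}^j_i$, the induced 2-cells $\ff{h}_{(a,\ff{r},\varphi)}\circ\varphi^{-1}\ff{h}_{j'}$ and $\ff{h}_{(a,\ff{s},\psi)}\circ\psi^{-1}\ff{h}_{j'}$ coincide. This is proved by first treating the case where the two representatives are related by a 2-cell $(id_a,\theta)$ of $\cc{K}_\ff{X}$ (where it follows from PC2 for the $\cc{K}_\ff{X}$-cone), and then reducing the general case to this via \ref{lema2}. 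Once this independence is established, the 2-cells $\ff{h}_a$ are obtained from the universal property of each $\ff{X}^j$ as pseudo-limit, and the remaining PC-axiom checks are routine. So your outline is sound, but the mechanism for the hard step is the 2-cell structure of $\cc{K}_\ff{X}$ itself rather than the reindexing lemmas you name.
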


\vspace{-3ex}

\begin{proof}
Let $\ff{Z} \in$ 2-$\cc{P}ro(\cc{C})$, and
$\Big\{\ff{Z}\mr{\ff{h}_j} \ff{X}^j\Big\}_{j\in \cc{J}}$,
\mbox{$\Big\{\ff{X}^a \ff{h}_{j'} \Mr{\ff{h}_a} \ff{h}_j \Big\}_{j \mr{a}j' \in \cc{J}}$}
be a pseudo-cone for $\ff{X}$ with vertex $\ff{Z}$ (\ref{pseudo-cone}). Given
\mbox{$(i,j)\mr{(a,\ff{r},\varphi)} (i',j') \; \in \cc{K}_\ff{X}$}, the definitions
$\ff{h}_{(i,j)} = \pi^j_i \ff{h}_j$ and
$\ff{h}_{(a,\ff{r},\varphi)} = %A109 
\vcenter{\xymatrix@C=-0.2pc@R=.5pc{ \fr \dl & \dc{\varphi} & \dr \pi\si\tj && \h\sj \deq \\
\pi_i^j \deq && \X^a && \h\sj \\
\pi_i^j &&& \h_j \cl{\h_a} }}
$ determine a pseudo-cone for $c\widetilde{\ff{X}}$ (where $c$ is the morphism of \ref{CisPro}) with vertex $\ff{Z}$:

\begin{itemize}
 \item[PC0.] It is straightforward.

\item[PC1.] Given $(i,j) \mr{(a,\ff{r},\varphi)} (i',j') \mr{(b, \ff{s}, \psi)} (i'',j'') \in \cc{K}_{\X}$,
%cambiar por A110
$$\vcenter{ \xymatrix@C=0pc@R=1.5pc{  \fr \deq && \s \dl & \dc{\psi} & \dr \pi\sii\tjj && \h\sjj \deq \\
\fr \deq && \pi\si\tj \deq && \X^b && \h\sjj \\
\fr \dl & \dc{\varphi} & \dr \pi\si\tj &&& \h\sj \deq \cl{\h_b} \\
\pi_i^j \deq && \X^a &&& \h\sj \\
\pi_i^j &&&& \h_j \cldosuno{\h_a} }}
\vcenter{ \xymatrix@C=-0pc{ \  = \  } }
\vcenter{ \xymatrix@C=0pc@R=1.5pc{  \fr \deq && \s \dl & \dc{\psi} & \dr \pi\sii\tjj && \h\sjj \deq \\
\fr \dl & \dc{\varphi} & \dr \pi\si\tj && \X^b \deq && \h\sjj \deq \\
\pi_i^j \deq && \X^a && \X^b && \h\sjj \deq \\
\pi_i^j \deq &&& \X\tba \cl{=} &&& \h\sjj \\
\pi_i^j &&&& \h_j \clunodos{\h\sba} }}$$
% $$\vcenter{ \xymatrix@C=-0.3pc{ \pi_i^j \deq &&&& \ff{h}_j \ar@{-}[dll] \ar@{-}[drr] \dc{\ff{h}_{ba}} \\
% 			       \pi_i^j \dl & \dc{\varphi} & \ff{X}^a \dr && \ff{X}^b \deq && \ff{h}_{j''} \deq \\
% 			       \ff{r} \deq && \pi_{i'}^{j'} \dl & \dc{\psi} & \ff{X}^b \dr && \ff{h}_{j''} \deq \\
% 			       \ff{r} && \ff{s} && \pi_{i''}^{j''} && \ff{h}_{j''} } }
% \vcenter{ \xymatrix@C=-0pc{ \  = \  } }
% \vcenter{ \xymatrix@C=-0.3pc{ \pi_i^j \deq &&&& \ff{h}_j \ar@{-}[dll] \ar@{-}[dr] \ar@{}[d]|{\ff{h}_a} \\
% 			     \pi_i^j \deq && \ff{X}^a \deq &&& \ff{h}_{j'} \op{\ff{h}_b} \\
% 			     \pi_i^j \dl & \dc{\varphi} & \ff{X}^a \dr && \ff{X}^b \deq && \ff{h}_{j''} \deq \\
% 			     \ff{r} \deq && \pi_{i'}^{j'} \dl & \dc{\psi} & \ff{X}^b \dr && \ff{h}_{j''} \deq \\
% 			     \ff{r} && \ff{s} && \pi_{i''}^{j''} && \ff{h}_{j''}   } }
% \vcenter{ \xymatrix@C=-0pc{ \  = \  } }
% \vcenter{ \xymatrix@C=-0.3pc{   \pi_i^j \deq &&&& \ff{h}_j \ar@{-}[dll] \ar@{-}[dr] \ar@{}[d]|{\ff{h}_a} \\
% 			     \pi_i^j \dl & \dc{\varphi} & \ff{X}^a \dr &&& \ff{h}_{j'} \deq \\
% 			     \ff{r} \deq && \pi_{i'}^{j'} \deq &&& \ff{h}_{j'} \op{\ff{h}_b} \\
% 			     \ff{r} \deq && \pi_{i'}^{j'} \dl & \dc{\psi} & \ff{X}^b \dr && \ff{h}_{j''} \deq \\
% 			     \ff{r} && \ff{s} && \pi_{i''}^{j''} && \ff{h}_{j''}   } }$$

\noindent where the equality is due to elevators calculus plus the fact that $\ff{h}$ is a pseudo-cone.
			     
\item[PC2.] Given $(i,j) \cellrd{(a,\ff{r},\varphi)}{(\alpha,\theta)}{(b,\ff{s},\psi)} (i',j') \in \cc{K}_{\X}$,

% Cambiar por A111
$$\vcenter{ \xymatrix@C=0pc@R=1.5pc{ \fr \dl & \dc{\varphi} & \dr \pi\si\tj && \h\sj \deq \\
\pi_i^j \deq && \X^a && \h\sj \\
\pi_i^j &&& \h_j \cl{\h_a} }}
\vcenter{ \xymatrix@C=0pc{  \quad = \quad  }}
\vcenter{ \xymatrix@C=0pc@R=1.5pc{  \fr \dl & \dc{\varphi} & \dr \pi\si\tj && \h\sj \deq \\
\pi_i^j \deq && \X^a \dcell{\X^{\alpha}} && \h\sj \deq \\
\pi_i^j \deq && \X^b && \h\sj \\
\pi_i^j &&& \h_j \cl{\h_b} }}
\vcenter{ \xymatrix@C=0pc{  \quad = \quad  }}
\vcenter{ \xymatrix@C=0pc@R=1.5pc{ \fr \dcell{\theta} && \pi\si\tj \deq && \h\sj \deq \\
\s \dl & \dc{\psi} & \dr \pi\si\tj && \h\sj \deq \\
\pi_i^j \deq && \X^b && \h\sj \\
\pi_i^j &&& \h_j \cl{\h_b} }}$$
% $$\vcenter{ \xymatrix@C=-0.3pc{ \pi_i^j \deq &&& \ff{h}_j \op{\ff{h}_a} \\
% 				\pi_i^j \dl & \dc{\varphi} & \ff{X}^a \dr && \ff{h}_{j'} \deq \\
% 				\ff{r} \dcell{\theta} && \pi_{i'}^{j'} \deq && \ff{h}_{j'} \deq \\
% 				\ff{s} && \pi_{i'}^{j'} && \ff{h}_{j'}    }}
% \vcenter{ \xymatrix@C=0pc{\  = \ }}
% \vcenter{ \xymatrix@C=-0.3pc{  \pi_i^j \deq &&& \ff{h}_j \op{\ff{h}_a} \\
% 				\pi_i^j \deq && \ff{X}^a \dcell{\ff{X}^{\alpha}} && \ff{h}_{j'} \deq \\
% 				\pi_i^j \dl & \dc{\psi} & \ff{X}^b \dr && \ff{h}_{j'} \deq \\
% 				\ff{s} && \pi_{i'}^{j'} && \ff{h}_{j'}    }}
% \vcenter{ \xymatrix@C=0pc{\  = \ }}
% \vcenter{ \xymatrix@C=-0.3pc{  \pi_i^j \deq &&& \ff{h}_j \op{\ff{h}_b} \\
% 				\pi_i^j \dl & \dc{\psi} & \ff{X}^b \dr && \ff{h}_{j'} \deq \\
% 				\ff{s} && \pi_{i'}^{j'} && \ff{h}_{j'}    }} $$

\noindent where the first equality is due to the fact that $\ff{h}$ is a pseudo-cone and the second one is valid because $(\theta,\ff{r},\varphi, \ff{s},\psi)$ represents $\ff{X}^\alpha$.				
				
It is straightforward to check that this extends to a functor, that we denote $p$ (for the isomorphism below see \ref{XisLim}):
$$
\ff{PC}_{2\hbox{-}\cc{P}ro(\cc{C})}(\ff{Z},\ff{X}) \mr{p} \ff{PC}_{2\hbox{-}\cc{P}ro(\cc{C})}(\ff{Z},c\widetilde{\ff{X}})
 \, \cong \,2\hbox{-} \cc{P}ro(\cc{C})(\ff{Z},\, \ff{\widetilde{X}})
$$
\end{itemize}

The proposition follows if $p$ is an isomorphism. In the sequel we prove that this is the case.

\vspace{1ex}

\noindent \emph{1. $p$ is bijective on objects}:
Let
$$
    {\Big\{\ff{Z}\mr{\ff{h}_{(i,j)}} \ff{X}^j_i\Big\}_{(i,j) \in \cc{K}_\ff{X}}, \hspace{2ex}
    }
\Big\{\ff{\widetilde{X}}_{(a,\ff{r},\varphi)} \ff{h}_{(i',j')}
     = \ff{r} \, \ff{h}_{(i',j')} \Mr{\ff{h}_{(a,\ff{r},\varphi)}}
     \ff{h}_{(i,j)}\Big\}_
     {(i,j) \mr{(a,\ff{r},\varphi)} (i',j') \in \cc{K}_\ff{X}}
$$
be a pseudo-cone for $c\ff{\widetilde{X}}$ with vertex $\ff{Z}$ (\ref{pseudo-cone}).

Check that for each $j \in \cc{J}$,
$\Big\{\ff{Z}\mr{\ff{h}_{(i,j)}}\ff{X}_i^j \Big\}_{i\in \cc{I}_j}$
together  with
\mbox{$\Big\{\ff{h}_u = \ff{h}_{(j,\ff{X}_{u}^j,\pi_u^j)}: \ff{X}_u^j \ff{h}_{(i',j')}\Mr{} \ff{h}_{(i,j)} \Big\}_{i\mr{u} i'\in \cc{I}_j}$} is a pseudo-cone for $\ff{X}^j$.
Then, since $\Big\{\ff{X}^j \mr{\pi^j_i} \ff{X}^j_i\Big\}_{i \in \cc{I}_j}$, $\Big\{\ff{X}^j_u \pi^j_{i'} \Mr{\pi_u^j} \pi^j_i\Big\}_{i\mr{u}i' \in \cc{I}_j}$ is a pseudo-limit pseudo-cone, it follows that there exists a unique $\ff{Z} \mr{\ff{h}_j} \ff{X}^j$ such that
\begin{equation}\label{*1}
 \forall i\in \cc{I}_j \;\; \pi_i^j\ff{h}_j=\ff{h}_{(i,j)} \;\;and\;\; \forall \; i \mr{u} i'\in \cc{I}_j \ \;\; \pi_u^j\ff{h}_j=\ff{h}_{u}.
\end{equation}
It only remains to define the 2-cells of the pseudo-cone structure. That is, for each
$j \mr{a} j' \in \cc{J}$, we need invertible 2-cells
$\ff{X}^a \ff{h}_{j'} \Mr{\ff{h}_a} \ff{h}_j $,
 such that $\{\ff{h}_j\}_{j\in \cc{J}}$ together with
 $\left\{\ff{h}_a\right\}_{j \mr{a} j' \in \cc{J}}$ form a pseudo-cone for $\ff{X}$ with vertex $\ff{Z}$.

Consider the pseudo-cone
 $\Big\{\ff{X}^j \mr{\pi^j_i} \ff{X}^j_i \Big\}_{i \in \cc{I}_j}$. Then the compositions 
 %$\pi^j_i \ff{h}_j$, $\pi^j_i \ff{X}^a \ff{h}_{j'}$, 
 $\Bigg\{\ff{Z} \mrpair{\pi_i^j\ff{h}_j}{\pi_i^j\ff{X}^a \ff{h}_{j'}}
                                           \ff{X}_i^j\Bigg\}_{i\in \cc{I}_j}$
determine two pseudo-cones for $\ff{X}^j$ with vertex $\ff{Z}$.

\vspace{1ex}

\noindent {\bf Claim 1}
 \emph
   { Let $(\ff{r},\varphi)$ and $(\ff{s},\psi)$ be two pairs
     representing $\ff{X}^a$ as follows:
    $$
     \xymatrix@C=0.8pc@R=0.8pc
           {
            \ff{X}^{j'} \ar@{}[rrdd]|{\cong \; \Downarrow \; \varphi} \ar[rr]^{\pi^{j'}_{i'}}  \ar[dd]_{\ff{X}^a}
            & & \ff{X}^{j'}_{i'} \ar[dd]^{\ff{r}}
           \\
            & 
            &
           \\  \ff{X}^j \ar[rr]_{\pi^j_i}
            & & \ff{X}^j_i
           }
     \hspace{5ex}
     \xymatrix@C=0.8pc@R=0.8pc
           {
            \ff{X}^{j'} \ar@{}[rrdd]|{\cong \; \Downarrow \; \psi} \ar[rr]^{\pi^{j'}_{i''}}  \ar[dd]_{\ff{X}^a}
            & & \ff{X}^{j'}_{i''} \ar[dd]^{\ff{s}}
           \\
            & 
            &
           \\ \ff{X}^j \ar[rr]_{\pi^j_i}
            & & \ff{X}^j_i
           }
$$
Then, $\ff{h}_{(a,\fr,\varphi)} \circ \varphi^{-1}\ff{h}_{j'}= \ff{h}_{(a,\s,\psi)} \circ \psi^{-1}\ff{h}_{j'}$, i.e.
% Cambiar por A112
%     $$\vcenter{\xymatrix@C=-0.5pc{  & \pi_i^j \ar@{-}[dl] && \ar@{}[dl]|{\ff{h}_{(a,\ff{r},\varphi)}} & \ff{h}_j \ar@{-}[dr] \\                               
% 			      \ff{r} \dl & \dc{\varphi^{-1}} & \pi_{i'}^{j'} \dr &&& \ff{h}_{j'}  \deq \\
% 			      \pi_i^j & & \ff{X}^a &&& \ff{h}_{j'} } }
% \vcenter{\xymatrix@C=0pc{\ = \ \ } }
% \vcenter{\xymatrix@C=-0.5pc{  & \pi_i^j \ar@{-}[dl] && \ar@{}[dl]|{\ff{h}_{(a,\ff{s},\psi)}} & \ff{h}_j \ar@{-}[dr] \\                               
% 			      \ff{s} \dl & \dc{\psi^{-1}} & \pi_{i''}^{j'} \dr &&& \ff{h}_{j'}  \deq \\
% 			      \pi_i^j & & \ff{X}^a &&& \ff{h}_{j'} } }
% $$ 
$$\vcenter{\xymatrix@C=-0pc{ \pi_i^j \dl & \dc{\varphi^{-1}} & \dr \X^a && \h\sj \deq \\
\fr \ardr && \pi\si\tj \dc{\h\st{a}{r}{\varphi}} && \ardl \h\sj \\
& \pi_i^j && \h_j}}
\vcenter{\xymatrix@C=0pc{\ = \ \ } }
 \vcenter{\xymatrix@C=-0.5pc{ \pi_i^j \dl & \dc{\psi^{-1}} & \dr \X^a && \h\sj \deq \\
 \s \ardr && \pi\sii\tj \dc{\h\st{a}{s}{\psi}} && \h\sj \ardl \\
 & \pi_i^j && \h_j }}$$
}
\hfill (proof below).

\vspace{1ex}

\noindent {\bf Claim 2}
 \emph
      {
       For each $i \in \cc{I}_j$, let $(\ff{r}, \varphi)$ be a pair representing
       $\ff{X}^a$, and set
       \mbox{$\rho_i= \ff{h}_{(a,\fr,\varphi)} \circ \varphi^{-1}\ff{h}_{j'} =%A223
       \vcenter{\xymatrix@R=.5pc@C=-0.4pc{ \pi_i^j \dl & \dc{\varphi^{-1}} & \dr \X^a && \h\sj \deq \\
\fr \ardr && \pi\si\tj \dc{\h\st{a}{r}{\varphi}} && \ardl \h\sj \\
& \pi_i^j && \h_j}} 
$.}
       Then, $\left\{\rho_i\right\}_{i \in \cc{I}_j}$ determines an isomorphism of
       pseudo-cones
       $\Bigg\{\ff{Z} \cellrd{\pi_i^j\ff{h}_j}{\rho_i}
        {\pi_i^j\ff{X}^a \ff{h}_{j'}} \ff{X}^j_i\Bigg\}_{i\in \cc{I}_j}$
      }
\hfill\mbox{(proof below)}.

Since $\Big\{\ff{X}^j \mr{\pi^j_i} \ff{X}^j_i\Big\}_{i \in \cc{I}_j}$, $\Big\{\ff{X}^j_u \pi^j_{i'} \Mr{\pi_u^j} \pi^j_i\Big\}_{i\mr{u}i' \in \cc{I}_j}$ is a pseudo-limit pseudo-cone, the functor
\mbox{$2$-$\cc{P}ro(\cc{C})(\ff{Z}, \ff{X}^j) \mr{(\pi^j)_*} \ff{PC}_{2\hbox{-}\cc{P}ro(\cc{C})}(\ff{Z}, \ff{X}^j)$}
is an isomorphism of categories $\forall \ j\in \cc{J}$. Then, from Claim 2 it follows that there are invertible 2-cells
\mbox{$\ff{Z} \cellrd{\ff{h}_j}{h_a}{\ff{X}^a\ff{h}_{j'}} \ff{X}^j \in 2$-$\cc{P}ro(\cc{C})$} such that $\rho_i=\pi_i^j \ff{h}_a \ \forall \ i\in \cc{I}_j \ \forall \ j\in \cc{J}$. Let's check that $\Big\{\ff{Z}\mr{\ff{h}_j} \ff{X}^j\Big\}_{j\in \cc{J}}$ together with $\Big\{\ff{X}^a \ff{h}_{j'}\Mr{\ff{h}_a} \ff{h}_j\Big\}_{j\mr{a} j' \in \cc{J}}$ is a pseudo-cone over $\ff{X}$:

\begin{itemize}
 \item[PC0.] By Claim 1, in Claim 2 we can take $\ff{r}=id$ and $\varphi=id$, so $\rho_i=id$ and therefore $\ff{h}_{id}=id$.

\item[PC1.] Given $j\mr{a}j'\mr{b}j''\in \cc{J}$ and $i\in \cc{I}_j$, by Claim 1, in Claim 2 we can take $(\ff{r},id)$ representing $\ff{X}^a$, $\ff{X}_{i'}^{j'}\mr{\ff{r}}\ff{X}_i^j$, $(\ff{s},id)$ representing $\ff{X}^b$, $\ff{X}_{i''}^{j''}\mr{\ff{s}}\ff{X}_{i'}^{j'}$ and $(\ff{r}\ff{s},id)$ representing $\ff{X}^{ba}$. Then

$$\vcenter{\xymatrix@C=-0.3pc@R=1.5pc{ \pi_i^j \deq && \X^a \deq && \X^b && \h\sjj \\
\pi_i^j \deq && \X^a &&& \h\sj \cl{\h_b} \\
\pi_i^j &&&& \h_j \cldosuno{\h_a} }}
\vcenter{\xymatrix@C=-0.3pc{  = \quad }}
\vcenter{\xymatrix@C=-0.3pc@R=1.5pc{ \pi_i^j \deq && \X^a \deq & \X^b && \h\sjj \\
\pi_i^j \dl & \dc{=} & \dr \X^a && \h\sj \deq \cl{\h_b} \\ 
\fr \ardr && \pi\si\tj \dc{\st{a}{r}{id}} && \h\sj \ardl \\
& \pi_i^j && \h_j }}
\vcenter{\xymatrix@C=-0.3pc{ \quad = \quad }}
\vcenter{\xymatrix@C=-0.4pc@R=1.5pc{ \pi_i^j \dl & \dc{=} & \dr \X^a && \X^b \deq && \h\sjj \deq \\
\fr \deq && \pi\si\tj \dl & \dc{=} & \dr \X^b && \h\sjj \deq \\
\fr \deq && \s \ardr && \pi\sii\tjj \dc{\h\st{b}{\s}{id}} && \h\sjj \ardl \\
\fr \ardr &&& \pi\si\tj \dc{\h\st{a}{\fr}{id}} && \h\sj \ardl \\
& \pi_i^j &&& \h_j}}
\vcenter{\xymatrix@C=-0.3pc{  = \quad }}$$

$$\vcenter{\xymatrix@C=-0.3pc{ \quad = \quad }}
\vcenter{\xymatrix@C=-0.3pc@R=1.5pc{ \pi_i^j \dl & \dc{=} & \dr \X^a && \X^b \deq && \h\sjj \deq \\
\fr \deq && \pi\si\tj \dl & \dc{=} & \dr \X^b && \h\sjj \deq \\
\fr \ardr && \s & \dc{\h\st{ba}{\fr\s}{id}} & \pi\sii\tjj && \h\sjj \ardl \\ 
& \pi_i^j &&&& \h_j }}
\vcenter{\xymatrix@C=-0.3pc{ \quad = \quad }}
\vcenter{\xymatrix@C=-0.3pc@R=1.5pc{ \pi_i^j \deq && \X^a \ar@{-}[drr] && \X^b \dc{\h\sba} && \h\sjj \ar@{-}[dll] \\
\pi_i^j &&&& \h_j }}$$

\noindent where the first, the second and the last equalities hold by definition of $\ff{h}_{(a,\ff{r},id)}$, $\ff{h}_{(b,\ff{s},id)}$ and $\ff{h}_{(ba,\ff{rs},id)}$ respectively plus some elevators calculus; and the third equality is due to the fact that $\ff{h}$ is a pseudo-cone.

Since we checked this for any $i\in \cc{I}_j$, it follows:
% cambiar por A114
% $$\vcenter{\xymatrix@C=-0.5pc{&& \ff{h}_j \ar@{-}[dll] \ar@{-}[dr] \dc{\ff{h}_a} \\
% 			       \ff{X}^a \deq &&& \ff{h}_{j'} \op{\ff{h}_b} \\
% 			       \ff{X}^a && \ff{X}^b && \ff{h}_{j''}   } }
% \vcenter{\xymatrix@C=0pc{ = } }
% \vcenter{\xymatrix@C=-0.5pc{&& \ff{h}_j \ar@{-}[dll] \dc{\ff{h}_{ba}} \ar@{-}[drr] \\
% 			     \ff{X}^a && \ff{X}^b && \ff{h}_{j''} } } $$

$$\vcenter{\xymatrix@C=-0pc@R=1.5pc{ \X^a \deq && \X^b && \h\sjj \\
\X^a &&& \h\sj \cl{\h_b} \\
&& \h_j \cldosuno{\h_a} }}
\vcenter{\xymatrix@C=-0pc{ \quad = \quad }}
\vcenter{\xymatrix@C=-0pc@R=1.5pc{ \X^a \ar@{-}[drr] && \X^b \dc{\h\sba} && \h\sjj \ar@{-}[dll] \\
&& \h_j }}$$

\item[PC2.] Given $j\cellrd{a}{\alpha}{b}j'\in \cc{J}$ and $i\in \cc{I}_j$, there is $\ff{X}_{i'}^{j'}\cellrd{\ff{r}}{\theta}{\ff{s}}\ff{X}_i^j$ and appropriate invertible 2-cells $\varphi$, $\psi$ such that $(\theta,\ff{r},\varphi,\ff{s},\psi)$ represents $\ff{X}^{\alpha}$. By Claim 1, in Claim 2 we can take those representatives of $\ff{X}^a$ and $\ff{X}^b$ and then:

% cambiar por A115
% $$\vcenter{\xymatrix@C=-0.3pc@R=3ex{ \pi_i^j \deq &&& \ff{h}_j \op{\ff{h}_b}  \\
% 				\pi_i^j && \ff{X}^b && \ff{h}_{j'}    }}
% \vcenter{\xymatrix@C=0pc@R=3ex{\ \  = \ \ } }
% \vcenter{\xymatrix@C=-0.3pc@R=3ex{   & \pi_i^j \ar@{-}[dl] & \dc{\ff{h}_{(b,\ff{s},\psi)}} &  \ff{h}_j \ar@{-}[dr] \\
% 			    \ff{s} \dl & \dc{\psi^{-1}} & \pi_{i'}^{j'} \dr && \ff{h}_{j'} \deq \\
% 			    \pi_i^j  && \ff{X}^b && \ff{h}_{j'}    }}
% \vcenter{\xymatrix@C=0pc@R=3ex{\ \  = \ \ } }     
% \vcenter{\xymatrix@C=-0.3pc@R=3ex{  & \pi_i^j \ar@{-}[dl] & \dc{\ff{h}_{(a,\ff{r},\varphi)}} &  \ff{h}_j \ar@{-}[dr] \\
% 			    \ff{r} \dcell{\theta} && \pi_{i'}^{j'} \deq && \ff{h}_{j'} \deq \\ 
% 			    \ff{s} \dl & \dc{\psi^{-1}} & \pi_{i'}^{j'} \dr && \ff{h}_{j'} \deq \\
% 			    \pi_i^j  && \ff{X}^b && \ff{h}_{j'}    }}
% \vcenter{\xymatrix@C=0pc@R=3ex{\ \  = \ \ } }
% $$

$$\vcenter{\xymatrix@C=-0pc@R=1.5pc{ \pi_i^j \deq && \X^a && \h\sj \\
\pi_i^j &&& \h_j \cl{\h_a} }}
\vcenter{\xymatrix@C=-0pc{ \quad = \quad }}
\vcenter{\xymatrix@C=-0pc@R=1.5pc{ \pi_i^j \dl & \dc{\varphi^{-1}} & \dr \X^a && \h\sj \deq \\
\fr \ardr && \pi\si\tj \dc{\h\st{a}{\fr}{\varphi}} && \h\sj \ardl \\
& \pi_i^j && \h_j }}
\vcenter{\xymatrix@C=-0pc{ \quad = \quad }}
\vcenter{\xymatrix@C=-0pc@R=1.5pc{ \pi_i^j \dl & \dc{\varphi^{-1}} & \dr \X^a && \h\sj \deq \\
\fr \dcell{\theta} && \pi\si\tj \deq && \h\sj \deq \\
\s \ardr && \pi\si\tj \dc{\h\st{b}{\s}{\psi}} && \h\sj \ardl \\
& \pi_i^j && \h_j }}
\vcenter{\xymatrix@C=-0pc{ \quad = \quad }}$$

$$\vcenter{\xymatrix@C=-0pc{ \quad = \quad }}
\vcenter{\xymatrix@C=-0pc@R=1.5pc{ \pi_i^j \deq && \X^a \dcell{\X^{\alpha}} && \h\sj \deq \\
\pi_i^j \dl & \dc{\psi^{-1}} & \dr \X^b && \h\sj \deq \\
\s \ardr && \pi\si\tj \dc{\h\st{b}{\s}{\psi}} && \h\sj \ardl \\
& \pi_i^j && \h_j }}
\vcenter{\xymatrix@C=-0pc{ \quad = \quad }}
\vcenter{\xymatrix@C=-0pc@R=1.5pc{ \pi_i^j \deq && \X^a \dcell{\X^{\alpha}} && \h\sj \deq \\
\pi_i^j \deq && \X^b && \h\sj \\
\pi_i^j &&& \h_j \cl{\h_b} }}$$

%\vspace{-3ex}

% $$\vcenter{\xymatrix@C=-0.3pc@R=3ex{  & \pi_i^j \ar@{-}[dl] & \dc{\ff{h}_{(a,\ff{r},\varphi)}} &  \ff{h}_j \ar@{-}[dr] \\
% 			    \ff{r} \dl & \dc{\varphi^{-1}} & \pi_{i'}^{j'} \dr && \ff{h}_{j'} \deq \\ 
% 			    \pi_i^j \deq && \ff{X}^a \dcell{\ff{X}^{\alpha}} && \ff{h}_{j'} \deq \\ 
% 			    \pi_i^j  && \ff{X}^b && \ff{h}_{j'}    }}
% \vcenter{\xymatrix@C=0pc@R=3ex{\ \  = \ \ } }     
% \vcenter{\xymatrix@C=-0.3pc@R=3ex{ \pi_i^j \deq &&& \ff{h}_j \op{\ff{h}_a} \\
% 			    \pi_i^j \deq && \ff{X}^a \dcell{\ff{X}^{\alpha}} && \ff{h}_{j'} \deq \\ 
% 			    \pi_i^j  && \ff{X}^b && \ff{h}_{j'}    }}  
% $$   
\noindent where the first and last equalities hold by definition of $\ff{h}_{(a,\ff{r},\varphi)}$ and $\ff{h}_{(b,\ff{s},\psi)}$ respectively, the second equality is due to the fact that $\ff{h}$ is a pseudo-cone and the third one is valid because $(\theta,\ff{r},\varphi,\ff{s},\psi)$ represents $\ff{X}^\alpha$. 
			    
Since we checked this for any $i\in \cc{I}_j$, it follows:
% cambiar por A116
% $$
% \vcenter{\xymatrix@C=-0.3pc@R=3ex{ && \ff{h}_j \op{\ff{h}_b}  \\
% 				& \ff{X}^b && \ff{h}_{j'}    }}
% \vcenter{\xymatrix@C=0pc@R=3ex{\ \  = \ \ } }
% \vcenter{\xymatrix@C=-0.3pc@R=3ex{ &\ff{h}_j  \op{\ff{h}_a} & \\
% 			     \ff{X}^a \dcell{\ff{X}^{\alpha}} && \ff{h}_{j'} \deq \\ 
% 			      \ff{X}^b && \ff{h}_{j'}    }}  
% $$

$$\vcenter{\xymatrix@C=-0pc@R=1.5pc{ \X^a && \h\sj \\
& \h_j \cl{\h_a} }}
\vcenter{\xymatrix@C=-0pc{ \quad = \quad }}
\vcenter{\xymatrix@C=-0pc@R=1.5pc{ \X^a \dcell{\X^{\alpha}} && \h\sj \deq \\
\X^b && \h\sj \\
& \h_j \cl{\h_b} }}$$
\end{itemize}

\noindent \emph{2. $p$ is full and faithful}:
Let $\Bigg\{\ff{Z} \cellrd{\pi_i^j \ff{h}_j}{\rho_{(i,j)}}{\pi_i^j \ff{m}_j} \ff{X}_i^j \Bigg\}_{(i,j)\in \cc{K}_\ff{X}}$
be a morphism of pseudo-cones for $\tilde{\ff{X}}$.
It is easy to check that for each $j\in \cc{J}$, $\Bigg\{\ff{Z}  \cellrd{\pi_i^j \ff{h}_j}{\rho_{(i,j)}}{\pi_i^j \ff{m}_j} \ff{X}_i^j \Bigg\}_{i\in \cc{I}_j}$ is a morphism of pseudo-cones for $\ff{X}^j$.  Then arguing as above, there exists a unique morphism \mbox{$\ff{Z} \cellrd{\ff{h}_j}{\rho_j}{\ff{m}_j} \ff{X}^j \in 2$-$\cc{P}ro(\cc{C})$} such that for each $i\in \cc{I}_j$, $\pi_i^j\rho_j=\rho_{(i,j)}$. It only remains to prove that $\{\rho_j\}_{j\in \cc{J}}$ is a morphism of pseudo-cones: 

\begin{itemize}
 \item[PCM.] Given $j\mr{a}j'\in \cc{J}$ and $i\in \cc{I}_j$, by Claim 1, in Claim 2 we can take $(\ff{r},id)$ representing $\ff{X}^a$, $\ff{X}_{i'}^{j'}\mr{\ff{r}}\ff{X}_{i}^{j}$ and then:
% cambiar por A117
% $$
% \vcenter
%   {
%    \xymatrix@C=-0.3pc@R=3ex
%       {
%        \pi_i^j \deq &&& \ff{h}_j \dcell{\rho_j} 
%       \\
% 	   \pi_i^j \deq &&& \ff{m}_j \op{\ff{m}_a} 
% 	  \\
% 	   \pi_i^j && \ff{X}^a && \ff{m}_{j'} 
% 	  }
%   }
% \vcenter
%   {
%    \xymatrix@C=0pc@R=3ex
%      {
%       \ \  = \ \ 
%      } 
%   }    
% \vcenter
%   {
%    \xymatrix@C=-0.3pc@R=3ex
%      {
%         \pi_i^j \dl && \ar@{}[dl]|{\quad \rho_{(i,j)}} 
%                                             & \ff{h}_j \dr 
%       \\
% 	    \pi_i^j \deq &&&  \ff{m}_j \op{\ff{m}_a}  
% 	  \\
% 	    \pi_i^j && \ff{X}^a && \ff{m}_{j'} 
% 	  }
%   }
% \vcenter
%   {
%    \xymatrix@C=0pc@R=3ex
%      {
%       \ \  = \ \ 
%      } 
%   }   
% \vcenter
%   {
%    \xymatrix@C=-0.3pc@R=3ex
%      { \pi_i^j \deq &&& \ff{h}_j \op{\ff{h}_a} 
%       \\
% 	   \pi_i^j \dl &\dc{=}& \ff{X}^a \dr&& \ff{h}_{j'} \deq
% 	  \\
% 	   \ff{r} \deq && \pi_{i'}^{j'} \dl 
% 	                & \dc{\rho_{(i',j')}} & \ff{h}_{j'} \dr 
% 	  \\
% 	   \ff{r} \dl & \dc{=} & \pi_{i'}^{j'} \dr 
% 	                                    && \ff{m}_{j'} \deq 
% 	  \\
% 	    \pi_i^j \deq & \! \comw{X_a} \! & \ff{X}^a 
% 	                                    && \ff{m}_{j'} 
% 	  \\
% 	    \pi_i^j \deq &&& \cl{\ \! \ff{m}_a^{-1}} \ff{m}_j 
% 	                                         \op{\ff{m}_a} 
% 	  \\
% 	    \pi_i^j && \ff{X}^a && \ff{m}_j 
% 	  }
%   }
% \vcenter
%   {
%    \xymatrix@C=0pc@R=3ex
%      {
%       \ \  = \ \ 
%      } 
%   }   
% $$

$$\vcenter{\xymatrix@C=-0pc@R=1.5pc{ \pi_i^j \deq && \X^a && \h\sj \\
\pi_i^j \deq &&& \h_j \cl{\h_a} \dcell{\rho_j} \\
\pi_i^j &&& \m_j }}
\vcenter{\xymatrix@C=-0pc{ \quad = \quad }}
\vcenter{\xymatrix@C=-0pc@R=1.5pc{ \pi_i^j \deq && \X^a && \h\sj \\
\pi_i^j \dl & \dcr{\rho\sd{i}{j}} && \dr \h_j \cl{\h_a} \\
\pi_i^j &&& \m_j }}
\vcenter{\xymatrix@C=-0pc{ \quad = \quad }}
\vcenter{\xymatrix@C=-0pc@R=1.5pc{ \pi_i^j \dl & \dc{=} & \dr \X^a && \h\sj \deq \\
\fr \deq && \pi\si\tj \dl & \dc{\rho\sd{i'}{j'}} & \dr \h\sj \\
\fr \dl & \dc{=} & \dr \pi\si\tj && \m\sj \deq \\
\pi_i^j \deq && \X^a && \m\sj \\
\pi_i^j &&& \m_j \cl{\m_a} }}
\vcenter{\xymatrix@C=-0pc{ \quad = \quad }}$$

$$\vcenter{\xymatrix@C=-0pc{ \quad = \quad }}
\vcenter{\xymatrix@C=-0pc@R=1.5pc{ \pi_i^j \dl & \dc{=} & \dr \X^a && \h\sj \deq \\
\fr \deq && \pi\si\tj \deq && \h\sj \dcell{\rho\sj} \\
\fr \dl & \dc{=} & \dr \pi\si\tj && \m\sj \deq \\
\pi_i^j \deq && \X^a && \m\sj \\
\pi_i^j && & \m_j \cl{\m_a} }}
\vcenter{\xymatrix@C=-0pc{ \quad = \quad }}
\vcenter{\xymatrix@C=-0pc@R=1.5pc{ \pi_i^j \deq &&  \X^a \deq && \h\sj \dcell{\rho\sj} \\
 \pi_i^j \deq &&  \X^a && \m\sj \\
 \pi_i^j  &&& \m_j \cl{\m_a} }}$$

% $$\vcenter{\xymatrix@C=0pc@R=3ex{\ \  = \ \ } }   
% \vcenter{\xymatrix@C=-0.3pc@R=3ex{ \pi_i^j \deq &&& \ff{h}_j \op{\ff{h}_a} \\
% 			      \pi_i^j \dl & \dc{=} & \ff{X}^a \dr && \ff{h}_{j'}  \deq\\
% 			      \ff{r} \deq && \pi_{i'}^{j'} \dl & \dc{\rho_{(i',j')}} & \ff{h}_{j'} \dr \\
% 			      \ff{r} \dl & \dc{=} & \pi_{i'}^{j'} \dr && \ff{m}_{j'} \deq \\
% 			      \pi_i^j & \! \comw{X_a} \! & \ff{X}^a && \ff{m}_{j'} }}
% \vcenter{\xymatrix@C=0pc@R=3ex{\ \  = \ \ }}   
% \vcenter{\xymatrix@C=-0.3pc@R=3ex{ \pi_i^j \deq &&& \ff{h}_j \op{\ff{h}_a} \\
%                              \pi_i^j \dl &\dc{=}& \ff{X}^a \dr&& \ff{h}_{j'}  \deq\\  
% 			      \ff{r} \deq && \pi_{i'}^{j'} \deq && \ff{h}_{j'} \dcell{\rho_{j'}} \\ 
% 			      \ff{r} \dl& \dc{=}& \pi_{i'}^{j'} \dr&& \ff{m}_{j'} \deq \\
% 			      \pi_i^j & \! \comw{X_a} \! & \ff{X}^a && \ff{m}_{j'}}}
% \vcenter{\xymatrix@C=0pc@R=3ex{\ \  = \ \ }}   
% \vcenter{\xymatrix@C=-0.3pc@R=3ex{ \pi_i^j \deq &&& \ff{h}_j \op{\ff{h}_a} \\
%                              \pi_i^j \deq && \ff{X}^a \deq && \ff{h}_{j'}  \dcell{\rho_{j'}} \\  			     
% 			      \pi_i^j  & & \ff{X}^a  && \ff{m}_{j'} }}$$

\noindent where the second equality is valid because $\rho$ is a morphism of pseudo-cones.
			      
Since we checked this for any $i\in \cc{I}_j$, it follows:                                   
% cambiar por A118
% $$\vcenter{\xymatrix@C=-0.3pc@R=3ex{ & \ff{h}_j \dcell{\rho_j} \\
% 				& \ff{m}_j \op{\ff{m}_a} \\
% 				 \ff{X}^a && \ff{m}_{j'} }}
% \vcenter{\xymatrix@C=0pc{\ \  = \ \ } }  
% \vcenter{\xymatrix@C=-0.3pc@R=3ex{ & \ff{h}_j \op{\ff{h}_a} \\
% 			       \ff{X}^a \deq && \ff{h}_{j'} \dcell{\rho_{j'}}  \\
% 			       \ff{X}^a && \ff{m}_{j'} }}$$

$$\vcenter{\xymatrix@C=-0pc@R=1.5pc{ \X^a && \h\sj \\ & \h_j \cl{\h_a} \dcell{\rho_j} \\ & \m_j}}
\vcenter{\xymatrix@C=-0pc{ \quad = \quad }}
\vcenter{\xymatrix@C=-0pc@R=1.5pc{ \X^a \deq && \h\sj \dcell{\rho\sj} \\ \X^a && \m\sj \\ & \m_j \cl{\m_a} }}$$

\end{itemize}
\end{proof}

\noindent \emph{Proof of Claim 1}. 
First assume that $i'=i''$ and $(\ff{r},\varphi)$, $(\ff{s},\psi)$ are related by a \mbox{2-cell} 
$(i,j)\cellrd{(a,\ff{r},\varphi)}{(a,\theta)}{(a,\ff{s},\psi)}(i',j')$ in $\cc{K}_{\ff{X}}$. 
Then:
% cambiar por A119
% $$\vcenter{\xymatrix@C=-0.5pc{  & \pi_i^j \ar@{-}[dl] && \ar@{}[dl]|{\ff{h}_{(a,\ff{s},\psi)}} & \ff{h}_j \ar@{-}[dr] \\                               
% 			      \ff{s} \dl & \dc{\psi^{-1}} & \pi_{i''}^{j'} \dr &&& \ff{h}_{j'}  \deq \\
% 			      \pi_i^j & & \ff{X}^a &&& \ff{h}_{j'} } }
% \vcenter{\xymatrix@C=0pc{\ \  = \ \ } } 
% \vcenter{\xymatrix@C=-0.5pc{  & \pi_i^j \ar@{-}[dl] && \ar@{}[dl]|{\ff{h}_{(a,\ff{r},\varphi)}} & \ff{h}_j \ar@{-}[dr] \\                               
% 			      \ff{r} \dcell{\theta} & & \pi_{i''}^{j'} \deq &&& \ff{h}_{j'}  \deq \\
% 			      \ff{s} \dl & \dc{\psi^{-1}} & \pi_{i''}^{j'} \dr & && \ff{h}_{j'} \deq \\ 
% 			      \pi_i^j & & \ff{X}^a &&& \ff{h}_{j'} } }
% \vcenter{\xymatrix@C=0pc{\ \  = \ \ } } 
% \vcenter{\xymatrix@C=-0.5pc{  & \pi_i^j \ar@{-}[dl] && \ar@{}[dl]|{\ff{h}_{(a,\ff{r},\varphi)}} & \ff{h}_j \ar@{-}[dr] \\                               
% 			      \ff{r} \dl & \dc{\varphi^{-1}} & \pi_{i''}^{j'} \dr &&& \ff{h}_{j'}  \deq \\
% 			      \pi_i^j & & \ff{X}^a &&& \ff{h}_{j'} } }			      
% $$
$$\vcenter{\xymatrix@C=-0pc@R=1.5pc{ \pi_i^j \dl & \dc{\varphi^{-1}} & \dr \X^a && \h\sj \deq \\
\fr \ardr && \pi\si\tj \dc{\h\st{a}{\fr}{\varphi}} && \h\sj \ardl \\
& \pi_i^j && \h_j }}
\vcenter{\xymatrix@C=-0pc{ \quad = \quad }}
\vcenter{\xymatrix@C=-0pc@R=1.5pc{ \pi_i^j \dl & \dc{\psi^{-1}} & \dr \X^a && \h\sj \deq \\ 
\s \dcell{\theta^{-1}} && \pi\si\tj \deq && \h\sj \deq \\
\fr \ardr && \pi\si\tj \dc{\h\st{a}{\fr}{\varphi}} && \h\sj \ardl \\
& \pi_i^j && \h_j}}
\vcenter{\xymatrix@C=-0pc{ \quad = \quad }}
\vcenter{\xymatrix@C=-0pc@R=1.5pc{ \pi_i^j \dl & \dc{\psi^{-1}} & \dr \X^a && \h\sj \deq \\ 
\s \ardr && \pi\si\tj \dc{\h\st{a}{\s}{\psi}} && \h\sj \ardl \\
& \pi_i^j && \h_j }}$$

\noindent where the first equality holds because $\theta$ represents $id$ (the identity of $\ff{X}^a$), and the second one is valid because $\ff{h}$ is a pseudo-cone.

The general case reduces to this one as follows: 
%\noindent 
we have 
$
\vcenter
    {
     \xymatrix@R=-.5pc
        {
         & (i',j') 
         \\
         (i,j) \ar[ru]^{(a,\ff{r},\varphi)}  \ar[dr]_{(a,\ff{s},\psi)} 
         \\
		 & (i'',j') 
		 }
    }\in \cc{K}_{\X}
$.
Take 
$
\vcenter
    {
     \xymatrix@R=-.5pc
         {
            i'\ar[rd]^u 
         \\
 		  & k 
		 \\
		    i'' \ar[ru]_v
		  }
    }
$
in 	$\cc{I}_j$. This yields a \mbox{particular} instance of lemma \ref{lema2}:

$$
\xymatrix@C=9ex@R=7ex
        {
         \ff{X}^{j'}   \ar@<1ex>[d]^{\ff{X}^a}
             \ar@{}[d]|{\stackrel{id}{\Rightarrow}}
             \ar@<-1ex>[d]_{\ff{X}^a}
             \ar[r]^{\pi_k^{j'}}
         & \ff{X}_k^{j'} \ar@<1ex>[d]^{\ff{s}\ff{X}_v^{j'}}
             \ar@<-1ex>[d]_{\ff{r}\ff{X}_u^{j'}}
        \\
         \ff{X}^j \ar[r]_{\pi_i^j}             
         & \ff{X}_i^j
        }
$$

% $$
% \xymatrix@C=9ex@R=7ex
%         {
%          \ff{X}^{j'}   
%              \ar@<1.6ex>[r]^{\ff{X}^a}
%              \ar@{}@<-1.3ex>[r]^{\!\! {id\,} \, \!\Downarrow}
%              \ar@<-1.1ex>[r]_{\ff{X}^a}
%              \ar[d]^{\pi_{k}}
%          & \ff{X}^j 
%              \ar[d]^{\pi_i}
%         \\
%          \ff{X}_{k}^{j'}
%              \ar@<1.6ex>[r]^{\ff{r}\ff{X}_u^{j'}}
%              \ar@{}@<-1.3ex>[r]^{}
%              \ar@<-1.1ex>[r]_{\ff{s}\ff{X}_v^{j'}}
%          & \ff{X}_i^j
%         }
% $$
with $(\ff{r}\ff{X}_u^{j'}, %A224 
\vcenter{\xymatrix@C=-0.2pc@R=0.5pc{ \fr \deq && \X_u\tj && \X_k\tj \\
		    \fr \dl & \dcr{\varphi} && \dr \pi\si\tj \cl{\pi_u\tj} \\
		    \pi_i^j &&& \X^a }}
		    )$ and 
$(\ff{s}\ff{X}_v^{j'}, %A225
\vcenter{\xymatrix@C=-0.2pc@R=0.5pc{ \s \deq && \X_v\tj && \X_k\tj \\
		    \s \dl & \dcr{\psi} && \dr \pi\sii\tj \cl{\pi_v\tj} \\
		    \pi_i^j &&& \X^a }}
		    )$ 
both representing $\ff{X}^a$. 
It follows that there exists 
$k\stackrel{w}{\rightarrow} k' \in \cc{I}_{j'}$ and 
$\ff{X}_{k'}^{j'}\cellrd{\ff{r}\ff{X}_u^{j'}\ff{X}_w^{j'}}{\theta}{\ff{s}\ff{X}_v^{j'}\ff{X}_w^{j'}} \ff{X}_i^j \in \cc{C}$ such that 
\mbox{$(\theta,\; \ff{r}\ff{X}_u^{j'}\ff{X}_w^{j'},\; %A120
\vcenter{\xymatrix@C=-0.2pc@R=0.5pc{ \fr \deq && \X_u\tj \deq && \X_w\tj && \pi\sk\tj \\
\fr \deq && \X_u\tj &&& \pi_k\tj \cl{\pi_w\tj} \\
\fr \dl && \dc{\varphi} && \dr \pi\si\tj \cldosuno{\pi_u\tj} \\
\pi_i^j &&&& \X^a }},\; \ff{s}\ff{X}_v^{j'}\ff{X}_w^{j'},\; %A121
\vcenter{\xymatrix@C=-0.2pc@R=0.5pc{ \s \deq && \X_v\tj \deq && \X_w\tj && \pi\sk\tj \\
\s \deq && \X_v\tj &&& \pi_k\tj \cl{\pi_w\tj} \\
\s \dl && \dc{\psi} && \dr \pi\sii\tj \cldosuno{\pi_v\tj} \\
\pi_i^j &&&& \X^a }})$} 
represents  $id$ (the identity of $\ff{X}^a$).

\noindent Considering $(\ff{r}\ff{X}_u^{j'}\ff{X}_w^{j'},\; %A120
\vcenter{\xymatrix@C=-0.2pc@R=0.5pc{ \fr \deq && \X_u\tj \deq && \X_w\tj && \pi\sk\tj \\
\fr \deq && \X_u\tj &&& \pi_k\tj \cl{\pi_w\tj} \\
\fr \dl && \dc{\varphi} && \dr \pi\si\tj \cldosuno{\pi_u\tj} \\
\pi_i^j &&&& \X^a }}
)$ and 
\mbox{$(\ff{s}\ff{X}_v^{j'}\ff{X}_w^{j'},\; %A121
\vcenter{\xymatrix@C=-0.2pc@R=0.5pc{ \s \deq && \X_v\tj \deq && \X_w\tj && \pi\sk\tj \\
\s \deq && \X_v\tj &&& \pi_k\tj \cl{\pi_w\tj} \\
\s \dl && \dc{\psi} && \dr \pi\sii\tj \cldosuno{\pi_v\tj} \\
\pi_i^j &&&& \X^a }}
)$}
both representing $\ff{X}^a$, we have a situation that corresponds to the previous case. 
Thus:

$$\vcenter{\xymatrix@C=0.2pc@R=1.5pc{ \pi_i^j \dl && \dc{\varphi\inv} && \dr \X^a &&&& \h\sj \deq \\
\fr \deq &&&& \pi\si\tj \opdosuno{\! \! \! (\pi_u\tj)\inv} &&&& \h\sj \deq \\
\fr \deq && \X_u\tj \deq &&& \pi_k\tj \opb{(\pi_w\tj)\inv} &&& \h\sj \deq \\
\fr \ardr && \X_u\tj && \X_w\tj \dc{\h_(a,\fr \X_u\tj \X_w\tj, I)} && \pi\sk\tj && \h\sj \ardl \\
& \pi_i^j &&&&&& \h_j }}
\vcenter{\xymatrix@C=-0pc{  = }}
\vcenter{\xymatrix@C=0.2pc@R=1.5pc{  \pi_i^j \dl && \dc{\psi\inv} && \dr \X^a &&&& \h\sj \deq \\
\s \deq &&&& \pi\sii\tj \opdosuno{\! \! \! (\pi_v\tj)\inv} &&&& \h\sj \deq \\
\s \deq && \X_v\tj \deq &&& \pi_k\tj \opb{(\pi_w\tj)\inv} &&& \h\sj \deq \\
\s \ardr && \X_v\tj && \X_w\tj \dc{\h_(a,\s \X_v\tj \X_w\tj, II)} && \pi\sk\tj && \h\sj \ardl \\
& \pi_i^j &&&&&& \h_j }}$$

\noindent where $I=\vcenter{\xymatrix@C=-0.2pc@R=0.5pc{ \fr \deq && \X_u\tj \deq && \X_w\tj && \pi\sk\tj \\
\fr \deq && \X_u\tj &&& \pi_k\tj \cl{\pi_w\tj} \\
\fr \dl && \dc{\varphi} && \dr \pi\si\tj \cldosuno{\pi_u\tj} \\
\pi_i^j &&&& \X^a }}$ and $II=\vcenter{\xymatrix@C=-0.2pc@R=0.5pc{ \s \deq && \X_v\tj \deq && \X_w\tj && \pi\sk\tj \\
\s \deq && \X_v\tj &&& \pi_k\tj \cl{\pi_w\tj} \\
\s \dl && \dc{\psi} && \dr \pi\sii\tj \cldosuno{\pi_v\tj} \\
\pi_i^j &&&& \X^a }} $

\vspace{1ex}

Then, since $\ff{h}$ is a pseudo-cone, we have that

%A122'
$$\vcenter{\xymatrix@C=0.2pc@R=1.5pc{ \pi_i^j \dl && \dc{\varphi\inv} && \dr \X^a &&&& \h\sj \deq \\
\fr \deq &&&& \pi\si\tj \opdosuno{ \! \! \! \! (\pi_u\tj)\inv } &&&& \h\sj \deq \\
\fr \deq && \X_u\tj \deq &&& \pi_k\tj \opb{(\pi_w\tj)\inv} &&& \h\sj \deq \\
\fr \deq && \X_u\tj \ardr && \X_w\tj & \dc{\h\st{j'}{\X_u\tj \X_w\tj}{\X_u\tj \pi_w\tj}} & \pi\sk\tj && \h\sj \ardl \\
\fr \ardr &&& \pi\si\tj \dcr{\h\st{a}{\fr}{\varphi}} &&&& \h\sj \ardl \\
& \pi_i^j &&&&& \h_j}}
\vcenter{\xymatrix@C=-0pc{  = }}
\vcenter{\xymatrix@C=0.2pc@R=1.5pc{  \pi_i^j \dl && \dc{\psi\inv} && \dr \X^a &&&& \h\sj \deq \\
\s \deq &&&& \pi\sii\tj \opdosuno{ \! \! \! \! (\pi_v\tj)\inv} &&&& \h\sj \deq \\
\s \deq && \X_v\tj \deq &&& \pi_k\tj \opb{(\pi_w\tj)\inv} &&& \h\sj \deq \\
\s \deq && \X_v\tj \ardr && \X_w\tj & \dc{\h\st{j'}{\X_v\tj \X_w\tj}{\X_v\tj \pi_w\tj}} & \pi\sk\tj && \h\sj \ardl \\
\s \ardr &&& \pi\sii\tj \dcr{\h\st{a}{\s}{\psi}} &&&& \h\sj \ardl \\
& \pi_i^j &&&&& \h_j }}$$

From \ref{*1} and the fact that $\ff{X}^j$ is a pseudo-cone, it follows that  
$\vcenter{\xymatrix@C=0.2pc@R=1.5pc{ \fr \deq &&&& \pi\si\tj \opdosuno{\! \! \! \! (\pi_u\tj)\inv } &&&& \h\sj \deq \\
\fr \deq && \X_u\tj \deq &&& \pi_k\tj \opb{(\pi_w\tj)\inv} &&& \h\sj \deq \\
\fr \deq && \X_u\tj \ardr && \X_w\tj & \dc{\h\st{j'}{\X_u\tj \X_w\tj}{\X_u\tj \pi_w\tj}} & \pi\sk\tj && \h\sj \ardl \\
\fr &&& \pi\si\tj &&&& \h\sj }}$ and 
$\vcenter{\xymatrix@C=0.2pc@R=1.5pc{  \s \deq &&&& \pi\sii\tj \opdosuno{\! \! \! \! (\pi_v\tj)\inv} &&&& \h\sj \deq \\
\s \deq && \X_v\tj \deq &&& \pi_k\tj \opb{(\pi_w\tj)\inv} &&& \h\sj \deq \\
\s \deq && \X_v\tj \ardr && \X_w\tj & \dc{\h\st{j'}{\X_v\tj \X_w\tj}{\X_v\tj \pi_w\tj}} & \pi\sk\tj && \h\sj \ardl \\
\s &&& \pi\sii\tj &&&& \h\sj }}$ are identities. So %A112 
$$\vcenter{\xymatrix@C=-0pc{ \pi_i^j \dl & \dc{\varphi^{-1}} & \dr \X^a && \h\sj \deq \\
\fr \ardr && \pi\si\tj \dc{\h\st{a}{r}{\varphi}} && \ardl \h\sj \\
& \pi_i^j && \h_j}}
\vcenter{\xymatrix@C=0pc{\ = \ \ } }
 \vcenter{\xymatrix@C=-0.5pc{ \pi_i^j \dl & \dc{\psi^{-1}} & \dr \X^a && \h\sj \deq \\
 \s \ardr && \pi\sii\tj \dc{\h\st{a}{s}{\psi}} && \h\sj \ardl \\
 & \pi_i^j && \h_j }}$$
 as we wanted to prove.
\qed

\vspace{1ex}

\noindent \emph{Proof of Claim 2}.
Given any $i \mr{u} k \in \cc{I}_j$, we have to check the PCM equation in \ref{pseudo-cone}. Given the pair $(\ff{s}, \psi)$ used to define $\rho_k$, it is possible to choose a pair $(\ff{r}, \varphi)$ to define $\rho_i$ in such a way that the equation holds. This arguing is justified by Claim 1.
\qed

\begin{remark}\label{limitesenproparapseudo}
A similar proof can be done in case $\ff{X}$ is only a pseudo-functor. Replace the equality 2-cell $\vcenter{\xymatrix@C=-.5pc@R=1pc{\X^a && \X^b \\ & \X^{ba} \cl{=} }} $ by the structure 2-cell $\vcenter{\xymatrix@C=-.5pc@R=1pc{\X^a && \X^b \\ & \X^{ba} \clb{\alpha^{\X}_{a,b}} }} $ in the elevators. \cqd
\end{remark}

\begin{theorem}\label{cerradaporpseudo-limites}
$2$-$\cc{P}ro(\cc{C})$ is closed under small 2-cofiltered \mbox{pseudo-limits.} Considering the equivalence in \ref{eqconloscolimderep}, it follows that the inclusion
\mbox{$\cc{H}om(\cc{C},\, \cc{C}at)_{fc} \;\subset\; \cc{H}om(\cc{C},\, \cc{C}at)$} is closed under small 2-filtered \mbox{pseudo-colimits} \cqd
\end{theorem}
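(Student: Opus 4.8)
The plan is to read off both assertions from work already done in this subsection, so that essentially no new computation is required: all the genuine labour lives in Proposition~\ref{teo} and in the construction of $\cc{K}_\ff{X}$ (Definition~\ref{kequis}), and the theorem is just an assembly of those pieces together with a transport along the equivalence of \ref{eqconloscolimderep}.

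First I would prove closure of $\Pro{C}$ under small 2-cofiltered pseudo-limits. A small 2-cofiltered pseudo-limit is indexed by $\cc{J}^{op}$ with $\cc{J}$ a small 2-filtered 2-category, so such a diagram is precisely a 2-functor $\cc{J}^{op}\mr{\ff{X}}\Pro{C}$ of the shape treated at the start of \ref{pseudo-limitsen2pro}. Proposition~\ref{teo} exhibits $\widetilde{\ff{X}}$, indexed by $\cc{K}_\ff{X}$, as the pseudo-limit $\Lim{j\in\cc{J}}{\ff{X}^j}$ in $\Pro{C}$; and $\cc{K}_\ff{X}$ has been shown to be small and 2-filtered. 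Hence $\widetilde{\ff{X}}$ is again a genuine 2-pro-object, which is exactly the assertion that $\Pro{C}$ is closed under such pseudo-limits. If one wishes to allow the indexing diagram $\ff{X}$ to be merely a pseudo-functor rather than a 2-functor, I would invoke Remark~\ref{limitesenproparapseudo}, which records that the same construction and proof survive after replacing the relevant identity 2-cells by the structural 2-cells of $\ff{X}$.

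For the second assertion I would transport along the contravariant 2-equivalence $\Pro{C}\mr{\ff{L}}\cc{H}om(\cc{C},\cc{C}at)^{op}_{fc}$ of \ref{eqconloscolimderep}. Being a 2-equivalence, $\ff{L}$ preserves pseudo-limits; since it is contravariant, it carries the 2-cofiltered pseudo-limit $\Lim{j\in\cc{J}}{\ff{X}^j}$ of $\Pro{C}$ to the 2-filtered pseudo-colimit $\coLim{j\in\cc{J}}{\ff{L}\ff{X}^j}$ computed in $\cc{H}om(\cc{C},\cc{C}at)_{fc}$. Thus $\cc{H}om(\cc{C},\cc{C}at)_{fc}$ has all small 2-filtered pseudo-colimits.

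The one point needing care — and which I would flag as the only real, if modest, obstacle — is to verify that these colimits are computed as in the ambient 2-category $\cc{H}om(\cc{C},\cc{C}at)$, i.e. that the full sub-2-category $\cc{H}om(\cc{C},\cc{C}at)_{fc}$ is genuinely \emph{closed} and not merely internally cocomplete. For this I would use that the limit object produced by \ref{teo} is, under $\ff{L}$, precisely the pseudo-colimit $\coLim{(i,j)\in\cc{K}_\ff{X}}{\cc{C}(\ff{X}^j_i,-)}$ of representables indexed by the 2-filtered 2-category $\cc{K}_\ff{X}$. By Proposition~\ref{pointwisebi-limit} pseudo-colimits in $\cc{H}om(\cc{C},\cc{C}at)$ are computed pointwise, so this object coincides with the pseudo-colimit formed in the ambient 2-category; at the same time it is manifestly a small 2-filtered pseudo-colimit of representables and therefore lies in $\cc{H}om(\cc{C},\cc{C}at)_{fc}$ by the very definition of that subcategory. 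Assembling these observations yields the closure statement.
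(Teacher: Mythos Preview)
Your proposal is correct and follows the same approach as the paper: the paper's own proof is the single line ``It is immediate from \ref{teo}'', and your argument simply unpacks this, invoking Proposition~\ref{teo} and the smallness and 2-filteredness of $\cc{K}_\ff{X}$ for the first assertion, then the 2-equivalence of \ref{eqconloscolimderep} for the second. Your extra care in checking that the colimit in $\cc{H}om(\cc{C},\cc{C}at)_{fc}$ agrees with the one formed in the ambient $\cc{H}om(\cc{C},\cc{C}at)$ is a welcome elaboration that the paper leaves implicit.
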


\begin{proof}
 It is immediate from \ref{teo}.
\end{proof}

Having \ref{limitesenproparapseudo} in mind, from the fact that $\Prop{C}$ is pseudo-equivalent to $\Pro{C}$ it follows easily that:

\begin{corollary}
 $\Prop{C}$ is closed under small 2-cofiltered \mbox{bi-limits} of pseudo-functors.  \cqd
\end{corollary}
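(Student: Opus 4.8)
The plan is to transport the pseudo-limit constructed in $\Pro{C}$ across to $\Prop{C}$ along the pseudo-equivalence of \ref{proppseudoeqapro}; since that pseudo-equivalence is only an equivalence, and not an isomorphism, on hom-categories, a strict pseudo-limit in $\Pro{C}$ can only be guaranteed to produce a \emph{bi}-limit in $\Prop{C}$, which is exactly what is being claimed and explains why the conclusion is phrased in terms of bi-limits rather than pseudo-limits.

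First I would fix a small 2-cofiltered 2-category $\cc{J}^{op}$ and a pseudo-functor $\cc{J}^{op} \mr{\ff{X}} \Prop{C}$, writing $\ff{X}^j$ for its values. Let $E : \Pro{C} \mr{} \Prop{C}$ be the identity-on-objects pseudo-functor of \ref{proppseudoeqapro} and $R : \Prop{C} \mr{} \Pro{C}$ a pseudo-quasi-inverse, so that $R$ is again the identity on objects, $RE \cong id$, $ER \cong id$, and on each hom-category $E$ and $R$ are mutually quasi-inverse equivalences. The composite $R\ff{X} : \cc{J}^{op} \mr{} \Pro{C}$ is a pseudo-functor landing in $\Pro{C}$; by \ref{cerradaporpseudo-limites}, extended from 2-functor to pseudo-functor diagrams as in \ref{limitesenproparapseudo}, it admits a pseudo-limit $\ff{L} = \Lim{j \in \cc{J}}{R\ff{X}^j}$ in $\Pro{C}$, equipped with its projection pseudo-cone. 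Because objects of $\Pro{C}$ and $\Prop{C}$ coincide, $\ff{L}$ is also an object of $\Prop{C}$, and I claim it is a bi-limit of $\ff{X}$ there.

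To establish the claim I would verify the bi-limit universal property of \ref{colimits}: for each $\ff{Z} \in \Prop{C}$, the comparison functor $\Prop{C}(\ff{Z}, \ff{L}) \mr{} \ff{PC}_{\Prop{C}}(\ff{Z}, \ff{X})$ is an equivalence of categories, pseudo-natural in $\ff{Z}$. I would exhibit this as a composite of three equivalences: (i) $\Prop{C}(\ff{Z}, \ff{L}) \simeq \Pro{C}(\ff{Z}, \ff{L})$, the retract equivalence on hom-categories provided by \ref{proppseudoeqapro}; (ii) $\Pro{C}(\ff{Z}, \ff{L}) \cong \ff{PC}_{\Pro{C}}(\ff{Z}, R\ff{X})$, the strict pseudo-limit isomorphism for $\ff{L}$; and (iii) $\ff{PC}_{\Pro{C}}(\ff{Z}, R\ff{X}) \simeq \ff{PC}_{\Prop{C}}(\ff{Z}, \ff{X})$, obtained by applying the functor on pseudo-cone categories induced by $E$ (Remark \ref{PCbifunctor} b, together with the bifunctoriality of \ref{Homisbifunctor}), landing in $\ff{PC}_{\Prop{C}}(\ff{Z}, ER\ff{X})$, and then using the invertible modification $ER\ff{X} \cong \ff{X}$.

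The technical heart of the argument, and the step I expect to be the main obstacle, is checking that this composite equivalence is, up to invertible modification, the \emph{canonical} comparison functor given by post-composition with the transported projection pseudo-cone $\left\{\ff{L} \mr{} \ff{X}^j\right\}_{j}$; only then does $\ff{L}$ genuinely qualify as a bi-limit of $\ff{X}$ rather than as an object whose hom-categories merely happen to match. This reduces to coherence bookkeeping: tracking the pseudo-naturality $2$-cells of $E$ and $R$ and those of the equivalences $RE \cong id$, $ER \cong id$ through the three steps, and verifying that the image under $E$ of the projection pseudo-cone of $\ff{L}$ in $\Pro{C}$ recovers, modulo these coherence isomorphisms, the intended projection pseudo-cone for $\ff{X}$ in $\Prop{C}$. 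I would carry this out with the same elevator-calculus manipulations used throughout the proof of \ref{teo}, keeping \ref{limitesenproparapseudo} in mind so that the pseudo-functoriality $2$-cells of $R\ff{X}$ are treated as genuine structure and not as identities. Since \ref{colimits} demands only that the comparison functor be an equivalence, the non-strictness introduced by $R$ and by the modification $ER\ff{X} \cong \ff{X}$ creates no difficulty, and the conclusion is precisely closure of $\Prop{C}$ under small 2-cofiltered bi-limits.
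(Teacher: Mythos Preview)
Your approach is correct and is precisely the one the paper has in mind: the sentence preceding the corollary says that, keeping \ref{limitesenproparapseudo} in mind, the result follows from the retract pseudo-equivalence of \ref{proppseudoeqapro}, and your proposal is a careful unpacking of exactly that. One small sharpening: from \ref{flexible2=p} and the remark following it, the inclusion $E$ is a genuine 2-functor and the retraction $R$ satisfies $RE = id$ on the nose (not merely $RE \cong id$), which lightens the coherence bookkeeping you flag in the final paragraph.
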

 
 \subsection{Universal property of $\Pro{C}$}\label{pu2pro}
In this subsection we prove for \mbox{2-pro-objects} the universal property established for pro-objects in [\citealp{G2}, Ex. I, Prop. 8.7.3.].
Consider the 2-functor $\cc{C} \mr{c} 2$-$\cc{P}ro(\cc{C})$ of Corollary \ref{CisPro} and a 2-pro-object $\ff{X} = (\ff{X}_i)_{i\in \cc{I}}$. Given a 2-functor $\cc{C} \mr{\ff{F}} \cc{E}$ into a 2-category closed under
small \mbox{2-cofiltered} pseudo-limits, we can naively extend
$\ff{F}$ into a 2-cofiltered pseudo-limit preserving 2-functor
$2\hbox{-} \cc{P}ro(\cc{C}) \mr{\widehat{\ff{F}}} \cc{E}$ by defining
$\widehat{\ff{F}}\ff{X} = \Lim{i \in \cc{I}}{\ff{F}\ff{X}_i}$.
This is just part of a 2-equivalence of 2-categories that we develop with the necessary precision in this subsection. First the universal property  should be wholly established for $\cc{E} = \cc{C}at$, and only afterwards can be lifted to any 2-category $\cc{E}$ closed under small 2-cofiltered pseudo-limits.

\begin{lemma}\label{casoCat}
 Let $\cc{C}$ be a 2-category and $\ff{F}:\cc{C}\mr{} \cc{C}at$ a \mbox{2-functor.} Then, there exist a 2-functor
 $\widehat{\ff{F}}: 2\hbox{-}\cc{P}ro(\cc{C}) \mr{} \cc{C}at$ that preserves small 2-cofiltered pseudo-limits, and an isomorphism
 $\widehat{\ff{F}}c \mr{\cong} \ff{F}$ in $\cc{H}om(\cc{C},\cc{C}at)$.
\end{lemma}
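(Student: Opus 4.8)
The statement asks for a $2$-functor $\widehat{\ff{F}}: 2\text{-}\cc{P}ro(\cc{C}) \mr{} \cc{C}at$ preserving $2$-cofiltered pseudo-limits, together with an isomorphism $\widehat{\ff{F}}c \mr{\cong} \ff{F}$. The natural definition is $\widehat{\ff{F}}\ff{X} = \Lim{i\in\cc{I}}{\ff{F}\ff{X}_i}$, the pseudo-limit in $\cc{C}at$ of the $2$-functor $\ff{F}\ff{X}: \cc{I}^{op} \mr{} \cc{C}at$. My first task is to exhibit this assignment as a genuine $2$-functor. I would do this representably: using the basic formula \eqref{basica2}, for a representable target one has $\Pro{C}(\ff{X},\ff{Y}) \cong \Lim{j\in\cc{J}}{\coLim{i\in\cc{I}}{\cc{C}(\ff{X}_i,\ff{Y}_j)}}$. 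The clean route is to observe that $\widehat{\ff{F}}$ should be characterized by a natural isomorphism of categories, $2$-natural in $\ff{X}$ and natural in the category-argument, expressing $\widehat{\ff{F}}\ff{X}$ in terms of the $\cc{H}om$-description of $\ff{L}\ff{X} = \Lim{i}{\cc{C}(\ff{X}_i,-)}$. Concretely, since $\cc{C}at$ is closed under small pseudo-limits (\ref{limincat}) and $\ff{F}\ff{X}_i$ is $2$-functorial in the index, the formula defines an object of $\cc{C}at$; functoriality on morphisms and $2$-cells of $2\text{-}\cc{P}ro(\cc{C})$ comes from the universal property \eqref{isoplim} of the pseudo-limit applied to the pseudo-cones induced by a morphism of pro-objects. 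I would phrase the construction so that the elevators-calculus verifications (respecting PC0, PC1, PC2 of \ref{pseudo-cone}) are reduced to the already-proven lemmas of \ref{lemas2pro}.

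\textbf{Preservation of pseudo-limits.} Given a $2$-filtered $\cc{J}$ and $\cc{J}^{op}\mr{\ff{X}}2\text{-}\cc{P}ro(\cc{C})$ with pseudo-limit $\widetilde{\ff{X}}$ indexed by $\cc{K}_{\ff{X}}$ (\ref{kequis}, \ref{teo}), I must show $\widehat{\ff{F}}\widetilde{\ff{X}} = \Lim{(i,j)\in\cc{K}_{\ff{X}}}{\ff{F}\ff{X}^j_i}$ is the pseudo-limit $\Lim{j\in\cc{J}}{\widehat{\ff{F}}\ff{X}^j}$ in $\cc{C}at$. Since $\widehat{\ff{F}}\ff{X}^j = \Lim{i\in\cc{I}_j}{\ff{F}\ff{X}^j_i}$, this amounts to an interchange/cofinality statement: the $2$-functor $\cc{K}_{\ff{X}}^{op}\mr{}\cc{C}at$ sending $(i,j)$ to $\ff{F}\ff{X}^j_i$ has the same pseudo-limit as the iterated limit over $j\in\cc{J}$ of the limits over $i\in\cc{I}_j$. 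I would establish this by a direct comparison of pseudo-cones, using that both sides corepresent pseudo-cones and that the projection $\cc{K}_{\ff{X}} \mr{} \cc{J}$ together with the fiberwise structure exhibits $\Lim{(i,j)}{}$ as $\Lim{j}{\Lim{i}{}}$. The isomorphism $\widehat{\ff{F}}\widetilde{\ff{X}} \cong \Lim{j}{\widehat{\ff{F}}\ff{X}^j}$ then follows from the universal property of pseudo-limits in $\cc{C}at$ and the explicit construction \ref{limincat}, matching projection families on both sides.

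\textbf{The comparison $\widehat{\ff{F}}c \cong \ff{F}$.} For $\ff{C}\in\cc{C}$, $c\ff{C}$ is the pro-object over the trivial index $\{*\}$, so $\widehat{\ff{F}}c\ff{C} = \Lim{*}{\ff{F}\ff{C}} \cong \ff{F}\ff{C}$, a pseudo-limit over a terminal index, which is an isomorphism in $\cc{C}at$. I would check that these isomorphisms assemble into a $2$-natural isomorphism $\widehat{\ff{F}}c \mr{\cong} \ff{F}$ in $\cc{H}om(\cc{C},\cc{C}at)$, again by the universal property of the (trivial) pseudo-limit applied to arrows and $2$-cells of $\cc{C}$.

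\textbf{Main obstacle.} The hard part will be the preservation claim, specifically verifying rigorously that $\Lim{(i,j)\in\cc{K}_{\ff{X}}}{\ff{F}\ff{X}^j_i} \cong \Lim{j\in\cc{J}}{\Lim{i\in\cc{I}_j}{\ff{F}\ff{X}^j_i}}$ as pseudo-limits in $\cc{C}at$. The index $2$-category $\cc{K}_{\ff{X}}$ encodes representatives $(a,\ff{r},\varphi)$ of the transition morphisms $\ff{X}^a$, so a pseudo-cone over $\ff{F}\circ\widetilde{\ff{X}}$ must be shown to correspond bijectively (and $2$-naturally) to a compatible family of pseudo-cones over each $\ff{F}\ff{X}^j$ together with coherence $2$-cells for $j\mr{a}j'$; this is exactly the data unpacked in the proof of \ref{teo}, and I expect to reuse Claim~1 and Claim~2 from there, now transported through $\ff{F}$, to control the ambiguity in the choice of representatives $(\ff{r},\varphi)$. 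The bookkeeping of these coherences via the elevators calculus is where the real work lies; the rest is formal consequence of the Yoneda lemma \ref{2Yoneda} and the pointwise nature of limits.
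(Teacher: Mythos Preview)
Your plan is not wrong, but it takes a much harder road than the paper does, and the obstacle you flag as ``the hard part'' is one the paper sidesteps entirely.

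The paper does not define $\widehat{\ff{F}}$ by the formula $\widehat{\ff{F}}\ff{X} = \Lim{i}{\ff{F}\ff{X}_i}$ and then verify $2$-functoriality and preservation by hand. Instead it \emph{defines} $\widehat{\ff{F}}$ as the composite of two already-known $2$-functors:
\[
\widehat{\ff{F}} \;=\; \cc{H}om(\cc{C},\cc{C}at)(-,\ff{F}) \circ \ff{L},
\]
where $\ff{L}: 2\text{-}\cc{P}ro(\cc{C}) \to \cc{H}om(\cc{C},\cc{C}at)^{op}$ is the defining $2$-fully-faithful $2$-functor of Proposition~\ref{eqconloscolimderep}. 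This makes $2$-functoriality automatic, with no elevators bookkeeping at all. The chain of isomorphisms
\[
\widehat{\ff{F}}\ff{X} = \cc{H}om(\cc{C},\cc{C}at)\bigl(\coLim{i}{\cc{C}(\ff{X}_i,-)},\ff{F}\bigr) \cong \Lim{i}{\cc{H}om(\cc{C},\cc{C}at)(\cc{C}(\ff{X}_i,-),\ff{F})} \cong \Lim{i}{\ff{F}\ff{X}_i}
\]
(pseudo-colimit definition \ref{colimits}, then Yoneda \ref{2Yoneda}) recovers your formula as a \emph{consequence}, not a definition, and simultaneously gives the isomorphism $\widehat{\ff{F}}c \cong \ff{F}$.

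Preservation of $2$-cofiltered pseudo-limits is then immediate: $\ff{L}$ is a $2$-equivalence onto $\cc{H}om(\cc{C},\cc{C}at)^{op}_{fc}$, so preserves all pseudo-limits; by Theorem~\ref{cerradaporpseudo-limites} the inclusion $\cc{H}om(\cc{C},\cc{C}at)_{fc} \subset \cc{H}om(\cc{C},\cc{C}at)$ is closed under small $2$-filtered pseudo-colimits, so these are computed in the ambient $\cc{H}om$; and the representable $\cc{H}om(\cc{C},\cc{C}at)(-,\ff{F})$ sends pseudo-colimits to pseudo-limits by the very definition \ref{colimits}. No Fubini over $\cc{K}_{\ff{X}}$, no reuse of Claims~1 and~2 from \ref{teo}, no direct comparison of iterated pseudo-cones.

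What your route buys is an explicit, index-level description of $\widehat{\ff{F}}$ and its action on morphisms, which could be useful if one needed to compute with it concretely. What the paper's route buys is a five-line proof: all the work was already done in establishing that $\ff{L}$ is a $2$-equivalence and that its image is closed under the relevant pseudo-colimits (Theorem~\ref{cerradaporpseudo-limites}). Your interchange statement $\Lim{(i,j)\in\cc{K}_{\ff{X}}}{} \cong \Lim{j}{\Lim{i}{}}$ is true and would eventually go through, but it is precisely the kind of computation that the conceptual definition is designed to avoid.
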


\vspace{1ex}

\begin{proof}
Let $\ff{X}=(\ff{X}_i)_{i\in \cc{I}}\in 2\hbox{-}\cc{P}ro(\cc{C})$ be a 2-pro-object. Define:

\vspace{1ex}

\noindent
$
\widehat{\ff{F}}\ff{X} \;=\;
(\cc{H}om(\cc{C},\cc{C}at)(-,\ff{F})\circ \ff{L})\ff{X} \;=\;
\cc{H}om(\cc{C},\cc{C}at)(\coLim{i\in \cc{I}}{\cc{C}(\ff{X}_i,-)},\ff{F})
\mr{\cong}
$

\hfill
$
\mr{\cong} \Lim{i\in \cc{I}}{\cc{H}om(\cc{C},\cc{C}at)(\cc{C}(\ff{X}_i,-),\ff{F})}
\mr{\cong} \Lim{i\in \cc{I}}{\ff{F}\ff{X}_i}.
$

\vspace{1ex}

Where $\ff{L}$ is the 2-functor of \ref{eqconloscolimderep}, the first isomorphism is by definition of pseudo-colimit \ref{colimits}, and the second is due to the Yoneda isomorphism \ref{2Yoneda}. Since it is a 2-equivalence, the \mbox{2-functor} $\ff{L}$ preserves any pseudo-limit. Then by Corollary \ref{cerradaporpseudo-limites} it follows that the composite
$\cc{H}om(\cc{C},\cc{C}at)(-,\ff{F})\circ \ff{L}$ preserves small 2-cofiltered pseudo-limits
 \end{proof}

\begin{theorem} \label{pseudouniversalcat}
Let $\cc{C}$ be any 2-category. Then, pre-composition with
\mbox{$\cc{C} \mr{c} 2\hbox{-}\cc{P}ro(\cc{C})$}  is a 2-equivalence of 2-categories:
$$
\xymatrix@R=.7pc@C=.5pc
 {
  & \cc{H}om(2\hbox{-}\cc{P}ro(\cc{C}),\cc{C}at)_+ \ar[rrrr]^>>>>>>>>{c^*}
  & & & & \comw{\ff{X}\ff{X}} \cc{H}om(\cc{C},\cc{C}at) \comw{\ff{X} \ff{X}}
 }
$$
(where $\cc{H}om(2\hbox{-}\cc{P}ro(\cc{C}),\cc{C}at)_+$ stands for the full subcategory whose objects are those \mbox{2-functors} that preserve small 2-cofiltered pseudo-limits).
\end{theorem}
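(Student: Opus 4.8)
The plan is to establish the 2-equivalence by exhibiting $c^*$ as 2-fully-faithful and essentially surjective on objects, and then invoke Proposition \ref{eqsiif&f}. The heavy lifting has already been done in Lemma \ref{casoCat}, which for each $\ff{F}:\cc{C}\mr{}\cc{C}at$ produces a preimage $\widehat{\ff{F}}$ that preserves 2-cofiltered pseudo-limits together with an isomorphism $\widehat{\ff{F}}c\cong \ff{F}$. I would first upgrade this pointwise construction into a candidate quasi-inverse 2-functor $(-)\widehat{\;}: \cc{H}om(\cc{C},\cc{C}at)\mr{}\cc{H}om(2\hbox{-}\cc{P}ro(\cc{C}),\cc{C}at)_+$, checking its action on 2-natural transformations and modifications via the Yoneda isomorphism \ref{2Yoneda}, since $\widehat{\ff{F}}\ff{X}=\Lim{i\in\cc{I}}{\ff{F}\ff{X}_i}$ is 2-natural in $\ff{F}$ by construction.

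First I would prove essential surjectivity of $c^*$: this is exactly Lemma \ref{casoCat}, since given $\ff{F}$ the 2-functor $\widehat{\ff{F}}$ lies in the ``$+$'' subcategory and $c^*\widehat{\ff{F}}=\widehat{\ff{F}}c\cong\ff{F}$. Next I would establish that $c^*$ is 2-fully-faithful, i.e. that for 2-functors $\ff{G},\ff{H}:2\hbox{-}\cc{P}ro(\cc{C})\mr{}\cc{C}at$ preserving 2-cofiltered pseudo-limits, precomposition with $c$ induces an isomorphism of hom-categories
$$\cc{H}om(2\hbox{-}\cc{P}ro(\cc{C}),\cc{C}at)_+(\ff{G},\ff{H}) \mr{\cong} \cc{H}om(\cc{C},\cc{C}at)(\ff{G}c,\ff{H}c).$$
The key input here is Proposition \ref{X=lim}, which says every 2-pro-object $\ff{X}=(\ff{X}_i)_{i\in\cc{I}}$ equals $\Lim{i\in\cc{I}}{\ff{X}_i}$ in $2\hbox{-}\cc{P}ro(\cc{C})$, with the $\ff{X}_i$ coming from $\cc{C}$ under the identification of Corollary \ref{CisPro}. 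Since $\ff{G}$ and $\ff{H}$ preserve this 2-cofiltered pseudo-limit, the values $\ff{G}\ff{X}$ and $\ff{H}\ff{X}$ are recovered as $\Lim{i\in\cc{I}}{\ff{G}c\ff{X}_i}$ and $\Lim{i\in\cc{I}}{\ff{H}c\ff{X}_i}$. This forces any 2-natural transformation (resp. modification) between $\ff{G}$ and $\ff{H}$ to be completely and uniquely determined by its restriction along $c$: the component at $\ff{X}$ is the induced map between pseudo-limits, which exists and is unique by the universal property \ref{isoplim} of pseudo-limits (the isomorphism $\pi_*$, not merely an equivalence). I would spell out injectivity (two natural transformations agreeing on $\cc{C}$ induce the same map on each pseudo-limit) and surjectivity (a natural transformation $\ff{G}c\Mr{}\ff{H}c$ yields a compatible family of pseudo-cone morphisms, hence a unique component at $\ff{X}$), verifying 2-naturality in $\ff{X}$ using that $c$ is 2-fully-faithful (\ref{CisPro}) together with \ref{X=lim}.

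The main obstacle will be the surjectivity half of 2-fullness: given $\ff{G}c\Mr{\sigma}\ff{H}c$, I must reconstruct a 2-natural transformation defined on all of $2\hbox{-}\cc{P}ro(\cc{C})$ and check its naturality squares on \emph{arbitrary} morphisms $\ff{X}\mr{\ff{f}}\ff{Y}$ of 2-pro-objects, not merely on morphisms coming from $\cc{C}$. This requires the representation machinery of \ref{lemas2pro}: by \ref{idrepresenta} any $\ff{f}$ is locally represented by an arrow $\ff{r}$ in $\cc{C}$ (modulo an invertible 2-cell), so the naturality of the reconstructed transformation at $\ff{f}$ reduces, projection by projection via the pseudo-cone structure, to the known naturality of $\sigma$ at $\ff{r}$, the coherence being controlled by the representing 2-cell $\varphi$ and the pseudo-limit uniqueness clause in \ref{isodeplimexplicito}. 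Once naturality and modification-compatibility are checked this way, both isomorphisms above hold, $c^*$ is 2-fully-faithful and essentially surjective, and \ref{eqsiif&f} yields the stated 2-equivalence.
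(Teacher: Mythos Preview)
Your proposal is correct and follows essentially the same route as the paper: invoke Proposition \ref{eqsiif&f}, obtain essential surjectivity from Lemma \ref{casoCat}, and establish 2-fully-faithfulness by using that both $\ff{G}$ and $\ff{H}$ preserve the pseudo-limit $\ff{X}=\Lim{i}{\ff{X}_i}$ of \ref{X=lim}, so that the universal property \eqref{isoplim} reconstructs components and modifications uniquely from their restriction to $\cc{C}$. The paper's proof is terser on the naturality verification---it simply asserts ``It is not difficult to check that $\theta'_{\ff{X}}$, $\eta'_{\ff{X}}$ are in fact 2-natural on $\ff{X}$''---whereas you correctly anticipate that this step requires unwinding what a general morphism $\ff{X}\to\ff{Y}$ of 2-pro-objects looks like projection-by-projection, which is indeed handled by the representation machinery of \S\ref{lemas2pro} (or equivalently by the explicit description of $\Pro{C}(\ff{X},\ff{Y}_j)$ via \eqref{basica2}).
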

\begin{proof}
We will check that the 2-functor $c^*$ is essentially surjective on objects and \mbox{2-fully-faithful} (see \ref{eqsiif&f}):

\begin{itemize}
 \item[-] \emph{Essentially surjective on objects}: It follows from lemma \ref{casoCat}.

 \item[-] \emph{2-fully-faithful}: We will check that if $\ff{F}$ and $\ff{G}$ are 2-functors from $2\hbox{-}\cc{P}ro(\cc{C})$ to $\cc{C}at$ that preserve small 2-cofiltered pseudo-limits, then
\begin{equation} \label{hrest}
\cc{H}om(2\hbox{-}\cc{P}ro(\cc{C}),\cc{C}at)_+(\ff{F},\ff{G})
                    \mr{c^*} \cc{H}om(\cc{C},\cc{C}at)(\ff{F}c, \ff{G}c)
\end{equation}
is an isomorphism of categories.

Let $\ff{F}c \cellrd{\theta c}{\mu}{\eta c} \ff{G}c \in \cc{H}om(\cc{C},\cc{C}at)(\ff{F}c, \ff{G}c)$.
It can be easily checked that composites
$\Bigg\{\ff{F}\ff{X} \mr{\ff{F} \pi_i} \ff{F}\ff{X}_i
\cellrd{\theta_{\ff{X}_i}}{\mu_{\ff{X}_i}}{\eta_{\ff{X}_i}} \ff{G}\ff{X}_i\Bigg\}_{i\in \cc{I}}$
 determine two pseudo-cones for $\ff{G}\ff{X}$ together with a morphism of pseudo-cones. Since $\ff{G}$ preserves small \mbox{2-cofiltered} pseudo-limits, post-composing with
 $\ff{GX} \mr{\ff{G}\pi_i} \ff{GX}_i$ is an isomorphism of categories
 $\cc{C}at(\ff{FX}, \ff{GX}) \mr{(\ff{G} \pi)_*}
 \ff{PC}_{\cc{C}at}(\ff{FX}, \ff{GX})$.
 It follows that there \mbox{exists} a unique \mbox{2-cell} in $\cc{C}at$,
 $\ff{F}\ff{X}
\cellrd{\theta'_{\ff{X}}}{\mu'_{\ff{X}}}{\eta'_{\ff{X}}} \ff{G}\ff{X}$,
such that
$\ff{G}\pi_i \theta'_{\ff{X}} \,=\, \theta_{\ff{X}_i} \ff{F} \pi_i$,
$\ff{G}\pi_i \eta'_{\ff{X}} \,=\, \eta_{\ff{X}_i} \ff{F} \pi_i$, and
$\ff{G}\pi_i \mu'_{\ff{X}} \,=\, \mu_{\ff{X}_i} \ff{F} \pi_i$,
$\forall i\in \cc{I}$. It is not difficult to check that $\theta'_{\ff{X}}$, $\eta'_{\ff{X}}$ are in fact 2-natural on $\ff{X}$, and that $\mu'_{\ff{X}}$ is a modification. Clearly $\theta' c = \theta$, $\eta' c = \eta$, and
$\mu' c = \mu$. Thus \ref{hrest} is an isomorphism of categories.
\end{itemize}
\end{proof}

\begin{lemma}\label{es-sury}
 Let $\cc{C}$ be a 2-category, $\cc{E}$ a 2-category closed under
 small \mbox{2-cofiltered} pseudo-limits and $\ff{F}: \cc{C} \mr{} \cc{E}$ a 2-functor. Then, there exists a 2-functor
 \mbox{$\widehat{\ff{F}}: 2\hbox{-}\cc{P}ro(\cc{C}) \mr{} \cc{E}$}
 that preserves small 2-cofiltered pseudo-limits, and an isomorphism
 $\widehat{\ff{F}}c \mr{\cong} \ff{F}$ in $\cc{H}om(\cc{C},\cc{E})$.
 \end{lemma}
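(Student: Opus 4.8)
The plan is to bootstrap from the already-settled $\cc{C}at$-valued case (Lemma~\ref{casoCat} and Theorem~\ref{pseudouniversalcat}) to an arbitrary target $\cc{E}$ by means of the Yoneda 2-functor $\ff{y}_{(-)}:\cc{E}\mr{}\cc{H}om(\cc{E}^{op},\cc{C}at)$, which is 2-fully-faithful (\ref{yonedaff}) and preserves pseudo-limits (\ref{yonedapreserves}). On objects I would simply set $\widehat{\ff{F}}\ff{X}=\Lim{i\in\cc{I}}{\ff{F}\ff{X}_i}$, which exists because $\cc{E}$ is closed under small 2-cofiltered pseudo-limits; the real content is to promote this to a 2-functor on morphisms and 2-cells, and to verify preservation.

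First I would record that the construction of Lemma~\ref{casoCat} is itself 2-functorial in its input. Indeed, $\ff{G}\mapsto\widehat{\ff{G}}=\cc{H}om(\cc{C},\cc{C}at)(\ff{L}(-),\ff{G})$ is the currying (via the cartesian closed structure \ref{cartesianclosed}) of the hom-2-functor of $\cc{H}om(\cc{C},\cc{C}at)$ precomposed in its first variable with $\ff{L}$, hence a strict 2-functor $\widehat{(-)}:\cc{H}om(\cc{C},\cc{C}at)\mr{}\cc{H}om(\Pro{C},\cc{C}at)$ landing in the pseudo-limit-preserving subcategory. Composing the Yoneda 2-functor $\cc{E}^{op}\mr{}\cc{H}om(\cc{E},\cc{C}at)$ with restriction $\ff{F}^*$ along $\ff{F}$ gives a 2-functor $\cc{E}^{op}\mr{}\cc{H}om(\cc{C},\cc{C}at)$, $\ff{E}'\mapsto\cc{E}(\ff{E}',\ff{F}(-))$; composing further with $\widehat{(-)}$ and currying again yields a strict 2-functor $\Phi:\Pro{C}\mr{}\cc{H}om(\cc{E}^{op},\cc{C}at)$ with $\Phi\ff{X}(\ff{E}')=\widehat{\cc{E}(\ff{E}',\ff{F}(-))}\ff{X}\cong\Lim{i}{\cc{E}(\ff{E}',\ff{F}\ff{X}_i)}$.

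Next, since representables preserve pseudo-limits (by the defining property in \ref{colimits}), there is an isomorphism $\Lim{i}{\cc{E}(\ff{E}',\ff{F}\ff{X}_i)}\cong\cc{E}(\ff{E}',\Lim{i}{\ff{F}\ff{X}_i})=\ff{y}_{\widehat{\ff{F}}\ff{X}}(\ff{E}')$, 2-natural in $\ff{E}'$, so $\Phi\ff{X}\cong\ff{y}_{\widehat{\ff{F}}\ff{X}}$ for every 2-pro-object $\ff{X}$. Fixing such isomorphisms $\gamma_{\ff{X}}$, I would transport the 2-functor structure of $\Phi$ onto the chosen objects $\widehat{\ff{F}}\ff{X}$: because $\ff{y}_{(-)}$ is an isomorphism on hom-categories, each $\Phi$-morphism and each $\Phi$-2-cell, conjugated by the $\gamma$'s, lifts uniquely along $\ff{y}_{(-)}$, and conjugation by a fixed family of isomorphisms respects vertical and horizontal composition and identities strictly. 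This produces a genuine 2-functor $\widehat{\ff{F}}$ together with an isomorphism $\ff{y}_{(-)}\widehat{\ff{F}}\cong\Phi$.

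Finally, preservation and the comparison with $\ff{F}$ follow formally. For a 2-cofiltered pseudo-limit cone in $\Pro{C}$, applying $\cc{E}(\ff{E}',\widehat{\ff{F}}(-))\cong\Phi(-)(\ff{E}')=\widehat{\cc{E}(\ff{E}',\ff{F}(-))}$, which preserves such pseudo-limits by \ref{casoCat}, shows that every representable sends the $\widehat{\ff{F}}$-image cone to a pseudo-limit in $\cc{C}at$; as $\cc{E}$ has the pseudo-limit and $\ff{y}_{(-)}$ reflects isomorphisms, the comparison arrow is invertible, so $\widehat{\ff{F}}$ preserves 2-cofiltered pseudo-limits. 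The isomorphism $\widehat{\ff{F}}c\mr{\cong}\ff{F}$ comes from $\widehat{\ff{F}}(c\ff{C})=\Lim{\{*\}}{\ff{F}\ff{C}}\cong\ff{F}\ff{C}$, 2-naturally in $\ff{C}$. The step I expect to be the main obstacle is the strictness in the third paragraph: one must check that lifting the conjugated morphisms and 2-cells along $\ff{y}_{(-)}$ is \emph{strictly} functorial rather than merely pseudo-functorial, and this is precisely where 2-full-faithfulness of Yoneda (an isomorphism, not just an equivalence, of hom-categories) is indispensable.
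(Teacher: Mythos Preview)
Your proposal is correct and follows essentially the same route as the paper: both construct a 2-functor $\Phi$ (the paper calls it $\widetilde{\ff{F}}$) into $\cc{H}om(\cc{E}^{op},\cc{C}at)$ by currying the composite of the Yoneda embedding $\cc{E}^{op}\to\cc{H}om(\cc{C},\cc{C}at)$ with the $\cc{C}at$-valued extension $\widehat{(-)}$ from Lemma~\ref{casoCat}, observe that its values are representable, and then factor through $\cc{E}$ using the 2-full-faithfulness of $\ff{y}_{(-)}$. Your write-up is in fact slightly more explicit than the paper's on two points---that $\widehat{(-)}$ is a strict 2-functor via currying of the hom, and that the strictness of the lift relies on $\ff{y}_{(-)}$ being an \emph{isomorphism} on hom-categories---whereas the paper argues preservation of pseudo-limits via ``pointwise computation in $\cc{H}om(\cc{E}^{op},\cc{C}at)$'' (\ref{pointwisebi-limit}) rather than your ``representables detect isomorphisms''; these are equivalent formulations of the same fact.
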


\begin{proof}
If $\ff{X}=(\ff{X}_i)_{i\in \cc{I}}\in 2\hbox{-}\cc{P}ro(\cc{C})$, define
$\widehat{\ff{F}}\ff{X}=\Lim{i\in \cc{I}}{\ff{F}\ff{X}_i}$. We will prove that this is the object function part of a 2-functor, and that this 2-functor has the rest of the properties asserted in the proposition.

Consider the composition
$\ff{y}_{(-)} \,\ff{F}:\,\cc{C}\mr{\ff{F}} \cc{E}\mr{\ff{y}_{(-)}}
 \cc{H}om(\cc{E}^{op},\cc{C}at)$,
 where $\ff{y}_{(-)}$ is the Yoneda 2-functor (\ref{Y2functor}). Under the isomorphism \ref{cartesianclosed} this corresponds to a \mbox{2-functor}
\mbox{$\cc{E}^{op} \mr{} \cc{H}om(\cc{C},\cc{C}at)$.} Composing this
\mbox{2-functor} with a quasi-inverse  $\widehat{(-)}$ for the \mbox{2-equivalence} in \ref{pseudouniversalcat}, we obtain a \mbox{2-functor}
$\cc{E}^{op} \mr{} \cc{H}om(2\hbox{-}\cc{P}ro(\cc{C}),\cc{C}at)_+$,
which in turn corresponds to a 2-functor
$ 2\hbox{-}\cc{P}ro(\cc{C}) \mr{\widetilde{\ff{F}}} \cc{H}om(\cc{E}^{op},\cc{C}at)$. The 2-functor $\widetilde{\ff{F}}$ preserves small 2-cofiltered pseudo-limits because they are computed pointwise in
$\cc{H}om(\cc{E}^{op},\cc{C}at)$ (\ref{pointwisebi-limit}). By chasing the isomorphisms one can check that we have the following diagram:
\begin{equation} \label{efetilde}
\xymatrix@C=0.5pc@R=0.5pc
           {
            { }
            & &  { }
           \\
            & {\widetilde{\ff{F}} c \mr{\cong} \ff{y}_{(-)} \ff{F}},
            &
           \\ { }
            & &  { }
           }
\hspace{6ex}
\xymatrix@C=0.5pc@R=0.5pc
           {
            \cc{C} \ar[rr]^<<<<<<<{c}  \ar[dd]_{\ff{F}}
            & &  2\hbox{-}\cc{P}ro(\cc{C}) \ar[dd]^{\widetilde{\ff{F}}}
           \\
            & \Downarrow \cong
            &
           \\ \cc{E} \ar[rr]_<<<<<{\ff{y}_{(-)}}
            & &  \cc{H}om(\cc{E}^{op},\cc{C}at)
           }
\end{equation}
Consider the following chain of isomorphisms (the first and the third because $\widetilde{\ff{F}}$ and $\ff{y}_{(-)}$ preserve pseudo-limits (\ref{yonedapreserves}), and the middle one given by \ref{efetilde}):
$$
\widetilde{\ff{F}}\ff{X} =
\widetilde{\ff{F}} \Lim{i \in \cc{I}}{\ff{X}_i} \mr{\cong}
\Lim{i \in \cc{I}}{\widetilde{\ff{F}} c \ff{X}_i} \mr{\cong}
\Lim{i \in \cc{I}}{\ff{y}_{(-)} \ff{F}\ff{X}_i} \ml{\cong}
\ff{y}_{(-)} \Lim{i \in \cc{I}}{\ff{F} \ff{X}_i}.
$$
This shows that $\widetilde{\ff{F}}\ff{X}$ is in the essential image of
$\ff{y}_{(-)}$. Since $\ff{y}_{(-)}$ is 2-fully-faithful (\ref{yonedaff}), it follows there is a factorization
\mbox{$\ff{y}_{(-)} \widehat{\ff{F}} \mr{\cong} \widetilde{\ff{F}}$,} given by a \mbox{2-functor}
$2\hbox{-}\cc{P}ro(\cc{C}) \mr{\widehat{\ff{F}}} \ff{E}$. Clearly
$\widehat{\ff{F}}$ preserves small \mbox{2-cofiltered} \mbox{pseudo-limits}. We have
$
\ff{y}_{(-)} \widehat{\ff{F}} c  \mr{\cong}
\widetilde{\ff{F}} c \mr{\cong}
\ff{y}_{(-)} \ff{F}.
$
Finally, the fully-faithfulness of $\ff{y}_{(-)}$ provides an isomorphism
$\widehat{\ff{F}}c \mr{\cong} \ff{F}$. This finishes the proof.
\end{proof}
Exactly the same proof of theorem \ref{pseudouniversalcat} applies with an arbitrary \mbox{2-category $\cc{E}$} in place of $\cc{C}at$, and we have:
\begin{theorem} \label{pseudouniversal}
Let $\cc{C}$ be any 2-category, and $\cc{E}$ a 2-category closed under small \mbox{2-cofiltered} pseudo-limits. Then, pre-composition with
\mbox{$\cc{C} \mr{c} 2\hbox{-}\cc{P}ro(\cc{C})$}  is a 2-equivalence of 2-categories:
$$
\xymatrix@R=.7pc@C=.3pc
 {
  & \cc{H}om(2\hbox{-}\cc{P}ro(\cc{C}),\cc{E})_+ \ar[rrrr]^>>>>>>{c^*}
  & & & & \comw{\ff{X}\ff{X}} \cc{H}om(\cc{C},\cc{E}) \comw{\ff{X} \ff{X}}
 }
$$
Where $\cc{H}om(2\hbox{-}\cc{P}ro(\cc{C}),\cc{E})_+$ stands for the full subcategory whose objects are those \mbox{2-functors} that preserve small 2-cofiltered pseudo-limits.
\cqd
\end{theorem}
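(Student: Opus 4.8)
The plan is to reduce Theorem \ref{pseudouniversal} to the already-established $\cc{C}at$-valued case, Theorem \ref{pseudouniversalcat}, by running the exact same argument with $\cc{E}$ replacing $\cc{C}at$. As the statement preceding the theorem already indicates, the proof of \ref{pseudouniversalcat} never used any special feature of $\cc{C}at$ beyond the two facts that (i) $\cc{C}at$ is closed under small 2-cofiltered pseudo-limits, and (ii) such pseudo-limits satisfy the universal property encoded in the isomorphism $\cc{A}(\ff{A},\Lim{i}{\ff{F}i}) \mr{\cong} \ff{PC}_\cc{A}(\ff{A},\ff{F})$. Both of these hold for any $\cc{E}$ closed under small 2-cofiltered pseudo-limits, which is precisely the hypothesis. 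So the first thing I would do is verify that $c^*$ lands in the right place and is well defined: given a pseudo-limit-preserving 2-functor $\ff{G}: 2\hbox{-}\cc{P}ro(\cc{C}) \mr{} \cc{E}$, the composite $\ff{G}c$ is an honest 2-functor $\cc{C} \mr{} \cc{E}$, so $c^*$ is a 2-functor into $\cc{H}om(\cc{C},\cc{E})$.

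\textbf{Key steps.} By Proposition \ref{eqsiif&f} it suffices to show $c^*$ is essentially surjective on objects and 2-fully-faithful. For essential surjectivity, I would invoke Lemma \ref{es-sury}: given any 2-functor $\ff{F}: \cc{C} \mr{} \cc{E}$, that lemma produces a small-2-cofiltered-pseudo-limit-preserving 2-functor $\widehat{\ff{F}}: 2\hbox{-}\cc{P}ro(\cc{C}) \mr{} \cc{E}$ together with an isomorphism $\widehat{\ff{F}}c \mr{\cong} \ff{F}$ in $\cc{H}om(\cc{C},\cc{E})$, which says exactly that $c^*\widehat{\ff{F}} \cong \ff{F}$. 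For 2-fully-faithfulness, I would show that for $\ff{F},\ff{G}$ two objects of $\cc{H}om(2\hbox{-}\cc{P}ro(\cc{C}),\cc{E})_+$ the functor
$$
\cc{H}om(2\hbox{-}\cc{P}ro(\cc{C}),\cc{E})_+(\ff{F},\ff{G}) \mr{c^*} \cc{H}om(\cc{C},\cc{E})(\ff{F}c,\ff{G}c)
$$
is an isomorphism of categories, reproducing verbatim the second bullet of the proof of \ref{pseudouniversalcat}. Concretely, given a 2-cell $\ff{F}c \cellrd{\theta c}{\mu}{\eta c} \ff{G}c$, I use Proposition \ref{X=lim} to write any 2-pro-object as $\ff{X}=\Lim{i\in \cc{I}}{\ff{X}_i}$ with its canonical pseudo-cone $\{\pi_i\}$, form the composite pseudo-cones $\{\ff{G}\pi_i \circ \theta'_{\ff{X}}\}$ etc., and then use that $\ff{G}$ \emph{preserves} this pseudo-limit (Definition \ref{preservation}) so that $(\ff{G}\pi)_*: \cc{E}(\ff{FX},\ff{GX}) \mr{\cong} \ff{PC}_\cc{E}(\ff{FX},\ff{GX})$ is an isomorphism of categories. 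This yields the unique $\theta'_\ff{X}$, $\eta'_\ff{X}$, $\mu'_\ff{X}$ with $\ff{G}\pi_i\,\theta'_\ff{X}=\theta_{\ff{X}_i}\ff{F}\pi_i$ and the analogous equations, recovering $c^*$-preimages.

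\textbf{Main obstacle.} The only point requiring genuine care — as opposed to a mechanical transcription — is confirming that the constructed families $\theta'_\ff{X}$, $\eta'_\ff{X}$ are 2-natural in $\ff{X}$ (not merely pointwise data) and that $\mu'_\ff{X}$ is a genuine modification. In the $\cc{C}at$ case this is checked by diagram-chasing in $\cc{C}at$; here one must see that the verification only invokes the universal property of the pseudo-limits $\ff{GX}$ and the functoriality of $\ff{F},\ff{G}$ on arrows of $2\hbox{-}\cc{P}ro(\cc{C})$, together with Remark \ref{XisLim} identifying morphisms with pseudo-cones, none of which depends on the codomain being $\cc{C}at$. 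Since the uniqueness clause in the pseudo-limit universal property forces $\theta' c=\theta$, $\eta' c=\eta$, $\mu' c=\mu$, both injectivity and surjectivity of $c^*$ on hom-categories follow, completing the argument. I would therefore simply remark that the proof of \ref{pseudouniversalcat} applies mutatis mutandis, pointing out that it relied on $\cc{C}at$ only through properties shared by every $\cc{E}$ closed under small 2-cofiltered pseudo-limits.
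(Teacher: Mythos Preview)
Your proposal is correct and follows exactly the paper's approach: the paper simply remarks (in the sentence immediately preceding the theorem) that ``exactly the same proof of theorem \ref{pseudouniversalcat} applies with an arbitrary 2-category $\cc{E}$ in place of $\cc{C}at$'' and leaves it at that. You have supplied the details the paper omits, correctly identifying Lemma \ref{es-sury} for essential surjectivity and the pseudo-limit universal property (via preservation by $\ff{G}$) for 2-fully-faithfulness, and your diagnosis of the one point needing care---2-naturality of $\theta',\eta'$ and the modification property of $\mu'$---is accurate.
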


From theorem \ref{pseudouniversal} it follows automatically the pseudo-functoriality of the assignment of the 2-category $2\hbox{-}\cc{P}ro(\cc{C})$ to each 2-category $\cc{C}$, and in such a way that $c$ becomes a pseudo-natural transformation. But we can do better:

\vspace{1ex}

If we put
$\cc{E} = \Pro{D}$ in \ref{pseudouniversal} it follows there is a 2-functor \mbox{(post-composing} with $c$ followed by a quasi-inverse in \ref{pseudouniversal})
\begin{equation} \label{ufa!}
\xymatrix@R=.7pc@C=.3pc
 {
  &  \cc{H}om(\cc{C},\cc{D})
\ar[rrrr]^>>>>>>{\widehat{(-)}}
  & & & & \comw{\ff{X}\ff{X}}
  \cc{H}om(2\hbox{-}\cc{P}ro(\cc{C}),\,
               2\hbox{-}\cc{P}ro(\cc{D}))_+
           \comw{\ff{X} \ff{X}} \hspace{-2.5ex},
 }
\end{equation}
and for each 2-functor $\cc{C} \mr{\ff{F}} \cc{D}$, a diagram:
\begin{equation} \label{pseudosquare}
\xymatrix@C=0.8pc@R=0.8pc
           {
            \Pro{C} \ar@{}[rrdd]|{\cong \; \Downarrow} \ar[rr]^{\widehat{\ff{F}}}
            & &  \Pro{D}
            \\
            & 
            &
            \\
            \cc{C} \ar[rr]_{\ff{\ff{F}}} \ar[uu]^{c}
            & & \cc{D} \ar[uu]_{c}
           }
\end{equation}
Given any 2-pro-object $\ff{X} \in \Pro{C}$, set
$2\hbox{-}\cc{P}ro(\ff{F})(\ff{X}) = \widehat{\ff{F}}\ff{X}$. It is straightforward to check that this determines a 2-functor
$$
\xymatrix@C=5ex@R=0.5pc
           {
            \Pro{C} \ar[rr]^
            {2\hbox{-}\cc{P}ro(\ff{F})}
            & &  \Pro{D}
           }
$$
making diagram \ref{pseudosquare} commutative. It follows we have an \mbox{isomorphism}
\mbox{$\widehat{\ff{\ff{F}}}\ff{X} \mr{\cong}
                   2\hbox{-}\cc{P}ro(\ff{F})(\ff{X})$} 2-natural in $\ff{X}$. This shows that the 2-functor $2\hbox{-}\cc{P}ro(\ff{F})$ preserves small 2-cofiltered pseudo-limits because
$\widehat{\ff{F}}$ does. Also, it follows that
$2\hbox{-}\cc{P}ro(\ff{F})$ determines a 2-functor as in \ref{ufa!}. In conclusion, denoting now by $2\hbox{-}\cc{CAT}$ the 2-category of locally small 2-categories (see \ref{2CAT}) we have:
\begin{theorem}
The definition $2\hbox{-}\cc{P}ro(\ff{F})(\ff{X}) = \widehat{\ff{F}}\ff{X}$ determines a \mbox{2-functor}
$$
2\hbox{-}\cc{P}ro(\hbox{-}): 2\hbox{-}\cc{C}\cc{A}\cc{T} \mr{} 2\hbox{-}\cc{C}\cc{A}\cc{T}_+
$$
in such a way that $c$ becomes a 2-natural transformation (where $2\hbox{-}\cc{C}\cc{A}\cc{T}_+$ is the full sub 2-category of locally small 2-categories closed under small \mbox{2-cofiltered} pseudo limits and small pseudo-limit preserving 2-functors). \cqd
\end{theorem}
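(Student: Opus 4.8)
The plan is to verify that the assignment $\ff{F} \mapsto 2\hbox{-}\cc{P}ro(\ff{F})$ is strictly 2-functorial and that $c$ is strictly 2-natural, leveraging everything already established. The object and morphism parts of $2\hbox{-}\cc{P}ro(-)$ are already fixed by the preceding discussion: on objects $\cc{C} \mapsto \Pro{C}$, and on a 2-functor $\ff{F}$ we set $2\hbox{-}\cc{P}ro(\ff{F})(\ff{X}) = \widehat{\ff{F}}\ff{X}$, where $\widehat{\ff{F}}\ff{X} = \Lim{i\in\cc{I}}{\ff{F}\ff{X}_i}$ via \ref{es-sury}, chosen so that diagram \ref{pseudosquare} commutes strictly. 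So what remains is to define $2\hbox{-}\cc{P}ro(-)$ on 2-natural transformations and modifications between 2-functors $\cc{C} \to \cc{D}$, and to check the three things a 2-functor requires: strict preservation of identities, strict preservation of both compositions (horizontal of 1-cells, and the vertical/horizontal of 2-cells), and functoriality on each hom-category.

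First I would pin down the definitions on higher cells. Given a 2-natural transformation $\ff{F} \Mr{\sigma} \ff{G}: \cc{C} \to \cc{D}$, I want a 2-natural transformation $2\hbox{-}\cc{P}ro(\ff{F}) \Mr{2\hbox{-}\cc{P}ro(\sigma)} 2\hbox{-}\cc{P}ro(\ff{G})$. For a 2-pro-object $\ff{X} = (\ff{X}_i)_{i\in\cc{I}}$, the component at $\ff{X}$ should be the arrow $\Lim{i}{\ff{F}\ff{X}_i} \to \Lim{i}{\ff{G}\ff{X}_i}$ in $\Pro{D}$ induced by the family $\{\sigma_{\ff{X}_i}\}_i$. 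Concretely, $\{\ff{F}\ff{X}_i \mr{\sigma_{\ff{X}_i}} \ff{G}\ff{X}_i\}$ together with the naturality squares (which are strict, since $\sigma$ is 2-natural) is a morphism of the defining 2-diagrams, hence by the universal property of pseudo-limits (\ref{colimits}, \ref{isodeplimexplicito}) induces a unique arrow between the pseudo-limits commuting with projections. For a modification $\sigma \Mr{\Gamma} \tau$, the component at $\ff{X}$ is induced analogously from the family $\{\Gamma_{\ff{X}_i}\}_i$ as a morphism of pseudo-cones, using the 2-dimensional part of \ref{isodeplimexplicito}. I would record these as the action of $2\hbox{-}\cc{P}ro(-)$.

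The core verifications then split cleanly. The commutation of \ref{pseudosquare} and the 2-naturality of $c$ follow because $c$ sends each object to the trivially-indexed 2-pro-object and the induced arrows on pseudo-limits restrict, via the projections, to the original $\sigma$, $\Gamma$; so $c^* \circ 2\hbox{-}\cc{P}ro(-) = \mathrm{id}$ on the nose, which is exactly the strict 2-naturality of $c$. For the 2-functoriality axioms of $2\hbox{-}\cc{P}ro(-)$, the key tool is the uniqueness clause in the universal property: preservation of composition $2\hbox{-}\cc{P}ro(\ff{G}\ff{F}) = 2\hbox{-}\cc{P}ro(\ff{G})\,2\hbox{-}\cc{P}ro(\ff{F})$ holds because both sides induce, after composing with the projections of $\Lim{i}{\ff{G}\ff{F}\ff{X}_i}$, the same family of arrows, so they coincide by uniqueness; identities and the three kinds of compatibility for 2-cells are dispatched the same way. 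Here it is crucial that the limits involved are genuine \emph{pseudo}-limits with the strict isomorphism \ref{isoplim}, so that the comparison arrows are literally equal and not merely equivalent.

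The main obstacle I anticipate is not any single calculation but ensuring everything is \emph{strict} 2-functorial rather than merely pseudo. The preceding theorem \ref{pseudouniversal} only delivers $\widehat{\ff{F}}$ up to an invertible 2-cell (diagram \ref{pseudosquare} commutes up to $\cong$), and the general pseudo-functoriality of $2\hbox{-}\cc{P}ro(-)$ is immediate; the content of the final theorem is that one can rigidify to strict 2-naturality of $c$ and strict 2-functoriality of the assignment. The excerpt already arranges this by replacing the $\widehat{\ff{F}}$ coming from the quasi-inverse with the explicit pointwise formula $\widehat{\ff{F}}\ff{X} = \Lim{i}{\ff{F}\ff{X}_i}$ for which \ref{pseudosquare} commutes \emph{strictly}, and by noting the isomorphism $\widehat{\ff{F}}\ff{X} \cong 2\hbox{-}\cc{P}ro(\ff{F})\ff{X}$ is 2-natural. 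So the real work is bookkeeping: confirming that with these strict choices, the uniqueness in \ref{isoplim} forces all the 2-functor axioms to hold on the nose, and that the codomain indeed lands in the full subcategory $2\hbox{-}\cc{C}\cc{A}\cc{T}_+$, which is exactly the statement that $2\hbox{-}\cc{P}ro(\ff{F})$ preserves small 2-cofiltered pseudo-limits — already argued from the fact that $\widehat{\ff{F}}$ does.
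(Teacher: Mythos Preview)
Your proposal is correct and follows the same approach as the paper, which leaves the proof as ``straightforward to check'' in the discussion preceding the statement and ends with a bare \cqd. You have supplied precisely the verifications the paper omits: the action on 2-natural transformations and modifications via the uniqueness clause in the pseudo-limit universal property \ref{isoplim}, and the rigidification from the pseudo-commutativity of \ref{pseudosquare} to strict commutativity by taking $\widehat{\ff{F}}\ff{X} = \Lim{i}{\ff{F}\ff{X}_i}$ explicitly.
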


\pagebreak

\begin{center}
 {\bf Resumen en castellano de la secci\'on \ref{2-Pro-objects}}
\end{center}

En esta secci\'on se encuentran algunos de los resultados claves de este trabajo. En \ref{def2pro}, dada una 2-categor\'ia $\cc{C}$ definimos la 2-categor\'ia $2$-$\cc{P}ro(\cc{C})$ cuyos objetos llamamos 2-pro-objetos. Un 2-pro-objeto de $\cc{C}$ es un 2-funtor a valores en $\cc{C}$ (o diagrama en $\cc{C}$)
indexado por una 2-categor\'ia 2-cofiltrante y ser\'a el pseudo-l\'imite de su propio diagrama en la 2-categor\'ia $\Pro{C}$. Tambi\'en en \ref{def2pro}, establecemos la f\'ormula b\'asica que describe los morfismos y las 2-celdas entre \mbox{2-pro-objetos} en t\'erminos de pseudo-l\'imites y pseudo-col\'imites de las categor\'ias de morfismos de $\cc{C}$.

En \ref{lemas2pro}, establecemos ciertos lemas t\'ecnicos que nos permiten operar con 2-pro-objetos en las secciones siguientes. 

En \ref{pseudo-limitsen2pro}, dada $\cc{J}$ una 2-categor\'ia 2-filtrante y un funtor
$\cc{J}^{op} \mr{\ff{X}}  2\hbox{-}\cc{P}ro(\cc{C})$, construimos un 2-pro-objeto que ser\'a el pseudo-l\'imite de $\ff{X}$ en $2\hbox{-} \cc{P}ro(\cc{C})$. Para esto, primero construimos una 2-categor\'ia \mbox{2-filtrante} que sirve como 2-categor\'ia de \'indices para el pseudo-l\'imite (Definici\'on \ref{kequis} y proposici\'on \ref{teo}). 

Finalmente, en \ref{pu2pro}, enunciamos y demostramos la propiedad universal de $2$-$\cc{P}ro(\cc{C})$ (Teorema \ref{pseudouniversal}) establecida para pro-objetos en [\citealp{G2}, Ex. I, Prop. 8.7.3.]. Considerar el 2-funtor $\cc{C} \mr{c} 2$-$\cc{P}ro(\cc{C})$ del corolario \ref{CisPro} y un 2-pro-objeto $\ff{X} = (\ff{X}_i)_{i\in \cc{I}}$. Dado un 2-funtor $\cc{C} \mr{\ff{F}} \cc{E}$ a una 2-categor\'ia cerrada por pseudo-l\'imites 2-cofiltrantes, podemos extender $\ff{F}$ a un 2-funtor que preserva pseudo-l\'imites 2-cofiltrantes
$2\hbox{-} \cc{P}ro(\cc{C}) \mr{\widehat{\ff{F}}} \cc{E}$ definiendo
$\widehat{\ff{F}}\ff{X} = \Lim{i \in \cc{I}}{\ff{F}\ff{X}_i}$.
Esto es solo una parte de una 2-equivalencia de 2-categor\'ias que desarrollamos aqu\'i. Primero debemos desarrollar completamente la propiedad universal para $\cc{E} = \cc{C}at$, y solo despu\'es de esto, puede ser traspasada a una 2-categor\'ia cualquiera $\cc{E}$ cerrada por pseudo-l\'imites 2-cofiltrantes.

Tambi\'en consideramos en esta secci\'on la 2-categor\'ia $\Prop{C}$ que es ``retract pseudo-equivalent'' a $\Pro{C}$, \ref{proppseudoeqapro}, hecho que se sigue de que los 2-funtores a valores en $\cc{C}at$ asociados a 2-pro-objetos son flexibles. Esta 2-categor\'ia ser\'a esencial en la secci\'on \ref{2-modelos} y probar\'a ser interesante en s\'i misma.

\pagebreak

\section{Reindexing properties for 2-pro-objects}\label{Mtrick}

In this section we prove some reindexing properties for the 2-categories $\Pro{C}$ and $\Prop{C}$ that will be used to determine closed 2-bmodel structures on them (see \ref{2-closed}) as Edwards and Hastings do in \cite{EH} for $\ff{Pro}(\ff{C})$ in the 1-dimensional case. The reindexing properties for $\ff{Pro}(\ff{C})$ can be found in \cite{AM} or \cite{G2}.

\subsection{Reindexing for objects}\label{objetos}

\begin{proposition}\label{reindexing}

Let $\ff{X}=(\ff{X}_j)_{j\in \cc{J}}$ be a 2-pro-object and $\F:\cc{I}\rightarrow \cc{J}$ be a 2-cofinal \mbox{2-functor} with $\cc{I}$ a 2-filtered 2-category. Then, the 2-pro-object $\ff{X}_\F=(\ff{X}_{\F(i)})_{i\in \cc{I}}$ is equivalent to $\ff{X}$ in $\Pro{C}$.   
\end{proposition}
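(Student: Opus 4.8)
The goal is to show that precomposition of a 2-pro-object $\ff{X}=(\ff{X}_j)_{j\in\cc{J}}$ with a 2-cofinal 2-functor $\F:\cc{I}\to\cc{J}$ yields an equivalent 2-pro-object $\ff{X}_\F=(\ff{X}_{\F(i)})_{i\in\cc{I}}$. The most natural route is to exploit the contravariant 2-fully-faithful embedding $\ff{L}$ of \ref{eqconloscolimderep}, which sends a 2-pro-object to the category-valued 2-functor $\coLim{j\in\cc{J}}{\cc{C}(\ff{X}_j,-)}$. Since $\ff{L}$ is 2-fully-faithful, establishing an equivalence $\ff{X}\simeq\ff{X}_\F$ in $\Pro{C}$ is the same as establishing an equivalence $\ff{L}\ff{X}\simeq\ff{L}\ff{X}_\F$ in $\cc{H}om(\cc{C},\cc{C}at)^{op}$, i.e.\ an equivalence $\coLim{i\in\cc{I}}{\cc{C}(\ff{X}_{\F(i)},-)}\simeq\coLim{j\in\cc{J}}{\cc{C}(\ff{X}_j,-)}$ in $\cc{H}om(\cc{C},\cc{C}at)$. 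So the whole problem reduces to comparing these two pseudo-colimits.

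The key tool here is Theorem \ref{T_Cat}. First I would note that for each fixed object $\ff{C}\in\cc{C}$, the 2-functor $\ff{G}=\cc{C}(\ff{X}_{(-)},\ff{C}):\cc{J}\to\cc{C}at$ (the appropriate covariant functor in the indexing variable) composed with $\F$ gives exactly the diagram indexing $\ff{X}_\F$. Theorem \ref{T_Cat} then says precisely that the canonical comparison morphism
$$\coLim{i\in\cc{I}}{\cc{C}(\ff{X}_{\F(i)},\ff{C})}\mr{\ff{h}}\coLim{j\in\cc{J}}{\cc{C}(\ff{X}_j,\ff{C})}$$
is an equivalence of categories, since $\F$ is 2-cofinal and $\cc{I}$ is 2-filtered. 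The plan is to verify that these equivalences $\ff{h}=\ff{h}_\ff{C}$ are pseudo-natural in $\ff{C}$, assembling into a morphism in $\cc{H}om(\cc{C},\cc{C}at)$ (or at least $\cc{H}om_p(\cc{C},\cc{C}at)$). By Remark \ref{equivalencias pointwise}, a pseudo-natural transformation that is a pointwise equivalence is an equivalence in the functor 2-category; this upgrades the pointwise equivalences into an equivalence of the pseudo-colimit 2-functors.

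There is a subtlety worth flagging as the main obstacle: $\ff{L}$ lands in $\cc{H}om(\cc{C},\cc{C}at)^{op}$ with \emph{2-natural} (not pseudo-natural) transformations, whereas the comparison $\ff{h}_\ff{C}$ produced by \ref{T_Cat} will a priori only be pseudo-natural in $\ff{C}$, and its pointwise quasi-inverses need not assemble 2-naturally. This is exactly the gap addressed by flexibility: by Proposition \ref{proisflexible} the 2-functors $\ff{L}\ff{X}$ and $\ff{L}\ff{X}_\F$ are flexible, and by Corollary \ref{eqencadaCeseq} a morphism between flexible 2-functors that is a pointwise equivalence is already an equivalence in the \emph{strict} 2-functor 2-category $\cc{H}om(\cc{C},\cc{C}at)_f$. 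So the plan is to first produce the pseudo-natural comparison, observe it is a pointwise equivalence by \ref{T_Cat}, invoke \ref{eqencadaCeseq} to conclude it is an equivalence in $\cc{H}om(\cc{C},\cc{C}at)$, and finally transport this equivalence back across the 2-fully-faithful $\ff{L}$ to obtain the desired equivalence $\ff{X}\simeq\ff{X}_\F$ in $\Pro{C}$. The routine but careful part is checking pseudo-naturality of $\ff{h}_\ff{C}$ in $\ff{C}$, which follows from the explicit description of $\ff{h}$ in the proof of \ref{T_Cat} together with the functoriality of the construction $\mathcal{L}$ of \ref{defLF} in the codomain variable.
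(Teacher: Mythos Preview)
Your approach is essentially the same as the paper's: reduce via the 2-fully-faithful $\ff{L}$ to an equivalence of pseudo-colimit 2-functors, apply Theorem~\ref{T_Cat} pointwise, and then invoke flexibility (\ref{proisflexible} / \ref{colimflexible}) together with Corollary~\ref{eqencadaCeseq} to upgrade the pointwise equivalence to an equivalence in $\cc{H}om(\cc{C},\cc{C}at)$. One small simplification: you flag the naturality of $\ff{h}_\ff{C}$ in $\ff{C}$ as the ``routine but careful part'', but in fact the comparison morphism is already \emph{2-natural}, not merely pseudo-natural, since by Proposition~\ref{pointwisebi-limit} pseudo-colimits in $\cc{H}om(\cc{C},\cc{C}at)$ are computed pointwise and the canonical map between them is induced by the universal property in that 2-category; this is why the paper can directly apply \ref{eqencadaCeseq} without first rectifying a pseudo-natural transformation.
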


\begin{proof}
First note that the 2-pro-objects $\ff{X}$ and $\ff{X}_\F$ are equivalent if the canonical 2-natural transformation $\coLim{i \in \cc{I}}{\cc{C}(\ff{X}_{\F(i)},-)}\stackrel{\theta}\Rightarrow \coLim{j\in \cc{J}}{\cc{C}(\ff{X}_j,-)}$ is an equivalence in $\cc{H}om(\cc{C},\cc{C}at)$.

Now, for each $\ff{C}\in \cc{C}$, consider the 2-functor $\xymatrix@R=0.3pc{\cc{J}\ar[r] & \cc{C}at \\ j\ar@{|->}[r] & \cc{C}(\ff{X}_j,\ff{C})}$. Then, by \ref{T_Cat}, $\coLim{i\in \cc{I}}{\cc{C}(\ff{X}_{\F(i)},\ff{C})}\stackrel{\theta_\ff{C}}\rightarrow \coLim{j\in \cc{J}}{\cc{C}(\ff{X}_j,\ff{C})}$ is an equivalence of categories $\forall \ff{C}\in \cc{C}$ and so, by \ref{colimflexible} and \ref{eqencadaCeseq}, $\theta$ is an equivalence.
\end{proof}

\begin{remark}
If we denote the equivalence given by the previous proposition $\ff{X}\mr{\ff{f}}\ff{X}_\F$ and its quasi-inverse $\overline{\ff{f}}$, then $\overline{\ff{f}}\ff{f}=id_{\ff{X}}$.   
\end{remark}

\begin{corollary}\label{reindexingparaobjetos}

It follows from \ref{phi} that every 2-pro-object $\ff{X}=(\ff{X}_j)_{j\in \cc{J}}$ is equivalent in  
$\Pro{C}$ to a 2-pro-object indexed by the cofinite filtered poset with a unique initial object
$\ff{M}(\cc{J})$ via a 2-cofinal 2-functor $\ff{M}(\cc{J}) \mr{\F} \cc{J}$. 

\end{corollary}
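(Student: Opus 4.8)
The plan is to combine Proposition \ref{reindexing} with Proposition \ref{phi} directly, since the corollary is exactly the instance of the former obtained by feeding it the 2-cofinal 2-functor produced by the latter. First I would observe that, by the very definition of a 2-pro-object (Definition \ref{2proc}), the index 2-category $\cc{J}$ of $\ff{X}=(\ff{X}_j)_{j\in \cc{J}}$ is small and 2-filtered. This is precisely the hypothesis under which the construction of $\ff{M}(\cc{J})$ in \ref{phi} applies.

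Then I would invoke \ref{phi} to obtain a 2-cofinal 2-functor $\ff{M}(\cc{J})\stackrel{\F}\rightarrow \cc{J}$. Since the notion of 2-cofinal pseudo-functor requires its domain to be 2-filtered, the existence of such a $\F$ already records that $\ff{M}(\cc{J})$, regarded as a trivial 2-category, is 2-filtered; equivalently this follows from \ref{MJ}, which states that $\ff{M}(\cc{J})$ is a filtered poset, together with the elementary fact that a filtered poset is 2-filtered as a trivial 2-category (axiom F0 is filteredness, while F1 and F2 hold trivially because parallel arrows in a poset coincide and the only 2-cells are identities). By \ref{MJ} this poset is moreover cofinite and has a unique initial object, which supplies the extra structure asserted in the statement.

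Finally I would apply \ref{reindexing} with $\cc{I}:=\ff{M}(\cc{J})$ and the 2-cofinal 2-functor $\F$ just obtained: the hypotheses of \ref{reindexing} (that $\cc{I}$ is 2-filtered and that $\F$ is 2-cofinal) are satisfied, so its conclusion gives that $\ff{X}_\F=(\ff{X}_{\F(i)})_{i\in \ff{M}(\cc{J})}$ is equivalent to $\ff{X}$ in $\Pro{C}$. As $\ff{X}_\F$ is by construction indexed by the cofinite filtered poset with a unique initial object $\ff{M}(\cc{J})$, this is exactly the claimed equivalence.

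There is essentially no obstacle to overcome here, as the entire mathematical content has been isolated into \ref{phi} (the construction of $\ff{M}(\cc{J})$ and its 2-cofinal 2-functor) and into \ref{reindexing} (reindexing along a 2-cofinal 2-functor preserves the equivalence class of a 2-pro-object, which itself rests on \ref{T_Cat}, \ref{colimflexible} and \ref{eqencadaCeseq}). The only point deserving a word of care is the verification that a filtered poset is 2-filtered when viewed as a trivial 2-category, but this is immediate and, as noted, is already implicit in the statement of \ref{phi}.
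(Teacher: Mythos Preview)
Your proposal is correct and follows exactly the paper's approach: the paper's proof consists of the single sentence ``It is immediate from \ref{phi} and \ref{reindexing},'' and your argument is precisely the unpacking of that sentence. The additional remarks you make (that a filtered poset is trivially 2-filtered, and the appeal to \ref{MJ} for the cofinite and initial-object properties) are accurate elaborations but are not needed beyond what the paper itself takes for granted.
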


\begin{proof}
It is immediate from \ref{phi} and \ref{reindexing}. 
\end{proof}

Since every morphism in $\Pro{C}$ is a morphism in $\Prop{C}$, \ref{reindexing} and \ref{reindexingparaobjetos} also hold in $\Prop{C}$. However, it is worth mentioning that a proof similar to the one for \ref{reindexing} can be done for $\Prop{C}$ and \ref{colimflexible}, \ref{eqencadaCeseq} wouldn't be needed because of \ref{equivalencias pointwise}. 

% \begin{remark}
%  The poset $M(\cc{J})$ from the previous corollary can be chosen with a unique initial object. 
% \end{remark}
% 
% \begin{proof}
%  
% \end{proof}

\subsection{Reindexing for morphisms}\label{morfismos}

\begin{definition}\label{M_f}
Let $\ff{f}$ be a morphism from $\ff{X}=(\ff{X}_i)_{i\in \cc{I}}$ to $\ff{Y}=(\ff{Y}_j)_{j\in \cc{J}}$ in $\Pro{C}$. We are going to denote by $\cc{M}_\ff{f}$ the following 2-category:

Objects are the pairs $(\ff{r},\varphi)$ that represent $\ff{f}$. 

Morphisms $(\ff{r},\varphi)\rightarrow(\ff{s},\psi)$ ($\ff{r}:\ff{X}_i\rightarrow \ff{Y}_j$, $\ff{s}:\ff{X}_{i'}\rightarrow \ff{Y}_{j'}$) are triplex $(u,a,\theta)$ where $i\stackrel{u}\rightarrow i'\in\cc{I}$, $j\stackrel{a}\rightarrow j'\in\cc{J}$ and $\theta$ is an invertible 2-cell $\vcenter{\xymatrix@C=0pc{\ff{Y}_{a} \dl & \dc{\theta} & \ff{s} \dr \\
\ff{r} & & \ff{X}_u}}$ such that 

% cambiar por A102
% $$\vcenter{\xymatrix@C=-.3pc@R=1pc{& \pi_j \dcellop{\pi_a} & & \ff{f} \deq \\
%                            \ff{Y}_a \deq && \pi_{j'} \dl & \dc{\psi} & \ff{f}\dr\\
%                            \ff{Y}_a \dl & \dc{\theta} & \ff{s} \dr & & \pi_{i'}\deq\\
%                            \ff{r} && \ff{X}_u & & \pi_{i'}}}
% \vcenter{\xymatrix@C=-.3pc{\quad \quad = \quad \quad  }}
% \vcenter{\xymatrix@C=-.3pc@R=1pc{ \pi_j \dl & \dc{\varphi} & \ff{f} \dr \\
%                            \ff{r} \deq  && \pi_{i} \dcellop{\pi_u} \\
%                            \ff{r} & \ff{X}_u & & \pi_{i'}}}$$

$$\vcenter{\xymatrix@C=-0pc{ \Y_a \dl & \dc{\theta} & \dr \s && \pi\si \deq \\
\fr \deq && \X_u && \pi\si \\
\fr \dl & \dcr{\varphi} && \dr \pi_i \cl{\pi_u} \\
\pi_j &&& \f }}
\vcenter{\xymatrix@C=-0pc{ \quad = \quad }}
\vcenter{\xymatrix@C=-0pc{ \Y_a \deq && \s \dl & \dc{\psi} & \dr \pi\si \\
\Y_a && \pi\sj && \f \deq \\
& \pi_j \cl{\pi_a} &&& \f }}$$

A 2-cell $(\ff{r},\varphi)\cellrd{(u,a,\theta)}{}{(v,b,\eta)}(\ff{s},\psi)$ is a pair $(\mu,\alpha)$ where $i\cellrd{u}{\mu}{v}i' \in \cc{I}$ and $j\cellrd{a}{\alpha}{b}j' \in \cc{J}$ are such that 

$$\vcenter{\xymatrix@C=-.3pc{\ff{Y}_a \dcellb{\ff{Y}_\alpha} & & \ff{s} \deq \\
	                     \ff{Y}_{b} \dl & \dc{\eta} & \ff{s} \dr \\
	                     \ff{r} & & \ff{X}_{v} }}
\vcenter{\xymatrix@C=-.3pc{\quad \quad = \quad \quad  }						}
\vcenter{\xymatrix@C=-.3pc{\ff{Y}_a \dl & \dc{\theta} &\ff{s} \dr \\
		           \ff{r} \deq & & \ff{X}_{u} \dcellb{\ff{X}_\mu} \\
		           \ff{r} & & \ff{X}_{v} }}$$

Identities and compositions are defined in the obvious way. 
\end{definition}

\begin{lemma}\label{M_f 2-filt}
Let $\ff{f}$ be a morphism from $\ff{X}=(\ff{X}_i)_{i\in \cc{I}}$ to $\ff{Y}=(\ff{Y}_j)_{j\in \cc{J}}$ in $\Pro{C}$. The \mbox{2-category} $\cc{M}_\ff{f}$ is 2-filtered and the 2-functors $\vcenter{\xymatrix@R=.3pc{\cc{M}_\ff{f}\ar[r] & \cc{I} \\ (\ff{r},\varphi)\ar@{|->}[r] & i}}$, $\vcenter{\xymatrix@R=.3pc{\cc{M}_\ff{f}\ar[r] & \cc{J} \\ (\ff{r},\varphi)\ar@{|->}[r] & j}}$ are 2-cofinal.  
\end{lemma}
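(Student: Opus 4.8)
The plan is to verify the three 2-filteredness axioms F0, F1, F2 for $\cc{M}_\ff{f}$ and then check the three 2-cofinality axioms CF0, CF1, CF2 for each of the two projection 2-functors, relying essentially on the representation lemmas of subsection \ref{lemas2pro}. The key technical tool is Lemma \ref{lemorphismaraM_f}, which is tailor-made for this purpose: it says that given two pairs $(\ff{r},\varphi)$, $(\ff{s},\psi)$ both representing $\ff{f}$, together with arrows $i\mr{u}i'$ in $\cc{I}$ and $j\mr{a}j'$ in $\cc{J}$, one can produce a common refinement $i\mr{v}i''\ml{w}i'$ in $\cc{I}$ and an invertible 2-cell $\theta$ relating $\ff{s}$ and $\ff{r}$ that satisfies precisely the compatibility equation required of a morphism $(v,a,\theta)$ (or similar) in $\cc{M}_\ff{f}$. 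I would therefore set up the notation so that the conclusion of \ref{lemorphismaraM_f} is read directly as the existence of a morphism in $\cc{M}_\ff{f}$ over a prescribed pair of arrows in the index 2-categories.

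First I would prove 2-filteredness. For F0, given two objects $(\ff{r},\varphi)$, $(\ff{s},\psi)$ of $\cc{M}_\ff{f}$ with $\ff{r}:\ff{X}_i\to\ff{Y}_j$, $\ff{s}:\ff{X}_{i'}\to\ff{Y}_{j'}$, I would first use F0 in $\cc{J}$ to get $j\mr{a}j''\ml{b}j'$, then invoke \ref{idrepresenta}(1) to obtain a representative $\ff{t}:\ff{X}_{i''}\to\ff{Y}_{j''}$ of $\ff{f}$ (with $\varphi'=id$), and finally apply \ref{lemorphismaraM_f} twice — once to $(\ff{r},\varphi)$ and $(\ff{t},id)$ over $a$, once to $(\ff{s},\psi)$ and $(\ff{t},id)$ over $b$ — together with F0 in the relevant $\cc{I}$ to land on a common $(\ff{t},id)$ receiving morphisms from both. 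For F1, given parallel morphisms $(u,a,\theta),(v,b,\eta):(\ff{r},\varphi)\to(\ff{s},\psi)$, I would use F1 in $\cc{J}$ to coequalize $a,b$ up to an invertible 2-cell, use F1 in $\cc{I}$ to coequalize $u,v$ up to an invertible 2-cell, and then use \ref{lema5} (or \ref{lema4}/\ref{lema2paraM_D}) to force the two $\theta$-data to agree after a further refinement, so that the resulting arrow of $\cc{M}_\ff{f}$ carries an invertible 2-cell equating the two composites; the pair $(\mu,\alpha)$ built from these refinement 2-cells is the required 2-cell of $\cc{M}_\ff{f}$. F2 is analogous but easier: given two 2-cells $(\mu,\alpha),(\mu',\alpha')$ of $\cc{M}_\ff{f}$, apply F2 in $\cc{J}$ to equalize $\alpha,\alpha'$, apply F2 in $\cc{I}$ to equalize $\mu,\mu'$, take a common refinement via F0, and check the defining equation of \ref{M_f} is preserved.

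Next I would verify 2-cofinality of the projection $P_\cc{I}:(\ff{r},\varphi)\mapsto i$ (the argument for $P_\cc{J}:(\ff{r},\varphi)\mapsto j$ being entirely parallel, using \ref{idrepresenta} to manufacture representatives over prescribed targets). CF0 amounts to: given $i\in\cc{I}$, produce an object of $\cc{M}_\ff{f}$ whose $\cc{I}$-component maps to $i$, together with a morphism $i\to P_\cc{I}(\ff{r},\varphi)$; this follows from \ref{idrepresenta}(1), which yields for any chosen $j\in\cc{J}$ a representative $(\ff{r},id)$ of $\ff{f}$ with source some $\ff{X}_{i_0}$, after which F0 in $\cc{I}$ connects $i$ to $i_0$. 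CF1 and CF2, which concern pairs and 2-cells $i\rightrightarrows P_\cc{I}(\ff{r},\varphi)$ in $\cc{I}$, reduce to F1 and F2 in $\cc{I}$ followed by an application of \ref{lemorphismaraM_f} (respectively \ref{lema5}) to promote the resulting $\cc{I}$-data to genuine morphisms and 2-cells of $\cc{M}_\ff{f}$ lying over them.

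The main obstacle I anticipate is not any single axiom but the bookkeeping of the compatibility 2-cells in the elevator-calculus equations defining morphisms and 2-cells of $\cc{M}_\ff{f}$: each time I invoke \ref{lemorphismaraM_f}, \ref{lema5}, or \ref{lema2paraM_D}, I must check that the invertible 2-cell it produces is exactly the one making the defining equation of \ref{M_f} hold, and that these verifications compose coherently when two or three refinements are stacked (as in F0 and F1). I expect each such check to be routine via elevator calculus plus the pseudo-cone axioms PC0--PC2, exactly as in the proof that $\cc{K}_\ff{X}$ is 2-filtered (\ref{kequis} ff.), but the simultaneous tracking of the $\cc{I}$-refinement, the $\cc{J}$-refinement, and the $\ff{Y}_a\ff{s}\,/\,\ff{r}\ff{X}_u$ 2-cell is where all the genuine work lies. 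The statement about $P_\cc{J}$ being 2-cofinal is the mirror image under the symmetric use of \ref{idrepresenta}, and I would simply indicate that the roles of $\cc{I}$ and $\cc{J}$ swap.
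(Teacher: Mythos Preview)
Your proposal is essentially the paper's proof: the same lemmas (\ref{idrepresenta}, \ref{lemorphismaraM_f}, \ref{lema4}/\ref{lema5}) are invoked at the same junctures, and the bookkeeping you anticipate is indeed where the work lies. One point to sharpen: the two projections are \emph{not} symmetric. For $P_\cc{I}$ the paper handles CF1 and CF2 by the trivial morphism $(w,id_j,id)\colon(\ff{r},\varphi)\to(\ff{r}\ff{X}_w,\ldots)$ --- no call to \ref{lemorphismaraM_f} is needed, since precomposing a representative with a transition map of $\ff{X}$ still represents $\ff{f}$ over the \emph{same} $j$. For $P_\cc{J}$, by contrast, moving along an arrow $c\colon j\to j'$ in $\cc{J}$ requires producing a \emph{new} representative over $j'$ via \ref{idrepresenta} and then connecting it to $(\ff{r},\varphi)$ using the F0 construction you already built; the paper explicitly reuses that construction here. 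So your ``mirror image'' remark undersells the asymmetry, though the ingredients you list are exactly what is needed.
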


\begin{proof}
$\cc{M}_\ff{f}$ is 2-filtered:

\begin{itemize}
\item[F0:] Let $(\ff{r},\varphi)$, $(\ff{s},\psi)\in \cc{M}_\ff{f}$ ($\ff{r}:\ff{X}_i\rightarrow \ff{Y}_j$, $\ff{s}:\ff{X}_{i'}\rightarrow \ff{Y}_{j'}$). Since $\cc{J}$ is 2-filtered, we have $\vcenter{\xymatrix@R=-0.5pc{j\ar[rd]^a & \\ & j'' \\ j'\ar[ru]_{b}}}\in \cc{J}$ and, by \ref{idrepresenta} and the fact that $\cc{I}$ is 2-filtered, we have $\ff{X}_{i''}\mr{\ff{t}}\ff{Y}_{j''} \in \cc{C}$ and an invertible 2-cell $\epsilon$ such that $(\ff{t},\epsilon)$ represents $\ff{f}$ and there are morphisms $\vcenter{\xymatrix@R=-0.5pc{i\ar[dr]^u \\
                          & i'' \\
                          i' \ar[ur]_ v}}\in \cc{I}$. 
Then we have $(\ff{r},\varphi)$ and $(\ff{t},\epsilon)$ both representing $\ff{f}$ equipped with morphisms $i\mr{u}i'' \in \cc{I}$, $j\mr{a}j'' \in \cc{J}$. So, by \ref{lemorphismaraM_f}, there are morphisms $\vcenter{\xymatrix@R=-0.5pc{i\ar[rd]^{u_0} \\
                          & \tilde{i} \\ i''\ar[ru]_{v_0}}}\in \cc{I}$ and an invertible 2-cell $\vcenter{\xymatrix@C=-.5pc{\ff{Y}_{a} \ar@{-}[dr] &&\ff{t} \dc{\theta} && \ff{X}_{v_0} \ar@{-}[dl] \\
& \ff{r} & & \ff{X}_{u_0} }}$ $\in \cc{C} $ such that 

% cambiar por A123
% $$\vcenter{\xymatrix@C=-0.5pc{ & \pi_j\dcellop{\pi_a} &&&& \ff{f} \deq \\
%                     \ff{Y}_a \deq & & \pi_{j''}\dl &\ar@{}[d]^{\epsilon} && \ff{f} \dr \\
%                     \ff{Y}_a \deq & & \ff{t} \deq &&& \pi_{i''} \dcellop{\pi_{v_0}} \\
%                     \ff{Y}_a \ar@{-}[dr] && \ff{t} \dc{\theta} && \ff{X}_{v_0} \ar@{-}[dl] && \pi_{\tilde{i}} \deq\\
%                     & \ff{r} && \ff{X}_{u_0} &&& \pi_{\tilde{i}}}}
% \vcenter{\xymatrix@C=-.3pc{\quad \quad = \quad \quad  }						}
% \vcenter{\xymatrix@C=-0.5pc{\pi_j \dl & \dc{\varphi} & \ff{f} \dr \\
%                             \ff{r} \deq && \pi_i \dcellop{\pi_{u_0}} \\
%                             \ff{r} & \ff{X}_{u_0} && \pi_{\tilde{i}}} }$$
   
$$\vcenter{\xymatrix@C=-0pc{ \Y_a \ardr && \ft \dc{\theta} && \Xvc \ardl && \pi\sit \deq \\
& \fr \deq && \Xuc &&& \pi\sit \\
& \fr \dl & \dcr{\varphi} && \dr \pi_i \clunodos{\piuc} \\
& \pi_j &&& \f }}
\vcenter{\xymatrix@C=-0pc{ \quad = \quad }}
\vcenter{\xymatrix@C=-0pc{ \Y_a \deq && \ft \deq && \Xvc && \pi\sit \\
\Y_a \deq && \ft \dl & \dcr{\eps} && \dr \pi\sii \cl{\pivc} \\
\Y_a && \pi\sjj &&& \f \deq \\
& \pi_j \cl{\pi_a} &&&& \f }}$$

Then we have $(\ff{s}, \psi)$ and $(\ff{t}\ff{X}_{v_0},  %A228
\vcenter{\xymatrix@C=-0.2pc@R=1pc{ \ft \deq && \Xvc && \pi\sit \\
\ft \dl & \dcr{\eps} && \dr \pi\sii \cl{\pivc} \\
\pi\sjj &&& \f}}
)$ both representing $\ff{f}$ equipped with morphisms $i'\mr{v_0 v}\tilde{i} \in \cc{I}$, $j'\mr{b}j'' \in \cc{J}$. So, by \ref{lemorphismaraM_f}, there are morphisms $\vcenter{\xymatrix@R=-0.5pc{i'\ar[rd]^{u_1} \\
                          & i_0 \\
                          \tilde{i}\ar[ru]_{v_1}}}\in \cc{I}$ and an invertible 2-cell $\vcenter{\xymatrix@C=-.5pc{\ff{Y}_{b} \ar@{-}[dr] & \ff{t} \ar@{}[d]^{\eta}&\ff{X}_{v_0} & \ff{X}_{v_1} \ar@{-}[dl] \\
& \ff{s} & \ff{X}_{u_1} }}$ $\in \cc{C} $ such that 
% cambiar por A124
% $$\vcenter{\xymatrix@C=-0.5pc{ & \pi_{j'}\dcellop{\pi_b} &&&& \ff{f} \deq \\
%                     \ff{Y}_b \deq & & \pi_{j''}\dl &\ar@{}[d]^{\epsilon} && \ff{f} \dr \\
%                     \ff{Y}_b \deq & & \ff{t} \deq &&& \pi_{i''} \dcellop{\pi_{v_0}} \\
%                     \ff{Y}_b \deq & & \ff{t} \deq && \ff{X}_{v_0} \deq && \pi_{\tilde{i}} \dcellop{\pi_{v_1}}\\
%                     \ff{Y}_b \ar@{-}[drr] && \ff{t} \ar@{}[d]^{\eta} && \ff{X}_{v_0} & \ff{X}_{v_1} \ar@{-}[dl] && \pi_{i_0} \deq\\
%                     & &\ff{r} && \ff{X}_{u_1} &&& \pi_{i_0}}}
% \vcenter{\xymatrix@C=-.3pc{\quad \quad = \quad \quad  }						}
% \vcenter{\xymatrix@C=-0.5pc{\pi_{j'} \dl & \dc{\psi} & \ff{f} \dr \\
%                             \ff{s} \deq && \pi_{i'} \dcellop{\pi_{u_1}} \\
%                             \ff{s} & \ff{X}_{u_1} && \pi_{i_0}} }$$                             

$$\vcenter{\xymatrix@C=-0pc{  \Y_b \ardr && \ft & \dc{\eta} & \Xvc && \Xvu \ardl && \piic \deq \\
& \s \deq &&&& \Xuu &&& \piic \\
& \s \dl && \dcr{\psi} &&& \dr \pi\si \clunodos{\piuu} \\
& \pi\sj &&&&& \f }}
\vcenter{\xymatrix@C=-0pc{ \quad = \quad }}
\vcenter{\xymatrix@C=-0pc{ \Y_b \deq && \ft \deq && \Xvc \deq && \Xvu && \piic \\
\Y_b \deq && \ft \deq && \Xvc &&& \pi\sit \cl{\pivu} \\
\Y_b \deq && \ft \dl && \dc{\eps} && \dr \pi\sii \cldosuno{\pivc} \\
\Y_b && \pi\sjj &&&& \f \deq \\
& \pi\sj \cl{\pi_b} &&&&& \f }}$$

It can be checked that there are morphisms in $\cc{M}_\ff{f}$ $\vcenter{\xymatrix@R=-0.5pc{(\ff{r},\varphi) \ar[rrd]^<<<<<<<<<<{(v_1 u_0,a,\theta \ff{X}_{v_1})}\\
                          & &(\ff{t}\ff{X}_{v_0}\ff{X}_{v_1}, I)\\
                          (\ff{s},\psi)\ar[rru]_<<<<<<<<<<{(u_1,b,\eta)}}}$.   

\noindent where %A125:
$$I=\vcenter{\xymatrix@C=-0pc{    \ft \deq && \Xvc \deq && \Xvu && \piic \\
\ft \deq && \Xvc &&& \pi\sit \cl{\pivu} \\
\ft \dl && \dc{\eps} && \dr \pi\sii \cldosuno{\pivc} \\
\pi\sjj &&&& \f }}$$                          
                          
\item[F1:] Let $(\ff{r},\varphi)\mrpair{(u,a,\theta)}{(v,b,\eta)}(\ff{s},\psi)\in \cc{M}_\ff{f}$. Since $\cc{J}$ is 2-filtered, we have $j'\stackrel{c}\rightarrow j''$ and an invertible 2-cell $ca\stackrel{\alpha}\Rightarrow cb \in \cc{J}$ and, by \ref{idrepresenta} and the fact that $\cc{I}$ is 2-filtered, we have $\ff{X}_{i''}\mr{\ff{t}} \ff{Y}_{j''} \in \cc{C}$ and an invertible 2-cell $\epsilon$ such that $(\ff{t},\epsilon)$  represents $\ff{f}$ and there are a morphism $i'\mr{w}i''$ and an invertible 2-cell $wu\Mr{\mu}wv \in \cc{I}$. 

Then we have $(\ff{s},\psi)$ and $(\ff{t},\epsilon)$ both representing $\ff{f}$ equipped with morphisms \mbox{$i'\mr{w}i'' \in \cc{I}$,} $j'\mr{c}j'' \in \cc{J}$. So, by \ref{lemorphismaraM_f}, there are morphisms $\vcenter{\xymatrix@R=-0.5pc{i'\ar[rd]^{u_0} \\
                          & i_0 \\
                          i''\ar[ru]_{v_0}}}\in \cc{I}$ and an invertible 2-cell $\vcenter{\xymatrix@C=-.5pc{\ff{Y}_{c} \ar@{-}[dr] &&\ff{t} \dc{\tilde{\theta}} & &\ff{X}_{v_0} \ar@{-}[dl] \\
& \ff{s} & & \ff{X}_{u_0} }}$ $\in \cc{C} $ such that 

%cambiar por A126
\begin{equation}\label{A126}
\vcenter{\xymatrix@C=-0pc{ \Y_c \ardr && \ft \dc{\tilde{\theta}} && \Xvc \ardl && \piic \deq \\
& \s \deq && \Xuc &&& \piic \\
& \s \dl & \dcr{\psi} && \dr \pi\si \clunodos{\piuc} \\
& \pi\sj &&& \f }}
\vcenter{\xymatrix@C=-0pc{ \quad = \quad }}
\vcenter{\xymatrix@C=-0pc{ \Y_c \deq && \ft \deq && \Xvc && \piic \\
\Y_c \deq && \ft \dl & \dcr{\eps} && \dr \pi\sii \cl{\pivc} \\
\Y_c && \pi\sjj &&& \f \deq \\
& \pi\sj \cl{\pi_c} &&&& \f }}
%  \vcenter{\xymatrix@C=-0.5pc{ & \pi_{j'}\dcellop{\pi_c} &&&& \ff{f} \deq \\
%                     \ff{Y}_c \deq & & \pi_{j''}\dl &\ar@{}[d]^{\epsilon} && \ff{f} \dr \\
%                     \ff{Y}_c \deq & & \ff{t} \deq &&& \pi_{i''} \dcellop{\pi_{v_0}} \\
%                     \ff{Y}_c \ar@{-}[dr] && \ff{t} \dc{\tilde{\theta}} && \ff{X}_{v_0} \ar@{-}[dl] && \pi_{i_0} \deq\\
%                     & \ff{s} && \ff{X}_{u_0} &&& \pi_{i_0}}}
% \vcenter{\xymatrix@C=-.3pc{\quad \quad = \quad \quad  }						}
% \vcenter{\xymatrix@C=-0.5pc{\pi_{j'} \dl & \dc{\psi} & \ff{f} \dr \\
%                             \ff{s} \deq && \pi_{i'} \dcellop{\pi_{u_0}} \\
%                             \ff{s} & \ff{X}_{u_0} && \pi_{i_0}} }
\end{equation}
                            
Plus, since $\cc{I}$ is 2-filtered, there is a morphism $i_0\mr{\tilde{w}}\tilde{i}$ and an invertible 2-cell $\tilde{w}u_0 \Mr{\tilde{\mu}} \tilde{w}v_0 w \in \cc{I}$.

Let's check that we a have a morphism in $\cc{M}_\ff{f}$ \mbox{$(\ff{s},\psi)\mr{(\tilde{w}v_0 w,c,\ff{s}\ff{X}_{\tilde{\mu}}\circ \tilde{\theta}\ff{X}_{\tilde{w}})}(\ff{t}\ff{X}_{v_0}\ff{X}_{\tilde{w}}, %A127
\vcenter{\xymatrix@C=-0.2pc@R=1pc{    \ft \deq && \Xvc \deq && \X\swt && \pi\sit \\
\ft \deq && \Xvc &&& \piic \cl{\pi\swt} \\
\ft \dl && \dc{\eps} && \dr \pi\sii \cldosuno{\pivc} \\
\pi\sjj &&&& \f }}
)$:}

$$\vcenter{\xymatrix@C=-0.3pc{ \Y_c \ardr && \ft \dcr{\tilde{\theta}} &&& \Xvc \ardl && \X\swt \deq &&& \pi\sit \deq \\
& \s \deq &&& \Xuc \ardl & \dcr{\X\smut} && \X\swt \ardr &&& \pi\sit \deq \\
& \s \deq  && \X_w \ardrrr && \Xvc & \dc{\pi_{\tilde{w} v_0 w}} && \X\swt && \pi\sit \ardllll \\
& \s \dl && \dcr{\psi} &&& \dr \pi\si \\
& \pi\sj &&&&& \f }}
\vcenter{\xymatrix@C=-0pc{ \quad = \quad }}
\vcenter{\xymatrix@C=-0.2pc{ \Y_c \ardr && \ft \dcr{\tilde{\theta}} &&& \Xvc \ardl && \X\swt \deq &&& \pi\sit \deq \\
& \s \deq &&& \Xuc \ardl & \dcr{\X\smut} && \X\swt \ardr &&& \pi\sit \deq \\
& \s \deq && \X_w \deq && \Xvc \deq &&& \X\swt && \pi\sit \\
& \s \deq && \X_w \deq && \Xvc &&&& \piic \cl{\pi\swt} \\
& \s \deq && \X_w &&&& \pi\sii \cldosdos{\pivc} \\
& \s \dl && \dc{\psi} && \dr \pi\si \cldosdos{\pi_w} \\
& \pi\sj &&&& \f }}
\vcenter{\xymatrix@C=-0pc{ = \quad }}$$

$$\vcenter{\xymatrix@C=-0pc{ \quad = \quad }}
\vcenter{\xymatrix@C=-0pc{ \Y_c \deq && \ft \deq && \Xvc \deq && \X\swt && \pi\sit \\
\Y_c \ardr && \ft \dc{\tilde{\theta}} && \Xvc \ardl &&& \piic \deq \cl{\pi\swt} \\
& \s \deq && \Xuc &&&& \piic \\
& \s \dl && \dc{\psi} && \dr \pi\si \cldosdos{\piuc} \\
& \pi\sj &&&& \f }}
\vcenter{\xymatrix@C=-0pc{ \quad = \quad }}
\vcenter{\xymatrix@C=-0pc{ \Y_c \deq && \ft \deq && \Xvc \deq && \X\swt && \pi\sit \\
\Y_c \deq && \ft \deq && \Xvc &&& \piic \cl{\pi\swt} \\
\Y_c \deq && \ft \dl && \dc{\eps} && \dr \pi\sii \cldosuno{\pivc} \\
\Y_c && \pi\sjj &&&& \f \deq \\
& \pi_j \cl{\pi_c} &&&&& \f }}$$

\noindent where the first equality follows from axiom PC1, the second one holds by elevators calculus plus axiom PC2 and the last one is due to \eqref{A126}. 
% Finally, let's check that there is an invertible 2-cell in $\cc{M}_\ff{f}$ $(\tilde{w}v_0 w,c,\ff{s}\tilde{\mu}\circ \tilde{\theta}\ff{X}_{\tilde{w}})(u,a,\theta)\Mr{(\tilde{w}v_0 \mu,\alpha)} (\tilde{w}v_0 w,c,\ff{s}\tilde{\mu}\circ \tilde{\theta}\ff{X}_{\tilde{w}})(v,b,\eta)$:

Now observe that

$$\vcenter{\xymatrix@C=-0.4pc{ \Y_a \dl & \dc{\Y_{\alpha}} & \dr \Y_c &&& \ft \deq && \Xvc \deq && \X\swt \deq && \quad & \pi\sit \deq \\
\Y_b \deq && \Y_c \ardr && \dcr{\tilde{\theta}} & \ft && \Xvc \ardl && \X\swt \deq &&& \pi\sit \deq \\
\Y_b \deq && & \s \deq &&& \Xuc \ardl & \dcr{\X\smut} && \X\swt \ardr &&& \pi\sit \deq \\
\Y_b \dl & \dcr{\eta} && \dr \s && \X_w \deq && \Xvc \deq &&& \X\swt \deq && \pi\sit \deq \\
\fr &&& \X_v && \X_w && \Xvc &&& \X\swt && \pi\sit }}
\vcenter{\xymatrix@C=-0pc{ \quad = \quad }}
\vcenter{\xymatrix@C=-0.2pc{ \Y_a \dl & \dc{\Y_{\alpha}} & \dr \Y_c && \ft \deq && \Xvc \deq && \X\swt \deq &&&& \pi\sit \deq \\
\Y_b \deq && \Y_c \ardr && \ft \dc{\tilde{\theta}} && \Xvc \ardl && \X\swt \deq &&&& \pi\sit \deq \\
\Y_b \deq && & \s \deq && \Xuc \deq &&& \X\swt &&&& \pi\sit \\
\Y_b \deq &&& \s \deq && \Xuc &&& && \piic \cldosdos{\pi\swt} \\
\Y_b \deq &&& \s \deq && &&& \pi\si \cltresdos{\piuc} \opdosdos{\pi_w\inv} \\
\Y_b \deq &&& \s \deq &&& \X_w \deq &&&& \pi\sii \opdosuno{\pivc\inv} \\
\Y_b \deq &&& \s \deq &&& \X_w \deq && \Xvc \deq &&& \piic \op{\pi\swt\inv} \\
\Y_b \dl & \dcr{\eta} && \dr \s &&& \X_w \deq && \Xvc \deq && \X\swt \deq && \pi\sit \deq \\
\fr &&& \X_v &&& \X_w && \Xvc && \X\swt && \pi\sit }}
\vcenter{\xymatrix@C=-0pc{ \!\!\!\!\!\! = }}
$$

$$\vcenter{\xymatrix@C=-0.3pc{  \Y_a \dl & \dc{\Y_{\alpha}} & \dr \Y_c && \ft \deq && \Xvc \deq && \X\swt &&&& \pi\sit \\
\Y_b \deq && \Y_c \deq && \ft \deq && \Xvc && && \piic \cldosdos{\pi\swt} && \\
\Y_b \deq && \Y_c \deq && \ft \dl && \dc{\eps} && \dr \pi\sii \cldosdos{\pivc} \\
\Y_b \deq && \Y_c && \pi\sjj &&&& \f \deq \\
\Y_b \deq &&& \pi\sj \cl{\pi_c} \dl && \dcr{\psi\inv} &&& \dr \f \\
\Y_b \dl & \dcr{\eta} && \dr \s &&&&& \pi\si \deq \\
\fr \deq &&& \X_v \deq &&&&& \pi\si \opdosdos{\pi_w\inv} \\
\fr \deq &&& \X_v \deq &&& \X_w \deq &&&& \pi\sii \opdosuno{\pivc\inv} \\
\fr \deq &&& \X_v \deq &&& \X_w \deq && \Xvc \deq &&& \piic \opb{\pi\swt\inv} \\
\fr &&& \X_v &&& \X_w && \Xvc && \X\swt && \pi\sit }}
\vcenter{\xymatrix@C=-0pc{  =  }}
\vcenter{\xymatrix@C=-0.3pc{\Y_a \dl & \dc{\Y_{\alpha}} & \dr \Y_c && \ft \deq && \Xvc \deq && \X\swt &&&& \pi\sit \\
\Y_b \deq && \Y_c \deq && \ft \deq && \Xvc && && \piic \cldosdos{\pi\swt} && \\
\Y_b \deq && \Y_c \deq && \ft \dl && \dc{\eps} && \dr \pi\sii \cldosdos{\pivc} \\
\Y_b \deq && \Y_c && \pi\sjj &&&& \f \deq \\
\Y_b &&& \pi\sj \cl{\pi_c} &&&&& \f \deq \\
& \pi_j \clunodos{\pi_b} \dl &&& \dcr{\varphi\inv} &&&& \dr \f \\
& \fr \deq &&&&&&& \pi_i \opdosdos{\pi_v\inv} \\
& \fr \deq &&&&& \X_v \deq &&&& \pi\si \opdosdos{\pi_w\inv} \\
& \fr \deq &&&&& \X_v \deq && \X_w \deq &&&& \pi\sii \opdosuno{\pivc\inv} \\
& \fr \deq &&&&& \X_v \deq && \X_w \deq && \Xvc \deq &&& \piic \opb{\pi\swt\inv} \\
& \fr &&&&& \X_v && \X_w && \Xvc && \X\swt && \pi\sit}}
\vcenter{\xymatrix@C=-0pc{ = \quad }}
$$                    

$$\vcenter{\xymatrix@C=-0.3pc{  \Y_a \deq && \Y_c \deq && \ft \deq && \Xvc \deq && \X\swt &&&& \pi\sit \\
\Y_a \deq && \Y_c \deq && \ft \deq && \Xvc && && \piic \cldosdos{\pi\swt} && \\
\Y_a \deq && \Y_c \deq && \ft \dl && \dc{\eps} && \dr \pi\sii \cldosdos{\pivc} \\
\Y_a \deq && \Y_c && \pi\sjj &&&& \f \deq \\
\Y_a &&& \pi\sj \cl{\pi_c} &&&&& \f \deq \\
& \pi_j \clunodos{\pi_a} \dl &&& \dcr{\varphi\inv} &&&& \dr \f \\
& \fr \deq &&&&&&& \pi_i \opdosdos{\pi_u\inv} \\
& \fr \deq &&&&& \X_u \deq &&&& \pi\si \opdosdos{\pi_w\inv} \\
& \fr \deq &&&&& \X_u \dl & \dc{\X_{\mu}} & \dr \X_w &&&& \pi\sii \deq \\
& \fr \deq &&&&& \X_v \deq && \X_w \deq &&&& \pi\sii \opdosuno{\pivc\inv} \\
& \fr \deq &&&&& \X_v \deq && \X_w \deq && \Xvc \deq &&& \piic \opb{\pi\swt\inv} \\
& \fr &&&&& \X_v && \X_w && \Xvc && \X\swt && \pi\sit }}
\vcenter{\xymatrix@C=-0pc{\!\!\!\!\!\!\!\!  = \!\!\!\! }}
\vcenter{\xymatrix@C=-0.3pc{ \Y_a \deq && \Y_c \deq && \ft \deq && \Xvc \deq && \X\swt &&&& \pi\sit \\
\Y_a \deq && \Y_c \ardr && \ft \dc{\tilde{\theta}} && \ardl \Xvc && && \piic \deq \cldosdos{\pi\swt} && \\
\Y_a \deq &&& \s \deq && \Xuc &&&&& \piic \\
\Y_a \deq &&& \s \dl && \dcr{\psi} &&& \dr \pi\si \cltresdos{\piuc} \\
\Y_a &&& \pi\sj &&&&& \f \deq \\
& \pi_j \clunodos{\pi_a} \dl &&& \dcr{\varphi\inv} &&&& \dr \f \\
& \fr \deq &&&&&&& \pi_i \opdosdos{\pi_u\inv} \\
& \fr \deq &&&&& \X_u \deq &&&& \pi\si \opdosdos{\pi_w\inv} \\
& \fr \deq &&&&& \X_u \dl & \dc{\X_{\mu}} & \dr \X_w &&&& \pi\sii \deq \\
& \fr \deq &&&&& \X_v \deq && \X_w \deq &&&& \pi\sii \opdosuno{\pivc\inv} \\
& \fr \deq &&&&& \X_v \deq && \X_w \deq && \Xvc \deq &&& \piic \opb{\pi\swt\inv} \\
& \fr &&&&& \X_v && \X_w && \Xvc && \X\swt && \pi\sit }}
\vcenter{\xymatrix@C=-0pc{ \!\!\!\!\!\!\!\!\! = \quad }}
$$   

$$\vcenter{\xymatrix@C=-0.2pc{ \Y_a \deq && \Y_c \ardr && \ft \dc{\tilde{\theta}} && \ardl \Xvc && \X\swt \deq && \pi\sit \deq \\
\Y_a \deq &&& \s \deq && \Xuc \deq &&& \X\swt && \pi\sit \\
\Y_a \deq &&& \s \deq && \Xuc &&&& \piic \cl{\pi\swt} \\
\Y_a \dl & \dcr{\theta} && \dr \s &&&& \pi\si \deq \cldosdos{\piuc} \\
\fr \deq &&& \X_u \deq &&&& \pi\si \opdosdos{\pi_w\inv} \\
\fr \deq &&& \X_u \deq && \X_w \deq &&&& \pi\sii \opdosuno{\pivc\inv} \\
\fr \deq &&& \X_u \deq && \X_w \deq && \Xvc \deq &&& \piic \op{\pi\swt\inv} \\
\fr \deq &&& \X_u \dl & \dc{\X_{\mu}} & \dr \X_w && \Xvc \deq && \X\swt \deq && \pi\sit \deq \\
\fr &&& \X_v && \X_w && \Xvc && \X\swt && \pi\sit }}
\vcenter{\xymatrix@C=-0pc{  =  }}
\vcenter{\xymatrix@C=-0.2pc{ \Y_a \deq && \Y_c \ardr && \dcr{\tilde{\theta}} & \ft && \Xvc \ardl && \X\swt \deq &&& \pi\sit \deq \\
\Y_a \deq &&& \s \deq &&& \Xuc \ardl & \dcr{\X\smut} && \X\swt \ardr &&& \pi\sit \deq \\
\Y_a \dl & \dcr{\theta} && \dr \s && \X_w \deq && \Xvc \deq &&& \X\swt \deq && \pi\sit \deq \\
\fr \deq &&& \X_u \dl & \dc{\X_{\mu}} & \dr \X_w && \Xvc \deq &&& \X\swt \deq && \pi\sit \deq \\
\fr &&& \X_v && \X_w && \Xvc &&& \X\swt && \pi\sit }}$$

\noindent where the first equality is due to axiom PC2, the second one and the fifth one require some elevators calculus plus \eqref{A126}, the third holds because $(v,b,\eta)$ is a morphism in $\cc{M}_{\ff{f}}$, the fourth one and the last one are valid by elevators calculus plus axiom PC2 and the sixth one is due to elevators calculus plus the fact that $(u,a, \theta)$ is a morphism in $\cc{M}_{\ff{f}}$.

Then, by \ref{lema4}, there exist a morphism $\tilde{i}\mr{w_0}i_1 \in \cc{I}$ such that 

$$\vcenter{\xymatrix@C=-0.5pc{ \ff{Y}_a \dl & \dc{\ff{Y}_\alpha} & \ff{Y}_c \dr &&& \ff{t} \deq &&& \ff{X}_{v_0} \deq && \ff{X}_{\tilde{w}} \deq & \ff{X}_{w_0} \deq \\
                    \ff{Y}_b \deq & & \ff{Y}_c \dl & && \ff{t} \ar@{}[d]^{\tilde{\theta}} &&&  \ff{X}_{v_0} \dr && \ff{X}_{\tilde{w}} \deq  & \ff{X}_{w_0} \deq\\
                    \ff{Y}_b \deq & & && \ff{s} \deq &&  \ff{X}_{u_0} \dr && \dc{\ff{X}_{\tilde{\mu}}} && \ff{X}_{\tilde{w}} \dl & \ff{X}_{w_0} \deq  \\
                    \ff{Y}_b \dl && \dc{\eta} & & \ff{s} \dr && \ff{X}_{w} \deq && \ff{X}_{v_0} \deq && \ff{X}_{\tilde{w}} \deq & \ff{X}_{w_0} \deq  \\
                    \ff{r} & &&& \ff{X}_{v} & &\ff{X}_{w} & & \ff{X}_{v_0} & &\ff{X}_{\tilde{w}} & \ff{X}_{w_0}}}
\vcenter{\xymatrix@C=-.3pc{\quad \quad = \quad \quad  }}
\vcenter{\xymatrix@C=-.5pc{ \ff{Y}_a \deq & & & \ff{Y}_c \ar@{-}[dr] && \ff{t} \dc{\tilde{\theta}} && \ff{X}_{v_0} \ar@{-}[dl] & \ff{X}_{\tilde{w}} \deq & \ff{X}_{w_0} \deq\\
                    \ff{Y}_a \deq & & & & \ff{s} \deq & &  \ff{X}_{u_0} \dr & \dc{\ff{X}_{\tilde{\mu}}} & \ff{X}_{\tilde{w}} \dl & \ff{X}_{w_0} \deq\\
                    \ff{Y}_a \dl & & \dc{\theta} & & \ff{s} \dr & & \ff{X}_{w} \deq & \ff{X}_{v_0} \deq & \ff{X}_{\tilde{w}} \deq & \ff{X}_{w_0} \deq\\
                    \ff{r} \deq & & & & \ff{X}_{u} \dl & \dc{\ff{X}_\mu} & \ff{X}_{w} \dr & \ff{X}_{v_0} \deq & \ff{X}_{\tilde{w}} \deq & \ff{X}_{w_0} \deq \\
                    \ff{r} & & & & \ff{X}_{v} & & \ff{X}_{w} & \ff{X}_{v_0} & \ff{X}_{\tilde{w}} & \ff{X}_{w_0} }}$$

We can conclude that we a have a morphism in $\cc{M}_\ff{f}$ \mbox{$(\ff{s},\psi)\mr{(w_0\tilde{w}v_0 w,c,\ff{s}\ff{X}_{\tilde{\mu}}\ff{X}_{w_0}\circ \tilde{\theta}\ff{X}_{\tilde{w}}\ff{X}_{w_0})}(\ff{t}\ff{X}_{v_0}\ff{X}_{\tilde{w}}\ff{X}_{w_0}, %A131 
\vcenter{\xymatrix@C=-0.2pc@R=1pc{ \ft \deq && \Xvc \deq && \X\swt \deq && \Xwc && \piiu \\
\ft \deq && \Xvc \deq && \X\swt &&& \pi\sit \cl{\piwc} \\
\ft \deq && \Xvc &&&& \piic \cldosuno{\pi\swt} \\
\ft \dl && \dc{\eps} && \dr \pi\sii \cldosdos{\pivc} \\
\pi\sjj &&&& \f}}
)$} and an invertible 2-cell in $\cc{M}_\ff{f}$, 

\noindent $(w_0\tilde{w}v_0 w,c,\ff{s}\ff{X}_{\tilde{\mu}}\ff{X}_{w_0}\circ \tilde{\theta}\ff{X}_{\tilde{w}}\ff{X}_{w_0})(u,a,\theta)\Mr{(w_0\tilde{w}v_0 \mu,\alpha)}$

$\hfill (w_0\tilde{w}v_0 w,c,\ff{s}\ff{X}_{\tilde{\mu}}\ff{X}_{w_0}\circ \tilde{\theta}\ff{X}_{\tilde{w}}\ff{X}_{w_0})(v,b,\eta)$.

\item[F2:] Let $(\ff{r},\varphi)\cellpairrd{(u,a,\theta)}{(\mu,\alpha)}{(\rho,\beta)}{(v,b,\eta)}(\ff{s},\psi)\in \cc{M}_\ff{f}$. Since $\cc{J}$ is 2-filtered, we have $j'\stackrel{c}\rightarrow j''\in \cc{J}$ such that $c\alpha=c\beta$ and, by \ref{idrepresenta} and the fact that $\cc{I}$ is 2-filtered, we have \mbox{$\ff{t}:\ff{X}_{i''}\rightarrow \ff{Y}_{j''} \in \cc{C}$} and an invertible 2-cell $\epsilon$ such that $(\ff{t},\epsilon)$ represents $\ff{f}$ and there is a morphism \mbox{$i'\mr{w}\tilde{i}\in \cc{I}$} such that $w\mu=w\rho$.

Now, since $\cc{I}$ is 2-filtered, we have morphisms $\vcenter{\xymatrix@R=-0.5pc{\tilde{i} \ar[rd]^{u_0} \\
                                                                               & i_0 \\
                                                                               i'' \ar[ru]_{v_0}}}\in \cc{I}$.
                                                                               
Then we have $(\ff{s},\psi)$ and $(\ff{t}\ff{X}_{v_0}, \vcenter{\xymatrix@C=-0.2pc@R=1pc{ \ft \deq && \Xvc && \pi\sit \\
\ft \dl & \dcr{\eps} && \dr \pi\sii \cl{\pivc} \\
\pi\sjj &&& \f}})$ both representing $\ff{f}$ equipped with morphisms $i'\mr{u_0 w} i_0 \in \cc{I}$, $j'\mr{c}j'' \in \cc{J}$. Then, by \ref{lemorphismaraM_f}, we have morphisms $\vcenter{\xymatrix@R=-0.5pc{i' \ar[rd]^{u_1} \\
                                                                               & i_1 \\
                                                                               i_0 \ar[ru]_{v_1}}}\in \cc{I}$ and an invertible 2-cell $\vcenter{\xymatrix@C=-.5pc{\ff{Y}_{c} \ar@{-}[dr] &\ff{t}
                                                                               \ar@{}[d]^{\tilde{\theta}} & \ff{X}_{v_0} & \ff{X}_{v_1}\ar@{-}[dl] \\
& \ff{s} & \ff{X}_{u_1} }}$ $\in \cc{C} $ such that 
% cambiar por A132
% $$\vcenter{\xymatrix@C=-0.5pc{ & \pi_{j'}\dcellop{\pi_c} &&& \ff{f} \deq \\
%                     \ff{Y}_c \deq & & \pi_{j''}\dl &\dc{\epsilon} & \ff{f} \dr \\
%                     \ff{Y}_c \deq & & \ff{t} \deq && \pi_{i''} \dcellop{\pi_{v_0}} \\
%                     \ff{Y}_c \deq & & \ff{t} \deq & \ff{X}_{v_0} \deq & & \pi_{i_0} \dcellop{\pi_{v_1}} \\
%                     \ff{Y}_c \ar@{-}[dr] && \ff{t} \dc{\tilde{\theta}} & \ff{X}_{v_0} & \ff{X}_{v_1} \ar@{-}[dl] && \pi_{i_1} \deq\\
%                     & \ff{s} && \ff{X}_{u_1} &&& \pi_{i_1}}}
% \vcenter{\xymatrix@C=-.3pc{\quad \quad = \quad \quad  }						}
% \vcenter{\xymatrix@C=-0.5pc{\pi_{j'} \dl & \dc{\psi} & \ff{f} \dr \\
%                             \ff{s} \deq && \pi_{i'} \dcellop{\pi_{u_1}} \\
%                             \ff{s} & \ff{X}_{u_1} && \pi_{i_1}} }$$ 

$$\vcenter{\xymatrix@C=-0pc{ \Y_c \ardr && \ft & \dc{\tilde{\theta}} & \Xvc && \Xvu \ardl && \piiu \deq \\
& \s \deq &&&& \Xuu &&& \piiu \\
& \s \dl && \dcr{\psi} &&& \dr \pi\si \clunodos{\piuu} \\
& \pi\sj &&&&& \f }}
\vcenter{\xymatrix@C=-0pc{ \quad = \quad }}
\vcenter{\xymatrix@C=-0pc{ \Y_c \deq && \ft \deq && \Xvc \deq && \Xvu && \piiu \\
\Y_c \deq && \ft \deq && \Xvc &&& \piic \cl{\pivu} \\
\Y_c \deq && \ft \dl && \dc{\eps} && \dr \pi\sii \cldosuno{\pivc} \\
\Y_c && \pi\sjj &&&& \f \deq \\
& \pi\sj \cl{\pi_c} &&&&& \f}}$$

Plus, since $\cc{I}$ is 2-filtered, we have a morphism $i_1\mr{\tilde{w}}i_2$ and an invertible 2-cell $\tilde{w}u_1\Mr{\tilde{\mu}} \tilde{w}v_1 u_0 w \in \cc{I}$.

It can be checked that there is a morphism in $\cc{M}_\ff{f}$ \mbox{$(\ff{s},\psi)\mr{(\tilde{w}v_1 u_0 w,c,\ff{s}\ff{X}_{\tilde{\mu}}\circ \tilde{\theta}\ff{X}_{\tilde{w}})} (\ff{t}\ff{X}_{v_0}\ff{X}_{v_1}\ff{X}_{\tilde{w}}, 
%A133 
\vcenter{\xymatrix@C=-0.2pc@R=1pc{ \ft \deq && \Xvc \deq && \Xvu \deq && \X\swt && \piiu \\
\ft \deq && \Xvc \deq && \Xvu &&& \piiu \cl{\pi\swt} \\
\ft \deq && \Xvc &&&& \piic \cldosuno{\pivu} \\
\ft \dl && \dc{\eps} && \dr \pi\sii \cldosdos{\pivc} \\
\pi\sjj &&&& \f}}
)$} such that \mbox{$(\tilde{w}v_1 u_0 w,c,\ff{s}\ff{X}_{\tilde{\mu}}\circ \tilde{\theta}\ff{X}_{\tilde{w}})(u,a,\theta)=(\tilde{w}v_1 u_0 w,c,\ff{s}\ff{X}_{\tilde{\mu}}\circ \tilde{\theta}\ff{X}_{\tilde{w}})(v,b,\eta)$.}

\end{itemize}

$\vcenter{\xymatrix@R=.3pc{\cc{M}_\ff{f}\ar[r] & \cc{I} \\ (\ff{r},\varphi)\ar@{|->}[r] & i}}$ is 2-cofinal:

\begin{itemize}
\item[CF0:] Let $i\in \cc{I}$ and let $\ff{X}_{i'}\mr{\ff{r}} \ff{Y}_j \in \cc{C}$ such that $(\ff{r},id)$ represents $\ff{f}$. Since $\cc{I}$ is 2-filtered, we have $\vcenter{\xymatrix@R=-0.5pc{i\ar[rd]^{u} & \\ & i'' \\ i'\ar[ru]_{v}}}\in \cc{I}$. It is straightforward to check that $(\ff{r}\ff{X}_{v},\ff{r}\pi_{v})$ together with $i\mr{u}i'' \in \cc{I}$ proves CF0.

\item[CF1:] Let $i\in \cc{I}$, $(\ff{r},\varphi)\in \cc{M}_\ff{f}$ ($\ff{r}:\ff{X}_{i'}\rightarrow \ff{Y}_j$) and $i\mrpair{u}{v}i'\in \cc{I}$. Since $\cc{I}$ is 2-filtered, we have $i'\stackrel{w}\rightarrow i''$ and an invertible 2-cell $wu\stackrel{\mu}\Rightarrow wv \in \cc{I}$. It is straightforward to check that $(\ff{r},\varphi)\mr{ (w,id,id)}(\ff{r}\ff{X}_w, %A134
\vcenter{\xymatrix@C=-0.2pc@R=1pc{ \fr \deq && \X_w && \pi\sii \\ \fr \dl & \dcr{\varphi} && \dr \pi\si \cl{\pi_w} \\ \pi\sj &&& \f  }}
)$ proves CF1. 

\item[CF2:] Let $i\in \cc{I}$, $(\ff{r},\varphi)\in \cc{M}_\ff{f}$ ($\ff{r}:\ff{X}_{i'}\rightarrow \ff{Y}_j$) and $i\cellpairrd{u}{\mu}{\rho}{v}i' \in \cc{I}$. Since $\cc{I}$ is \mbox{2-filtered}, we have $i'\stackrel{w}\rightarrow i'' \in \cc{I}$ such that $w\mu=w\rho$. It is straightforward to check that \mbox{$(\ff{r},\varphi)\mr{ (w,id,id)}(\ff{r}\ff{X}_w, \vcenter{\xymatrix@C=-0.2pc@R=1pc{ \fr \deq && \X_w && \pi\sii \\ \fr \dl & \dcr{\varphi} && \dr \pi\si \cl{\pi_w} \\ \pi\sj &&& \f  }} )$} proves CF2. 

\end{itemize}

$\vcenter{\xymatrix@R=.3pc{\cc{M}_\ff{f}\ar[r] & \cc{J} \\ (\ff{r},\varphi)\ar@{|->}[r] & j}}$ is 2-cofinal:

\begin{itemize}
\item[CF0:] Let $j\in \cc{J}$. By \ref{idrepresenta}, we have $\ff{r}:\ff{X}_i\rightarrow \ff{Y}_j \in \cc{C}$ such that $(\ff{r},id)$ represents $\ff{f}$. This clearly proves CF0.

\item[CF1:] Let $j_0\in \cc{J}$, $(\ff{r},\varphi)\in \cc{M}_\ff{f}$ ($\ff{r}:\ff{X}_i\rightarrow \ff{Y}_j$) and $j_0\mrpair{a}{b}j \in \cc{J}$. Since $\cc{J}$ is 2-filtered, we have $j\stackrel{c}\rightarrow j'$ and an invertible 2-cell $ca\stackrel{\alpha}\Rightarrow cb \in \cc{J}$. Now, by \ref{idrepresenta}, we have $\ff{s}:\ff{X}_{i'}\rightarrow \ff{Y}_{j'} \in \cc{C}$ such that $(\ff{s},id)$ represents $\ff{f}$. 

From the proof of the fact that $\cc{M}_\ff{f}$ is 2-filtered, we have morphisms $\vcenter{\xymatrix@R=-0.5pc{(\ff{r},\varphi)\ar[rd]^{(u,c,\theta)} & \\ & (\ff{t},\psi)\\ (\ff{s},id)\ar[ru]_{(v,id,\eta)}}}$. It is straightforward to check that $(\ff{r},\varphi)\stackrel{(u,c,\theta)}\rightarrow (\ff{t},\psi)$ proves CF1. 

\item[CF2:] Let $j_0\in \cc{J}$, $(\ff{r},\varphi)\in \cc{M}_\ff{f}$ ($\ff{r}:\ff{X}_i\rightarrow \ff{Y}_j$) and $j_0\cellpairrd{a}{\alpha}{\beta}{b}j \in \cc{J}$. Since $\cc{J}$ is 2-filtered, we have $j\stackrel{c}\rightarrow j' \in \cc{J}$ such that $c\alpha=c\beta$. Now, by \ref{idrepresenta}, we have $\ff{s}:\ff{X}_{i'}\rightarrow \ff{Y}_{j'} \in \cc{C}$ such that $(\ff{s},id)$ represents $\ff{f}$. 

From the proof of the fact that $\cc{M}_\ff{f}$ is 2-filtered, we have morphisms $\vcenter{\xymatrix@R=-0.5pc{(\ff{r},\varphi)\ar[rd]^{(u,c,\theta)} & \\ & (\ff{t},\psi)\\ (\ff{s},id)\ar[ru]_{(v,id,\eta)}}}$. It is straightforward to check that $(\ff{r},\varphi)\stackrel{(u,c,\theta)}\rightarrow (\ff{t},\psi)$ proves CF2. 
\end{itemize}

\end{proof}

\begin{proposition}\label{reindexingparamorfismos}
Every morphism of 2-pro-objects $\ff{X}=(\ff{X}_i)_{i\in \cc{I}}\stackrel{\ff{f}}\rightarrow \ff{Y}=(\ff{Y}_j)_{j\in \cc{J}}$ can be represented up to equivalence by a 2-pro-object $\{\ff{X}'_m\stackrel{\ff{f}_m}\rightarrow \ff{Y}'_m\}_{m\in \cc{M}}$ in $2$-$\cc{P}ro(\cc{H}om_p(\ff{2},\cc{C}))$, i.e. $\exists$ a 2-filtered 2-category $\cc{M}$, 2-pro-objects $\ff{X}'=(\ff{X}'_m)_{m\in \cc{M}}$, $\ff{Y}'=(\ff{Y}'_m)_{m\in \cc{M}}$ and a morphism $\X' \mr{\f'} \Y'$ 
%$\ff{Y}'=(\ff{Y}'_m)_{m\in \cc{M}_\ff{f}}$ equivalent to $\ff{X}$ and $\ff{Y}$ respectively and morphisms $\ff{X}'_m\stackrel{\ff{f}_m}\rightarrow \ff{Y}'_m$ (satisfying certain compatibilities) 
such that the following diagram commutes in $2$-$\cc{P}ro(\cc{C})$ up to isomorphism:

\begin{equation}\label{cuadradoMTflechas}
\xymatrix@R=3pc@C=3pc{\ff{X} \ar@{}[rd]|{\cong} \ar[r]^{\ff{f}} \ar[d]_\simeq & \ff{Y} \ar[d]^\simeq \\ \ff{X}' \ar[r]_{\ff{f}'} & \ff{Y}'} 
\end{equation}

%\noindent where $\ff{f}'$ is the morphism induced by the pseudo-cone $\{\ff{X}'\stackrel{\ff{f}_m \pi_m }{\rightarrow} \ff{Y}'_m\}$.
%$\{\cc{C}(\ff{Y}'_m,-)\stackrel{\lambda_m \ff{f}_m^*}\Rightarrow \Lim{m\in \cc{M}_\ff{f}^{op}}{\cc{C} (\ff{X}'_m,-)}\}$.
\end{proposition}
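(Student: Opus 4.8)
The plan is to build the reindexing 2-category $\cc{M}$ from the 2-category $\cc{M}_{\ff{f}}$ introduced in Definition \ref{M_f}, whose objects are the pairs $(\ff{r},\varphi)$ representing $\ff{f}$. Lemma \ref{M_f 2-filt} already gives us exactly what we need: $\cc{M}_{\ff{f}}$ is 2-filtered and both forgetful 2-functors $\cc{M}_{\ff{f}} \to \cc{I}$, $(\ff{r},\varphi) \mapsto i$, and $\cc{M}_{\ff{f}} \to \cc{J}$, $(\ff{r},\varphi)\mapsto j$, are 2-cofinal. So I would set $\cc{M} = \cc{M}_{\ff{f}}$ and define the reindexed 2-pro-objects by precomposition: $\ff{X}' = (\ff{X}_{i(m)})_{m\in\cc{M}}$ where $i(m)$ is the index in $\cc{I}$ associated to $m=(\ff{r},\varphi)$, and similarly $\ff{Y}' = (\ff{Y}_{j(m)})_{m\in\cc{M}}$ using the index in $\cc{J}$. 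On morphisms of $\cc{M}$, which are triples $(u,a,\theta)$, one uses $\ff{X}_u$ and $\ff{Y}_a$ to make $\ff{X}'$ and $\ff{Y}'$ into genuine 2-functors on $\cc{M}^{op}$.

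The key point is to define $\ff{f}' : \ff{X}' \to \ff{Y}'$ componentwise so that it becomes a morphism of 2-pro-objects. For each object $m=(\ff{r},\varphi)$ of $\cc{M}$, the very data of $m$ supplies an arrow $\ff{X}_{i(m)} \xrightarrow{\ff{r}} \ff{Y}_{j(m)}$ in $\cc{C}$; this should be the $m$-component $\ff{f}'_m$. I would then check that the family $\{\ff{r}\}_{m}$ is 2-natural enough to assemble into a single morphism $\ff{X}'\to\ff{Y}'$ in $\Pro{C}$ — concretely, that for a morphism $(u,a,\theta)$ of $\cc{M}$ the invertible 2-cell $\theta$ (which compares $\ff{Y}_a\ff{s}$ and $\ff{r}\ff{X}_u$ up to the representing data) provides precisely the coherence 2-cell required of a pseudo-natural transformation of the appropriate type. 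Equivalently, this is exactly saying that $\{\ff{X}'_m \xrightarrow{\ff{f}'_m} \ff{Y}'_m\}_{m\in\cc{M}}$ is a 2-pro-object in $2$-$\cc{P}ro(\cc{H}om_p(\ff{2},\cc{C}))$, since an object of $\cc{H}om_p(\ff{2},\cc{C})$ is a morphism of $\cc{C}$ and the morphisms there are precisely triples $(\theta_0,\theta_1,\theta_m)$ as in Remark \ref{mapsp}.

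With the construction in place, the equivalences $\ff{X}\simeq\ff{X}'$ and $\ff{Y}\simeq\ff{Y}'$ are immediate from Proposition \ref{reindexing} applied to the two 2-cofinal forgetful 2-functors of Lemma \ref{M_f 2-filt}: reindexing a 2-pro-object along a 2-cofinal 2-functor yields an equivalent 2-pro-object. The vertical equivalences in diagram \eqref{cuadradoMTflechas} are these comparison morphisms. The remaining task is to verify that the square commutes up to isomorphism, i.e.\ that $\ff{f}'$ followed by the comparison $\ff{Y}'\simeq\ff{Y}$ agrees, up to an invertible 2-cell, with $\ff{f}$ followed by the comparison $\ff{X}\simeq\ff{X}'$. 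This is where I expect the main obstacle to lie: one must trace through the explicit form of the equivalence in Proposition \ref{reindexing} (built from the equivalence $\theta$ of pseudo-colimits of representables via Theorem \ref{T_Cat}) and exhibit the comparison 2-cell, using that each $\ff{f}'_m=\ff{r}$ represents $\ff{f}$ by the defining condition $(\ff{r},\varphi)\in\cc{M}_{\ff{f}}$. The invertible 2-cells $\varphi$ attached to the objects of $\cc{M}$ are exactly what furnishes the isomorphism filling the square, so the verification reduces to checking the naturality and coherence of this family of $\varphi$'s against the pseudo-cone structures — a bookkeeping argument in elevators calculus rather than a genuinely new difficulty.
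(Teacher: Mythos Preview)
Your proposal is correct and follows essentially the same approach as the paper: take $\cc{M}=\cc{M}_{\ff{f}}$, set $\ff{X}'_{(\ff{r},\varphi)}=\ff{X}_i$, $\ff{Y}'_{(\ff{r},\varphi)}=\ff{Y}_j$, $\ff{f}'_{(\ff{r},\varphi)}=\ff{r}$, obtain the vertical equivalences from Proposition~\ref{reindexing} via the 2-cofinal forgetful 2-functors of Lemma~\ref{M_f 2-filt}, and fill the square using the family of $\varphi$'s together with the universal property of $\ff{Y}'$. The paper records the last step as ``straightforward'' without spelling out the elevators-calculus bookkeeping you anticipate.
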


\begin{proof}
Take $\cc{M}=\cc{M}_\ff{f}$ as defined in \ref{M_f}, $\ff{X}'_{(\ff{r},\varphi)}=\ff{X}_i$ and $\ff{Y}'_{(\ff{r},\varphi)}=\ff{Y}_j$ (if $\ff{r}:\ff{X}_i\rightarrow \ff{Y}_j$) and $\ff{f}_{(\ff{r},\varphi)}=\ff{r}$.

Since $\vcenter{\xymatrix@R=.3pc{\cc{M}_\ff{f}\ar[r] & \cc{I} \\ (\ff{r},\varphi)\ar@{|->}[r] & i}}$ and $\vcenter{\xymatrix@R=.3pc{\cc{M}_\ff{f}\ar[r] & \cc{J} \\ (\ff{r},\varphi)\ar@{|->}[r] & j}}$ are 2-cofinal, by \ref{reindexing}, $\ff{X}'$ is equivalent to $\ff{X}$ and $\ff{Y}'$ is equivalent to $\ff{Y}$. 
It is straightforward to check that diagram \eqref{cuadradoMTflechas} commutes up to the isomorphism given by $\varphi$ and the universal property of $\ff{Y}'$. 
\end{proof}

The previous proposition can be also stated as follows:

\begin{remark}\label{reindexing morfismos como lifting}
Every object $\f\in \cc{H}om(\ff{2},\Pro{C})$ have a lifting to $\cc{H}om(\cc{M}^{op},\cc{C})$ up to equivalence for some 2-filtered 2-category $\cc{M}$.

$$\xymatrix{ & \cc{H}om(\cc{M}^{op},\cc{C}) \ar[d]^{inc}_>>>>{\simeq \quad \;\;} \\ 
             \ff{2} \ar[ru]^{\f'} \ar[r]_{\f} & \Pro{C}}$$

\cqd
\end{remark}

\begin{corollary}\label{mardesictrickparaflechas}
Let $\ff{X}=(\ff{X}_i)_{i\in \cc{I}}\stackrel{\ff{f}}\rightarrow \ff{Y}=(\ff{Y}_j)_{j\in \cc{J}}\in \Pro{C}$. There exists a cofinite filtered poset with a unique initial object $\ff{J}$, and a morphism $\ff{X}'\stackrel{\ff{f}'}\rightarrow \ff{Y}'\in \cc{H}om(\ff{J}^{op},\cc{C})$ such that the following diagram commutes in $\Pro{C}$ up to isomorphism:

\begin{equation}\label{cuadradoMTflechasbis}
\xymatrix@R=3pc@C=3pc{\ff{X} \ar@{}[rd]|{\cong}\ar[r]^{\ff{f}} \ar[d]_\simeq & \ff{Y} \ar[d]^\simeq \\ \ff{X}' \ar[r]_{\ff{f}'} & \ff{Y}'} 
\end{equation}

Equivalently every object $\f\in \cc{H}om(\ff{2},\Pro{C})$ have a lifting to $\cc{H}om(\ff{J}^{op},\cc{C})$ up to equivalence for some cofinite and filtered poset with a unique initial object $\ff{J}$.

$$\xymatrix{ & \cc{H}om(\ff{J}^{op},\cc{C}) \ar[d]^{inc}_>>>>{\simeq \quad \;\;} \\ 
             \ff{2} \ar[ru]^{\f'} \ar[r]_{\f} & \Pro{C}}$$
\end{corollary}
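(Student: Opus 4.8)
The plan is to combine the two reindexing results already available: Proposition \ref{reindexingparamorfismos}, which lifts a morphism $\ff{f}$ up to equivalence to a morphism of $2$-pro-objects indexed by a \emph{common} $2$-filtered $2$-category $\cc{M}=\cc{M}_\ff{f}$, and Proposition \ref{phi} together with Proposition \ref{reindexing}, which reindex along the $2$-cofinal $2$-functor $\ff{M}(\cc{M})\stackrel{\F}\rightarrow\cc{M}$ from the cofinite filtered poset $\ff{M}(\cc{M})$ with a unique initial object. The key point is that both reindexing steps can be performed \emph{simultaneously} on $\ff{X}'$ and $\ff{Y}'$ because they are indexed by the \emph{same} $2$-category $\cc{M}$, so a single $2$-cofinal $2$-functor $\F$ reindexes the whole arrow $\ff{f}'$ at once.

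First I would invoke Proposition \ref{reindexingparamorfismos} to obtain a $2$-filtered $2$-category $\cc{M}$, $2$-pro-objects $\ff{X}'=(\ff{X}'_m)_{m\in\cc{M}}$, $\ff{Y}'=(\ff{Y}'_m)_{m\in\cc{M}}$, a morphism $\ff{X}'\stackrel{\ff{f}'}{\rightarrow}\ff{Y}'$ with each $\ff{f}'_m=\ff{r}$ a morphism in $\cc{C}$, and the isomorphism-commutative square \eqref{cuadradoMTflechas} with vertical equivalences. Next I would set $\ff{J}=\ff{M}(\cc{M})$ and take the $2$-cofinal $2$-functor $\ff{M}(\cc{M})\stackrel{\F}{\rightarrow}\cc{M}$ from Proposition \ref{phi}; by the definition of $\ff{M}(\cc{M})$ this $\ff{J}$ is a cofinite filtered poset with a unique initial object. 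Precomposing the diagrams $\ff{X}'$ and $\ff{Y}'$ with $\F$ produces $2$-pro-objects $\ff{X}''=(\ff{X}'_{\F(j)})_{j\in\ff{J}}$ and $\ff{Y}''=(\ff{Y}'_{\F(j)})_{j\in\ff{J}}$, both now honest $2$-functors $\ff{J}^{op}\rightarrow\cc{C}$, i.e.\ objects of $\cc{H}om(\ff{J}^{op},\cc{C})$; and $\ff{f}'$ precomposed with $\F$ gives a morphism $\ff{X}''\stackrel{\ff{f}''}{\rightarrow}\ff{Y}''$ in $\cc{H}om(\ff{J}^{op},\cc{C})$. By Proposition \ref{reindexing} the canonical comparison morphisms $\ff{X}'\simeq\ff{X}''$ and $\ff{Y}'\simeq\ff{Y}''$ are equivalences in $\Pro{C}$.

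It then remains to assemble the square \eqref{cuadradoMTflechasbis}. I would compose the vertical equivalences $\ff{X}\simeq\ff{X}'\simeq\ff{X}''$ and $\ff{Y}\simeq\ff{Y}'\simeq\ff{Y}''$, and paste the two isomorphism-commutative squares: the one coming from \eqref{cuadradoMTflechas} and the one expressing that reindexing along $\F$ is compatible with $\ff{f}'$, i.e.\ that $\ff{f}''=\ff{f}'_\F$ fits into an isomorphism-commutative square against $\ff{f}'$ under the reindexing equivalences. Setting $\ff{f}'$ in the statement to be this $\ff{f}''$ (renaming $\ff{X}''\rightsquigarrow\ff{X}'$, $\ff{Y}''\rightsquigarrow\ff{Y}'$) yields the desired square commuting up to isomorphism in $\Pro{C}$. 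The equivalent lifting reformulation (the triangle over $\cc{H}om(\ff{J}^{op},\cc{C})\stackrel{inc}{\rightarrow}\Pro{C}$) is just the restatement of \eqref{cuadradoMTflechasbis} in the language of Remark \ref{reindexing morfismos como lifting}, now with $\cc{M}$ replaced by the poset $\ff{J}$.

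The main obstacle I anticipate is purely bookkeeping rather than conceptual: verifying that the two isomorphism-commutative squares paste to a single isomorphism-commutative square, which requires tracking that the reindexing equivalence of Proposition \ref{reindexing} is genuinely natural in the diagram, so that applying it to the arrow $\ff{f}'$ (not merely to the objects $\ff{X}'$ and $\ff{Y}'$ separately) produces a coherent invertible $2$-cell filling the reindexing square. Concretely, one must check that the comparison $2$-natural transformation $\theta$ appearing in the proof of \ref{reindexing} is compatible with the induced map on hom-categories $\cc{C}(\ff{X}'_m,-)$ versus $\cc{C}(\ff{Y}'_m,-)$; since $\ff{X}'$, $\ff{Y}'$ share the index $2$-category $\cc{M}$ and $\ff{f}'$ is levelwise a map in $\cc{C}$, this compatibility is automatic from the functoriality of the construction, but spelling it out is where the only real care is needed.
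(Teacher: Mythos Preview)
Your proposal is correct and follows essentially the same route as the paper: apply \ref{reindexingparamorfismos} to lift $\ff{f}$ to a levelwise morphism $\tilde{\ff{f}'}$ over a common $2$-filtered $\cc{M}$, then reindex along the $2$-cofinal $\F:\ff{M}(\cc{M})\to\cc{M}$ from \ref{phi} and take $\ff{J}=\ff{M}(\cc{M})$. The paper packages the final pasting step a bit more cleanly by working directly with the lifting-triangle formulation and the functor $(\F^{op})^*:\cc{H}om(\cc{M}^{op},\cc{C})\to\cc{H}om(\ff{J}^{op},\cc{C})$, so the naturality concern you raise is absorbed into the single observation that the right triangle commutes up to equivalence because $\F$ is $2$-cofinal.
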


\begin{proof}

Consider $\tilde{\f'}$ given by \ref{reindexingparamorfismos} and consider the following diagram: 

$$\xymatrix{& \cc{H}om(\cc{M}^{op},\cc{C}) \ar[rd]^{(\F^{op})^*} \ar[dd]|{\stackrel[\comw{a}]{\comw{a}}{\scriptstyle inc}} \\
            \ff{2} \ar@{}[r]|{\simeq} \ar[ru]^{\tilde{\f'}} \ar[rd]_{\f} & \ar@{}[r]|>>>>>>>>>{\simeq} & \cc{H}om(\ff{M}(\cc{M})^{op},\cc{C}) \ar[ld]^{inc} \\
            & \Pro{C}}$$

\noindent where $\ff{M}(\cc{M})\stackrel{\F}\rightarrow \cc{M}$ is the one given by \ref{phi}. Note that the equivalence in the right triangle is because $\F$ is 2-cofinal.

Then take $\ff{J}=\ff{M}(\cc{M})$ and $ \f'=(\F^{op})^* \tilde{\f'}$.

%First consider 2-pro-objects $\tilde{\ff{X}}$ and $\tilde{\ff{Y}}$ equivalent to $\ff{X}$ and $\ff{Y}$ and indexed by $M(\cc{I})$ and $M(\cc{J})$ respectively as in \ref{reindexingparaobjetos}. Then take the morphism of 2-pro-objects $\tilde{\ff{X}}\stackrel{\tilde{\ff{f}}}{\rightarrow}\tilde{\ff{Y}}$ given by the composition of $\ff{f}$ and these equivalences and apply \ref{reindexingparamorfismos} to this morphism. Then there is a commutative diagram as follows 

%\begin{equation}
%\xymatrix{\tilde{\ff{X}}\ar[r]^{\tilde{\ff{f}}} \ar[d]_\simeq & \tilde{\ff{Y}} \ar[d]^\simeq \\ \ff{X}'' \ar[r]_{\ff{f}''} & \ff{Y}''}  
%\end{equation}

% \noindent where $\ff{X}''\stackrel{\ff{f}''}{\rightarrow}\ff{Y}''\in \cc{H}om(\cc{M}_{\tilde{\ff{f}}}^{op},\cc{C})$.
% 
% Finally, consider 2-pro-objects $\ff{X}'$ and $\ff{Y}'$ equivalent to $\ff{X}''$ and $\ff{Y}''$ and indexed by $M(\cc{M}_{\tilde{\ff{f}}})$ as in \ref{reindexingparaobjetos}. Then take the morphism of 2-pro-objects $\ff{X}'\stackrel{\ff{f}'}{\rightarrow}\ff{Y}'$ given by the composition of $\ff{f}''$ and these equivalences which clearly satisfies the desired property. 
\end{proof}

The lifting property \ref{mardesictrickparaflechas} also holds for $\Prop{C}$:

\begin{corollary}\label{mardesictrickparaflechasenprop}
Let $\ff{X}=(\ff{X}_i)_{i\in \cc{I}}\stackrel{\ff{f}}\rightarrow \ff{Y}=(\ff{Y}_j)_{j\in \cc{J}}\in \Prop{C}$. There exist a cofinite filtered poset with a unique initial object $\ff{J}$, and a morphism $\ff{X}'\stackrel{\ff{f}'}\rightarrow \ff{Y}'\in \cc{H}om(\ff{J},\cc{C})$ such that the following diagram commutes in $\Prop{C}$ up to isomorphism:

\begin{equation}\label{cuadradoMTflechasbis}
\xymatrix@R=3pc@C=3pc{\ff{X} \ar@{}[rd]|{\cong} \ar[r]^{\ff{f}} \ar[d]_\cong & \ff{Y} \ar[d]^\cong \\ \ff{X}' \ar[r]_{\ff{f}'} & \ff{Y}'} 
\end{equation}

Equivalently every object $\f\in \cc{H}om(\ff{2},\Prop{C})$ have a lifting to $\cc{H}om_p(\ff{J}^{op},\cc{C})$ up to equivalence for some cofinite and filtered poset with a unique initial object $\ff{J}$.

$$\xymatrix{ & \cc{H}om_p(\ff{J}^{op},\cc{C}) \ar[d]^{inc}_>>>>{\simeq \quad \;\;} \\ 
             \ff{2} \ar[ru]^{\f'} \ar[r]_{\f} & \Prop{C}}$$

\end{corollary}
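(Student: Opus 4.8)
The plan is to mirror the proof of \ref{mardesictrickparaflechas} entirely within the pseudo setting, relying on the fact that all the ingredients used there have already been noted to hold in $\Prop{C}$. First I would transport the construction of \ref{M_f}: for the given morphism $\ff{f}\in\Prop{C}$, define the $2$-category $\cc{M}_\ff{f}$ exactly as in \ref{M_f}, with objects the pairs $(\ff{r},\varphi)$ representing $\ff{f}$ (such pairs exist by \ref{idrepresentap}), morphisms the triples $(u,a,\theta)$ and $2$-cells the pairs $(\mu,\alpha)$ subject to the same equations. The proof that $\cc{M}_\ff{f}$ is $2$-filtered and that the two projections $\cc{M}_\ff{f}\to\cc{I}$ and $\cc{M}_\ff{f}\to\cc{J}$ are $2$-cofinal is the content of \ref{M_f 2-filt}; it goes through verbatim in $\Prop{C}$ because every lemma it invokes --- \ref{lemorphismaraM_f}, \ref{lema2}, \ref{lema4} and \ref{lema5} --- holds in $\Prop{C}$ by the remarks following \ref{lema1} (see \ref{lema1p}).

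Next I would prove the $\Prop{C}$ analogue of \ref{reindexingparamorfismos}. Setting $\cc{M}=\cc{M}_\ff{f}$, $\ff{X}'_{(\ff{r},\varphi)}=\ff{X}_i$, $\ff{Y}'_{(\ff{r},\varphi)}=\ff{Y}_j$ and $\ff{f}_{(\ff{r},\varphi)}=\ff{r}$ produces a morphism $\tilde{\ff{f}'}\colon\ff{X}'\to\ff{Y}'$ in $2$-$\cc{P}ro_p(\cc{H}om_p(\ff{2},\cc{C}))$, that is, a lift to $\cc{H}om_p(\cc{M}^{op},\cc{C})$. Since the projections are $2$-cofinal, the reindexing \ref{reindexing} --- which holds in $\Prop{C}$ as noted after its proof --- gives equivalences $\ff{X}\simeq\ff{X}'$ and $\ff{Y}\simeq\ff{Y}'$, and the square commutes up to the isomorphism furnished by $\varphi$ together with the universal property of $\ff{Y}'$. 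The only difference from the strict case is that the reindexed object now lives in the $\cc{H}om_p$ $2$-category, which is precisely what the statement requires.

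Finally I would pass to a cofinite filtered poset. Applying \ref{phi} to $\cc{M}=\cc{M}_\ff{f}$ yields a $2$-cofinal $2$-functor $\ff{M}(\cc{M})\stackrel{\F}\to\cc{M}$; composing with $(\F^{op})^*\colon\cc{H}om_p(\cc{M}^{op},\cc{C})\to\cc{H}om_p(\ff{M}(\cc{M})^{op},\cc{C})$ and setting $\ff{J}=\ff{M}(\cc{M})$, $\ff{f}'=(\F^{op})^*\tilde{\ff{f}'}$ gives the desired lift; the new triangle commutes up to equivalence because $\F$ is $2$-cofinal, again by \ref{reindexing} in $\Prop{C}$. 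The main obstacle is purely bookkeeping: one must verify that replacing $\cc{H}om$ by $\cc{H}om_p$ throughout disturbs no step, and that the representatives $(\ff{r},\varphi)$ now carry genuinely non-trivial structural $2$-cells $\varphi$ (one can no longer normalise $\varphi=id$ as in $\Pro{C}$). Both are handled by the pseudo-equivalence \ref{proppseudoeqapro}, which is exactly what makes the strict lemmas available in the pseudo context.
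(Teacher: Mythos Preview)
Your approach is correct but takes a genuinely different and more laborious route than the paper. You propose to rebuild the entire machinery --- the $2$-category $\cc{M}_\ff{f}$, the $2$-filteredness and $2$-cofinality proofs, and the reindexing proposition --- directly in the pseudo setting, checking that each lemma (\ref{idrepresentap}, \ref{lema1p}, \ref{lemorphismaraM_f}, \ref{lema4}) transfers. The paper instead uses the retract pseudo-equivalence \ref{proppseudoeqapro} once at the very start: given $\ff{f}\in\Prop{C}$, choose $\tilde{\ff{f}}\in\Pro{C}$ together with an invertible $2$-cell $\alpha\colon\ff{f}\Rightarrow\tilde{\ff{f}}$ in $\Prop{C}$; apply the already-proved strict corollary \ref{mardesictrickparaflechas} to $\tilde{\ff{f}}$ to obtain the square with $2$-cell $\gamma$; then the composite $\gamma\circ\ff{b}\alpha$ fills the square for $\ff{f}$. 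Your route is self-contained and would establish the result without invoking the strict version, which is conceptually cleaner, but the paper's three-line reduction is far more economical and is exactly what the pseudo-equivalence is for: statements about morphisms pass between $\Pro{C}$ and $\Prop{C}$ up to an invertible $2$-cell, so there is no need to redo any of the work.
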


\begin{proof}
 By \ref{proppseudoeqapro}, there exist $\ff{X}\mr{\tilde{\ff{f}}}\ff{Y} \in \Pro{C}$ and an invertible 2-cell \mbox{$\ff{f} \Mr{\alpha} \tilde{\ff{f}}\in \Prop{C}$.} Apply \ref{mardesictrickparaflechas} to $\tilde{\ff{f}}$ to obtain a cofinite filtered poset with a unique initial object $\ff{J}$, and a morphism $\ff{X}'\stackrel{\ff{f}'}\rightarrow \ff{Y}'\in \cc{H}om(\ff{J}^{op} ,\cc{C})$ such that the following diagram commutes in $\Pro{C}$ up to an invertible 2-cell $\gamma$:

\begin{equation}\label{cuadradoMTflechasbis}
\xymatrix@R=3pc@C=3pc{\ff{X} \ar@{}[rd]|{\cong \; \Downarrow \; \gamma} \ar[r]^{\tilde{\ff{f}}} \ar[d]_{\ff{a}\simeq} & \ff{Y} \ar[d]^{\ff{b}\simeq} \\ \ff{X}' \ar[r]_{\ff{f}'} & \ff{Y}'} 
\end{equation}

Then we have 

$$\vcenter{\xymatrix@R=3pc@C=1.5pc{\ff{X}\ar[rr]^{\ff{f}} \ar[d]_{\ff{a}\simeq} & 
            \ar@{}[d]|{\cong \; \Downarrow \; \gamma \circ \ff{b}\alpha  } &
            \ff{Y} \ar[d]^{\ff{b}\simeq} \\ 
            \ff{X}' \ar[rr]_{\ff{f}'} &&
            \ff{Y}'}}
            \vcenter{\xymatrix{ \in \Prop{C}}}$$

\noindent as we wanted to prove.
% \begin{remark}
% In the previous corollary, $\cc{K}$ can be chosen with a unique initial object. 
% \end{remark}
% 
% \begin{proof}
%  
% \end{proof}
\end{proof}

\subsection{Reindexing for diagrams}\label{diagramas}

The following proposition is a generalization of \ref{reindexingparamorfismos}.

\begin{proposition}\label{diagrama de diagramas}
Let $\Delta\mr{\D}\Pro{C}$ be a finite diagram with commutation relations and no loops in $2$-$\cc{P}ro(\cc{C})$. Then $\D$ can be represented up to equivalence by a 2-pro-object over $\cc{H}om_p(\Delta,\cc{C})$, i.e. there exists an inverse 2-filtered system of diagrams $\{\Delta \mr{\D_k}\cc{C}\}_{k\in \cc{M}}$ in $\cc{C}$ such that the diagram induced by the $\D_k$'s in $\Pro{C}$ is equivalent to $\D$ up to isomorphism. 

Equivalently, every object $\D\in \cc{H}om(\Delta,\Pro{C})$ have a lifting to $\cc{H}om(\cc{M}^{op},\cc{C})$ up to equivalence for some 2-filtered 2-category $\cc{M}$.

$$\xymatrix{ & \cc{H}om(\cc{M}^{op},\cc{C}) \ar[d]^{inc}_>>>>{\simeq \quad \;\;} \\ 
             \Delta \ar[ru]^{\D'} \ar[r]_{\D} & \Pro{C}}$$

\end{proposition}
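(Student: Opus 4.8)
The plan is to generalize the construction of $\cc{M}_\ff{f}$ from \ref{M_f} to an arbitrary finite diagram $\Delta \mr{\D} \Pro{C}$. First I would define a $2$-category $\cc{M}_\D$ whose objects are compatible families of ``representatives'' of the whole diagram: concretely, a choice of index $i_\delta \in \cc{I}_\delta$ for each vertex $\delta$ of $\Delta$ (where $\D(\delta) = (\D(\delta)_{i})_{i \in \cc{I}_\delta}$), together with, for each edge $\delta \mr{e} \delta'$ of $\Delta$, an arrow $\ff{r}_e$ in $\cc{C}$ and an invertible $2$-cell $\varphi_e$ so that $(\ff{r}_e, \varphi_e)$ represents the morphism $\D(e)$, and such that the $\ff{r}_e$ satisfy the commutation relations of $\Delta$ (up to the coherence data). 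Morphisms and $2$-cells of $\cc{M}_\D$ are defined exactly as in \ref{M_f}, but now carrying a tuple $(u_\delta, \theta_e)$ indexed by the vertices and edges, subject to the analogues of the compatibility equations there.

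The key steps, in order, would be: (1) show $\cc{M}_\D$ is $2$-filtered, verifying F0, F1, F2; (2) show that for each vertex $\delta$ the forgetful $2$-functor $\cc{M}_\D \to \cc{I}_\delta$, $(\,\cdots\,) \mapsto i_\delta$, is $2$-cofinal; and (3) assemble the lifting $\D'$ and invoke \ref{reindexing} to conclude the diagram commutes up to equivalence. For step (1), the proofs of F0, F1, F2 would mirror the proof of \ref{M_f 2-filt} almost verbatim, except that the crucial combinatorial inputs \ref{lemorphismaraM_f} and \ref{lema4} must be replaced by their finite-family versions \ref{lemorphismaraM_D} and \ref{lema2paraM_D}. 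This is precisely why those multi-morphism lemmas were proved by induction on $k$ in \ref{lemas2pro}: one processes all edges of $\Delta$ simultaneously, using $2$-filteredness of the $\cc{I}_\delta$ and of $\cc{J}$ to find a common refinement, and then applies \ref{lema2paraM_D} to collapse the finitely many $2$-cells witnessing the commutation relations to genuine equalities on a single later index. For step (3), once cofinality of each $\cc{M}_\D \to \cc{I}_\delta$ is established, \ref{reindexing} gives that the reindexed $2$-pro-object $\D'(\delta) = (\D(\delta)_{i_\delta})_{(\cdots) \in \cc{M}_\D}$ is equivalent to $\D(\delta)$, coherently over $\delta$, and the $\ff{r}_e$ assemble into a genuine (strict-index) diagram $\cc{M}_\D^{op} \to \cc{C}$, i.e. an object of $\cc{H}om_p(\Delta, \cc{C})$ landing in $\Pro{C}$ via $\cc{M}_\D$.

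The hard part will be step (1), and within it the interaction between the commutation relations of $\Delta$ and the $2$-cofiltering axioms. In the single-morphism case of \ref{M_f 2-filt} the verification of F1 already required a lengthy elevator-calculus computation; for a diagram with several edges related by commutation constraints one must simultaneously refine along every edge while preserving \emph{all} the relations, and then use \ref{lema2paraM_D} to equalize the finitely many comparison $2$-cells at once. The bookkeeping of how a single refinement index $i''$ (and maps $v,w$) serves every vertex and edge coherently is the genuine technical obstacle; the ``no loops'' hypothesis is what makes this manageable, since it lets one orient $\Delta$ and choose representatives following a topological order so that the coherence $2$-cells compose without circularity. Once this is set up, the remaining cofinality verifications (CF0, CF1, CF2 for each projection) are straightforward adaptations of the corresponding arguments in \ref{M_f 2-filt}, and the final assembly in step (3) is formal given \ref{reindexing}.
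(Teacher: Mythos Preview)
The paper takes a genuinely different route: it proceeds by induction on the number of vertices of $\Delta$. One removes an initial vertex $x$ (which exists by the no-loops hypothesis), applies the inductive hypothesis to the remaining diagram $\Delta'$ to obtain a \emph{single} 2-filtered index 2-category $\cc{J}$ encoding all of $\Delta'$, and then defines $\cc{M}$ as the 2-category of $m$-tuples $\{(\ff{r}_l,\varphi_l)\}_{l=1,\dots,m}$ of representatives for the $m$ edges out of $x$, where the tuples share a common source index $i\in\cc{I}$ (the index 2-category of $\D(x)$) and a common target index $j\in\cc{J}$. This collapses the problem back to the two-index-category situation of $\cc{M}_\ff{f}$, and is exactly why Lemmas \ref{lemorphismaraM_D} and \ref{lema2paraM_D} are formulated for a family $\{\ff{X}\to\ff{Y}^l\}$ with a single source and all targets indexed by \emph{the same} $\cc{J}$: they are tailor-made for this inductive step, not for a direct attack.

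Your direct construction over $\prod_\delta\cc{I}_\delta$ is plausible, but the lemmas you invoke do not fit it as stated. In your $\cc{M}_\D$, the edges out of a vertex $\delta$ land in different index 2-categories $\cc{I}_{\delta'}$, whereas \ref{lemorphismaraM_D} assumes a common target index category with a single $a:j\to j'$. The generalization you would actually need (allowing each $\ff{Y}^l$ its own index 2-category and its own $a_l$) is not hard, since the proof of \ref{lemorphismaraM_D} only ever combines refinements on the source side; but it is extra work, and your claim that step~(1) ``would mirror \ref{M_f 2-filt} almost verbatim'' is optimistic once you have one index per vertex and must coordinate refinements across all of them. The paper's induction buys precisely this: by packaging $\Delta'$ into one $\cc{J}$ via the inductive hypothesis, the existing two-index machinery applies without modification, and the commutation relations internal to $\Delta'$ are absorbed into $\D'_j$ rather than imposed object-by-object on representatives.
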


\begin{proof}
We are going to proceed by induction in the amount of vertices of $\Delta$. The initial case is trivial.

Now, suppose that we have proved the proposition for diagrams with $n-1$ vertices and let $\Delta\mr{\D}\Pro{C}$ be a diagram with $n$ vertices. Let $x$ be an initial vertex of $\Delta$ and let $\Delta'$ be the diagram resulting by taking $x$ out of $\Delta$ and $\D'$ the induced diagram in $\Pro{C}$. By inductive hypothesis, there exists an inverse 2-filtered system of diagrams $\{\D'_j\}_{j\in \cc{J}}$ such that the diagram induced by the $\D'_j$'s in $\Pro{C}$ is equivalent to $\D'$. Let $\ff{X}=(\ff{X}_i)_{i\in \cc{I}}$ be the object of $\Pro{C}$ corresponding to $x$ by $\D$ and let $\{\ff{X}\mr{f_l}\ff{Y}^l\}_{l=1,...,m}$ be all the morphisms in $\Pro{C}$ corresponding to morphisms from $x$ to some other vertex of $\Delta$ when we apply $\D$. In what follows, we are going to abuse the notation by using $\ff{Y}^l$ for the corresponding objects via the equivalence between $\D'$ and the diagram induced by the $\D'_j$'s and $f_l$ for the composition of the previous $f_l$ with the corresponding equivalence. 

 Define $\cc{M}$ as the following 2-category:

Objects are $m$-tuples of pairs $(\ff{r}_l,\varphi_l)$ with $\ff{X}_i\mr{\ff{r}_l}\ff{Y}^l_j \in \cc{C}$ such that $i\in \cc{I}$, $j\in \cc{J}$ and $(\ff{r}_l,\varphi_l)$ represents $\ff{f}_l\ \forall \ l=1,...,m$. 

Morphisms $\{(\ff{r}_l,\varphi_l)\}_{l=1,...,m}\mr{}\{(\ff{s}_l,\psi_l)\}_{l=1,...,m}$ ($\ff{r}_l:\ff{X}_i\rightarrow \ff{Y}^l_j$, $\ff{s}_l:\ff{X}_{i'}\rightarrow \ff{Y}^l_{j'}$) are triplex $(u,a,\{\theta_l\}_{l=1,...,m})$ where $i\stackrel{u}\rightarrow i'\in\cc{I}$, $j\stackrel{a}\rightarrow j'\in\cc{J}$ and $\forall \ l=1,...,m\ \  \theta_l$ is an invertible 2-cell $\vcenter{\xymatrix@C=0pc{\ff{Y}^l_{a} \dl & \dc{\theta_l} & \ff{s}_l \dr \\
                               \ff{r}_l & & \ff{X}_u}}$ such that 

% cambiar por A230
%                                
% $$\vcenter{\xymatrix@C=-.3pc@R=1pc{& \pi_j \dcellop{\pi_a} & & \ff{f}_l \deq \\
%                            \ff{Y}^l_a \deq && \pi_{j'} \dl & \dc{\psi_l} & \ff{f}_l\dr\\
%                            \ff{Y}^l_a \dl & \dc{\theta_l} & \ff{s}_l \dr & & \pi_{i'}\deq\\
%                            \ff{r}_l && \ff{X}_u & & \pi_{i'}}}
% \vcenter{\xymatrix@C=-.3pc{\quad \quad = \quad \quad  }}
% \vcenter{\xymatrix@C=-.3pc@R=1pc{ \pi_j \dl & \dc{\varphi_l} & \ff{f}_l \dr \\
%                            \ff{r}_l \deq  && \pi_{i} \dcellop{\pi_u} \\
%                            \ff{r}_l & \ff{X}_u & & \pi_{i'}}}$$

$$\vcenter{\xymatrix@C=-0pc{ \Yal \dl & \dc{\theta_l} & \dr \s_l && \pi\si \deq \\
\fr_l \deq && \X_u && \pi\si \\
\fr_l \dl & \dcr{\varphi_l} && \dr \pi_i \cl{\pi_u} \\
\pi_j &&& \f_l }}
\vcenter{\xymatrix@C=-0pc{ \quad = \quad }}
\vcenter{\xymatrix@C=-0pc{ \Yal \deq && \s_l \dl & \dc{\psi_l} & \dr \pi\si \\
\Yal && \pi\sj && \f_l \deq \\
& \pi_j \cl{\pi_a} &&& \f_l }}$$

A 2-cell $\{(\ff{r}_l,\varphi_l)\}_{l=1,...,m}\cellrd{(u,a,\{\theta_l\}_{l=1,...,m})}{}{(v,b,\{\eta_l\}_{l=1,...,m})}\{(\ff{s}_l,\psi_l)\}_{l=1,...,m}$ is a pair $(\mu,\alpha)$ where $i\cellrd{u}{\mu}{v}i' \in \cc{I}$, $j\cellrd{a}{\alpha}{b}j' \in \cc{J}$ and $\forall \ l=1,...,m$

$$\vcenter{\xymatrix@C=-.3pc{\ff{Y}^l_a \dcellb{\ff{Y}^l_\alpha} & & \ff{s}_l \deq \\
	                     \ff{Y}^l_{b} \dl & \dc{\eta_l} & \ff{s}_l \dr \\
	                     \ff{r}_l & & \ff{X}_{v} }}
\vcenter{\xymatrix@C=-.3pc{\quad \quad = \quad \quad  }						}
\vcenter{\xymatrix@C=-.3pc{\ff{Y}^l_a \dl & \dc{\theta_l} &\ff{s}_l \dr \\
		           \ff{r}_l \deq & & \ff{X}_{u} \dcellb{\ff{X}_\mu} \\
		           \ff{r}_l & & \ff{X}_{v} }}$$

Identities and compositions are defined in the obvious way. 

In the following, we are going to prove that $\cc{M}$ is 2-filtered and the 2-functors $\vcenter{\xymatrix@R=.3pc{\cc{M}\ar[r] & \cc{I} \\ \{(\ff{r}_l,\varphi_l)\}_{l=1,...,m}\ar@{|->}[r] & i}}$ and $\vcenter{\xymatrix@R=.3pc{\cc{M}\ar[r] & \cc{J} \\ \{(\ff{r}_l,\varphi_l)\}_{l=1,...,m}\ar@{|->}[r] & j}}$ are 2-cofinal. 

$\cc{M}$ is 2-filtered:

\begin{itemize}
\item[F0:] Let $\{(\ff{r}_l,\varphi_l)\}_{l=1,...,m}$, $\{(\ff{s}_l,\psi_l)\}_{l=1,...,m} \in \cc{M}$ ($\ff{r}_l:\ff{X}_i\rightarrow \ff{Y}^l_j$, $\ff{s}_l:\ff{X}_{i'}\rightarrow \ff{Y}^l_{j'}$). Since $\cc{J}$ is \mbox{2-filtered}, we have $\vcenter{\xymatrix@R=-0.5pc{j\ar[rd]^a & \\ & j'' \\ j'\ar[ru]_{b}}} \in \cc{J}$ and, by \ref{idrepresenta} and the fact that $\cc{I}$ is \mbox{2-filtered}, we have $\ff{X}_{i''}\mr{\ff{t}_l}\ff{Y}^l_{j''} \in \cc{C}$ and invertible 2-cells $\epsilon_l$ such that $\forall \ l=1,..,m \ (\ff{t}_l,\epsilon_l)$ represents $\ff{f}_l$ and there are morphisms $\vcenter{\xymatrix@R=-0.5pc{i\ar[dr]^u \\
                          & i'' \\
                          i' \ar[ur]_ v}} \in \cc{I}$. Observe that, in this case, we are using the fact that $\cc{I}$ is 2-filtered as in \ref{M_f 2-filt} but also to achieve that all $\ff{t}_l$ have the same source.
                          
Then we have $\{(\ff{r}_l,\varphi_l)\}_{l=1,...,m}$ and $\{(\ff{t}_l,\epsilon_l)\}_{l=1,...,m}$ such that $\forall \ l=1,...,m$, $(\ff{r}_l,\varphi_l)$ and $(\ff{t}_l,\epsilon_l)$ both represent $\ff{f}_l$ and there are morphisms $i\mr{u}i'' \in \cc{I}$, $j\mr{a}j'' \in \cc{J}$. So, by \ref{lemorphismaraM_D}, there are morphisms $\vcenter{\xymatrix@R=-0.5pc{i\ar[rd]^{u_0} \\
                                                                                                                                                             & \tilde{i} \\
                                                                                                                                                             i''\ar[ru]_{v_0}}} \in \cc{I}$ and invertible 2-cells $\vcenter{\xymatrix@C=-.5pc{\ff{Y}^l_{a} \ar@{-}[dr] &&\ff{t}_l \dc{\theta_l} & &\ff{X}_{v_0} \ar@{-}[dl] \\
                                              & \ff{r}_l & & \ff{X}_{u_0} }}$ $\in \cc{C} $ such that 
% cambiar por A135
% $$\vcenter{\xymatrix@C=-0.5pc{ & \pi_j\dcellop{\pi_a} &&&& \ff{f}_l \deq \\
%                     \ff{Y}^l_a \deq & & \pi_{j''}\dl &\ar@{}[d]^{\epsilon_l} && \ff{f}_l \dr \\
%                     \ff{Y}^l_a \deq & & \ff{t}_l \deq &&& \pi_{i''} \dcellop{\pi_{v_0}} \\
%                     \ff{Y}^l_a \ar@{-}[dr] && \ff{t}_l \dc{\theta_l} && \ff{X}_{v_0} \ar@{-}[dl] && \pi_{\tilde{i}} \deq\\
%                     & \ff{r}_l && \ff{X}_{u_0} &&& \pi_{\tilde{i}}}}
% \vcenter{\xymatrix@C=-.3pc{\quad \quad = \quad \quad  }						}
% \vcenter{\xymatrix@C=-0.5pc{\pi_j \dl & \dc{\varphi_l} & \ff{f}_l \dr \\
%                             \ff{r}_l \deq && \pi_i \dcellop{\pi_{u_0}} \\
%                             \ff{r}_l & \ff{X}_{u_0} && \pi_{\tilde{i}}} }$$

$$\vcenter{\xymatrix@C=-0pc{ \Yal \ardr && \ft_l \dc{\theta_l} && \Xvc \ardl && \pi\sit \deq \\
& \fr_l \deq && \Xuc &&& \pi\sit \\
& \fr_l \dl & \dcr{\varphi_l} && \dr \pi_i \clunodos{\piuc} \\
& \pi_j &&& \f_l }}
\vcenter{\xymatrix@C=-0pc{ \quad = \quad }}
\vcenter{\xymatrix@C=-0pc{ \Yal \deq && \ft_l \deq && \Xvc && \pi\sit \\
\Yal \deq && \ft_l \dl & \dcr{\eps_l} && \dr \pi\sii \cl{\pivc} \\
\Yal && \pi\sjj &&& \f_l \deq \\
& \pi_j \cl{\pi_a} &&&& \f_l }}$$

Then we have $\{(\ff{s}_l,\psi_l)\}_{l=1,...,m}$ and $\{(\ff{t}_l\ff{X}_{v_0}, %A136
\vcenter{\xymatrix@C=-0.2pc@R=1pc{ \ft_l \deq && \Xvc && \pi\sit \\
\ft_l \dl & \dcr{\eps_l} && \dr \pi\sii \cl{\pivc} \\
\pi_j &&& \f_l}}
)\}_{l=1,...,m}$ such that \mbox{$\forall \ l=1,...,m$ $(\ff{s}_l,\psi_l)$} and $(\ff{t}_l\ff{X}_{v_0},  %A136
\vcenter{\xymatrix@C=-0.2pc@R=1pc{ \ft_l \deq && \Xvc && \pi\sit \\
\ft_l \dl & \dcr{\eps_l} && \dr \pi\sii \cl{\pivc} \\
\pi_j &&& \f_l}}
)$ both represent $\ff{f}_l$ and there are morphisms $i'\mr{v_0 v}\tilde{i} \in \cc{I}$, $j'\mr{b}j'' \in \cc{J}$. So, by \ref{lemorphismaraM_D}, there are morphisms $\vcenter{\xymatrix@R=-0.5pc{i'\ar[rd]^{u_1} \\
                                                    & i_0 \\
                                                    \tilde{i}\ar[ru]_{v_1}}}\in \cc{I}$ and invertible 2-cells $\vcenter{\xymatrix@C=-.5pc{\ff{Y}^l_{b} \ar@{-}[dr] & \ff{t}_l \ar@{}    [d]^{\eta_l}&\ff{X}_{v_0} & \ff{X}_{v_1} \ar@{-}[dl] \\
                                                                                                               & \ff{s}_l & \ff{X}_{u_1} }}$ $\in \cc{C} $ such that 
% cambiar por A137
% $$\vcenter{\xymatrix@C=-0.5pc{ & \pi_{j'}\dcellop{\pi_b} &&&& \ff{f}_l \deq \\
%                     \ff{Y}^l_b \deq & & \pi_{j''}\dl &\ar@{}[d]^{\epsilon_l} && \ff{f}_l \dr \\
%                     \ff{Y}^l_b \deq & & \ff{t}_l \deq &&& \pi_{i''} \dcellop{\pi_{v_0}} \\
%                     \ff{Y}^l_b \deq & & \ff{t}_l \deq && \ff{X}_{v_0} \deq && \pi_{\tilde{i}} \dcellop{\pi_{v_1}}\\
%                     \ff{Y}^l_b \ar@{-}[drr] && \ff{t}_l \ar@{}[d]^{\eta_l} && \ff{X}_{v_0} & \ff{X}_{v_1} \ar@{-}[dl] && \pi_{i_0} \deq\\
%                     & &\ff{r}_l && \ff{X}_{u_1} &&& \pi_{i_0}}}
% \vcenter{\xymatrix@C=-.3pc{\quad \quad = \quad \quad  }						}
% \vcenter{\xymatrix@C=-0.5pc{\pi_{j'} \dl & \dc{\psi} & \ff{f}_l \dr \\
%                             \ff{s}_l \deq && \pi_{i'} \dcellop{\pi_{u_1}} \\
%                             \ff{s}_l & \ff{X}_{u_1} && \pi_{i_0}} }$$                             

$$\vcenter{\xymatrix@C=-0pc{ \Ybl \ardr && \ft_l & \dc{\eta_l} & \Xvc && \Xvu \ardl && \piic \deq \\
& \s_l \deq &&&& \Xuu &&& \piic \\
& \s_l \dl && \dcr{\psi_l} &&& \dr \pi\si \clunodos{\piuu} \\
& \pi\sj &&&&& \f_l}}
\vcenter{\xymatrix@C=-0pc{ \quad = \quad }}
\vcenter{\xymatrix@C=-0pc{ \Ybl \deq && \ft_l \deq && \Xvc \deq && \Xvu && \piic \\
\Ybl \deq && \ft_l \deq && \Xvc &&& \pi\sit \cl{\pivu} \\
\Ybl \deq && \ft_l \dl && \dc{\eps_l} && \dr \pi\sii \cldosuno{\pivc} \\
\Ybl && \pi\sjj &&&& \f_l \deq \\
& \pi\sj \cl{\pi_b} &&&&& \f_l }}$$

It can be checked that there are morphisms in $\cc{M}$ $\vcenter{\xymatrix@R=0pc{\{(\ff{r}_l,\varphi_l)\}_{l=1,...,m} \ar[rrd]^<<<<<<<<<<{(v_1 u_0,a,\{\theta_l   \ff{X}_{v_1}\}_{l=1,...,m})}\\
                                                                                   & &\{(\ff{t}_l\ff{X}_{v_0}\ff{X}_{v_1}, I
                                                                                   )\}_{l=1,...,m}\\
                                                                                   \{(\ff{s}_l,\psi_l)\}_{l=1,...,m}\ar[rru]_<<<<<<<<<<{(u_1,b,\{\eta_l\}_{l=1,...,m})}}}$.   

%A138:                                                                                   
\noindent where $$I=\vcenter{\xymatrix@C=-0pc{ \ft_l \deq && \Xvc \deq && \Xvu && \piic \\
\ft_l \deq && \Xvc &&& \pi\sit \cl{\pivu} \\
\ft_l \dl & \dcr{\eps_l} && \dr \pi\sii \clunodos{\pivc} \\
\pi\sjj &&& \f_l}}$$
                                                                                   
\item[F1:] Let $\{(\ff{r}_l,\varphi_l)\}_{l=1,...,m}\mrpair{(u,a,\{\theta_l\}_{l=1,...,m})}{(v,b,\{\eta_l\}_{l=1,...,m})}\{(\ff{s}_l,\psi_l)\}_{l=1,...,m}\in \cc{M}$. Since $\cc{J}$ is 2-filtered, we have $j'\stackrel{c}\rightarrow j'' $ and an invertible 2-cell $ca\stackrel{\alpha}\Rightarrow cb \in \cc{J}$ and, by \ref{idrepresenta} and the fact that $\cc{I}$ is 2-filtered, we have morphisms $\ff{X}_{i''}\mr{\ff{t}_l} \ff{Y}^l_{j''} \in \cc{C}$ and invertible 2-cells $\epsilon_l$ such that $\forall \ l=1,...,m \ (\ff{t}_l,\epsilon_l)$ represents $\ff{f}_l$ and there is a morphism $i'\mr{w}i''$ and an invertible 2-cell $wu\Mr{\mu}wv \in \cc{I}$. Have in mind the same observation made in the proof of axiom F0. 

Then we have $\{(\ff{s}_l,\psi_l)\}_{l=1,...,m}$ and $\{(\ff{t}_l,\epsilon_l)\}_{l=1,...,m}$ such that $\forall \ l=1,..,m$ $(\ff{s}_l,\psi_l)$ and $(\ff{t}_l,\epsilon_l)$ both represent $\ff{f}_l$ and there are morphisms $i'\mr{w} i'' \in \cc{I}$, $j'\mr{c} j'' \in \cc{J}$. So, by \ref{lemorphismaraM_D}, there are morphisms $\vcenter{\xymatrix@R=-0.5pc{i'\ar[rd]^{u_0} \\
                                                                                                                                                                 & i_0 \\
                                                                                                                                                                 i''\ar[ru]_{v_0}}} \in \cc{I}$ and invertible 2-cells $\vcenter{\xymatrix@C=-.5pc{\ff{Y}^l_{c} \ar@{-}[dr] &&\ff{t}_l \dc{\tilde{\theta_l}} & &\ff{X}_{v_0} \ar@{-}[dl] \\
                                               & \ff{s}_l & & \ff{X}_{u_0} }}$ $\in \cc{C} $ such that 
%cambiar por A139
\begin{equation}\label{A139}
%  \vcenter{\xymatrix@C=-0.5pc{ & \pi_{j'}\dcellop{\pi_c} &&&& \ff{f}_l \deq \\
%                     \ff{Y}^l_c \deq & & \pi_{j''}\dl &\ar@{}[d]^{\epsilon_l} && \ff{f}_l \dr \\
%                     \ff{Y}^l_c \deq & & \ff{t}_l \deq &&& \pi_{i''} \dcellop{\pi_{v_0}} \\
%                     \ff{Y}^l_c \ar@{-}[dr] && \ff{t}_l \dc{\tilde{\theta_l}} && \ff{X}_{v_0} \ar@{-}[dl] && \pi_{i_0} \deq\\
%                     & \ff{s}_l && \ff{X}_{u_0} &&& \pi_{i_0}}}
% \vcenter{\xymatrix@C=-.3pc{\quad \quad = \quad \quad  }						}
% \vcenter{\xymatrix@C=-0.5pc{\pi_{j'} \dl & \dc{\psi_l} & \ff{f}_l \dr \\
%                             \ff{s}_l \deq && \pi_{i'} \dcellop{\pi_{u_0}} \\
%                             \ff{s}_l & \ff{X}_{u_0} && \pi_{i_0}} }
\vcenter{\xymatrix@C=-0pc{ \Ycl \ardr && \ft_l \dc{\tilde{\theta}_l} && \Xvc \ardl && \piic \deq \\
& \s_l \deq && \Xuc &&& \piic \\
& \s_l \dl & \dcr{\psi_l} && \dr \pi\si \clunodos{\piuc} \\
& \pi\sj &&& \f_l }}
\vcenter{\xymatrix@C=-0pc{ \quad = \quad }}
\vcenter{\xymatrix@C=-0pc{ \Ycl \deq && \ft_l \deq && \Xvc && \piic \\
\Ycl \deq && \ft_l \dl & \dcr{\eps_l} && \dr \pi\sii \cl{\pivc} \\
\Ycl && \pi\sjj &&& \f_l \deq \\
& \pi\sj \cl{\pi_a} &&&& \f_l }}
\end{equation}

Plus, since $\cc{I}$ is 2-filtered, there is a morphism $i_0\mr{\tilde{w}}\tilde{i}$ and an invertible 2-cell $\tilde{w}u_0 \Mr{\tilde{\mu}} \tilde{w}v_0 w \in \cc{I}$.

Let's check that we a have a morphism in $\cc{M}$ \mbox{$\{(\ff{s}_l,\psi_l)\}_{l=1,...,m}\mr{(\tilde{w}v_0 w,c,\{\ff{s}_l\ff{X}_{\tilde{\mu}}\circ \tilde{\theta_l}\ff{X}_{\tilde{w}}\}_{l=1,...,m})}\{(\ff{t}_l\ff{X}_{v_0}\ff{X}_{\tilde{w}}, %A140
\vcenter{\xymatrix@C=-0.2pc@R=1pc{ \ft_l \deq && \Xvc \deq && \X\swt && \pi\sit \\
\ft_l \deq && \Xvc &&& \piic \cl{\pi\swt} \\
\ft_l \dl & \dcr{\eps_l} && \dr \pi\sii \clunodos{\pivc} \\
\pi\sjj &&& \f_l}}
)\}_{l=1,...,m}$:}

$$\vcenter{\xymatrix@C=-0.2pc{ \Ycl \ardr && \ft_l \dcr{\tilde{\theta}_l} &&& \Xvc \ardl && \X\swt \deq &&& \pi\sit \deq \\
& \s_l \deq &&& \Xuc \ardl & \dcr{\X\smut} && \X\swt \ardr &&& \pi\sit \deq \\
& \s_l \deq  && \X_w \ardrrr && \Xvc & \dc{\pi_{\tilde{w} v_0 w}} && \X\swt && \pi\sit \ardllll \\
& \s_l \dl && \dcr{\psi_l} &&& \dr \pi\si \\
& \pi\sj &&&&& \f_l }}
\vcenter{\xymatrix@C=-0pc{  =  }}
\vcenter{\xymatrix@C=-0.2pc{ \Ycl \ardr && \ft_l \dcr{\tilde{\theta}_l} &&& \Xvc \ardl && \X\swt \deq &&& \pi\sit \deq \\
& \s_l \deq &&& \Xuc \ardl & \dcr{\X\smut} && \X\swt \ardr &&& \pi\sit \deq \\
& \s_l \deq && \X_w \deq && \Xvc \deq &&& \X\swt && \pi\sit \\
& \s_l \deq && \X_w \deq && \Xvc &&&& \piic \cl{\pi\swt} \\
& \s_l \deq && \X_w &&&& \pi\sii \cldosdos{\pivc} \\
& \s_l \dl && \dc{\psi_l} && \dr \pi\si \cldosdos{\pi_w} \\
& \pi\sj &&&& \f_l }}
\vcenter{\xymatrix@C=-0pc{  =  }}$$

$$\vcenter{\xymatrix@C=-0pc{ \quad = \quad }}
\vcenter{\xymatrix@C=-0pc{ \Ycl \deq && \ft_l \deq && \Xvc \deq && \X\swt && \pi\sit \\
\Ycl \ardr && \ft_l \dc{\tilde{\theta}_l} && \Xvc \ardl &&& \piic \deq \cl{\pi\swt} \\
& \s_l \deq && \Xuc &&&& \piic \\
& \s_l \dl && \dc{\psi_l} && \dr \pi\si \cldosdos{\piuc} \\
& \pi\sj &&&& \f_l }}
\vcenter{\xymatrix@C=-0pc{ \quad = \quad }}
\vcenter{\xymatrix@C=-0pc{ \Ycl \deq && \ft_l \deq && \Xvc \deq && \X\swt && \pi\sit \\
\Ycl \deq && \ft_l \deq && \Xvc &&& \piic \cl{\pi\swt} \\
\Ycl \deq && \ft_l \dl && \dc{\eps_l} && \dr \pi\sii \cldosuno{\pivc} \\
\Ycl && \pi\sjj &&&& \f_l \deq \\
& \pi_j \cl{\pi_c} &&&&& \f_l }}$$

\noindent where the first equality follows from axiom PC1, the second one holds by elevators calculus plus axiom PC2 and the last one is due to \eqref{A139}. 
                              
Now observe that

$$\vcenter{\xymatrix@C=-0.4pc{ \Yal \dl & \dc{\Y_{\alpha}^l} & \dr \Ycl &&& \ft_l \deq && \Xvc \deq && \X\swt \deq && \quad & \pi\sit \deq \\
\Ybl \deq && \Ycl \ardr && \dcr{\tilde{\theta}_l} & \ft_l && \Xvc \ardl && \X\swt \deq &&& \pi\sit \deq \\
\Ybl \deq && & \s_l \deq &&& \Xuc \ardl & \dcr{\X\smut} && \X\swt \ardr &&& \pi\sit \deq \\
\Ybl \dl & \dcr{\eta_l} && \dr \s_l && \X_w \deq && \Xvc \deq &&& \X\swt \deq && \pi\sit \deq \\
\fr_l &&& \X_v && \X_w && \Xvc &&& \X\swt && \pi\sit }}
\vcenter{\xymatrix@C=-0pc{ \quad = \quad }}
\vcenter{\xymatrix@C=-0.2pc{ \Yal \dl & \dc{\Y_{\alpha}^l} & \dr \Ycl && \ft_l \deq && \Xvc \deq && \X\swt \deq &&&& \pi\sit \deq \\
\Ybl \deq && \Ycl \ardr && \ft_l \dc{\tilde{\theta}_l} && \Xvc \ardl && \X\swt \deq &&&& \pi\sit \deq \\
\Ybl \deq && & \s_l \deq && \Xuc \deq &&& \X\swt &&&& \pi\sit \\
\Ybl \deq &&& \s_l \deq && \Xuc &&& && \piic \cldosdos{\pi\swt} \\
\Ybl \deq &&& \s_l \deq && &&& \pi\si \cltresdos{\piuc} \opdosdos{\pi_w\inv} \\
\Ybl \deq &&& \s_l \deq &&& \X_w \deq &&&& \pi\sii \opdosuno{\pivc\inv} \\
\Ybl \deq &&& \s_l \deq &&& \X_w \deq && \Xvc \deq &&& \piic \op{\pi\swt\inv} \\
\Ybl \dl & \dcr{\eta_l} && \dr \s_l &&& \X_w \deq && \Xvc \deq && \X\swt \deq && \pi\sit \deq \\
\fr_l &&& \X_v &&& \X_w && \Xvc && \X\swt && \pi\sit }}
\vcenter{\xymatrix@C=-0pc{ \!\!\!\!\!\! = \quad }}
$$

$$\vcenter{\xymatrix@C=-0.4pc{  \Yal \dl & \dc{\Y_{\alpha}^l} & \dr \Ycl && \ft_l \deq && \Xvc \deq && \X\swt &&&& \pi\sit \\
\Ybl \deq && \Ycl \deq && \ft_l \deq && \Xvc && && \piic \cldosdos{\pi\swt} && \\
\Ybl \deq && \Ycl \deq && \ft_l \dl && \dc{\eps_l} && \dr \pi\sii \cldosdos{\pivc} \\
\Ybl \deq && \Ycl && \pi\sjj &&&& \f_l \deq \\
\Ybl \deq &&& \pi\sj \cl{\pi_c} \dl && \dcr{\psi_l\inv} &&& \dr \f_l \\
\Ybl \dl & \dcr{\eta_l} && \dr \s_l &&&&& \pi\si \deq \\
\fr_l \deq &&& \X_v \deq &&&&& \pi\si \opdosdos{\pi_w\inv} \\
\fr_l \deq &&& \X_v \deq &&& \X_w \deq &&&& \pi\sii \opdosuno{\pivc\inv} \\
\fr_l \deq &&& \X_v \deq &&& \X_w \deq && \Xvc \deq &&& \piic \opb{\pi\swt\inv} \\
\fr_l &&& \X_v &&& \X_w && \Xvc && \X\swt && \pi\sit }}
\vcenter{\xymatrix@C=-0pc{  =  }}
\vcenter{\xymatrix@C=-0.4pc{\Yal \dl & \dc{\Y_{\alpha}^l} & \dr \Ycl && \ft_l \deq && \Xvc \deq && \X\swt &&&& \pi\sit \\
\Ybl \deq && \Ycl \deq && \ft_l \deq && \Xvc && && \piic \cldosdos{\pi\swt} && \\
\Ybl \deq && \Ycl \deq && \ft_l \dl && \dc{\eps_l} && \dr \pi\sii \cldosdos{\pivc} \\
\Ybl \deq && \Ycl && \pi\sjj &&&& \f_l \deq \\
\Ybl &&& \pi\sj \cl{\pi_c} &&&&& \f_l \deq \\
& \pi_j \clunodos{\pi_b} \dl &&& \dcr{\varphi_l\inv} &&&& \dr \f_l \\
& \fr_l \deq &&&&&&& \pi_i \opdosdos{\pi_v\inv} \\
& \fr_l \deq &&&&& \X_v \deq &&&& \pi\si \opdosdos{\pi_w\inv} \\
& \fr_l \deq &&&&& \X_v \deq && \X_w \deq &&&& \pi\sii \opdosuno{\pivc\inv} \\
& \fr_l \deq &&&&& \X_v \deq && \X_w \deq && \Xvc \deq &&& \piic \opb{\pi\swt\inv} \\
& \fr_l &&&&& \X_v && \X_w && \Xvc && \X\swt && \pi\sit}}
\vcenter{\xymatrix@C=-0pc{ = }}
$$                    

$$\vcenter{\xymatrix@C=-0.4pc{  \Yal \deq && \Ycl \deq && \ft_l \deq && \Xvc \deq && \X\swt &&&& \pi\sit \\
\Yal \deq && \Ycl \deq && \ft_l \deq && \Xvc && && \piic \cldosdos{\pi\swt} && \\
\Yal \deq && \Ycl \deq && \ft_l \dl && \dc{\eps_l} && \dr \pi\sii \cldosdos{\pivc} \\
\Yal \deq && \Ycl && \pi\sjj &&&& \f_l \deq \\
\Yal &&& \pi\sj \cl{\pi_c} &&&&& \f_l \deq \\
& \pi_j \clunodos{\pi_a} \dl &&& \dcr{\varphi_l\inv} &&&& \dr \f_l \\
& \fr_l \deq &&&&&&& \pi_i \opdosdos{\pi_u\inv} \\
& \fr_l \deq &&&&& \X_u \deq &&&& \pi\si \opdosdos{\pi_w\inv} \\
& \fr_l \deq &&&&& \X_u \dl & \dc{\X_{\mu}} & \dr \X_w &&&& \pi\sii \deq \\
& \fr_l \deq &&&&& \X_v \deq && \X_w \deq &&&& \pi\sii \opdosuno{\pivc\inv} \\
& \fr_l \deq &&&&& \X_v \deq && \X_w \deq && \Xvc \deq &&& \piic \opb{\pi\swt\inv} \\
& \fr_l &&&&& \X_v && \X_w && \Xvc && \X\swt && \pi\sit }}
\vcenter{\xymatrix@C=-0pc{  =  }}
\vcenter{\xymatrix@C=-0.4pc{ \Yal \deq && \Ycl \deq && \ft_l \deq && \Xvc \deq && \X\swt &&&& \pi\sit \\
\Yal \deq && \Ycl \ardr && \ft_l \dc{\tilde{\theta}_l} && \ardl \Xvc && && \piic \deq \cldosdos{\pi\swt} && \\
\Yal \deq &&& \s_l \deq && \Xuc &&&&& \piic \\
\Yal \deq &&& \s_l \dl && \dcr{\psi_l} &&& \dr \pi\si \cltresdos{\piuc} \\
\Yal &&& \pi\sj &&&&& \f_l \deq \\
& \pi_j \clunodos{\pi_a} \dl &&& \dcr{\varphi_l\inv} &&&& \dr \f_l \\
& \fr_l \deq &&&&&&& \pi_i \opdosdos{\pi_u\inv} \\
& \fr_l \deq &&&&& \X_u \deq &&&& \pi\si \opdosdos{\pi_w\inv} \\
& \fr_l \deq &&&&& \X_u \dl & \dc{\X_{\mu}} & \dr \X_w &&&& \pi\sii \deq \\
& \fr_l \deq &&&&& \X_v \deq && \X_w \deq &&&& \pi\sii \opdosuno{\pivc\inv} \\
& \fr_l \deq &&&&& \X_v \deq && \X_w \deq && \Xvc \deq &&& \piic \opb{\pi\swt\inv} \\
& \fr_l &&&&& \X_v && \X_w && \Xvc && \X\swt && \pi\sit }}
\vcenter{\xymatrix@C=-0pc{ \!\!\!\!\!\!\!\!\! = \quad }}
$$   

$$\vcenter{\xymatrix@C=-0.2pc{ \Yal \deq && \Ycl \ardr && \ft_l \dc{\tilde{\theta}_l} && \ardl \Xvc && \X\swt \deq && \pi\sit \deq \\
\Yal \deq &&& \s_l \deq && \Xuc \deq &&& \X\swt && \pi\sit \\
\Yal \deq &&& \s_l \deq && \Xuc &&&& \piic \cl{\pi\swt} \\
\Yal \dl & \dcr{\theta} && \dr \s_l &&&& \pi\si \deq \cldosdos{\piuc} \\
\fr_l \deq &&& \X_u \deq &&&& \pi\si \opdosdos{\pi_w\inv} \\
\fr_l \deq &&& \X_u \deq && \X_w \deq &&&& \pi\sii \opdosuno{\pivc\inv} \\
\fr_l \deq &&& \X_u \deq && \X_w \deq && \Xvc \deq &&& \piic \op{\pi\swt\inv} \\
\fr_l \deq &&& \X_u \dl & \dc{\X_{\mu}} & \dr \X_w && \Xvc \deq && \X\swt \deq && \pi\sit \deq \\
\fr_l &&& \X_v && \X_w && \Xvc && \X\swt && \pi\sit }}
\vcenter{\xymatrix@C=-0pc{  =  }}
\vcenter{\xymatrix@C=-0.2pc{ \Yal \deq && \Ycl \ardr && \dcr{\tilde{\theta}_l} & \ft_l && \Xvc \ardl && \X\swt \deq &&& \pi\sit \deq \\
\Yal \deq &&& \s_l \deq &&& \Xuc \ardl & \dcr{\X\smut} && \X\swt \ardr &&& \pi\sit \deq \\
\Yal \dl & \dcr{\theta_l} && \dr \s_l && \X_w \deq && \Xvc \deq &&& \X\swt \deq && \pi\sit \deq \\
\fr_l \deq &&& \X_u \dl & \dc{\X_{\mu}} & \dr \X_w && \Xvc \deq &&& \X\swt \deq && \pi\sit \deq \\
\fr_l &&& \X_v && \X_w && \Xvc &&& \X\swt && \pi\sit }}$$

%A continuacion de lo que esta va A142

\noindent where the first equality is due to axiom PC2, the second one and the fifth one require some elevators calculus plus \eqref{A139}, the third holds because $(v,b,\eta_l)$ is a morphism in $\cc{M}$, the fourth one and the last one are valid by elevators calculus plus axiom PC2 and the sixth one is due to elevators calculus plus the fact that $(u,a \theta_l)$ is a morphism in $\cc{M}$.

Then, by \ref{lema2paraM_D}, there exist a morphism $\tilde{i}\mr{w_0}i_1 \in \cc{I}$ such that 

$$\vcenter{\xymatrix@C=-0.5pc{ \ff{Y}^l_a \dl & \dc{\ff{Y}^l_\alpha} & \ff{Y}^l_c \dr &&& \ff{t}_l \deq &&& \ff{X}_{v_0} \deq && \ff{X}_{\tilde{w}} \deq & \ff{X}_{w_0} \deq \\
                    \ff{Y}^l_b \deq & & \ff{Y}^l_c \dl & && \ff{t}_l \ar@{}[d]^{\tilde{\theta}_l} &&&  \ff{X}_{v_0} \dr && \ff{X}_{\tilde{w}} \deq  & \ff{X}_{w_0} \deq\\
                    \ff{Y}^l_b \deq & & && \ff{s}_l \deq &&  \ff{X}_{u_0} \dr && \dc{\ff{X}_{\tilde{\mu}}} && \ff{X}_{\tilde{w}} \dl & \ff{X}_{w_0} \deq  \\
                    \ff{Y}^l_b \dl && \dc{\eta_l} & & \ff{s}_l \dr && \ff{X}_{w} \deq && \ff{X}_{v_0} \deq && \ff{X}_{\tilde{w}} \deq & \ff{X}_{w_0} \deq  \\
                    \ff{r}_l & &&& \ff{X}_{v} & &\ff{X}_{w} & & \ff{X}_{v_0} & &\ff{X}_{\tilde{w}} & \ff{X}_{w_0}}}
\vcenter{\xymatrix@C=-.3pc{\quad \quad = \quad \quad  }}
\vcenter{\xymatrix@C=-.5pc{ \ff{Y}^l_a \deq & & & \ff{Y}^l_c \ar@{-}[dr] && \ff{t}_l \dc{\tilde{\theta}_l} && \ff{X}_{v_0} \ar@{-}[dl] & \ff{X}_{\tilde{w}} \deq & \ff{X}_{w_0} \deq\\
                    \ff{Y}^l_a \deq & & & & \ff{s}_l \deq & &  \ff{X}_{u_0} \dr & \dc{\ff{X}_{\tilde{\mu}}} & \ff{X}_{\tilde{w}} \dl & \ff{X}_{w_0} \deq\\
                    \ff{Y}^l_a \dl & & \dc{\theta_l} & & \ff{s}_l \dr & & \ff{X}_{w} \deq & \ff{X}_{v_0} \deq & \ff{X}_{\tilde{w}} \deq & \ff{X}_{w_0} \deq\\
                    \ff{r}_l \deq & & & & \ff{X}_{u} \dl & \dc{\ff{X}_\mu} & \ff{X}_{w} \dr & \ff{X}_{v_0} \deq & \ff{X}_{\tilde{w}} \deq & \ff{X}_{w_0} \deq \\
                    \ff{r}_l & & & & \ff{X}_{v} & & \ff{X}_{w} & \ff{X}_{v_0} & \ff{X}_{\tilde{w}} & \ff{X}_{w_0} }}$$

We can conclude that we a have a morphism in $\cc{M}$ \mbox{$\{(\ff{s}_l,\psi_l)\}_{l=1,...,m} \!\!\!\!\!\!\!\!\!\!\!\!\!\!\!\!\!\!\! \mr{(w_0\tilde{w}v_0 w,c,\{\ff{s}_l\ff{X}_{\tilde{\mu}}\ff{X}_{w_0}\circ \tilde{\theta_l}\ff{X}_{\tilde{w}}\ff{X}_{w_0}\}_{l=1,...,m})} \!\!\!\! \{(\ff{t}_l\ff{X}_{v_0}\ff{X}_{\tilde{w}}\ff{X}_{w_0}, %A144
\vcenter{\xymatrix@C=-0.2pc@R=1pc{ \ft_l \deq && \Xvc \deq && \X\swt \deq && \Xwc && \piiu \\
\ft_l \deq && \Xvc \deq && \X\swt &&& \pi\sit \cl{\piwc} \\
\ft_l \deq && \Xvc &&&& \piic \cldosuno{\pi\swt} \\
\ft_l \dl && \dc{\eps_l} && \dr \pi\sii \cldosdos{\pivc} \\
\pi\sjj &&&& \f_l}}
)\}_{l=1,...,m}$} and an invertible 2-cell in $\cc{M}$ 

\noindent $((w_0\tilde{w}v_0 w,c,\{\ff{s}_l\ff{X}_{\tilde{\mu}}\ff{X}_{w_0}\circ \tilde{\theta_l}\ff{X}_{\tilde{w}}\ff{X}_{w_0}\}_{l=1,...,m})(u,a,\{\theta_l\}_{l=1,...,m})\Mr{(w_0\tilde{w}v_0 \mu,\alpha)}$

$\hfill ((w_0\tilde{w}v_0 w,c,\{\ff{s}_l\ff{X}_{\tilde{\mu}}\ff{X}_{w_0}\circ \tilde{\theta_l}\ff{X}_{\tilde{w}}\ff{X}_{w_0}\}_{l=1,...,m})(v,b,\{\eta\}_{l=1,...,m})$.

\item[F2:] Let $\{(\ff{r}_l,\varphi_l)\}_{l=1,...,m}\cellpairrd{(u,a,\{\theta_l\}_{l=1,...,m})}{(\mu,\alpha)}{(\rho,\beta)}{(v,b,\{\eta_l\}_{l=1,...,m})}\{(\ff{s}_l,\psi_l)\}_{l=1,...,m}\in \cc{M}$. Since $\cc{J}$ is 2-filtered, we have $j'\stackrel{c}\rightarrow j'' \in \cc{J}$ such that $c\alpha=c\beta$ and, by \ref{idrepresenta} and the fact that $\cc{I}$ is 2-filtered, we have $\{\ff{X}_{i''}\mr{\ff{t}_l}\ff{Y}^l_{j''}\}_{l=1,...,m}$ and invertible 2-cells $\epsilon_l$ such that $\forall \ l=1,...,m\ (\ff{t}_l,\epsilon_l)$ represents $\ff{f}_l$ and there is a morphism $i'\mr{w}\tilde{i}\in \cc{I}$ such that $w\mu=w\rho$.

Now, since $\cc{I}$ is 2-filtered, we have morphisms $\vcenter{\xymatrix@R=-0.5pc{\tilde{i} \ar[rd]^{u_0} \\
                                                                               & i_0 \\
                                                                               i'' \ar[ru]_{v_0}}}\in \cc{I}$.
                                                                               
Then we have $\{(\ff{s}_l,\psi_l)\}_{l=1,...,m}$ and $\{(\ff{t}_l\ff{X}_{v_0}, %A145
\vcenter{\xymatrix@C=-0.2pc@R=1pc{ \ft_l \deq && \Xvc && \piic \\ \ft_l \dl & \dcr{\eps_l} && \dr \pi\sii \cl{\pivc} \\ \pi\sjj &&& \f_l  }}
)\}_{l=1,...,m}$ such that \mbox{$\forall \ l=1,...,m$} $(\ff{s}_l,\psi_l)$ and $(\ff{t}_l\ff{X}_{v_0}, %A136
\vcenter{\xymatrix@C=-0.2pc@R=1pc{ \ft_l \deq && \Xvc && \pi\sit \\
\ft_l \dl & \dcr{\eps_l} && \dr \pi\sii \cl{\pivc} \\
\pi_j &&& \f_l}}
)$ both represent $\ff{f}_l$ and there are morphisms $i'\mr{u_0 w} i'' \in \cc{I}$, $j'\mr{c}j'' \in \cc{J}$. Then, by \ref{lemorphismaraM_D}, we have morphisms $\vcenter{\xymatrix@R=-0.5pc{i' \ar[rd]^{u_1} \\
                                    & i_1 \\
                                    i_0 \ar[ru]_{v_1}}}\in \cc{I}$ and invertible 2-cells $\vcenter{\xymatrix@C=-.5pc{\ff{Y}^l_{c} \ar@{-}[dr] &\ff{t}_l
                                                                                 \ar@{}[d]^{\tilde{\theta_l}} & \ff{X}_{v_0} & \ff{X}_{v_1}\ar@{-}[dl] \\
                                                                                 & \ff{s}_l & \ff{X}_{u_1} }}$ $\in \cc{C} $ such that 
% cambiar por A146
% $$\vcenter{\xymatrix@C=-0.5pc{ & \pi_{j'}\dcellop{\pi_c} &&& \ff{f}_l \deq \\
%                     \ff{Y}^l_c \deq & & \pi_{j''}\dl &\dc{\epsilon_l} & \ff{f}_l \dr \\
%                     \ff{Y}^l_c \deq & & \ff{t}_l \deq && \pi_{i''} \dcellop{\pi_{v_0}} \\
%                     \ff{Y}^l_c \deq & & \ff{t}_l \deq & \ff{X}_{v_0} \deq & & \pi_{i_0} \dcellop{\pi_{v_1}} \\
%                     \ff{Y}^l_c \ar@{-}[dr] && \ff{t}_l \dc{\tilde{\theta_l}} & \ff{X}_{v_0} & \ff{X}_{v_1} \ar@{-}[dl] && \pi_{i_1} \deq\\
%                     & \ff{s}_l && \ff{X}_{u_1} &&& \pi_{i_1}}}
% \vcenter{\xymatrix@C=-.3pc{\quad \quad = \quad \quad  }						}
% \vcenter{\xymatrix@C=-0.5pc{\pi_{j'} \dl & \dc{\psi_l} & \ff{f}_l \dr \\
%                             \ff{s}_l \deq && \pi_{i'} \dcellop{\pi_{u_1}} \\
%                             \ff{s}_l & \ff{X}_{u_1} && \pi_{i_1}} }$$ 

$$\vcenter{\xymatrix@C=-0pc{ \Ycl \ardr && \ft_l & \dc{\tilde{\theta}_l} & \Xvc && \Xvu \ardl && \piiu \deq \\
& \s_l  \deq &&&& \Xuu &&& \piiu \\
& \s_l  \dl && \dcr{\psi_l} &&& \dr \pi\si \clunodos{\piuu} \\
& \pi\sj &&&&& \f_l }}
\vcenter{\xymatrix@C=-0pc{ \quad = \quad }}
\vcenter{\xymatrix@C=-0pc{ \Ycl \deq && \ft_l \deq && \Xvc \deq && \Xvu && \piiu \\
\Ycl \deq && \ft_l \deq && \Xvc &&& \piic \cl{\pivu} \\
\Ycl \deq && \ft_l \dl && \dc{\eps_l } && \dr \pi\sii \cldosuno{\pivc} \\
\Ycl && \pi\sjj &&&& \f_l \deq \\
& \pi\sj \cl{\pi_c} &&&&& \f_l}}$$

Plus, since $\cc{I}$ is 2-filtered, we have a morphism $i_1\mr{\tilde{w}}i_2$ and an invertible 2-cell $\tilde{w}u_1\Mr{\tilde{\mu}} \tilde{w}v_1 u_0 w \in \cc{I}$.

It can be checked that there is a morphism in $\cc{M}$ \mbox{$\{(\ff{s}_l,\psi_l)\}_{l=1,...,m} \!\!\!\!\!\!\!\!\!\!\!\!\!\!\!\!\!\!\! \mr{(\tilde{w}v_1 u_0 w,c,\{\ff{s}_l\ff{X}_{\tilde{\mu}}\circ \tilde{\theta_l}\ff{X}_{\tilde{w}}\}_{l=1,...,m})} \!\!\!\! \{(\ff{t}_l\ff{X}_{v_0}\ff{X}_{v_1}\ff{X}_{\tilde{w}}, %A147
\vcenter{\xymatrix@C=-0pc{ \ft_l \deq && \Xvc \deq && \Xvu \deq && \X\swt && \piid \\
\ft_l \deq && \Xvc \deq && \Xvu &&& \piiu \cl{\pi\swt} \\
\ft_l \deq && \Xvc &&&& \piic \cldosuno{\pivu} \\
\ft_l \dl && \dc{\eps_l} && \dr \pi\sii \cldosdos{\pivc} \\
\pi\sjj &&&& \f_l}}
)\}_{l=1,...,m}$} such that 

\noindent $(\tilde{w}v_1 u_0 w,c,\{\ff{s}_l\ff{X}_{\tilde{\mu}}\circ \tilde{\theta_l}\ff{X}_{\tilde{w}}\}_{l=1,...,m})(u,a,\{\theta_l\}_{l=1,...,m})=$

$\hfill (\tilde{w}v_1 u_0 w,c,\{\ff{s}_l\ff{X}_{\tilde{\mu}}\circ \tilde{\theta_l}\ff{X}_{\tilde{w}}\}_{l=1,...,m})(v,b,\{\eta_l\}_{l=1,...,m})$.

\end{itemize}

$\vcenter{\xymatrix@R=.3pc{\cc{M}\ar[r] & \cc{I} \\ \{(\ff{r}_l,\varphi_l)\}_{l=1,...,m}\ar@{|->}[r] & i}}$ is 2-cofinal:

\begin{itemize}
\item[CF0:] Let $i\in \cc{I}$ and let $\forall \ l=1,...,m \ \ff{X}_{i'}\mr{\ff{r}_l} \ff{Y}^l_j \in \cc{C}$ and invertible 2-cells $\varphi_l$ such that $(\ff{r}_l,\varphi_l)$ represents $\ff{f}_l$. Note that we are using the fact that $\cc{I}$ is 2-filtered to make all $\ff{r}_l$'s have the same source. Also because $\cc{I}$ is 2-filtered, we have \mbox{$\vcenter{\xymatrix@R=-0.5pc{i\ar[rd]^{u} & \\ & i'' \\ i'\ar[ru]_{v}}} \in \cc{I}$.} It is straightforward to check that $\{(\ff{r}_l\ff{X}_{v}, %A231
\vcenter{\xymatrix@C=-0.2pc@R=1pc{ \fr_l \deq && \X_v && \pi\sii \\
\fr_l \dl & \dcr{\varphi_l} && \dr \pi\si \cl{\pi_v} \\
\pi_j &&& \f_l}}
)\}_{l=1,...,m}$ together with \mbox{$i\mr{u}i''$} proves CF0.

\item[CF1:] Let $i\in \cc{I}$, $\{(\ff{r}_l,\varphi_l)\}_{l=1,...,m}\in \cc{M}$ ($\ff{r}_l:\ff{X}_{i'}\rightarrow \ff{Y}^l_j$) and $i\mrpair{u}{v}i'\in \cc{I}$. Since $\cc{I}$ is \mbox{2-filtered,} we have $i'\stackrel{w}\rightarrow i''$ and an invertible 2-cell $wu\stackrel{\mu}\Rightarrow wv \in \cc{I}$. It is straightforward to check that $\{(\ff{r}_l,\varphi_l)\}_{l=1,...,m}\mr{ (w,id,\{id\}_{l=1,...,m})}\{(\ff{r}_l\ff{X}_w, %A148
\vcenter{\xymatrix@C=-0.2pc@R=1pc{ \fr_l \deq && \X_w && \pi\sii \\ \fr_l \dl & \dcr{\varphi_l} && \dr \pi\si \cl{\pi_w} \\ \pi\sj &&& \f_l  }}
)\}_{l=1,...,m}$ proves CF1. 

\item[CF2:] Let $i\in \cc{I}$, $\{(\ff{r}_l,\varphi_l)\}_{l=1,...,m}\in \cc{M}$ ($\ff{r}_l:\ff{X}_{i'}\rightarrow \ff{Y}^l_j$) and $i\cellpairrd{u}{\mu}{\rho}{v}i' \in \cc{I}$. Since $\cc{I}$ is 2-filtered, we have $i'\stackrel{w}\rightarrow i'' \in \cc{I}$ such that $w\mu=w\rho$. It is straightforward to check that $\{(\ff{r}_l,\varphi_l)\}_{l=1,...,m}\mr{(w,id,\{id\}_{l=1,...,m})}\{(\ff{r}_l\ff{X}_w, \vcenter{\xymatrix@C=-0.2pc@R=1pc{ \fr_l \deq && \X_w && \pi\sii \\ \fr_l \dl & \dcr{\varphi_l} && \dr \pi\si \cl{\pi_w} \\ \pi\sj &&& \f_l  }} )\}_{l=1,...,m}$ proves CF2. 

\end{itemize}

$\vcenter{\xymatrix@R=.3pc{\cc{M}\ar[r] & \cc{J} \\ \{(\ff{r}_l,\varphi_l)\}_{l=1,...,m}\ar@{|->}[r] & j}}$ is 2-cofinal:

\begin{itemize}
\item[CF0:] Let $j\in \cc{J}$. By \ref{idrepresenta} and the fact that $\cc{I}$ is 2-filtered, we have \mbox{$\forall \ l=1,...,m$} \mbox{$\ff{r}_l:\ff{X}_i\rightarrow \ff{Y}^l_j \in \cc{C}$} and invertible 2-cells $\varphi_l$ such that $(\ff{r}_l,\varphi_l)$ represents $\ff{f}_l$. $\{(\ff{r}_l,\varphi_l)\}_{l=1,...,m}$ clearly proves CF0.

\item[CF1:] Let $j_0\in \cc{J}$, $\{(\ff{r}_l,\varphi_l)\}_{l=1,...,m}\in \cc{M}$ ($\ff{r}_l:\ff{X}_i\rightarrow \ff{Y}^l_j$) and $j_0\mrpair{a}{b}j \in \cc{J}$. Since $\cc{J}$ is 2-filtered, we have $j\stackrel{c}\rightarrow j' \in \cc{J}$ and an invertible 2-cell $ca\stackrel{\alpha}\Rightarrow cb \in \cc{J}$. Now, by \ref{idrepresenta}, we have $\ff{s}:\ff{X}_{i'}\rightarrow \ff{Y}_{j'} \in \cc{C}$ such that $(\ff{s},id)$ represents $\ff{f}$. 

From the proof of the fact that $\cc{M}$ is 2-filtered, we have morphisms $\vcenter{\xymatrix@R=0pc{\{(\ff{r}_l,\varphi_l)\}_{l=1,...,m}\ar[rd]^{\quad \quad (u,c,\{\theta_l\}_{l=1,...,m})} & \\                             & \{(\ff{t}_l,\psi_l)\}_{l=1,...,m}\\ \{(\ff{s}_l,id)\}_{l=1,...,m} \ar[ru]_{\quad \quad (v,id,\{\eta_l\}_{l=1,...,m})}}}$. It is straightforward to check that $\{(\ff{r}_l,\varphi_l)\}_{l=1,...,m}\stackrel{(u,c,\{\theta_l\}_{l=1,...,m})}\longrightarrow \{(\ff{t}_l,\psi_l)\}_{l=1,...,m}$ proves CF1. 

\item[CF2:] Let $j_0\in \cc{J}$, $\{(\ff{r}_l,\varphi_l)\}_{l=1,...,m}\in \cc{M}$ ($\ff{r}_l:\ff{X}_i\rightarrow \ff{Y}^l_j$) and $j_0\cellpairrd{a}{\alpha}{\beta}{b}j \in \cc{J}$. Since $\cc{J}$ is 2-filtered, we have $j\stackrel{c}\rightarrow j' \in \cc{J}$ such that $c\alpha=c\beta$. Now, by \ref{idrepresenta} and the fact that $\cc{I}$ is 2-filtered, we have $\forall \ l=1,...,m\ \ \ff{s}_l:\ff{X}_{i'}\rightarrow \ff{Y}^l_{j'} \in \cc{C}$ and invertible 2-cells $\psi_l$ such that $(\ff{s}_l,\psi_l)$ represents $\ff{f}_l$. 

From the proof of the fact that $\cc{M}$ is 2-filtered, we have morphisms $\vcenter{\xymatrix@R=0pc{\{(\ff{r}_l,\varphi_l)\}_{l=1,...,m}\ar[rd]^{\quad \quad (u,c,\{\theta_l\}_{l=1,...,m})} & \\ & \{(\ff{t}_l,\epsilon_l)\}_{l=1,...,m}\\ \{(\ff{s}_l,id)\}_{l=1,...,m}\ar[ru]_{\quad \quad (v,id,\{\eta_l\}_{l=1,...,m})}}}$. It is straightforward to check that $\{(\ff{r}_l,\varphi_l)\}_{l=1,...,m}\stackrel{(u,c,\{\theta_l\}_{l=1,...,m})}\longrightarrow \{(\ff{t}_l,\epsilon_l)\}_{l=1,...,m}$ proves CF2. 
\end{itemize}

Finally, take $\D_{{(\ff{r}_l,\varphi_l)}_{l=1,...,k}}$ with $\ff{X}_i \mr{\ff{r}_l} \ff{Y}^l_j$ given by the following:

\noindent Send $x$ to $\ff{X}_{i}$, $\Delta'$ to $\D'_j$ and the morphisms that link both parts to the corresponding $\ff{r}_l$'s.

\end{proof}

\begin{corollary}\label{reindexingparadiagramas}
Let $\Delta\mr{\D}\Pro{C}$ be a finite diagram with commutation relations and no loops in $2$-$\cc{P}ro(\cc{C})$. Then there exists a cofinite and filtered poset with a unique initial object $\ff{J}$ and a diagram $\Delta \mr{\D'} \cc{H}om(\ff{J}^{op},\cc{C})$ equivalent to $\D$ up to isomorphism. 

Equivalently every object $\D\in \cc{H}om(\Delta,\Pro{C})$ have a lifting to $\cc{H}om(\ff{J}^{op},\cc{C})$ up to equivalence for some cofinite and filtered poset with a unique initial object $\ff{J}$.

$$\xymatrix{ & \cc{H}om(\ff{J}^{op},\cc{C}) \ar[d]^{inc}_>>>>{\simeq \quad \;\;} \\ 
             \Delta \ar[ru]^{\f'} \ar[r]_{\f} & \Pro{C}}$$
\end{corollary}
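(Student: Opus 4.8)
The plan is to imitate, essentially verbatim, the proof of Corollary \ref{mardesictrickparaflechas}, replacing the morphism-level reindexing \ref{reindexingparamorfismos} by its diagram-level generalization, Proposition \ref{diagrama de diagramas}. First I would apply \ref{diagrama de diagramas} to the given finite diagram $\Delta\mr{\D}\Pro{C}$: this yields a 2-filtered 2-category $\cc{M}$ together with a lifting $\Delta\mr{\tilde{\D'}}\cc{H}om(\cc{M}^{op},\cc{C})$ whose image $inc\circ\tilde{\D'}$ is equivalent, up to isomorphism, to $\D$ in $\cc{H}om(\Delta,\Pro{C})$, where $inc$ sends a 2-functor $\cc{M}^{op}\to\cc{C}$ to the corresponding 2-pro-object indexed by $\cc{M}$.

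Next I would reduce the 2-filtered index $\cc{M}$ to a cofinite filtered poset with a unique initial object. By Proposition \ref{MJ} the poset $\ff{M}(\cc{M})$ has exactly these properties, and by Proposition \ref{phi} there is a 2-cofinal 2-functor $\ff{M}(\cc{M})\mr{\F}\cc{M}$. Precomposition with $\F^{op}$ gives a 2-functor $(\F^{op})^*:\cc{H}om(\cc{M}^{op},\cc{C})\to\cc{H}om(\ff{M}(\cc{M})^{op},\cc{C})$, and I would set $\ff{J}=\ff{M}(\cc{M})$ and $\D'=(\F^{op})^*\tilde{\D'}$. The situation is summarized by the triangle
$$\xymatrix{& \cc{H}om(\cc{M}^{op},\cc{C}) \ar[rd]^{(\F^{op})^*} \ar[dd]|{inc} \\
            \Delta \ar[ru]^{\tilde{\D'}} \ar[rd]_{\D} & & \cc{H}om(\ff{M}(\cc{M})^{op},\cc{C}) \ar[ld]^{inc} \\
            & \Pro{C}}$$

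The crux is the right-hand triangle, i.e. the equivalence $inc\simeq inc\circ(\F^{op})^*$ of 2-functors landing in $\Pro{C}$. This is precisely the content of Proposition \ref{reindexing}: for a 2-pro-object $\ff{Y}$ indexed by $\cc{M}$, reindexing along the 2-cofinal $\F$ produces the equivalent 2-pro-object $\ff{Y}_\F=inc((\F^{op})^*\ff{Y})$, the witnessing equivalence being the canonical 2-natural transformation $\coLim{m\in\ff{M}(\cc{M})}{\cc{C}(\ff{Y}_{\F(m)},-)}\Rightarrow\coLim{k\in\cc{M}}{\cc{C}(\ff{Y}_k,-)}$, which is natural in $\ff{Y}$. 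Whiskering this equivalence with $\tilde{\D'}$ gives $inc\circ\tilde{\D'}\simeq inc\circ(\F^{op})^*\tilde{\D'}$ in $\cc{H}om(\Delta,\Pro{C})$; composing with the equivalence $inc\circ\tilde{\D'}\simeq\D$ from the first step yields $inc\circ\D'\simeq\D$, which is exactly the assertion, and the ``lifting'' reformulation is then immediate from the triangle above.

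The main obstacle I anticipate is not any single hard computation but making precise that the vertexwise equivalences supplied by \ref{reindexing} assemble into an equivalence of \emph{diagrams} over $\Delta$, rather than a mere object-by-object equivalence. This is handled by the observation that both $(\F^{op})^*$ and $inc$ are genuine 2-functors and that the comparison transformation of \ref{reindexing} is natural in the 2-pro-object, so 2-cofinality of $\F$ upgrades the pointwise equivalences to a single 2-natural equivalence which can be whiskered with $\tilde{\D'}$. Once this is in place, commutativity up to isomorphism is preserved under composition of the two equivalences and the conclusion follows formally.
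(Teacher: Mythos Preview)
Your proposal is correct and follows exactly the approach the paper takes: the paper's proof says only that the result ``follows from \ref{diagrama de diagramas} as \ref{mardesictrickparaflechas} follows from \ref{reindexingparamorfismos},'' and your write-up spells out precisely that argument, including the use of $\ff{M}(\cc{M})$ and the 2-cofinal $\F$ from \ref{phi} together with the reindexing equivalence from \ref{reindexing}. Your additional remark about the naturality needed to assemble the vertexwise equivalences into a diagram equivalence is a legitimate point that the paper leaves implicit.
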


\begin{proof}
 It follows from \ref{diagrama de diagramas} as \ref{mardesictrickparaflechas} follows from \ref{reindexingparamorfismos}.
\end{proof}

\begin{corollary}\label{reindexingparadiagramasenprop}
\ref{reindexingparadiagramas} also holds in $\Prop{C}$.
\end{corollary}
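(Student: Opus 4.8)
The final statement to prove is Corollary \ref{reindexingparadiagramasenprop}, which asserts that the diagram-reindexing result \ref{reindexingparadiagramas} also holds in $\Prop{C}$. The plan is to deduce this from the already-established version in $\Pro{C}$ by exploiting the pseudo-equivalence between $\Prop{C}$ and $\Pro{C}$ recorded in \ref{proppseudoeqapro}, following exactly the template by which \ref{mardesictrickparaflechasenprop} was obtained from \ref{mardesictrickparaflechas}.

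First I would take a finite diagram $\Delta \mr{\D} \Prop{C}$ with commutation relations and no loops. Since $\Prop{C}$ is pseudo-equivalent to $\Pro{C}$ with the inclusion as a pseudo-quasi-inverse (the identity on objects, by \ref{proppseudoeqapro}), I would produce a diagram $\widetilde{\D}: \Delta \mr{} \Pro{C}$ together with an invertible modification relating $\D$ and $\widetilde{\D}$ in $\cc{H}om(\Delta, \Prop{C})$. Concretely, for each vertex the objects coincide (same objects in both $2$-categories), and for each edge one replaces the pseudo-morphism $\D(e)$ by an honest morphism $\widetilde{\D}(e)$ in $\Pro{C}$ together with an invertible $2$-cell $\D(e) \Mr{} \widetilde{\D}(e)$ in $\Prop{C}$; the commutation relations of $\D$ are then transported, up to invertible $2$-cells, into commutation relations for $\widetilde{\D}$. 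This step uses that $\cc{H}om(\cc{C},\cc{C}at)_f$ and $\cc{H}om_p(\cc{C},\cc{C}at)_f$ are retract-equivalent (\ref{flexible2=p}, \ref{proisflexible}), so that a pseudo-diagram can be rectified to a strict one up to equivalence.

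Next I would apply \ref{reindexingparadiagramas} to $\widetilde{\D}$, obtaining a cofinite filtered poset $\ff{J}$ with a unique initial object and a diagram $\Delta \mr{\D'} \cc{H}om(\ff{J}^{op},\cc{C})$ whose induced diagram in $\Pro{C}$ is equivalent to $\widetilde{\D}$ up to isomorphism. Composing the equivalence $\widetilde{\D} \simeq \D'$ (in $\Pro{C}$, hence a fortiori in $\Prop{C}$) with the invertible $2$-cell $\D \Mr{} \widetilde{\D}$ from the first step, and using that the inclusion $\cc{H}om(\ff{J}^{op},\cc{C}) \mr{} \cc{H}om_p(\ff{J}^{op},\cc{C})$ is a retraction pseudo-equivalence (the equivalence in \ref{mardesictrickparaflechasenprop}'s diagram), I would obtain a lifting of $\D$ to $\cc{H}om_p(\ff{J}^{op},\cc{C})$ up to equivalence, which is the required conclusion. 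The pasting of equivalences and invertible $2$-cells parallels the final computation in \ref{mardesictrickparaflechasenprop}, where $\gamma \circ \ff{b}\alpha$ is formed.

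I expect the main obstacle to be the first step: making precise the rectification of the pseudo-diagram $\D$ into a strict diagram $\widetilde{\D}$ over $\Pro{C}$ while controlling the commutation relations. Unlike the single-morphism case \ref{mardesictrickparaflechasenprop}, where one only rectifies one arrow, here one must simultaneously rectify all edges of $\Delta$ and verify that the transported commutation relations still hold up to coherent invertible $2$-cells. The flexibility of the category-valued $2$-functors attached to $2$-pro-objects (\ref{proisflexible}) together with the retract-equivalence \ref{flexible2=p} is precisely what makes this rectification available, so the argument is structurally forced, but care is needed to ensure the invertible modification assembled across vertices and edges is genuinely coherent. Since the statement only requires equivalence (not strict equality), these coherence checks are routine once the pseudo-equivalence of \ref{proppseudoeqapro} is invoked vertexwise and edgewise, and I would leave them to the reader, exactly as the paper does for the analogous earlier corollaries.
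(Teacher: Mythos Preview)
Your proposal is correct and follows essentially the same approach as the paper: the paper's proof is the single line ``It follows from \ref{reindexingparadiagramas} as \ref{mardesictrickparaflechasenprop} follows from \ref{mardesictrickparaflechas}'', and you have accurately unpacked what that parallel argument entails, including the edgewise rectification via \ref{proppseudoeqapro} and the final pasting of invertible $2$-cells. Your discussion of the coherence obstacle is more detailed than the paper's, but the underlying strategy is identical.
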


\begin{proof}
 It follows from \ref{reindexingparadiagramas} as \ref{mardesictrickparaflechasenprop} follows from \ref{mardesictrickparaflechas}.
\end{proof}

% \begin{corollary}\label{diagramadediagramasenprop}
%  Let $\Delta\mr{D}\Prop{C}$ be a finite diagram with commutation relations and no loops in $\Prop{C}$. Then there exists an inverse 2-filtered system of diagrams $\{\Delta_k\mr{D_k}\cc{C}\}_{k\in \cc{K}}$ in $\cc{C}$ such that the diagram induced by the $D_k$'s in $\Prop{C}$ is equivalent to $D$.
% \end{corollary}
% 
% \begin{proof}
% By \ref{proppseudoeqapro}, there exists a diagram $\Delta\mr{D'}\Pro{C}$ such that $D'$ is isomorphic to $D$ in $\Prop{C}$. Apply \ref{diagrama de diagramas} to obtain an inverse 2-filtered system of diagrams $\{\Delta_k\mr{D_k}\cc{C}\}_{k\in \cc{K}}$ in $\cc{C}$ such that the diagram induced by the $D_k$'s in $\Pro{C}$ is equivalent to $D'$. Then if we embed $D'$ into $\Prop{C}$, this diagram is equivalent to $D$ as we wanted to prove.
% \end{proof}
\pagebreak
\begin{center}
{\bf Resumen en castellano de la secci\'on \ref{Mtrick}}
\end{center}

En esta secci\'on probamos ciertas propiedades de reindexaci\'on para las 2-categor\'ias $\Pro{C}$ y $\Prop{C}$ que ser\'an usadas para probar que son ``closed 2-bmodel \mbox{2-categories}'' (\ref{2-closed}) as\'i como Edwards-Hastings lo hacen para $\ff{Pro}(\ff{C})$ en \cite{EH} en el caso 1-dimensional. Las propiedades de reindexaci\'on para $\ff{Pro}(\ff{C})$ pueden hallarse en \cite{AM} o \cite{G2}.

El primer resultado es una versi\'on 2-categ\'orica de un resultado debido a Deligne \cite[Expose I, 8.1.6]{G2} que es clave en el caso 1-dimensional en el desarrollo de la estructura de modelos de la categor\'ia 
$\mathsf{Pro(C)}$ \cite{EH}. 
El enunciado 1-dimensional establece que todo pro-objeto es isomorfo a uno indexado por un poset cofinito y filtrante. Nuestra versi\'on establece que todo 2-pro-objeto es equivalente a uno indexado por un poset cofinito y filtrante. El segundo resultado establece que todo morfismo de 2-pro-objetos puede ser levantado salvo equivalencia a un morfismo entre 2-pro-objetos indexados por un poset cofinito y filtrante. Esto es un caso particular de un tercer resultado que establece que todo diagrama finito en $2$-$\cc{P}ro(\cc{C})$ puede ser levantado salvo equivalencia a un diagrama finito de 2-pro-objetos indexados por un poset cofinito y filtrante. Es clave para estos resultados la noci\'on de pseudo-funtor 2-cofinal dada en la secci\'on \ref{prelims}. Toda esta secci\'on ser\'a usada para probar el teorema central de la secci\'on \ref{2-modelos}.

\pagebreak

\section{Closed 2-model 2-categories}\label{Definitions and basic lemmas}

In this section we introduce original notions of closed 2-model  and closed 2-bmodel 2-category and state some lemmas and propositions that we are going to use later. Our notion is stronger than Pronk's ``fibration structures'' (\cite{Pronk}) since it is a 2-dimensional version of the full Quillen's axioms for closed model structures. It also differs in the important fact that we do not assume the choice of a privileged global factorization given in a pseudo-functorial way but stipulates, as Quillen does, only the existence of factorizations for each arrow. 
%where she guarantee that, under certain conditions, the closed 2-model structure can be used to construct the homotopy category. 

Most of the results of this section are generalizations to the context of 2-categories of well known statements about closed model categories. For definitions and results in the 1-dimensional case, check for example \cite{Q1} or \cite{GJ}.

\subsection{Definitions and basic lemmas}

\begin{definition}\label{lifting}
Let $\cc{C}$ be a 2-category and $\ff{A}\mr{\ff{i}}\ff{X}$, $\ff{Y}\mr{\ff{p}}\ff{B}$ two morphisms in $\cc{C}$. We say that the pair $(\ff{i},\ff{p})$ has the \emph{lifting property} (or equivalently that $\ff{i}$ has the left lifting property with respect to $\ff{p}$ or equivalently that $\ff{p}$ has the right lifting property with respect to $\ff{i}$) if for each diagram in $\cc{C}$ of the form

\begin{equation}\label{cuadradoallenar}
 \xymatrix@C=1.5pc@R=3pc{\ff{A} \ar[rr]^{\ff{a}} \ar[d]_{\ff{i}} &\ar@{}[d]|{\cong \; \Downarrow \; \gamma}& \ff{Y} \ar[d]^{\ff{p}} \\
            \ff{X} \ar[rr]_{\ff{b}} && \ff{B}}
\end{equation}

% $$\xymatrix{\ff{A} \ar[r]^{\ff{a}} \ar[d]_{\ff{i}} \ar@{}[dr]|{\cong \; \Downarrow \; \gamma}& \ff{Y} \ar[d]^{\ff{p}} \\
%             \ff{X} \ar[r]_{\ff{b}} & \ff{B}}$$ tama\tilde{n}o estandar
       
\noindent there exist a morphism $\ff{f}$ and invertible 2-cells $\lambda$, $\rho$ as in the following diagram

$$\xymatrix@C=1.5pc@R=1.5pc{\ff{A} \ar[rr]^{\ff{a}} \ar[dd]_{\ff{i}}  \ar@{}[dr]|{\cong \; \Downarrow \; \lambda}&  & \ff{Y} \ar[dd]^{\ff{p}}\\
                  & \ar@{}[dr]|{ \cong \; \Downarrow \; \rho} \\ 
                  \ff{X} \ar[rr]_{\ff{b}} \ar@{-->}[rruu]|{\comw{M^M} \ff{f} \comw{M^M} } && \ff{B}}$$
                  
\noindent such that $ \quad %A154
\vcenter{\xymatrix@C=-0pc{ \p \deq && & \fa \op{\lambda} \\ \p && \f && \ii \deq \\ & \fb \cl{\rho} &&& \ii}}
\vcenter{\xymatrix@C=-0pc{ \quad = \quad }}
\vcenter{\xymatrix@C=-0pc{ \p \dl & \dc{\gamma} & \dr \fa \\ \fb && \ii}}$.

In this case, we say that $(\ff{f},\lambda,\rho)$ is a filler for diagram \eqref{cuadradoallenar}.
\end{definition}

\begin{definition}\label{2-closed}
  We say that a 2-category $\cc{C}$ is a \emph{closed 2-model 2-category (respectively a closed 2-bmodel 2-category)} if it is equipped with three classes of morphisms called \emph{fibrations}, \emph{cofibrations} and \emph{weak equivalences} satisfying the following properties:
  
   \item[2-M0:] $\cc{C}$ is closed under finite weighted pseudo-limits and pseudo-colimits of pseudo-functors $\F:\cc{P} \mr{} \cc{C}$ with finite weights $\ff{W}: \cc{P} \mr{} \cc{C}at$ (see \cite{K2}).

\noindent (Respectively:

  \item[2-M0b:] $\cc{C}$ is closed under finite weighted bi-limits and bi-colimits of pseudo-functors \mbox{$\F:\cc{P} \mr{} \cc{C}$} with finite weights $\ff{W}: \cc{P} \mr{} \cc{C}at$ (see \cite{K2}).)
   
\noindent To simplify, by finite we mean that $ \cc{P}$ is finite and $\ff{W}(\ff{P})$ is finite for all $\ff{P} \in \cc{P}$. 
   \item[2-M2:] 
% There exist 2-functors $\ff{F},\ff{G},\ff{F}',\ff{G}':2$-$\cc{M}aps_p(\cc{C})\mr{}2$-$\cc{M}aps_p(\cc{C})$ such that for every morphism $\ff{f}\in \cc{C}$, $\ff{F}(\ff{f})$ is a cofibration, $\ff{G}(\ff{f})$ is both a fibration and a weak equivalence, $\ff{F}'(\ff{f})$ is both a cofibration and a weak equivalence, $\ff{G}'(\ff{f})$ is a fibration, $\ff{f}\cong \ff{F}(f)\ff{G}(f)$ and $\ff{f}\cong \ff{F}'(f)\ff{G}'(f)$.
 Every morphism $\ff{f}\in \cc{C}$ can be factored up to isomorphism as $\ff{f}\cong \ff{p} \ff{i}$ with $\ff{i}$ a cofibration which is also a weak equivalence and $\ff{p}$ a fibration or $\ff{i}$ a cofibration and $\ff{p}$ a fibration which is also a weak equivalence.
 
  \item[2-M5:] Given a diagram of the form
 
 $$\xymatrix{\ff{X} \ar[rr]^{h} \ar[dr]_{\ff{f}} &
             \ar@{}[d]|{ \cong} &
             \ff{Y} \\
             &
             \ff{Z} \ar[ru]_{\ff{g}}}$$

\noindent If two of the three $\ff{f}$, $\ff{g}$, $\ff{h}$ are weak equivalences, then so is the third one. Every isomorphism is a weak equivalence.              

\item[2-M6a):] A morphism $\ff{p}\in \cc{C}$ is a fibration iff the pair $(\ff{i},\ff{p})$ has the lifting property for every $\ff{i}$ that is both a cofibration and a weak equivalence. 
 \item[2-M6b):] A morphism $\ff{i}\in \cc{C}$ is a cofibration iff the pair $(\ff{i},\ff{p})$ has the lifting property for every $\ff{p}$ that is both a fibration and a weak equivalence. 
 \item[2-M6c):] A morphism $\ff{f}\in \cc{C}$ is a weak equivalence iff it can be factored up to isomorphism as $\ff{f}\cong \ff{u}\ff{v}$ where $\ff{u}$ has the right lifting property with respect to all cofibrations and $\ff{v}$ has the left lifting property with respect to all fibrations.
\end{definition}

For some of the proofs of section \ref{2-modelos} we are going to assume that our 2-category $\cc{C}$ satisfies the following ``2-niceness conditions'':

\vspace{2ex}

\emph{2-N1 Every cofibration is a bi-pushout of a cofibration between cofibrant objects.}

\emph{2-N2 Every fibration is a bi-pullback of a fibration between fibrant objects.}

\emph{2-N3 At least one of the following is satisfied:}

\emph{\ \ \  2-N3a) Every object is cofibrant.
\ \ \  2-N3b) Every object is fibrant.}
% 2-N4 There exists functorial cylinder objects.

\begin{remark}\label{N4}
A fourth niceness condition is considered in \cite{EH} in the 1-dimensional case: \emph{N4: There exist functorial cylinder objects.} Though it is not mentioned, we think that this condition is only needed in the proof of the analogous of \ref{lema5'.2}, but we also believe that this condition is not necessary as we proved in the 2-dimensional case that locally pseudo-functorial cylinder objects can be chosen (see \ref{cilindros funtoriales}) which is enough to prove \ref{lema5'.2}. Clearly, from the proof of \ref{cilindros funtoriales} we see that it follows from Quillen's axioms that is possible to choose locally functorial cylinder objects, result that we have not found in the literature. 
\end{remark}

\begin{remark}
Any closed 2-model 2-category is a closed 2-bmodel 2-category. Note also that the two notions differ only in the first axiom. \cqd 
\end{remark}

\begin{remark}\label{limites que alcanzan}
To check axiom 2-M0b, it is enough to check the existence of bi-equalizers, finite bi-products (binary plus bi-1) and bi-cotensors with a finite category (see \cite{Street}, \cite{K2}, \cite{Canevalli}). 
\cqd
\end{remark}

\begin{remark}
If $\cc{C}$ is a closed 2-bmodel 2-category, in particular $\cc{C}$ has finite bi-limits (that is finite conical weighted bi-limits) indexed by a poset, and more in particular bi-pullbacks and bi-1. \cqd
\end{remark}

\begin{lemma}\label{lema2.2}
Let $\cc{C}$ be a 2-category with three classes of morphisms satisfying 2-M6a), 2-M6b) and 2-M6c). Then a morphism $\ff{p}\in \cc{C}$ is both a fibration (respectively cofibration) and a weak equivalence iff it has the right lifting property (respectively left lifting property) with respect to all cofibrations (respectively fibrations). 
\end{lemma}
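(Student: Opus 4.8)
The plan is to prove Lemma \ref{lema2.2} by a standard \emph{retract argument} adapted to the 2-categorical setting, mirroring the classical proof from model category theory but being careful that commutativities hold only up to invertible 2-cells. I will prove the fibration statement; the cofibration statement is formally dual and follows by the same argument read in $\cc{C}^{op}$ (see \ref{oposite}).

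First I would prove the easy implication. Suppose $\ff{p}$ is both a fibration and a weak equivalence. By 2-M6a), since $\ff{p}$ is a fibration, the pair $(\ff{i},\ff{p})$ has the lifting property for every $\ff{i}$ that is both a cofibration and a weak equivalence. To upgrade this to \emph{all} cofibrations, I would use 2-M6c): since $\ff{p}$ is a weak equivalence, it factors up to isomorphism as $\ff{p}\cong \ff{u}\ff{v}$ where $\ff{u}$ has the right lifting property with respect to all cofibrations and $\ff{v}$ has the left lifting property with respect to all fibrations. The key observation is that $\ff{p}$ is a fibration, so by the defining property of $\ff{v}$ (left lifting against all fibrations) applied to a suitable square built from the factorization $\ff{p}\cong\ff{u}\ff{v}$, one exhibits $\ff{p}$ as (isomorphic to) a \emph{retract} of $\ff{u}$ in $\cc{H}om_p(\ff{2},\cc{C})$ in the sense of \ref{retractos}. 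Then I would invoke the standard fact that the class of morphisms having the right lifting property with respect to a fixed class is closed under retracts: since $\ff{u}$ has the right lifting property with respect to all cofibrations and $\ff{p}$ is a retract of $\ff{u}$, $\ff{p}$ inherits this property. This retract-closure is the one genuinely 2-categorical lemma I would need to establish (or cite if available earlier), carrying the invertible 2-cells $\lambda,\rho,\gamma$ of Definition \ref{lifting} through the retraction data $(\theta_0,\theta_1,\theta_m,\eta_0,\eta_1,\eta_m,\mu_0,\mu_1)$ via the elevators calculus.

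Conversely, I would suppose $\ff{p}$ has the right lifting property with respect to all cofibrations and show it is both a fibration and a weak equivalence. That $\ff{p}$ is a fibration is immediate from 2-M6a), since having the right lifting property against all cofibrations is stronger than having it against those cofibrations that are also weak equivalences. To see $\ff{p}$ is a weak equivalence I would use 2-M6c): it suffices to factor $\ff{p}$ up to isomorphism as $\ff{u}\ff{v}$ with $\ff{u}$ right-lifting against all cofibrations and $\ff{v}$ left-lifting against all fibrations. Here I would apply the factorization axiom 2-M2 to write $\ff{p}\cong\ff{q}\,\ff{j}$ with $\ff{j}$ a cofibration and $\ff{q}$ a fibration which is also a weak equivalence; by the (already proven) first half of the lemma, $\ff{q}$ has the right lifting property against all cofibrations. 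Since $\ff{p}$ itself has the right lifting property against all cofibrations and $\ff{j}$ is a cofibration, the square expressing $\ff{p}\cong\ff{q}\ff{j}$ admits a filler, which exhibits $\ff{p}$ as a retract of $\ff{q}$; a retract of a weak equivalence is a weak equivalence by 2-M5 (the two-out-of-three axiom together with closure of weak equivalences under retracts, which follows from it). This gives the desired factorization and hence realizes $\ff{p}$ as a weak equivalence.

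The main obstacle I anticipate is the bookkeeping of the invertible 2-cells in the retract argument: in the 1-dimensional case ``$\ff{p}$ is a retract of $\ff{u}$'' is an equality-on-the-nose statement, whereas here the lifting property produces only invertible 2-cells $\lambda,\rho$ and the relevant squares commute merely up to the $\gamma$'s. I would therefore need to verify carefully that the retraction tuple of \ref{retractos} can be assembled from these fillers so that the coherence equation displayed there actually holds, and that retract-closure of a lifting class is robust under this weakening. The computations themselves are routine elevator-calculus manipulations, but making the invertible 2-cells match up is where the real work lies; everything else reduces to citing 2-M2, 2-M5, 2-M6a), 2-M6c) and the duality \ref{oposite}.
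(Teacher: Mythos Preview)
Your forward direction is correct but takes a more roundabout route than the paper. The paper does not use a retract argument: given a cofibration $\ff{i}$ and a square against $\ff{p}$, it factors $\ff{p}\cong\ff{u}\ff{v}$ via 2-M6c), then directly builds the filler as a composite. First it fills the square with $\ff{i}$ on the left and $\ff{u}$ on the right (using that $\ff{u}$ has the RLP against all cofibrations), obtaining $(\ff{f}_1,\lambda_1,\rho_1)$; then it fills the square with $\ff{v}$ on the left and $\ff{p}$ on the right (using that $\ff{v}$ has the LLP against all fibrations and $\ff{p}$ is a fibration), obtaining $(\ff{f}_2,\lambda_2,\rho_2)$; the desired filler is $\ff{f}_2\ff{f}_1$ with the obvious pasted 2-cells. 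Your approach via retracts is valid, but it presupposes the retract argument (Proposition \ref{retractargument}) and closure of lifting classes under retracts (Proposition \ref{retractospreservanlifting}), both of which appear \emph{after} this lemma in the paper; so at minimum you would need to reorder, or reprove those inline.

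Your backward direction has a genuine gap: you invoke 2-M2 and 2-M5, but the lemma only assumes 2-M6a), 2-M6b), 2-M6c). Neither the factorization axiom nor two-out-of-three is available here, so your argument does not go through as stated. The paper's proof of this direction is in fact trivial and uses nothing beyond the hypotheses: that $\ff{p}$ is a fibration follows from 2-M6a) exactly as you say; that $\ff{p}$ is a weak equivalence follows from 2-M6c) by taking the factorization $\ff{p}=\ff{p}\, id_{\ff{Y}}$, where $\ff{u}=\ff{p}$ has the RLP against all cofibrations by hypothesis, and $\ff{v}=id_{\ff{Y}}$ has the LLP against all fibrations trivially (identities have every lifting property). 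No retracts, no 2-M2, no 2-M5 are needed.
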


\begin{proof} We will prove the case where $\ff{p}$ is a fibration. The other case is similar and we omit it.

$\Rightarrow)$ Let $\ff{Y}\mr{\ff{p}}\ff{B}\in \cc{C}$ be a morphism that is both a fibration and a weak equivalence and $\ff{A}\mr{\ff{i}}\ff{X}\in \cc{C}$ a cofibration and suppose that we have the following situation:

$$\xymatrix@C=1.5pc@R=3pc{\ff{A} \ar[rr]^{\ff{a}} \ar[d]_{\ff{i}} &\ar@{}[d]|{\cong \; \Downarrow \; \gamma}& \ff{Y} \ar[d]^{\ff{p}} \\
            \ff{X} \ar[rr]_{\ff{b}} && \ff{B}}$$
            
Since $\cc{C}$ satisfies axiom 2-M6c) and $\ff{p}$ is a weak equivalence, there exist morphisms $\ff{u}$, $\ff{v}$ and an invertible 2-cell $\ff{p}\Mr{\epsilon} \ff{u}\ff{v}$ such that $\ff{u}$ has the right lifting property with respect to all cofibrations and $\ff{v}$ has the left lifting property with respect to all fibrations. Then there exist fillers $(\ff{f}_1,\lambda_1,\rho_1)$, $(\ff{f}_2,\lambda_2,\rho_2)$ for the following diagrams

$$\vcenter{\xymatrix@R=1.5pc{\ff{A} \ar[r]^{\ff{a}} \ar[dd]_{\ff{i}}  \ar@{}[ddr]|{\cong \; \Downarrow \; \gamma} & \ff{Y} \ar@{}[ddr]|{\stackrel[\scriptstyle \cong]{\scriptstyle \epsilon}{\Rightarrow}} \ar[r]^{\ff{v}} \ar[dd]^{\ff{p}} & \ff{Z} \ar@/^3ex/[ddl]^{\ff{u}} \\
                            \\
                            \ff{X} \ar[r]_{\ff{b}} &  \ff{B} &}} 
\vcenter{\xymatrix{=}}
\vcenter{\xymatrix@C=1.5pc@R=1.5pc{\ff{A} \ar[r]^{\ff{a}} \ar[dd]_{\ff{i}}  \ar@{}[ddrr]|{\cong \; \Downarrow \; \gamma \circ \epsilon^{-1}\ff{a}} & \ff{Y} \ar[r]^{\ff{v}} & \ff{Z} \ar[dd]^{\ff{u}} \\
                            \\
                            \ff{X} \ar[rr]_{\ff{b}} & & \ff{B}}}
\vcenter{\xymatrix{ \quad \quad \quad }}                           
\vcenter{\xymatrix@C=3.5pc@R=3.5pc{\ff{Y} \ar[r]^{id_\ff{Y}} \ar[d]_{\ff{v}}  \ar@{}[dr]|{\cong \; \Downarrow \; \epsilon}&  \ff{Y} \ar[d]^{\ff{p}} \\
                  \ff{Z} \ar[r]_{\ff{u}} & \ff{B}}}$$

Let's check that $(\ff{f}_2 \ff{f}_1, %A155 
\vcenter{\xymatrix@C=-0.2pc@R=1pc{ && \fa \opunodos{= \quad} \\ & id_{\Y} \op{\lambda_2} &&& \fa \deq \\ \f_2 \deq && \fv \dl & \dc{\lambda_1} & \dr \fa \\ \f_2 && \f_1 && \ii}}
, %A156
\vcenter{\xymatrix@C=-0.2pc@R=1pc{ \p && \f_2 && \f_1 \deq \\ & \fu \cl{\rho_2} &&& \f_1 \\ &&& \fb \cldosuno{\rho_1} }}
)$ is the filler that we were looking for:

%A157
$$\vcenter{\xymatrix@C=-0pc{ \p \deq &&&& \fa \opunodos{= \quad } \\ 
\p \deq &&& id_{\Y} \op{\lambda_2} &&& \fa \deq \\
\p \deq && \f_2 \deq && \fv \dl & \dc{\lambda_1} & \dr \fa \\
\p && \f_2 && \f_1 \deq && \ii \deq \\
& \fu \cl{\rho_2} &&& \f_1 && \ii \deq \\
&& \fb \clunodos{\rho_1} &&&& \ii}}
\vcenter{\xymatrix@C=-0pc{ \quad = \quad }}
\vcenter{\xymatrix@C=-0pc{ \p \deq &&& \fa \op{=} \\
\p \dl & \dc{\eps} & \dr id_{\Y} && \fa \deq \\
\fu && \fv && \fa \deq \\
& \p \cl{\eps\inv} \dl & \dcr{\gamma} && \dr \fa \\
& \fb &&& \ii }}
\vcenter{\xymatrix@C=-0pc{ \quad = \quad }}
\vcenter{\xymatrix@C=-0pc{ \p \dl & \dc{\gamma} & \dr \fa \\ \fb && \ii}}$$

The first equality holds by elevators calculus plus the fact that $(\ff{f}_1,\lambda_1,\rho_1)$ and $(\ff{f}_2,\lambda_2,\rho_2)$ are fillers for the corresponding diagrams. 

$\Leftarrow)$ Since $\cc{C}$ satisfies axioms 2-M6a) and 2-M6c), it is clear that $\ff{p}$ is a fibration and it is also a weak equivalence because it can be factored as $\ff{p}=\ff{p} id_\ff{Y}$. 

\end{proof}

% 
% We are using the notation $\ff{X}\amalg \ff{X}$ to denote the following bi-pushout
% 
% $$\xymatrix@C=1pc{0 \ar[r] \ar[d] & 
%                   \ff{X} \ar[d] \\
%                   \ff{X} \ar[r]
%                   & \ff{X}\amalg \ff{X}} $$

% We are using the notation $\ff{X}\times \ff{X}$ to denote the following bi-pullback
% 
% $$\xymatrix@C=1pc{\ff{X}\times \ff{X} \ar[r] \ar[d] & 
%                   \ff{X} \ar[d] \\
%                   \ff{X} \ar[r]
%                   & \ast} $$

\begin{proposition}\label{sobran axiomas}
Let $\cc{C}$ be a 2-category with three classes of morphisms satisfying axioms 2-M6a), 2-M6b) and 2-M6c). Then the following hold:

\begin{itemize}
 \item[2-M1:] Given $\ff{i}$ a cofibration and $\ff{p}$ a fibration, if one of them is a weak equivalence, then the pair $(\ff{i},\ff{p})$ has the lifting property.
 \item[2-M3b:] Fibrations (respectively cofibrations) are closed under composition and bi-pullbacks (respectively bi-pushouts). Every isomorphism is a fibration and a cofibration.
 
\noindent In particular: 
 
 \item[2-M3:] Fibrations (respectively cofibrations) are closed under composition and pseudo-pullbacks (respectively pseudo-pushouts). Every isomorphism is a fibration and a cofibration.
 
 \item[2-M4b:] If $\ff{f}\in \cc{C}$ is the bi-pullback (respectively bi-pushout) of a fibration (respectively cofibration) which is also a weak equivalence, then $\ff{f}$ is a weak equivalence.
 
 \noindent In particular: 
 
 \item[2-M4:] If $\ff{f}\in \cc{C}$ is the pseudo-pullback (respectively pseudo-pushout) of a fibration (respectively cofibration) which is also a weak equivalence, then $\ff{f}$ is a weak equivalence.
 
 \item[2-M7:] Fibrations, cofibrations and weak equivalences are closed under isomorphisms, i.e. if there is an invertible 2-cell $\ff{f}\Mr{}\ff{g}$ and $\ff{f}$ is a fibration (respectively a cofibration or a weak equivalence), then $\ff{g}$ is also a fibration (respectively a cofibration or a weak equivalence).
 
\end{itemize}

\end{proposition}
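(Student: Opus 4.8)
The plan is to reduce all six assertions to two observations: (i) lifting problems are insensitive to pasting invertible $2$-cells onto any edge of the square, and (ii) for a fixed class $\cc{S}$ of morphisms, the class $R(\cc{S})$ of morphisms having the right lifting property against every member of $\cc{S}$ is closed under composition and bi-pullbacks and contains the isomorphisms (dually for the left lifting property, $L(\cc{S})$, and bi-pushouts). Granting (i) and (ii), the statement follows by combining 2-M6a), 2-M6b), 2-M6c) and Lemma \ref{lema2.2}.

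First I would record the invariance lemma (i): if $(\ff{i},\ff{p})$ has the lifting property of \ref{lifting} and $\ff{i}\Mr{}\ff{i}'$, $\ff{p}\Mr{}\ff{p}'$ are invertible $2$-cells, then $(\ff{i}',\ff{p}')$ has it too, since any square for $(\ff{i}',\ff{p}')$ is turned into one for $(\ff{i},\ff{p})$ by pasting the inverses, solved, and the filler pasted back. This gives 2-M7 at once: for fibrations and cofibrations via 2-M6a) and 2-M6b), and for weak equivalences via 2-M6c), because an invertible $2$-cell transports a factorization $\ff{f}\cong\ff{u}\ff{v}$ along itself. Axiom 2-M1 is then immediate: if the cofibration $\ff{i}$ is also a weak equivalence, 2-M6a) applied to the fibration $\ff{p}$ solves the lifting problem; if instead the fibration $\ff{p}$ is a weak equivalence, 2-M6b) applied to the cofibration $\ff{i}$ does.

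The heart of the argument is (ii). I would show that $R(\cc{S})$ is closed under composition by the usual two-step lift: given a square for $\ff{q}\ff{p}$ and $\ff{p},\ff{q}\in R(\cc{S})$, first lift it through the outer map $\ff{q}$, then lift the resulting square (whose diagonal comparison is invertible) through the inner map $\ff{p}$, and assemble the two fillers together with their invertible $2$-cells by elevator calculus so that the defining equation of \ref{lifting} holds, using (i) to absorb the structural isomorphisms. That isomorphisms lie in $R(\cc{S})$ — indeed have RLP against \emph{all} morphisms — is direct: for an iso $\ff{p}$ the $1$-cell $\ff{p}^{-1}\ff{b}$ together with $\lambda=(\ff{p}^{-1}\gamma)^{-1}$ and $\rho=id$ fills any square, as $\ff{p}\ff{p}^{-1}=id$ and $\ff{p}^{-1}\ff{p}=id$ strictly. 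Closure under bi-pullback is the delicate point and the main obstacle: given $\ff{p}'$ the bi-pullback of $\ff{p}\in R(\cc{S})$ along some $\ff{g}$, a lifting problem for $\ff{p}'$ is pushed forward to one for $\ff{p}$ through the bi-pullback projections and its structural $2$-cell, solved there, and then pulled back by invoking the two-dimensional bi-universal property (\ref{isodeplimexplicito}) to induce simultaneously the filler $1$-cell and the two invertible $2$-cells $\lambda,\rho$, whose compatibility must be verified by the elevator identity of \ref{lifting}. Unlike the $1$-dimensional retract/lifting argument, here one must produce a coherent pair of invertible $2$-cells rather than a bare comparison arrow; the rest is bookkeeping. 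The dual facts for $L(\cc{S})$, bi-pushouts and LLP are obtained by passing to $\cc{C}^{op}$.

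Finally I would harvest the consequences. By 2-M6a), the fibrations are exactly $R(\cc{S})$ with $\cc{S}$ the class of trivial cofibrations, so (ii) yields 2-M3b for fibrations and places the isomorphisms among them; dually for cofibrations via 2-M6b). Since pseudo-pullbacks are in particular bi-pullbacks (\ref{colimits}), axiom 2-M3 is the special case of 2-M3b, and likewise for pseudo-pushouts. For 2-M4b, Lemma \ref{lema2.2} identifies the trivial fibrations with $R(\cc{S})$ for $\cc{S}$ the class of \emph{all} cofibrations; (ii) shows this class is stable under bi-pullback, and Lemma \ref{lema2.2} read in reverse then exhibits the bi-pullback as again a trivial fibration, hence in particular a weak equivalence. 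Dually, a bi-pushout of a trivial cofibration is a weak equivalence, and 2-M4 is the pseudo-special case of 2-M4b.
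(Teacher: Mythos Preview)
Your proposal is correct and follows essentially the same route as the paper: both establish 2-M3b by the two-step lift for composition and by the bi-pullback universal property, both derive 2-M4b by combining this with Lemma~\ref{lema2.2}, and both obtain 2-M7 by pasting the invertible 2-cell and its inverse onto the lifting square. Your presentation is slightly more systematic in isolating the closure properties of $R(\cc{S})$ and $L(\cc{S})$ up front, and your argument for 2-M1 is actually a touch more direct than the paper's (you invoke 2-M6a) and 2-M6b) immediately rather than passing through Lemma~\ref{lema2.2}), but these are organizational differences rather than substantive ones.
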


\begin{proof}
 2-M1: is clear by \ref{lema2.2}.
 
 2-M3b: We are going to prove the case of fibrations, the case of cofibrations is similar and we leave it to the reader.

 \begin{itemize}
  \item[-] Suppose that $\ff{p}$ and $\ff{q}$ are two fibrations in $\cc{C}$. By axiom 2-M6a), to prove that $\ff{q}\ff{p}$ is a fibration, we only need to check that it has the right lifting property with respect to all morphisms that are both cofibrations and weak equivalences. So let $\ff{i}$ be a cofibration which is also a weak equivalence and suppose that we have a diagram of the form 

 $$\xymatrix@R=3pc@C=1.5pc{\ff{A} \ar[rr]^{\ff{a}} \ar[d]_{\ff{i}} &\ar@{}[d]|{\cong \; \Downarrow \; \gamma}& \ff{Y} \ar[d]^{\ff{q}\ff{p}} \\
            \ff{X} \ar[rr]_{\ff{b}} && \ff{B}}$$
            
\noindent Since $\ff{q}$ is a fibration, there exists a filler $(\ff{f}_0,\lambda_0, \rho_0)$ for the following diagram

$$\xymatrix@C=1.5pc@R=1.5pc{\ff{A} \ar[r]^{\ff{a}} \ar[dd]_{\ff{i}}  \ar@{}[ddrr]|{\cong \; \Downarrow \; \gamma } & \ff{Y} \ar[r]^{\ff{p}} & \ff{Z} \ar[dd]^{\ff{q}} \\
                            \\
                            \ff{X} \ar[rr]_{\ff{b}} & & \ff{B}} $$
 
\noindent Now, since $\ff{p}$ is also a fibration, there exists a filler $(\ff{f}_1,\lambda_1,\rho_1)$ for the following diagram

 $$\xymatrix@R=3pc@C=1.5pc{\ff{A} \ar[rr]^{\ff{a}} \ar[d]_{\ff{i}} &\ar@{}[d]|{\cong \; \Downarrow \; \lambda_0}& \ff{Y} \ar[d]^{\ff{p}} \\
            \ff{X} \ar[rr]_{\ff{f}_0} && \ff{Z}}$$

\noindent Let's check that $(\ff{f}_1,\lambda_1, %A158 
\vcenter{\xymatrix@C=-0.2pc@R=1pc{ \q \deq && \p && \f_1 \\ \q &&& \f_0 \cl{\rho_1} \\ && \fb \cldosuno{\rho_0} }}
)$ is the filler that we were looking for:

%A159

$$\vcenter{\xymatrix@C=-0pc{ \q \deq && \p \deq &&& \fa \op{\lambda_1} \\
\q \deq && \p && \f_1 && \ii \deq \\
\q &&& \f_0 \cl{\rho_1} &&& \ii \deq \\
& \fb \clunodos{\rho_0} &&&&& \ii }}
\vcenter{\xymatrix@C=-0pc{ \quad = \quad }}
\vcenter{\xymatrix@C=-0pc{ \q \deq && \p \dl & \dc{\lambda_0} & \dr \fa \\ \q && \f_0 && \ii \deq \\ & \fb \cl{\rho_0} &&& \ii   }}
\vcenter{\xymatrix@C=-0pc{ \quad = \quad }}
\vcenter{\xymatrix@C=-0pc{ \q \ardr && \p \dc{\gamma} && \fa \ardl \\ & \fb && \ii  }}$$

\item[-] Now suppose that $\ff{p}$ is a fibration and we have a bi-pullback 

\begin{equation}\label{ppb}
  \xymatrix@R=3pc@C=1.5pc{\ff{P} \ar[rr]^{\pi_0} \ar[d]_{\pi_1}  &\ar@{}[d]|{\cong \; \Downarrow \; \alpha }& \ff{Y} \ar[d]^{\ff{p}} \\
            \ff{Z} \ar[rr]_{\ff{f}} && \ff{B}}
\end{equation}
            
\noindent We need to prove that $\pi_1$ has the right lifting property with respect to all morphisms that are both cofibrations and weak equivalences. So let $\ff{i}$ be a cofibration which is also a weak equivalence and suppose that we have a diagram of the form 
 
 $$\xymatrix@R=3pc@C=1.5pc{\ff{A} \ar[rr]^{\ff{a}} \ar[d]_{\ff{i}} &\ar@{}[d]|{\cong \; \Downarrow \; \gamma}& \ff{P} \ar[d]^{\pi_1} \\
            \ff{X} \ar[rr]_{\ff{b}} && \ff{Z}}$$             
            
\noindent Since $\ff{p}$ is a fibration, there exists a filler $(\ff{f}_0,\lambda_0,\rho_0)$ for the following diagram

 $$
 \vcenter{\xymatrix@R=3pc@C=3pc{\ff{A} \ar[r]^{\fa} \ar[d]_{\ii} \ar@{}[dr]|{\cong \; \Downarrow \; \gamma} & \ff{P} \ar[d]^{\pi_1} \ar[r]^{\pi_0} \ar@{}[dr]|{\; \; \cong \; \Downarrow \; \alpha} & \ff{Y} \ar[d]^{\p} \\
 \X \ar[r]_{\fb} & \Z \ar[r]_{\f} & \B}}
 \vcenter{\xymatrix@C=1pc{=}}
 \vcenter{\xymatrix@R=3pc@C=1.5pc{\ff{A} \ar[rr]^{\pi_0\ff{a}} \ar[d]_{\ff{i}} &\ar@{}[d]|{\cong \; \Downarrow \; \ff{f}\gamma \circ \alpha \ff{a}}& \ff{Y} \ar[d]^{\ff{p}} \\
            \ff{X} \ar[rr]_{\ff{f}\ff{b}} && \ff{B}}}$$   

\noindent Since \eqref{ppb} is a bi-pullback, there exists a morphism $\ff{X}\mr{\ff{g}}\ff{P}$ and invertible \mbox{2-cells} $\pi_0 \ff{g} \Mr{\beta_0} \ff{f}_0$, $\pi_1 \ff{g} \Mr{\beta_1} \ff{b}$ satisfying the following equality: 

%A160

$$\vcenter{\xymatrix@C=-0pc{ \p \dl & \dc{\alpha} & \dr \pi_0 && \g \deq \\ \f \deq && \pi_1 && \g \\ \f &&& \fb \cl{\beta_1} }}
\vcenter{\xymatrix@C=-0pc{ \quad = \quad }}
\vcenter{\xymatrix@C=-0pc{ \p \deq && \pi_0 && \g \\ \p \dl & \dcr{\rho_0} && \dr \f_0 \cl{\beta_0} \\ \f &&& \fb  }}$$

% $\pi_0 \ff{g}=\ff{f}_0$, $\pi_1 \ff{g}=\ff{b}$ and $\alpha \ff{g}=\rho_0$.

\noindent It is straightforward to check that $(\ff{g},\lambda, \beta_1)$ is the filler that we were looking for, where $\lambda$ is such that $\pi_0 \lambda= %A161
\vcenter{\xymatrix@C=-0pc{& \pi_0 \dl & \dcr{\lambda_0} && \fa \dr \\ & \f_0 \op{\beta_0\inv} &&& \ii \deq \\ \pi_0 && \g && \ii  }}
$ and $\pi_1 \lambda= %A162
\vcenter{\xymatrix@C=-0pc{& \pi_1 \dl & \dcr{\gamma} && \fa \dr \\ & \fb \op{\beta_1\inv} &&& \ii \deq \\ \pi_1 && \g && \ii    }}
$.

\item[-] To conclude with axiom 2-M3b, suppose that $\ff{f}$ is an isomorphism, $\ff{i}$ is a morphism that is both a cofibration and a weak equivalence and we have a diagram of the form 

$$\xymatrix@R=3pc@C=1.5pc{\ff{A} \ar[rr]^{\ff{a}} \ar[d]_{\ff{i}} &\ar@{}[d]|{\cong \; \Downarrow \; \gamma}& \ff{Y} \ar[d]^{\ff{f}} \\
            \ff{X} \ar[rr]_{\ff{b}} && \ff{B}}$$
            
Since $\ff{f}$ is an isomorphism, there exists $\ff{B}\mr{\ff{g}}\ff{Y}$ such that $\ff{f}\ff{g}=id_{\ff{B}}$ and $\ff{g}\ff{f}=id_{\ff{Y}}$. It is clear that $(\ff{g}\ff{b},\gamma,id)$ is the filler that we were looking for.

 \end{itemize}

2-M4b: Suppose that $\ff{p}$ is both a fibration and a weak equivalence and we have a bi-pullback 

\begin{equation}\label{ppb2}
 \xymatrix@R=3pc@C=1.5pc{\ff{P} \ar[rr]^{\pi_0} \ar[d]_{\pi_1} 
            & \ar@{}[d]|{\cong \; \Downarrow \; \alpha}
            & \ff{Y} \ar[d]^{\ff{p}}\\
            \ff{Z} \ar[rr]_{\ff{f}} 
            & & \ff{B}}
\end{equation}

\noindent Since we have already proved axiom 2-M3b, we know that $\pi_1$ is a fibration. By \ref{lema2.2}, we have to check that $\pi_1$ has the right lifting property with respect to all cofibrations. The proof follows as the proof of axiom 2-M3b.

% So suppose that $\ff{i}$ is a cofibration and we have a diagram of the form
% 
% $$\xymatrix@C=1pc{\ff{A} \ar[rr]^{\ff{a}} \ar[d]_{\ff{i}} &\ar@{}[d]|{\cong \; \Downarrow \; \gamma}& \ff{P} \ar[d]^{\pi_1} \\
%             \ff{X} \ar[rr]_{\ff{b}} && \ff{Z}}$$
%             
% \noindent Since $\ff{p}$ is a fibration, there exists a filler $(\ff{f}_0,\lambda_0,\rho_0)$ for the following diagram
% 
% $$\xymatrix@C=1pc{\ff{A} \ar[rr]^{\pi_0\ff{a}} \ar[d]_{\ff{i}} &\ar@{}[d]|{\cong \; \Downarrow \; \ff{f}\gamma \circ \alpha \ff{a} } & \ff{Y} \ar[d]^{\ff{p}} \\
%             \ff{X} \ar[rr]_{\ff{f}\ff{b}} && \ff{B}}$$
% 
% \noindent Then, since \ref{ppb2} is a pseudo-pullback, there exists a morphism $\ff{X}\mr{\ff{g}}\ff{P}$ such that $\pi_0 \ff{g}=\ff{f}_0$, $\pi_1 \ff{g}=\ff{b}$ and $\alpha \ff{g}=\rho_0$. It can be checked that $(\ff{g},\lambda,id)$ is the filler that we were looking for, where $\lambda$ is such that $\pi_0 \lambda=\lambda_0$ and $\pi_1 \lambda=\gamma$. 

2-M7: Suppose that $\ff{f}$ is a fibration (the case of a cofibration is similar and we leave it to the reader) and there is an isomorphism $\ff{f}\Mr{\alpha}\ff{g}$. We want to check that $\ff{g}$ has the right lifting property with respect to all morphisms that are both cofibrations and weak equivalences. So, suppose that $\ff{i}$ is a morphism that is both a cofibration and a weak equivalence and we have a diagram of the form 

$$\xymatrix@R=3pc@C=1.5pc{\ff{A} \ar[rr]^{\ff{a}} \ar[d]_{\ff{i}} &\ar@{}[d]|{\cong \; \Downarrow \; \gamma}& \ff{Y} \ar[d]^{\ff{g}} \\
            \ff{X} \ar[rr]_{\ff{b}} && \ff{B}}$$
            
\noindent Since $\ff{f}$ is a fibration, there is a filler $(\ff{f}_0,\lambda_0,\rho_0)$ for the following diagram

$$\vcenter{\xymatrix@R=3pc@C=1.5pc{\ff{A} \ar[rr]^{\ff{a}} \ar[d]_{\ff{i}} &\ar@{}[d]|{\cong \; \Downarrow \; \gamma \quad}& \ff{Y} \ar@<1.5ex>[d]^{\ff{f}} \ar@<-1.5ex>[d]_{\ff{g}} \ar@{}[d]|{\stackrel[\scriptstyle \cong]{\scriptstyle \alpha}{\Leftarrow}} \\
            \ff{X} \ar[rr]_{\ff{b}} && \ff{B}}}
\vcenter{\xymatrix@C=1pc{=}}
\vcenter{\xymatrix@R=3pc@C=1.5pc{\ff{A} \ar[rr]^{\ff{a}} \ar[d]_{\ff{i}} &\ar@{}[d]|{\cong \; \Downarrow \; \gamma\circ \alpha \ff{a}}& \ff{Y} \ar[d]^{\ff{f}} \\
            \ff{X} \ar[rr]_{\ff{b}} && \ff{B}}}$$
            
\noindent It is straightforward to check that $(\ff{f}_0,\lambda_0, %A163
\vcenter{\xymatrix@C=-0.2pc@R=1pc{ \g \dcellb{\alpha\inv} && \f_0 \deq \\ \f && \f_0 \\ & \fb \cl{\rho_0} }}
)$ is the filler that we were looking for.

To conclude, suppose that $\ff{f}$ is a weak equivalence and there is an isomorphism $\ff{f}\Mr{\alpha}\ff{g}$. Then $\ff{g}\cong \ff{f}\cong \ff{u}\ff{v}$ as in 2-M6c) and so is also a weak equivalence.
\end{proof}

%It is worth to have the following definition:
Although we do not use it in this work, we set (for the record) the following definition:

\begin{definition}
 We say that a 2-category $\cc{C}$ is a \emph{2-model 2-category} (respectively a \emph{2-bmodel 2-category}) if it is equipped with three classes of morphisms called fibrations, cofibrations and weak equivalences satisfying 2-M0 (respectively 2-M0b), 2-M1, 2-M2, 2-M3 (respectively 2-M3b), 2-M4 (respectively 2-M4b), 2-M5 and 2-M7.
 %A 2-category with three classes of morphisms satisfying 2-M0 (respectively 2-M0b), 2-M1, 2-M2, 2-M3 (respectively 2-M3b), 2-M4 (respectively 2-M4b), 2-M5 and 2-M7 should be called a \emph{2-model (respectively a 2-bmodel) 2-category}.
\end{definition}

\begin{corollary}[of \ref{sobran axiomas}]\label{axiomasnecesariosysuficientes}
\comw{A}

\begin{enumerate}
  \item If $\cc{C}$ is a closed 2-bmodel 2-category, then $\cc{C}$ is a 2-bmodel 2-category.
  
  \item If $\cc{C}$ is a closed 2-model 2-category, then $\cc{C}$ is a 2-model 2-category. $\hfill \square$
 \end{enumerate}

\end{corollary}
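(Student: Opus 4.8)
The plan is to recognize that both statements are immediate bookkeeping consequences of Proposition \ref{sobran axiomas}, so that no genuinely new argument is needed beyond matching up the two lists of axioms. First I would recall from the definition \ref{2-closed} that a closed 2-bmodel 2-category $\cc{C}$ comes equipped with three classes of morphisms satisfying 2-M0b, 2-M2 and 2-M5, together with the characterizing axioms 2-M6a), 2-M6b) and 2-M6c). Comparing this with the definition of a 2-bmodel 2-category, which demands 2-M0b, 2-M1, 2-M2, 2-M3b, 2-M4b, 2-M5 and 2-M7, the three axioms 2-M0b, 2-M2 and 2-M5 are literally shared and therefore already available; what remains to be produced are 2-M1, 2-M3b, 2-M4b and 2-M7.

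The key step is then to invoke Proposition \ref{sobran axiomas}: its hypothesis is exactly that $\cc{C}$ carries three classes of morphisms satisfying 2-M6a), 2-M6b) and 2-M6c), which we have by assumption, and its conclusion delivers precisely 2-M1, 2-M3b, 2-M4b and 2-M7. Thus all the missing axioms are supplied at once, and the first statement follows directly.

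For the second statement I would argue in the same way, replacing the bi-limit flavour by its strict counterpart. A closed 2-model 2-category satisfies 2-M0 (rather than 2-M0b), 2-M2, 2-M5 and again 2-M6a), 2-M6b), 2-M6c), while the definition of a 2-model 2-category requires 2-M0, 2-M1, 2-M2, 2-M3, 2-M4, 2-M5 and 2-M7. Here 2-M0, 2-M2 and 2-M5 are shared, and 2-M1, 2-M3, 2-M4 and 2-M7 follow from Proposition \ref{sobran axiomas}, using that the proposition actually yields the stronger 2-M3b and 2-M4b, which it records as specializing to 2-M3 and 2-M4 respectively.

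Since the real content is entirely concentrated in Proposition \ref{sobran axiomas}, the only point requiring any care — the closest thing to an obstacle — is the purely clerical one of checking that the two families of axioms match up in the correct flavour: 2-M0b paired with 2-M3b, 2-M4b in the bmodel case, and 2-M0 paired with 2-M3, 2-M4 in the model case. One should also confirm that none of 2-M1, 2-M3(b), 2-M4(b), 2-M7 is secretly required as an \emph{input} to Proposition \ref{sobran axiomas} rather than obtained as an \emph{output}; inspecting its statement shows its sole hypotheses are 2-M6a), 2-M6b), 2-M6c), so there is no circularity and no deeper difficulty.
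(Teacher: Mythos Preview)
Your proposal is correct and matches the paper's approach: the paper gives no explicit proof at all, marking the corollary with a $\square$ immediately after the statement, so it is treated as an immediate consequence of Proposition~\ref{sobran axiomas}. Your careful matching of the axiom lists is exactly the (trivial) bookkeeping the paper leaves implicit.
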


\begin{remark}
By axiom 2-M7, axiom 2-M5 can be replaced in each definition by its following weaker version

\begin{center}
\textsl{2-M5w: Given two composable morphisms $\ff{f}$, $\ff{g}\in\cc{C}$, if two of the three $\ff{f}$, $\ff{g}$, $\ff{g}\ff{f}$ are weak equivalences, then so is the third one. Every isomorphism is a weak equivalence.\cqd} 
\end{center}

\end{remark}

The following is the 2-dimensional version of ``the retract argument'' \cite[7.2.2]{H}. 

\begin{proposition}\label{retractargument}
Let $\cc{C}$ be a 2-category and let $\ff{X}\mr{\ff{f}}\ff{Y}\in \cc{C}$. 

\begin{enumerate}
 \item If $\ff{f}$ is factorized as $\ff{f}\cong \ff{p}\ff{i}$ and the pair $(\ff{f},\ff{p})$ has the lifting property, then $\ff{f}$ is a retract of $\ff{i}$ in $\cc{M}aps_p(\cc{C})$. 
 
 \item If $\ff{f}$ is factorized as $\ff{f}\cong \ff{p}\ff{i}$ and the pair $(\ff{i},\ff{f})$ has the lifting property, then $\ff{f}$ is a retract of $\ff{p}$ in $\cc{M}aps_p(\cc{C})$.
\end{enumerate}

\end{proposition}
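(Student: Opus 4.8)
The two statements are dual to each other (passing to $\cc{C}^{op}$ interchanges left and right lifting properties and reverses the roles of $\ff{i}$ and $\ff{p}$), so the plan is to prove (1) in full and then obtain (2) by the symmetric argument. Throughout, write $\gamma\colon \ff{f}\Mr{\cong}\ff{p}\ff{i}$ for the given invertible $2$-cell realizing the factorization $\ff{f}\cong\ff{p}\ff{i}$, with $\ff{X}\mr{\ff{i}}\ff{Z}\mr{\ff{p}}\ff{Y}$. The target is to exhibit a retract datum $(\theta_0,\theta_1,\theta_m,\eta_0,\eta_1,\eta_m,\mu_0,\mu_1)$ as in Definition \ref{retractos} witnessing that $\ff{f}$ is a retract of $\ff{i}$ (resp. of $\ff{p}$) in $\cc{H}om_p(\ff{2},\cc{C})$.

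For part (1), first I would form the lifting square of Definition \ref{lifting} whose left leg is $\ff{f}$, right leg is $\ff{p}$, top edge is $\ff{i}$, bottom edge is $id_{\ff{Y}}$, and whose structural $2$-cell is $\gamma\inv\colon \ff{p}\ff{i}\Mr{}\ff{f}=id_{\ff{Y}}\ff{f}$. Since by hypothesis the pair $(\ff{f},\ff{p})$ has the lifting property, this square admits a filler $(\ff{g},\lambda,\rho)$ consisting of a morphism $\ff{g}\colon\ff{Y}\to\ff{Z}$ and invertible $2$-cells $\lambda\colon\ff{i}\Mr{\cong}\ff{g}\ff{f}$ and $\rho\colon\ff{p}\ff{g}\Mr{\cong}id_{\ff{Y}}$ satisfying the filler equation, which in this instance reads $(\rho\,\ff{f})\circ(\ff{p}\,\lambda)=\gamma\inv$. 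This $\ff{g}$ is the retracting map, playing exactly the role that the diagonal lift plays in the classical one-dimensional retract argument \cite[7.2.2]{H}.

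Next I would assemble the retract data (interpreting $\ff{g}$ of Definition \ref{retractos} as our $\ff{i}$): take $\theta=(\theta_0,\theta_1,\theta_m)=(id_{\ff{X}},\,\ff{g},\,\lambda)$ as a morphism $\ff{f}\to\ff{i}$, noting that $\theta_m=\lambda$ has the correct type $\ff{i}\,\theta_0\Mr{}\theta_1\,\ff{f}$; take $\eta=(\eta_0,\eta_1,\eta_m)=(id_{\ff{X}},\,\ff{p},\,\gamma)$ as a morphism $\ff{i}\to\ff{f}$, since $\eta_m=\gamma$ has type $\ff{f}\,\eta_0\Mr{}\eta_1\,\ff{i}$; and set $\mu_0=id_{id_{\ff{X}}}$, $\mu_1=\rho$, so that $\mu_0\colon\eta_0\theta_0=id_{\ff{X}}\Mr{}id_{\ff{X}}$ and $\mu_1\colon\eta_1\theta_1=\ff{p}\ff{g}\Mr{}id_{\ff{Y}}$ are invertible. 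These are automatically morphisms of $\cc{H}om_p(\ff{2},\cc{C})$ (no extra equation is required of the data of a morphism, per Remark \ref{mapsp}), and $\mu$ is invertible because $\mu_0,\mu_1$ are. The only substantive content left is the single coherence equation of Definition \ref{retractos}, which is precisely the condition that $\mu=(\mu_0,\mu_1)$ be a $2$-cell $\eta\theta\Mr{}id_{\ff{f}}$ in $\cc{H}om_p(\ff{2},\cc{C})$. I expect this to collapse, via the elevators calculus of \ref{basicelevator} together with $\mu_0=id$, exactly to the filler equation $(\rho\,\ff{f})\circ(\ff{p}\,\lambda)=\gamma\inv$ obtained above.

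The verification of that last equation is the main obstacle: it is the only genuinely computational step, an elevator-calculus manipulation pasting $\gamma$, $\lambda$, $\rho$ along the interchange law and matching them against the filler identity; everything else is bookkeeping of types. For part (2) I would run the mirror-image construction: using that $(\ff{i},\ff{f})$ has the lifting property, fill the square with left leg $\ff{i}$, right leg $\ff{f}$, top edge $id_{\ff{X}}$, bottom edge $\ff{p}$, and structural $2$-cell $\gamma\colon\ff{f}\Mr{}\ff{p}\ff{i}$, obtaining a lift $\ff{g}\colon\ff{Z}\to\ff{X}$ with $\lambda\colon id_{\ff{X}}\Mr{\cong}\ff{g}\ff{i}$ and $\rho\colon\ff{f}\ff{g}\Mr{\cong}\ff{p}$; then take $\theta=(\ff{i},id_{\ff{Y}},\gamma\inv)$, $\eta=(\ff{g},id_{\ff{Y}},\rho)$, $\mu_0=\lambda\inv$, $\mu_1=id_{id_{\ff{Y}}}$, and again reduce the coherence equation of \ref{retractos} to the corresponding filler equation.
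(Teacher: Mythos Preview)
Your proposal is correct and follows essentially the same approach as the paper. The paper's proof is even terser: it simply records the retract datum $(id_{\ff{X}},\ff{g},\lambda,id_{\ff{X}},\ff{p},\gamma^{-1},id_{id_{\ff{X}}},\rho)$ (with the opposite convention for the direction of $\gamma$) and leaves part (2) to the reader, whereas you additionally spell out that the coherence equation of Definition~\ref{retractos} collapses to the filler equation, which is exactly right.
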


\begin{proof}
 
\begin{enumerate}
 \item Let $(\ff{g},\lambda,\rho)$ be a filler for the following diagram
 
 $$\xymatrix@C=1.5pc@R=3pc{\ff{X} \ar[rr]^{\ff{i}} \ar[d]_{\ff{f}} &\ar@{}[d]|{\cong \; \Downarrow \; \gamma}& \ff{Z} \ar[d]^{\ff{p}} \\
            \ff{Y} \ar[rr]_{id_{\ff{Y}}} && \ff{Y}}$$
            
 Then $\ff{f}$ is a retract of $\ff{i}$ via $(id_{\ff{X}},\ff{g},\lambda,id_{\ff{X}},\ff{p},\gamma^{-1},id_{id_{\ff{X}}},\rho)$.
 
 \item The proof is similar to the previous one and we leave it to the reader.
\end{enumerate}

\end{proof}

\begin{proposition}\label{retractospreservanlifting}
 Let $\cc{C}$ be a 2-category, $\ff{p}'$ a retract of $\ff{p}$ and $\ff{i}'$ a retract of $\ff{i}$. If the pair $(\ff{i},\ff{p})$ has the lifting property, then the pair  $(\ff{i}',\ff{p}')$ also does.
\end{proposition}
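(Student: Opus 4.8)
The plan is to reproduce the classical one–dimensional retract argument, transporting a lifting problem for $(\ff{i}',\ff{p}')$ across the retract data into one for $(\ff{i},\ff{p})$, solving that by hypothesis, and carrying the filler back; the genuinely new work lies entirely in bookkeeping the invertible $2$-cells and checking the single coherence equation of Definition~\ref{lifting}. Write $\ff{i}:\ff{A}\to\ff{X}$, $\ff{i}':\ff{A}'\to\ff{X}'$, $\ff{p}:\ff{Y}\to\ff{B}$, $\ff{p}':\ff{Y}'\to\ff{B}'$. By Definition~\ref{retractos} and Remark~\ref{mapsp}, the retraction of $\ff{i}'$ onto $\ff{i}$ is a tuple $(\theta_0,\theta_1,\theta_m,\eta_0,\eta_1,\eta_m,\mu_0,\mu_1)$ with $\ff{i}\theta_0\Mrsimeq{\theta_m}\theta_1\ff{i}'$, $\ff{i}'\eta_0\Mrsimeq{\eta_m}\eta_1\ff{i}$, $\eta_0\theta_0\Mrsimeq{\mu_0}id_{\ff{A}'}$, $\eta_1\theta_1\Mrsimeq{\mu_1}id_{\ff{X}'}$; likewise the retraction of $\ff{p}'$ onto $\ff{p}$ is a tuple $(\sigma_0,\sigma_1,\sigma_m,\tau_0,\tau_1,\tau_m,\nu_0,\nu_1)$ with $\ff{p}\sigma_0\Mrsimeq{\sigma_m}\sigma_1\ff{p}'$, $\ff{p}'\tau_0\Mrsimeq{\tau_m}\tau_1\ff{p}$, $\tau_0\sigma_0\Mrsimeq{\nu_0}id_{\ff{Y}'}$, $\tau_1\sigma_1\Mrsimeq{\nu_1}id_{\ff{B}'}$, each subject to the compatibility equation displayed in Definition~\ref{retractos}.

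First I would take an arbitrary lifting problem $(\ff{a},\ff{b},\gamma)$ for $(\ff{i}',\ff{p}')$, with $\ff{A}'\mr{\ff{a}}\ff{Y}'$, $\ff{X}'\mr{\ff{b}}\ff{B}'$ and $\ff{p}'\ff{a}\Mrsimeq{\gamma}\ff{b}\ff{i}'$, and form the transported problem for $(\ff{i},\ff{p})$ whose legs are $\sigma_0\ff{a}\eta_0:\ff{A}\to\ff{Y}$ and $\sigma_1\ff{b}\eta_1:\ff{X}\to\ff{B}$. Its structural $2$-cell $\widehat\gamma:\ff{p}\,\sigma_0\ff{a}\eta_0\Mr{}\sigma_1\ff{b}\eta_1\,\ff{i}$ is the pasting $(\sigma_1\ff{b}\eta_m)\circ(\sigma_1\gamma\eta_0)\circ(\sigma_m\ff{a}\eta_0)$, invertible because every factor is. Applying the hypothesis that $(\ff{i},\ff{p})$ has the lifting property then produces a filler $(\ff{g},\lambda,\rho)$ with $\ff{X}\mr{\ff{g}}\ff{Y}$, $\ff{g}\ff{i}\Mrsimeq{\lambda}\sigma_0\ff{a}\eta_0$ and $\ff{p}\ff{g}\Mrsimeq{\rho}\sigma_1\ff{b}\eta_1$, satisfying the coherence of Definition~\ref{lifting} relative to $\widehat\gamma$.

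Next I would set $\ff{f}'=\tau_0\,\ff{g}\,\theta_1:\ff{X}'\to\ff{Y}'$ and build the invertible $2$-cells $\lambda':\ff{f}'\ff{i}'\Mr{}\ff{a}$ and $\rho':\ff{p}'\ff{f}'\Mr{}\ff{b}$ as the evident pastings that transport $\lambda$ and $\rho$ back through the retract. Concretely, $\lambda'$ composes $\tau_0\ff{g}\theta_m^{-1}$, then $\tau_0\lambda\theta_0$, then the cancellations $\nu_0$ and $\mu_0$ of $\tau_0\sigma_0$ and $\eta_0\theta_0$; dually, $\rho'$ composes $\tau_m\ff{g}\theta_1$, then $\tau_1\rho\theta_1$, then the cancellations $\nu_1$ and $\mu_1$ of $\tau_1\sigma_1$ and $\eta_1\theta_1$. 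Both are invertible, and these choices reproduce verbatim, with identities replaced by the structural $2$-cells, the one-dimensional identities $\ff{f}'\ff{i}'\cong\ff{a}$ and $\ff{p}'\ff{f}'\cong\ff{b}$.

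Finally I would verify the coherence equation of Definition~\ref{lifting} for $(\ff{f}',\lambda',\rho')$ relative to $\gamma$, namely that $\gamma$ is recovered as the appropriate pasting of $\ff{p}'\lambda'$ and $\rho'\ff{i}'$. I expect this to be the main obstacle: it is a pasting identity to be carried out in the elevators calculus, feeding in the coherence of $(\ff{g},\lambda,\rho)$ for $\widehat\gamma$, the explicit form of $\widehat\gamma$ above, and crucially the compatibility equations of Definition~\ref{retractos} for \emph{both} retractions (precisely the two-dimensional data that has no one-dimensional counterpart), together with repeated use of the interchange law \eqref{basicelevator}. Since every structural $2$-cell is invertible, all the intended cancellations go through, so no hypothesis beyond those already listed is required, and the pair $(\ff{i}',\ff{p}')$ inherits the lifting property.
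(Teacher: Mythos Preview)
Your proposal is correct and follows essentially the same approach as the paper: transport the lifting problem through the retract data to a problem for $(\ff{i},\ff{p})$, solve it, and carry the filler back, with the coherence of Definition~\ref{lifting} verified via the pasting you describe. The paper's proof is identical up to naming (it uses $(\theta,\eta,\mu)$ for the $\ff{p}$-retraction and $(\theta',\eta',\mu')$ for the $\ff{i}$-retraction), writes out the explicit elevator formulas for the transported filler and its two $2$-cells, and likewise leaves the final coherence equation as ``it can be checked''.
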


\begin{proof}
We know that $\ff{p}'$ is a retract of $\ff{p}$ via $(\theta_0,\theta_1,\theta_m,\eta_0,\eta_1,\eta_m,\mu_0,\mu_1)$ and $\ff{i}'$ is a retract of $\ff{i}$ via $(\theta'_0,\theta'_1,\theta'_m,\eta'_0,\eta'_1,\eta'_m,\mu'_0,\mu'_1)$. Suppose that we have a diagram of the form

\begin{equation}\label{cuadradoconlosprima}
\xymatrix@C=1.5pc@R=3pc{\ff{A}' \ar[rr]^{\ff{a}} \ar[d]_{\ff{i}'}&\ar@{}[d]|{\cong \; \Downarrow \; \gamma}& \ff{Y}' \ar[d]^{\ff{p}'} \\
            \ff{X}' \ar[rr]_{\ff{b}} && \ff{B}'} 
\end{equation}

By hypothesis, there is a filler $(\ff{f},\lambda,\rho)$ for diagram

$$\xymatrix@C=1.5pc@R=3pc{\ff{A} \ar[rr]^{\theta_0 \ff{a} \eta'_0} \ar[d]_{\ff{i}}&\ar@{}[d]|{\cong \; \Downarrow \; \gamma'}& \ff{Y} \ar[d]^{\ff{p}} \\
            \ff{X} \ar[rr]_{\theta_1 \ff{b} \eta'_1} && \ff{B}} $$

\noindent where $\gamma= %A103
\vcenter{\xymatrix@C=0pc@R=1pc{ \p \dl & \dc{\theta_m} & \dr \theta_0 && \fa \deq && \eta_0' \deq \\
\theta_1 \deq && \p' \dl & \dc{\gamma} & \dr \fa \dr && \eta_0' \deq \\
\theta_1 \deq && \fb \deq && \ii' \dl & \dc{\eta'_m} & \dr \eta_0' \\
\theta_1 && \fb && \eta_1' && \ii }}$.

\noindent It can be checked that $(\eta_0\ff{f}\theta'_1, %A104
\vcenter{\xymatrix@C=-0.2pc@R=1.5pc{ &&&& \fa \optrestres{=} \\
& id_{\Y'} \opb{{\mu_0}\inv} &&& \fa \deq &&& id_{\A'} \opb{{\mu'_0}\inv} \\
\eta_0 \deq && \theta_0 \ardr && \fa \dc{\lambda} && \eta'_0 \ardl && \theta'_0 \deq \\
\eta_0 \deq &&& \f \deq && \ii \dl & \dcr{\theta_m} && \theta'_0 \dr \\
\eta_0 &&& \f && \theta'_1 &&& \ii'}}
, %A105
\vcenter{\xymatrix@C=-0.2pc@R=1.5pc{ \p' \dl & \dcr{\eta_m} && \eta_0 \dr && \f \deq &&& \theta_1' \deq \\
\eta_1 \deq &&& \p \ardl & \dc{\rho} & \f \ardr &&& \theta_1' \deq \\
\eta_1 && \theta_1 && \fb \deq && \eta_1' && \theta_1' \\
& id_{\B'} \cl{\mu_1} &&& \fb &&& id_{\X'} \cl{\mu_1'} \\
&&&& \fb \cltrestres{=} }}
)$ is the filler that we were looking for.
\end{proof}

\begin{proposition}\label{retractospreservan}
Let $\cc{C}$ be a 2-category with three classes of morphisms satisfying axioms 2-M6a), 2-M6b) and 2-M6c). Then fibrations, cofibrations, morphisms that are both fibrations and weak equivalences and morphisms that are both cofibrations and weak equivalences are closed under the formation of retracts.
\end{proposition}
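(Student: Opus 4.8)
The plan is to reduce the four closure-under-retracts statements to the single combinatorial fact already established in \ref{retractospreservanlifting}, namely that the lifting property passes to retracts. The key observation is that each of the four classes in question has a \emph{lifting-theoretic characterization} supplied by the axioms and by the preceding lemma: fibrations are exactly the morphisms with the right lifting property against all trivial cofibrations (2-M6a); cofibrations are exactly those with the left lifting property against all trivial fibrations (2-M6b); by \ref{lema2.2}, a trivial fibration is exactly a morphism with the right lifting property against \emph{all} cofibrations, and dually a trivial cofibration is exactly a morphism with the left lifting property against all fibrations. So in every case membership in the class is equivalent to having a lifting property against a fixed family of morphisms.

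First I would treat fibrations. Let $\ff{p}'$ be a retract of a fibration $\ff{p}$ in $\cc{M}aps_p(\cc{C})$, and let $\ff{i}$ be an arbitrary trivial cofibration. Since $\ff{p}$ is a fibration, the pair $(\ff{i},\ff{p})$ has the lifting property by 2-M6a. The identity morphism $id_{\ff{i}}$ is trivially a retract of $\ff{i}$ (via the identity data). Applying \ref{retractospreservanlifting} with $\ff{i}'=\ff{i}$ and the given retract datum exhibiting $\ff{p}'$ as a retract of $\ff{p}$, I conclude that $(\ff{i},\ff{p}')$ has the lifting property. As $\ff{i}$ was an arbitrary trivial cofibration, 2-M6a gives that $\ff{p}'$ is a fibration. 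The case of cofibrations is entirely dual, using 2-M6b and taking the \emph{right} member to be an arbitrary trivial fibration (which is a retract of itself), so that \ref{retractospreservanlifting} yields the lifting property for $(\ff{i}',\ff{p})$ for every trivial fibration $\ff{p}$, whence $\ff{i}'$ is a cofibration.

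Next I would handle trivial fibrations. Let $\ff{p}'$ be a retract of a morphism $\ff{p}$ which is both a fibration and a weak equivalence. By \ref{lema2.2}, $\ff{p}$ has the right lifting property with respect to \emph{all} cofibrations $\ff{i}$. For each such $\ff{i}$, again using that $\ff{i}$ is a retract of itself, \ref{retractospreservanlifting} gives that $(\ff{i},\ff{p}')$ has the lifting property; since this holds for all cofibrations, \ref{lema2.2} applied in the reverse direction shows $\ff{p}'$ is both a fibration and a weak equivalence. The trivial-cofibration case is dual, using the left-lifting characterization in \ref{lema2.2}.

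I do not anticipate a genuine obstacle here: the entire argument is a bookkeeping exercise in assembling the right retract data and invoking \ref{retractospreservanlifting} and \ref{lema2.2}. The one point requiring a little care is purely formal: \ref{retractospreservanlifting} is stated for a pair $(\ff{i}',\ff{p}')$ where \emph{both} members are retracts, so in each of the four applications I must explicitly note that the member I am \emph{not} varying is a retract of itself via the identity tuple in $\cc{H}om_p(\ff{2},\cc{C})$ (recall Definition \ref{retractos}), i.e.\ the tuple of identity 1-cells and identity 2-cells. Once this is observed, each of the four cases is a one-line deduction, and assembling them completes the proof.
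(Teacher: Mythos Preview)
Your proposal is correct and follows exactly the approach the paper takes: the paper's own proof is the single line ``It is straightforward from \ref{retractospreservanlifting} plus \ref{lema2.2},'' and you have simply spelled out those straightforward details.
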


\begin{proof}

It is straightforward from \ref{retractospreservanlifting} plus \ref{lema2.2}.
\end{proof}

 We believe the previous statement is also true for weak equivalences but its proof is a deeper result. Classically (in the 1-dimensional case, for closed model categories) it follows immediately from Quillen's theorem ``$\gamma(\ff{f}) \; invertible \;\; if\!f \;\; \ff{f} \; is \; a \; weak \; equivalence$'' (where $\gamma$ is the universal functor inverting weak equivalences). We expect to finish a 2-dimensional version of this theorem in future work.

\subsection{Locally functorial factorizations and cylinder objects}

\begin{definition}\label{locally functorial}
A factorization up to invertible 2-cell for arrows in a 2-category $\cc{C}$ is said to be locally pseudo-functorial (on the sequel we will just say functorial) if given $\f \mr{\theta} \f' \mr{\eta} \f'' \in \cc{H}om_p(\ff{2},\cc{C})$ as in the following diagram
 
 $$ %D1
 \xymatrix@C=3pc@R=3pc{ \X\ar[r]^{\theta_0} \ar[d]_\f \ar@{}[dr]|{\cong \; \Downarrow \; \theta_m} & \X' \ar[d]_{\f'} \ar[r]^{\eta_0} \ar@{}[dr]|{\cong \; \Downarrow \; \eta_m} & \X'' \ar[d]^{\f''} \\
 \Y \ar[r]_{\theta_1} & \Y' \ar[r]_{\eta_1} & \Y''}
 $$
 
\noindent there are suitable factorizations
 
 $$ %D2
 \xymatrix{\X \ar[rr]^\f \ar[rd]_\ii & \ar@{}[d]|{\cong \; \Downarrow \; \alpha } & \Y \\
 & \Z \ar[ru]_\p }, \quad
 %D3
 \xymatrix{\X' \ar[rr]^{\f'} \ar[rd]_{\ii'} & \ar@{}[d]|{\cong \; \Downarrow \; \alpha' } & \Y' \\
 & \Z' \ar[ru]_{\p'} }, \quad
 %D4
 \xymatrix{\X'' \ar[rr]^{\f''} \ar[rd]_{\ii''} & \ar@{}[d]|{\cong \; \Downarrow \; \alpha'' } & \Y'' \\
 & \Z'' \ar[ru]_{\p''} }$$
  
 \noindent such that there exist morphisms 
 $\Z \mr{\ff{g}} \Z' \mr{\ff{h}} \Z''$ and invertible 2-cells $\vcenter{\xymatrix@C=-0pc{\ii' \dl & \dc{\beta} & \theta_0 \dr \\ \ff{g} & & \ii}}$, $\vcenter{\xymatrix@C=-0pc{\ii' \dl & \dc{\gamma} & \eta_0 \dr \\ \ff{h} & & \ii'}}$, $\vcenter{\xymatrix@C=-0pc{\p' \dl & \dc{\mu} & \ff{g} \dr \\ \theta_1 & & \p}}$, $\vcenter{\xymatrix@C=-0pc{\p'' \dl & \dc{\epsilon} & \ff{h} \dr \\ \eta_1 & & \p'}}$ fitting in the following diagram:
 
 $$ %D42
 \xymatrix{\X \ar[dd]_>>>>{\f} \ar[dr]^{\ii} \ar[rr]^{\theta_0} && \X' \ar[dd]_>>>>{\f'} \ar[dr]^{\ii'} \ar[rr]^{\eta_0} && \X'' \ar[dd]_>>>>{\f''} \ar[dr]^{\ii''} \\
 & \Z \ar[rr]|{\quad} \ar@{}[rr]^>>>>{\g} \ar[dl]^{\p} && \Z' \ar[dl]^{\p'} \ar[rr]|{\quad} \ar@{}[rr]^>>>>{\h} && \Z'' \ar[dl]^{\p''} \\
 \Y \ar[rr]_{\theta_1} && \Y' \ar[rr]_{\eta_1} && \Y''}
 $$
 
 \noindent and satisfying the following equations:
 
 $$ %D43
\vcenter{\xymatrix@C=-0pc{& \f' \op{\alpha'} &&& \theta_0 \deq \\ \p' \deq && \ii' \dl & \dc{\beta} & \dr \theta_0 \\ \p' \dl & \dc{\mu} & \dr \g && \ii \deq \\ \theta_1 && \p && \ii  }}
\vcenter{\xymatrix@C=-0pc{ \quad = \quad }}
\vcenter{\xymatrix@C=-0pc{\f' \dl & \dcr{\theta_m} && \dr \theta_0 \\ \theta_1 \deq &&& \f \op{\alpha} \\ \theta_1 && \p && \ii }}
 \hbox{ and } \quad %D44
\vcenter{\xymatrix@C=-0pc{& \f'' \op{\alpha''} &&& \eta_0 \deq \\ \p'' \deq && \ii'' \dl & \dc{\gamma} & \dr \eta_0 \\ \p'' \dl & \dc{\eps} & \dr \h && \ii' \deq \\ \eta_1 && \p' && \ii'  }}
\vcenter{\xymatrix@C=-0pc{ \quad = \quad }}
\vcenter{\xymatrix@C=-0pc{\f'' \dl & \dcr{\eta_m} && \dr \eta_0 \\ \eta_1 \deq &&& \f' \op{\alpha'} \\ \eta_1 && \p' && \ii' }} $$
%  $\ff{i}\mr{\theta'} \ff{i}'\mr{\eta'} \ff{i}''$, $\ff{p}\mr{\theta''} \ff{p}'\mr{\eta''} \ff{p}'' \in 2$-$\cc{M}aps_p(\cc{C})$ satisfying the following equalities:
%  
%  $$ %D7
%  \vcenter{\xymatrix@C=-0pc{ & \f \op{\alpha} \\ \p \dcellb{\theta''} && \ii \dcell{\theta'} \\ \p' && \ii' }}
% \vcenter{\xymatrix@C=-0pc{ \quad = \quad }}
% \vcenter{\xymatrix@C=-0pc{ & \f \dcell{\theta} \\ & \f' \op{\alpha'} \\ \p' && \ii' }}
%  \hbox{ and } %D8
%  \vcenter{\xymatrix@C=-0pc{ & \f' \op{\alpha'} \\ \p' \dcellb{\eta''} && \ii' \dcell{\eta'} \\ \p'' && \ii'' }}
% \vcenter{\xymatrix@C=-0pc{ \quad = \quad }}
% \vcenter{\xymatrix@C=-0pc{ & \f' \dcell{\eta} \\ & \f'' \op{\alpha''} \\ \p'' && \ii'' }}
%  \in 2\hbox{-}\cc{M}aps_p(\cc{C})$$
\end{definition}

\begin{proposition}\label{factorizacionfuntorial}
If $\cc{C}$ is a closed 2-bmodel 2-category, then the factorization of axiom 2-M2 is locally functorial. 

%  $\ff{Z}\mr{\ff{a}} \ff{Z}'$, $\ff{Z}'\mr{\ff{b}} \ff{Z}''$ and invertible 2-cells $\ff{i}'\theta_0 \Mr{\beta} \ff{a} \ff{i}$, $\ff{i}''\eta_0 \Mr{\gamma} \ff{b} \ff{i}'$, $\ff{p}'\ff{a} \Mr{\delta} \theta_1 \ff{p}$, $\ff{p}''\ff{b} \Mr{\epsilon} \eta_1 \ff{p}'$ such that $\delta \ff{i} \circ \ff{p}' \beta \circ \alpha' \theta_0= \theta_1 \alpha \circ \theta_m $ and $\epsilon \ff{i}' \circ \ff{p}'' \gamma \circ \alpha'' \eta_0= \eta_1 \alpha' \circ \eta_m  $, i.e.
%  
%  $$ D5 \hbox{ and } D6$$
 
 \end{proposition}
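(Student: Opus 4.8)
The plan is to deduce the functoriality of the factorization from the two-dimensional lifting property, i.e. from axiom 2-M1 (proved in \ref{sobran axiomas}), exactly as in the one-dimensional ``functorial factorization from lifting'' argument, but carrying every commuting square up to an invertible $2$-cell. First I would fix, using axiom 2-M2, factorizations $\f\cong\p\ii$ through $\Z$, $\f'\cong\p'\ii'$ through $\Z'$ and $\f''\cong\p''\ii''$ through $\Z''$, all chosen in the same one of the two cases of 2-M2 (so that in every instance the left factor is a cofibration, the right factor a fibration, and one of the two is a weak equivalence). These are the factorizations appearing in the statement; the content to be produced is the connecting data $\g,\h$ together with the invertible $2$-cells $\beta,\gamma,\mu,\epsilon$.

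Then, for the pair $\f\mr{\theta}\f'$ in $\cc{H}om_p(\ff{2},\cc{C})$, I would apply the lifting property to the square with left leg $\ii$, right leg $\p'$, top $\ii'\theta_0$ and bottom $\theta_1\p$. This square commutes up to an invertible $2$-cell $\delta:\p'\,\ii'\,\theta_0\Mr{}\theta_1\,\p\,\ii$, obtained by pasting $(\alpha')^{-1}\theta_0$ (turning $\p'\ii'$ into $\f'$), then the structural $2$-cell $\theta_m:\f'\theta_0\Mr{}\theta_1\f$ of $\theta$, and finally $\theta_1\alpha$ (turning $\f$ into $\p\ii$). Since the chosen $\ii$ is a cofibration, $\p'$ a fibration, and one of the two is a weak equivalence, axiom 2-M1 applies and produces a filler $(\g,\beta,\mu)$ in the sense of \ref{lifting}, with $\beta:\ii'\theta_0\Mr{}\g\ii$ and $\mu:\p'\g\Mr{}\theta_1\p$. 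The analogous lift for the pair $\f'\mr{\eta}\f''$ produces $(\h,\gamma,\epsilon)$.

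It then remains to check the two displayed equations of \ref{locally functorial}. Here I would simply translate the filler equation of \ref{lifting}: for the first square it reads $\mu\ii\circ\p'\beta=\delta$, and post-composing both sides on the right with $\alpha'\theta_0$ cancels the factor $(\alpha')^{-1}\theta_0$ built into $\delta$, leaving precisely $\mu\ii\circ\p'\beta\circ\alpha'\theta_0=\theta_1\alpha\circ\theta_m$, which is the first identity demanded. The second equation follows in the same way from the filler for $\h$. No interaction between the two lifts arises, since $\g$ and $\h$ are produced independently and share only the fixed factorization of $\f'$, so the two coherence equations hold separately.

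I expect this last verification to be the only delicate point: it is a purely formal manipulation in the elevator calculus, but one must orient the $2$-cells $\alpha,\alpha',\theta_m$ correctly and whisker them on the right by $\theta_0$ and on the right by $\ii$ so that the pasted $\delta$ is exactly the $2$-cell whose filler equation coincides with the equation of \ref{locally functorial}; getting these orientations and whiskerings to match up is where all the bookkeeping lies.
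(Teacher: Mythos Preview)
Your argument is correct and is the standard one-dimensional ``lift through the other factorization'' argument carried out with invertible $2$-cells; the verification you flag as delicate is indeed just the filler equation of \ref{lifting} rewritten, and it goes through exactly as you describe.

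The paper, however, takes a genuinely different route. Rather than fixing three independent factorizations and producing $\g,\h$ by two separate lifts, the paper chooses only the factorization of $\f$ freely and then \emph{constructs} the factorizations of $\f'$ and $\f''$ from it via iterated bi-pushouts: one pushes $\ii$ out along $\theta_0$ to obtain an object $\ff{P}$ with legs $\lambda_0,\lambda_1$, factors the induced map $\ff{P}\to\Y'$ as $\p'\tilde{\ii}$, and sets $\ii'=\tilde{\ii}\lambda_0$ and $\g=\tilde{\ii}\lambda_1$; the construction is then repeated to produce $\ii'',\p''$, and only a single lift is used at the end to get $\h$. The coherence for $\g$ thus comes from the universal property of the bi-pushout rather than from a lifting.

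What this buys the paper is control over the relationship between the connecting maps and the input data, which is used immediately afterwards in Remark~\ref{factorizacionfuntorialretractos}: when $\f=\f''$ and $\theta,\eta$ form a retraction, the paper's construction allows one to arrange that $\ii$ is a retract of $\ii'$ and $\p$ of $\p'$. Your approach does not give this directly, since $\g$ and $\h$ are independent lifts and nothing forces $\h\g\cong id_\Z$ even when $\eta\theta\cong id$. For the proposition as stated, though, your argument is entirely sufficient and noticeably simpler.
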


\begin{proof}
 In the situation of \ref{locally functorial}, by axiom 2-M2, we have a factorization $\ff{f} \Mr{\alpha \cong} \ff{p} \ff{i}$ where $\ff{i}$ is a cofibration and $\ff{p}$ is both a fibration that is also a weak equivalence. Consider the following diagram 
 
 $$ %D16
 \xymatrix{ \X \ar[rrr]^{\theta_0} \ar[dd]_{\ii} \ar@{}[rrrd]|{\cong \; \Downarrow \; \delta} \ar@{}[rrrdd]|{bi-p.o.} &&& \X' \ar[rrr]^{\eta_0} \ar[dd]_{\lambda_0} \ar@{}[rrrd]|{\cong \; \Downarrow \; \delta'} \ar@{}[rrrdd]|{bi-p.o.} &&& \X'' \ar[dd]^{\lambda_0'} \\ &&&&&& \\
 \Z \ar[rrr]^{\lambda_1} \ar[dd]_{\p} \ar@{}[rrrdd]|{\cong \; \Downarrow \; \beta_1} &&& \fP \ar[rrr]^{\lambda_1'} \ar@{-->}[dd]_{\fa} \ar@{-->}[rd]^{\tilde{\ii}} & \ar@{}[rd]|{\cong \; \Downarrow \; \lambda} && \Q \ar@{-->}[dd]^{\fb} \ar@{-->}[dl]_{\tilde{\ii}'} \\
 &&& \ar@{}[r]|>>>>>>{\cong \; \Rightarrow \; \tilde{\alpha}} & \Z' \ar@{-->}[dl]^{\p'} \ar@{-->}[r]^{\ff{h}} \ar@{}[rd]|{\cong \; \Downarrow \; \rho} & \Z'' \ar@{-->}[dr]_{\p''} \ar@{}[r]|>>>>>>{\cong \; \Rightarrow \; \tilde{\alpha}'}  & \\
 \Y \ar[rrr]_{\theta_1} &&& \Y' \ar[rrr]_{\eta_1} &&& \Y'' }
 $$
 
 \noindent Both bi-pushouts exist by axiom 2-M0b. $\ff{P}\mr{\ff{a}} \ff{Y}'$ together with invertible 2-cells $ %D9
 \vcenter{\xymatrix@C=-0pc{ \fa && \lambda_0 \\ & \f' \cl{\beta_0} }}
 $, $ %D10
 \vcenter{\xymatrix@C=-0pc{ \fa \dl & \dc{\beta_1} & \dr \lambda_1 \\ \theta_1 && \p}}
 $ such that $ %D11
 \vcenter{\xymatrix@C=-0pc@R=1pc{ \fa && \lambda_0 && \theta_0 \deq \\ 
 & \f' \cl{\beta_0} \dl & \dcr{\theta_m} && \theta_0 \dr \\
 & \theta_1 \deq &&& \f \op{\alpha} \\
 & \theta_1 && \p && \ii }}
\vcenter{\xymatrix@C=-0pc{ \quad = \quad }}
\vcenter{\xymatrix@C=-0pc@R=1pc{ \fa \deq && \lambda_0 \dl & \dc{\delta} & \dr \theta_0 \\
\fa \dl & \dc{\beta_1} & \dr \lambda_1 && \ii \deq \\
\theta_1 && \p && \ii}}
 $ exist by universal property of $\ff{P}$. By axiom 2-M2, we have a factorization $\ff{a} \Mr{\tilde{\alpha}} \ff{p}' \tilde{\ff{i}}$ with $\tilde{\ff{i}}$ a cofibration and $\ff{p}'$ a fibration and a weak equivalence. $\ff{Q}\mr{\ff{b}} \ff{Y}''$ together with invertible 2-cells $ %D12
 \vcenter{\xymatrix@C=-0pc{ \fb && \lambda_0' \\ & \f'' \cl{\beta_0'} }}
 $, $ %D13
 \vcenter{\xymatrix@C=-0pc{ & \fb \ardl & \dc{\beta_1'} & \lambda_1' \ardr \\ \eta_1 && \p' && \tilde{\ii} }}
 $ such that $ %D14
  \vcenter{\xymatrix@C=-0pc@R=1pc{ \fb && \lambda_0' &&& \eta_0 \deq \\ 
  & \f'' \cl{\beta_0'} \dl && \dc{\eta_m} && \dr \eta_0 \\
  & \eta_1 \deq &&&& \f' \opunodos{\!\!\!\!\!\beta_0\inv} \\
  & \eta_1 \deq &&& \fa \op{\tilde{\alpha}} &&& \lambda_0 \deq \\
  & \eta_1 && \p' && \tilde{\ii} && \lambda_0 }}
\vcenter{\xymatrix@C=-0pc{ \quad = \quad }}
\vcenter{\xymatrix@C=-0pc@R=1pc{ & \fb \deq && \lambda_0' \dl & \dcr{\delta'} && \dr \eta_0 \\
 & \fb \ardl & \dc{\beta_1'} & \lambda_1' \ardr &&& \lambda_0 \deq \\ 
 \eta_1 && \p' && \tilde{\ii} && \lambda_0 }}
 $ exist by universal property of $\ff{Q}$. By axiom 2-M2, we have a factorization $\ff{b} \Mr{\tilde{\alpha'}} \ff{p}'' \tilde{\ff{i}'}$ with $\tilde{\ff{i}'}$ a cofibration and $\ff{p}''$ a fibration and a weak equivalence. Finally, $(\ff{h}, \lambda, \rho)$ is a filler given by axiom 2-M6. Thus we have the following equality: 
 
 $$ %D15
 \vcenter{\xymatrix@C=-0pc{ \p'' \deq && \tilde{\ii}' \dl & \dc{\lambda} & \dr \lambda_1' \\
 \p'' \dl & \dc{\rho} & \dr \ff{h} && \tilde{\ii} \deq \\
 \eta_1 && \p' && \tilde{\ii}  }}
\vcenter{\xymatrix@C=-0pc{ \quad = \quad }}
\vcenter{\xymatrix@C=-0pc{ \p'' && \tilde{\ii}' && \lambda_1' \deq \\
& \fb \clb{\tilde{\alpha}'{}\inv} \ardl & \dcr{\beta_1'} && \lambda_1' \ardr \\
\eta_1 && \p' &&& \tilde{\ii} }}
 $$
 
Take $\ff{i}'=\tilde{\ff{i}}\lambda_0$ and $\ff{i}''=\tilde{\ff{i}'}\lambda'_0$ which are cofibrations because they are compositions of cofibrations ($\lambda_0$ and $\lambda'_0$ are cofibrations by axiom 2-M3). It is straightforward to check that $ %D17
\vcenter{\xymatrix@C=-0pc@R=1.2pc{& \f \op{\alpha} \\ \p && \ii }}
$, $ %D18
\vcenter{\xymatrix@C=-0pc@R=1.2pc{ && \f' \opunodos{\!\!\!\!\beta_0\inv} \\ & \fa \op{\tilde{\alpha}} &&& \lambda_0 \deq \\ \p' && \tilde{\ii} && \lambda_0 }}
$, $ %D19
\vcenter{\xymatrix@C=-0pc@R=1.2pc{ && \; \f'' \; \opunodos{\!\!\!\!\!\!\beta_0'{}\inv} & \; \\ & \fb \op{\tilde{\alpha}'} &&& \lambda_0' \deq \\ \p'' && \tilde{\ii}' && \lambda_0' }}
$, 
$\ff{g}=\tilde{\ii} \lambda_1$, 
$\h$, 
$\beta=\vcenter{\xymatrix@C=-0pc@R=1.2pc{ \tilde{\ii} \deq && \lambda_0 \dl & \dc{\delta} & \dr \theta_0 \\ \tilde{\ii} && \lambda_1 && \ii }}$, 
$\gamma=\vcenter{\xymatrix@C=-0pc@R=1.2pc{ \tilde{\ii}' \deq && \lambda_0' \dl & \dc{\delta'} & \dr \eta_0 \\
\tilde{\ii}' \dl & \dc{\lambda} & \dr \lambda_1' && \lambda_0 \deq \\
\ff{h} && \tilde{\ii} && \lambda_0 }}$, 
$\mu=\vcenter{\xymatrix@C=-0pc@R=1.2pc{ \p' && \tilde{\ii} && \lambda_1 \deq \\
& \fa \cl{\tilde{\alpha}\inv} \dl & \dcr{\beta_1} && \dr \lambda_1 \\
& \theta_1 &&& \p }}$ 
and $\epsilon=\vcenter{\xymatrix@C=-0pc@R=1.2pc{ \p'' \dl & \dc{\rho'} & \dr \ff{h} \\ \eta_1 && \p' }}$ 
% $\theta'=(\theta_0, \tilde{i} \lambda_1, %D20
% \vcenter{\xymatrix@C=-0pc{ \tilde{\ii} \deq && \lambda_0 \dl & \dc{\delta} & \dr \theta_0 \\ \tilde{\ii} && \lambda_1 && \ii }}
% )$, $\eta'=(\eta_0, \ff{h}, %D21
% \vcenter{\xymatrix@C=-0pc{ \tilde{\ii}' \deq && \lambda_0' \dl & \dc{\delta'} & \dr \eta_0 \\
% \tilde{\ii}' \dl & \dc{\lambda} & \dr \lambda_1' && \lambda_0 \deq \\
% \ff{h} && \tilde{\ii} && \lambda_0 }}
% )$, $\theta''=(\tilde{i} \lambda_1, \theta_1, %D22
% \vcenter{\xymatrix@C=-0pc{ \p' && \tilde{\ii} && \lambda_1 \deq \\
% & \fa \cl{\tilde{\alpha}\inv} \dl & \dcr{\beta_1} && \dr \lambda_1 \\
% & \theta_1 &&& \p }}
% )$, $\eta''=(\ff{h}, \eta_1, %D23
% \vcenter{\xymatrix@C=-0pc{ \p'' \dl & \dc{\rho'} & \dr \ff{h} \\ \eta_1 && \p' }}
% )$  
satisfy the desired property.
\end{proof}

\begin{remark}\label{factorizacionfuntorialretractos}
In the situation of \ref{locally functorial}, if $\ff{f}=\ff{f}''$ and $\theta$, $\eta$ are part of a retraction from $\ff{f}$ to $\ff{f}'$, then factorizations for $\ff{f}$ and $\ff{f'}$ given by \ref{factorizacionfuntorial} can be chosen in such a way that $\ff{i}$ is a retract of $\ff{i}'$ and $\ff{p}$ is a retract of $\ff{p}'$. \cqd
\end{remark}

\begin{definition} Let $\cc{C}$ be a closed 2-bmodel 2-category and $\ff{X}$ an object of $\cc{C}$. 
 \begin{enumerate}
  
  \item We say that $\ff{X}$ is a fibrant object if the only morphism $\ff{X}\mr{}*$ is a fibration.
  
  \item We say that $\ff{X}$ is a cofibrant object if the only morphism $0\mr{}\ff{X}$ is a cofibration.
                   
 \end{enumerate}

\end{definition}

\begin{remark}
Note that $0$ and $*$ are denoting the bi-initial and the bi-terminal object respectively given by axiom M0b. More explicitly, $0$ satisfies that for each $\X\in \cc{C}$, there exists a morphism $0\mr{}\X\in \cc{C}$ up to unique invertible 2-cell. And $*$ satisfies that for each $\X\in \cc{C}$, there exists a morphism $\X\mr{}* \in \cc{C}$ up to unique invertible 2-cell.

In the previous definition the abuse of saying ``the only morphism'' is justified by axiom 2-M7. \cqd
\end{remark}

\begin{definition}\label{cilindros}
Let $\cc{C}$ be a closed 2-bmodel 2-category and $\ff{X}$ an object of $\cc{C}$. 

\begin{enumerate} 

  \item A cylinder object for $\ff{X}$ consists of a diagram 
  
  \begin{equation}\label{cilindro}
\xymatrix@C=3pc{\ff{X}\amalg \ff{X} \ar[d]_{\ff{i}^{\ff{X}}} \ar@/^2ex/[rd]^{\nabla^{\ff{X}}} & \\
              \widetilde{\ff{X}} \ar[r]_{\sigma^{\ff{X}}} \ar@{}[u]_{\;\; \cong \; \Uparrow \; \gamma^{\ff{X}}} 
              & \ff{X} }   
  \end{equation}
  
  \noindent where $\nabla^{\ff{X}}=\left(\vcenter{\xymatrix@R=.0pc{id_{\ff{X}} \\ id_{\ff{X}} }} \right)$, $\ff{i}^{\ff{X}}=\left(\vcenter{\xymatrix@R=-.2pc{\ff{i}^{\ff{X}}_0 \\ \ff{i}^{\ff{X}}_1 }} \right)$ is a cofibration and $\sigma^{\ff{X}}$ is a weak equivalence. By $\ff{X}\amalg \ff{X}$ we denote the bi-coproduct.

\item A path object $\ff{X}$ consists of a diagram 

$$\xymatrix@C=3pc{& \widehat{\ff{X}} \ar@{}[d]_{\cong \; \Downarrow \; \gamma^{\ff{X}} \;\;} \ar[d]^{\ff{p}^{\ff{X}}} \\
            \ff{X} \ar@/^2ex/[ru]^{\ff{s}^{\X}}  \ar[r]_{\Delta^{\ff{X}}} 
            & \ff{X}\times \ff{X}}$$
            
\noindent where $\Delta^{\ff{X}}=(id_{\ff{X}}, id_{\ff{X}})$, $\ff{p}^{\ff{X}}=(\ff{p}^{\ff{X}}_0,\ff{p}^{\ff{X}}_1)$ is a fibration and $\ff{s}^{\ff{X}}$ is a weak equivalence. By $\ff{X}\times \ff{X}$ we denote the bi-product.

\end{enumerate}
\end{definition}

The following corollary says that in a closed 2-bmodel 2-category there are locally functorial cylinder objects (c.f. \ref{N4}).

\begin{corollary}[of \ref{factorizacionfuntorial}]\label{cilindros funtoriales}
Let $\cc{C}$ be a closed 2-bmodel 2-category. Given \mbox{$\X \mr{\f} \X' \mr{\f'} \X'' \in \cc{C}$}. There are suitable cylinder objects for $\X$, $\X'$ and $\X''$

$$\xymatrix@C=3pc{\ff{X}\amalg \ff{X} \ar[d]_{\ff{i}^{\ff{X}}} \ar@/^2ex/[rd]^{  \nabla^{\ff{X}}} & \\
              \widetilde{\ff{X}} \ar[r]_{\sigma^{\ff{X}}} \ar@{}[u]_{\;\; \cong \; \Uparrow \; \gamma^{\ff{X}}} 
              & \ff{X} }   \quad
\xymatrix@C=3pc{\ff{X}'\amalg \ff{X}'\ar[d]_{\ff{i}^{\ff{X}'}} \ar@/^2ex/[rd]^{ \nabla^{\ff{X}'} } & \\
              \widetilde{\ff{X}'} \ar[r]_{\sigma^{\ff{X}'}} \ar@{}[u]_{\;\; \cong \; \Uparrow \; \gamma^{\ff{X}'}} 
              & \ff{X}' }\quad
\xymatrix@C=3pc{\ff{X}''\amalg \ff{X}''\ar[d]_{\ff{i}^{\ff{X}''}} \ar@/^2ex/[rd]^{ \nabla^{\ff{X}''} } & \\
              \widetilde{\ff{X}''} \ar[r]_{\sigma^{\ff{X}''}} \ar@{}[u]_{\;\; \cong \; \Uparrow \; \gamma^{\ff{X}''}} 
              & \ff{X}'' }$$
  
 \noindent morphisms $\widetilde{\X} \mr{\ff{g}} \widetilde{\X}' \mr{\ff{h}} \widetilde{\X}''$ and invertible 2-cells $\vcenter{\xymatrix@C=-0pc{\ff{i}^{\ff{X}'} \dl & \dc{\beta} & \f\amalg \f \dr \\ \ff{g} & & \ff{i}^{\ff{X}}}}$, $\vcenter{\xymatrix@C=-0pc{\ff{i}^{\ff{X}''} \dl & \dc{\gamma} & \f'\amalg \f' \dr \\ \ff{h} & & \ff{i}^{\ff{X}'} }}$, $\vcenter{\xymatrix@C=-0pc{\sigma^{\ff{X}'} \dl & \dc{\mu} & \ff{g} \dr \\ \f & & \sigma^{\ff{X}}}}$, $\vcenter{\xymatrix@C=-0pc{\sigma^{\ff{X}''} \dl & \dc{\epsilon} & \ff{h} \dr \\ \f' & & \sigma^{\ff{X}'}}}$ fitting in the following diagram:
 
 $$ %D45
 \xymatrix{\X \amalg \X \ar[dd]_>>>>>{\nabla^{\X}} \ar[dr]^{i^{\X}} \ar[rr]^{\f \amalg \f } && \X' \amalg \X' \ar[dd]_>>>>>{\nabla^{\X'}} \ar[dr]^{i^{\X'}} \ar[rr]^{\f' \amalg \f' } && \X'' \amalg \X'' \ar[dd]_>>>>>{\nabla^{\X''}} \ar[dr]^{i^{\X''}} \\
 & \widetilde{\X} \ar[rr]|{\quad} \ar@{}[rr]^>>>>>>>>{\g} \ar[dl]^<<<<{\sigma^{X}} && \widetilde{\X'} \ar[dl]^<<<<{\sigma^{X'}} \ar[rr]|{\quad} \ar@{}[rr]^>>>>>>>>{\h} && \widetilde{\X''} \ar[dl]^<<<<{\sigma^{X''}} \\
 \X \ar[rr]_{\f} && \X' \ar[rr]_{\f'} && \X''}
 $$
 
 \noindent satisfying the following equations:
 
 $$ %D46
 \vcenter{\xymatrix@C=-.2pc{& \nabla^{\X'} \opbb{(\gamma^{\X'}){}\inv} &&& \f \amalg \f \deq \\ \sigma^{\X'} \deq && \ii^{\X'} \dl & \dc{\beta} & \dr \f \amalg \f \\ \sigma^{\X'} \dl & \dc{\mu} & \dr \g && \ii^{\X} \deq \\ \f && \sigma^{\X} && \ii^{\X}  }}
\vcenter{\xymatrix@C=-.2pc{ = }}
\vcenter{\xymatrix@C=-.2pc{\nabla^{\X'} \dl & \dcr{\cong} && \dr \f \amalg \f \\ \f \deq &&& \nabla^{\X} \opbb{(\gamma^{\X'}){}\inv} \\ \f && \sigma^{\X} && \ii^{\X} }}
 \hbox{ and } %D47
 \vcenter{\xymatrix@C=-.2pc{& \nabla^{\X''} \opbymedio{(\gamma^{\X''}){}\inv} &&& \f' \amalg \f' \deq \\ \sigma^{\X''} \deq && \ii^{\X''} \dl & \dc{\gamma} & \dr \f' \amalg \f' \\ \sigma^{\X''} \dl & \dc{\eps} & \dr \h && \ii^{\X'} \deq \\ \f' && \sigma^{\X'} && \ii^{\X'}  }}
\vcenter{\xymatrix@C=-.2pc{ = }}
\vcenter{\xymatrix@C=-.2pc{\nabla^{\X''} \dl & \dcr{\cong} && \dr \f' \amalg \f' \\ \f' \deq &&& \nabla^{\X'} \opbymedio{(\gamma^{\X'}){}\inv} \\ \f' && \sigma^{\X'} && \ii^{\X'} }}
 $$ \cqd
 
\end{corollary}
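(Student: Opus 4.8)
The plan is to obtain the statement as a direct specialization of the locally functorial factorization of Proposition \ref{factorizacionfuntorial}, applied to the codiagonal morphisms $\nabla^{\X}$, $\nabla^{\X'}$, $\nabla^{\X''}$. No new argument beyond \ref{factorizacionfuntorial} should be needed; the work is entirely in choosing the right composable pair in $\cc{H}om_p(\ff{2},\cc{C})$ and then reading off the conclusion.

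First I would assemble the data of a composable pair as in \ref{locally functorial}. Take its three arrows to be $\nabla^{\X}\colon \X\amalg\X\to\X$, $\nabla^{\X'}\colon\X'\amalg\X'\to\X'$ and $\nabla^{\X''}\colon\X''\amalg\X''\to\X''$, with $\theta_0=\f\amalg\f$, $\theta_1=\f$, $\eta_0=\f'\amalg\f'$, $\eta_1=\f'$. The structural $2$-cells $\theta_m\colon\nabla^{\X'}(\f\amalg\f)\Mr{}\f\,\nabla^{\X}$ and $\eta_m\colon\nabla^{\X''}(\f'\amalg\f')\Mr{}\f'\nabla^{\X'}$ are the canonical invertible comparisons furnished by the universal property of the bi-coproduct $\amalg$ (axiom 2-M0b): since $\nabla^{\X}=(id_\X,id_\X)$ and $\f\amalg\f$ is the map induced coordinatewise by $\f$, both composites are arrows out of $\X\amalg\X$ whose two legs agree up to the structural isomorphisms of the coproduct. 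One then checks that $\theta$ and $\eta$ satisfy the compatibility equation of Remark \ref{mapsp}, so that they are genuine morphisms of $\cc{H}om_p(\ff{2},\cc{C})$.

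Next I would invoke Proposition \ref{factorizacionfuntorial} on this pair. It supplies factorizations $\nabla^{\X}\cong\sigma^{\X}\ii^{\X}$, and likewise for $\X'$ and $\X''$, in which each $\ii$ is a cofibration and each $\sigma$ is a fibration which is moreover a weak equivalence; in particular each $\sigma$ is a weak equivalence, so the resulting triangles are exactly the cylinder-object diagrams \eqref{cilindro} of \ref{cilindros}. The proposition simultaneously delivers the comparison morphisms $\widetilde{\X}\mr{\ff{g}}\widetilde{\X}'\mr{\ff{h}}\widetilde{\X}''$ together with the invertible $2$-cells $\beta,\gamma,\mu,\epsilon$ and the two functoriality equations of \ref{locally functorial}.

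Finally I would translate this output into the notation of the corollary. Reading the factorization $2$-cells $\alpha,\alpha',\alpha''$ of \ref{locally functorial} as the $(\gamma^{\X})^{-1},(\gamma^{\X'})^{-1},(\gamma^{\X''})^{-1}$ of \eqref{cilindro}, and the structural cells $\theta_m,\eta_m$ as the coproduct comparisons labelled $\cong$ on the right-hand sides of the corollary's two equations, the displayed equations of \ref{locally functorial} become, verbatim, the two equations asserted here. The only genuine bookkeeping, and the single point where care is warranted, is to confirm that these identifications are literal: that the $2$-cell the factorization names $\alpha'$ for $\nabla^{\X'}$ is precisely $(\gamma^{\X'})^{-1}$, and that the $\theta_m,\eta_m$ constructed above are exactly the coproduct isomorphisms appearing on the right-hand sides. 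I expect this matching of structural $2$-cells, rather than any deeper step, to be the main (and essentially only) obstacle, after which the corollary follows with no further computation.
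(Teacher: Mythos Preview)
Your proposal is correct and is exactly the argument the paper has in mind: the corollary carries only a \cqd\ with no separate proof, because it is the literal specialization of Proposition~\ref{factorizacionfuntorial} to the composable pair $\nabla^{\X}\mr{(\f\amalg\f,\,\f,\,\cong)}\nabla^{\X'}\mr{(\f'\amalg\f',\,\f',\,\cong)}\nabla^{\X''}$ in $\cc{H}om_p(\ff{2},\cc{C})$. Your identification of $\alpha,\alpha',\alpha''$ with $(\gamma^{\X})^{-1},(\gamma^{\X'})^{-1},(\gamma^{\X''})^{-1}$ and of $\theta_m,\eta_m$ with the bi-coproduct comparison isomorphisms is precisely the translation needed, and the two equations then match verbatim.
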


\begin{proposition}\label{2.3.5 de EH}
 Let $\cc{C}$ be a closed 2-bmodel 2-category and $\ff{X}\mr{\ff{f}}\ff{Y}\in \cc{C}$ a cofibration. There exist suitable cylinder objects for $\ff{X}$ and $\ff{Y}$ such that $\ff{f}$ induces a morphism that is both a cofibration and a weak equivalence
 
 $$\ff{Y} \bigtriangleup \widetilde{\X} \mr{\ff{k}'_{\ff{f}}} \widetilde{\ff{Y}}$$
 
 \noindent and a cofibration 
 
 $$\ff{Y} \bigtriangleup \widetilde{\X} \bigtriangledown \Y \mr{\ff{k}_{\ff{f}}} \widetilde{\ff{Y}}$$

\noindent where $\ff{Y} \bigtriangleup \widetilde{\X}$ is the following bi-pushout 

$$\xymatrix@R=3pc@C=1.5pc{\X \ar[rr]^{\ii_0^{\X}} \ar[d]_{\f} & \ar@{}[d]|<<<<<<<{\cong \; \Downarrow \; \delta_{\f}'} \ar@{}[d]|>>>>>>>{bi-p.o} & \widetilde{\X} \ar[d]^{\fa_{\f}'}  \\
\Y \ar[rr]_{\fb_{\f}'} && \Y \bigtriangleup \widetilde{\X} & }$$

%\vspace{1ex}

\noindent and $\ff{Y} \bigtriangleup \widetilde{\X} \bigtriangledown \Y$ is the following bi-pushout

$$\xymatrix@R=3pc@C=1.5pc{\X \amalg \X \ar[rr]^{\ii^{\X}} \ar[d]_{\f \amalg \f}  & \ar@{}[d]|<<<<<<<{\cong \; \Downarrow \; \delta_{\f}'} \ar@{}[d]|>>>>>>>{bi-p.o} & \widetilde{\X} \ar[d]^{\fa_{\f}} & \\
\Y \amalg \Y \ar[rr]_{\fb_{\f}} && \Y \bigtriangleup \widetilde{\X} \bigtriangledown \Y \\
}$$ 

%\vspace{1ex}

 Plus, if $\ff{f}$ is both a cofibration and a weak equivalence, so is $\ff{k}_{\f}$.

\vspace{1ex}

For a geometric intuition, one can think of $\ff{Y} \bigtriangleup \widetilde{\X}$ as the cylinder of $\ff{f}$ which is obtained by pasting a copy of $\Y$ on the bottom part of the cylinder of $\X$ via $\ff{f}$. And $\ff{Y} \bigtriangleup \widetilde{\X} \bigtriangledown \Y$ can be seen as the double cylinder of $\ff{f}$, which is obtained by pasting a copy of $\Y$ on the top and another one on the bottom of the cylinder of $\X$ via $\ff{f}$.
 \end{proposition}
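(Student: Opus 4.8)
The plan is to construct the cylinder object $\widetilde{\Y}$ for $\Y$ \emph{from} a fixed cylinder object for $\X$ by a single factorization, so that $\ff{k}_{\f}$ appears as a cofibration by construction and $\ff{k}'_{\f}$ as a composite of two cofibrations. First I would fix any cylinder object $\X\amalg\X \mr{\ii^{\X}} \widetilde{\X} \mr{\sigma^{\X}} \X$ for $\X$ (it exists by \ref{cilindros funtoriales}, or already by 2-M2), and form the two bi-pushouts $\ff{Y}\bigtriangleup\widetilde{\X}$ and $\ff{Y}\bigtriangleup\widetilde{\X}\bigtriangledown\Y$ displayed in the statement; both exist by 2-M0b. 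Using the universal property of the double cylinder, I define a ``collapse'' morphism $\ff{Y}\bigtriangleup\widetilde{\X}\bigtriangledown\Y \mr{\q} \Y$ induced by the fold $\nabla^{\Y}\colon\Y\amalg\Y\to\Y$ on the two copies of $\Y$ and by $\f\sigma^{\X}\colon\widetilde{\X}\to\Y$ on $\widetilde{\X}$; these agree on $\X\amalg\X$ up to an invertible $2$-cell built from $\delta_{\f}$ and the cylinder identity $\sigma^{\X}\ii^{\X}\cong\nabla^{\X}$, so $\q$ is well defined up to invertible $2$-cell. Factoring $\q$ by 2-M2 as $\q\cong\sigma^{\Y}\ff{k}_{\f}$ with $\ff{k}_{\f}$ a cofibration and $\sigma^{\Y}$ a fibration which is also a weak equivalence, I set $\widetilde{\Y}$ to be the intermediate object and $\ii^{\Y}:=\ff{k}_{\f}\,\fb_{\f}$, where $\Y\amalg\Y\mr{\fb_{\f}}\ff{Y}\bigtriangleup\widetilde{\X}\bigtriangledown\Y$ is the pushout leg. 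Since $\fb_{\f}$ is a bi-pushout of the cofibration $\ii^{\X}$ it is a cofibration (2-M3b), hence so is $\ii^{\Y}=\ff{k}_{\f}\fb_{\f}$; as $\sigma^{\Y}$ is a weak equivalence and $\sigma^{\Y}\ii^{\Y}\cong\q\fb_{\f}\cong\nabla^{\Y}$, this datum is a genuine cylinder object for $\Y$, and $\ff{k}_{\f}$ is a cofibration by construction.

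Next, for $\ff{k}'_{\f}$, I would produce the natural morphism $\ff{Y}\bigtriangleup\widetilde{\X}\mr{\iota}\ff{Y}\bigtriangleup\widetilde{\X}\bigtriangledown\Y$ exhibiting the double cylinder as the bi-pushout of $\f$ along the ``top'' inclusion $\widetilde{\X}\to\ff{Y}\bigtriangleup\widetilde{\X}$; since $\f$ is a cofibration, $\iota$ is a cofibration (2-M3b), and I define $\ff{k}'_{\f}:=\ff{k}_{\f}\,\iota$, which is therefore a cofibration. To see that $\ff{k}'_{\f}$ is a weak equivalence I dualise the collapse: the universal property of $\ff{Y}\bigtriangleup\widetilde{\X}$ gives a morphism $\ff{Y}\bigtriangleup\widetilde{\X}\mr{\q'}\Y$ induced by $\f\sigma^{\X}$ and $id_{\Y}$, with $\sigma^{\Y}\ff{k}'_{\f}\cong\q'$ and with section $\fb'_{\f}$ satisfying $\q'\fb'_{\f}\cong id_{\Y}$. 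Now the cylinder inclusion $\ii^{\X}_0\colon\X\to\widetilde{\X}$ is a weak equivalence (by 2-M5 applied to $\sigma^{\X}\ii^{\X}_0\cong id_{\X}$) and, using the niceness condition 2-N3a so that $\X$ is cofibrant, it is a cofibration as well; hence its bi-pushout $\fb'_{\f}$ along $\f$ is a weak equivalence (2-M4b). Two applications of 2-M5 (first to $\q'\fb'_{\f}\cong id_{\Y}$, then to $\sigma^{\Y}\ff{k}'_{\f}\cong\q'$) show $\ff{k}'_{\f}$ is a weak equivalence, so it is both a cofibration and a weak equivalence.

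Finally, for the last assertion, suppose $\f$ is both a cofibration and a weak equivalence. Then $\f\amalg\f$ is again a cofibration and a weak equivalence (write it as a composite of two bi-pushouts of $\f$ and apply 2-M3b and 2-M4b), so the leg $\widetilde{\X}\mr{\fa_{\f}}\ff{Y}\bigtriangleup\widetilde{\X}\bigtriangledown\Y$, being a bi-pushout of $\f\amalg\f$ along $\ii^{\X}$, is a weak equivalence (2-M4b). Since $\q\fa_{\f}\cong\f\sigma^{\X}$ is a weak equivalence and $\fa_{\f}$ is a weak equivalence, 2-M5 gives that $\q$ is a weak equivalence, and then, as $\sigma^{\Y}$ is a weak equivalence with $\q\cong\sigma^{\Y}\ff{k}_{\f}$, a final use of 2-M5 shows $\ff{k}_{\f}$ is a weak equivalence; thus $\ff{k}_{\f}$ is both a cofibration and a weak equivalence.

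The main obstacle throughout is not the homotopical bookkeeping but the $2$-categorical coherence: every square commutes only up to an invertible $2$-cell, so each of $\q,\q',\iota,\ff{k}_{\f},\ff{k}'_{\f}$ must be produced together with explicit structural $2$-cells, and the identities $\ff{k}'_{\f}\cong\ff{k}_{\f}\iota$, $\sigma^{\Y}\ff{k}'_{\f}\cong\q'$ and $\q\fa_{\f}\cong\f\sigma^{\X}$ have to be verified by elevators calculus against the defining $2$-cells of the bi-pushouts. A secondary point to flag is the reliance on cofibrancy of $\X$ (the niceness condition 2-N3a) in the weak-equivalence argument for $\ff{k}'_{\f}$; under 2-N3b one would instead run the dual argument with path objects.
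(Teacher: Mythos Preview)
Your approach matches the paper's: both construct $\widetilde{\Y}$ by factoring the collapse map $\Y\bigtriangleup\widetilde{\X}\bigtriangledown\Y\to\Y$ (the paper calls it $\nabla_{\f}$, your $\q$), take $\ff{k}_{\f}$ as the cofibration part, set $\ii^{\Y}=\ff{k}_{\f}\fb_{\f}$, and define $\ff{k}'_{\f}=\ff{k}_{\f}\circ\iota$ where $\iota$ (the paper's $\tilde{\ff{k}}_{\f}$) is exhibited as a bi-pushout of $\f$. The arguments diverge only in the weak-equivalence step for $\ff{k}'_{\f}$: the paper argues that $\q'$ (their $\ff{h}$) is a weak equivalence because it arises as the bi-pushout of $\sigma^{\X}$ in a pasted-square argument, which tacitly appeals to left properness; you instead observe that $\fb'_{\f}$ is the bi-pushout of $\ii^{\X}_0$ and use 2-N3a to make $\ii^{\X}_0$ a trivial cofibration, so 2-M4b applies directly---this is cleaner and makes the dependence on the niceness hypothesis explicit. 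For the final clause the paper takes the shorter route of applying 2-M5 to $\ff{k}'_{\f}=\ff{k}_{\f}\tilde{\ff{k}}_{\f}$ with $\tilde{\ff{k}}_{\f}$ a bi-pushout of the trivial cofibration $\f$, rather than your $\fa_{\f}$ chain, but both are valid. One small imprecision: your phrase ``bi-pushout of $\f$ along the top inclusion $\widetilde{\X}\to\Y\bigtriangleup\widetilde{\X}$'' has mismatched domains; the correct span (as in the paper) is $\Y\xleftarrow{\f}\X\xrightarrow{\fa'_{\f}\ii^{\X}_1}\Y\bigtriangleup\widetilde{\X}$, but your conclusion that $\iota$ is a cofibration stands.
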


\begin{proof}
By axiom 2-M2, we can choose a cylinder object for $\ff{X}$ with $\sigma_{\ff{X}}$ a fibration

  \begin{equation}
\xymatrix@C=3pc{\ff{X}\amalg \ff{X} \ar[d]_{\ff{i}^{\ff{X}}} \ar@/^2ex/[rd]^{\nabla^{\ff{X}}} & \\
              \widetilde{\ff{X}} \ar[r]_{\sigma^{\ff{X}}} \ar@{}[u]_{\;\; \cong \; \Uparrow \; \gamma^{\ff{X}}} 
              & \ff{X} }   
  \end{equation}
            
Now consider the following diagram

\begin{equation}\label{D24}
 %D24
\xymatrix{&& \\ \X \amalg \X \ar[rr]^{\ii^{\X}} \ar@/^7ex/[rrrr]^{\nabla^{\X}} \ar[dd]_{\f \amalg \f} \ar@{}[drr]|{\cong \; \Downarrow \; \delta_{\f}} \ar@{}[ddrr]|{bi-p.o.} && \widetilde{\X} \ar@{}[u]|{\cong \; \Uparrow \; \gamma^{\X}} \ar[rr]^{\sigma^{\X}} \ar[dd]^{\fa_{\f}} \ar@{}[ddrr]|{\cong \; \Uparrow \; \beta_{\f}} && \X \ar[dd]^{\f} \\ && \\
\Y \amalg \Y \ar[rr]^{\fb_{\f}} \ar@/_16ex/[rrrr]^{\nabla^{\Y}} && \Y \bigtriangleup \widetilde{\X} \bigtriangledown \Y \ar@{}[d]|>{\cong \; \Downarrow \; \gamma_{\f}} \ar@{-->}[rd]_{\fk_{\f}} \ar@{-->}[rr]^{\nabla_{\f}} & \ar@{}[d]|{\cong \; \Downarrow \; \theta_{\f}} & \widetilde{\Y} \\
&&& \Y \ar@{-->}[ru]|{\sigma^{\Y}} }
\end{equation}

The upper left bi-pushout exists by axiom 2-M0b. $\nabla_{\f}$ and invertible 2-cells $ %D25
\vcenter{\xymatrix@C=-0pc{ \nabla_{\f} \dl & \dc{\beta_{\f}} & \dr \fa_{\f} \\ \f && \sigma^{\X} }}
$, $ %D26
\vcenter{\xymatrix@C=-0pc{ \nabla_{\f} && \fb_{\f} \\ & \nabla^{\Y} \cl{\gamma_{\f}} }}
$ such that $ %EMI
\vcenter{\xymatrix@C=-0pc@R=1pc{ \nabla_{\f} \dl & \dc{\beta_{\f}} & \dr \fa_{\f} && \ii\tX \deq \\
\f \deq && \sigma\tX && \ii\tX \\
\f \dl & \dcr{\cong} && \dr \nabla\tX \cl{\gamma\tX} \\
\nabla\tY &&& \f\amalg\f}}
\vcenter{\xymatrix@C=-0pc{ \quad = \quad }}
\vcenter{\xymatrix@C=-0pc@R=1pc{\nabla_{\f} \deq && \fa_{\f} \dl & \dc{\delta_{\f}} & \dr \ii\tX \\
\nabla_{\f} && \fb_{\f} && \f\amalg\f \deq \\
& \nabla\tY \cl{\gamma_{\f}} &&& \f\amalg\f}}
$ are given by the universal property of $\ff{Y} \bigtriangleup \widetilde{\X} \bigtriangledown \Y$.

% $$\xymatrix{\ff{X}\amalg \ff{X} \ar[rr]^{\ff{i}_0+\ff{i}_1} \ar[d]_{\ff{f}\amalg \ff{f}}  
%             & \ar@{}[d]|{\alpha \Downarrow \cong} 
%             & \widetilde{\ff{X}} \ar[d]^{\varphi_0} \ar@/^4ex/[drd]^{\ff{f}\ff{p}_{\ff{X}}} \\
%             \ff{Y}\amalg \ff{Y} \ar[rr]_{\varphi_1} \ar@/_4ex/[drrr]^{\nabla_{\ff{Y}}} 
%             & & \ff{P} \ar@{-->}[rd]^{\ff{g}} \\
%             & & & \ff{Y}}$$

By axiom 2-M2, we can factorize $\nabla_{\ff{f}}$ with $\sigma_{\ff{Y}}$ a fibration that is also a weak equivalence and $\ff{k}_{\ff{f}}$ a cofibration. 

Consider the following cylinder object for $\Y$:

  \begin{equation}
\xymatrix@C=3pc{\ff{Y}\amalg \ff{Y} \ar[d]_{\ff{k}_{\ff{f}}\ff{b}_{\f}} \ar@/^2ex/[rd]^{\nabla^{\ff{Y}}} & \\
              \widetilde{\ff{Y}} \ar[r]_{\sigma^{\ff{Y}}} \ar@{}[u]_{\;\; \cong \; \Uparrow \; \gamma^{\Y}} 
              & \ff{Y} }   
  \end{equation}

\noindent where $\gamma^{\Y}= \gamma_{\ff{f}}\ff{b}_f \circ \theta_{\f}^{-1}$  
Then, $\ff{k}_{\ff{f}}$ is the wanted cofibration.

To construct $\ff{Y} \bigtriangleup \widetilde{\X}$ and $\ff{k}'_{\ff{f}}$, consider the following diagram:

%Cambiar por D27
$$\xymatrix@R=3pc{\X \ar[rr]^{\ii_0^{\X}} \ar[d]_{\f} & \ar@{}[d]|<<<<<<<{\cong \; \Downarrow \; \delta_{\f}'} \ar@{}[d]|>>>>>>>{bi-p.o} & \widetilde{\X} \ar[d]^{\fa_{\f}'} \ar@/^4ex/[drd]^{\fa_{\f}} \\
\Y \ar[rr]_{\fb_{\f}'} \ar@/_4ex/[drrr]_{\fb_{\f} \lambda_0} && \Y \bigtriangleup \widetilde{\X} \ar@{-->}[rd]|{\tilde{\fk}_f} \ar@{}[d]|{\cong \; \Downarrow \; \mu_{\f}} \ar@{}[r]|{\cong \; \Uparrow \; \alpha_{\f}}   & \\
            & & & \Y \bigtriangleup \widetilde{\X} \bigtriangledown \Y  }$$
% $$\xymatrix{\ff{X} \ar[rr]^{\ff{i}_0} \ar[d]_{\ff{f}}  
%             & \ar@{}[d]|{\alpha' \Downarrow \cong} 
%             & \widetilde{\ff{X}} \ar[d]^{\psi_0} \ar@/^4ex/[drd]^{\varphi_0} \\
%             \ff{Y} \ar[rr]_{\psi_1} \ar@/_4ex/[drrr]^{\varphi_1 \lambda_0} 
%             & & \ff{Q} \ar@{-->}[rd]^{\widetilde{\ff{k}}} \\
%             & & & \ff{P} }$$
            
The upper left bi-pushout exists by axiom 2-M0b and then, by universal property, we have $\tilde{\ff{k}}_{\ff{f}}$ and invertible 2-cells $ %D28
\vcenter{\xymatrix@C=-0pc{\tilde{\fk}_{\f} && \fa_{\f}' \\ & \fa_{\f} \cl{\alpha_{\f}} }}
$, $ %D29
\vcenter{\xymatrix@C=-0pc{\tilde{\fk}_{\f} \dl & \dc{\mu_{\f}} & \dr \fb_{\f}' \\ \fb_{\f} && \lambda_0 }}
$ such that $ %D30
\vcenter{\xymatrix@C=-0pc@R=1pc{\tilde{\fk}_{\f} && \fa_{\f}' && \ii_0^{\X} \deq \\ & \fa_{\f} \cl{\alpha_{\f}} \ardl & \dcr{(\delta_{\f})_0} && \ii_0^{\X} \ardr \\ \fb_{\f} && \lambda_0 &&& \f }}
\vcenter{\xymatrix@C=-0pc@R=1pc{ \quad = \quad }}
\vcenter{\xymatrix@C=-0pc@R=1pc{\tilde{\fk}_{\f} \deq && \fa_{\f}' \dl & \dc{\delta_{\f}'} & \dr \ii_0^{\X} \\
\tilde{\fk}_{\f} \dl & \dc{\mu_{\f}} & \dr \fb_{\f}' && \f \deq \\
\fb_{\f} && \lambda_0 && \f}}
$.

\vspace{1ex}

Take $\ff{k}'_{\ff{f}}=\ff{k}_{\ff{f}}\tilde{\ff{k}}_{\ff{f}}$.

Recall that the upper left square of \eqref{D24} is a bi-pushout. Then, it can be easily checked that the following diagram is also a bi-pushout:

$$\vcenter{\xymatrix@C=3pc@R=3pc{\ff{X} \ar[r]^{\ff{i}^{\ff{X}}_1} \ar[d]_{\ff{f}} & \widetilde{\X} \ar@{}[d]|{\cong \; \Downarrow \; (\delta_{\f})_1} \ar[r]^{\ff{a}'_{\f}} \ar[dr]_{\ff{a}_{\f}}^{\cong \; \Downarrow \; \alpha_{\f}} &   \ff{Y} \bigtriangleup \widetilde{\X} \ar[d]^{\tilde{\ff{k}}_{\ff{f}}} \\
            \ff{Y} \ar[r]_{\lambda_1} & \Y \amalg \Y \ar[r]_{\ff{b}_{\f}} & \ff{Y} \bigtriangleup \widetilde{\X} \bigtriangledown \Y }}
\vcenter{\xymatrix@C=1pc{=}}
\vcenter{\xymatrix@C=1.5pc@R=3pc{\ff{X} \ar[rr]^{\ff{a}'_{\f} \ff{i}^{\ff{X}}_1} \ar[d]_{\ff{f}} &\ar@{}[d]|{\quad \;\;\; \cong \; \Downarrow \; (\delta_{\f})_1 \circ \alpha_{\f} \ff{i}^{\X}_1}& \ff{Y} \bigtriangleup \widetilde{\X} \ar[d]^{\tilde{\ff{k}}_{\ff{f}}} \\
            \ff{Y} \ar[rr]_{\ff{b}_{\f} \lambda_1} && \ff{Y} \bigtriangleup \widetilde{\X} \bigtriangledown \Y }} $$

Thus, since $\ff{f}$ is a cofibration it follows from axiom 2-M3b that $\tilde{\ff{k}}_{\ff{f}}$ is also a cofibration. We can conclude then that $\ff{k}'_{\ff{f}}$  is a cofibration.

It only remains to check that $\ff{k}'_{\ff{f}}$ is a weak equivalence (then, in case $\ff{f}$ is a weak equivalence, $\tilde{\ff{k}}_{\ff{f}}$ is also a weak equivalence by axiom 2-M3b and so, by axiom 2-M5, $\ff{k}_{\ff{f}}$ is a weak equivalence):

Also by axiom 2-M5, it is enough to check that $\ff{b}'_{\f}$ is a weak equivalence because $\sigma_{\ff{Y}}$ is a weak equivalence and $\sigma_{\ff{Y}}\ff{k}'_{\ff{f}}\ff{b}'_{\f}\cong id_{\ff{Y}}$. To check that, consider the following diagram

% $$\xymatrix{\ff{X} \ar[rr]^{\ff{i}_0} \ar[d]_{\ff{f}}  
%             & \ar@{}[d]|{\alpha' \Downarrow \cong} 
%             & \widetilde{\ff{X}} \ar[d]^{\psi_0} \ar@/^4ex/[drd]^{\ff{f}\ff{p}_{\ff{X}}} \\
%             \ff{Y} \ar[rr]_{\psi_1} \ar@/_4ex/[drrr]^{id_{\ff{Y}}} 
%             & & \ff{Q} \ar@{-->}[rd]^{\ff{h}} \\
%             & & & \ff{Y} }$$

\begin{equation}\label{D31}
%Cambiar por D31 el que esta aca abajo
\xymatrix@C=3pc@R=3pc{\ff{X} \ar@/^5ex/[rr]^{id_{\ff{X}}}_{\cong \; \Uparrow \; \gamma_0^{\X}} \ar[r]^{\ff{i}_0^{\X}} \ar[d]_{\ff{f} } \ar@{}[dr]|{\cong \; \Downarrow \; \delta_{\f}'}
            & \widetilde{\ff{X}} \ar[r]^{\sigma^{\ff{X}}} \ar[d]^{\fa_{\f}'} \ar@{}[dr]|{\cong \; \Uparrow \; \epsilon_{\f}}
            & \ff{X} \ar[d]^{\ff{f}}  \\
            \ff{Y} \ar[r]_{\fb_{\f}'} \ar@/_5ex/[rr]_{id_{\ff{Y}}}^{\cong \; \Downarrow \; \nu_{\f}}
            & \Y \bigtriangleup \widetilde{\X} \ar@{-->}[r]_{\ff{h}} 
            & \ff{Y}}
\end{equation}
% \xymatrix{\ff{X} \ar@/^4ex/[rr]^{id_{\ff{X}}} \ar[r]^{\ff{i}_0} \ar[d]_{\ff{f} } 
%             & \widetilde{\ff{X}} \ar[r]^{\ff{p}_{\ff{X}}} \ar[d]^{\psi_0} 
%             & \ff{X} \ar[d]^{\ff{f}}  \\
%             \ff{Y} \ar[r]_{\psi_1} \ar@/_4ex/[rr]_{id_{\ff{Y}}} 
%             & \ff{Q} \ar[r]_{\ff{h}} 
%             & \ff{Y}}

By universal property of $\ff{Y} \bigtriangleup \widetilde{\X}$, there exist $\ff{h}$ and invertible 2-cells $ %D32
\vcenter{\xymatrix@C=-0pc{ \h \dl & \dc{\epsilon_{\f}} & \dr \fa_{\f}' \\ \f && \sigma^{\X} }}
$, $ %D33
\vcenter{\xymatrix@C=-0pc{ \h && \fb'_{\f} \\ & id_{\Y} \cl{\nu_{\f}} }}
$ such that $ %D34
\vcenter{\xymatrix@C=-0pc@R=1pc{\h \dl & \dc{\epsilon_{\f}} & \dr \fa_{\f}' && \ii_0^{\X} \deq \\
\f \deq && \sigma^{\X} && \ii_0^{\X} \\
\f \dl & \dcr{=} && \dr id_{\X} \cl{\gamma_0^{\X}} \\
id_{\Y} &&& \f}}
\vcenter{\xymatrix@C=-0pc{ \quad = \quad }}
\vcenter{\xymatrix@C=-0pc@R=1pc{ \h \deq && \fa_{\f}' \dl & \dc{\delta_{\f}'} & \dr \ii_0^{\X} \\
\h && \fb_{\f}' && \f \deq \\
& id_{\Y} \cl{\nu_{\f}} &&& \f}}
$.  

Again, by axiom 2-M5, it is enough to check that $\ff{h}$ is a weak equivalence and this is the case because $\sigma^{\X}$ is a weak equivalence and the right square of diagram \eqref{D31} is a bi-pushout since both the left square and the outside square are bi-pushouts.

\end{proof}

\begin{remark}\label{kretracto}
 If $\X \mr{\ff{f}} \Y$ is a retract of $\X' \mr{\ff{f'}} \Y'$  via $(\theta_0,\theta_1,\theta_m,\eta_0,\eta_1,\eta_m,\mu_0,\mu_1)$ and the constructions of the previous proposition are performed for both morphisms, then $\ff{k}_{\f}$ is a retract of $\ff{k}_{\f'}$ and $\ff{k}'_{\f}$ is a retract of $\ff{k}'_{\f'}$.
\end{remark}

\begin{proof}
We give a sketch of the proof, leaving the details to the reader.
First observe that $\nabla^{\X}$ is a retract of $\nabla^{\X'}$. Then, from \ref{factorizacionfuntorialretractos} plus \ref{cilindros funtoriales}, one can choose ``retract cylinder objects'' for $\X$ and $\X'$ as in the following diagram

$$ %D48
 \xymatrix{\X \amalg \X \ar[dd]_>>>>>{\nabla^{\X}} \ar[dr]^{\ii^{\X}} \ar[rr]^{\f \amalg \f } && \X' \amalg \X' \ar[dd]_>>>>>{\nabla^{\X'}} \ar[dr]^{\ii^{\X'}} \ar[rr]^{\f' \amalg \f' } && \X \amalg \X \ar[dd]_>>>>>{\nabla^{\X}} \ar[dr]^{\ii^{\X}} \\
 & \widetilde{\X} \ar[rr]|{\quad} \ar@{}[rr]^>>>>>>>>{\g} \ar[dl]^<<<<{\sigma^{X}} && \widetilde{\X'} \ar[dl]^<<<<{\sigma^{X'}} \ar[rr]|{\quad} \ar@{}[rr]^>>>>>>>>{\h} && \widetilde{\X} \ar[dl]^<<<<{\sigma^{X}} \\
 \X \ar[rr]_{\f} && \X' \ar[rr]_{\f'} && \X''}
 $$

By functoriality of the bi-pushout, one can construct a retraction \mbox{$\ff{Y} \bigtriangleup \widetilde{\X} \bigtriangledown \Y \mr{\tilde{\g}} \ff{Y'} \bigtriangleup \widetilde{\X'} \bigtriangledown \Y' \mr{\tilde{\h}} \ff{Y} \bigtriangleup \widetilde{\X} \bigtriangledown \Y $} and it can be checked that this is part of a retraction from $\nabla_{\ff{f}}$ to $\nabla_{\ff{f}'}$. Then, by \ref{factorizacionfuntorialretractos}, one can factorize $\nabla_{\ff{f}}$ and $\nabla_{\ff{f}'}$ in such way that $\ff{k}_{\ff{f}}$ is a retract of $\ff{k}_{\ff{f}'}$

$$ %D49 
\xymatrix@C=3pc@R=3pc{\Y \bigtriangleup \widetilde{\X} \bigtriangledown \Y \ar[r]^{\tilde{\g}} \ar[d]_{\fk_{\f}} & \Y' \bigtriangleup \widetilde{\X'} \bigtriangledown \Y' \ar[r]^{\tilde{\h}} \ar[d]_{\fk_{\f'}} & \Y \bigtriangleup \widetilde{\X} \bigtriangledown \Y \ar[d]_{\fk_{\f}} \\
\widetilde{\Y} \ar[d]_{\sigma^{\Y}} \ar[r]^{\tilde{\tilde{\g}}} & \widetilde{\Y}' \ar[d]_{\sigma^{\Y'}} \ar[r]^{\tilde{\tilde{\h}}} & \widetilde{\Y} \ar[d]_{\sigma^{\Y}} \\
\Y \ar[r]_{\theta_1} & \Y' \ar[r]_{\eta_1} & \Y}
$$

Similar arguments can be used to prove that $\ff{k}'_{\ff{f}}$ is a retract of $\ff{k}'_{\ff{f}'}$. 
\end{proof}

\subsection{Some transfer properties}

\begin{lemma}\label{lemaadjuntos}
 Let $\ff{F}: \vcenter{\xymatrix{\cc{C} \ar@<.75ex>[r] \ar@<-.75ex>@{<-}[r] & \cc{D}}}:\ff{G}$ be pseudo-functors such that $\ff{F}\dashv_b \ff{G}$ via $\epsilon$, $\eta$ (see \ref{equivalencia de adjuntos con mi def}) and let $\ff{A}\mr{\ff{i}}\ff{X}\in \cc{C}$ and $\ff{Y}\mr{\ff{p}}\ff{B}\in \cc{D}$. Then the pair $(\ff{F}\ff{i},\ff{p})$ has the lifting property iff the pair $(\ff{i},\ff{G}\ff{p})$ does. 
\end{lemma}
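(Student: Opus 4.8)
The statement is the 2-categorical analogue of the classical adjunction transfer lemma for lifting properties. The plan is to prove the two implications symmetrically, transporting a filling problem for $(\ff{Fi},\ff{p})$ in $\cc{D}$ across the bi-adjunction to a filling problem for $(\ff{i},\ff{Gp})$ in $\cc{C}$, and conversely. I will only write out one direction, say $(\ff{Fi},\ff{p})$ has the lifting property $\Rightarrow$ $(\ff{i},\ff{Gp})$ does; the other follows by the symmetric argument using $\epsilon$ in place of $\eta$ (or by duality, applying the proven implication to the bi-adjunction with roles reversed).

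First I would take an arbitrary filling problem for $(\ff{i},\ff{Gp})$ in $\cc{C}$: morphisms $\ff{A}\mr{\ff{a}}\ff{GY}$, $\ff{X}\mr{\ff{b}}\ff{GB}$ and an invertible $2$-cell $\ff{Gp}\,\ff{a}\Mr{\gamma}\ff{b}\,\ff{i}$, exactly as in diagram \eqref{cuadradoallenar} of \ref{lifting} (with $\ff{p}$ replaced by $\ff{Gp}$). Using the bi-adjunction, the unit $\eta$ and the transpose construction, I would produce the adjunct data in $\cc{D}$: transpose $\ff{a}$ to $\ff{Fa}^\flat = \epsilon_\ff{Y}\ff{F}\ff{a}:\ff{FA}\mr{}\ff{Y}$ and $\ff{b}$ to $\ff{Fb}^\flat = \epsilon_\ff{B}\ff{F}\ff{b}:\ff{FX}\mr{}\ff{B}$. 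Applying $\ff{F}$ to $\gamma$ and pasting with the pseudo-naturality $2$-cells of $\epsilon$ (and the structural $2$-cells of the pseudo-functors $\ff{F}$, $\ff{G}$, using $\ff{p}\epsilon_\ff{Y}\cong\epsilon_\ff{B}\ff{FGp}$) yields an invertible $2$-cell $\ff{p}\,\ff{Fa}^\flat\Mr{\gamma^\flat}\ff{Fb}^\flat\,\ff{Fi}$, giving a genuine filling problem for $(\ff{Fi},\ff{p})$ in $\cc{D}$.

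By hypothesis this problem has a filler $(\ff{h},\lambda,\rho)$: a morphism $\ff{FX}\mr{\ff{h}}\ff{Y}$ with invertible $2$-cells $\ff{Fa}^\flat\Mr{\lambda}\ff{h}\,\ff{Fi}$ and $\ff{p}\,\ff{h}\Mr{\rho}\ff{Fb}^\flat$ satisfying the compatibility equation of \ref{lifting}. I would then transpose $\ff{h}$ back: set $\ff{f}=\ff{Gh}\,\eta_\ff{X}:\ff{X}\mr{}\ff{GY}$, and transport $\lambda$, $\rho$ across the adjunction by applying $\ff{G}$ and pasting with the pseudo-naturality cells of $\eta$, together with the triangle identities $\ff{G}\epsilon\circ\eta\ff{G}=\ff{G}$ from \ref{equivalencia de adjuntos con mi def}, which are precisely what guarantees that transposing and then transposing back returns the original data up to coherent isomorphism. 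This produces invertible $2$-cells $\ff{a}\Mr{\lambda^\sharp}\ff{f}\,\ff{i}$ and $\ff{Gp}\,\ff{f}\Mr{\rho^\sharp}\ff{b}$, and I would verify that $(\ff{f},\lambda^\sharp,\rho^\sharp)$ is a filler for the original problem.

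The main obstacle will be the bookkeeping in the last verification step: checking that the compatibility equation $(\ff{Gp}\,\lambda^\sharp)\circ\cdots = \gamma\circ\cdots$ required by \ref{lifting} actually holds. This is where the triangle identities, the pseudo-naturality squares of $\eta$ and $\epsilon$, and the structural coherence $2$-cells $\alpha^{\ff{F}}$, $\alpha^{\ff{G}}$ all have to be pasted together correctly; it is a routine but lengthy diagram chase best carried out in elevator calculus, moving the structural cells past one another until the defining equation of the filler $(\ff{h},\lambda,\rho)$ can be substituted. The symmetric direction is identical with $\ff{F}\leftrightarrow\ff{G}$, $\eta\leftrightarrow\epsilon$, and the two transpose operations interchanged, so I would simply remark that it follows \emph{mutatis mutandis}.
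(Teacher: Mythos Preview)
Your proposal is correct and follows essentially the same approach as the paper: transpose the $\cc{C}$-diagram across the bi-adjunction via $\epsilon_\ff{Y}\ff{Fa}$, $\epsilon_\ff{B}\ff{Fb}$ and the pseudo-naturality of $\epsilon$, apply the hypothesis to obtain a filler $\ff{h}$, then transpose back to $\ff{Gh}\,\eta_\ff{X}$ using the pseudo-naturality of $\eta$, the structural cells $\alpha^{\ff{F}},\alpha^{\ff{G}}$, and the triangle identities of \ref{equivalencia de adjuntos con mi def}; the paper writes out the explicit elevator diagrams for $\gamma'$ and the two resulting $2$-cells but likewise leaves the final compatibility check as ``straightforward,'' and treats the other direction by symmetry.
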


\begin{proof}
$\Rightarrow)$ Suppose that we have a diagram of the form

$$\xymatrix@C=1.5pc@R=3pc{\ff{A} \ar[rr]^{\ff{a}} \ar[d]_{\ff{i}} &\ar@{}[d]|{\cong \; \Downarrow \; \gamma}& \ff{G}\ff{Y} \ar[d]^{\ff{G}\ff{p}} \\
            \ff{X} \ar[rr]_{\ff{b}} && \ff{G}\ff{B}}$$
            
By hypothesis, we have a filler $(\ff{f},\lambda,\rho)$ for the following diagram:

$$\xymatrix@C=3pc@R=3pc{\ff{F}\ff{A} \ar[r]^{\ff{F}\ff{a}} \ar[d]_{\ff{F}\ff{i}} & \ff{F}\ff{G}\ff{Y} \ar[r]^{\epsilon_{\ff{Y}}} \ar@{}[d]|{\cong \; \Downarrow \; \gamma'} & \ff{Y} \ar[d]^{\ff{p}} \\
            \ff{F}\ff{X} \ar[r]_{\ff{F}\ff{b}} & \ff{F}\ff{G}\ff{B} \ar[r]_{\epsilon_{\ff{B}}} & \ff{B}}$$

\noindent where $\gamma'= %D41
\vcenter{\xymatrix@C=-0.2pc{\p \dl & \dc{\eps_{\p}} & \dr \eps_{\Y} && \F\fa \deq \\
\eps_{\B} \deq && \F\G\p && \F\fa \\
\eps_{\B} \deq &&& \F(\G\p\fa) \cl{\alpha^{\F}_{\G\p,\fa}} \dcell{\F\gamma} \\
\eps_{\B} \deq &&& \F(\fb\ii) \op{\alpha^{\F}_{\fb,\ii}} \\
\eps_{\B} && \F\fb && \F\ii}}$.            

\noindent It is straightforward to check that $(\ff{G}\ff{f} \eta_{\ff{X}}, %A164 
\vcenter{\xymatrix@C=-0.2pc@R=1pc{ && \fa \opunodos{= \quad} \\ 
& id_{\G\Y} \op{=} &&& \fa \deq \\
\G\eps_{\Y} \deq && \eta_{\G\Y} \dl & \dc{\eta_{\ff{a}}} & \dr \fa \\
\G\eps_{\Y} && \G\F\fa && \eta_{\A} \deq \\
& \!\!\!\! \G(\eps_{\Y} \F\fa) \!\!\!\! \cl{\alpha^{\G}_{\eps_{\Y},\F\fa}} \dcellb{\G\lambda}  &&& \eta_{\A} \deq \\
& \G(\f\F\ii) \op{\alpha^{\G}_{\f,\F\ii}} &&& \eta_{\A} \deq \\
%\G\f && \G\F\ii && \eta_{\A} \deq \\
\G\f \deq && \G\F\ii \dl & \dc{\eta_{\ff{i}}} & \dr \eta_{\A} \\
\G\f && \eta_{\X} && \F\ii}}
,  %A165 
\vcenter{\xymatrix@C=-0.2pc@R=1pc{ 
\G\p && \G\f && \eta_{\X} \deq \\
& \G(\p\f) \cl{\alpha^{\G}_{\p,\f}} \dcellb{\G\rho} &&& \eta_{\X} \deq \\
& \!\!\!\! \G(\eps_{\B}\F\fb) \!\!\!\! \op{\alpha^{\G}_{\eps_{\B},\F\fb}} &&& \eta_{\X} \deq \\
%\G\eps_{\B} && \G\F\fb && \eta_{\X} \deq \\
\G\eps_{\B} \deq && \G\F\fb \dl & \dc{\eta_{\ff{b}}} & \dr \eta_{\X} \\
\G\eps_{\B} && \eta_{\G\B} && \fb \deq \\
& id_{\G\B} \cl{=} &&& \fb \\
&& \fb \clunodos{=} }}
)$ is the filler that we were looking for.            

$\Leftarrow)$ The proof is similar to the previous one.
\end{proof}

\begin{lemma}\label{lemaadjuntos2}
 Let $\ff{F}: \vcenter{\xymatrix{\cc{C} \ar@<.75ex>[r] \ar@<-.75ex>@{<-}[r] & \cc{D}}}:\ff{G}$ be pseudo-functors between closed 2-bmodel 2-categories such that $\ff{F}\dashv_b \ff{G}$. The following properties hold:
 
 \begin{enumerate}
  \item $\ff{F}$ preserves cofibrations iff $\ff{G}$ preserves morphisms that are both fibrations and weak equivalences.  
  
  \item $\ff{F}$ preserves morphisms that are both cofibrations and weak equivalences iff $\ff{G}$ preserves fibrations.
 \end{enumerate}

\end{lemma}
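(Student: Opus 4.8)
The plan is to reduce both biconditionals to a single mechanism: the bi-adjunction transfers lifting properties (Lemma \ref{lemaadjuntos}), and each of the four classes of morphisms named in the statement is \emph{characterized} by a lifting property. Concretely, I would recall that by Lemma \ref{lema2.2} a morphism is both a fibration and a weak equivalence iff it has the right lifting property with respect to all cofibrations, and is both a cofibration and a weak equivalence iff it has the left lifting property with respect to all fibrations; whereas by the axioms 2-M6a) and 2-M6b) themselves, a morphism is a fibration (resp.\ a cofibration) iff it has the right (resp.\ left) lifting property with respect to all morphisms that are both cofibrations and weak equivalences (resp.\ both fibrations and weak equivalences). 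So the two statements to prove just say that $\ff{F}$ and $\ff{G}$ preserve the left- and right-hand members of these ``matching'' lifting-property pairs, and Lemma \ref{lemaadjuntos} is exactly the device that lets a lifting property of $(\ff{F}\ff{i},\ff{p})$ be traded for one of $(\ff{i},\ff{G}\ff{p})$.

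For part (1), in the forward direction I assume $\ff{F}$ preserves cofibrations and take $\ff{Y}\mr{\ff{p}}\ff{B}$ in $\cc{D}$ both a fibration and a weak equivalence; to conclude $\ff{G}\ff{p}$ is of the same type, by Lemma \ref{lema2.2} it suffices to fill against an arbitrary cofibration $\ff{i}$ of $\cc{C}$. By Lemma \ref{lemaadjuntos}, $(\ff{i},\ff{G}\ff{p})$ has the lifting property iff $(\ff{F}\ff{i},\ff{p})$ does; since $\ff{F}\ff{i}$ is a cofibration and $\ff{p}$ is a trivial fibration, the latter holds by Lemma \ref{lema2.2}. For the converse I assume $\ff{G}$ preserves trivial fibrations and take a cofibration $\ff{i}$ of $\cc{C}$; by 2-M6b) it is enough to fill $(\ff{F}\ff{i},\ff{p})$ for every $\ff{p}$ in $\cc{D}$ that is both a fibration and a weak equivalence. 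Again Lemma \ref{lemaadjuntos} turns this into filling $(\ff{i},\ff{G}\ff{p})$, where $\ff{G}\ff{p}$ is a trivial fibration by hypothesis, so the filler exists because $\ff{i}$ is a cofibration (Lemma \ref{lema2.2}, or equivalently 2-M1).

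Part (2) is handled by the same template with the roles of left/right lifting interchanged: assuming $\ff{F}$ preserves trivial cofibrations and given a fibration $\ff{p}$, I verify $\ff{G}\ff{p}$ is a fibration via 2-M6a), transporting each lifting problem $(\ff{i},\ff{G}\ff{p})$ with $\ff{i}$ a trivial cofibration to $(\ff{F}\ff{i},\ff{p})$; and conversely, assuming $\ff{G}$ preserves fibrations and given a trivial cofibration $\ff{i}$, I check $\ff{F}\ff{i}$ is a trivial cofibration by Lemma \ref{lema2.2}, filling $(\ff{F}\ff{i},\ff{p})$ against any fibration $\ff{p}$ through $(\ff{i},\ff{G}\ff{p})$ with $\ff{G}\ff{p}$ a fibration. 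There is essentially no hard step here: the entire argument is the bookkeeping of which characterizing lifting property each class carries. The only point demanding genuine care --- and the place where the $2$-dimensional setting could bite --- is that ``lifting property'' throughout means the up-to-invertible-$2$-cell notion of Definition \ref{lifting}, so I must make sure I am invoking Lemma \ref{lemaadjuntos} in its stated $2$-categorical form (which already produces the required fillers together with their coherence $2$-cells) rather than a naive strict version; once that lemma is in hand, the proof is a four-line diagram chase repeated four times.
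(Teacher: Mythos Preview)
Your proposal is correct and follows essentially the same approach as the paper: use Lemma \ref{lema2.2} and axioms 2-M6a)/2-M6b) to characterize each class by a lifting property, then transport lifting problems across the bi-adjunction via Lemma \ref{lemaadjuntos}. The paper only spells out the forward direction of part (1) and dismisses the remaining three cases as ``similar'', whereas you have (correctly) written them all out.
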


\begin{proof}
 
\begin{enumerate}
 \item $\Rightarrow)$ Let $\ff{p}$ be a morphism in $\cc{D}$ that is both a fibration and a weak equivalence. By \ref{lema2.2}, we only need to check that $\ff{G}\ff{p}$ has the right lifting property with respect to all cofibrations. So let $\ff{i}$ be a cofibration in $\cc{C}$, then $\ff{F}\ff{i}$ is a cofibration in $\cc{D}$ and so it has the left lifting property with respect to all morphisms that are both fibrations and weak equivalence. Then the pair $(\ff{F}\ff{i},\ff{p})$ has the lifting property and, by \ref{lemaadjuntos}, so does the pair $(\ff{i},\ff{G}\ff{p})$.
 
 $\Leftarrow)$ The proof is similar.
 
 \item The proof is similar.
\end{enumerate}

\end{proof}

\begin{proposition}\label{pseudo-equivalencerespetaaxiomas}
Let $\cc{C}\mr{\ff{F}}\cc{D}$ be a 2-functor that is a retract pseudo-equivalence of \mbox{2-categories} (\ref{pequivalencia}). If $\cc{D}$ is a closed 2-bmodel 2-category and we define the corresponding structure in $\cc{C}$, then $\cc{C}$ is also a closed 2-bmodel 2-category. 
\end{proposition}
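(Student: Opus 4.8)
The plan is to equip $\cc{C}$ with the classes \emph{created by} $\ff{F}$: a morphism $\ff{f}\in\cc{C}$ is declared a fibration (resp.\ cofibration, weak equivalence) exactly when $\ff{F}\ff{f}$ is one in $\cc{D}$, and then to verify the axioms of \ref{2-closed} one at a time. Throughout I would use that, being a retract pseudo-equivalence (\ref{pequivalencia}, \ref{peqsiipf&f}), $\ff{F}$ comes with a pseudo-quasi-inverse $\cc{D}\mr{\ff{G}}\cc{C}$ satisfying $\ff{G}\ff{F}=id_\cc{C}$ together with an invertible pseudo-natural transformation $\ff{F}\ff{G}\Mr{\cong}id_\cc{D}$; in particular $\ff{F}$ is pseudo-fully-faithful and both $\ff{F}\dashv_b\ff{G}$ and $\ff{G}\dashv_b\ff{F}$ (\ref{equivalencia de adjuntos con mi def}).

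The technical heart is one transfer lemma: for any $\ff{i},\ff{p}\in\cc{C}$, the pair $(\ff{i},\ff{p})$ has the lifting property in $\cc{C}$ iff $(\ff{F}\ff{i},\ff{F}\ff{p})$ has it in $\cc{D}$. I would prove it exactly as in \ref{lemaadjuntos}: a lifting problem in $\cc{C}$ is carried by $\ff{F}$ (pasting in the structural $2$-cells $\alpha^{\ff{F}}$) to one in $\cc{D}$, and a filler there is pulled back using pseudo-fully-faithfulness of $\ff{F}$, since the filler $1$-cell and the $2$-cells $\lambda,\rho$ correspond under the equivalence $\cc{C}(\ff{X},\ff{Y})\simeq\cc{D}(\ff{F}\ff{X},\ff{F}\ff{Y})$, and symmetrically. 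I would also record the elementary facts that the lifting property is invariant under replacing $\ff{i},\ff{p}$ by isomorphic arrows (a special case of \ref{retractospreservanlifting}) and that each distinguished class of $\cc{D}$ is closed under isomorphism (axiom 2-M7, \ref{sobran axiomas}); combined with $\ff{F}\ff{G}\cong id_\cc{D}$, so that $\ff{F}\ff{G}\ff{j}\cong\ff{j}$, this shows $\ff{G}\ff{j}$ is again a (trivial) cofibration or fibration in $\cc{C}$ whenever $\ff{j}$ is one in $\cc{D}$.

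With these tools the axioms follow. For 2-M0b, since $\ff{F}$ is a pseudo-equivalence it preserves and reflects finite weighted bi-limits and bi-colimits; concretely, for $\cc{P}\mr{\F}\cc{C}$ I would take the bi-(co)limit $L$ of $\ff{F}\F$ in $\cc{D}$ and check that $\ff{G}L$ is the bi-(co)limit of $\F$ in $\cc{C}$ via the chain $\cc{C}(\ff{Z},\ff{G}L)\simeq\cc{D}(\ff{F}\ff{Z},L)\simeq\dots$ supplied by pseudo-fully-faithfulness and $\ff{F}\ff{G}\cong id$. For 2-M5 I would apply $\ff{F}$ to a triangle, use $\alpha^{\ff{F}}_{\ff{f},\ff{g}}$ to get $\ff{F}\ff{h}\cong\ff{F}\ff{g}\,\ff{F}\ff{f}$, and invoke 2-M5 and 2-M7 in $\cc{D}$. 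For 2-M2 I would factor $\ff{F}\ff{f}\cong\ff{p}\ff{i}$ in $\cc{D}$, apply $\ff{G}$, and use $\ff{G}\ff{F}=id_\cc{C}$ to obtain $\ff{f}\cong(\ff{G}\ff{p})(\ff{G}\ff{i})$, the factors being of the required type because $\ff{F}\ff{G}\ff{i}\cong\ff{i}$ and $\ff{F}\ff{G}\ff{p}\cong\ff{p}$. Finally, for 2-M6a),b),c) I would combine the transfer lemma with the observation above: $\ff{p}$ is a fibration in $\cc{C}$ iff $\ff{F}\ff{p}$ has the right lifting property against every trivial cofibration of $\cc{D}$, and for the nontrivial implication, given an arbitrary trivial cofibration $\ff{j}$ of $\cc{D}$ one applies the hypothesis in $\cc{C}$ to the trivial cofibration $\ff{G}\ff{j}$ and transfers back along $\ff{F}\ff{G}\ff{j}\cong\ff{j}$; the cofibration and weak-equivalence characterizations are identical in form.

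The main obstacle I anticipate is bookkeeping rather than conceptual: carrying the invertible structural $2$-cells of $\ff{F}$ and $\ff{G}$ correctly through the lifting-property transfer (the same kind of elevator computation that makes the proof of \ref{lemaadjuntos} long), and, in the two ``hard'' directions of 2-M6, guaranteeing that an \emph{arbitrary} distinguished arrow of $\cc{D}$ is reached. This last point is exactly where the retraction $\ff{G}\ff{F}=id$ together with the invertibility of $\ff{F}\ff{G}\cong id$ — the strength of a \emph{retract} pseudo-equivalence over a mere pseudo-equivalence — is essential.
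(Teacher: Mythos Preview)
Your approach is essentially the paper's: create the three classes along $\ff{F}$, then verify 2-M0b, 2-M2, 2-M5, 2-M6 by pushing problems to $\cc{D}$ via $\ff{F}$ and pulling solutions back via $\ff{G}$ (using $\ff{G}\ff{F}=id_\cc{C}$). The paper's argument for 2-M6a)\,$\Leftarrow$ unfolds exactly your outline: apply $\ff{G}$ to the given square, show $\ff{G}\ff{i}$ is a trivial cofibration in $\cc{C}$, solve there, and transport back using $\alpha$ and a quasi-inverse $\overline{\alpha}$.

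There is one imprecision worth flagging. A retract pseudo-equivalence in the sense of \ref{pequivalencia} gives only an \emph{equivalence} pseudo-natural transformation $\ff{F}\ff{G}\Mr{\alpha}id_\cc{D}$ (components $\alpha_\ff{D}$ are equivalences, not isomorphisms); the paper's proof is written at exactly this level of generality. Thus ``$\ff{F}\ff{G}\ff{j}\cong\ff{j}$'' is not available, and axiom 2-M7 does not apply directly. What does hold is that $\ff{F}\ff{G}\ff{j}$ is a \emph{retract} of $\ff{j}$ in $\cc{H}om_p(\ff{2},\cc{D})$ (built from $\alpha$ and any choice of quasi-inverses $\overline{\alpha}$), and the paper uses \ref{retractospreservan} (not 2-M7) to conclude that $\ff{F}\ff{G}\ff{j}$ inherits the relevant class; likewise \ref{retractospreservanlifting} in its full retract form (not just the isomorphism special case) is what transfers the lifting property from $(\ff{F}\ff{G}\ff{j},\ff{F}\ff{p})$ to $(\ff{j},\ff{F}\ff{p})$. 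With that substitution your sketch goes through verbatim. (In the intended application \ref{proppseudoeqapro}, the two $2$-categories share objects, so one may take $\alpha$ with identity components and your assumption is harmless there; but the proposition is stated and proved in the greater generality.) One further small point: since $\ff{F}$ is a $2$-functor, there are no nontrivial structural $2$-cells $\alpha^{\ff{F}}$ to paste; the only such cells appear when applying the pseudo-functor $\ff{G}$.
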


\begin{proof} Let $\ff{G}$ be a pseudo-quasi-inverse to $\ff{F}$ and $\ff{F}\ff{G}\Mr{\alpha}id_{\cc{D}}$ an equivalence as in \ref{pequivalencia} such that $\G\F=id_{\cc{C}}$.

2-M0b: The proof is straightforward.

2-M2: Let $\ff{f}\in \cc{C}$. Since $\cc{D}$ satisfies 2-M2, $\ff{F}\ff{f}$ can be factorized as $\ff{F}\ff{f}\cong \tilde{\ff{p}}\tilde{\ff{i}}$ where $\tilde{\ff{p}}$ is a fibration and $\tilde{\ff{i}}$ is both a cofibration and a weak equivalence. Let $\ff{p}=\ff{G}\tilde{\ff{p}}$ and $\ff{i}=\ff{G}\tilde{\ff{i}}$. Then, it can be easily checked that $\ff{F}\ff{p}=\ff{F}\ff{G}\tilde{\ff{p}}$ is a retract of $\tilde{\ff{p}}$. Then, by \ref{retractospreservan}, $\ff{F}\ff{p}$ is a fibration in $\cc{D}$ and so $\ff{p}$ is a fibration in $\cc{C}$. With a similar argument, one can check that $\ff{i}$ is both a cofibration and a weak equivalence in $\cc{C}$. Plus $\ff{f}=\ff{G}\ff{F}\ff{f}\cong \ff{G}\tilde{\ff{p}}\tilde{\ff{i}}\cong \ff{G}\tilde{\ff{p}}\ff{G}\tilde{\ff{i}}=\ff{p}\ff{i}$ as we wanted to prove. The case in which $\ff{p}$ is a weak equivalence is similar and we leave it to the reader.

2-M5w: Let $\ff{f}$ and $\ff{g}$ be two composable arrows in $\cc{C}$ such that two out of the three $\ff{f}$, $\ff{g}$ and $\ff{g}\ff{f}$ are weak equivalences. Then two out of the three $\ff{F}\ff{f}$, $\ff{F}\ff{g}$ and $\ff{F}\ff{g}\ff{f}=\ff{F}\ff{g}\ff{F}\ff{f}$ are weak equivalences and so is the third because 2-M5w is satisfied in $\cc{D}$. But this implies that $\ff{f}$, $\ff{g}$ and $\ff{g}\ff{f}$ are all weak equivalences as we wanted to prove.

2-M6a): $\Rightarrow)$ Let $\ff{p}$ be a fibration and $\ff{i}$ a cofibration which is also a weak equivalence in $\cc{C}$ and suppose that we have a diagram of the form:

$$\xymatrix@C=1.5pc@R=3pc{\ff{A} \ar[rr]^{\ff{a}} \ar[d]_{\ff{i}} &\ar@{}[d]|{\cong \; \Downarrow \; \gamma}& \ff{Y} \ar[d]^{\ff{p}} \\
            \ff{X} \ar[rr]_{\ff{b}} && \ff{B}}$$
            
\noindent Since $\ff{F}\ff{p}$ is a fibration and $\ff{F}\ff{i}$ is both a cofibration and a weak equivalence in $\cc{D}$, there exists a filler $(\ff{f}_0,\lambda_0,\rho_0)$ for the following diagram

$$\xymatrix@C=1.5pc@R=3pc{\ff{F}\ff{A} \ar[rr]^{\ff{F}\ff{a}} \ar[d]_{\ff{F}\ff{i}} &\ar@{}[d]|{\cong \; \Downarrow \; \ff{F}\gamma}& \ff{F}\ff{Y} \ar[d]^{\ff{F}\ff{p}} \\
            \ff{F}\ff{X} \ar[rr]_{\ff{F}\ff{b}} && \ff{F}\ff{B}}$$
                  
\noindent It is straightforward to check that $(\ff{G}\ff{f}_0, %A166 
\vcenter{\xymatrix@C=-0.2pc@R=1.2pc{ & \fa \dcell{=} \\
& \G\F\fa \dcellb{\G\lambda_0} \\
& \G(\f_0\F\ii) \opbb{(\alpha^{\G}_{\f_0,\F\ii})\inv} \\
\G\f_0 \deq && \G\F\ii \dcell{=} \\
\G\f_0 && \ii}}
, %A167
\vcenter{\xymatrix@C=-0.2pc@R=1.2pc{ 
\p \dcell{=} && \G\f_0 \deq \\
\G\F\p && \G\f_0 \\
& \G(\F\p\f_0) \cl{\alpha^{\G}_{\F\p,\f_0}} \dcellb{\G\rho_0} \\
& \G\F\fb \dcell{=} \\
& \fb }}
)$ is the filler that we were looking for.    

$\Leftarrow)$ Let $\ff{Y}\mr{\ff{p}}\ff{B}\in \cc{C}$ such that it has the right lifting property with respect to all morphisms that are both a cofibration and a weak equivalence. To check that it is a fibration, we need to check that $\ff{F}\ff{p}$ is a fibration in $\cc{D}$ where we have axiom 2-M6a). So suppose that we have a morphism $\ff{i}\in \cc{D}$ which is both a cofibration and a weak equivalence and a diagram of the form:

$$\xymatrix@C=1.5pc@R=3pc{\ff{A} \ar[rr]^{\ff{a}} \ar[d]_{\ff{i}} &\ar@{}[d]|{\cong \; \Downarrow \; \gamma}& \ff{F}\ff{Y} \ar[d]^{\ff{F}\ff{p}} \\
            \ff{X} \ar[rr]_{\ff{b}} && \ff{F}\ff{B}}$$
            
\noindent Then we have a diagram in $\cc{C}$ as follows:

\begin{equation}\label{aplicandoG}
\xymatrix@C=1.5pc@R=3pc{\ff{G}\ff{A} \ar[rr]^{\ff{G}\ff{a}} \ar[d]_{\ff{G}\ff{i}} &\ar@{}[d]|{\cong \; \Downarrow \; \gamma'}& \ff{Y} \ar[d]^{\ff{p}} \\
            \ff{G}\ff{X} \ar[rr]_{\ff{G}\ff{b}} && \ff{B}}
\end{equation}  

\noindent where $\gamma'= %A168
\vcenter{\xymatrix@C=-0pc@R=1.2pc{ 
\p \dcell{=} && \G\fa \deq \\
\G\F\p && \G\fa \\
& \G(\F\p\fa) \cl{\alpha^{\G}_{\F\p,\fa}} \dcellb{\G\gamma} \\
& \G(\fb \ii) \opbymedio{(\alpha^{\G}_{\fb,\ii})\inv} \\
\G\fb && \G\ii }}
$. 

We are going to prove that $\ff{G}\ff{i}$ is both a cofibration and a weak equivalence: In order to do that, by \ref{lema2.2}, we only need to check that $\ff{F}\ff{G}\ff{i}$ has the left lifting property with respect to all fibrations. So suppose that we have a fibration $\ff{q}$ and a diagram of the form

$$\xymatrix@C=1.5pc@R=3pc{\ff{F}\ff{G}\ff{A} \ar[rr]^{\tilde{\ff{a}}} \ar[d]_{\ff{F}\ff{G}\ff{i}} &\ar@{}[d]|{\cong \; \Downarrow \; \tilde{\gamma}}& \tilde{\ff{Y}} \ar[d]^{\ff{q}} \\
            \ff{F}\ff{G}\ff{X} \ar[rr]_{\tilde{\ff{b}}} && \tilde{\ff{B}}}$$ 
            
\noindent Since $\ff{i}$ is both a cofibration and a weak equivalence and $\ff{q}$ is a fibration, the following diagram admits a filler $(\ff{f}_1,\lambda_1,\rho_1)$

$$\xymatrix@C=3pc@R=3pc{\ff{A} \ar[r]^{\overline{\alpha_{\ff{A}}}} \ar[d]_{\ff{i}} & \ff{F}\ff{G}\ff{A} \ar[r]^{\tilde{\ff{a}}} \ar@{}[d]|{\cong \; \Downarrow \; \tilde{\gamma'}} & \tilde{\ff{Y}} \ar[d]^{\ff{q}} \\
            \ff{X} \ar[r]_{\overline{\alpha_{\ff{X}}}} & \ff{F}\ff{G}\ff{X} \ar[r]_{\tilde{\ff{b}}} & \tilde{\ff{B}}}$$ 

\noindent where $\tilde{\gamma'}= 
\vcenter{\xymatrix@C=-0pc@R=1.2pc{ \q \dl && \dc{\tilde{\gamma}} && \dr \tilde{\fa} &&&& \overline{\alpha_{\A}} \deq \\
\tilde{\fb} \deq &&&& \F\G\ii \opdosdos{\cong} &&&& \overline{\alpha_{\A}} \deq \\
\tilde{\fb} \deq && \overline{\alpha_{\X}} \deq && \alpha_{\X} \dl & \dc{\alpha_i\inv} & \dr \F\G\ii && \overline{\alpha_{\A}} \deq \\
\tilde{\fb} \deq && \overline{\alpha_{\X}} \deq && \ii && \alpha_{\A} && \overline{\alpha_{\A}} \\
\tilde{\fb} && \overline{\alpha_{\X}} && && \ii \cldosdos{\cong} }}
$

It is straightforward to check that $(\ff{f}_1\alpha_{\ff{X}}, %A169 
\vcenter{\xymatrix@C=-0.2pc@R=1pc{ && \tilde{\fa} \opdosdos{\cong} \\
\tilde{\fa} \dl & \dc{\lambda_1} & \dr \overline{\alpha_{\ff{A}}} && \alpha_{\A} \deq \\
\f_1 \deq && \ii \dl & \dc{\alpha_{\ii}} & \dr \alpha_{\A} \\
\f_1 && \alpha_{\X} && \F\G\ii}}
,  %A170
\vcenter{\xymatrix@C=-0.2pc@R=1pc{ \q \dl & \dc{\rho_1} & \dr \f_1 && \alpha_{\X} \deq \\
\tilde{\fb} && \overline{\alpha_{\ff{X}}} && \alpha_{\X} \\
&& \tilde{\fb} \cldosdos{\cong} }}
)$ is the filler that we were looking for. 

Then, by hypothesis, diagram \eqref{aplicandoG} admits a filler $(\ff{f}_0,\lambda_0,\rho_0)$. 

\noindent It is straightforward to check that $$(\alpha_{\ff{F}\ff{Y}} \ff{F}\ff{f}_0 \overline{\alpha_{\ff{X}}}, %A149
\vcenter{\xymatrix@C=-0.2pc@R=1pc{&&&&& \fa \opcincocinco{\cong} \\
\fa \dl && \dc{\alpha_{\fa}} && \dr \alpha_{\A} &&&&&& \overline{\alpha_{\ff{A}}} \deq \\
\alpha_{\F\Y} \deq &&&& \F\G\fa \opdosdos{\F\lambda_0} &&&&&& \overline{\alpha_{\ff{A}}} \deq \\
\alpha_{\F\Y} \deq && \F\f_0 \deq &&&& \F\G\ii \opdosdos{\cong} &&&& \overline{\alpha_{\ff{A}}} \deq \\
\alpha_{\F\Y} \deq && \F\f_0 \deq && \overline{\alpha_{\ff{X}}} \deq && \alpha_{\X} \dl & \dc{\alpha_{\ii}\inv} & \dr \F\G\ii && \overline{\alpha_{\ff{A}}} \deq \\
\alpha_{\F\Y} \ardrr && \F\f_0 && \overline{\alpha_{\ff{X}}} & \dc{\cong} & \ii && \alpha_{\A} && \overline{\alpha_{\ff{A}}} \ardll \\
&& \alpha_{\F\Y} && \F\f_0 && \overline{\alpha_{\ff{X}}} && \ii }}
, %A150
\vcenter{\xymatrix@C=-0.2pc@R=1pc{ \F\p \dl & \dc{\alpha_{\F\p}} & \dr \alpha_{\F\Y} && \F\f_0 \deq && \overline{\alpha_{\ff{X}}} \deq \\
\alpha_{\F\B} \deq && \F\p && \F\f_0 && \overline{\alpha_{\ff{X}}} \deq \\
\alpha_{\F\B} \dl & \dcr{\alpha_{\fb}\inv} && \dr \F\G\fb \cl{\F\rho_0} &&& \overline{\alpha_{\ff{X}}} \deq \\
\fb &&& \alpha_{\X} &&& \overline{\alpha_{\ff{X}}} \\
&&& \fb \cltrestres{\cong} }}
)$$ is the filler that we were looking for.

2-M6b): The proof of this axiom is similar to the previous one and we leave it to the reader.

2-M6c): $\Rightarrow)$ Let $\ff{f} \in \cc{C}$ be a weak equivalence. Then $\ff{F}\ff{f}\in \cc{D}$ is a weak equivalence and therefore we can factorize it as $\ff{F}\ff{f}\cong \ff{u}\ff{v}$ where $\ff{u}$ has the right lifting property with respect to all cofibrations and $\ff{v}$ has the left lifting property with respect to all fibrations. Consider $\tilde{\ff{u}}=\ff{G}\ff{u}$ and $\tilde{\ff{v}}=\ff{G}\ff{v}$. Then $\ff{F}\tilde{\ff{u}}=\ff{F}\ff{G}\ff{u}$ is a retract of $\ff{u}$. Then, by \ref{retractospreservan}, $\tilde{\ff{u}}$ has the right lifting property with respect to all cofibrations.

% Let $\ff{i}$ be a cofibration and suppose that we have a diagram of the form
% 
% $$\xymatrix@C=1pc{\ff{A} \ar[rr]^{\ff{a}} \ar[d]_{\ff{i}} &\ar@{}[d]|{\cong \; \Downarrow \; \gamma}& \ff{Y} \ar[d]^{\tilde{\ff{u}}} \\
%             \ff{X} \ar[rr]_{\ff{b}} && \ff{B}}$$
%             
% \noindent Then, we have a filler $(\ff{f}_0, \lambda_0,\rho_0)$ for the following diagram:
% 
% $$\xymatrix@C=1pc{\ff{F}\ff{A} \ar[rr]^{\ff{F}\ff{a}} \ar[d]_{\ff{F}\ff{i}} &\ar@{}[d]|{\cong \; \Downarrow \; \ff{F}\gamma}& \ff{F}\ff{Y} \ar[d]^{\ff{F}\tilde{\ff{u}}} \\
%             \ff{F}\ff{X} \ar[rr]_{\ff{F}\ff{b}} && \ff{F}\ff{B}}$$
% 
% \noindent It can be checked that $(\ff{G}\ff{f}_0,\ff{G}_{\ff{f}_0\ff{F}\ff{i}}\circ \ff{G}\lambda_0, \ff{G}\rho_0 \circ \ff{G}_{\ff{F}\tilde{u}\ff{f}_0})$ is the filler that we were looking for.

By a similar argument, we can prove that $\tilde{\ff{v}}$ has the left lifting property with respect to all fibrations. 

So we factorized $\ff{f}$ as we wanted because $\ff{f}=\ff{G}\ff{F}\ff{f}\cong \ff{G}\ff{u}\ff{v} \cong \ff{G}\ff{u}\ff{G}\ff{v}=\tilde{\ff{u}}\tilde{\ff{v}}$.

$\Leftarrow)$ Let $\ff{f}\cong \ff{u}\ff{v} \in \cc{C}$ with $\ff{u}$ having the right lifting property with respect to all cofibrations and $\ff{v}$ having the left lifting property with respect to all fibrations. We want to check that $\ff{F}\ff{f}$ is a weak equivalence in $\cc{D}$: since $\ff{F}\ff{f} \cong \ff{F}\ff{u} \ff{F}\ff{v}$, we only need to check that $\ff{F}\ff{u}$ has the right lifting property with respect to all cofibrations and $\ff{F}\ff{v}$ has the left lifting property with respect to all fibrations. This can be checked by working as before. 
\end{proof}

\pagebreak
\begin{center}
{\bf Resumen en castellano de la secci\'on \ref{Definitions and basic lemmas}}
\end{center}

En esta secci\'on introducimos las nociones in\'editas de ``closed 2-model 2-category'' y ``closed 2-bmodel 2-category'' y enunciamos y demostramos algunos lemas y proposiciones que usaremos m\'as adelante. Nuestra noci\'on es m\'as fuerte que las ``fibration structures'' de Pronk (\cite{Pronk}) pues es una versi\'on 2-dimensional de los axiomas de Quillen completos para ``closed model categories''. Tambi\'en difiere en el hecho importante de que no asumimos la elecci\'on de una factorizaci\'on global privilegiada dada de forma pseudo-funtorial sino que estipulamos, como Quillen, solo la existencia de factorizaciones para cada flecha. 

La mayor\'ia de los resultados de esta secci\'on son generalizaciones al contexto de 2-categor\'ias de enunciados bien conocidos de la teor\'ia de ``closed model categories''.  
Para ver las definiciones y resultados en el caso 1-dimensional, se puede consultar por ejemplo \cite{Q1} o \cite{GJ}.

\pagebreak 

\section{Closed 2-bmodel structure for $\Pro{C}$}\label{2-modelos}

In this section, we give $\Pro{C}$ a closed 2-bmodel structure provided that $\cc{C}$ has one. This section is inspired in the proof given in \cite{EH} of the fact that $\ff{Pro}(\ff{C})$ is a closed model category in the 1-dimensional case. The proof in our context turned out to be more complicated due to the fact that diagrams doesn't strictly commute but only commute up to an invertible 2-cell. This is the reason why we were forced to work with pseudo-functors and pseudo-natural transformations even though objects and morphisms in $\Pro{C}$ are 2-functors and 2-natural transformations. We proceed in three steps. First, in \ref{closed 2-model structure for CJ} we define a closed 2-bmodel structure for the 2-category $\pCJ$ (see \ref{ccHom}) out of a closed 2-bmodel structure for $\cc{C}$, where $\ff{J}$ is a cofinite and filtered poset with a unique initial object. Second, in \ref{closed 2-bmodel structure in Prop} we use the closed 2-bmodel structure in $\pCJ$ to define such an structure in the 2-category 
$\Prop{C}$. Finally, we transfer this structure into $\Pro{C}$ using that this 2-category is retract pseudo-equivalent to $\Prop{C}$ \mbox{(see \ref{proppseudoeqapro}).}   

%In that subsection we provide $\pCJ$ a closed 2-bmodel structure when $\cc{J}$ is a cofinite and filtered poset with a unique initial object and $\cc{C}$ has a closed 2-bmodel structure. In \ref{closed 2-model structure in Prop}, we give a closed 2-bmodel structure for $\Prop{C}$ provided a closed 2-bmodel structure for $\cc{C}$ to finally conclude that $\Pro{C}$ is a closed 2-bmodel 
% 2-category if $\cc{C}$ is.

\subsection{Closed 2-bmodel structure for $\pCJ$}\label{closed 2-model structure for CJ}

The aim of this subsection is to prove that given a closed 2-bmodel 2-category $\cc{C}$ and a cofinite and filtered poset $\ff{J}$ with a unique initial object, the 2-category $p\cc{H}om_p(\ff{J}^{op},\cc{C})$ (see \ref{ccHom}) is a closed 2-bmodel 2-category. The proof is inspired in the 1-dimensional case treated in \cite{EH}. For the 2-categorical setting, things become more complicated. So is that we were forced to work with pseudo-functors and pseudo-natural transformations instead of 2-functors and 2-natural transformations because of the non-strict commutativity of diagrams. One would think (and we did for a while) that 2-functors and pseudo-natural transformations should be enough but they are not. The reason for taking pseudo-functors evidences itself in the proof of axiom 2-M2 where $\ff{Z}$ turns out to be a pseudo-functor that is not necessarily a 2-functor even if all the others are.  

All along this subsection, $\ff{J}$ will be a cofinite and filtered poset with a unique initial object $0$ and $\cc{C}$ will be a closed 2-bmodel 2-category. 
We comment that in \cite{EH} $\ff{J}$ is not supposed to have a unique initial object, which for us is an essential requirement, also in the 1-dimensional case.

\begin{sinnadastandard} {\bf Notation.} Since there are at most one morphism between any pair of objects of $\ff{J}$, we will write $\alpha_{k,l,j}$ instead of $\alpha_{k\leq l,l\leq j}$ (see \ref{pseudo-functor}). Also, we will use the subindex notation for the evaluation of 2-functors.
\end{sinnadastandard}

\begin{definition}\label{estructuraenCJ}
 
We define fibrations, cofibrations and weak equivalences in $\pCJ$ as follows:

\begin{itemize}
 \item A morphism $\ff{f}\in \pCJ$ is a cofibration if the morphism $\ff{f}_j$ is a cofibration in $\cc{C}\ \forall \ j\in \ff{J}$. We say ``pointwise cofibration''.
 \item A morphism $\ff{f}\in \pCJ$ is a weak equivalence if the morphism $\ff{f}_j$ is a weak equivalence in $\cc{C}\ \forall \ j\in \ff{J}$. We say ``pointwise weak equivalence''.
 \item A morphism $\ff{f}\in \pCJ$ is a fibration if it has the right lifting property with respect to all the morphisms that are both cofibrations and weak equivalences.
 \end{itemize}

\end{definition}

\begin{lemma}\label{lema0.2}
\comw{A}
\begin{enumerate}
 \item The 2-functor constant diagram from $\cc{C}$ to $\pCJ$ preserves cofibrations, fibrations and weak equivalences. 
 \item The pseudo-functor inverse bi-limit from $\pCJ$ to $\cc{C}$ preserves fibrations and morphisms that are both fibrations and weak equivalences.
\end{enumerate}

\end{lemma}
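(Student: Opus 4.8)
The plan is to realize both statements as instances of the lifting-transfer Lemma \ref{lemaadjuntos}, applied to the two bi-adjunctions that surround the constant-diagram 2-functor. Write $c:\cc{C}\mr{}\pCJ$ for the constant diagram, $\biLim{j\in \ff{J}}{(-)}$ for the inverse bi-limit, and $ev_0:\pCJ\mr{}\cc{C}$ for evaluation at the unique initial object $0$. The constant diagram is bi-left adjoint to the inverse bi-limit by the universal property of bi-limits (\ref{colimits}, \ref{pseudo-cone}), so $c\dashv_b \biLim{j\in \ff{J}}{(-)}$; these bi-limits are moreover computed pointwise in $\pCJ$ by \ref{pointwise en phomp}. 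Dually, since $0$ is initial in $\ff{J}$ it is terminal in $\ff{J}^{op}$, so the bi-colimit of a diagram over $\ff{J}^{op}$ is its value at $0$; thus $ev_0$ is, up to equivalence, the bi-colimit, and bi-colimit is bi-left adjoint to the constant diagram, giving $ev_0\dashv_b c$. This second adjunction, which relies crucially on the unique initial object, is exactly what forces the constant diagram to preserve fibrations.

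For part 1, preservation of cofibrations and of weak equivalences is immediate from \ref{estructuraenCJ}: since $(c\ff{f})_j=\ff{f}$ for every $j\in \ff{J}$, if $\ff{f}$ is a cofibration (resp.\ weak equivalence) in $\cc{C}$ then $c\ff{f}$ is a pointwise cofibration (resp.\ pointwise weak equivalence), i.e.\ a cofibration (resp.\ weak equivalence) in $\pCJ$. For fibrations, let $\ff{p}$ be a fibration in $\cc{C}$ and let $\ff{i}$ be any trivial cofibration in $\pCJ$; then $ev_0\ff{i}=\ff{i}_0$ is a trivial cofibration in $\cc{C}$, so the pair $(\ff{i}_0,\ff{p})$ has the lifting property by axiom 2-M1 in $\cc{C}$ (\ref{sobran axiomas}). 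Applying Lemma \ref{lemaadjuntos} to $ev_0\dashv_b c$ shows that $(\ff{i},c\ff{p})$ has the lifting property; as $\ff{i}$ ranges over all trivial cofibrations, \ref{estructuraenCJ} gives that $c\ff{p}$ is a fibration.

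Preservation of fibrations by the inverse bi-limit (the first half of part 2) is the mirror argument through $c\dashv_b\biLim{j\in \ff{J}}{(-)}$: for $\ff{p}$ a fibration in $\pCJ$ and $\ff{i}$ a trivial cofibration in $\cc{C}$, the image $c\ff{i}$ is a trivial cofibration in $\pCJ$ (by part 1), so $(c\ff{i},\ff{p})$ has the lifting property by the very definition of fibration in $\pCJ$; Lemma \ref{lemaadjuntos} then yields that $(\ff{i},\biLim{j\in \ff{J}}{\ff{p}_j})$ has the lifting property for every trivial cofibration $\ff{i}$, so axiom 2-M6a) in $\cc{C}$ makes $\biLim{j\in \ff{J}}{\ff{p}_j}$ a fibration.

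The hard part is the remaining half of part 2: that $\biLim{j\in \ff{J}}{(-)}$ sends a morphism $\ff{p}$ that is both a fibration and a weak equivalence in $\pCJ$ to a trivial fibration in $\cc{C}$. One cannot merely combine the two preceding items, since the inverse bi-limit does not preserve weak equivalences in general. Using Lemma \ref{lema2.2} in $\cc{C}$ (available because $\cc{C}$ is a closed 2-bmodel 2-category) together with Lemma \ref{lemaadjuntos} for $c\dashv_b \biLim{j\in \ff{J}}{(-)}$, the claim reduces to showing that a trivial fibration $\ff{p}$ in $\pCJ$ has the lifting property against every constant cofibration $c\ff{i}$, with $\ff{i}$ a cofibration of $\cc{C}$. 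The obstruction is that the fibrancy of $\ff{p}$ only supplies lifts against \emph{trivial} cofibrations, so this extra lifting has to be squeezed out of the weak-equivalence hypothesis, and the analogues of Lemma \ref{lema2.2} for $\pCJ$ are not yet at our disposal at this stage. I would prove it by a Reedy-type levelwise induction over the cofinite poset $\ff{J}$, well-founded because each object has only finitely many predecessors, with the base case at the unique initial object $0$: building a pointwise filler at $j$ from the fillers already chosen at the predecessors of $j$, one checks that the matching map determined at $j$ by the $\pCJ$-fibration $\ff{p}$ is a trivial fibration in $\cc{C}$, against which the cofibration $\ff{i}$ lifts; assembling these compatible pointwise fillers produces the required lift of $c\ff{i}$ against $\ff{p}$. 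This levelwise construction, rather than the formal adjunction bookkeeping, is where the real work lies.
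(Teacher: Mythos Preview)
Your argument is correct, and in part~1 (fibrations) it takes a genuinely different route from the paper. The paper constructs the filler by hand: it lifts at level~$0$ and then transports that single filler to every $j$ via $\ff{f}_j=\ff{f}_0\ff{X}_{0<j}$, checking the pseudo-naturality and modification conditions explicitly. You instead package the same idea as the bi-adjunction $ev_0\dashv_b c$ (available precisely because $0$ is terminal in $\ff{J}^{op}$) and invoke \ref{lemaadjuntos}. Both arguments hinge on the unique initial object, but yours is shorter and explains conceptually why the paper's explicit transport works.

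For part~2 the paper simply notes $c\dashv_b \biLim{j\in\ff{J}}{(-)}$ and invokes \ref{lemaadjuntos2} together with part~1. You reproduce this for fibrations, and for trivial fibrations you rightly flag a subtlety the paper glosses over: \ref{lemaadjuntos2} is stated for closed 2-bmodel 2-categories on both sides, but at this point $\pCJ$ is not yet known to be one; what is really needed is that a trivial fibration in $\pCJ$ lifts against cofibrations (here, constant ones), and that is not available from the definitions alone. Your proposed Reedy induction---showing the matching maps $\ff{q}_j$ are trivial fibrations in $\cc{C}$ and lifting level by level---is exactly the content of \ref{lema4.2}~($\Rightarrow$), which the paper proves later, independently of \ref{lema0.2}. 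So the paper's appeal to \ref{lemaadjuntos2} is a forward reference rather than a circularity; your sketch makes the actual logical dependency explicit.
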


\begin{proof}

\begin{enumerate}
 \item
It is clear that the 2-functor constant diagram preserves cofibrations and weak equivalences. We will check now that it also preserves fibrations:

Let $\ff{C}\mr{\ff{p}}\ff{D}\in \cc{C}$ be a fibration. We need to check that if we embed $\ff{p}$ in $\pCJ$, then it has the right lifting property with respect to all morphisms that are both cofibrations and weak equivalences. So let \mbox{$\ff{A}\mr{\ff{i}}\ff{X}\in \pCJ$} be a cofibration which is also a weak equivalence and suppose that we have the following situation:

$$\xymatrix@C=1.5pc@R=3pc{\ff{A} \ar[rr]^{\ff{a}} \ar[d]_{\ff{i}} &\ar@{}[d]|{\cong \; \Downarrow \; \gamma}& \ff{C} \ar[d]^{\ff{p}} \\
            \ff{X} \ar[rr]_{\ff{b}} && \ff{D}}$$
            
\noindent We are going to define the filler $(\ff{f},\lambda,\rho)$:

\noindent $\cc{C}$ is a closed 2-bmodel 2-category, so, since $\ff{p}$ is a fibration and $\ff{i}_0$ is both a cofibration and a weak equivalence, there exists a filler $(\ff{f}_0,\lambda_0,\rho_0)$ for the following diagram

\begin{equation}\label{sub0}
\xymatrix@C=1.5pc@R=3pc{\ff{A}_0 \ar[rr]^{\ff{a}_0} \ar[d]_{\ff{i}_0} &\ar@{}[d]|{\cong \; \Downarrow \; \gamma_0}& \ff{C} \ar[d]^{\ff{p}} \\
            \ff{X}_0 \ar[rr]_{\ff{b}_0} && \ff{D}} 
\end{equation}

For $j\neq 0\in \ff{J}$, take $\ff{f}_j=\ff{f}_0 \ff{X}_{0< j}$, $\ff{f}_{0< j}=id_{f_j}$, $\ff{f}_{id_j}=\ff{f}_j \alpha^\ff{X}_j$, 
\mbox{$\ff{f}_{k<j}=\vcenter{\xymatrix@C=-.2pc@R=1pc{&& \ff{f}_j \opdosuno{=} &&&& \\
\ff{f}_0 \deq &&& \ff{X}_{0<j} \opbb{\alpha^\ff{X}_{0,k,j}{}^{-1}} && \\
\ff{f}_0 && \ff{X}_{0<k} && \ff{X}_{k<j} \deq \\
& \; \ff{f}_k \cl{=} \; &&& \ff{X}_{k<j}}}$,}
\mbox{$\lambda_j= \vcenter{\xymatrix@C=-.2pc@R=1pc{&& \ff{a}_j \opunodos{\ff{a}_{0<j} \quad} \\
& \ff{a}_0 \op{\lambda_0} &&& \ff{A}_{0<j} \deq \\
\ff{f}_0 \deq && \ff{i}_0 \dl & \dc{\ff{i}_{0<j}^{-1}} & \ff{A}_{0<j} \dr \\
\ff{f}_0 && \ff{X}_{0<j} && \ff{i}_j \deq \\
& \ff{f}_j \cl{=} &&& \ff{i}_j }}$} and 
\mbox{$\rho_j=\vcenter{\xymatrix@C=-.2pc@R=1pc{\ff{p} \deq &&& \ff{f}_j \op{=} \\
\ff{p} && \ff{f}_0 && \ff{X}_{0<j} \deq \\
& \ff{b}_0 \cl{\rho_0} &&& \ff{X}_{0<j} \\
&& \ff{b}_j \clunodos{\; \ff{b}_{0<j}^{-1}} }}$.}
                                   
\noindent Let's check that $\ff{f}$ defined this way is a pseudo-natural transformation: PNO is straightforward and PN2 is vacuous since $\ff{J}$ doesn't have any non trivial 2-cells. For axiom PN1, we need to verify that $\forall \ k<l<j \in \ff{J}$ the following equality holds:

%A1

$$\vcenter{\xymatrix@C=-0pc{ id_{\C} \deq && id_{\C} \dl & \dc{\f_{l < j}} & \dr \f_j \\
				id_{\C} \dl & \dc{\f_{k < l}} & \dr \f_l && X_{l < j} \deq \\
				\f_k \deq && \X_{k < l} && \X_{l < j} \\
				\f_k &&& \X_{k < j} \cl{\alpha^\X_{k,l,j}}  }}
\vcenter{\xymatrix@C=-.5pc{ = }} 
\vcenter{\xymatrix@C=-0pc{ id_{\C} && id_{\C} && \f_j \deq \\
			      & id_{\C} \cl{=}  \dl & \dcr{\f_{k < j}} && \dr \f_j \\ 
			      & \f_k &&& \X_{k < j} }}$$

But, if $0<k<l<j$,

%A2

$$\vcenter{\xymatrix@C=-0pc{ id_{\C} \deq && id_{\C} \dl & \dc{\f_{l < j}} & \dr \f_j \\
				id_{\C} \dl & \dc{\f_{k < l}} & \dr \f_l && X_{l < j} \deq \\
				\f_k \deq && \X_{k < l} && \X_{l < j} \\
				\f_k &&& \X_{k < j} \cl{\alpha^\X_{k,l,j}}  }}
\vcenter{\xymatrix@C=-.5pc{ = }} 
\vcenter{\xymatrix@C=-0pc{ id_{\C} \deq && id_{\C} \dl & \dcr{=} && \dr \f_j \\
id_{\C} \deq && \f_0 \deq &&& \X_{0<j} \opb{\alpha^\X_{0,l,j}{}^{-1}}   \\
id_{\C} \deq && \f_0 && \X_{0<l} && \X_{l < j} \deq \\
id_{\C} \dl & \dcr{\f_{k<l}} && \dr \f_l \cl{=} &&& \X_{l < j} \deq \\
\f_k \deq &&& \X_{k<l} \ar@{-}[dr] & \dc{\alpha^\X_{k,l,j}} && \ar@{-}[dll] \X_{l < j}  \\
\f_k &&&& \X_{k<j}  }}
\vcenter{\xymatrix@C=-0pc{ = }}$$
\vspace{1ex}
$$\vcenter{\xymatrix@C=-0pc{ = }}
\vcenter{\xymatrix@C=-0pc{ id_{\C} \deq && id_{\C} \dl & \dcr{=} && \dr \f_j \\
id_{\C} \deq && \f_0 \deq &&& \X_{0<j}  \opb{\alpha^\X_{0,k,j}{}^{-1}} \\
id_{\C} \ar@{-}[drr] && \f_0 \dc{=}  &&  \ar@{-}[dll] \X_{0<k} && \X_{k<j} \deq \\
&& \f_k &&&& \X_{k<j}  }}
\vcenter{\xymatrix@C=-0pc{ =  }}
\vcenter{\xymatrix@C=-0pc{id_{\C} && id_{\C} && \f_j \deq \\
			      & id_{\C} \cl{=}  \dl & \dcr{\f_{k < j}} && \dr \f_j \\ 
			      & \f_k &&& \X_{k < j} }}$$

\noindent where the second equality is satisfied by inductive hypothesis and the third one is due to the fact that $\ff{X}$ is a pseudo-functor (cases where $k=0$ or there are equalities are straightforward).

To check that $\lambda$ is a modification, we need to verify that $\forall \ k<j \in \ff{J}$ the following equality holds: 

%A3

$$\vcenter{\xymatrix@C=-0pc{ & id_{\C} \dl & \dcr{\fa_{k<j}} && \dr \fa_j \\ 
& \fa_k \op{\lambda_k} &&& \A_{k<j} \deq \\
\f_k && \ii_k && \A_{k<j}    }}
\vcenter{\xymatrix@C=-0pc{ =  }}
\vcenter{\xymatrix@C=-0pc{  id_{\C} \deq  &&& \fa_j \op{\lambda_j} \\
id_{\C}   \dl & \dc{\f_{k<j}} & \dr \f_j && \ii_j \deq \\
\f_k \deq && \X_{k<j} \dl & \dc{\ii_{k<j}} & \dr \ii_j \\
\f_k && \ii_k && \A_{k<j}  }}$$

But

%A4 

$$\vcenter{\xymatrix@C=-.5pc{ & id_{\C} \dl & \dcr{\fa_{k<j}} && \dr \fa_j \\ 
& \fa_k \op{\lambda_k} &&& \A_{k<j} \deq \\
\f_k && \ii_k && \A_{k<j}    }}
\vcenter{\xymatrix@C=-.5pc{\quad = \quad }}
\vcenter{\xymatrix@C=-.5pc{ &&& id_{\C} \dl & \dcr{\fa_{k<j}} && \dr \fa_j \\
&&& \fa_k \ar@{-}[dll] \ar@{-}[dr] \ar@{}[d]|{\fa_{0<k}} &&& \A_{k<j} \deq \\
& \fa_0 \op{\lambda_0} &&& \A_{0<k} \deq && \A_{k<j} \deq \\
\f_0 \deq && \ii_0 \dl & \dc{\ii^{-1}_{0<k}} & \dr \A_{0<k} && \A_{k<j} \deq \\
\f_0 && \X_{0<k} && \ii_k \deq && \A_{k<j} \deq \\
& \f_k \cl{=} &&& \ii_k && \A_{k<j} }}
\vcenter{\xymatrix@C=-.5pc{\quad = \quad }}
\vcenter{\xymatrix@C=-.5pc{ & id_{\C} \dl && \dc{\fa_{0<j}} && \dr \fa_j \\
& \fa_0 \deq &&&& \A_{0<j} \opbb{\alpha^\A_{0,k,j}{}^{-1}} \\
& \fa_0 \op{\lambda_0} &&& \A_{0<k} \deq && \A_{k<j} \deq \\
\f_0 \deq && \ii_0 \dl & \dc{\ii^{-1}_{0<k}} & \dr \A_{0<k} && \A_{k<j} \deq \\
\f_0 && \X_{0<k} && \ii_k \deq && \A_{k<j} \deq \\
& \f_k \cl{=} &&& \ii_k && \A_{k<j} }}
\vcenter{\xymatrix@C=-.5pc{ =  }}$$

$$\vcenter{\xymatrix@C=-.5pc{ =  }}
\vcenter{\xymatrix@C=-.5pc{ & id_{\C} \dl && \dc{\fa_{0<j}} && \dr \fa_j \\
& \fa_0 \op{\lambda_0} &&&& \A_{0<j} \deq \\
\f_0 \deq && \ii_0 \deq &&& \A_{0<j} \opbb{\alpha^\A_{0,k,j}{}^{-1}} \\
\f_0 \deq && \ii_0 \dl & \dc{\ii^{-1}_{0<k}} & \dr \A_{0<k} && \A_{k<j} \deq \\
\f_0 && \X_{0<k} && \ii_k \deq && \A_{k<j} \deq \\
& \f_k \cl{=} &&& \ii_k && \A_{k<j} }}
\vcenter{\xymatrix@C=-.5pc{ \quad = \quad }}
\vcenter{\xymatrix@C=-.5pc{ && id_{\C} \dl && \dc{\fa_{0<j}} && \dr \fa_j \\
&& \fa_0 \ar@{-}[dll] \dc{\lambda_0} \ar@{-}[dr] &&&& \A_{0<j} \deq \\
\f_0 \deq &&& \ii_0 \dl & \dcr{\ii^{-1}_{0<j}} && \dr \A_{0<j} \\
\f_0 \deq &&& \X_{0<j} \opbb{\alpha^\X_{0,k,j}{}^{-1}} &&& \ii_j \deq \\
\f_0 \deq && \X_{0<k} \deq &&  \X_{k<j} \dl & \dc{\ii_{k<j}} & \dr \ii_j \\
\f_0  && \X_{0<k} && \ii_k \deq && \A_{k<j} \deq \\
& \f_k \cl{=} &&& \ii_k && \A_{k<j}  }}
\vcenter{\xymatrix@C=-.5pc{ =  }}$$

$$\vcenter{\xymatrix@C=-.5pc{ =  }}
\vcenter{\xymatrix@C=-.5pc{ & id_{\C} \dl & \dcr{\fa_{0<j}} && \dr \fa_j \\
& \fa_0 \op{\lambda_0} &&& \A_{0<j} \deq \\
\f_0 \deq && \ii_0 \dl & \dc{\ii^{-1}_{0<j}}   & \dr \A_{0<j} \\
\f_0 \dl & \dc{\f_{k<j}} & \dr \X_{0<j} && \ii_j \deq \\
\f_k \deq && \X_{k<j} \dl & \dc{\ii_{k<j}} & \dr \ii_j \\
\f_k && \ii_k && \A_{k<j} }}
\vcenter{\xymatrix@C=-.5pc{ \quad = \quad }}
\vcenter{\xymatrix@C=-.5pc{  id_{\C} \deq  &&& \fa_j \op{\lambda_j} \\
id_{\C}   \dl & \dc{\f_{k<j}} & \dr \f_j && \ii_j \deq \\
\f_k \deq && \X_{k<j} \dl & \dc{\ii_{k<j}} & \dr \ii_j \\
\f_k && \ii_k && \A_{k<j}  }}$$

\noindent where the second equality is due to the fact that $\ff{a}$ is a pseudo-natural transformation, the third one is due to elevators calculus, the fourth one is due to the fact that $\ff{i}$ is a pseudo-natural transformation, the fifth one is due to elevators calculus plus the definition of $\ff{f}_{k<j}$ and the last one is due to the definition of $\lambda_j$. Again, simpler cases are omitted.

To check that $\rho$ is a modification, we need to verify that $\forall \ k<j \in \ff{J}$ the following equality holds: 

%A5

$$\vcenter{\xymatrix@C=.5pc{ \p \deq &&& \f_j \op{\f\kj} \\
\p && \f_k && \X\kj \deq \\
& \fb_k \cl{\rho_k} &&& \X\kj }}
\vcenter{\xymatrix@C=.5pc{\quad = \quad }}
\vcenter{\xymatrix@C=.5pc{ \p && \f_j \\
& \fb_j \cl{\rho_j} \op{\fb\kj} \\
\fb_k && \X\kj }}$$

But 

%A6

$$\vcenter{\xymatrix@C=-0pc{ \p \deq &&& \f_j \op{\; \f\kj} \\
\p && \f_k && \X\kj \deq \\
& \fb_k \cl{\rho_k} &&& \X\kj }}
\vcenter{\xymatrix@C=-.5pc{ \quad =  \quad }}
\vcenter{\xymatrix@C=-0pc{ \p \deq &&& \f_j \ar@{-}[dl] \ar@{-}[drr] \ar@{}[dr]|{=} \\
\p \deq && \f_0 \deq &&& \X\oj \opbb{\alpha^\X_{0,k,j}{}^{-1}} \\
\p && \f_0 && \X\ok \deq && \X\kj \deq \\
& \fb_0 \cl{\rho_0} \ar@{-}[dr] & \ar@{}[d]|{\; \fb^{-1}\ok} && \X\ok \ar@{-}[dll] && \X\kj \deq \\
&& \fb_k &&&& \X\kj }}
\vcenter{\xymatrix@C=-.5pc{ =  }}$$

$$\vcenter{\xymatrix@C=-0pc{ =  }}
\vcenter{\xymatrix@C=-0pc{ \p \deq &&& \f_j \op{=} \\
\p && \f_0 && \X\oj \deq \\
& \fb_0 \deq \cl{\rho_0} &&& \X\oj \ar@{-}[dl] \ar@{}[d]|{\alpha^\X_{0,k,j}{}^{-1}} \ar@{-}[drr]  \\
& \fb_0 && \X\ok &&& \X\kj \deq \\
&& \fb_k \cl{\fb^{-1}\ok} &&&& \X\kj }}
\vcenter{\xymatrix@C=-0pc{ =  }}
\vcenter{\xymatrix@C=-0pc{ \p \deq &&& \f_j \op{=} \\
\p && \f_0 && \X\oj \deq \\
& \fb_0 \cl{\rho_0} \ar@{-}[dr] & \dc{\; \fb^{-1}\oj} && \ar@{-}[dll] \X\oj \\
&& \fb_j \ar@{-}[dl] \ar@{-}[drr] \dc{\; \fb\kj} \\
& \fb_k &&& \X\kj }}
\vcenter{\xymatrix@C=-0pc{ =  }}
\vcenter{\xymatrix@C=-0pc{ \p && \f_j \\
& \fb_j \cl{\rho_j} \op{\ \fb\kj} \\
\fb_k && \X\kj }}$$

\noindent where the second equality is due to elevators calculus and the third one is due to the fact that $\ff{b}$ is a pseudo-natural transformation. Simpler cases are omitted.

Finally, let's check that $(\ff{f},\lambda,\rho)$ is the filler that we were looking for:

%A7

$$\vcenter{\xymatrix@C=-.5pc{ \p \deq &&& \fa_j \op{\lambda_j} \\
\p && \f_j && \ii_j \deq \\
& \fb_j \clmediob{\rho_j} &&& \ii_j}}
\vcenter{\xymatrix@C=-.5pc{ =  }}
\vcenter{\xymatrix@C=-.5pc{ \p \deq &&&& \fa_j \opunodos{\; \ \! \fa\oj \quad} \\
\p \deq &&& \fa_0 \op{\lambda_0} &&& \A\oj \deq \\
\p \deq && \f_0 \deq && \ii_0 \dl & \dc{\ii^{-1}\oj} & \dr \A\oj \\
\p && \f_0 && \X\oj \deq && \ii_j \deq \\
& \fb_0 \clmediob{\rho_0} & && \X\oj && \ii_j \deq \\
&& \fb_j \clunodos{\ \ \fb^{-1}\oj} &&&& \ii_j }}
\vcenter{\xymatrix@C=-.5pc{ =  }}
\vcenter{\xymatrix@C=-.5pc{ \p \deq &&&& \fa_j \fa_j \opunodos{\; \ \! \fa\oj \quad} \\
\p \deq &&& \fa_0 \op{\lambda_0} &&& \A\oj \deq \\
\p && \f_0 && \ii_0 \deq && \A\oj \deq \\
& \fb_0 \clmediob{\rho_0} \deq & && \ii_0 \dl & \dc{\ii^{-1}\oj} & \dr \A\oj  \\
& \fb_0 & && \X\oj && \ii_j \deq \\
&& \fb_j \clunodos{\ \ \fb^{-1}\oj} &&&& \ii_j   }}
\vcenter{\xymatrix@C=-.5pc{ =  }}
\vcenter{\xymatrix@C=-.5pc{ \p \deq &&& \fa_j \opb{\fa\oj} \\
\p \dl & \dc{\gamma_0} & \dr \fa_0 && \A\oj \deq \\
\fb_0 \deq && \ii_0 \dl & \dc{\ii^{-1}\oj} & \dr \A\oj \\
\fb_0 && \X\oj && \ii_j \deq \\
& \fb_j \clb{\fb^{-1}\oj} &&& \ii_j   }}
\vcenter{\xymatrix@C=-.5pc{ =  }}
\vcenter{\xymatrix@C=-.5pc{ \p \dl & \dc{\gamma_j} & \dr \fa_j \\ \fb_j && \ii_j   }}$$

\noindent The second equality is due to elevators calculus, the third one is due to the fact that $(\ff{f}_0,\lambda_0,\rho_0)$ is a filler for diagram \eqref{sub0} and the last one is due to the fact that $\gamma$ is a modification. 

\item By \ref{equivalencia de adjuntos fiore}, it is straightforward that the 2-functor constant diagram is left bi-adjoint to the pseudo-functor inverse bi-limit. Then, by \ref{lemaadjuntos2} and the previous item, we have what we wanted.

\end{enumerate}
\end{proof}

The following characterization of fibrations, similar but stronger than pointwise fibrations, is key to manipulate fibrations and prove that \ref{estructuraenCJ} determines a closed 2-bmolel structure (Theorem \ref{CJdeclosed 2-modelos}).

\begin{lemma}\label{lema1.2}
A morphism $\ff{Y}\mr{\ff{p}}\ff{B}\in \pCJ$ is a fibration iff $\forall \ j\in \ff{J}$ the morphism $\ff{q}_j$ of the following diagram is a fibration in $\cc{C}$:

$$\xymatrix@C=3pc{\ff{Y}_j   \ar@/^4ex/[rrd]^{\ff{a}_{\ff{Y}}^j} \ar@{-->}[rd]|{\comw{M^M} \ff{q}_j \comw{M^M} } \ar@/_4ex/[rdd]_{\ff{p}_j} & \\
            \ar@{}[r]|{\cong \; \Downarrow \; \beta_1^j}  & \ff{P}_j \ar@{}[u]|{\cong \; \Uparrow \; \beta_0^j} \ar[r]^{\pi_0^j} \ar[d]_{\pi_1^j} \ar@{}[rd]^{bipb \quad\quad\quad\quad\quad\quad\quad\quad\quad\quad}_{\quad\quad\quad\quad\quad\quad\quad\quad\quad\quad \cong \; \Downarrow \; \alpha_j } & \biLim{k<j}{\ff{Y}_k} \ar[d]^{\biLim{k<j}{\ff{p}_k}}\\
            & \ff{B}_j \ar[r]_{\ff{a}_{\ff{B}}^j} & \biLim{k<j}{\ff{B}_k}}$$
            
\noindent where $\biLim{k<j}{\ff{p}_k}$ is induced by the pseudo-cone $\{\ff{p}_k \pi_k\}_{k<j},\ $ \mbox{$\Bigg\{ %A171
\vcenter{\xymatrix@C=-0.2pc@R=1pc{ \B\kl \dl & \dc{\p\kl} & \dr \p_l && \pi_l \deq \\   \p_k \deq && \Y\kl && \pi_l \\   \p_k &&& \pi_k \clb{\pi\kl \ } }}
\Bigg\}_{k<l<j}\cup \Bigg\{ %A172
\vcenter{\xymatrix@C=-0.2pc@R=1pc{ \dl & \B_{id_k} \dc{(\alpha_k\tB)\inv} & \drbis & \p_k \deq && \pi_k \deq \\ & id_{\B_k} \ardr && \p_k \dc{=} && \pi_k \ardl \\ & & \p_k && \pi_k }}
\Bigg\}_{k<j}$,} 
$\ff{a}_{\ff{Y}}^j$ is induced by the pseudo-cone $\{\ff{Y}_{k<j}\}_{k<j},\ \Bigg\{ %A173
\vcenter{\xymatrix@C=-0.2pc@R=1pc{ \Y\kl && \Y\lj \\ & \Y\kj \cl{\Y_{k,l,j}} }}
\Bigg\}_{k<l<j}\cup \Bigg\{ %A174
\vcenter{\xymatrix@C=-0.2pc@R=1pc{\dl & \Y_{id_k} \dc{(\alpha_k\tY)\inv} & \dr && \Y\kj \deq \\  & id_{\Y_k} &&& \Y\kj \\ &  && \Y\kj \cldosuno{=} }}
\Bigg\}_{k<j}$ , 
$\ff{a}_{\ff{B}}^j$ is induced by the pseudo-cone $\{\ff{B}_{k<j}\}_{k<j},\ \Bigg\{ %A175
\vcenter{\xymatrix@C=-0.2pc@R=1pc{ \B\kl && \B\lj \\ & \B\kj \cl{\B_{k,l,j}} }}
\Bigg\}_{k<l<j}\cup \Bigg\{ %A176
\vcenter{\xymatrix@C=-0.2pc@R=1pc{\dl & \B_{id_k} \dc{(\alpha_k\tB)\inv} & \dr && \B\kj \deq \\  & id_{\B_k} &&& \B\kj \\  & && \B\kj \cldosuno{=} }}
\Bigg\}_{k<j}$ 
and so we have:

\begin{enumerate}
 
 \item[a)] invertible 2-cells $\pi_k \biLim{k<j}{\ff{p}_k} \Mr{\mu_k} \ff{p}_k \pi_k \ \forall \ k<j$ such that %A8
 $$\vcenter{\xymatrix@C=-0pc{ \B\kl && \pi_l && \biLim{k<j}{\p_k} \deq \\
 & \pi_k \cl{\pi\kl} \dl & \dcr{\mu_k} && \dr \biLim{k<j}{\p_k} \\
 & \p_k &&& \pi_k }}
\vcenter{\xymatrix@C=-0pc{ \quad = \quad }}
\vcenter{\xymatrix@C=-0pc{ \B\kl \deq && \pi_l \dl & \dc{\mu_l} & \dr \biLim{k<j}{\p_k} \\
\B\kl \dl & \dc{\p\kl} & \dr \p_l && \pi_l \deq \\
\p_k \deq && \Y\kl && \pi_l \\
\p_k &&& \pi_k \cl{\pi\kl} }}
\vcenter{\xymatrix@C=-0pc{\quad \forall \ k<l<j}}  $$
 
  \item[b)] invertible 2-cells $\pi_k \ff{a}_{\ff{Y}}^j \Mr{\eta_k} \ff{Y}_{k<j} \ \forall \ k<j$ such that %A9
 $$\vcenter{\xymatrix@C=-0pc{ \Y\kl && \pi_l && \fa_{\Y}^j \deq \\
 & \pi_k \cl{\pi\kl}  &&& \fa_{\Y}^j \\
 && \Y\kj \clunodos{\eta_k} }}
\vcenter{\xymatrix@C=-0pc{ \quad = \quad }}
\vcenter{\xymatrix@C=-0pc{ \Y\kl \deq && \pi_l && \fa_{\Y}^j \\
\Y\kl &&& \Y\lj \cl{\eta_l} \\
&& \Y\kj \cldosuno{\alpha\tY_{k,l,j}} }}
\vcenter{\xymatrix@C=-0pc{\quad \forall \ k<l<j}}  $$
 
 \item[c)] invertible 2-cells $\pi_k \ff{a}_{\ff{B}}^j \Mr{\epsilon_k} \ff{B}_{k<j} \ \forall \ k<j$ such that %A10
 $$\vcenter{\xymatrix@C=-0pc{ \B\kl && \pi_l && \fa_{\B}^j \deq \\
 & \pi_k \cl{\pi\kl}  &&& \fa_{\B}^j \\
 && \B\kj \clunodos{\eps_k} }}
\vcenter{\xymatrix@C=-0pc{ \quad = \quad }}
\vcenter{\xymatrix@C=-0pc{ \B\kl \deq && \pi_l && \fa_{\B}^j \\
\B\kl &&& \B\lj \cl{\eps_l} \\
&& \B\kj \cldosuno{\alpha\tB_{k,l,j}} }}
\vcenter{\xymatrix@C=-0pc{\quad \forall \ k<l<j}}  $$
  
 \item[d)] %A11
 $$\vcenter{\xymatrix@C=-0pc{ \pi_k \deq && \biLim{k<j}{\p_k} \dl \dcr{\quad \alpha_j} && \dr \pi_0^j && \q_j \deq \\
 \pi_k \deq && \fa_{\B}^j \deq && \pi_1^j && \q_j \\
 \pi_k && \fa_{\B}^j &&& \p_j \cl{\beta_1^j}  }}
\vcenter{\xymatrix@C=-0pc{ \quad = \quad }}
\vcenter{\xymatrix@C=-0pc{ & \pi_k \dl & \dc{\mu_k} & \dr \biLim{k<j}{\p_k} && \pi_0^j && \q_j \\
& \p_k \deq && \pi_k &&& \fa_{\Y}^j \cl{\beta_0^j} \\
& \p_k \dl && \dc{\p\kj\inv} && \dr \Y\kj \cldosuno{\eta_k} \\
& \B\kj \op{\eps_k\inv} &&&& \p_j \deq \\
\pi_k && \fa_{\B}^j &&& \p_j}}
\vcenter{\xymatrix@C=-0pc{\forall \ k<j}}  $$

 \end{enumerate}
\end{lemma}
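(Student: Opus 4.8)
The plan is to prove both implications by reducing, stage by stage along $\ff{J}$, a lifting problem in the \mbox{2-category} $\pCJ$ to lifting problems in $\cc{C}$ against the comparison maps $\ff{q}_j$. Two facts make this feasible. First, since $\ff{J}$ is cofinite, each $j$ has only finitely many predecessors, so the matching bi-limit $\biLim{k<j}{\ff{Y}_k}$ (and likewise for $\ff{B}$ and for $\ff{p}$) is a finite bi-limit and exists by axiom 2-M0b, as does the bi-pullback $\ff{P}_j$; hence $\ff{q}_j$ together with the structural 2-cells $\mu_k,\eta_k,\epsilon_k,\alpha_j,\beta_0^j,\beta_1^j$ of the statement are well defined through the universal properties recalled in \ref{isodeplimexplicito}. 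Second, cofiniteness provides a well-founded rank on $\ff{J}$ (the number of predecessors), so one may build data over $\ff{J}$ by induction, having it already defined on the finite set $\{k : k<j\}$ when treating $j$.

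For the implication $(\Leftarrow)$, assume each $\ff{q}_j$ is a fibration in $\cc{C}$. By \ref{estructuraenCJ} it suffices to produce a filler for an arbitrary square with a pointwise trivial cofibration $\ff{i}:\ff{A}\to\ff{X}$ (i.e.\ a morphism that is pointwise both a cofibration and a weak equivalence) on the left and $\ff{p}$ on the right. I would construct the filler $(\ff{f},\lambda,\rho)$ pointwise by induction on the rank of $j$. At stage $j$ the already-constructed components $\{\ff{f}_k\}_{k<j}$ assemble, through the transition morphisms $\ff{X}_{k<j}$ and the modification data, into a morphism $\ff{X}_j\to\biLim{k<j}{\ff{Y}_k}$; together with $\ff{b}_j:\ff{X}_j\to\ff{B}_j$ and the relevant invertible 2-cells this determines, by the universal property of the bi-pullback $\ff{P}_j$, a morphism $\ff{X}_j\to\ff{P}_j$. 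Since $\ff{i}_j$ is a trivial cofibration in $\cc{C}$ and $\ff{q}_j$ is a fibration, lifting $\ff{i}_j$ against $\ff{q}_j$ yields $\ff{f}_j$ together with invertible 2-cells; one then defines $\lambda_j$, $\rho_j$ and the comparison 2-cells $\ff{f}_{k<j}$ exactly as in the constant-diagram computation of \ref{lema0.2}. The remaining work is to check that the $\ff{f}_j$ organize into a pseudo-natural transformation (axioms PN0--PN1, with PN2 vacuous since $\ff{J}$ has no nontrivial 2-cells) and that $\lambda,\rho$ are modifications; this is a lengthy but routine elevator computation of the same flavour as the one carried out in \ref{lema0.2}.

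For the implication $(\Rightarrow)$, assume $\ff{p}$ is a fibration and fix $j$; I must show $\ff{q}_j$ has the right lifting property against every trivial cofibration $\ff{i}:\ff{A}\to\ff{X}$ of $\cc{C}$. The idea is to manufacture from $\ff{i}$ and the index $j$ an auxiliary \emph{pointwise} trivial cofibration $\ff{U}\to\ff{V}$ in $\pCJ$ — a cell attachment based at $j$ — designed so that a square over $(\ff{U}\to\ff{V},\ff{p})$ is precisely the data of a square over $(\ff{i},\ff{q}_j)$, with fillers corresponding. Concretely one takes $\ff{V}$ to be $\ff{X}$ along the up-set $\{j'\ge j\}$ with identity transitions and forced to agree with the matching data elsewhere, $\ff{U}$ to coincide with $\ff{V}$ away from $j$ and to be $\ff{A}$ at $j$, and $\ff{U}\to\ff{V}$ to equal $\ff{i}$ at $j$ and identities otherwise; this is a pointwise trivial cofibration, so $\ff{p}$ lifts against it, and unwinding the bi-pullback description of $\ff{P}_j$ converts that lift into the desired filler for $\ff{q}_j$.

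The main obstacle will be the $(\Rightarrow)$ direction: making the auxiliary diagram $\ff{U}\to\ff{V}$ into a genuine object of $\pCJ$ requires choosing the structural constraint 2-cells of the pseudo-functors coherently — this is exactly the point, flagged in the subsection's preamble, where one is forced to allow pseudo-functors rather than 2-functors — and then checking that the translation between fillers for $\ff{p}$ and fillers for $\ff{q}_j$ respects all the invertible 2-cells $\mu_k,\eta_k,\epsilon_k,\alpha_j,\beta_0^j,\beta_1^j$ appearing in clauses (a)--(d). Both directions thus ultimately reduce to a package of elevator-calculus identities whose verification, though mechanical once the universal properties of \ref{isodeplimexplicito} are invoked, is the bulk of the labour.
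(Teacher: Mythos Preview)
Your strategy matches the paper's in both directions, and the $(\Leftarrow)$ argument you sketch is exactly what the paper does: induct on $j$, assemble the previously built $\ff{f}_k$ for $k<j$ into a map $\ff{c}_j:\ff{X}_j\to\ff{P}_j$ via the universal property of the bi-pullback, then lift $\ff{i}_j$ against $\ff{q}_j$ and read off $\ff{f}_j$, $\ff{f}_{k<j}$, $\lambda_j$, $\rho_j$.

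In the $(\Rightarrow)$ direction your overall idea --- manufacture an auxiliary pointwise trivial cofibration in $\pCJ$ whose lifting problem against $\ff{p}$ encodes the given one against $\ff{q}_j$ --- is also the paper's, but your concrete construction is on the wrong side of $j$. Placing $\ff{X}$ on the \emph{up}-set $\{j'\ge j\}$ cannot work: the datum $\ff{b}:\ff{X}\to\ff{P}_j$ gives you a map $\ff{X}\to\ff{B}_j$ and maps $\ff{X}\to\ff{Y}_k$ only for $k<j$ (through the bi-limit legs), while transitions in $\pCJ$ run from higher to lower indices, so there is no way to obtain $\ff{X}\to\ff{B}_{j'}$ or $\ff{X}\to\ff{Y}_{j'}$ for $j'>j$. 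The paper instead works on the \emph{down}-set: the source $\ff{K}$ has $\ff{K}_k=\ff{X}$ for $k<j$, $\ff{K}_j=\ff{A}$, and $\ff{K}_k=0$ otherwise; the target $\ff{L}$ has $\ff{L}_k=\ff{X}$ for $k\le j$ and $0$ otherwise; the map $\ff{i}'$ is $\ff{i}$ at $j$ and identities elsewhere. The test maps $\ff{a}':\ff{K}\to\ff{Y}$ and $\ff{b}':\ff{L}\to\ff{B}$ are then assembled from $\ff{a}$, the legs $\pi_k\pi_0^j\ff{b}$ for $k<j$, and the composites $\ff{B}_{k\le j}\pi_1^j\ff{b}$. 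A secondary correction: in this particular construction $\ff{K}$, $\ff{L}$ are strict $2$-functors and $\ff{i}'$ is $2$-natural; the pseudo-ness flagged in the preamble enters through $\ff{a}'$, $\ff{b}'$ here and, more essentially, in the factorisation object $\ff{Z}$ for axiom 2-M2, not in the shape of $\ff{K},\ff{L}$. With the down-set correction the remainder of your plan goes through as written.
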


\begin{proof}
$\Rightarrow)$ Since $\cc{C}$ is a closed 2-bmodel 2-category, it is enough to check that $\forall \ j\in \ff{J}$, $\ff{q}_j$ has the right lifting property with respect to all morphisms that are both cofibrations and weak equivalences. So, let $\ff{A}\mr{\ff{i}}\ff{X}\in \cc{C}$ be a cofibration which is also a weak equivalence and suppose that we have the following situation:

$$\xymatrix@C=1.5pc@R=3pc{\ff{A} \ar[rr]^{\ff{a}} \ar[d]_{\ff{i}} &\ar@{}[d]|{\cong \; \Downarrow \; \gamma}& \ff{Y}_j \ar[d]^{\ff{q}_j} \\
            \ff{X} \ar[rr]_{\ff{b}} && \ff{P}_j}$$
            
\noindent Let's define $\ff{K}$, $\ff{L}$, $\ff{K}\mr{\ff{i}'}\ff{L}$, $\ff{K}\mr{\ff{a}'}\ff{Y}$ and $\ff{L}\mr{\ff{b}'}\ff{B}$ in $\pCJ$ by 
                        $\ff{K}_k=\begin{cases}
                         \ff{X} &  \hbox{if } k<j \\
                         \ff{A} &  \hbox{if } k=j \\
                         0 &  \hbox{otherwise} 
                        \end{cases}$,
                        $\ff{K}_{id_k}=id_{\ff{K}_k}$,
                        $\vcenter{\xymatrix{\\ \\ \ff{K}_{k<l} = \\ \\ \\ }} 
                        \begin{cases}
                                       id_{\ff{X}} &  \hbox{if } k<l<j \\ 
                                       \ff{i} &  \hbox{if } k<j \hbox{ and } l=j \\
                                      0\mr{}\ff{X} &  \hbox{if } k<j \hbox{ and } l\nleq j\\
                                      0\mr{}\ff{A} &  \hbox{if } k=j \hbox{ and } l\nleq j\\
                                      id_0 & \hbox{otherwise } \\
                                      \end{cases}$,
                         $\ff{L}_k=\begin{cases}
                              \ff{X} & \hbox{if } k\leq j\\
                              0 & \hbox{otherwise}
                             \end{cases}$,
                             $\ff{L}_{id_k}=id_{\ff{L}_k}$,
                        $\vcenter{\xymatrix{\\ \ff{L}_{k<l}= \\ \\ }} \begin{cases}
                                       id_{\ff{X}} &  \hbox{if } k<l\leq j \\ 
                                       0\mr{}\ff{X} &  \hbox{if } k\leq j \hbox{ and } l\nleq j\\
                                      id_0 & \hbox{otherwise } \\
                                      \end{cases}$,
                        $\ff{i}'_k=\begin{cases}
                               id_{\ff{X}} & \hbox{if } k<j \\
                               \ff{i} & \hbox{if } k=j \\
                               id_0 & \hbox{otherwise}
                               \end{cases}$,
                        $\ff{a}'_k=\begin{cases}
                                   \pi_k \pi_0^j \ff{b} & \hbox{if } k<j \\
                                   \ff{a} & \hbox{if } k=j \\
                                   0\mr{} \ff{Y}_k & \hbox{otherwise}
                                  \end{cases}$,
                        $\ff{a}'_{id_k}=(\alpha\tY_k)^{-1}\ff{a}'_k$,
                        %\vspace{2ex}
                        \mbox{$\vcenter{\xymatrix{\\ \ff{a}'_{k<l}= \\ \\ }} 
                        \begin{cases}
                                       \pi_{k<l}\pi_0^j \ff{b} &  \hbox{if } k<l<j \\ 
                                       \pi_k \pi_0^j\gamma \circ \pi_k (\beta_0^j)^{-1} \ff{a} \circ \eta_k^{-1} \ff{a} &  \hbox{if } k<j \hbox{ and } l=j\\
                                      id_{0\mr{}\ff{Y}_k} & \hbox{otherwise } \\
                                      \end{cases}$,}
                        $\ff{b}'_k=\begin{cases}
                                  \ff{B}_{k\leq j} \pi_1^j \ff{b} & \hbox{if } k\leq j \\
                                  0\mr{} \ff{B}_k & \hbox{otherwise}
                                 \end{cases}$,
                                 \vspace{2ex}
                        $\ff{b}'_{id_k}=(\alpha\tB_k)^{-1}\ff{b}'_k$ and
                        $\ff{b}'_{k<l}=\begin{cases}
                                       \alpha\tB_{k,l,j}\pi_1^j \ff{b} &  \hbox{if } k<l<j \\ 
                                       \ff{B}_{k<j} (\alpha\tB_j)^{-1} \pi_1^j \ff{b} &  \hbox{if } k<j \hbox{ and } l=j\\
                                      id_{0\mr{}\ff{B}_k} & \hbox{otherwise } \\
                                      \end{cases}$.
\vspace{2ex}
It is straightforward to check that $\ff{K}$ and $\ff{L}$ are 2-functors, $\ff{i}'$ is a 2-natural transformation and $\ff{a}'$, $\ff{b}'$ are pseudo-natural transformations.
                                 
\noindent Then, since $\ff{p}$ is a fibration and $\ff{i}'$ is both a cofibration and a weak equivalence, there exists a filler $(\ff{f},\lambda,\rho)$ for the following diagram

\begin{equation}\label{diagramaconlosprima}
\xymatrix@C=1.5pc@R=3pc{\ff{K} \ar[rr]^{\ff{a}'} \ar[d]_{\ff{i}'} &\ar@{}[d]|{\cong \; \Downarrow \; \tilde{\gamma}}& \ff{Y} \ar[d]^{\ff{p}} \\
            \ff{L} \ar[rr]_{\ff{b}'} && \ff{B}} 
\end{equation}

\noindent where $\tilde{\gamma}_k=\begin{cases}
                                   %A12
                                   \vcenter{\xymatrix@C=-0.2pc@R=0.5pc{\p_k \dl & \dc{\mu_k\inv} & \dr \pi_k && \pi_0^j \deq && \fb \deq \\
                                   \pi_k \deq && \biLim{k<j}{\p_k} \dl \dcr{\quad \alpha_j} && \dr \pi_0^j && \fb \deq \\
                                   \pi_k && \fa_{\B}^j && \pi_1^j \deq && \fb \deq \\
                                   & \B\kj \cl{\eps_k} &&& \pi_1^j && \fb }}
                                   & \mbox{ if } k<j \\
                                   \\%A13
                                   \vcenter{\xymatrix@C=0.1pc@R=0.5pc{ &&& \p_j \opdosuno{\! \! \! \! \! (\beta_1^j)\inv} &&& \fa \deq \\
                                   & \pi_1^j \deq &&& \q_j \dl & \dc{\gamma} & \dr \fa \\
                                   & \pi_1^j \op{=} &&& \fb \deq && \ii \deq \\
                                   id_{\B_j} \dcell{\alpha\tB_j} && \pi_1^j \deq && \fb \deq && \ii \deq \\
                                   \B_{id_j} && \pi_1^j && \fb && \ii }}
                                   & \mbox{ if } k=j \\
                                   id_{0\mr{}\ff{B}_k} & \mbox{ otherwise} 
                                  \end{cases}$.

Let's check that $(\ff{f}_j,\lambda_j,\rho')$ (where $\rho'$ is such that $\pi_1\rho'= %A14
\vcenter{\xymatrix@C=-0.2pc@R=0.5pc{& \pi_1^j && \q_j && \f_j \deq \\
& & \p_j \cl{\beta_1^j} &&& \f_j \\
& && \fb'_j \clunodos{\rho_j} \opdosdos{=} \\
\dl & \B_{id_j} \dc{(\alpha\tB_j)\inv} & \drbis & \pi_1^j \deq && \fb \deq \\
& id_{\B_j} && \pi_1^j && \fb}}$ 
and 
\mbox{$\pi_k\pi_0\rho'= %A15
\vcenter{\xymatrix@C=-0.2pc@R=0.5pc{\pi_k \deq && \pi_0^j && \q_j & \; \; & \f_j \deq \\
\pi_k &&& \fa_{\Y}^j \cl{\beta_0^j} &&& \f_j \deq \\
&& \Y\kj \cldosuno{\eta_k} &&&& \f_j \\
&& && \f_k \cldosdos{\f\kj} \opdosdos{\lambda_k\inv} \\
&& \pi_k && \pi_0^j && \fb}}$)} is the filler that we were looking for: To do that, it is enough to check that $\pi_k \pi_0^j (\rho' \ff{i} \circ \ff{q}_j \lambda_j)=\pi_k \pi_0^j \gamma\ \forall \ k<j$ and $\pi_1^j (\rho' \ff{i} \circ \ff{q}_j \lambda_j)=\pi_1^j \gamma$:  

By elevators calculus plus the fact that $\lambda$ is a modification, we have the following equality: 

%A16 

$$\vcenter{\xymatrix@C=-0pc{ \pi_k \deq && \pi_0^j \deq && \q_j \deq &&& \fa \op{\lambda_j} \\
\pi_k \deq && \pi_0^j && \q_j & \; \; & \f_j \deq && \ii \deq \\
\pi_k &&& \fa_{\Y}^j \cl{\beta_0^j} &&& \f_j \deq && \ii \deq  \\
&& \Y\kj \cldosuno{\eta_k} &&&& \f_j && \ii \deq \\
&& && \f_k \cldosdos{\f\kj} \opdosdos{\lambda_k\inv} &&&& \ii \deq \\
&& \pi_k && \pi_0^j && \fb && \ii}}
\vcenter{\xymatrix@C=-0pc{ \quad = \quad }}
\vcenter{\xymatrix@C=-0pc{ \pi_k \deq && \pi_0^j \deq && \q_j \dl & \dc{\gamma} & \dr \fa \\
\pi_k && \pi_0^j && \fb && \ii}}$$

And, by elevators calculus plus the fact that $(\ff{f},\lambda,\rho)$ is a filler for diagram  \eqref{diagramaconlosprima}, we have the following equality:

%A17

$$\vcenter{\xymatrix@C=-0pc{ \pi_1^j \deq && \q_j \deq &&& \fa \op{\lambda_j} \\
\pi_1^j && \q_j && \f_j \deq && \ii \deq \\
& \p_j \cl{\beta_1^j} &&& \f_j && \ii \deq \\
&& \fb'_j \clunodos{\rho_j} \opdosdos{=} && && \ii \deq \\
\B_{id_j} \dcellbymedio{(\alpha\tB_j)\inv} && \pi_1^j \deq && \fb \deq && \ii \deq \\
id_{\B_j} && \pi_1^j && \fb && \ii}}
\vcenter{\xymatrix@C=-0pc{ \quad = \quad }}
\vcenter{\xymatrix@C=-0pc{ \pi_1^j && \q_j &&& \fa \deq \\
& \p_j \cl{\beta_1^j} \ardl && \dc{\tilde{\gamma}_j} && \fa \ardr \\
\B_{id_j} \dcellbymedio{(\alpha\tB_j)\inv} && \pi_1^j \deq && \fb \deq && \ii \deq \\
id_{\B_j} && \pi_1^j && \fb && \ii}}
\vcenter{\xymatrix@C=-0pc{ \quad = \quad }}
\vcenter{\xymatrix@C=-0pc{ \pi_1^j \deq && \q_j \dl & \dc{\gamma} & \dr \fa \\
\pi_1^j && \fb && \ii}}$$

$\Leftarrow)$ Let $\ff{A}\mr{\ff{i}}\ff{X}\in \pCJ$ be a cofibration that is also a weak equivalence and suppose that we have the following situation: 

$$\xymatrix@C=1.5pc@R=3pc{\ff{A} \ar[rr]^{\ff{a}} \ar[d]_{\ff{i}} &\ar@{}[d]|{\cong \; \Downarrow \; \gamma}& \ff{Y} \ar[d]^{\ff{p}} \\
            \ff{X} \ar[rr]_{\ff{b}} && \ff{B}}$$
            
\noindent We are going to construct the filler $(\ff{f},\lambda,\rho)$ inductively:

\noindent $\ff{p}_0\cong \ff{q}_0$ and therefore is a fibration. So, since $\cc{C}$ is a closed 2-bmodel 2-category, there exists a filler $(\ff{f}_0,\lambda_0,\rho_0)$ for the following diagram

$$\xymatrix@C=1.5pc@R=3pc{\ff{A}_0 \ar[rr]^{\ff{a}_0} \ar[d]_{\ff{i}_0} &\ar@{}[d]|{\cong \; \Downarrow \; \gamma_0}& \ff{Y}_0 \ar[d]^{\ff{p}_0} \\
            \ff{X}_0 \ar[rr]_{\ff{b}_0} && \ff{B}_0}$$

\noindent Suppose that we have already constructed $(\ff{f}_k,\lambda_k,\rho_k)\ \forall k<j$. Then, since $\ff{q}_j$ is a fibration and $\ff{i}_j$ is both a cofibration and a weak equivalence, there exists a filler $(\ff{f}_j,\lambda_j,\tilde{\rho_j})$ for the following diagram

\begin{equation}\label{diagramacongammatilde}
\xymatrix@C=1.5pc@R=3pc{\ff{A}_j \ar[rr]^{\ff{a}_j} \ar[d]_{\ff{i}_j} &\ar@{}[d]|{\cong \; \Downarrow \; \tilde{\gamma}_j}& \ff{Y}_j \ar[d]^{\ff{q}_j} \\
            \ff{X}_j \ar[rr]_{\ff{c}_j} && \ff{P}_j} 
\end{equation}
                   
\noindent where $\ff{c}_j$ is given by diagram \eqref{diagramaC} and $\tilde{\gamma}_j$ is such that \mbox{$\pi_k \pi_0^j \tilde{\gamma}_j= %A18
\vcenter{\xymatrix@C=-0pc@R=0.5pc{ \pi_k \deq && \pi_0^j && \q_j & \; \; & \fa_j \deq \\
\pi_k &&& \fa_{\Y}^j \cl{\beta_0^j} &&& \fa_j \deq \\
&& \Y\kj \cldosuno{\eta_k} \dl && \dc{\fa\kj} && \dr \fa_j \\
&& \fa_k \opdosuno{\lambda_k} &&&& \A\kj \deq \\
\f_k \deq &&& \ii_k \dl & \dcr{\ii\kj\inv} && \dr \A\kj \\
\f_k \dl && \dc{\nu_k\inv} & \dr \X\kj &&& \ii_j \deq \\
\pi_k \deq &&& \fa^j_{\X,\varphi} \op{\! (\theta_0^j)\inv} &&& \ii_j \deq \\
\pi_k && \pi_0^j && \fc_j && \ii_j}}
$} $\quad$
and $\quad$
$\pi_1^j \tilde{\gamma}_j= %A19
\vcenter{\xymatrix@C=0.2pc@R=0.5pc{ \pi_1^j && \q_j && \fa_j \deq \\
& \p_j \cl{\beta_1^j} \dl & \dcr{\gamma_j} && \dr \fa_j \\
& \fb_j \opb{(\theta_1^j)\inv} &&& \ii_j \deq \\
\pi_1^j && \fc_j && \ii_j}}
$:

\begin{equation} \label{diagramaC}
\xymatrix@C=3pc{\ff{X}_j   \ar@/^4ex/[rrd]^{\ff{a}_{\ff{X},\ff{Y}}^j} \ar@{-->}[rd]|{\comw{M^M} \ff{c}_j \comw{M^M} } \ar@/_4ex/[rdd]_{\ff{b}_j} & \\
            \ar@{}[r]|{\cong \; \Downarrow \; \theta_1^j} & \ff{P}_j \ar@{}[u]|{\cong \; \Uparrow \; \theta_0^j} \ar[r]^{\pi_0^j} \ar[d]_{\pi_1^j} \ar@{}[rd]^{bipb \quad\quad\quad\quad\quad\quad\quad\quad\quad\quad}_{\quad\quad\quad\quad\quad\quad\quad\quad\quad\quad \cong \; \Downarrow \; \alpha_j }
            & \biLim{k<j}{\ff{Y}_k} \ar[d]^{\biLim{k<j}{\ff{p}_k}}\\
            & \ff{B}_j \ar[r]_{\ff{a}_{\ff{B}}^j} & \biLim{k<j}{\ff{B}_k}}
\end{equation}

\noindent where $\ff{a}_{\ff{X},\ff{Y}}^j$ is induced by the pseudo-cone $\{\ff{f}_k\ff{X}_{k<j}\}_{k<j},\ $ \mbox{$\Bigg\{ %A177
\vcenter{\xymatrix@C=-0.2pc@R=1pc{ \Y\kl \dl & \dc{\f\kl} & \dr \f_l && \X\lj \deq \\   \f_k \deq && \X\kj && \X\lj \\   \f_k &&& \X\kj \cl{\alpha\tX_{k,l,j}} }}
\Bigg\}_{k<l<j}\cup \Bigg\{ %A178
\vcenter{\xymatrix@C=-0.2pc@R=1pc{ \dl & \Y_{id_k} \dc{(\alpha\tY_k)\inv} & \drbis & \f_k \deq && \X\kj \deq \\  & id_{\Y_k} \ardr && \f_k \dc{=} && \X\kj \ardl \\ & & \f_k && \X\kj }}
\Bigg\}_{k<j}$} and so we have:

\begin{enumerate}
 
 \item[e)] invertible 2-cells $\pi_k \ff{a}_{\ff{X},\ff{Y}}^j \Mr{\nu_k} \ff{f}_k\ff{X}_{k<j} \ \forall \ k<j$ such that $ %A20
 \vcenter{\xymatrix@C=-0pc{ \Y\kl && \pi_l && \fa\sXY^j \deq \\
 & \pi_k \cl{\pi\kl} \dl & \dcr{\nu_k} && \dr \fa\sXY^j \\
 & \f_k &&& \X\kj}}
\vcenter{\xymatrix@C=-0pc{ \quad = \quad }}
\vcenter{\xymatrix@C=-0pc{ \Y\kl \deq && \pi_l \dl & \dc{\nu_l} & \dr \fa\sXY^j \\
\Y\kl \dl & \dc{\f\kl} & \dr \f_l && \X\lj \deq \\
\f_k \deq && \X\kl && \X\lj \\
\f_k &&& \X\kj \cl{\alpha\tX_{k,l,j}} }}
 \vcenter{\xymatrix@C=-0pc{ \ \forall \ k<l<j}}$
 
 \item[f)] $ %A21
 \vcenter{\xymatrix@C=-0pc{ \pi_k \deq && \biLim{k<j}{\p_k} \dl \dcr{\quad \alpha_j} && \dr \pi_0^j && \fc_j \deq \\
 \pi_k \deq && \fa_{\B}^j \deq && \pi_1^j && \fc_j \\
 \pi_k && \fa_{\B}^j &&& \fb_j \cl{\theta_1^j}  }}
\vcenter{\xymatrix@C=-0pc{ \quad = \quad }}
\vcenter{\xymatrix@C=-0pc{ \pi_k \dl & \dc{\mu_k} & \dr \biLim{k<j}{\p_k} && \pi_0^j && \fc_j \\
\p_k \deq && \pi_k \dl & \dcr{\nu_k} && \dr \fa\sXY^j \cl{\theta_0^j} \\
\p_k && \f_k &&& \X\kj \deq \\
& \fb_k \cl{\rho_k} \dl & \dcr{\fb\kj\inv} &&& \dr \X\kj \\
& \B\kj \op{\eps_k\inv} &&&& \fb_j \deq \\
\pi_k && \fa_{\B}^j &&& \fb_j}}
 \vcenter{\xymatrix@C=-0pc{ \ \forall \ k<j}}$
\end{enumerate}

Take $\ff{f}_{k<j}= %A22 
\vcenter{\xymatrix@C=-0pc@R=1pc{ && \Y\kj \opdosuno{\eta_k\inv} &&&& \f_j \deq \\
\pi_k \deq &&& \fa_{\Y}^j \opb{(\beta_0^j)\inv \ } &&& \f_j \deq \\
\pi_k \deq && \pi_0^j \deq && \q_j && \f_j \\
\pi_k \deq && \pi_0^j &&& \fc_j \cl{\tilde{\rho}_j} \\
\pi_k \dl & \dcr{\nu_k} && \dr \fa\sXY^j \clunodos{\theta_0^j} \\
\f_k &&& \X\kj}}
$, $\ff{f}_{id_j}= %A61 
\vcenter{\xymatrix@C=-0pc@R=1pc{ \dl & \Y_{id_j} \dc{(\alpha\tY_j)\inv} & \drmediobis & \f_j \deq \\
& id_{\Y_j} \dl & \dc{=} & \dr \f_j \\
& \f_j \deq && id_{\X_j} \dcellb{\alpha\tX_j} \\
& \f_j && \X_{id_j} }}
$ and \mbox{$\rho_j= %A28
\vcenter{\xymatrix@C=-0pc@R=1pc{ & \; \p_j \; \opbymedio{(\beta_1^j)\inv} &&& \f_j \deq \\
\pi_1^j \deq && \q_j && \f_j \\
\pi_1^j &&& \fc_j \cl{\tilde{\rho}_j} \\
&& \fb_j \cldosuno{\theta_1^j} }}
$.}

Now we are going to check that $\ff{f}$ constructed this way is a pseudo-natural transformation: PNO is satisfied by construction and PN2 is vacuous since there are no 2-cells in $\ff{J}$. To check axiom PN1, we need to check that the following equality holds $\forall \ k<l<j$:

%A23

$$\vcenter{\xymatrix@C=-0pc{ \Y\kl \deq && \Y\lj \dl & \dc{\f\lj} & \dr \f_j \\
\Y\kl \dl & \dc{\f\kl} & \dr \f_l && \X\lj \deq \\
\f_k \deq && \X\kl && \X\lj \\
\f_k &&& \X\kj \cl{\alpha\tX_{k,l,j}} }}
\vcenter{\xymatrix@C=-0pc{ \quad = \quad }}
\vcenter{\xymatrix@C=-0pc{ \Y\kl && \Y\lj && \f_j \deq \\
& \Y\kj \cl{\alpha\tY_{k,l,j}} \dl & \dcr{\f\kj} && \dr \f_j \\
& \f_k &&& \X\kj}}$$

\noindent But

%A24

$$\vcenter{\xymatrix@C=-0pc{ \Y\kl \deq && \Y\lj \dl & \dc{\f\lj} & \dr \f_j \\
\Y\kl \dl & \dc{\f\kl} & \dr \f_l && \X\lj \deq \\
\f_k \deq && \X\kl && \X\lj \\
\f_k &&& \X\kj \cl{\alpha\tX_{k,l,j}} }}
\vcenter{\xymatrix@C=-0pc{ \quad = \quad }}
\vcenter{\xymatrix@C=-0pc{ \Y\kl \deq &&&& \Y\lj \opdosuno{\eta_l\inv} &&&& \f_j \deq \\
\Y\kl \deq && \pi_l \deq &&& \fa_{\Y}^j \opb{(\beta_0^j)\inv} &&& \f_j \deq \\
\Y\kl \deq && \pi_l \deq && \pi_0^j \deq && \q_j && \f_j \\
\Y\kl \deq && \pi_l \deq && \pi_0^j &&& \fc_j \cl{\tilde{\rho}_j} \\
\Y\kl \deq && \pi_l \dl & \dcr{\nu_l} && \dr \fa\sXY^j \clunodos{\theta_0^j} \\
\Y\kl \dl & \dc{\f\kl} & \dr \f_l &&& \X\lj \deq \\
\f_k \deq && \X\kl &&& \X\lj \\
\f_k &&& \X\kj \clunodos{\alpha\tX_{k,l,j}}  }}
\vcenter{\xymatrix@C=-0pc{ \quad = \quad }}$$

$$\vcenter{\xymatrix@C=-0pc{ \quad = \quad }}
\vcenter{\xymatrix@C=-0pc{ \Y\kl \deq &&&& \Y\lj \opdosuno{\eta_l\inv} &&&& \f_j \deq \\
\Y\kl \deq && \pi_l \deq &&& \fa_{\Y}^j \opb{(\beta_0^j)\inv} &&& \f_j \deq \\
\Y\kl \deq && \pi_l \deq && \pi_0^j \deq && \q_j && \f_j \\
\Y\kl \deq && \pi_l \deq && \pi_0^j &&& \fc_j \cl{\tilde{\rho}_j} \\
\Y\kl && \pi_l &&& \fa\sXY^j \clunodos{\theta_0^j} \deq \\ 
& \pi_k \cl{\pi\kl} \dl && \dc{\nu_k} && \dr \fa\sXY^j \\
& \f_k &&&& \X\kj}}
\vcenter{\xymatrix@C=-0pc{ \quad = \quad }}
\vcenter{\xymatrix@C=-0pc{ \Y\kl &&& \Y\lj &&& \f_j \deq \\
&& \Y\kj \cldosuno{\alpha\tY_{k,l,j}} \opdosuno{\eta_k\inv} &&&& \f_j \deq \\
\pi_k \deq &&& \fa_{\Y}^j \opb{(\beta_0^j)\inv} &&& \f_j \deq  \\
\pi_k \deq && \pi_0^j \deq && \q_j && \f_j \\
\pi_k \deq && \pi_0^j &&& \fc_j \cl{\tilde{\rho}_j} \\
\pi_k \dl && \dc{\nu_k} && \dr \fa\sXY^j \cldosuno{\theta_0^j} \\
\f_k &&&& \X\kj}}
\vcenter{\xymatrix@C=-0pc{ \quad = \quad }}$$
\vspace{1ex}
$$\vcenter{\xymatrix@C=-0pc{ \quad = \quad }}
\vcenter{\xymatrix@C=-0pc{ \Y\kl && \Y\lj && \f_j \deq \\
& \Y\kj \cl{\alpha\tY_{k,l,j}} \dl & \dcr{\f\kj} && \dr \f_j \\
& \f_k &&& \X\kj}}$$

\noindent where the second equality is due to $e)$ and the third one is due to elevators calculus plus $b)$.

To check that $\lambda$ is a modification, we need to verify that the following equality holds $\forall \ k<j$:

%A25

$$\vcenter{\xymatrix@C=-0pc{ & \Y\kj \dl & \dcr{\fa\kj} && \dr \fa_j \\
& \fa_k \op{\lambda_k} &&& \A\kj \deq \\
\f_k && \ii_k && \A\kj}}
\vcenter{\xymatrix@C=-0pc{ \quad = \quad }}
\vcenter{\xymatrix@C=-0pc{ \Y\kj \deq &&& \fa_j \op{\lambda_j} \\
\Y\kj \dl & \dc{\f\kj} & \dr \f_j && \ii_j \deq \\
\f_k \deq && \X\kj \dl & \dc{\ii\kj} & \dr \ii_j \\
\f_k && \ii_k && \A\kj}}$$

\noindent But

%A26

$$\vcenter{\xymatrix@C=-0pc{ \Y\kj \deq &&& \fa_j \op{\lambda_j} \\
\Y\kj \dl & \dc{\f\kj} & \dr \f_j && \ii_j \deq \\
\f_k \deq && \X\kj \dl & \dc{\ii\kj} & \dr \ii_j \\
\f_k && \ii_k && \A\kj}}
\vcenter{\xymatrix@C=-0pc{ \quad = \quad }}
\vcenter{\xymatrix@C=-0pc{ && \Y\kj \deq &&&&& \fa_j \op{\lambda_j} \\
&& \Y\kj \opdosuno{\eta_k\inv} &&&& \f_j \deq && \ii_j \deq \\
\pi_k \deq &&& \fa_{\Y}^j \opb{(\beta_0^j)\inv} &&& \f_j \deq && \ii_j \deq \\
\pi_k \deq && \pi_0^j \deq && \q_j && \f_j && \ii_j \deq \\
\pi_k \deq && \pi_0^j &&& \fc_j \cl{\tilde{\rho}_j} &&& \ii_j \deq \\
\pi_k \dl & \dcr{\nu_k} && \dr \fa\sXY^j \clunodos{\theta_0^j} &&&&& \ii_j \deq \\
\f_k \deq &&& \X\kj \dl && \dcr{\ii\kj} &&& \dr \ii_j \\
\f_k &&& \ii_k &&&&& \A\kj }}
\vcenter{\xymatrix@C=-0pc{ \quad = \quad }}$$

$$\vcenter{\xymatrix@C=-0pc{ \quad = \quad }}
\vcenter{\xymatrix@C=-0pc{ && \Y\kj \opdosuno{\eta_k\inv} &&&& \fa_j \deq \\
\pi_k \deq &&& \fa_{\Y}^j \opb{(\beta_0^j)\inv} &&& \fa_j \deq \\
\pi_k \deq && \pi_0^j \deq && \q_j \dl & \dc{\tilde{\gamma}_j} & \dr \fa_j \\
\pi_k \deq && \pi_0^j && \fc_j && \ii_j \deq \\
\pi_k \dl & \dcr{\nu_k} && \dr \fa\sXY^j \cl{\theta_0^j} &&& \ii_j \deq \\
\f_k \deq &&& \X\kj \dl & \dcr{\ii\kj} && \dr \ii_j \\
\f_k &&& \ii_k &&& \A\kj}}
\vcenter{\xymatrix@C=-0pc{ \quad = \quad }}
\vcenter{\xymatrix@C=-0pc{ & \Y\kj \dl & \dcr{\fa\kj} && \dr \fa_j \\
& \fa_k \op{\lambda_k} &&& \A\kj \deq \\
\f_k && \ii_k && \A\kj}}$$

\noindent where the second equality is due to elevators calculus plus the fact that $(\ff{f}_j,\lambda_j,\tilde{\rho}_j)$ is a filler for diagram \eqref{diagramacongammatilde} and the last one is due to the definition of $\tilde{\gamma}_j$ plus elevators calculus.

To check that $\rho$ is a modification, we need to verify that the following equality holds $\forall \ k<j$:

%A64

$$\vcenter{\xymatrix@C=-0pc{ \B\kj \dl &\dc{\p\kj} & \dr \p_j && \f_j \deq \\
\p_k \deq && \Y\kj \dl & \dc{\f\kj} & \dr \f_j \\
\p_k && \f_k && \X\kj \deq \\
& \fb_k \cl{\rho_k} &&& \X\kj}}
\vcenter{\xymatrix@C=-0pc{ \quad = \quad }}
\vcenter{\xymatrix@C=-0pc{ \B\kj \deq && \rho_j && \f_j \\
\B\kj \dl & \dcr{\fb\kj} && \dr \fb_j \cl{\rho_j} \\
\fb_k &&& \X\kj}}$$

\noindent But

%A65

$$\vcenter{\xymatrix@C=-0pc{ \B\kj \dl &\dc{\p\kj} & \dr \p_j && \f_j \deq \\
\p_k \deq && \Y\kj \dl & \dc{\f\kj} & \dr \f_j \\
\p_k && \f_k && \X\kj \deq \\
& \fb_k \cl{\rho_k} &&& \X\kj}}
\vcenter{\xymatrix@C=-0pc{ \quad = \quad }}
\vcenter{\xymatrix@C=-0pc{ \B\kj \dl && \dc{\p\kj} && \dr \p_j &&&& \f_j \deq \\
\p_k \deq &&&& \Y\kj \opdosuno{\eta_k\inv} &&&& \f_j \deq \\
\p_k \deq && \pi_k \deq &&& \fa_{\Y}^j \op{\! \! (\beta_0^j)\inv} &&& \f_j \deq \\
\p_k \deq && \pi_k \deq && \pi_0^j \deq && \q_j && \f_j \\
\p_k \deq && \pi_k \deq && \pi_0^j &&& \fc_j \cl{\tilde{\rho}_j} \\
\p_k \deq && \pi_k \dl & \dcr{\nu_k} && \dr \A\sXY^j \clunodos{\theta_0^j} \\
\p_k && \f_k &&& \X\kj \deq \\
& \fb_k \cl{\rho_k} &&&& \X\kj }}
\vcenter{\xymatrix@C=-0pc{ \quad = \quad }}$$

$$\vcenter{\xymatrix@C=-0pc{ \quad = \quad }}
\vcenter{\xymatrix@C=-0pc{ \B\kj \dl && \dc{\p\kj} && \dr \p_j &&&& \f_j \deq \\
\p_k \deq &&&& \Y\kj \opdosuno{\eta_k\inv} &&&& \f_j \deq \\
\p_k \deq && \pi_k \deq &&& \fa_{\Y}^j \op{\! \! (\beta_0^j)\inv} &&& \f_j \deq \\
\p_k \deq && \pi_k \deq && \pi_0^j \deq && \q_j && \f_j \\
\p_k \dl & \dc{\mu_k\inv} & \dr \pi_k && \pi_0^j \deq &&& \fc_j \cl{\tilde{\rho}_j} \deq \\
\pi_k \deq && \biLim{k<j}{\p_k} \dl & \dc{\alpha_j} & \dr \pi_0^j &&& \fc_j \deq \\
\pi_k && \fa_{\B}^j && \pi_1^j &&& \fc_j \\
& \B\kj \cl{\eps_k} \dl && \dc{\fb\kj} && \dr \fb_j \clunodos{\theta_1^j} \\
& \fb_k &&&& \X\kj }}
\vcenter{\xymatrix@C=-0pc{ \quad = \quad }}
\vcenter{\xymatrix@C=-0pc{ \B\kj \deq && \rho_j && \f_j \\
\B\kj \dl & \dcr{\fb\kj} && \dr \fb_j \cl{\rho_j} \\
\fb_k &&& \X\kj}}$$

\noindent where the second equality is due to item $f)$ and the last one is due to item $d)$.

Finally, let's check that $(\ff{f},\lambda,\rho)$ constructed this way is the filler that we were looking for:

%A27

$$\vcenter{\xymatrix@C=-0pc{ \p_j \deq &&& \fa_j \op{\lambda_j} \\   \p_j && \f_j && \ii_j \deq \\   & \fb_j \cl{\rho_j} &&& \ii_j }}
\vcenter{\xymatrix@C=-0pc{ \quad = \quad }}
\vcenter{\xymatrix@C=-0pc{ & \p_j \deq &&&& \fa_j \op{\lambda_j} \\
& \p_j \opb{(\beta_1^j)\inv} &&& \f_j \deq && \ii_j \deq \\
\pi_1^j \deq && \q_j && \f_j && \ii_j \deq  \\
\pi_1^j &&& \fc_j \cl{\tilde{\rho}_j} &&& \ii_j \deq \\
& \fb_j \clunodos{\theta_1^j} &&&&& \ii_j}}
\vcenter{\xymatrix@C=-0pc{ \quad = \quad }}
\vcenter{\xymatrix@C=-0pc{ & \p_j \opb{(\beta_1^j)\inv} &&& \fa_j \deq \\    
\pi_1^j \deq && \q_j \dl & \dc{\tilde{\gamma}_j} & \dr \fa_j \\
\pi_q^j && \fc_j && \ii_j \deq \\
& \fb_j \cl{\theta_1^j} &&& \ii_j}}
\vcenter{\xymatrix@C=-0pc{ \quad = \quad }}
\vcenter{\xymatrix@C=-0pc{ \p_j \dl & \dc{\gamma_j} & \dr \fa_j \\   \fb_j && \ii_j}}$$

\noindent where the second equality is due to elevators calculus plus the fact that $(\ff{f}_j,\lambda_j,\tilde{\rho}_j)$ is a filler for diagram \eqref{diagramacongammatilde} and the last one is due to the definition of $\tilde{\gamma}_j$.
\end{proof}

\begin{lemma}\label{lema3.2}
If $\ff{Y}\mr{\ff{p}}\ff{B}\in \pCJ$ is a fibration, then $\ff{p}_j$ is a fibration in $\cc{C} \ \forall \ j\in\ff{J}$. 
\end{lemma}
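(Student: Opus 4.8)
The plan is to deduce the pointwise fibration property from the stronger characterization of fibrations just established in Lemma \ref{lema1.2}. The essential observation is that for a fixed $j \in \ff{J}$, the morphism $\ff{p}_j$ factors (up to invertible 2-cell) through the comparison morphism $\ff{q}_j$ of the bi-pullback diagram in \ref{lema1.2}. Indeed, the structural 2-cells $\beta_0^j$ and $\beta_1^j$ exhibit $\ff{p}_j \cong \pi_1^j \ff{q}_j$, where $\pi_1^j$ is the projection of the bi-pullback. So the strategy is: given that $\ff{p}$ is a fibration, first invoke Lemma \ref{lema1.2} to conclude that $\ff{q}_j$ is a fibration in $\cc{C}$; then argue that $\pi_1^j$ is a fibration; and finally use that fibrations are closed under composition (axiom 2-M3b from \ref{sobran axiomas}) together with closure under isomorphism (axiom 2-M7) to conclude that $\ff{p}_j$ itself is a fibration.

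First I would make the factorization precise. By \ref{lema1.2}, $\ff{p}$ being a fibration gives us, for each $j$, the bi-pullback square defining $\ff{P}_j$ together with $\ff{Y}_j \mr{\ff{q}_j} \ff{P}_j$ a fibration and invertible 2-cells $\beta_0^j$, $\beta_1^j$ satisfying $\ff{p}_j \cong \pi_1^j \ff{q}_j$ (this is exactly the content of $\beta_1^j$). The morphism $\pi_1^j$ is the projection $\ff{P}_j \mr{} \ff{B}_j$ of a bi-pullback of $\biLim{k<j}{\ff{p}_k}$ along $\ff{a}_{\ff{B}}^j$. The key step is to show $\pi_1^j$ is a fibration: this will follow from axiom 2-M3b (fibrations are closed under bi-pullbacks) provided $\biLim{k<j}{\ff{p}_k}$ is a fibration in $\cc{C}$. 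To establish the latter I would proceed by well-founded induction on $j$ using cofiniteness of $\ff{J}$: the set $\{k : k<j\}$ is finite, and by the inductive hypothesis each $\ff{p}_k$ for $k<j$ is a fibration in $\cc{C}$; then I would appeal to the fact that a bi-limit of fibrations indexed by a finite poset is again a fibration (this is a standard consequence of axiom 2-M3b, since such a bi-limit is built from bi-pullbacks and bi-products of the $\ff{p}_k$, all of which preserve fibrations). The base case $j=0$ is immediate since the indexing set $\{k<0\}$ is empty and $\ff{p}_0 \cong \ff{q}_0$ directly.

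Once $\biLim{k<j}{\ff{p}_k}$ is known to be a fibration, axiom 2-M3b gives that $\pi_1^j$ (its bi-pullback along $\ff{a}_{\ff{B}}^j$) is a fibration. Composing with the fibration $\ff{q}_j$ and using closure of fibrations under composition, $\pi_1^j \ff{q}_j$ is a fibration; then the invertible 2-cell $\ff{p}_j \cong \pi_1^j \ff{q}_j$ together with axiom 2-M7 yields that $\ff{p}_j$ is a fibration, completing the inductive step.

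The main obstacle I anticipate is making the inductive argument on $\biLim{k<j}{\ff{p}_k}$ fully rigorous: one must verify that the induced morphism between the bi-limits genuinely is assembled from bi-pullbacks, finite bi-products, and bi-cotensors (as in \ref{limites que alcanzan}) in a way that each constituent preserves the fibration property. The delicate point is that $\cc{C}$ only has \emph{bi}-limits, so the comparison morphism $\biLim{k<j}{\ff{p}_k}$ is defined only up to equivalence by its universal property, and one must check that the relevant closure axioms (2-M3b and 2-M4b) are stated in the bi-categorical form — which they are, as established in \ref{sobran axiomas}. I would organize this by reducing the finite bi-limit to iterated bi-pullbacks along the poset structure of $\{k<j\}$, so that only the binary closure properties of fibrations are needed at each stage, and the well-foundedness of the cofinite poset $\ff{J}$ guarantees the induction terminates.
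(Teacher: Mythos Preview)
Your factorization $\ff{p}_j \cong \pi_1^j \ff{q}_j$ and the plan to show each factor is a fibration is exactly right and matches the paper. The gap is the step where you conclude that $\biLim{k<j}{\ff{p}_k}$ is a fibration from the inductive hypothesis that each $\ff{p}_k$ is. This is \emph{not} a consequence of axiom 2-M3b: that axiom says the bi-pullback of a single fibration along an arbitrary map is again a fibration, but the induced map between two bi-limits is not of that form, and the assertion is false in general. Already for the cospan $\{0<1,\,0<2\}$ (which arises as $\{k<3\}$ in the square poset) it fails: in chain complexes over a field with the projective model structure, take $\ff{Y}_0=\ff{Y}_1=\ff{Y}_2=k$ with identity transition maps, $\ff{B}_1=\ff{B}_2=k$, $\ff{B}_0=0$, and $\ff{p}_i$ the evident identity or zero maps. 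Each $\ff{p}_i$ is a surjection, hence a fibration, but the induced map on pullbacks is the diagonal $k\to k\oplus k$, which is not surjective.

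The paper avoids this by appealing to Lemma \ref{lema0.2} instead: the inverse-bi-limit pseudo-functor from $p\cc{H}om_p(\{k<j\}^{op},\cc{C})$ to $\cc{C}$ preserves fibrations, because it is right bi-adjoint to the constant-diagram 2-functor (which manifestly preserves the levelwise-defined trivial cofibrations). One applies this to the restriction of $\ff{p}$ to the sub-poset $\{k<j\}$, which is a fibration in that functor 2-category by Lemma \ref{lema1.2}: the maps $\ff{q}_k$ for $k<j$ depend only on $\{l:l<k\}\subset\{l:l<j\}$, so they are unchanged upon restriction. In other words, what is actually needed is the full Reedy-type condition on $\ff{p}$ supplied by Lemma \ref{lema1.2}, not merely the pointwise one from your inductive hypothesis --- and once one uses it, no induction is required at all.
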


\begin{proof}
$\ff{p}_0\cong \ff{q}_0$ which is a fibration by \ref{lema1.2}.

If $j\in \ff{J}$ is not the initial object, consider $\ff{p}$ as an object in \mbox{$p\cc{H}om_p(\left\{k\in \ff{J}\ |\ k<j\right\}^{op},\cc{C})$.} Since $\ff{p}\in \pCJ$ is a fibration, by \ref{lema1.2}, \mbox{$\ff{p}\in p\cc{H}om_p(\left\{k\in \ff{J}\ |\ k<j\right\}^{op},\cc{C})$} is a fibration and then, by \ref{lema0.2} $\biLim{k<j}{\ff{p}_k}\in \cc{C}$ is a fibration. Then, since $\cc{C}$ is a closed \mbox{2-bmodel} 2-category, $\pi_1^j$ is a fibration. We also know that $\ff{q}_j$ is a fibration by \ref{lema1.2}. Then, $\ff{p}_j \cong \pi_1^j \ff{q}_j\in \cc{C}$ is also a fibration.
\end{proof}

\begin{lemma}\label{lema4.2}
A morphism $\ff{Y}\mr{\ff{p}}\ff{B}\in \pCJ$ is both a fibration and a weak equivalence iff $\ff{q}_j$ is both a fibration and a weak equivalence in $\cc{C} \ \forall j\in \ff{J}$ where $\ff{q}_j$ is defined as in \ref{lema1.2}. 
\end{lemma}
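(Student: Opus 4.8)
A morphism $\ff{Y}\mr{\ff{p}}\ff{B}\in \pCJ$ is both a fibration and a weak equivalence iff $\ff{q}_j$ is both a fibration and a weak equivalence in $\cc{C}$ for all $j\in\ff{J}$, where $\ff{q}_j$ is the comparison map into the bi-pullback $\ff{P}_j$ as defined in Lemma~\ref{lema1.2}.

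Let me sketch my proof plan.

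The plan is to reduce everything to the already-established characterization of fibrations in Lemma~\ref{lema1.2} together with the ``two-out-of-three'' bookkeeping that relates $\ff{p}_j$, $\pi_1^j$ and $\ff{q}_j$ in the bi-pullback square. First I would record the structural facts I need about the bi-pullback diagram from Lemma~\ref{lema1.2}: $\ff{p}_j\cong\pi_1^j\,\ff{q}_j$ up to invertible $2$-cell (via $\beta_1^j$), and $\pi_1^j$ is the bi-pullback of $\biLim{k<j}{\ff{p}_k}$ along $\ff{a}_{\ff{B}}^j$. These let me transfer fibration-hood and weak-equivalence-hood between $\ff{p}_j$, $\ff{q}_j$ and $\pi_1^j$ using axioms 2-M4b, 2-M5 and 2-M7.

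For the forward direction, suppose $\ff{p}$ is both a fibration and a weak equivalence in $\pCJ$. By Lemma~\ref{lema1.2} each $\ff{q}_j$ is already a fibration, so I only need that each $\ff{q}_j$ is a weak equivalence. I would argue by induction on $j$ (using that $\ff{J}$ is cofinite and filtered, so $\{k : k<j\}$ is a finite down-set). At the initial object $0$ we have $\ff{p}_0\cong\ff{q}_0$, and $\ff{p}_0$ is a weak equivalence by the definition of weak equivalence in $\pCJ$ (pointwise), so $\ff{q}_0$ is a weak equivalence by 2-M7. For the inductive step, the hypothesis gives that $\ff{p}_k$ is a weak equivalence for all $k<j$ (again pointwise, since $\ff{p}$ is a pointwise weak equivalence), whence by the inductive reasoning $\biLim{k<j}{\ff{p}_k}$ is a weak equivalence: here I would invoke Lemma~\ref{lema0.2}(2) to see $\biLim{k<j}{\ff{p}_k}$ is a fibration (so that 2-M4b applies), and I would need the fact that the inverse bi-limit of fibrations that are pointwise weak equivalences is a weak equivalence. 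Since $\pi_1^j$ is the bi-pullback of $\biLim{k<j}{\ff{p}_k}$ (a fibration and a weak equivalence) along $\ff{a}_{\ff{B}}^j$, axiom 2-M4b yields that $\pi_1^j$ is a weak equivalence. Finally, from $\ff{p}_j\cong\pi_1^j\,\ff{q}_j$ with $\ff{p}_j$ and $\pi_1^j$ weak equivalences, axiom 2-M5 (two-out-of-three) forces $\ff{q}_j$ to be a weak equivalence.

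For the converse, suppose each $\ff{q}_j$ is both a fibration and a weak equivalence. Lemma~\ref{lema1.2} immediately gives that $\ff{p}$ is a fibration in $\pCJ$, so it remains to show $\ff{p}$ is a pointwise weak equivalence, i.e. each $\ff{p}_j$ is a weak equivalence in $\cc{C}$. I would again induct on $j$: at $0$, $\ff{p}_0\cong\ff{q}_0$ is a weak equivalence by 2-M7; for the step, the inductive hypothesis makes each $\ff{p}_k$ ($k<j$) a weak equivalence, so $\biLim{k<j}{\ff{p}_k}$ is a fibration (Lemma~\ref{lema0.2}(2)) and a weak equivalence, hence its bi-pullback $\pi_1^j$ is a weak equivalence by 2-M4b, and then $\ff{p}_j\cong\pi_1^j\,\ff{q}_j$ is a composite of two weak equivalences, which is a weak equivalence by 2-M5. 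The main obstacle I anticipate is the sublemma that an inverse bi-limit (over the finite poset $\{k<j\}$) of fibrations that are pointwise weak equivalences is itself a weak equivalence: Lemma~\ref{lema0.2}(2) as stated only asserts that inverse bi-limit preserves fibrations and morphisms that are both fibrations and weak equivalences, so I must either apply that statement directly (recognizing that, by the already-proved Lemma~\ref{lema1.2}/\ref{lema3.2} machinery, such a pointwise fibration-and-weak-equivalence is a fibration-and-weak-equivalence in the smaller functor $2$-category $p\cc{H}om_p(\{k<j\}^{op},\cc{C})$) or prove the preservation of weak equivalences by the inverse bi-limit as an auxiliary finite induction. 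I expect this to be exactly the point where the statement and the proof of Lemma~\ref{lema1.2} get reused in its strongest form, and care is needed to match the bi-pullback comparison maps $\ff{q}_k$ for $k<j$ with the weak-equivalence hypothesis before the induction can proceed.
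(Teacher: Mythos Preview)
Your proposal is correct, and your backward direction matches the paper's argument essentially verbatim: induct on $j$, restrict $\ff{p}$ to the sub-poset $\{k:k<j\}$ where it is a fibration (via Lemma~\ref{lema1.2}) and a pointwise weak equivalence (by inductive hypothesis), apply Lemma~\ref{lema0.2}(2) to conclude that $\biLim{k<j}{\ff{p}_k}$ is a trivial fibration, push this to $\pi_1^j$ by 2-M4b, and finish with 2-M5.

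In the forward direction you take a genuinely shorter route than the paper. The paper's argument sets up an induction on the hypothesis ``$\ff{q}_k$ is a weak equivalence for all $k<m$'' and then, to show that $\biLim{k<m}{\ff{p}_k}$ is a trivial fibration, verifies by hand that it has the right lifting property with respect to every cofibration in $\cc{C}$: this is a long explicit filler construction, building a pseudo-cone $\{\ff{f}_k\}_{k<m}$ level by level using that each $\ff{q}_k$ (for $k<m$) lifts against cofibrations. Your observation is that this hands-on work is unnecessary: since $\ff{p}$ is assumed to be a pointwise weak equivalence from the start, every $\ff{p}_k$ is already a weak equivalence without any induction, and the restriction $\ff{p}|_{\{k<j\}}$ is already a fibration in the smaller functor $2$-category (by Lemma~\ref{lema1.2}, the comparison maps $\ff{q}_k$ for $k<j$ coincide with those computed over $\ff{J}$). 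So Lemma~\ref{lema0.2}(2) applies directly, exactly as in the backward direction, and the final 2-M5 step just has its two-out-of-three roles swapped. In effect you are reusing the paper's own backward-direction argument for the forward direction as well; what this buys is that the long lifting computation disappears entirely. The ``main obstacle'' you flag is real, and your proposed resolution (restriction to $\{k<j\}$ and applying Lemma~\ref{lema0.2}(2) in its strong form) is precisely what the paper does in its $\Leftarrow$ argument, so it is already known to work.
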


\begin{proof}
$\Rightarrow)$ By \ref{lema1.2}, it only remains to check that $\ff{q}_j$ is a weak equivalence $\forall \ j\in \ff{J}$. We are going to prove this inductively:

$\ff{q}_0 \cong \ff{p}_0$ and therefore is a weak equivalence.

Suppose that $\ff{q}_k$ is a weak equivalence $\forall \ k<m$. Let's check that, in that case, $\biLim{k<m}{\ff{p}_k}$ is both a fibration and a weak equivalence: 

By \ref{lema2.2}, it is enough to check that it has the right lifting property with respect to all cofibrations. So let $\ff{A}\mr{\ff{i}}\ff{X} \in \cc{C}$ be a cofibration and suppose that we have the following situation:

$$\xymatrix@C=1.5pc@R=3pc{\ff{A} \ar[rr]^{\ff{a}} \ar[d]_{\ff{i}} &\ar@{}[d]|{\cong \; \Downarrow \; \gamma}& \biLim{k<m}{\ff{Y}_k} \ar[d]^{\biLim{k<m}{\ff{p}_k}} \\
            \ff{X} \ar[rr]_{\ff{b}} && \biLim{k<m}{\ff{B}_k}}$$
            
We are going to define a pseudo-cone $\left\{\ff{X}\mr{\ff{f}_k}\ff{Y}_k\right\}_{k<m},\ \left\{\ff{Y}_{k<j}\ff{f}_j\Mr{\ff{f}_{k<j}}\ff{f}_k\right\}_{k<j<m}$ and invertible morphisms of pseudo-cones $\left\{\pi_k \ff{a}\Mr{\tilde{\lambda_k}} \ff{f}_k \ff{i}\right\}_{k<m}$, $\left\{\ff{p}_k \ff{f}_k \Mr{\tilde{\rho_k}}\pi_k \ff{b} \right\}_{k<m}$ as follows:

For the initial object, use \ref{lema2.2} to construct a filler $(\ff{f}_0, \tilde{\lambda_0}, \tilde{\rho_0})$ for the following diagram:

$$\xymatrix@C=1.5pc@R=3pc{\ff{A} \ar[rr]^{\pi_0\ff{a}} \ar[d]_{\ff{i}} &\ar@{}[d]|{\cong \; \Downarrow \; \pi_0\gamma} & \ff{Y}_0 \ar[d]^{\ff{p}_0} \\
            \ff{X} \ar[rr]_{\pi_0\ff{b}} && \ff{B}_0}$$

If $j$ is not the initial object, suppose that we have already defined $\ff{f}_{k}$, $\tilde{\lambda_k}$, $\tilde{\rho_k}\ \forall \ k<j$ and consider the following diagram:

$$\xymatrix@C=3pc{\ff{X}  \ar@/^4ex/[rrd]^{\ff{b}_{\ff{X},\ff{Y}}^j} \ar@{-->}[rd]|{\comw{M^M} \ff{c}_j \comw{M^M} } \ar@/_4ex/[rdd]_{\pi_j \ff{b}}& \\
            \ar@{}[r]|{\cong \; \Downarrow \; \theta_1^j} & \ff{P}_j \ar@{}[u]|{\cong \; \Uparrow \; \theta_0^j} \ar[r]^{\pi_0^j} \ar[d]_{\pi_1^j} \ar[d]_{\pi_1^j} \ar@{}[rd]^{bipb \quad\quad\quad\quad\quad\quad\quad\quad\quad\quad}_{\quad\quad\quad\quad\quad\quad\quad\quad\quad\quad \cong \; \Downarrow \; \alpha_j } & \biLim{k<j}{\ff{Y}_k} \ar[d]^{\biLim{k<j}{\ff{p}_k}}\\
            & \ff{B}_j \ar[r]_{\ff{a}_{\ff{B}}^j} & \biLim{k<j}{\ff{B}_k}}$$

\noindent where $\biLim{k<j}{\ff{p}_k}$ and
% is induced by the pseudo-cone $\Bigg\{\ff{p}_k \pi_k\Bigg\}_{k<m},\  \Bigg\{ %A171
% \vcenter{\xymatrix@C=-0.2pc@R=0.5pc{ \B\kl \dl & \dc{\p\kl} & \dr \p_l && \pi_l \deq \\   \p_k \deq && \Y\kl && \pi_l \\   \p_k &&& \pi_k \cl{\pi\kl \ } }}
% \Bigg\}_{k<l<m}\cup \Bigg\{ %A172
% \vcenter{\xymatrix@C=-0.2pc@R=0.5pc{ \B_{id_k} \dcellb{\B_k\inv} && \p_k \deq && \pi_k \deq \\   id_{\B_k} \ardr && \p_k \dc{=} && \pi_k \ardl \\  & \p_k && \pi_k }}
% \Bigg\}_{k<m}$, 
$\ff{a}_{\ff{B}}^j$ 
% is induced by the pseudo-cone $\Bigg\{ %A179
% \vcenter{\xymatrix@C=-0.2pc@R=0.5pc{ \B\kl && \B_{l<m} \\ & \B_{k<m} \cl{\B_{k,l,m}} }}
% \Bigg\}_{k<m}$, $\Bigg\{ %A180
% \vcenter{\xymatrix@C=-0.2pc@R=0.5pc{\B_{id_k} \dcellb{\B_k\inv} && \B_{k<m} \deq \\   id_{\B_k} && \B_{k<m} \\   & \B_{k<m} \cl{=} }}
% \Bigg\}_{k<l<m}$ and so we have:
are defined as in \ref{lema1.2} and $\ff{b}_{\ff{X},\ff{Y}}^j$ is induced by the pseudo-cone $\left\{\ff{f}_k\right\}_{k<j}$, $\left\{\ff{f}_{k<l}\right\}_{k<l<j}$. Then we have invertible 2-cells $\pi_k \ff{b}_{\ff{X},\ff{Y}}^j \Mr{\nu_k} \ff{f}_k \ \forall \ k<j$ such that 

\begin{equation}\label{A30}
 %A30
 \vcenter{\xymatrix@C=-0pc{ \Y\kl && \pi_l && \fb\sXY^j \deq \\ & \pi_k \cl{\! \! \pi\kl} &&& \fb\sXY^j \\   && \f_k \clunodos{\nu_k} }}
\vcenter{\xymatrix@C=-0pc{ \quad = \quad }}
\vcenter{\xymatrix@C=-0pc{ \Y\kl \deq && \pi_l && \fb\sXY^j \\  \Y\kl &&& \f_l \cl{\nu_l} \\ && \f_k \cldosuno{\! \! \f\kl}  }}
 \vcenter{\xymatrix@C=-0pc{ \quad \forall \ k<l<j}}
\end{equation}
 
\noindent and we also have the following equality

\begin{equation}\label{A32}
 %A32
 \vcenter{\xymatrix@C=-0pc{ \pi_k \deq && \biLim{k<j}{\p_k} \dl & \dc{\! \! \! \! \! \alpha_j} & \dr \pi_0^j && \fc_j \deq \\   \pi_k && \fa_{\B}^j && \pi_1^j && \fc_j }}
 \vcenter{\xymatrix@C=-0pc{ \quad = \quad }}
 \vcenter{\xymatrix@C=-0pc{ && \pi_k \dl & \dc{\; \; \mu_k} & \dr \biLim{k<j}{\p_k} && \pi_0^j && \fc_j \\
 && \p_k \deq && \pi_k &&& \fb\sXY^j \cl{\theta_0^j} \\
 && \p_k \dl && \dc{\tilde{\rho}_k} && \dr \f_k \cldosuno{\nu_k} \\
 && \pi_k \opunodos{\pi\kj\inv} &&&& \fb \deq \\
 & \B\kj \op{\eps_k\inv} &&& \pi_1^j \dl & \dc{\! \! \! \! \! (\theta_1^j)\inv} & \dr \fb \\
 \pi_k && \fa_{\B}^j && \pi_1^j && \fc_j}}
 \vcenter{\xymatrix@C=-0pc{ \quad \forall \ k<j }}
\end{equation}

% \begin{enumerate}
%  
%  \item[a)] $\pi_k \biLim{k<m}{\ff{p}_k}=\ff{p}_k \pi_k \ \forall \ k<m$
%  
%  \item[b)] $ A29 \ \forall \ k<l<m$
%  
%  \item[c)] $\pi_k \ff{b}_{\ff{X},\ff{Y}}^m=\ff{f}_k \ \forall \ k<m$
%  
%  \item[d)] $ A30 \ \forall \ k<l<m$
%  
%  \item[e)] $\pi_k \ff{a}_{\ff{B}}^m=\ff{B}_{k<m} \ \forall \ k<m$
%  
%  \item[f)] $ A31 \ \forall \ k<l<m$
%  
%  \item[g)] $ A32 \ \forall \ k <m$
% \end{enumerate}

Then there exists a filler $(\ff{f}_j,\tilde{\lambda}_j,\rho'_j)$ for the following diagram

\begin{equation}\label{diagramaconbeta}
\xymatrix@C=1.5pc@R=3pc{\ff{A} \ar[rr]^{\pi_j \ff{a}} \ar[d]_{\ff{i}} &\ar@{}[d]|{\cong \; \Downarrow \; \tilde{\gamma}_j}& \ff{Y}_j \ar[d]^{\ff{q}_j} \\
            \ff{X} \ar[rr]_{\ff{c}_j} && \ff{P}_j} 
\end{equation}

\noindent where $\ff{q}_j$ corresponds to a diagram as the one in \ref{lema1.2}, $\beta_j$ is given by the formulas \mbox{$\pi^j_1 \tilde{\gamma}_j= %A33
\vcenter{\xymatrix@C=-0.2pc{\pi_1^j && \q_j && \pi_j \deq && \fa \deq \\
& \p_j \cl{\beta_1^j} \dl & \dcr{\mu_j\inv} && \dr \pi_j && \fa \deq \\
& \pi_j \deq &&& \biLim{k<m}{\p_k} \dl & \dc{\gamma} & \dr \fa \\
& \pi_j \dl & \dcr{\theta_1^j} && \dr \fb && \ii \deq \\
& \pi_1^j &&& \fc_j && \ii}}
$} 
$\quad$ and \mbox{$\quad$
$\pi_k \pi^j_0 \tilde{\gamma}_j= %A34
\vcenter{\xymatrix@C=-0.2pc{ \pi_k \deq && \pi_0^j && \q_j && \pi_j \deq && \fa \deq \\
\pi_k &&& \fa_{\Y}^j \cl{\beta_0^j} &&& \pi_j \deq && \fa \deq \\
&& \Y\kj \cldosuno{\eta_k} &&&& \pi_j && \fa \deq \\
&&&& \pi_k \cldosdos{\pi\kj} \dl && \dc{\tilde{\lambda}_k} && \dr \fa \\
&&&& \f_k \opdosuno{\nu_k\inv} &&&& \ii \deq \\
&& \pi_k \deq &&& \fb\sXY^j \opb{(\theta_0^j)\inv} &&& \ii \deq \\
&& \pi_k && \pi_0^j && \fc_j && \ii}}
\vcenter{\xymatrix@C=-0pc{ \ \forall \ k<j}}$.}
\vspace{2ex}
Take \mbox{$\tilde{\rho_j}= %A35
\vcenter{\xymatrix@C=.2pc{& \p_j \opb{(\beta_1^j)\inv} &&& \f_j \deq \\ \pi_1^j \deq && \q_j && \f_j \\ \pi_1^j \dl & \dcr{\theta_1^j} && \dr \fc_j \cl{\rho'_j} \\ \pi_j &&& \fb}}
$, 
$\ff{f}_{id_j}=(\alpha\tY_j)^{-1} \ff{f}_j$} $\quad$ and $\quad$ \mbox{$\ff{f}_{k<j}= %A36
\vcenter{\xymatrix@C=-0.2pc{\\ && \Y\kj \opdosuno{\eta_k\inv} &&&& \f_j \deq \\
\pi_k \deq &&& \fa_{\Y}^j \opb{(\beta_0^j)\inv} &&& \f_j \deq \\
\pi_k \deq && \pi_0^j \deq && \q_j && \f_j \\
\pi_k \deq && \pi_0^j &&& \fc_j \cl{\rho'_j} \\
\pi_k &&&& \fb\sXY^j \cldosuno{\theta_0^j} \\
&& \f_k \cldosdos{\nu_k} \\ }}
$.}

To verify that $\left\{\ff{X}\mr{\ff{f}_k}\ff{Y}_k\right\}_{k<m},\ \left\{\ff{Y}_{k<j}\ff{f}_j\Mr{\ff{f}_{k<j}}\ff{f}_k\right\}_{k<j<m}$ is a pseudo-cone, observe that axiom PC0 is satisfied by definition and axiom PC2 is vacuous because there are no \mbox{2-cells} in $\ff{J}$. So, we only need to check that axiom PC1 holds. To do that, we need to prove that the following equality holds $\forall \ k<l<j<m$:

%A37

$$\vcenter{\xymatrix@C=-0pc{\Y\kl \deq && \Y\lj && \f_j \\   \Y\kl &&& \f_l \cl{\f\lj} \\ && \f_k \cldosuno{\f\kl} }}
\vcenter{\xymatrix@C=-0pc{ \quad = \quad }}
\vcenter{\xymatrix@C=-0pc{ \Y\kl && \Y\lj && \f_j \deq \\  & \Y\kj \cl{\alpha\tY_{k,l,j}} &&& \f_j \\   && \f_k \clunodos{\f\kj} }}$$

\noindent But

%A38

$$\vcenter{\xymatrix@C=-0pc{\Y\kl \deq && \Y\lj && \f_j \\   \Y\kl &&& \f_l \cl{\f\lj} \\ && \f_k \cldosuno{\f\kl} }}
\vcenter{\xymatrix@C=-0pc{ \quad = \quad }}
\vcenter{\xymatrix@C=-0pc{\Y\kl \deq &&&& \Y\lj \opdosuno{\eta_l\inv} &&&& \f_j \deq \\
\Y\kl \deq && \pi_l \deq &&& \fa_{\Y}^j \opb{(\beta_0^j)\inv} &&& \f_j \deq \\
\Y\kl \deq && \pi_l \deq && \pi_0^j \deq && \q_j && \f_j \\
\Y\kl \deq && \pi_l \deq && \pi_0^j &&& \fc_j \cl{\rho'_j} \\
\Y\kl \deq && \pi_l &&&& \fb\sXY^j \cldosuno{\theta_0^j} \\
\Y\kl &&&& \f_l \cldosdos{\nu_l} \\
&& \f_k \cldosdos{\f\kl} }}
\vcenter{\xymatrix@C=-0pc{ \quad = \quad }}$$

$$\vcenter{\xymatrix@C=-0pc{ \quad = \quad }}
\vcenter{\xymatrix@C=-0pc{\Y\kl \deq &&&& \Y\lj \opdosuno{\eta_l\inv} &&&& \f_j \deq \\
\Y\kl \deq && \pi_l \deq &&& \fa_{\Y}^j \opb{(\beta_0^j)\inv} &&& \f_j \deq \\
\Y\kl \deq && \pi_l \deq && \pi_0^j \deq && \q_j && \f_j \\
\Y\kl \deq && \pi_l \deq && \pi_0^j &&& \fc_j \cl{\rho'_j} \\
\Y\kl && \pi_l &&&& \fb\sXY^j \cldosuno{\theta_0^j} \deq \\
& \pi_k \cl{\pi\kl} &&&&& \fb\sXY^j \\
& &&& \f_k \cltresdos{\nu_k} }}
\vcenter{\xymatrix@C=-0pc{ \quad = \quad }}
\vcenter{\xymatrix@C=-0pc{ \Y\kl && \Y\lj && \f_j \deq \\  & \Y\kj \cl{\alpha\tY_{k,l,j}} &&& \f_j \\   && \f_k \clunodos{\f\kj} }}$$

\noindent where the second equality is due to the definition of $\ff{b}^j_{\ff{X},\ff{Y}}$ and the last one to the definition $\ff{a}^j_{\ff{Y}}$.

To check that $\Bigg\{\pi_k \ff{a}\Mr{\tilde{\lambda_k}} \ff{f}_k \ff{i}\Bigg\}_{k<m}$ is a morphism of pseudo-cones, we need to verify that the following equality holds $\forall \ k<j<m$:

%A39

$$\vcenter{\xymatrix@C=-0pc{\Y\kj && \pi_j && \fa \deq \\   & \pi_k \cl{\pi\kj} \dl & \dcr{\tilde{\lambda}_k} && \dr \fa \\   & \f_k &&& \ii }}
\vcenter{\xymatrix@C=-0pc{ \quad = \quad }}
\vcenter{\xymatrix@C=-0pc{ \Y\kj \deq && \pi_j \dl & \dc{\tilde{\lambda}_j} & \dr \fa \\   \Y\kj && \f_j && \ii \deq \\   & \f_k \cl{\f\kj} &&& \ii  }}$$

\noindent But 

%A40

$$\vcenter{\xymatrix@C=-0pc{ \Y\kj \deq && \pi_j \dl & \dc{\tilde{\lambda}_j} & \dr \fa \\   \Y\kj && \f_j && \ii \deq \\   & \f_k \cl{\f\kj} &&& \ii  }}
\vcenter{\xymatrix@C=-0pc{ \quad = \quad }}
\vcenter{\xymatrix@C=-0pc{ && \Y\kj \deq &&&& \pi_j \dl & \dc{\tilde{\lambda}_j} & \dr \fa \\ 
&& \Y\kj \opdosuno{\eta_k\inv} &&&& \f_j \deq && \ii \deq \\
\pi_k \deq &&& \fa_{\Y}^j \opb{(\beta_0^j)\inv} &&& \f_j \deq && \ii \deq \\
\pi_k \deq && \pi_0^j \deq && \q_j && \f_j && \ii \deq \\
\pi_k \deq && \pi_0^j &&& \fc_j \cl{\rho'_j} &&& \ii \deq \\
\pi_k &&&& \fb\sXY^j \cldosuno{\theta_0^j} &&&& \ii \deq \\
&& \f_k \cldosdos{\nu_k} &&&&&& \ii}}
\vcenter{\xymatrix@C=-0pc{ \quad = \quad }}$$

$$\vcenter{\xymatrix@C=-0pc{ \quad = \quad }}
\vcenter{\xymatrix@C=-0pc{ && \Y\kj \opdosuno{\eta_k\inv} &&&& \pi_j \deq && \fa \deq \\
\pi_k \deq &&& \fa_{\Y} \opb{(\beta_0^j)\inv} &&& \pi_j \deq && \fa \deq  \\
\pi_k \deq && \pi_0^j \deq && \q_j \ardr && \pi_j \dc{\tilde{\gamma}_j} && \fa \ardl \\
\pi_k \deq && \pi_0^j &&& \fc_j && \ii \deq \\
\pi_k &&&& \fb\sXY^j \cldosuno{\theta_0^j} &&& \ii \deq \\
&& \f_k \cldosdos{\nu_k} &&&&& \ii}}
\vcenter{\xymatrix@C=-0pc{ \quad = \quad }}
\vcenter{\xymatrix@C=-0pc{\Y\kj && \pi_j && \fa \deq \\   & \pi_k \cl{\pi\kj} \dl & \dcr{\tilde{\lambda}_k} && \dr \fa \\   & \f_k &&& \ii }}$$

\noindent where the first equality is due to the definition of $\ff{f}_{k<j}$, the second one is due to elevators calculus plus the fact that $(\ff{f}_j,\tilde{\lambda}_j,\rho'_j)$ is a filler for diagram \eqref{diagramaconbeta} and the last one is due to the definition of $\tilde{\gamma}_j$.

To check that $\Bigg\{\ff{p}_k \ff{f}_k \Mr{\tilde{\rho_k}}\pi_k \ff{b} \Bigg\}_{\stackrel[\comw{a}]{}{k<m}}$ is a morphism of pseudo-cones, we need to verify that the following equality holds $\forall \ k<j<m$:

%A41

$$\vcenter{\xymatrix@C=-0pc{ \B\kj \dl & \dc{\p\kj} & \p_j \dr && \f_j \deq \\
\p_k \deq && \Y\kj && \f_j \\
\p_k \dl & \dcr{\tilde{\rho}_k} && \dr \f_k \clb{\f\kj} \\
\pi_k &&& \fb}}
\vcenter{\xymatrix@C=-0pc{ \quad = \quad }}
\vcenter{\xymatrix@C=-0pc{ \B\kj \deq && \p_j \dl & \dc{\tilde{\rho}_j} & \dr \f_j \\
\B\kj && \pi_j && \fb \deq \\
& \pi_k \clb{\pi\kj} &&& \fb}}$$

\noindent But

%A42

$$\vcenter{\xymatrix@C=-0pc{ \B\kj \dl & \dc{\p\kj} & \p_j \dr && \f_j \deq \\
\p_k \deq && \Y\kj && \f_j \\
\p_k \dl & \dcr{\tilde{\rho}_k} && \dr \f_k \cl{\f\kj} \\
\pi_k &&& \fb}}
\vcenter{\xymatrix@C=-0pc{ \quad = \quad }}
 \vcenter{\xymatrix@C=-0pc{\B\kj \dl && \dc{\p\kj} && \dr \p_j &&&& \f_j \deq \\
 \p_k \deq &&&& \Y\kj \opdosuno{\eta_k\inv} &&&& \f_j \deq \\
 \p_k \deq && \pi_k \deq &&& \fa_{\Y}^j \opb{(\beta_0^j)\inv} &&& \f_j \deq \\
 \p_k \deq && \pi_k \deq && \pi_0^j \deq && \q_j && \f_j \\
 \p_k \deq && \pi_k \deq && \pi_0^j &&& \fc_j \cl{\rho'_j} \\
 \p_k \deq && \pi_k &&&& \fb\sXY^j \cldosuno{\theta_0^j} \\
 \p_k \dl && \dc{\tilde{\rho}_k} && \dr \f_k \cldosdos{\nu_k} \\
 \pi_k &&&& \fb }}
\vcenter{\xymatrix@C=-0pc{ \quad = \quad }}$$

$$\vcenter{\xymatrix@C=-0pc{  = \quad }}
\vcenter{\xymatrix@C=-0pc{ \B\kj \dl && \dc{\p\kj} && \dr \p_j &&&& \f_j \deq \\
\p_k \deq &&&& \Y\kj \opdosuno{\eta_k\inv} &&&& \f_j \deq \\
\p_k \deq && \pi_k \deq &&& \fa_{\Y}^j \opb{(\beta_0^j)\inv} &&& \f_j \deq \\
\p_k \deq && \pi_k \deq && \pi_0^j \deq && \q_j && \f_j \\
\p_k \deq && \pi_k \deq && \pi_0^j &&& \fc_j \cl{\rho'_j} \\
\p_k \dl & \dc{\mu_k\inv} & \dr \pi_k &&& \fb\sXY^j \clunodos{\theta_0^j} \opunodos{\!\!\!\!\!\!\!\!\!\!\!\!(\theta_0^j)\inv} \\
\pi_k \deq && \biLim{k<j}{\p_k} \dl & \dc{\alpha_j} & \dr \pi_0^j &&& \fc_j \deq \\
\pi_k && \fa_{\B}^j && \pi_1^j \dl & \dcr{\theta_1^j} && \dr \fc_j \\
& \B\kj \cl{\eps_k} &&& \pi_j &&& \fb \deq \\
&& \pi_k \clunodos{\pi\kj} &&&&& \fb }}
\vcenter{\xymatrix@C=-0pc{  = \quad }}
\vcenter{\xymatrix@C=-0pc{ \B\kj \deq &&& \p_j \opbymedio{(\beta_1^j)\inv} &&& \f_j \deq \\
\B\kj \deq && \pi_1^j \deq && \q_j && \f_j \\
\B\kj \deq && \pi_1^j \dl & \dcr{\theta_1^j} && \dr \fc_j \cl{\rho'_j} \\
\B\kj && \pi_j &&& \fb \deq \\
& \pi_k \cl{\pi\kj} &&&& \fb}}
\vcenter{\xymatrix@C=-0pc{  = }}$$

$$\vcenter{\xymatrix@C=-0pc{ \quad = \quad }}
\vcenter{\xymatrix@C=-0pc{ \B\kj \deq && \p_j \dl & \dc{\tilde{\rho}_j} & \dr \f_j \\
\B\kj && \pi_j && \fb \deq \\
& \pi_k \cl{\pi\kj} &&& \fb}}$$

\noindent where the first equality is due to the definition of $\ff{f}_{k<j}$, the second one is due to \eqref{A32}, the third one is due to item $d)$ from \ref{lema1.2} and the last one is due to the definition of $\tilde{\rho}_j$.

Then, by the universal property of $\biLim{k<m}{\ff{Y}_k}$, there exist a morphism $\ff{X}\mr{\ff{f}}\biLim{k<m}{\ff{Y}_k}$ and invertible 2-cells $\pi_k \ff{f} \Mr{\delta_k} \ff{f}_k \ \forall \ k<m$ such that $ %A43
\vcenter{\xymatrix@C=-0pc{ \Y\kj && \pi_j && \f \deq \\   & \pi_k \cl{\pi\kj} &&& \f \\   && \f_k \clunodos{\delta_k} }}
\vcenter{\xymatrix@C=-0pc{ \quad = \quad }}
\vcenter{\xymatrix@C=-0pc{ \Y\kj \deq && \pi_j && \f \\   \Y\kj &&& \f_j \cl{\delta_j} \\    && \f_k \cldosuno{\f\kj}  }}
\forall \ k<j<m$; and there also exist invertible 2-cells $\ff{a}\Mr{\lambda}\ff{f}\ff{i}$, $\ff{b}\Mr{\rho}\biLim{k<m}{p_k} \ff{f}$ such that $\pi_k \lambda= %A47
\vcenter{\xymatrix@C=-0pc{ & \pi_k \dl & \dcr{\tilde{\lambda}_k} && \dr \fa \\  & \f_k \op{\delta_k\inv} &&& \ii \deq \\   \pi_k && \f && \ii }}
$ and \mbox{$\pi_k \rho= %A46
\vcenter{\xymatrix@C=-0pc{\pi_k \dl & \dc{\mu_k} & \dr \biLim{k<m}{\p_k} && \f \deq \\
\p_k \deq && \pi_k && \f \\
\p_k \dl & \dcr{\tilde{\rho}_k} && \dr \f_k \cl{\delta_k} \\
\pi_k &&& \fb}}
\ \forall \ k<m$.} To check that $(\ff{f},\lambda,\rho)$ is the filler that we were looking for, it is enough to check that the following equality holds $\forall \ k<m$:  

%A44

$$\vcenter{\xymatrix@C=-0pc{\pi_k \deq && \biLim{k<m}{\p_k} \deq &&& \fa \op{\lambda} \\
\pi_k \deq && \biLim{k<m}{\p_k} && \f && \ii \deq \\
\pi_k &&& \fb \cl{\rho} &&& \ii}}
\vcenter{\xymatrix@C=-0pc{ \quad = \quad }}
\vcenter{\xymatrix@C=-0pc{ \pi_k \deq && \biLim{k<m}{\p_k} \dl & \dc{\gamma} & \dr \fa \\   \pi_k && \fb && \ii }}$$

\vspace{5ex}

\noindent But

\vspace{5ex}

%A45

$$\vcenter{\xymatrix@C=-0pc{\pi_k \deq && \biLim{k<m}{\p_k} \deq &&& \fa \op{\lambda} \\
\pi_k \deq && \biLim{k<m}{\p_k} && \f && \ii \deq \\
\pi_k &&& \fb \cl{\rho} &&& \ii}}
\vcenter{\xymatrix@C=-0pc{ \quad = \quad }}
\vcenter{\xymatrix@C=-0pc{ \pi_k \deq && \biLim{k<m}{\p_k} \deq &&& \fa \op{\lambda} \\
\pi_k \dl & \dc{\mu_k} & \dr \biLim{k<m}{\p_k} && \f \deq && \ii \deq \\
\p_k \deq && \pi_k && \f && \ii \deq \\
\p_k \dl & \dcr{\tilde{\rho}_k} && \dr \f_k \cl{\delta_k} &&& \ii \deq \\
\pi_k &&& \fb &&& \ii}}
\vcenter{\xymatrix@C=-0pc{ \quad = \quad }}
\vcenter{\xymatrix@C=-0pc{ \pi_k \dl & \dc{\mu_k} & \dr \biLim{k<m}{\p_k} && \fa \deq \\
\p_k \deq && \pi_k \dl & \dc{\tilde{\lambda}_l} & \dr \fa \\
\p_k \dl & \dc{\tilde{\rho}_k} & \dr \f_k && \ii \deq \\
\pi_k && \fb && \ii}}
\vcenter{\xymatrix@C=-0pc{ =  }}$$

$$\vcenter{\xymatrix@C=-0pc{ \quad = \quad }}
\vcenter{\xymatrix@C=0.3pc{ & \pi_k \dl & \dcr{\mu_k} && \dr \biLim{k<m}{\p_k} && \fa \deq \\
& \p_k \deq &&& \pi_k \dl & \dc{\tilde{\lambda}_l} & \dr \fa \\
& \p_k \opb{(\beta_1^j)\inv} &&& \f_k \deq && \ii \deq \\
\pi_q^k \deq && \q_k && \f_k && \ii \deq \\
\pi_1^k \dl & \dcr{\theta_1^k} && \dr \fc_k \cl{\rho'_k} &&& \ii \deq \\
\pi_k &&& \fb &&& \ii}}
\vcenter{\xymatrix@C=-0pc{ \quad = \quad }}
\vcenter{\xymatrix@C=0.3pc{  & \pi_k \dl & \dcr{\mu_k} && \dr \biLim{k<m}{\p_k} && \fa \deq \\  
& \p_k \opb{(\beta_1^j)\inv} &&& \pi_k \deq && \fa \deq \\
\pi_1^k \deq && \q_k \ardr && \pi_k \dc{\tilde{\gamma}_k} && \fa \ardl \\
\pi_q^k &&& \fc_k && \ii \deq \\
& \fb \clunodos{\theta_1^k} &&&& \ii}}
\vcenter{\xymatrix@C=-0pc{  =  }}$$

$$\vcenter{\xymatrix@C=-0pc{ \quad = \quad }}
\vcenter{\xymatrix@C=-0pc{ \pi_k \deq && \biLim{k<m}{\p_k} \dl & \dc{\gamma} & \dr \fa \\   \pi_k && \fb && \ii }}$$

\noindent where the first equality is due to the definition of $\rho$, the second one is due to elevators calculus plus the definition of $\lambda$, the third one is due to the definition of $\tilde{\rho}_k$, the fourth one is due to elevators calculus plus the fact that $(\ff{f}_k,\tilde{\lambda}_k,\rho'_k)$ is a filler for diagram \eqref{diagramaconbeta} and the last one is due to the definition of $\tilde{\gamma}_k$.

Finally, since $\cc{C}$ is a closed 2-bmodel 2-category, $\ff{P}_j \mr{\pi_1^j} \ff{B}_j$ is both a fibration and a weak equivalence. Also, by definition of weak equivalences, $\ff{p}_j$ is a weak equivalence. So, $\ff{q}_j$ is also a weak equivalence by axiom 2-M5.

$\Leftarrow)$ By \ref{lema1.2}, it is clear that $\ff{p}$ is a fibration. Let's check inductively that $\ff{p}_j$ is a weak equivalence: If $j=0$, $\ff{p}_0\cong \ff{q}_0$ and so is a weak equivalence. Now suppose that $\ff{p}_k$ is a weak equivalence $\forall \ k<j$ and consider $\ff{p}$ as an object of \mbox{$p\cc{H}om_p(\left\{k\in \ff{J}\ |\ k<j\right\}^{op},\cc{C})$.} Since $\ff{p}\in \pCJ$ is a fibration, by \ref{lema1.2}, $\ff{p}\in p\cc{H}om_p(\left\{k\in \ff{J}\ |\ k<j\right\}^{op},\cc{C})$ is also a fibration. Plus, we know that $\ff{p}$ is a weak equivalence as an object of \mbox{$p\cc{H}om_p(\left\{k\in \ff{J}\ |\ k<j\right\}^{op},\cc{C})$} and then, by \ref{lema0.2} $\biLim{k<j}{\ff{p}_k}\in \cc{C}$ is both a fibration and a weak equivalence. Then, since $\cc{C}$ is a closed 2-bmodel 2-category, $\pi_1^j$ is a weak equivalence. We also know that $\ff{q}_j$ is a weak equivalence and so $\ff{p}_j\cong \pi_1^j \ff{q}_j\in \cc{C}$ is a weak equivalence as we wanted to prove.  
\end{proof} 

\begin{theorem}\label{CJdeclosed 2-modelos}
$\pCJ$ with the structure provided in \ref{estructuraenCJ} is a closed \mbox{2-bmodel} 2-category.
\end{theorem}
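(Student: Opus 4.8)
The plan is to verify the axioms of a closed 2-bmodel 2-category (Definition \ref{2-closed}) for $\pCJ$ one by one, leveraging the pointwise definitions of \ref{estructuraenCJ} and the three preparatory lemmas \ref{lema1.2}, \ref{lema3.2} and \ref{lema4.2}. By Corollary \ref{axiomasnecesariosysuficientes} and Proposition \ref{sobran axiomas}, it suffices to check axioms 2-M0b, 2-M2, 2-M5 and the three lifting-characterization axioms 2-M6a), 2-M6b), 2-M6c), since the remaining structural axioms (2-M1, 2-M3b, 2-M4b, 2-M7) follow formally from these. First I would dispose of the easy axioms: 2-M0b holds because bi-limits and bi-colimits in $p\cc{H}om_p(\ff{J}^{op},\cc{C})$ are computed pointwise (\ref{pointwise en phomp}, \ref{tensorptoapto}) and $\cc{C}$ satisfies 2-M0b; 2-M5 is immediate from the pointwise definition of weak equivalences and the fact that $\cc{C}$ satisfies 2-M5 at each $j\in \ff{J}$.

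The substance of the proof is the factorization axiom 2-M2 and axiom 2-M6a). For 2-M6a), one direction is the very definition of fibration in \ref{estructuraenCJ}; the converse (that a morphism with the right lifting property against trivial cofibrations is a fibration) is again definitional, so 2-M6a) holds trivially by construction. The real work is 2-M2: given $\ff{f}\in \pCJ$, I would build the factorization $\ff{f}\cong \ff{p}\ff{i}$ inductively over the cofinite poset $\ff{J}$, exactly as in the $\Leftarrow$ direction of the proof of \ref{lema1.2}. At the initial object $0$ one factors $\ff{f}_0$ in $\cc{C}$ using that $\cc{C}$ satisfies 2-M2. At a non-initial $j$, having factored all $\ff{f}_k$ for $k<j$, one forms the bi-pullback $\ff{P}_j$ of $\biLim{k<j}{\ff{p}_k}$ along $\ff{a}^j_{\ff{B}}$, factors the induced comparison map $\ff{X}_j \to \ff{P}_j$ in $\cc{C}$, and assembles these into $\ff{i}$ and $\ff{p}$. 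The characterization \ref{lema1.2} (that $\ff{p}$ is a fibration iff each $\ff{q}_j$ is a fibration in $\cc{C}$) is precisely what guarantees the resulting $\ff{p}$ is a fibration, and \ref{lema4.2} handles the case where one wants $\ff{p}$ to additionally be a weak equivalence. The cofiniteness of $\ff{J}$ and the unique initial object are essential for the induction to be well-founded and for the initial step to make sense.

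The key subtlety, and the main obstacle, is that the factorizing object $\ff{Z}$ (the middle term of the factorization) is genuinely only a \emph{pseudo}-functor, not a 2-functor, even when $\ff{f}$ is a morphism of 2-functors: the structural 2-cells $\ff{f}_{k<j}$ built from the bi-pullback comparisons and the pseudo-cone data do not compose strictly. This is exactly the phenomenon flagged in the introduction to \ref{closed 2-model structure for CJ}, and it is why the ambient 2-category must be $p\cc{H}om_p(\ff{J}^{op},\cc{C})$ rather than $\cc{H}om_p$ or $\cc{H}om$. Verifying that the assembled data $(\ff{f}_j,\ff{f}_{k<j})$ satisfies the pseudo-naturality axioms PN0, PN1, PN2 and that $\lambda$, $\rho$ are genuine modifications is the bulk of the calculation; these are the elevator-calculus verifications carried out in detail in \ref{lema1.2} and \ref{lema4.2}, and for 2-M2 they transcribe with only notational changes (replacing the filler-against-a-trivial-cofibration setup by the factorization setup).

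For axioms 2-M6b) and 2-M6c) I would argue as follows. Axiom 2-M6b) asks that cofibrations are characterized by the left lifting property against trivial fibrations; by Lemma \ref{lema4.2} a morphism is both a fibration and a weak equivalence iff each $\ff{q}_j$ is, and by \ref{lema2.2} such morphisms are exactly those with the right lifting property against all cofibrations, so the pointwise characterization of cofibrations combined with the lifting arguments of \ref{lema1.2} (run against trivial fibrations instead of trivial cofibrations) yields the equivalence. Axiom 2-M6c) requires factoring a weak equivalence $\ff{f}$ as $\ff{u}\ff{v}$ with $\ff{u}$ right-lifting against all cofibrations and $\ff{v}$ left-lifting against all fibrations; here I would apply the 2-M2 factorization already established to write $\ff{f}\cong \ff{p}\ff{i}$ with $\ff{i}$ a trivial cofibration and $\ff{p}$ a fibration, then invoke 2-M5 to conclude $\ff{p}$ is a weak equivalence, and identify $\ff{u}=\ff{p}$, $\ff{v}=\ff{i}$ using \ref{lema2.2} in both $\pCJ$ and the characterization lemmas. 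Once all six axioms are checked, the theorem follows; the pointwise lemmas do essentially all the conceptual work, and the only genuinely new difficulty beyond them is bookkeeping the pseudo-functoriality of the middle object across the inductive construction.
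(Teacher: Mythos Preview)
Your proposal is essentially correct and tracks the paper's argument closely: the pointwise verification of 2-M0b and 2-M5, the tautological 2-M6a), and the inductive bi-pullback construction for 2-M2 (with the pseudo-functoriality of the middle term $\ff{Z}$ flagged correctly) are all as in the paper. Two points deserve sharpening.

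First, for the $\Leftarrow$ directions of 2-M6b) and 2-M6c) you need the dual of the $\ff{K},\ff{L}$ construction from the $\Rightarrow$ direction of Lemma \ref{lema1.2}: to show that a morphism $\ff{i}$ in $\pCJ$ with the relevant lifting property has each $\ff{i}_j$ a cofibration (resp.\ weak equivalence) in $\cc{C}$, you take a trivial fibration (resp.\ fibration) $\tilde{\ff{p}}$ in $\cc{C}$ and extend it to an object of $\pCJ$ concentrated on indices $\geq j$ that is still a trivial fibration (resp.\ fibration) there, so the assumed lifting property in $\pCJ$ yields a filler whose $j$-component is what you need. The paper does this explicitly with auxiliary 2-functors $\ff{E},\ff{D}$ and uses \ref{lema4.2} to verify the extended $\ff{p}$ is a trivial fibration. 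Your phrase ``lifting arguments of \ref{lema1.2}'' gestures at this but does not name the construction, and you omit the $\Leftarrow$ direction of 2-M6c) entirely (the paper handles it by showing each $\ff{u}_j$ and $\ff{v}_j$ is a weak equivalence via the same extension device).

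Second, your invocation of Lemma \ref{lema2.2} in $\pCJ$ is circular, since that lemma requires all three of 2-M6a)--c) to hold. The paper avoids this for 2-M6c)~$\Rightarrow$ by arguing directly: having factored $\ff{f}\cong\ff{p}\ff{i}$ with $\ff{i}$ a trivial cofibration and $\ff{p}$ a trivial fibration (via 2-M2 and 2-M5), one knows $\ff{i}$ has the left lifting property against all fibrations by the very definition of fibration in \ref{estructuraenCJ}, and $\ff{p}$ has the right lifting property against all cofibrations by the already-established $\Rightarrow$ direction of 2-M6b). No appeal to \ref{lema2.2} in $\pCJ$ is needed.
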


\begin{proof} $ $

%\begin{itemize}
{\bfseries Axiom 2-M0b}: It is clear from \ref{limites que alcanzan} since bi-limits, bi-colimits, bi-tensors and bi-cotensors in $\pCJ$ are computed pointwise and therefore exist (see \ref{pointwise en phomp} and \ref{tensorptoapto}). \cqd
%  Let $\cc{I}\mr{X}\pCJ$ be a 2-functor with $\cc{I}$ a finite 2-category. For each $j\in \ff{J}$, we consider the 2-functor $\cc{I}\mr{X^j}\cc{C}$ which is the composition of $X$ and evaluation in $j$. Since $\cc{C}$ is a closed 2-bmodel 2-category, there exists $\biLim{i\in \cc{I}}{X_i^j}\in \cc{C}$. It can be checked that $\vcenter{\xymatrix@R=-.5pc{\ff{J}\ar[r]^{\ff{L}} & \cc{C} \\ j\ar@{|->}[r] & \biLim{i\in \cc{I}}{X_i^j}}}$ is the limit in $\pCJ$ of the diagram given by $X$.  

\vspace{1ex}
 
{\bfseries Axiom 2-M2}: We will first do the case where $\ff{p}$ is a fibration and $\ff{i}$ is both a cofibration and a weak equivalence:
 
 It is enough to check that we can factor $\ff{p}_j \ff{i}_j \Mr{\gamma_j \cong} \ff{f}_j \ \forall \ j\in \ff{J}$, $\ff{i}=\left\{\ff{i}_j\right\}_{j\in \ff{J}}$ and $\ff{p}=\left\{\ff{p}_j\right\}_{j\in \ff{J}}$ are pseudo-natural transformations, $\ff{i}_j$ is both a cofibration and a weak equivalence $\forall \ j\in \ff{J}$, $\ff{q}_j$ (associated to $\ff{p}$ as in \ref{lema1.2}) is a fibration $\forall \ j\in \ff{J}$ and $\gamma=\left\{\gamma_j\right\}_{j\in \ff{J}}$ is a modification. We are going to do this by induction in $j$:
 
 If $j$ is the initial object of $\ff{J}$, since $\cc{C}$ is a closed 2-bmodel 2-category, we can factorize $\ff{f}_0$ as $\ff{p}_0 \ff{i}_0 \Mr{\gamma_0 \cong} \ff{f}_0$ where $\ff{i}_0$ is both a cofibration and a weak equivalence and $\ff{p}_0$ is a fibration. As $\ff{q}_0\cong \ff{p}_0$, $\ff{p}_0$ is a fibration. 
 
 Now, suppose that we have already defined $\ff{p}_k$, $\ff{i}_k$ and $\gamma_k \ \forall \ k<j$ and let's define $\ff{p}_j$, $\ff{i}_j$ and $\gamma_j$:
 
 Consider the following diagram in $\cc{C}$:
 
\begin{equation}\label{diagramaconQ}
 \xymatrix@C=3pc{\ff{X}_j  \ar[rr]^{\ff{a}_{\ff{X}}^j} \ar@{-->}[rd]^{\ff{a}_j} \ar@/_4ex/[rdd]_{\ff{f}_j} & &  \biLim{k<j}{\ff{X}_k} \ar[d]^{\biLim{k<j}{\ff{i}_k}}\\
            & \ff{Q}_j  \ar@{}[l]|{\cong \; \Downarrow \; \theta_1^j} \ar@{}[ur]|{\cong \; \Uparrow \; \theta_0^j} \ar[r]^{\ff{h}_0^j} \ar[d]_{\ff{h}_1^j} \ar[d]_{\pi_1^j} \ar@{}[rd]^{bipb \quad\quad\quad\quad\quad\quad\quad\quad\quad\quad}_{\quad\quad\quad\quad\quad\quad\quad\quad\quad\quad \cong \; \Downarrow \; \delta_j} 
            & \biLim{k<j}{\ff{Z}_k} \ar[d]^{\biLim{k<j}{\ff{p}_k}}\\
            & \ff{Y}_j \ar[r]_{\ff{a}_{\ff{Y}}^j} & \biLim{k<j}{\ff{Y}_k}}
\end{equation}
            
\noindent where $\biLim{k<j}{\ff{p}_k}$ is induced by the pseudo-cone $\left\{\ff{p}_k \pi_k\right\}_{k<j},\  \Bigg\{ %A181
\vcenter{\xymatrix@C=-0.2pc@R=1pc{ \Y\kl \dl & \dc{\p\kl} & \dr \p_l && \pi_l \deq \\   \p_k \deq && \Z\kl && \pi_l \\   \p_k &&& \pi_k \cl{\pi\kl \ } }}
\Bigg\}_{k<l<j}\cup \Bigg\{ %A182
\vcenter{\xymatrix@C=-0.2pc@R=1pc{ \dl & \Y_{id_k} \dc{(\alpha\tY_k)\inv} & \drbis & \p_k \deq && \pi_k \deq \\  & id_{\Y_k} \ardr && \p_k \dc{=} && \pi_k \ardl \\ && \p_k && \pi_k}}
\Bigg\}_{k<j}$, 
$\biLim{k<j}{\ff{i}_k}$ is induced by the pseudo-cone $\left\{\ff{i}_k \pi_k\right\}_{k<j},\ $  \mbox{$\Bigg\{ %A183
\vcenter{\xymatrix@C=-0.2pc@R=1pc{ \Z\kl \dl & \dc{\ii\kl} & \dr \ii_l && \pi_l \deq \\   \ii_k \deq && \X\kl && \pi_l \\   \ii_k &&& \pi_k \cl{\pi\kl \ } }}
\Bigg\}_{k<l<j}\cup \Bigg\{ %A184
\vcenter{\xymatrix@C=-0.2pc@R=1pc{ \dl & \Z_{id_k} \dc{(\alpha\tZ_k)\inv} & \drbis & \ii_k \deq && \pi_k \deq \\  & id_{\Z_k} \ardr && \ii_k \dc{=} && \pi_k \ardl \\  && \ii_k && \pi_k}}
\Bigg\}_{k<j}$,} 
$\ff{a}_{\ff{X}}^j$ is induced by the pseudo-cone $\left\{\ff{X}_{k<j}\right\}_{k<j},\ $ \mbox{$\Bigg\{ %A185
\vcenter{\xymatrix@C=-0.2pc@R=1pc{ \X\kl && \X\lj \\ & \X\kj \cl{\alpha\tX_{k,l,j}} }}
\Bigg\}_{k<l<j}\cup \Bigg\{ %A186
\vcenter{\xymatrix@C=-0.2pc@R=1pc{\dl & \X_{id_k} \dc{(\alpha\tX_k)\inv} & \dr && \X\kj \deq \\  & id_{\X_k} &&& \X\kj \\   &&& \X\kj \cldosuno{=} }}
\Bigg\}_{k<j}$,} 
$\ff{a}_{\ff{Y}}^j$ is induced by the pseudo-cone $\left\{\ff{Y}_{k<j}\right\}_{k<j},\ \Bigg\{ %A187
\vcenter{\xymatrix@C=-0.2pc@R=1pc{ \Y\kl && \Y\lj \\ & \Y\kj \cl{\alpha\tY_{k,l,j}} }}
\Bigg\}_{k<l<j}\cup \Bigg\{ %A188
\vcenter{\xymatrix@C=-0.2pc@R=1pc{\dl & \Y_{id_k} \dc{(\alpha\tY_k)\inv} & \dr && \Y\kj \deq \\  & id_{\Y_k} &&& \Y\kj \\  &&& \Y\kj \cldosuno{=} }}
\Bigg\}_{k<j}$, 
and so we have:

\begin{enumerate}
 
 \item[a)] invertible 2-cells $\pi_k \biLim{k<j}{\ff{p}_k} \Mr{\mu_k} \ff{p}_k \pi_k \ \forall \ k<j$ such that $ %A48
 \vcenter{\xymatrix@C=-0pc{\Y\kl && \pi_l && \biLim{k<j}{\p_k} \deq \\
 & \pi_k \cl{\pi\kl} \dl & \dcr{\mu_k} && \dr \biLim{k<j}{\p_k} \\
 & \p_k &&& \pi_k }}
\vcenter{\xymatrix@C=-0pc{ \quad = \quad }}
\vcenter{\xymatrix@C=-0pc{ \Y\kl \deq && \pi_l \dl & \dc{\mu_l} & \dr \biLim{k<j}{\p_k} \\
\Y\kl \dl & \dc{\p\kl} & \dr \p_l && \pi_l \deq \\
\p_k \deq && \Z\kl && \pi_l \\
\p_k &&& \pi_k \cl{\pi\kl}}}
 \ \forall \ k<l<j$
 
 \item[b)] invertible 2-cells $\pi_k \biLim{k<j}{\ff{i}_k}\Mr{\epsilon_k} \ff{i}_k \pi_k \ \forall \ k<j$ such that $ %A49
 \vcenter{\xymatrix@C=-0pc{\Z\kl && \pi_l && \biLim{k<j}{\ii_k} \deq \\
 & \pi_k \cl{\pi\kl} \dl & \dcr{\eps_k} && \dr \biLim{k<j}{\ii_k} \\
 & \ii_k &&& \pi_k }}
\vcenter{\xymatrix@C=-0pc{ \quad = \quad }}
\vcenter{\xymatrix@C=-0pc{ \Z\kl \deq && \pi_l \dl & \dc{\eps_l} & \dr \biLim{k<j}{\ii_k} \\
\Z\kl \dl & \dc{\ii\kl} & \dr \ii_l && \pi_l \deq \\
\ii_k \deq && \X\kl && \pi_l \\
\ii_k &&& \pi_k \cl{\pi\kl}}}
 \ \forall \ k<l<j$
 
 \item[c)] invertible 2-cells $\pi_k \ff{a}_{\ff{X}}^j \Mr{\varphi_k} \ff{X}_{k<j} \ \forall \ k<j$ such that $ %A50
 \vcenter{\xymatrix@C=-0pc{\X\kl && \pi_l && \fa_{\X}^j \deq \\   & \pi_k \cl{\pi\kl} &&& \fa_{\X}^j \\  && \X\kj \clunodos{\varphi_k} }}
\vcenter{\xymatrix@C=-0pc{ \quad = \quad }}
\vcenter{\xymatrix@C=-0pc{\X\kl \deq && \pi_l && \fa_{\X}^j \\   \X\kl &&& \X\lj \cl{\varphi_l} \\   && \X\kj \cldosuno{\alpha\tX_{k,l,j}}  }}
 \ \forall \ k<l<j$
 
 \item[d)] invertible 2-cells $\pi_k \ff{a}_{\ff{Y}}^j \Mr{\psi_k} \ff{Y}_{k<j} \ \forall \ k<j$ such that $ %A51
 \vcenter{\xymatrix@C=-0pc{\Y\kl && \pi_l && \fa_{\Y}^j \deq \\   & \pi_k \cl{\pi\kl} &&& \fa_{\Y}^j \\  && \Y\kj \clunodos{\psi_k} }}
\vcenter{\xymatrix@C=-0pc{ \quad = \quad }}
\vcenter{\xymatrix@C=-0pc{\Y\kl \deq && \pi_l && \fa_{\Y}^j \\   \Y\kl &&& \Y\lj \cl{\psi_l} \\   && \Y\kj \cldosuno{\alpha\tY_{k,l,j}}  }}
 \ \forall \ k<l<j$
 
 \item[e)] \mbox{$ %A52
 \vcenter{\xymatrix@C=-0.2pc{ \pi_k \deq && \biLim{k<j}{\p_k} \dl & \dc{\delta_j} & \dr \h_0^j && \fa_j \deq \\
 \pi_k \deq && \fa_{\Y}^j \deq && \h_1^j && \fa_j \\
 \pi_k && \fa_{\Y}^j &&& \f_j \cl{\theta_1^j}  }}
\vcenter{\xymatrix@C=-0pc{ \quad = \quad }}
\vcenter{\xymatrix@C=-0.2pc{ \pi_k \dl & \dc{\mu_k} & \dr \biLim{k<j}{\p_k} && \h_0^j \dl & \dc{\theta_0^j} & \dr \fa_j \\
\p_k \deq && \pi_k \dl & \dc{\eps_k} & \dr \biLim{k<j}{\ii_k} && \fa_{\X}^j \deq \\
\p_k && \ii_k && \pi_k && \fa_{\X}^j \\
& \f_k \cl{\gamma_k} \dl && \dc{\f\kj\inv} && \dr \X\kj \cl{\varphi_k} \\
& \Y\kj \op{\psi_k\inv} &&&& \f_j \deq \\
\pi_k && \fa_{\Y}^j &&& \f_j}}
 \ \forall \ k<j$}
\end{enumerate}

Since $\cc{C}$ is a closed 2-bmodel 2-category, $\ff{a}_j$ can be factored as $\ff{q}'_j \ff{i}_j \Mr{\gamma'_j \cong} \ff{a}_j$ where $\ff{i}_j$ is both a cofibration and a weak equivalence and $\ff{q}'_j$ is a fibration. Consider $\ff{p}_j=\ff{h}_1^j \ff{q}'_j$ and $\gamma_j=\ff{h}_1^j \gamma'_j$. Let's check that this data satisfies the desired properties:

It can be easily checked that $\ff{Z}$ is a pseudo-functor ($\ff{Z}_{k<j}=\pi_k \ff{h}_0^j \ff{q}'_j$, $\alpha\tZ_j=id_{\ff{Z}_j}$, $\alpha\tZ_{k,l,j}= %A53
\vcenter{\xymatrix@C=-0pc{\Z\kl \deq &&&& \Z\lj \opdosdos{=} \\
\Z\kl && \pi_l && \h_0^l \deq && \q'_j \deq \\
& \pi_k \cl{\pi\kl} &&& \h_0^j && \q'_j \\
&&&& \Z\kj \cltresdos{=} }}$). 

We are now going to prove that $\ff{i}$ is pseudo-natural:

\noindent We define $\ff{i}_{k<j}= %A54
\vcenter{\xymatrix@C=-0pc{ && \Z\kj \opdosdos{=} &&&& \ii_j \deq \\
\pi_k \deq && \h_0^j \deq && \q'_j && \ii_j \\
\pi_k \deq && \h_0^j \dl & \dcr{\theta_0^j} && \dr \fa_j \cl{\gamma'_j} \\
\pi_k \dl & \dc{\eps_k} & \dr \biLim{k<j}{\ii_k} &&& \fa_{\X}^j \deq \\
\ii_k \deq && \pi_k &&& \fa_{\X}^j \\
\ii_k &&&& \X\kj \cldosuno{\varphi_k} }}$ 
$\quad$ and $\quad$ 
$\ff{i}_{id_j}= %A189
\vcenter{\xymatrix@C=-0pc{ \dl & \Z_{id_j} \dc{(\alpha\tZ_j)\inv} & \drmediobis & \ii_j \deq \\  & id_{\Z_j} \dl & \dc{=} & \dr \ii_j \\  & \ii_j \deq && id_{\X_j} \dcell{\alpha\tX_j} \\   & \ii_j && \X_{id_j}}}
$. \mbox{PN0 is satisfied} by definition and PN2 is vacuous because there are no 2-cells in $\ff{J}$, so we only need to check PN1: consider $k<l<j$, we want to check that the following equality holds:

$$\vcenter{\xymatrix@C=-.5pc{\ff{Z}_{k<l} \deq 
                             && \ff{Z}_{l<j} \dl 
                             & \dc{\ff{i}_{l<j}} 
                             & \ff{i}_j \dr \\
                             \ff{Z}_{k<l} \dl 
                             & \dc{\ff{i}_{k<l}} 
                             & \ff{i}_l \dr 
                             && \ff{X}_{l<j} \deq \\
                             \ff{i}_k \deq 
                             && \ff{X}_{k<l} 
                             & 
                             & \ff{X}_{l<j} \\
                             \ff{i}_k 
                             &&& \ff{X}_{k<j} \clb{\alpha\tX_{k,l,j}}  &&   }}
\vcenter{\xymatrix@C=-.4pc{\quad = \quad \quad }}
\vcenter{\xymatrix@C=-.5pc{\ff{Z}_{k<l} 
                           & 
                           & \ff{Z}_{l<j} 
                           &&& \ff{i}_{j} \deq \\
                           & \ff{Z}_{k<j} \clb{\alpha\tZ_{k,l,j}} \dl 
                           & \dc{\ff{i}_{k<j}} &  
                           && \ff{i}_{j} \dr \\
                           & \ff{i}_{k} 
                           &&&& \ff{X}_{k<j}}}$$

\noindent But 

$$\vcenter{\xymatrix@C=-.5pc{\ff{Z}_{k<l} \deq && \ff{Z}_{l<j} \dl & \dc{\ff{i}_{l<j}} & \ff{i}_j \dr \\
                             \ff{Z}_{k<l} \dl & \dc{\ff{i}_{k<l}} & \ff{i}_l \dr && \ff{X}_{l<j} \deq \\
                             \ff{i}_k \deq && \ff{X}_{k<l} && \ff{X}_{l<j} \\
                             \ff{i}_k &&& \ff{X}_{k<j} \clb{\alpha\tX_{k,l,j}} }}
\vcenter{\xymatrix@C=-.4pc{\quad = \quad }}
\vcenter{\xymatrix@C=-0.3pc@R=1.5pc{\pi_k \deq && \ff{h}_0^l \deq && \ff{q}'_l \deq && \pi_l \deq && \ff{h}_0^j \deq && \ff{q}'_j &  & \ff{i}_j \\
                          \pi_k \deq && \ff{h}_0^l \deq & \; & \ff{q}'_l \deq && \pi_l \deq && \ff{h}_0^j \dl & \dcr{\theta_0^j} && \dr \ff{a}_j \cl{\gamma'_j} \\
                          \pi_k \deq && \ff{h}_0^l \deq && \ff{q}'_l \deq && \pi_l \dl & \dc{\epsilon_l} & \biLim{k<j}{i_k} \dr &&& \ff{a}_{\ff{X}}^j \deq \\
                          \pi_k \deq && \ff{h}_0^l \deq && \ff{q}'_l \deq && \ff{i}_l \deq && \pi_l &&& \ff{a}_{\ff{X}}^j \\
                          \pi_k \ardr && \ff{h}_0 & \dc{\ff{i}_{k<l}} & \ff{q}'_l && \ff{i}_l \ardl &&& \ff{X}_{l<j} \deq \clunodos{\varphi_l} \\
                          & \ff{i}_k \deq &&&& \ff{X}_{k<l} \ardrr && \dc{\alpha^\ff{X}_{k,l,j}} && \ff{X}_{l<j} \ardll \\
                          & \ff{i}_k &&&&&& \ff{X}_{k<j} }}
\vcenter{\xymatrix@C=-.4pc{ = \quad }}$$

$$%\vcenter{\xymatrix@C=-.4pc{\quad = \quad }}
\vcenter{\xymatrix@C=-.3pc@R=1.5pc{\pi_k \deq && \ff{h}_0^l \deq && \ff{q}'_l \deq && \pi_l \deq && \ff{h}_0^j \deq && \ff{q}'_j && \ff{i}_j \\
                          \pi_k \deq && \ff{h}_0^l \deq && \ff{q}'_l \deq && \pi_l \deq && \ff{h}_0^j \dl & \dcr{\theta_0^j} && \dr \ff{a}_j \cl{\gamma'_j} \\
                          \pi_k \deq && \ff{h}_0^l \deq && \ff{q}'_l \deq && \pi_l \dl & \dc{\epsilon_l} & \biLim{k<j}{i_k} \dr &&& \ff{a}_{\ff{X}}^j \deq \\
                          \pi_k \ardr && \ff{h}_0^l & \dc{\ff{i}_{k<l}} & \ff{q}'_l && \ff{i}_l \ardl && \pi_l \deq &&& \ff{a}_{\ff{X}}^j \deq \\
                          & \ff{i}_k \deq &&&& \ff{X}_{k<l} &&& \pi_l &&& \ff{a}_{\ff{X}}^j \deq \\
                          & \ff{i}_k \deq &&&&& \pi_k \clunodos{\pi_{k<l}} &&&&& \ff{a}_{\ff{X}}^j \\
                          & \ff{i}_k &&&&&&& \ff{X}_{k<j} \cldostres{\varphi_k} }}
\vcenter{\xymatrix@C=-.4pc{\quad = \quad }}
\vcenter{\xymatrix@C=-.3pc@R=1.5pc{\pi_k \deq && \ff{h}_0^l \deq && \ff{q}'_l \deq && \pi_l \deq && \ff{h}_0^j \deq && \ff{q}'_j && \ff{i}_j \\
				    \pi_k \deq && \ff{h}_0^l \deq && \ff{q}'_l \deq && \pi_l \deq && \ff{h}_0^j \dl & \dcr{\theta_0^j} && \dr \ff{a}_j \cl{\gamma'_j} \\
				    \pi_k && \ff{h}_0^l && \ff{q}'_l && \pi_l && \biLim{k<j}{i_k} \deq &&& \ff{a}_{\ff{X}}^j \deq \\
				    &&& \pi_k \cltrestres{\pi_{k<l}} \dl && \dcr{\epsilon_k} &&& \biLim{k<j}{i_k} \dr &&& \ff{a}_{\ff{X}}^j \deq \\
				    &&& \ff{i}_k \deq &&&&& \pi_k &&& \ff{a}_{\ff{X}}^j \\
				    &&& \ff{i}_k &&&&&&& \ff{X}_{k<j} \cldosuno{\varphi_k} }}$$
                          
$$
\vcenter{\xymatrix@C=-.4pc{\quad = \quad }}
\vcenter{\xymatrix@C=-.2pc{\pi_k \ardrr
                           & \ff{h}_0^l 
                           & \dc{\pi_{k<l}}
                           & \ff{q}'_l 
                           & \pi_l \ardll
                           && \ff{h}_0^j \deq 
                           & \ff{q}'_j \deq
                           && \ff{i}_j \deq \\
                           && \pi_k \deq 
                           &&&& \ff{h}_0^j \deq 
                           & \ff{q}'_j \ardr
                           & \dc{\gamma'_j}
                           & \ff{i}_j \ardl \\
                           && \pi_k \deq 
                           &&&& \ff{h}_0^j \dl 
                           & \dc{\theta_0^j} 
                           & \ff{a}_j \dr  
                           & \\
                           && \pi_k \dl
                           && \dc{\epsilon_k} 
                           && \biLim{k<j}{\ff{i}_k} \dr 
                           && \ff{a}_{\ff{X}}^j \deq
                           & \\
                           && \ff{i}_k \deq
                           &&&& \pi_k \ardr
                           & \dc{\varphi_k}
                           & \ff{a}_{\ff{X}}^j \ardl
                           & \\
                           && \ff{i}_k 
                           &&&&& \ff{X}_{k<j}
                           && }}
\vcenter{\xymatrix@C=-.4pc{\quad = \quad }}
\vcenter{\xymatrix@C=-.5pc{\ff{Z}_{k<l} \ardr
                           & \dc{\alpha\tZ_{k,l,j}} 
                           & \ff{Z}_{l<j} \ardl 
                           &&& \ff{i}_{j} \deq \\
                           & \ff{Z}_{k<j} \dl 
                           & \dcr{\ff{i}_{k<j}} & 
                           && \ff{i}_{j} \dr \\
                           & \ff{i}_{k} 
                           &&&& \ff{X}_{k<j}}}$$

\noindent where the second equality is due to the elevators calculus plus c), the third one is due to b) and the fourth one is due to elevators calculus again.

Now we are going to prove that $\ff{p}$ is pseudo-natural:

\noindent We define $\ff{p}_{k<j}= %A55
\vcenter{\xymatrix@C=-0.1pc@R=1pc{& \Y\kj \op{\psi_k\inv} &&&& \p_j \op{=} \\
\pi_k \deq && \fa_{\Y}^j \dl & \dc{\delta_j\inv} & \dr \h_1^j && \q'_j \deq \\
\pi_k \dl & \dc{\mu_k} & \dr \biLim{k<j}{\p_k} && \h_0^j \deq && \q'_j \deq \\
\p_k \deq && \pi_k && \h_0^j && \q'_j \\
\p_k &&&& \Z\kj \cldosdos{=} }}
$ 
$\quad$ and $\quad$ 
$\ff{p}_{id_j}= %A190
\vcenter{\xymatrix@C=0pc{ \Y_{id_j} \dcellbymedio{(\alpha\tY_j)\inv} && \p_j \deq \\   id_{\Y_j} \dl & \dc{=} & \dr \p_j \\   \p_j \deq && id_{\Z_j} \dcell{\alpha\tZ_j} \\   \p_j && \Z_{id_j} }}
$. PN0 is satisfied by definition and PN2 is vacuous because there are no 2-cells in $\ff{J}$, so we only need to check PN1: consider $k<l<j$, we want to check that the following equality holds:

$$\vcenter{\xymatrix@C=-.4pc{\ff{Y}_{k<l} \deq 
                             && \ff{Y}_{l<j} \dl 
                             & \dc{\ff{p}_{l<j}} 
                             & \ff{i}_j \dr \\
                             \ff{Y}_{k<l} \dl 
                             & \dc{\ff{p}_{k<l}} 
                             & \ff{p}_l \dr 
                             && \ff{Z}_{l<j} \deq \\
                             \ff{p}_k \deq 
                             && \ff{Z}_{k<l} \ardr
                             & \dc{\alpha\tZ_{k,l,j}} 
                             & \ff{Z}_{l<j} \ardl \\
                             \ff{p}_k 
                             &&& \ff{Z}_{k<j} &&}}
\vcenter{\xymatrix@C=-.4pc{\quad = \quad \quad }}
\vcenter{\xymatrix@C=-.4pc{\ff{Y}_{k<l} \ardr
                           & \dc{\alpha\tY_{k,l,j}} 
                           & \ff{Y}_{l<j} \ardl 
                           &&& \ff{p}_{j} \deq \\
                           & \ff{Y}_{k<j} \dl 
                           & \dc{\ff{p}_{k<j}} & 
                           && \ff{p}_{j} \dr \\
                           & \ff{p}_{k} 
                           &&&& \ff{Z}_{k<j}}}$$

\noindent But

$$\vcenter{\xymatrix@C=-.4pc{\ff{Y}_{k<l} \deq 
                             && \ff{Y}_{l<j} \dl 
                             & \dc{\ff{p}_{l<j}} 
                             & \ff{i}_j \dr \\
                             \ff{Y}_{k<l} \dl 
                             & \dc{\ff{p}_{k<l}} 
                             & \ff{p}_l \dr 
                             && \ff{Z}_{l<j} \deq \\
                             \ff{p}_k \deq 
                             && \ff{Z}_{k<l} \ardr
                             & \dc{\alpha\tZ_{k,l,j}} 
                             & \ff{Z}_{l<j} \ardl \\
                             \ff{p}_k 
                             &&& \ff{Z}_{k<j} &&}}
\vcenter{\xymatrix@C=-.4pc{\quad = \quad \quad }}
\vcenter{\xymatrix@C=-.4pc{\ff{Y}_{k<l} \deq
                           &&& \ff{Y}_{l<j} \op{\psi_l^{-1}} 
                           &&& \ff{h}_1^j \deq
                           &&& \ff{q}'_j \deq
                           \\
                           \ff{Y}_{k<l} \deq 
                           && \pi_l \deq 
                           && \ff{a}_{\ff{Y}}^j \dl 
                           & \dc{ \delta_j^{-1} \quad} 
                           & \ff{h}_1^j \dr
                           &&& \ff{q}'_j \deq
                           \\
                           \ff{Y}_{k<l} \deq 
                           && \pi_l \dl
                           & \dc{\mu_l} 
                           & \biLim{k<j}{\ff{p}_k} \dr
                           && \ff{h}_0^j \deq 
                           &&& \ff{q}'_j \deq 
                           \\
                           \ff{Y}_{k<l} \dl 
                           & \dc{\ff{p}_{k<l}} 
                           & \ff{p}_l \dr 
                           && \pi_l \deq
                           && \ff{h}_0^j \deq
                           &&& \ff{q}'_j \deq
                           \\
                           \ff{p}_k \deq 
                           && \ff{Z}_{k<l} \ardr
                           & \dc{\pi_{k<l}} 
                           & \pi_l \ardl
                           && \ff{h}_0^j \deq
                           &&& \ff{q}'_j \deq
                           \\
                           \ff{p}_k \deq 
                           &&& \pi_k \ardr
                           & \dc{=} & & \ff{h}_0^j 
                           &&& \ff{q}'_j \ardlllll
                           \\
                           \ff{p}_k 
                           &&&& \ff{Z}_{k<j} }}
\vcenter{\xymatrix@C=-.4pc{\quad = \quad \quad }}$$

$$
\vcenter{\xymatrix@C=-.4pc{\ff{Y}_{k<l} \deq
                           &&&&& \ff{Y}_{l<j} \op{\; \psi_l^{-1}} 
                           &&& \ff{h}_1^j \deq
                           && \ff{q}'_j \deq
                           \\
                           \ff{Y}_{k<l} \deq 
                           &&&& \pi_l \deq 
                           && \ff{a}_{\ff{Y}}^j \dl 
                           & \dc{ \delta_j^{-1}} 
                           & \ff{h}_1^j \dr
                           && \ff{q}'_j \deq
                           \\
                           \ff{Y}_{k<l} \ardr 
                           && \dc{\pi_{k<l}}
                           && \pi_l \ardl
                           && \biLim{k<j}{\ff{p}_k} \deq
                           && \ff{h}_0^j \deq 
                           && \ff{q}'_j \deq 
                           \\
                           && \pi_k \dl
                           && 
                           & \dc{\mu_k} & \biLim{k<j}{\ff{p}_k} \dr
                           && \ff{h}_0^j \deq
                           && \ff{q}'_j \deq
                           \\
                           && \ff{p}_k \deq 
                           &&&& \pi_k \ardrr
                           && \ff{h}_0^j \dc{=}
                           && \ff{q}'_j \ardll
                           \\
                           && \ff{p}_k
                           &&&&&& \ff{Z}_{k<j}
                           &&
                           }}
\vcenter{\xymatrix@C=-.4pc{\quad = \quad \quad }}
\vcenter{\xymatrix@C=-.4pc{\ff{Y}_{k<l} \ardr
                             & \dc{\alpha\tY_{k,l,j}}
                             & \ff{Y}_{l<j} \ardl
                             && \ff{h}_1^j \deq
                             && \ff{q}'_j \deq 
                             \\
                             & \ff{Y}_{k<j} \op{\psi_k^{-1}}
                             &&& \ff{h}_1^j \deq 
                             && \ff{q}'_j \deq
                             \\
                             \pi_k \deq 
                             && \ff{a}_{\ff{Y}}^j \dl 
                             & \dc{ \delta_j^{-1}}
                             & \ff{h}_1^j \dr
                             && \ff{q}'_j \deq
                             \\
                             \pi_k \dl
                             & \dc{\mu_k}
                             & \biLim{k<j}{\ff{p}_k} \dr
                             && \ff{h}_0^j \deq
                             && \ff{q}'_j \deq
                             \\
                             \ff{p}_k \deq 
                             && \pi_k \ardrr
                             && \ff{h}_0^j \dc{=} 
                             && \ff{q}'_j \ardll
                             \\
                             \ff{p}_k 
                             &&&& \ff{Z}_{k<j} 
                             && }}$$
                             
$$\vcenter{\xymatrix@C=-.4pc{\quad = \quad \quad }}
\vcenter{\xymatrix@C=-.4pc{\ff{Y}_{k<l} \ardr
                           & \dc{\alpha\tY_{k,l,j}} 
                           & \ff{Y}_{l<j} \ardl
                           &&& \ff{p}_{j} \deq \\
                           & \ff{Y}_{k<j} \dl 
                           & \dc{\ff{p}_{k<j}} & 
                           && \ff{p}_{j} \dr \\
                           & \ff{p}_{k} 
                           &&&& \ff{Z}_{k<j}}}$$                           

\noindent where the second equality is due to $a)$ and the third one is due to elevators calculus plus $d)$.

Now we are going to prove that $\gamma$ is a modification: Consider $k<j$, we want to check that the following equality holds:

$$\vcenter{\xymatrix@C=-.3pc{\ff{Y}_{k<j} \dl
                             & \dc{\ff{p}_{k<j}}
                             & \ff{p}_j \dr
                             && \ff{i}_j \deq
                             \\
                             \ff{p}_k \deq
                             && \ff{Z}_{k<j} \dl
                             & \dc{\ff{i}_{k<j}}
                             & \ff{i}_j \dr
                             \\
                             \ff{p}_k \ardr
                             & \dc{\gamma_k}
                             & \ff{i}_k \ardl
                             && \ff{X}_{k<j} \deq
                             \\
                             & \ff{f}_k 
                             &&& \ff{X}_{k<j}}}
\vcenter{\xymatrix@C=-.4pc{\quad = \quad \quad }}
\vcenter{\xymatrix@C=-.3pc{\ff{Y}_{k<j} \deq
                           & \ff{p}_j \ardr
                           & \dc{\gamma_j} 
                           & \ff{i}_j \ardl
                           \\
                           \ff{Y}_{k<j} \dl
                           & \dc{\ff{f}_{k<j}}
                           & \ff{f}_j \dr
                           &\\
                           \ff{f}_k 
                           && \ff{X}_{k<j}
                           &}}$$

\noindent But

$$\vcenter{\xymatrix@C=-.5pc{\ff{Y}_{k<j} \dl
                             & \dc{\ff{p}_{k<j}}
                             & \ff{p}_j \dr
                             && \ff{i}_j \deq
                             \\
                             \ff{p}_k \deq
                             && \ff{Z}_{k<j} \dl
                             & \dc{\ff{i}_{k<j}}
                             & \ff{i}_j \dr
                             \\
                             \ff{p}_k \ardr
                             & \dc{\gamma_k}
                             & \ff{i}_k \ardl
                             && \ff{X}_{k<j} \deq
                             \\
                             & \ff{f}_k 
                             &&& \ff{X}_{k<j}}}
\vcenter{\xymatrix@C=-.4pc{\quad = \quad \quad }}
\vcenter{\xymatrix@C=-.5pc@R=1pc{& \ff{Y}_{k<j} \deq
                           &&&&&& \ff{p}_j \op{=} 
                           &&&&& \ff{i}_j \deq
                           \\
                           &\ff{Y}_{k<j} \op{\psi_k^{-1}}
                           &&&&& \ff{h}_1^j \deq
                           && \ff{q}'_j \deq
                           &&&& \ff{i}_j \deq
                           \\
                           \pi_k \deq 
                           && \ff{a}_{\ff{X}}^j \dl 
                           && \dc{ \delta_j^{-1}} 
                           && \ff{h}_1^j \dr
                           && \ff{q}'_j \deq
                           &&&& \ff{i}_j  \deq
                           \\
                           \pi_k \dl
                           & \dc{\mu_k}
                           & \biLim{k<j}{\ff{p}_k} \dr
                           &&&& \ff{h}_0^j \deq
                           && \ff{q}'_j \deq
                           &&&& \ff{i}_j \deq
                           \\
                           \ff{p}_k \deq 
                           && \pi_k \deq 
                           &&&& \ff{h}_0^j \deq 
                           && \ff{q}'_j  \ardr
                           &&  \dc{\gamma'_j} 
                           && \ff{i}_j \ardl
                           \\
                           \ff{p}_k \deq 
                           &&  \pi_k \deq
                           &&&& \ff{h}_0^j \dl
                           && \dc{\theta_0^j}
                           && \ff{a}_j \dr
                           &&
                           \\
                           \ff{p}_k \deq
                           && \pi_k \dl
                           && \dc{\epsilon_k}
                           && \biLim{k<j}{\ff{i}_k} \dr
                           &&&& \ff{a}_{\ff{X}}^j \deq
                           &&
                           \\
                           \ff{p}_k \deq
                           && \ff{i}_k \deq 
                           &&&& \pi_k \ardrr
                           && \dc{\varphi_k}
                           && \ff{a}_{\ff{X}}^j \ardl
                           &&
                           \\
                           \ff{p}_k \ardr
                           & \dc{\gamma_k}
                           & \ff{i}_k \ardl
                           &&&&&& \ff{X}_{k<j} \deq 
                           &&&& 
                           \\
                           & \ff{f}_k 
                           &&&&&&& \ff{X}_{k<j}
                           &&&&
                           }}
\vcenter{\xymatrix@C=-.4pc{\quad = \quad }}$$

$$\vcenter{\xymatrix@C=-.5pc@R=1.5pc{& \ff{Y}_{k<j} \deq &&&& \ff{p}_j \op{=} &&& \ff{i}_j \deq \\
                             & \ff{Y}_{k<j} \deq &&& \ff{h}_1^j \deq && \ff{q}'_j && \ff{i}_j \\
                             & \ff{Y}_{k<j} \op{\psi_k^{-1}} &&& \ff{h}_1^j \deq &&& \ff{a}_j \clb{\gamma'_j} \deq \\
                             \pi_k \deq && \ff{a}_{\ff{Y}}^j \deq & \; & \ff{h}_1^j &&& \ff{a}_j \\
                             \pi_k && \ff{a}_{\ff{Y}}^j &&& \ff{f}_j \clunodos{\theta_1^j} \deq \\
                             & \ff{Y}_{k<j} \cl{\psi_k} \dl && \dc{\ff{f}_{k<j}} && \ff{f}_j \dr \\
                             & \ff{f}_k &&&& \ff{X}_{k<j} }}
\vcenter{\xymatrix@C=-.4pc{\quad = \quad }}
\vcenter{\xymatrix@C=-.4pc{\ff{Y}_{k<j} \deq 
                             &&& \ff{p}_j \op{=} 
                             &&&&& \ff{i}_j \deq 
                             \\
                             \ff{Y}_{k<j} \deq
                             && \ff{h}_1^j \deq
                             && \ff{q}'_j \ardr 
                             && \dc{\gamma'_j}
                             && \ff{i}_j \ardl
                             \\
                             \ff{Y}_{k<j} \deq
                             && \ff{h}_1^j \ardrr
                             && \dc{\theta_1^j}
                             && \ff{a}_j \ardll
                             &&
                             \\
                             \ff{Y}_{k<j} \dl
                             && \dc{\ff{f}_{k<j}} 
                             && \ff{f}_j \dr
                             &&&&
                             \\
                             \ff{f}_k 
                             &&&& \ff{X}_{k<j}
                             &&&&}}
\vcenter{\xymatrix@C=-.4pc{\quad = \quad }}
\vcenter{\xymatrix@C=-.4pc{\ff{Y}_{k<j} \deq
                           & \ff{p}_j \ardr
                           & \dc{\gamma_j} 
                           & \ff{i}_j \ardl
                           \\
                           \ff{Y}_{k<j} \dl
                           & \dc{\ff{f}_{k<j}}
                           & \ff{f}_j \dr
                           &\\
                           \ff{f}_k 
                           && \ff{X}_{k<j}
                           &}}$$

\noindent where the second equality is due to elevators calculus plus $e)$ and the third one is due to elevators calculus.

$\ff{i}_j$ is both a cofibration and a weak equivalence by construction.

It can be easily checked that the $\ff{q}_j$ associated to $\ff{p}$ is $\ff{q}'_j$ and so is a fibration.

Now we will focus on the case where $\ff{p}$ is both a fibration and a weak equivalence and $\ff{i}$ is a cofibration:

It is enough to check that we can factor $\ff{p}_j \ff{i}_j \Mr{\gamma_j \cong} \ff{f}_j \ \forall \ j\in \ff{J}$, $\ff{i}=\left\{\ff{i}_j\right\}_{j\in \ff{J}}$ and $\ff{p}=\left\{\ff{p}_j\right\}_{j\in \ff{J}}$ are pseudo-natural transformations, $\ff{i}_j$ is a cofibration $\forall \ j\in \ff{J}$, $\ff{q}_j$ (associated to $\ff{p}$ as in \ref{lema1.2}) is a fibration $\forall \ j\in \ff{J}$, $\ff{p}_j$ is a weak equivalence $\forall \ j\in \ff{J}$ and $\gamma=\left\{\gamma_j\right\}_{j\in \ff{J}}$ is a modification. We are going to do this by induction in $j$:
 
 If $j$ is the initial object of $\ff{J}$, since $\cc{C}$ is a closed 2-bmodel 2-category, we have $\ff{p}_0 \ff{i}_0 \Mr{\gamma_0 \cong} \ff{f}_0$ where $\ff{i}_0$ is a cofibration and $\ff{p}_0$ is a both a fibration and a weak equivalence. As $\ff{q}_0\cong\ff{p}_0$, $\ff{q}_0$ is a fibration. 
 
 Now, suppose that we have already defined $\ff{p}_k$, $\ff{i}_k$ and $\gamma_k \ \forall \ k<j$. In order to define $\ff{p}_j$, $\ff{i}_j$ and $\gamma_j$, consider diagram \eqref{diagramaconQ} as before:
%   
%  $$\xymatrix@C=3pc{\ff{X}_j\ar[rr]^{\ff{a}_{\ff{X}}^j} \ar@{-->}[rd]^{\ff{a}_j} \ar@/_4ex/[rdd]_{\ff{f}_j} & &  \biLim{k<j}{\ff{X}_k} \ar[d]^{\biLim{k<j}{\ff{i}_k}}\\
%             & \ff{Q}_j \ar[r]^{\pi_0^j} \ar[d]_{\pi_1^j} \ar@{}[rd]|{bipb\ \Downarrow \cong  \alpha_j} & \biLim{k<j}{\ff{Z}_k} \ar[d]^{\biLim{k<j}{\ff{p}_k}}\\
%             & \ff{Y}_j \ar[r]_{\ff{a}_{\ff{Y}}^j} & \biLim{k<j}{\ff{Y}_k}}$$
 
 Since $\cc{C}$ is a closed 2-bmodel 2-category, $\ff{a}_j$ may be factored as $\ff{q}'_j \ff{i}_j \Mr{\gamma'_j \cong} \ff{a}_j$ where $\ff{i}_j$ is a cofibration and $\ff{q}'_j$ is both a fibration and a weak equivalence. Consider $\ff{p}_j=\ff{h}_1^j \ff{q}'_j$ and $\gamma_j= %A56
 \vcenter{\xymatrix@C=-0pc{ & \p_j \op{=} &&& \ii_j \deq \\
 \h_1^j \deq && \q'_j && \ii_j \\
 \h_1^j &&& \fa_j \cl{\gamma'_j} \\
 && \f_j \cldosuno{\theta_1^j} }} 
 $. Let's check that this data satisfies the desired properties:

One can check as before that $\ff{Z}$ is a pseudo-functor, $\ff{i}$ is pseudo-natural, $\ff{p}$ is pseudo-natural and $\gamma$ is a modification.

$\ff{i}_j$ is a cofibration by construction.

As before, $\ff{q}'_j$ is the $\ff{q}_j$ associated to $\ff{p}$ and it is a fibration.

It only remains to check that $\ff{p}_j$ is a weak equivalence: Since $\ff{p}$ is a fibration, by \ref{lema3.2}, $\ff{p}_j$ is a fibration $\forall \ j\in \ff{J}$. Then $\ff{p}_k$ is both a fibration and a weak equivalence $\forall \ k<j$ and so, by \ref{lema0.2}, $\biLim{k<j}{\ff{p}_k}$ is both a fibration and a weak equivalence. Therefore, since 2-M4 is satisfied in $\cc{C}$, $\ff{h}_1^j$ is a weak equivalence. Then $\ff{p}_j$ is a weak equivalence by axiom 2-M5. 

\vspace{1ex}

{\bfseries Axiom 2-M5}: It follows from the fact that weak equivalences in $\pCJ$ are defined pointwise and $\cc{C}$ is a closed 2-bmodel 2-category.

\vspace{1ex}

{\bfseries Axiom 2-M6a)}: It is tautological from the definition of fibrations in $\pCJ$. 

\vspace{1ex}

{\bfseries Axiom 2-M6b)}: $\Rightarrow)$ Suppose that we have a cofibration $\ff{i}$, a fibration $\ff{p}$ which is also a weak equivalence and they fit in a diagram 
 
 $$\xymatrix@C=1.5pc@R=3pc{\ff{A} \ar[rr]^{\ff{a}} \ar[d]_{\ff{i}} &\ar@{}[d]|{\cong \; \Downarrow \; \gamma}& \ff{Y} \ar[d]^{\ff{p}} \\
            \ff{X} \ar[rr]_{\ff{b}} && \ff{B}}$$

The filler can be constructed inductively exactly as in the proof of \ref{lema1.2} $\Leftarrow$.

$\Leftarrow)$ Suppose that $\ff{A}\mr{\ff{i}}\ff{X}$ has the left lifting property with respect to all morphisms that are both fibrations and weak equivalences. We have to check that $\ff{i}_j$ is a cofibration $\forall \ j\in \ff{J}$ but, since $\cc{C}$ is a closed 2-bmodel 2-category, it is enough to check that $\ff{i}_j$ has the left lifting property with respect to all morphisms that are both fibrations and weak equivalences. So take a morphism $\ff{Y}\mr{\tilde{\ff{p}}}\ff{B}\in \cc{C}$ which is both a fibration and a weak equivalence and suppose that we have a diagram of the form

 $$\xymatrix@C=1.5pc@R=3pc{\ff{A}_j \ar[rr]^{\tilde{\ff{a}}} \ar[d]_{\ff{i}_j} &\ar@{}[d]|{\cong \; \Downarrow \; \tilde{\gamma}}& \ff{Y} \ar[d]^{\tilde{\ff{p}}} \\
            \ff{X}_j \ar[rr]_{\tilde{\ff{b}}} && \ff{B}}$$
            
Let's define $\ff{E}$, $\ff{D}$, $\ff{E}\mr{\ff{p}}\ff{D}$, $\ff{A}\mr{\ff{a}}\ff{E}$, $\ff{X}\mr{\ff{b}}\ff{D}\in \pCJ$ by: 

\mbox{$\ff{E}_k=\begin{cases}\ff{Y} & \hbox{if } k\geq j \\
                               * & \hbox{otherwise}
                  \end{cases}$,}
             \mbox{$\ff{E}_{id_k}=id_{\ff{E}_k}$,}
             \mbox{$\vcenter{\xymatrix{\\ \ff{E}_{k<l} = \\ \\ }}\begin{cases}
                                       id_{\ff{Y}} &  \hbox{if } k\geq j \\ 
                                       \ff{Y}\mr{}* &  \hbox{if } k\ngeq j \hbox{ and } l\geq j\\
                                      id_* & \hbox{otherwise } \\
                                      \end{cases}$,}
             \mbox{$\ff{D}_k=\begin{cases}\ff{B} & \hbox{if } k\geq j \\
                               * & \hbox{otherwise}
                  \end{cases}$,}
             \mbox{$\ff{D}_{id_k}=id_{\ff{D}_k}$,}
             \mbox{$\ff{D}_{k<l}= \begin{cases}
                                       id_{\ff{B}} &  \hbox{if } k\geq j \\ 
                                       \ff{B}\mr{}* &  \hbox{if } k\ngeq j \hbox{ and } l\geq j\\
                                      id_* & \hbox{otherwise } \\
                                      \end{cases}$,}
            \mbox{$\ff{p}_k=\begin{cases}\tilde{\ff{p}} & \hbox{if } k\geq j \\
                               id_* & \hbox{otherwise}
                  \end{cases}$,}
            \mbox{$\vcenter{\xymatrix{\\ \ff{p}_{id_k}= \\ \\ }} \begin{cases}
                                       id_{\tilde{\ff{p}}} &  \hbox{if } k\geq j \\ 
                                       id_{id_*} & \hbox{otherwise } \\
                                      \end{cases}$,}
            \mbox{$\ff{p}_{k<l}=\begin{cases}
                                       id_{\tilde{\ff{p}}} &  \hbox{if } k\geq j \\ 
                                       id_{\ff{Y}\mr{}*} & \hbox{if } k\ngeq j \hbox{ and } l\geq j \\
                                       id_{id_*} & \hbox{otherwise } \\
                                      \end{cases}$,}  
             \mbox{$\ff{a}_k=\begin{cases}\tilde{\ff{a}} \ff{A}_{j\leq k} & \hbox{if } k\geq j \\
                               \ff{A}_k\mr{}* & \hbox{otherwise}
                  \end{cases}$,}
             \mbox{$\ff{a}_{id_k}=\begin{cases}
                                       \tilde{\ff{a}} \ff{A}_{j\leq k} \alpha\tA_k &  \hbox{if } k\geq j \\ 
                                       id_{\ff{A}_k \mr{} *} & \hbox{otherwise } \\
                                      \end{cases}$,}
             \mbox{$\vcenter{\xymatrix{\\ \ff{a}_{k<l}= \\ \\ }}\begin{cases}
                                       \tilde{\ff{a}} (\alpha\tA_{j,k,l})^{-1} &  \hbox{if } k\geq j \\ 
                                       id_{\ff{A}_l\mr{}*} & \hbox{otherwise}
                                       \end{cases}$,}     
             \mbox{$\ff{b}_k=\begin{cases}\tilde{\ff{b}}\ff{X}_{j\leq k} & \hbox{if } k\geq j \\
                               \ff{X}_k\mr{}* & \hbox{otherwise}
                  \end{cases}$} 
             \mbox{$\ff{b}_{id_k}=\begin{cases}
                                        \tilde{\ff{b}} \ff{X}_{j\leq k} \alpha\tX_k &  \hbox{if } k\geq j \\ 
                                       id_{\ff{X}_k \mr{} *} & \hbox{otherwise } \\
                                      \end{cases}$} \linebreak
	    and
             \mbox{$\ff{b}_{k<l}=\begin{cases}
                                       \tilde{\ff{b}} (\alpha\tX_{j,k,l})^{-1} &  \hbox{if } k\geq j \\ 
                                       id_{\ff{X}_l\mr{}*} & \hbox{otherwise}
                                      \end{cases}$.}
                  
It is straightforward to check that $\ff{E}$ and $\ff{D}$ are 2-functors and that $\ff{p}$, $\ff{a}$ and $\ff{b}$ are pseudo-natural transformations.                  
                                   
By using \ref{lema4.2}, $\ff{p}$ is both a fibration and a weak equivalence and then there exists a filler $(\ff{f},\lambda,\rho)$ for the following diagram

\begin{equation}\label{diagramaconEyD}
\xymatrix@C=1.5pc@R=3pc{\ff{A} \ar[rr]^{\ff{a}} \ar[d]_{\ff{i}} &\ar@{}[d]|{\cong \; \Downarrow \; \gamma}& \ff{E} \ar[d]^{\ff{p}} \\
            \ff{X} \ar[rr]_{\ff{b}} && \ff{D}} 
\end{equation}

\noindent where $\gamma_k=\begin{cases}
                                        %A66
                                        \vcenter{\xymatrix@C=0pc{ \tilde{\p} \dl & \dc{\tilde{\gamma}} & \dr \tilde{\fa} && \A_{j \leq k} \deq \\
                                        \tilde{\fb} \deq && \ii_j \dl & \dc{\ii_{j \leq k}\inv} & \dr \A_{j \leq k} \\
                                        \tilde{\fb} && \X_{j \leq k} && \ii_k }}                                        
                                        &  \hbox{if } k\geq j \\ 
                                        id_{\ff{A}_k\mr{}*}& \hbox{otherwise } \\
                                      \end{cases}$.
\vspace{1ex}

Consider $\tilde{\lambda}= %A67
\vcenter{\xymatrix@C=0pc{ & \tilde{\fa} \op{=} \\  \tilde{\fa} \deq && id_{\A_j} \dcellb{\alpha\tA_j} \\  \tilde{\fa} \dl & \dc{\lambda_j} & \dr \A_{id_j} \\   \f_j && \ii_j }}
$ and $\tilde{\rho}= %A68
\vcenter{\xymatrix@C=0pc{ \tilde{\p} \dl & \dc{\rho_j} & \dr \f_j \\ \tilde{\fb} \deq && \X_{id_j} \dcellbymedio{(\alpha\tX_j)\inv} \\ \tilde{\fb} && id_{\X_j} \\  & \tilde{\fb} \cl{=} }}
$

Let's check that $(\ff{f}_j,\lambda_j,\rho_j)$ is the filler that we were looking for: 
 
%A69

$$\vcenter{\xymatrix@C=-0pc{ \tilde{\p} \deq &&& \tilde{\fa} \op{=} \\
\tilde{\p} \deq && \tilde{\fa} \deq && id_{\A_j} \dcellb{\alpha\tA_j} \\
\tilde{\p} \deq && \tilde{\fa} \dl & \dc{\lambda_j} & \dr \A_{id_j} \\
\tilde{\p} \dl & \dc{\rho_j} & \dr \f_j && \ii_j \deq \\
\tilde{\fb} \deq && \X_{id_j} \dcellbymedio{(\alpha\tX_j)\inv} && \ii_j \deq \\
\tilde{\fb} && id_{\X_j} && \ii_j \deq \\
& \tilde{\fb} \cl{=} &&& \ii_j}}
\vcenter{\xymatrix@C=-0pc{ \quad = \quad }}
\vcenter{\xymatrix@C=-0pc{ \tilde{\p} \deq &&& \tilde{\fa} \op{=} \\
\tilde{\p} \deq && \tilde{\fa} \deq && id_{\A_j} \dcellb{\alpha\tA_j}\\
\tilde{\p} \dl && \tilde{\fa} \dc{\gamma_j} && \dr \A_{id_j} \\
\tilde{\fb} \deq && \X_{id_j} \dcellbymedio{(\alpha\tX_j)\inv} && \ii_j \deq \\
\tilde{\fb} && id_{\X_j} && \ii_j \deq \\
& \tilde{\fb} \cl{=} &&& \ii_j}}
\vcenter{\xymatrix@C=-0pc{ \quad = \quad }}
\vcenter{\xymatrix@C=-0pc{ \tilde{\p} \dl & \dc{\tilde{\gamma}} & \dr \tilde{\fa} \\ \tilde{\fb} && \ii_j }}$$

\noindent where the first equality is due to the fact that $(\ff{f},\lambda,\rho)$ is a filler for diagram \eqref{diagramaconEyD} and the last one is due to elevators calculus plus the definition of $\gamma_j$ and the fact that $\ff{i}$ is a pseudo-natural transformation.

\vspace{1ex}

{\bfseries Axiom 2-M6c)}: $\Rightarrow)$ Let $\ff{f}$ be a weak equivalence. By axiom 2-M2, $\ff{f}$ can be factored as $\ff{f}\cong \ff{u}\ff{v}$ where $\ff{u}$ is both a fibration and a weak equivalence and $\ff{v}$ is a cofibration. Since $\ff{f}_j$ is a weak equivalence $\forall \ j\in \ff{J}$ and $\cc{C}$ is a closed 2-bmodel \mbox{2-category}, $\ff{u}_j\ff{v}_j$ is a weak equivalence $\forall \ j\in \ff{J}$ and so $\ff{u}\ff{v}$ is a weak equivalence. Then, by axiom 2-M5, $\ff{v}$ is also a weak equivalence. This plus axioms 2-M6a) and 2-M6b) conclude the proof. 
 
 $\Leftarrow)$ By an argument similar to the one used in the proof of $\Rightarrow)$, it is enough to check that $\ff{u}\ff{v}$ is a weak equivalence. And to do that, it is enough to check that $\ff{u}$ and $\ff{v}$ are both weak equivalences. We are going to do the proof for $\ff{v}$ (the proof for $\ff{u}$ is analogous but easier): By definition, we want to check that $\ff{v}_j$ is a weak equivalence $\forall \ j\in \ff{J}$ and, by \ref{lema2.2} and the fact that $\cc{C}$ is a closed 2-bmodel \mbox{2-category}, it is enough to check that it has the left lifting property with respect to all fibrations. So suppose that we have a diagram of the form
 
 $$\xymatrix@C=1.5pc@R=3pc{\ff{X}_j \ar@{}[drr]|{\cong \; \Downarrow  \; \gamma } \ar[rr]^{\ff{a}} \ar[d]_{\ff{v}_j} 
			   & & \ff{A} \ar[d]^{\ff{p}} \\
			    \ff{Y}_j \ar[rr]_{\ff{b}} & & \ff{B} }$$
\noindent where $\ff{p}$ is a fibration.			   

The proof follows by an argument exactly as the one used in the proof of axiom 2-M6b) $\Leftarrow$.

\end{proof}

\begin{corollary}
 Let $\cc{C}$ be a closed 2-bmodel 2-category. Then $\cc{H}om_p(\hat{\ff{J}}^{op}, \cc{C})$ is a closed 2-bmodel 2-category.
\end{corollary}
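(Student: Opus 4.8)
<br>

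The final statement to prove is the corollary: if $\cc{C}$ is a closed 2-bmodel 2-category and $\hat{\ff{J}}$ is the 2-category associated (via Proposition~\ref{Asombrero}) to a filtered category $\ff{J}$, then $\cc{H}om_p(\hat{\ff{J}}^{op},\cc{C})$ is a closed 2-bmodel 2-category.

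The plan is to deduce this from Theorem~\ref{CJdeclosed 2-modelos} together with the machinery of Section~\ref{A sombrero}. The key structural fact is Proposition~\ref{Asombrero}, which gives, for any category $\ff{A}$, an isomorphism of 2-categories $\cc{H}om_p(\hat{\ff{A}},\cc{C}) \mr{\ff{T}^*} p\cc{H}om_p(\ff{A},\cc{C})$. Applying this with $\ff{A} = \ff{J}^{op}$ and using the identity $\widehat{\ff{J}^{op}} = \hat{\ff{J}}^{op}$ (the remark following \ref{Asombrero}), I obtain an isomorphism of 2-categories
$$
\cc{H}om_p(\hat{\ff{J}}^{op},\cc{C}) \;=\; \cc{H}om_p(\widehat{\ff{J}^{op}},\cc{C}) \;\mr{\ff{T}^*\;\cong}\; p\cc{H}om_p(\ff{J}^{op},\cc{C}).
$$
Since $\ff{J}$ is a cofinite filtered poset with a unique initial object, Theorem~\ref{CJdeclosed 2-modelos} tells us that $p\cc{H}om_p(\ff{J}^{op},\cc{C})$ is a closed 2-bmodel 2-category. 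So the first step is simply to transport this closed 2-bmodel structure across the isomorphism $\ff{T}^*$.

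The natural mechanism for the transport is Proposition~\ref{pseudo-equivalencerespetaaxiomas}: a 2-functor that is a retract pseudo-equivalence carries a closed 2-bmodel structure back along itself. An isomorphism of 2-categories is in particular a 2-equivalence, hence a retract pseudo-equivalence (with its inverse playing the role of pseudo-quasi-inverse and with $\ff{G}\ff{F}=id$ holding strictly), so Proposition~\ref{pseudo-equivalencerespetaaxiomas} applies directly once we declare the fibrations, cofibrations and weak equivalences in $\cc{H}om_p(\hat{\ff{J}}^{op},\cc{C})$ to be exactly the preimages under $\ff{T}^*$ of the corresponding classes in $p\cc{H}om_p(\ff{J}^{op},\cc{C})$. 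First I would verify that $\ff{T}^*$, being an isomorphism, is trivially a retract pseudo-equivalence in the sense of \ref{pequivalencia}; then I would invoke \ref{pseudo-equivalencerespetaaxiomas} to conclude that $\cc{H}om_p(\hat{\ff{J}}^{op},\cc{C})$, equipped with the transported classes, satisfies all the closed 2-bmodel axioms 2-M0b, 2-M2, 2-M5, 2-M6a), 2-M6b), 2-M6c).

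The only point requiring genuine care—and the step I expect to be the main obstacle—is checking axiom 2-M0b, closure under finite weighted bi-limits and bi-colimits. Proposition~\ref{pseudo-equivalencerespetaaxiomas} handles 2-M0b in its proof, but one should make sure the relevant bi-limits actually exist on the $\cc{H}om_p(\hat{\ff{J}}^{op},\cc{C})$ side; here Corollary~\ref{pointwise en sombrero} is decisive, as it states that $\cc{H}om_p(\hat{\ff{A}},\cc{C})$ has all bi-limits and bi-cotensors of pseudo-functors computed pointwise (and dually bi-colimits and bi-tensors), which is precisely what 2-M0b demands. Thus the cleanest route may be to observe that 2-M0b holds intrinsically for $\cc{H}om_p(\hat{\ff{J}}^{op},\cc{C})$ via \ref{pointwise en sombrero}, while the remaining axioms transfer across the isomorphism $\ff{T}^*$ through \ref{pseudo-equivalencerespetaaxiomas}. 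I would spell out the identification $\widehat{\ff{J}^{op}}=\hat{\ff{J}}^{op}$ and the fact that $\ff{J}^{op}$ is filtered (so that $\hat{\ff{J}}^{op}$ is 2-filtered, as in the remark after \ref{Asombrero}), and then let the two cited results do the work; no lengthy computation is needed beyond confirming that the transported classes coincide with any pointwise characterisation one might prefer.
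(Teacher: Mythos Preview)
Your proposal is correct and follows essentially the same approach as the paper: combine the isomorphism of Proposition~\ref{Asombrero} (together with $\widehat{\ff{J}^{op}}=\hat{\ff{J}}^{op}$) with Theorem~\ref{CJdeclosed 2-modelos}. The paper's proof is simply the one-line citation ``It follows immediately from \ref{CJdeclosed 2-modelos} and \ref{Asombrero}''; your invocation of \ref{pseudo-equivalencerespetaaxiomas} and \ref{pointwise en sombrero} is a legitimate but unnecessary elaboration, since an outright isomorphism of 2-categories transports all the axioms on the nose.
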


\begin{proof}
 It follows immediately from \ref{CJdeclosed 2-modelos} and \ref{Asombrero}.
\end{proof}

It is worth mention that the proof given in this subsection can be easily adapted to the case of closed 2-model 2-categories giving the following also interesting result:

\begin{theorem}
$\pCJ$ with the structure provided in \ref{estructuraenCJ} is a closed \mbox{2-model} 2-category if $\cc{C}$ is. \cqd
\end{theorem}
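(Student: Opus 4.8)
The plan is to run the proof of Theorem~\ref{CJdeclosed 2-modelos} essentially unchanged. The three classes of fibrations, cofibrations and weak equivalences in $\pCJ$ are defined in \ref{estructuraenCJ} with no reference to whether $\cc{C}$ is a closed 2-model or a closed 2-bmodel 2-category, and the two notions of closed structure differ only in their first axiom (2-M0 versus 2-M0b). Consequently the verifications of axioms 2-M5, 2-M6a), 2-M6b), 2-M6c) and 2-M2 transcribe verbatim, the only systematic change being that every bi-limit, bi-pushout, bi-pullback and bi-tensor appearing in the argument is replaced by its pseudo counterpart (pseudo-limit, pseudo-pushout, pseudo-pullback, pseudo-tensor), all of which exist because $\cc{C}$ now satisfies the stronger axiom 2-M0. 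In particular the comparison object $\ff{Z}$ built in the proof of 2-M2 remains a genuine pseudo-functor, so we still work inside $p\cc{H}om_p$ rather than $\cc{H}om$.

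First I would restate and reprove the four supporting lemmas \ref{lema0.2}, \ref{lema1.2}, \ref{lema3.2} and \ref{lema4.2} with ``bi-limit'' everywhere replaced by ``pseudo-limit''. The 2-adjunction between the constant-diagram 2-functor and the inverse-pseudo-limit functor is obtained exactly as before from \ref{equivalencia de adjuntos fiore} and transferred through \ref{lemaadjuntos2}, and the pseudo-limit universal property \eqref{isoplim} furnishes genuine isomorphisms rather than mere equivalences, which only strengthens the arguments: the invertible structural 2-cells $\alpha_j$, $\beta_0^j$, $\beta_1^j$, $\mu_k$, $\eta_k$, $\epsilon_k$, $\ldots$ remain invertible, and every elevator-calculus identity in the inductive construction of fillers goes through word for word. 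Since the pointwise characterization of fibrations in \ref{lema1.2} is the engine of the whole argument, once its pseudo-version is in place the inductive proofs of 2-M2 and 2-M6 require no new ideas.

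It then remains to verify axiom 2-M0, namely that $\pCJ$ is closed under finite weighted pseudo-limits and pseudo-colimits of pseudo-functors, computed pointwise. Conical pseudo-limits and pseudo-colimits in $p\cc{H}om_p(\ff{J}^{op},\cc{C})$ are performed pointwise by \ref{pointwise en phomp}, and since $\cc{C}$ has all finite weighted pseudo-limits and pseudo-colimits (axiom 2-M0), these exist pointwise in $\pCJ$. By the reduction of weighted limits to conical limits together with cotensors (the pseudo analogue of \ref{limites que alcanzan}, see \cite{Street}, \cite{K2}, \cite{Canevalli}), the finite weighted case follows once one knows that finite pseudo-cotensors are also computed pointwise.

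This last point is exactly where the one genuine obstacle lies. The cited \ref{tensorptoapto} records only that \emph{bi}-tensors are computed pointwise in $p\cc{H}om_p(\cc{C},\cc{D})$; its second clause does not assert the same for pseudo-tensors. Hence, before declaring the adaptation complete, I would need to prove a pointwise-pseudo-cotensor lemma for $p\cc{H}om_p$, by an argument parallel to \ref{pointwisebi-limit} and \ref{tensorptoapto} that uses the evaluation quasifunctor of \ref{evaluation} to preserve and reflect the defining universal property up to isomorphism. Everything else being a mechanical substitution of ``pseudo'' for ``bi'', this pointwise-cotensor statement is the only place where the adaptation requires more than bookkeeping; with it in hand, axiom 2-M0 holds and the remaining axioms follow as above.
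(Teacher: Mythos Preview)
Your approach is exactly the paper's: the paper's proof consists of the single sentence ``the proof given in this subsection can be easily adapted to the case of closed 2-model 2-categories,'' followed by the boxed statement. You carry out that adaptation explicitly, replacing every bi-construction by its pseudo analogue and relying on the same chain of lemmas \ref{lema0.2}--\ref{lema4.2}.

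You are in fact more careful than the paper on one point. You correctly observe that \ref{tensorptoapto} only records pointwise \emph{bi}-tensors in $p\cc{H}om_p$ (clause 2), whereas for axiom 2-M0 you need pointwise \emph{pseudo}-cotensors. The paper does not address this, and your proposal to supply the missing pointwise-pseudo-cotensor lemma by imitating the proof of \ref{pointwisebi-limit} is the honest way to close the gap. Note, though, that this is the one place where the adaptation is genuinely delicate: when $\F$ is a pseudo-functor rather than a 2-functor, the argument of \ref{tensorptoapto}(1) does not apply directly, and one must check that the pointwise pseudo-tensor universal isomorphisms assemble 2-naturally (not merely pseudo-naturally) in the variable $\G$. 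The paper's blanket assertion that the proof ``can be easily adapted'' silently assumes this goes through.
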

\subsection{Closed 2-bmodel structure in $\Pro{C}$}\label{closed 2-bmodel structure in Prop}

In order to prove that $\Pro{C}$ is a closed 2-bmodel 2-category, we are going to give first a closed 2-bmodel structure to its retract pseudo-equivalent 2-category $\Prop{C}$ (see \ref{proppseudoeqapro}). As we have already said, we were forced to work with pseudo-natural transformations instead of 2-natural transformations due to the non-strict commutativity of diagrams.
We start with the finite completeness and finite cocompleteness aspects of the 2-category $\Prop{C}$.

%such that $p\cc{H}om_p(\ff{J}^{op},\cc{C})$ is a closed 2-bmodel 2-category $\forall \ \ff{J}$ cofinite and filtered poset with a unique initial object. 

% \begin{remark}
%  If $p\cc{H}om_p(\ff{J}^{op},\cc{C})$ is a closed 2-bmodel 2-category, then $\cc{H}om_p(\hat{\ff{J}}^{op},\cc{C})$ also is it.
% \end{remark}
% \begin{proof}
%  It is straightforward from \ref{Asombrero}.
% \end{proof}

\begin{proposition}\label{Bp}
Let $\ff{J}$ be a filtered category and $\cc{C}$ a 2-category with finite bi-colimits (respectively bi-limits, bi-tensors and bi-cotensors). Consider $\cc{J}=\hat{\ff{J}}$ (see \ref{Asombrero}). Then the inclusion 2-functor $\cc{H}om_p(\cc{J}^{op},\cc{C})\mr{inc}\Prop{C}$ preserves finite bi-colimits (respectively bi-limits, bi-tensors and bi-cotensors). 
\end{proposition}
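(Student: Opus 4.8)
The statement asserts that the inclusion $\cc{H}om_p(\cc{J}^{op},\cc{C}) \mr{inc} \Prop{C}$ preserves finite bi-colimits (dually bi-limits, bi-tensors and bi-cotensors), where $\cc{J} = \hat{\ff{J}}$ for a filtered category $\ff{J}$. My strategy is to reduce everything to two facts already available in the excerpt: first, that weak (bi-)limits and colimits in the relevant $\cc{H}om_p$ 2-categories are computed pointwise (Proposition \ref{pointwisebi-limit} together with Remark \ref{pointwise en phomp} and Corollary \ref{pointwise en sombrero}), and second, the explicit description of morphisms and 2-cells in $\Prop{C}$ via the basic formula \eqref{basica2} in its pseudo-form, i.e. the equivalence $\Prop{C}(\ff{X},\ff{Y}) \simeq \Lim{j\in \cc{J}}{\coLim{i\in \cc{I}}{\cc{C}(\ff{X}_i,\ff{Y}_j)}}$ from Remark \ref{formula en prop}. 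I would treat the bi-colimit case in detail and note that the others follow by the same method (with the dual of \eqref{basica2} for bi-limits, and the pointwise nature of bi-tensors/bi-cotensors from \ref{tensorptoapto} and \ref{pointwise en sombrero}).

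\textbf{Key steps for the bi-colimit case.} Let $\cc{P} \mr{\ff{F}} \cc{H}om_p(\cc{J}^{op},\cc{C})$ be a finite pseudo-functor with finite weight $\ff{W}$, and let $\ff{F}_p \mr{\lambda_p} \ff{L}$ be a weighted bi-colimit pseudo-cone computed in $\cc{H}om_p(\cc{J}^{op},\cc{C})$. By \ref{pointwise en sombrero} this bi-colimit is computed pointwise, so for each $j \in \cc{J}$ the cone $\ff{F}_p(j) \mr{(\lambda_p)_j} \ff{L}(j)$ is a weighted bi-colimit in $\cc{C}$. The goal is to show that $inc(\ff{F}_p) \mr{inc(\lambda_p)} inc(\ff{L})$ is a weighted bi-colimit pseudo-cone in $\Prop{C}$. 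Since bi-colimits are characterized by the comparison arrow being an equivalence (Definition \ref{preservation}), I would fix an arbitrary 2-pro-object $\ff{Y} = (\ff{Y}_m)_{m\in \cc{M}} \in \Prop{C}$ and examine the induced functor on pseudo-cone categories. Using Remark \ref{formula en prop}, there is an equivalence $\Prop{C}(inc(\ff{L}),\ff{Y}) \simeq \Lim{m\in \cc{M}}{\coLim{k\in \cc{J}}{\cc{C}(\ff{L}_k,\ff{Y}_m)}}$, where I write $\ff{L}_k = \ff{L}(k)$. The heart of the argument is to commute the weighted bi-limit $\ff{PC}_{(-)}(\ff{F}_p,\ff{Y})$ defining pseudo-cones past these $\Lim$ and $\coLim$ constructions: because $\cc{J} = \hat{\ff{J}}$ with $\ff{J}$ filtered, $\cc{J}$ is 2-filtered (by \ref{Asombrero} plus \ref{cofinal implica filtrante}), and filtered (pseudo-)colimits of categories commute with finite (pseudo-)limits. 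This lets me transport the pointwise bi-colimit property in $\cc{C}$ through the formula, showing the comparison functor on hom-categories is an equivalence for every $\ff{Y}$, which by the pointwise criterion of \ref{equivalencias pointwise} (applicable since $\Prop{C}$ uses pseudo-natural transformations) yields that $inc$ sends the bi-colimit cone to a bi-colimit cone.

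\textbf{The main obstacle.} The delicate point is the interchange of the finite weighted bi-limit (defining the pseudo-cone/universal-property categories) with the 2-filtered pseudo-colimit $\coLim{k\in \cc{J}}{(-)}$ appearing in the $\Prop{C}$ hom-formula. In ordinary category theory filtered colimits commute with finite limits, and the 2-categorical analogue — 2-filtered pseudo-colimits of categories commuting with finite weighted pseudo-limits — is exactly what is needed, but it must be invoked carefully using the explicit construction of 2-filtered pseudo-colimits (Construction LL, \ref{defLF}, and \ref{construccionDS}) rather than taken for granted. I expect the bookkeeping to hinge on verifying that the finiteness of $\cc{P}$ and $\ff{W}$ matches the commutation hypothesis, and that the equivalences supplied by \ref{formula en prop} are natural enough to be assembled across the diagram $\cc{P}$. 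The dual assertions for bi-limits and the bi-tensor/bi-cotensor cases are then formally identical, replacing \eqref{basica2} by its dual and appealing to \ref{tensorptoapto} and \ref{pointwise en sombrero} for the pointwise computation; I would state these explicitly but leave the verification to the reader, since no new idea is required beyond the filtered-commutes-with-finite principle established in the bi-colimit case.
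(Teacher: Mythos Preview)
Your proposal is correct and matches the paper's approach: the paper proves the bi-colimit case by writing out the explicit chain of equivalences
\[
\Prop{C}(inc(\bicoLim{\alpha}{\ff{D}_\alpha}),\ff{Y}) \simeq \Lim{i}{\coLim{j}{\cc{C}((\bicoLim{\alpha}{\ff{D}_\alpha})(j),\ff{Y}_i)}} \simeq \cdots \simeq \biLim{\alpha}{\Prop{C}(inc(\ff{D}_\alpha),\ff{Y})},
\]
using exactly the three ingredients you identify (the hom-formula of \ref{formula en prop}, the pointwise nature of bi-colimits from \ref{pointwise en sombrero}, and the commutation of 2-filtered pseudo-colimits with finite bi-limits, the latter cited as \cite{Canevalli}); the bi-tensor case is handled separately via \cite[2.4]{DS} for the commutation of $\cc{C}at(\E,-)$ with the 2-filtered pseudo-colimit, and the bi-limit/bi-cotensor cases are declared dual.
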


\begin{proof}
We are going to prove the assertion for bi-colimits and bi-tensors, other cases are dual to these ones.

\begin{itemize}
 \item[-] Let $\{\ff{D}_{\alpha}\}_{\alpha \in \Gamma}$ be a finite diagram in $\cc{H}om_p(\cc{J}^{op},\cc{C})$. We have to check that $inc(\bicoLim{\alpha \in \Gamma}{\ff{D}_\alpha})$ is the bi-colimit $\bicoLim{\alpha \in \Gamma}{inc(\ff{D}_\alpha)}$ in $\Prop{C}$ i.e. that \mbox{$\forall \ \ff{Y}=\{\ff{Y}_i\}_{i\in \cc{I}}\in \Prop{C}$}, 
 $$\Prop{C}(inc(\bicoLim{\alpha \in \Gamma}{\ff{D}_\alpha}),\ff{Y}) \simeq \biLim{\alpha \in \Gamma}{\Prop{C}(inc(\ff{D}_\alpha),\ff{Y})} \mbox{(see \ref{colimits}):}$$

We do as follows: 
 
$$\Prop{C}(inc(\bicoLim{\alpha \in \Gamma}{\ff{D}_\alpha}),\ff{Y}) \stackrel{I}{\simeq}
\Lim{i\in \cc{I}}{\coLim{j\in \cc{J}}{\cc{C}((\bicoLim{\alpha \in \Gamma}{\ff{D}_\alpha})(j),\ff{Y}_i)}} \stackrel{II}{\simeq}$$
$$\Lim{i\in \cc{I}}{\coLim{j\in \cc{J}}{\cc{C}(\bicoLim{\alpha \in \Gamma}{\ff{D}_\alpha j},\ff{Y}_i)}} \stackrel{III}{\simeq}
\Lim{i\in \cc{I}}{\coLim{j\in \cc{J}}{\biLim{\alpha \in \Gamma}{\cc{C}(\ff{D}_\alpha j,\ff{Y}_i)}}} \stackrel{IV}{\simeq} $$
$$\Lim{i\in \cc{I}}{\biLim{\alpha\in \Gamma}{\coLim{j \in \cc{J}}{\cc{C}(\ff{D}_\alpha j,\ff{Y}_i)}}} \stackrel{V}{\simeq}
\biLim{\alpha \in \Gamma}{\Lim{i\in \cc{I}}{\coLim{j \in \cc{J}}{\cc{C}(\ff{D}_\alpha j,\ff{Y}_i)}}} \stackrel{VI}{\simeq}$$
$$ \biLim{\alpha \in \Gamma}{\Prop{C}(inc(\ff{D}_\alpha),\ff{Y})}$$

\noindent where $I$ is due to \ref{formula en prop}, $II$ is due to the fact that bi-colimits in $\cc{H}om_p(\cc{J}^{op},\cc{C})$ are computed pointwise (see \ref{pointwise en sombrero}), $III$ holds by \ref{colimits}, $IV$ is true because the 2-filtered bi-colimit and the finite bi-limit of categories can be replaced by equivalent pseudo-colimit and pseudo-limit, and these commute (\cite{Canevalli}), $V$ holds because bi-limits are associative up to equivalence and $VI$ is due to \ref{formula en prop} again.

\item[-] Let $\E$ be a finite category and $\F\in \cc{H}om_p(\cc{J}^{op},\cc{C})$. We denote for simplicity \mbox{$\tilde{\otimes}_{\cc{H}} = \tilde{\otimes}_{\cc{H}om_p(\cc{J}^{op},\cc{C})}$,} $\tilde{\otimes}_{\cc{P}} = \tilde{\otimes}_{\Prop{C}}$. 
We have to check that $inc(\E \tilde{\otimes}_{\cc{H}} \F )$ is the bi-tensor $\E \tilde{\otimes}_{\cc{P}} inc(\F)$, i.e. that \mbox{$\forall \ \ff{Y}=\{\ff{Y}_i\}_{i\in \cc{I}}\in \Prop{C}$}, 
$$\Prop{C}(inc(\E \tilde{\otimes}_{\cc{H}} \F ),\Y)\simeq \cc{C}at(\E,\Prop{C}(inc(\F),\Y)) \mbox{(see \ref{bi-tensor}):}$$

We do as follows:

$$\Prop{C}(inc(\E \tilde{\otimes}_{\cc{H}} \F),\Y)\stackrel{I}{\simeq}
\Lim{i\in \cc{I}}{\coLim{j\in \cc{J}}{\cc{C}((\E \tilde{\otimes}_{\cc{H}} \F)(j),\Y_i)}} \stackrel{II}{\simeq}$$
$$\Lim{i\in \cc{I}}{\coLim{j\in \cc{J}}{\cc{C}(\E \tilde{\otimes}_{\cc{C}} \F j,\Y_i)}} \stackrel{III}{\simeq}
\Lim{i\in \cc{I}}{\coLim{j\in \cc{J}}{\cc{C}at(\E ,\cc{C}(\F j,\Y_i))}} \stackrel{IV}{\simeq}$$
$$\Lim{i\in \cc{I}}{\cc{C}at(\E ,\coLim{j\in \cc{J}}{\cc{C}(\F j,\Y_i)})} \stackrel{V}{\simeq}
\cc{C}at(\E ,\Lim{i\in \cc{I}}{\coLim{j\in \cc{J}}{\cc{C}(\F j,\Y_i)}}) \stackrel{VI}{\simeq}$$
$$\cc{C}at(\E ,\Prop{C}(inc(\F),\Y))$$

\noindent where $I$ is due to \ref{formula en prop}, $II$ is due to the fact that bi-tensors in $\cc{H}om_p(\cc{J}^{op},\cc{C})$ are computed pointwise (see \ref{pointwise en sombrero}, \ref{tensorptoapto}), $III$ holds by definition \ref{bi-tensor}, $IV$ is true by \cite[2.4]{DS}, $V$ is due to definition \ref{colimits} and $VI$ holds by \ref{formula en prop} again.
\end{itemize}
\end{proof}

\begin{proposition} \label{2M0b}
  If $\cc{C}$ has finite weighted bi-limits and bi-colimits of pseudo-functors $\F : \cc{P} \mr{} \cc{C}$ with finite weights $\ff{W}: \cc{P} \mr{} \cc{C}at$ (see \cite{K2}), then so does $\Prop{C}$ (to simplify, by finite we mean that $\cc{P}$ is finite and $\ff{W}(\ff{P})$ is finite for all $\ff{P} \in \cc{P}$).
\end{proposition}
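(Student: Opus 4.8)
<br>

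The plan is to reduce the statement about finite weighted bi-limits and bi-colimits in $\Prop{C}$ to the corresponding statement in $\cc{C}$, via the reindexing machinery of Section \ref{Mtrick} together with the completeness of the intermediate 2-categories $\cc{H}om_p(\cc{J}^{op},\cc{C})$ established in \ref{Bp}. By the reduction in \ref{limites que alcanzan} (dualized to cover both bi-limits and bi-colimits), it suffices to produce bi-equalizers, finite bi-products (including the bi-initial and bi-terminal objects) and bi-(co)tensors with a finite category. Since all of these are \emph{finite} weighted bi-(co)limits, they are indexed by a finite diagram $\Delta \mr{\D} \Prop{C}$, and this is exactly the situation handled by the reindexing results for diagrams.

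First I would invoke \ref{reindexingparadiagramasenprop}: any finite weighted bi-(co)limit in $\Prop{C}$ is computed out of a finite diagram $\D \in \cc{H}om(\Delta, \Prop{C})$, and this diagram admits a lifting, up to equivalence, to a diagram $\D' \in \cc{H}om_p(\ff{J}^{op},\cc{C})$ for some cofinite filtered poset with a unique initial object $\ff{J}$. Concretely, there are 2-pro-objects $\ff{X}'_m$ indexed by $\ff{J}$ realizing the vertices of $\D$, with the structural morphisms of the diagram lifted to morphisms in $\cc{H}om_p(\ff{J}^{op},\cc{C})$, all commuting up to invertible 2-cells. Writing $\cc{J}=\hat{\ff{J}}$ as in \ref{Asombrero} (so that $\ff{J}$ filtered makes $\cc{J}$ 2-filtered and the relevant bi-(co)limits in $\cc{H}om_p(\cc{J}^{op},\cc{C})$ exist and are computed pointwise by \ref{pointwise en sombrero}), I would form the desired finite weighted bi-(co)limit \emph{inside} $\cc{H}om_p(\cc{J}^{op},\cc{C})$; its existence follows pointwise from the corresponding finite weighted bi-(co)limit in $\cc{C}$, which exists by hypothesis.

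The crucial step is then \ref{Bp}: the inclusion 2-functor $\cc{H}om_p(\cc{J}^{op},\cc{C}) \mr{inc} \Prop{C}$ preserves finite bi-limits, bi-colimits, bi-tensors and bi-cotensors. Applying $inc$ to the bi-(co)limit constructed in $\cc{H}om_p(\cc{J}^{op},\cc{C})$ therefore yields a bi-(co)limit in $\Prop{C}$ of the lifted diagram $inc(\D')$. Since $\D$ is equivalent to $inc(\D')$ in $\Prop{C}$ (up to invertible 2-cells, via the reindexing equivalence), and bi-(co)limits are invariant under equivalence of diagrams and are characterized up to equivalence by their universal property, this produces the required finite weighted bi-(co)limit of the original diagram $\D$. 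Assembling the special cases (bi-equalizers, finite bi-products, bi-(co)tensors with a finite category) and invoking \ref{limites que alcanzan} gives all finite weighted bi-limits and bi-colimits.

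The main obstacle I anticipate is bookkeeping the passage from an \emph{arbitrary} finite weighted bi-(co)limit to the finitely many generating shapes covered by \ref{limites que alcanzan}, and ensuring that a \emph{single} index poset $\ff{J}$ serves the whole finite diagram simultaneously — this is precisely why \ref{reindexingparadiagramasenprop} (lifting an entire finite diagram, not just one object or one morphism) is indispensable, rather than the weaker object- or morphism-level reindexing. A secondary delicate point is matching the finiteness hypotheses: one must check that the weight $\ff{W}$, being finite, keeps the relevant pointwise bi-(co)limits within the finite-weighted class where \ref{Bp} applies, and that the non-strict commutativities (invertible 2-cells rather than equalities) are harmless because all the universal properties in play are bi-universal. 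I would carry out the reduction to generating shapes first, then the lifting, then the transfer through $inc$, leaving the routine verification that equivalence of diagrams induces equivalence of bi-(co)limit cones to the reader.
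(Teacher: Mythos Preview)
Your proposal is correct and follows essentially the same route as the paper: reduce via \ref{limites que alcanzan} to the generating shapes, lift the finite diagram to some $\cc{H}om_p(\ff{J}^{op},\cc{C})$ using \ref{reindexingparadiagramasenprop}, pass to $\cc{H}om_p(\hat{\ff{J}}^{op},\cc{C})$ via \ref{Asombrero} and \ref{reindexing}, compute the bi-(co)limit there pointwise, and transfer it back to $\Prop{C}$ by \ref{Bp}. The only organizational difference is that the paper treats bi-tensors separately (needing only the object-level reindexing \ref{reindexingparaobjetos} rather than the diagram-level \ref{reindexingparadiagramasenprop}), but this is a minor simplification, not a change of strategy.
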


\begin{proof}
 We are going to prove only the case of bi-colimits. The case of bilimits is analogous and we leave it to the reader. We are going to check that $\Prop{C}$ has bi-colimits of pseudo-functors indexed by a finite category $\Delta$ with no loops. As a particular case, we will have bi-coequalizers, binary bi-coproducts and $0$ which, by \ref{limites que alcanzan}, is enough to prove the statement in the proposition. 

Let $\Delta \mr{\D} \Prop{C}$ be a pseudo-functor. Then, by \ref{reindexingparadiagramasenprop}, we have \mbox{$\Delta \mr{\D'} \cc{H}om_p(\ff{J}^{op},\cc{C})$} equivalent to $\D$ in $\Prop{C}$ as in the following diagram with $\ff{J}$ a cofinite and filtered poset with a unique initial object:

$$\xymatrix{ & \cc{H}om_p(\ff{J}^{op},\cc{C}) \ar[d]^{inc}_>>>>{\simeq \quad \;\;} \\ 
             \Delta \ar[ru]^{\D'} \ar[r]_{\D} & \Prop{C}}$$

If we apply the construction of \ref{Asombrero} to $\ff{J}$, we obtain $\Delta \mr{\D''} \cc{H}om_p(\hat{\ff{J}}^{op},\cc{C})$ which by \ref{reindexing} is equivalent to $\D$ in $\Prop{C}$.

It suffices to show that the bi-colimit of $inc(\D'')$ exists in $\Prop{C}$ and this follows from \ref{Bp} plus the fact that $\cc{C}$ is a closed 2-bmodel 2-category.

To conclude the proof, we have to check that $\Prop{C}$ has bi-tensors $\E \tilde{\otimes}_{\Prop{C}} \X$ with $\E$ a finite category:

By \ref{reindexingparaobjetos}, we obtain a 2-pro-object $\X'$ equivalent to $\X$ and indexed by a cofinite and filtered poset with a unique initial object $\ff{J}$. Then, by \ref{Asombrero} plus \ref{reindexing}, we can construct a 2-pro-object $\X''$ equivalent to $\X$ and indexed by $\cc{J}=\hat{\ff{J}}$. Then, by \ref{Bp}, there exists the bi-tensor $\E \tilde{\otimes}_{\Prop{C}} \X''$ and so there exists the bi-tensor $\E \tilde{\otimes}_{\Prop{C}} \X$. 
% Since we can construct all finite bicolimits by using bipushouts and $0$ (see \ref{}), we are going to check that $\Prop{C}$ is closed under this kind of bicolimits: 
% 
% Since $\cc{C}$ has $0$, it is straightforward that $\Prop{C}$ also has $0$. Let's focus then on the case of bipushouts:
% 
% \noindent Let $\ff{D}= \vcenter{\xymatrix@R=.4pc{& \ff{X} \\ 
%                          \ff{Z} \ar[ru]^{\ff{a}} \ar[rd]_{\ff{b}} \\
%                          & \ff{Y}}}$ 
%           be a diagram in $\Prop{C}$. By \ref{diagrama de diagramasenprop}, there exists a 2-filtered inverse system of diagrams $\ff{D}_i=\vcenter{\xymatrix@R=.4pc{& \ff{X}_i \\ 
%                                                     \ff{Z}_i \ar[ru]^{\ff{a}_i} \ar[rd]_{\ff{b}_i} \\
%                                                     & \ff{Y}_i}}$ 
%           in $\cc{C}$ with $i\in \cc{I}$ such that the diagram $\ff{D}'$ determined in $\Prop{C}$ by the $\ff{D}_i$'s is equivalent to $\ff{D}$. Since the diagram $\ff{D}'$ lives in $\cc{H}om(\cc{I}^{op},\cc{C})$, we have the bipushout of this diagram and then, by \ref{Bp}, we have its bipushout in $\Prop{C}$. Then, by \ref{}, we also have the bipushout of $\ff{D}$ in $\Prop{C}$.   
\end{proof}

In all what it follows we assume that $\cc{C}$ is a closed 2-bmodel 2-category.

\begin{definition}\label{estructuraenpropC}
 We define strong fibrations, strong cofibrations, strong trivial fibrations, strong trivial cofibrations in $\Prop{C}$ as the image of fibrations, cofibrations, fibrations that are also weak equivalences and cofibrations that are also weak equivalences respectively in some $\cc{H}om_p(\hat{\ff{J}}^{op},\cc{C})$ with $\ff{J}$ a cofinite, filtered poset with a unique initial object.

\begin{itemize}
 \item A morphism $\ff{f}\in \Prop{C}$ is a cofibration if is the retract in $\cc{H}om_p(\ff{2},\Prop{C})$ of a strong cofibration. 
 \item A morphism $\ff{f}\in \Prop{C}$ is a fibration if is the retract in $\cc{H}om_p(\ff{2},\Prop{C})$ of a strong fibration. 
 \item A morphism $\ff{f}\in \Prop{C}$ is a trivial cofibration if is the retract in $\cc{H}om_p(\ff{2},\Prop{C})$ of a strong trivial cofibration. 
 \item A morphism $\ff{f}\in \Prop{C}$ is a trivial fibration if is the retract in $\cc{H}om_p(\ff{2},\Prop{C})$ of a strong trivial fibration.
 \item A morphism $\ff{f}\in \Prop{C}$ is a weak equivalence if it can be factored up to isomorphism as $\ff{f}\cong \ff{p}\ff{i}$ where $\ff{p}$ is a trivial fibration and $\ff{i}$ is a trivial cofibration.
 \end{itemize}
\end{definition}

All the rest of this section is devoted  to prove the following theorem:
\begin{theorem}\label{Propde2modelos}
If $\cc{C}$ is a closed 2-bmodel 2-category, then 
$\Prop{C}$ with the structure given in \ref{estructuraenpropC} is a closed 2-bmodel \mbox{2-category.} \cqd  
\end{theorem}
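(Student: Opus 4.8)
The plan is to verify the axioms 2-M0b, 2-M2, 2-M5, 2-M6a), 2-M6b) and 2-M6c) for the structure defined in \ref{estructuraenpropC}, exploiting at every turn the machinery built up in the previous sections, in particular the reindexing results of section \ref{Mtrick} and the closed 2-bmodel structure on $\cc{H}om_p(\hat{\ff{J}}^{op},\cc{C})$ established in \ref{CJdeclosed 2-modelos} (together with its corollary for $\cc{H}om_p(\hat{\ff{J}}^{op},\cc{C})$). The crucial conceptual point is that $\Prop{C}$ is \emph{not} itself a 2-functor 2-category, but every finite diagram in it can be lifted, up to equivalence, to such a 2-category via \ref{reindexingparadiagramasenprop}; thus each axiom should be reduced to the corresponding (already known) statement in $\cc{H}om_p(\hat{\ff{J}}^{op},\cc{C})$.

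First I would dispose of axiom 2-M0b, which is exactly \ref{2M0b}. Next, for the factorization axiom 2-M2, given $\ff{X}\mr{\ff{f}}\ff{Y}\in \Prop{C}$ I would use \ref{mardesictrickparaflechasenprop} to lift $\ff{f}$ up to equivalence to a morphism $\ff{f}'\in \cc{H}om_p(\ff{J}^{op},\cc{C})$ and then, via \ref{Asombrero} and \ref{reindexing}, into some $\cc{H}om_p(\hat{\ff{J}}^{op},\cc{C})$; there $\ff{f}'$ factors by \ref{CJdeclosed 2-modelos} as a (trivial) cofibration followed by a (trivial) fibration, and pushing this factorization back along the equivalences and using axiom 2-M7 (from \ref{sobran axiomas}, valid once 2-M6a)--c) are verified) yields a factorization of $\ff{f}$ into a strong cofibration/trivial cofibration and a strong fibration/trivial fibration, hence into the required classes. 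The weak-equivalence axiom 2-M5 should follow from the analogous property in the lifted 2-categories together with the factorization definition of weak equivalence, again using that any finite configuration of composable arrows can be simultaneously lifted by \ref{reindexingparadiagramasenprop}.

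The heart of the proof is axioms 2-M6a), 2-M6b) and 2-M6c), which characterize fibrations, cofibrations and weak equivalences through lifting properties. Here the key technical tools are \ref{retractospreservanlifting} and \ref{retractospreservan}: since the classes in $\Prop{C}$ are defined as retracts of the strong classes, and retracts preserve lifting properties, one reduces each lifting problem for an arbitrary (co)fibration to the same problem for a strong one, which lives in a genuine $\cc{H}om_p(\hat{\ff{J}}^{op},\cc{C})$ where \ref{CJdeclosed 2-modelos} applies. For the ``if'' directions one must show, conversely, that a morphism with the appropriate lifting property is a retract of a strong one; this is precisely where the retract argument \ref{retractargument} enters, applied to the factorization from 2-M2. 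I expect the main obstacle to be the simultaneous lifting of a lifting-problem square: a commutative square in $\Prop{C}$ is a finite diagram and can be lifted by \ref{reindexingparadiagramasenprop}, but one must check that the lifted square still exhibits the relevant arrow as a (trivial) (co)fibration and that a filler downstairs transports back to a filler upstairs through the equivalences and the invertible $2$-cells they carry, with all the pasting identities of Definition \ref{lifting} preserved.

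A delicate subsidiary point, which I would handle with care, is the compatibility of the definition of the classes with the choice of index poset $\ff{J}$: the strong classes are defined ``in some $\cc{H}om_p(\hat{\ff{J}}^{op},\cc{C})$'', so I must confirm that the resulting notions of fibration, cofibration and weak equivalence on $\Prop{C}$ are independent of this choice. This independence is forced by the retract-closed definition together with \ref{reindexing} and \ref{pseudo-equivalencerespetaaxiomas}: any two such liftings of a given morphism are connected through 2-cofinal 2-functors, and \ref{pseudo-equivalencerespetaaxiomas} guarantees that the model structure is transported coherently along the induced retract pseudo-equivalences. Once 2-M6a)--c) are in place, \ref{sobran axiomas} and \ref{axiomasnecesariosysuficientes} supply the remaining structural axioms automatically, completing the verification that $\Prop{C}$ is a closed 2-bmodel 2-category.
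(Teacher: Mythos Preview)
Your broad strategy for 2-M0b, 2-M2 and 2-M6 matches the paper's, but there is a circularity in your 2-M2 sketch: you appeal to 2-M7 from \ref{sobran axiomas}, yet 2-M7 requires 2-M6, and your 2-M6 argument uses the retract argument \ref{retractargument}, which in turn needs 2-M2. The paper avoids this by observing directly that the transported factor $\overline{\ff{b}}\,\overline{\ff{b}'}\,\ff{p}'$ is a \emph{retract} of the strong fibration $\ff{p}'$ (via the equivalence data), hence a fibration by the very definition of the classes in \ref{estructuraenpropC}; no 2-M7 is needed.

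The genuine gap is your treatment of 2-M5. Weak equivalences in $\Prop{C}$ are \emph{not} defined levelwise, nor as images of weak equivalences in some $\cc{H}om_p(\hat{\ff{J}}^{op},\cc{C})$; they are composites $\ff{p}\ff{i}$ with $\ff{p}$ a trivial fibration and $\ff{i}$ a trivial cofibration, and these two pieces are in general retracts of strong maps living in \emph{different} index categories. Lifting the triple $\ff{f},\ff{g},\ff{g}\ff{f}$ to a single $\cc{H}om_p(\hat{\ff{J}}^{op},\cc{C})$ via \ref{reindexingparadiagramasenprop} does not exhibit any of them as a weak equivalence in that lift, so there is no 2-out-of-3 statement upstairs to invoke. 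The paper accordingly postpones 2-M5 to the very end, proving it only after 2-M6 through a chain of auxiliary lemmas: \ref{lema2'.2} (trivial cofibration $=$ cofibration $+$ weak equivalence), the lifting characterizations \ref{lema3'.2} and \ref{lema4'.2} (the latter uses the 2-niceness hypotheses 2-N1--2-N3 on $\cc{C}$, which you do not mention), a restricted 2-out-of-3 \ref{lema5'.2} for pairs of cofibrations or pairs of fibrations, proved with the mapping-cylinder constructions of \ref{2.3.5 de EH} and \ref{kretracto}, and finally \ref{lema6'.2}--\ref{lema9'.2} to handle the mixed cases. None of this homotopical apparatus appears in your sketch, and without it the argument for 2-M5 does not go through.

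A smaller point you anticipate but slightly misdiagnose: in the key lifting step (the paper's Lemma \ref{lema1'.2}) the strong trivial cofibration lives in $\cc{H}om_p(\hat{\ff{K}}^{op},\cc{C})$ and the strong fibration in $\cc{H}om_p(\hat{\ff{J}}^{op},\cc{C})$ for \emph{different} $\ff{K},\ff{J}$. One cannot simply lift the whole square to one of them via \ref{reindexingparadiagramasenprop}, because that would replace the given strong maps by merely equivalent ones. The paper instead constructs by hand an order-preserving map $\hat{\ff{J}}^{op}\mr{\F}\hat{\ff{K}}^{op}$ and reindexes $\ff{i}$ along it so that the resulting $\ff{i}'=\ff{i}\F$ is again a levelwise trivial cofibration, now over $\hat{\ff{J}}$; the lifting problem is then solved inside $\cc{H}om_p(\hat{\ff{J}}^{op},\cc{C})$ and transported back.
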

But before we state and prove the theorem we wanted in the first place:
\begin{theorem}\label{Proesde2-bmodelos}
If $\cc{C}$ is a closed 2-bmodel 2-category, then $\Pro{C}$ is a closed \mbox{2-bmodel} 2-category. 
\end{theorem}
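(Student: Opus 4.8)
The plan is to transfer the closed 2-bmodel structure from $\Prop{C}$ to $\Pro{C}$ by exhibiting the relationship between these two 2-categories as an instance of the general transfer machinery established in \ref{pseudo-equivalencerespetaaxiomas}. Recall from \ref{proppseudoeqapro} that $\Pro{C}$ and $\Prop{C}$ have the same objects, and that the canonical inclusion 2-functor $\Pro{C} \mr{} \Prop{C}$ (identity on objects, sending 2-natural transformations to themselves viewed as pseudo-natural transformations) is a retract pseudo-equivalence: its pseudo-quasi-inverse is the identity on objects, with the invertible pseudo-natural transformations guaranteed by the flexibility of the category-valued 2-functors associated to 2-pro-objects (\ref{proisflexible}, \ref{flexible2=p}). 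The strategy is therefore to apply \ref{pseudo-equivalencerespetaaxiomas} with $\cc{C} := \Pro{C}$, $\cc{D} := \Prop{C}$ and $\ff{F}$ this inclusion 2-functor.

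First I would verify the hypotheses of \ref{pseudo-equivalencerespetaaxiomas} precisely. By \ref{Propde2modelos}, the 2-category $\Prop{C}$ is a closed 2-bmodel 2-category, which supplies the ``$\cc{D}$ is a closed 2-bmodel 2-category'' requirement. The inclusion 2-functor $\Pro{C}\mr{\ff{F}}\Prop{C}$ must be checked to be a retract pseudo-equivalence in the sense of \ref{pequivalencia}, with the additional property $\ff{G}\ff{F}=id_{\Pro{C}}$ used in the proof of \ref{pseudo-equivalencerespetaaxiomas}; this is exactly the content of \ref{proppseudoeqapro}, where the pseudo-quasi-inverse is the identity on objects and $\ff{G}\ff{F}$ is the identity. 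Second, I would observe that \ref{pseudo-equivalencerespetaaxiomas} also requires that we ``define the corresponding structure in $\cc{C}$'': that is, a morphism $\ff{f}\in \Pro{C}$ is declared to be a fibration (respectively cofibration, weak equivalence) precisely when $\ff{F}\ff{f}$ is a fibration (respectively cofibration, weak equivalence) in $\Prop{C}$. With these definitions in place, \ref{pseudo-equivalencerespetaaxiomas} yields directly that $\Pro{C}$ satisfies axioms 2-M0b, 2-M2, 2-M5w, 2-M6a), 2-M6b) and 2-M6c), and hence, by the equivalent formulation of 2-M5 via 2-M5w together with \ref{axiomasnecesariosysuficientes} and \ref{sobran axiomas}, that $\Pro{C}$ is a closed 2-bmodel 2-category.

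The step I expect to require the most care is confirming that the inclusion $\Pro{C}\mr{}\Prop{C}$ genuinely fits the template of \ref{pseudo-equivalencerespetaaxiomas}, namely that it is a \emph{2-functor} (not merely a pseudo-functor) that is a retract pseudo-equivalence with $\ff{G}\ff{F}=id$. That the inclusion is a strict 2-functor is immediate from the definitions of $\Pro{C}$ and $\Prop{C}$ (both have the same objects, and a 2-natural transformation is in particular a pseudo-natural transformation, with composition and 2-cells preserved on the nose). The retract-pseudo-equivalence property, including the identity equation $\ff{G}\ff{F}=id_{\Pro{C}}$, is precisely \ref{proppseudoeqapro} combined with \ref{peqsiipf&f}; here one should note that $\ff{F}$ need not be essentially surjective in the strong sense, but retract pseudo-equivalence is all that \ref{pseudo-equivalencerespetaaxiomas} demands. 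Once this identification is made, the theorem follows formally, since all the hard analytic work (the existence of factorizations, the lifting characterizations, the bi-limit and bi-colimit completeness) has already been carried out for $\Prop{C}$ in \ref{Propde2modelos} and is simply transported across the pseudo-equivalence.

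\begin{proof}
By \ref{proppseudoeqapro}, the inclusion $\Pro{C}\mr{\ff{F}}\Prop{C}$ is a 2-functor which is a retract pseudo-equivalence of 2-categories whose pseudo-quasi-inverse $\ff{G}$ is the identity on objects and satisfies $\ff{G}\ff{F}=id_{\Pro{C}}$. Define a morphism $\ff{f}$ in $\Pro{C}$ to be a fibration, cofibration, or weak equivalence exactly when $\ff{F}\ff{f}$ is one in $\Prop{C}$. By \ref{Propde2modelos}, $\Prop{C}$ is a closed 2-bmodel 2-category. Applying \ref{pseudo-equivalencerespetaaxiomas} to $\ff{F}$ with this induced structure, we conclude that $\Pro{C}$ is also a closed 2-bmodel 2-category.
\end{proof}
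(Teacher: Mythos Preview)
Your proposal is correct and follows essentially the same approach as the paper: invoke the retract pseudo-equivalence $\Pro{C}\mr{}\Prop{C}$ from \ref{proppseudoeqapro}, the closed 2-bmodel structure on $\Prop{C}$ from \ref{Propde2modelos}, and transfer via \ref{pseudo-equivalencerespetaaxiomas}. The paper's proof is just the one-line version of what you wrote; your additional care in spelling out that the inclusion is a strict 2-functor and that $\ff{G}\ff{F}=id$ is welcome but not new content.
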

\begin{proof}
Recall that the inclusion $\Pro{C}\mr{}\Prop{C}$ is a retract pseudo-equivalence (see \ref{proppseudoeqapro}). Then, the result follows immediately from \ref{pseudo-equivalencerespetaaxiomas} and  \ref{Propde2modelos}.
\end{proof}
\vspace{2ex}

\emph{Proof of theorem \ref{Propde2modelos}}

{\bfseries Axiom 2-M0b}: It holds by proposition \ref{2M0b}.

\vspace{1ex}

{\bfseries Axiom 2-M2}: We are going to give the proof for the case where $\ff{p}$ is both a fibration and a weak equivalence and $\ff{i}$ is a cofibration. The other case is analogous and we leave it to the reader.
 
\noindent Let $\ff{X}\mr{\ff{f}} \ff{Y}\in \Prop{C}$. By \ref{mardesictrickparaflechasenprop}, we have a diagram of the form 

$$\xymatrix@R=1.5pc@C=1.5pc{\ff{X} \ar@{}[ddrr]|{\cong \; \Downarrow \; \gamma} \ar[rr]^{\ff{f}} \ar[dd]_{\ff{a}} 
			   & & \ff{Y} \ar[dd]^{\ff{b}} \\
			   & 
			    \\
			   \ff{X}' \ar[rr]_{\ff{f}'}
			   & & \ff{Y}' }$$

\noindent where $\ff{a}$ and $\ff{b}$ are equivalences with quasi inverses $\overline{\ff{a}}$ and $\overline{\ff{b}}$ and \mbox{$\ff{X}'\mr{\ff{f}'}\ff{Y}'\in \CJ$} for some $\ff{J}$ cofinite, filtered poset with a unique initial object.

Consider $\ff{J}^{op} \mr{\ff{T}} \widehat{\ff{J}^{op}}=\hat{\ff{J}}^{op}$ as in \ref{Asombrero}. Then there are 2-functors \mbox{$\widehat{\ff{X}'}$, $\widehat{\ff{Y}'}: \hat{\ff{J}}^{op}\mr{} \cc{C}$} such that $\widehat{\ff{X}'} \ff{T}=\ff{X}'$ and $\widehat{\ff{Y}'} \ff{T}=\ff{Y}'$. Then, by \ref{Asombrero} plus \ref{reindexing}, $\widehat{\ff{X}'}$ and $\widehat{\ff{Y}'}$ are equivalent to $\ff{X}'$ and $\ff{Y}'$ respectively in $\Prop{C}$ via some equivalences $\ff{a}'$, $\ff{b}'$ with quasi inverses $\overline{\ff{a}'}$, $\overline{\ff{b}'}$. Then we have the following diagram in $\Prop{C}$:

$$\vcenter{\xymatrix@R=1.5pc@C=1pc{\ff{X} \ar@{}[rrrd]|{\cong \; \Downarrow \; \gamma } \ar[rrr]^{\ff{f}} \ar[d]_{\ff{a}} &&& \ff{Y} \ar[d]^{\ff{b}} \\
			   \ff{X}' \ar@{}[rrrd]|{= \; \Downarrow \; id}  \ar[d]_{\ff{a}'} \ar[rrr]|{\comw{a} \ff{f}' \comw{a} } &&& \ff{Y'} \ar[d]^{\ff{b}'} \\
			   \widehat{\ff{X}'} \ar[r]_{\overline{\ff{a}'}} & \X' \ar[r]_{\ff{f}'} & \Y' \ar[r]_{\ff{b}'} & \widehat{\ff{Y}'} }}
\vcenter{\xymatrix{\quad = \quad }}
\vcenter{\xymatrix@R=1.5pc@C=1.5pc{\ff{X} \ar@{}[rrdd]|{\cong \; \Downarrow \; \ff{b}'\gamma \circ \ff{b}'\ff{f}'} \ar[rr]^{\ff{f}} \ar[dd]_{\ff{a}'\ff{a}} 
			   & & \ff{Y} \ar[dd]^{\ff{b}'\ff{b}} \\
			   & 
			    \\
			   \widehat{\ff{X}'} \ar[rr]_{\ff{b}'\ff{f}'\overline{\ff{a}'}}
			   & & \widehat{\ff{Y}'} }}$$

\noindent where the bottom row of this diagram belongs to $\cc{H}om_p(\hat{\ff{J}}^{op},\cc{C})$.

Since %$\pCJ$ is a closed 2-bmodel 2-category and 
$\cc{H}om_p(\hat{\ff{J}}^{op},\cc{C})$ is %isomorphic to it by \ref{Asombrero}, the latter is also 
a closed 2-bmodel 2-category, $\ff{b}'\ff{f}'\overline{\ff{a}'}$ can be factored as $ \ff{b}'\ff{f}'\overline{\ff{a}'} \Mr{\alpha \cong} \ff{p}'\ff{i}'$ where $\ff{p}'$ is both a fibration and a weak equivalence and $\ff{i}'$ is a cofibration in $\cc{H}om_p(\hat{\ff{J}}^{op},\cc{C})$. Consider $\ff{i}=\ff{i}'\ff{a}'\ff{a}$ and $\ff{p}=\overline{\ff{b}}\overline{\ff{b}'}\ff{p}'$. Then \mbox{$\ff{f}\cong \overline{\ff{b}}\overline{\ff{b}'}\ff{b}'\ff{b}\ff{f}\cong \overline{\ff{b}}\overline{\ff{b}'}\ff{b}'\ff{f}'\overline{\ff{a}'}\ff{a}'\ff{a}\cong \overline{\ff{b}}\overline{\ff{b}'}\ff{p}'\ff{i}'\ff{a}'\ff{a}=\ff{p}\ff{i}$.} It can be easily checked that $\ff{p}$ is a retract of $\ff{p}'$ and so, by definition of the structure in $\Prop{C}$, it is a fibration and $\ff{i}$ is a retract of $\ff{i}'$ and so it is a cofibration. It only remains to check that $\ff{p}$ is a weak equivalence: We 
know that $\ff{p}'$ is a weak 
equivalence in $\cc{H}om_p(\hat{\ff{J}}^{op},\cc{C})$, then $\ff{p}'\cong \ff{u}\ff{v}$ where $\ff{u}$ is both a fibration and a weak equivalence and $\ff{v}$ is both a cofibration and a weak equivalence in $\cc{H}om_p(\hat{\ff{J}}^{op},\cc{C})$ and so, $\ff{p}\cong\overline{\ff{b}}\overline{\ff{b}'}\ff{u}\ff{v}$. It can be checked that $\overline{\ff{b}}\overline{\ff{b}'}\ff{u}$ is a retract of $\ff{u}$ and $\ff{v}$ is a retract of $\ff{v}$ which concludes the proof of axiom 2-M2.

\vspace{1ex}

In order to prove axioms 2-M5 and 2-M6, we state and prove some previous lemmas:

\begin{lemma}\label{lema1'.2}
Given a diagram in $\Prop{C}$ of the form 

$$\xymatrix@R=3pc@C=1.5pc{\ff{A} \ar[rr]^{\ff{a}} \ar[d]_{\ff{i}} &\ar@{}[d]|{\cong \; \Downarrow \; \gamma}& \ff{Y} \ar[d]^{\ff{p}} \\
            \ff{X} \ar[rr]_{\ff{b}} && \ff{B}}$$

\noindent where $\ff{p}$ is a fibration and $\ff{i}$ is a trivial cofibration (or $\ff{p}$ is a trivial fibration and $\ff{i}$ is a cofibration), there exists a filler $(\ff{f},\lambda,\rho)$ for that diagram.

\end{lemma}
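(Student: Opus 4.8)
The statement to be proved (Lemma \ref{lema1'.2}) asserts a lifting property in $\Prop{C}$: every commutative-up-to-$\gamma$ square with $\ff{p}$ a fibration and $\ff{i}$ a trivial cofibration (or dually) admits a filler. The natural strategy is to reduce the problem, by means of the retract definitions of fibration and cofibration in \ref{estructuraenpropC}, to the corresponding statement in some $\cc{H}om_p(\hat{\ff{J}}^{op},\cc{C})$, where we already know the answer from \ref{CJdeclosed 2-modelos}. The plan is therefore: first lift the whole square to a $\CJ$-level square up to equivalence, then solve the lifting problem there, and finally transport the filler back and use the retract data to correct it.

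\textbf{Key steps.} First I would apply the reindexing result \ref{reindexingparadiagramasenprop} (or \ref{mardesictrickparaflechasenprop} applied to the relevant morphisms) to the given finite diagram --- the square together with $\ff{p}$, $\ff{i}$, $\ff{a}$, $\ff{b}$ --- to obtain an equivalent square living in $\cc{H}om_p(\ff{J}^{op},\cc{C})$ for some cofinite, filtered poset $\ff{J}$ with a unique initial object, and then pass to $\cc{H}om_p(\hat{\ff{J}}^{op},\cc{C})$ using \ref{Asombrero} and \ref{reindexing} exactly as in the proof of axiom 2-M2 above. Second, since $\ff{p}$ is a fibration in $\Prop{C}$, by definition \ref{estructuraenpropC} it is a retract (in $\cc{H}om_p(\ff{2},\Prop{C})$) of a \emph{strong} fibration, i.e. of the image of an honest fibration $\ff{p}'$ in some $\cc{H}om_p(\hat{\ff{J}}^{op},\cc{C})$; similarly $\ff{i}$ is a retract of a strong trivial cofibration $\ff{i}'$. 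Third, in $\cc{H}om_p(\hat{\ff{J}}^{op},\cc{C})$ --- which is a closed 2-bmodel 2-category by \ref{CJdeclosed 2-modelos} --- the pair $(\ff{i}',\ff{p}')$ has the lifting property by axiom 2-M1 (established in \ref{sobran axiomas} from 2-M6), so the transported square admits a filler $(\ff{f}',\lambda',\rho')$. Finally I would invoke \ref{retractospreservanlifting}: since $\ff{i}$ is a retract of $\ff{i}'$ and $\ff{p}$ is a retract of $\ff{p}'$, and the pair $(\ff{i}',\ff{p}')$ has the lifting property, the pair $(\ff{i},\ff{p})$ also has it, yielding the desired filler $(\ff{f},\lambda,\rho)$ for the original square.

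\textbf{Main obstacle.} The delicate point is bookkeeping the invertible 2-cells through the equivalences $\ff{a}$, $\ff{a}'$, $\ff{b}$, $\ff{b}'$ and their quasi-inverses: the square only commutes up to $\gamma$, and transporting $\gamma$ along the equivalences to the $\hat{\ff{J}}$-level (and transporting the filler back) requires careful pasting of structural 2-cells, exactly the kind of elevator-calculus manipulation carried out in the proof of axiom 2-M2. The cleanest route is to arrange matters so that \ref{retractospreservanlifting} can be applied directly, since that proposition already packages the correction of a filler along retracts in $\cc{H}om_p(\ff{2},\cc{C})$; the remaining work is then to verify that the retraction data for $\ff{i}$ over $\ff{i}'$ and for $\ff{p}$ over $\ff{p}'$ genuinely live in $\cc{H}om_p(\ff{2},\Prop{C})$, which follows from \ref{estructuraenpropC}. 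The dual case (with $\ff{p}$ a trivial fibration and $\ff{i}$ a cofibration) is entirely symmetric and I would simply remark that the same argument applies, interchanging the roles of fibrations and cofibrations and using the other half of axiom 2-M1.
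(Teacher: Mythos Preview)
Your reduction via \ref{retractospreservanlifting} (steps 2 and 4) is correct and is exactly how the paper begins: it suffices to treat the case where $\ff{i}$ is a \emph{strong} trivial cofibration, say in $\cc{H}om_p(\hat{\ff{K}}^{op},\cc{C})$, and $\ff{p}$ is a \emph{strong} fibration, say in $\cc{H}om_p(\hat{\ff{J}}^{op},\cc{C})$. The gap is in step 3, and step 1 does not fix it.

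The problem is that after the retract reduction, $\ff{i}$ and $\ff{p}$ live over \emph{different} index posets $\hat{\ff{K}}$ and $\hat{\ff{J}}$, and the horizontal maps $\ff{a},\ff{b}$ are only morphisms in $\Prop{C}$. You cannot simply ``apply 2-M1 in $\cc{H}om_p(\hat{\ff{J}}^{op},\cc{C})$'' because the square is not a square there. Your step 1 --- reindexing the whole square via \ref{reindexingparadiagramasenprop} --- does produce an equivalent square in some $\cc{H}om_p(\ff{J}'^{op},\cc{C})$, but the reindexed vertical maps are merely morphisms in that 2-category: they are fibrations/trivial cofibrations \emph{in $\Prop{C}$} (that notion is preserved under equivalence), not in $\cc{H}om_p(\ff{J}'^{op},\cc{C})$. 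Since fibrations in $\pCJ$ are \emph{not} defined pointwise (only cofibrations and weak equivalences are, see \ref{estructuraenCJ}), there is no reason the reindexed $\ff{p}$ is a fibration in $\cc{H}om_p(\ff{J}'^{op},\cc{C})$, so 2-M1 there gives you nothing.

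The paper exploits precisely this asymmetry. It keeps $\ff{p}$ fixed over $\hat{\ff{J}}$ and, using that trivial cofibrations are pointwise, pulls $\ff{i}$ back along a hand-built order-preserving $\F:\hat{\ff{J}}^{op}\to\hat{\ff{K}}^{op}$: then $\ff{i}'=\ff{i}\F$ is still a pointwise trivial cofibration, now over $\hat{\ff{J}}$. The real work --- which your sketch does not anticipate --- is the inductive construction of $\F$ \emph{together with} level representatives $\tilde{\ff{a}}_j,\tilde{\ff{b}}_j$ of the $\Prop{C}$-maps $\ff{a},\ff{b}$ and a compatible $\tilde{\gamma}$, using \ref{idrepresentap} and \ref{lema1p} at each stage and the 2-filteredness of $\ff{K}$ to align everything. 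Only then does one have a genuine lifting square in $\cc{H}om_p(\hat{\ff{J}}^{op},\cc{C})$ to which \ref{CJdeclosed 2-modelos} applies; the resulting filler is then transported back to $\Prop{C}$ via the canonical comparison maps $\ff{X}\to\ff{X}'$, $\ff{A}\to\ff{A}'$ induced by the projections $\pi_{\F(j)}$.
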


\begin{proof} We will focus on the case where $\ff{i}$ is a trivial cofibration and $\ff{p}$ is a fibration. The other one is analogous and we omit it. Also, since the lifting property is preserved by the formation of retracts (see \ref{retractospreservanlifting}), it is enough to check this lemma for $\ff{i}$ a strong trivial cofibration in some $\cc{H}om_p(\hat{\ff{K}}^{op},\cc{C})$ and $\ff{p}$ a strong trivial fibration in some $\cc{H}om_p(\hat{\ff{J}}^{op},\cc{C})$:

We are going to define an order preserving morphism $\hat{\ff{J}}^{op} \mr{\F} \hat{\ff{K}}^{op}$ and we are going to construct a diagram 

\begin{equation}\label{diagramalema1'.2}
\xymatrix@R=3pc@C=1.5pc{\ff{A}' \ar[rr]^{\tilde{\ff{a}}} \ar[d]_{\ff{i}'} &\ar@{}[d]|{\cong \; \Downarrow \; \tilde{\gamma}}& \ff{Y} \ar[d]^{\ff{p}} \\
            \ff{X}' \ar[rr]_{\tilde{\ff{b}}} && \ff{B}}
\xymatrix@R=.5pc{\\ \quad \in \cc{H}om_p(\hat{\ff{J}}^{op},\cc{C})}
\end{equation}

\noindent where $\ff{A}'=\ff{A}\F$, $\ff{X}'=\ff{X}\F$, $(\tilde{\ff{a}}_j,\epsilon_j)$ represents $\ff{a}$ and $(\tilde{\ff{b}}_j,\mu_j)$ represents $\ff{b} \ \forall j\in \ff{J}$ for some invertible 2-cells $\epsilon_j$, $\mu_j$:

For $j=0$, consider $\tilde{k_0}\in \ff{K}$ and morphisms $\ff{A}_{\tilde{k_0}}\mr{\ff{a}'_0}\ff{Y}_0$, $\ff{X}_{\tilde{k_0}}\mr{\ff{b}'_0}\ff{B}_0$ and appropriate invertible 2-cells $\epsilon_0$, $\mu_0$ such that $(\ff{a}'_0,\epsilon_0)$ represents $\ff{a}$ and $(\ff{b}'_0,\mu_0)$ represents $\ff{b}$ (see \ref{idrepresentap}). Consider $\ff{A}_{\tilde{k_0}} \mrpair{\ff{p}_0 \ff{a}'_0}{\ff{b}'_0 \ff{i}_{\tilde{k_0}}} \ff{B}_0$  and $\alpha_0$ given by the following composition

$$\vcenter{\xymatrix@C=-0pc{\ff{p}_0 \deq & & \ff{a}'_0 \dl & \dc{\epsilon_0} & \pi_{\tilde{k_0}} \dr \\
                              \ff{p}_0 \dl & \dc{=} & \pi_{0} \dr & & \ff{a} \deq \\
                              \pi_0 \deq & & \ff{p} \dl & \dc{\gamma} & \ff{a} \dr\\
                              \pi_0 \dl & \dc{\mu_0^{-1}} & \ff{b} \dr & & \ff{i} \deq \\
                              \ff{b}'_0 \deq & & \pi_{\tilde{k_0}} \dl & \dc{=} & \ff{i} \dr \\
                              \ff{b}'_0 & &\ff{i}_{\tilde{k_0}} && \pi_{\tilde{k_0}} }}$$
                              
\noindent Then, by \ref{lema1p}, there exists $\tilde{k_0}\leq k_0 \in \ff{K}$ and an invertible 2-cell $\ff{A}_{k_0}\cellrd{\ff{p}_0 \ff{a}'_0 \ff{A}_{\tilde{k_0}\leq k_0}}{\theta_0}{\ff{b}'_0 \ff{i}_{\tilde{k_0}} \ff{A}_{\tilde{k_0}\leq k_0}} \ff{B}_0 \in \cc{C}$ such that    

%A194

$$\vcenter{\xymatrix@C=-0pc{ \p_0 \dl && \fa'_0 \dc{\theta_0} && \dr \A\kko && \pi\sko \deq \\
\fb'_0 \deq && \ii\skto \deq && \A\kko && \pi\sko \\
\fb'_0 && \ii\skto &&& \pi\skto \clb{\pi\kko} }}
\vcenter{\xymatrix@C=-0pc{ \quad = \quad }}
\vcenter{\xymatrix@C=-0pc{ \p_0 \deq && \fa'_0 \deq && \A\kko && \pi\sko \\
\p_0 \dl && \fa'_0  & \dcr{\alpha_0} && \dr \pi\skto \clb{\pi\kko} \\
\fb'_0 && \ii\skto &&& \pi\skto }}$$

Take $\F(0)=k_0$, $\ff{i}'_0=\ff{i}_{k_0}$, $\tilde{\ff{a}}_0=\ff{a}'_0 \ff{A}_{\tilde{k_0}\leq k_0}$, $\tilde{\ff{b}}_0=\ff{b}'_0 \ff{X}_{\tilde{k_0}\leq k_0}$, $\tilde{\gamma}_0= %D35
\vcenter{\xymatrix@C=-0.2pc@R=1pc{ \p_0 \deq &&& \tilde{\fa}_0 \op{=} \\
\p_0 \dl && \fa'_0 \dc{\theta_0} && \dr \A\kko \\
\fb'_0 \deq && \ii\skto \dl & \dc{\ii\kko} & \dr \A\kko \\
\fb'_0 && \X\kko && \ii\sko \dcell{=} \\
& \tilde{\fb}_0 \cl{=} &&& \ii'_0}}
$ and we redefine $\epsilon_0$ and $\mu_0$ so that $(\tilde{\ff{a}}_0,\epsilon_0)$ represents $\ff{a}$ and $(\tilde{\ff{b}}_0,\mu_0)$ represents $\ff{b}$.

For $j\neq 0$, suppose that we have already defined all the data $\forall \ j'<j$. By using \ref{idrepresentap}, consider $\tilde{k_j}\in \ff{K}$, morphisms $\ff{A}_{\tilde{k_j}}\mr{\ff{a}'_j}\ff{Y}_j$, $\ff{X}_{\tilde{k_j}}\mr{\ff{b}'_j}\ff{B}_j$ and appropriate invertible 2-cells $\epsilon_j$, $\mu_j$ such that $(\ff{a}'_j,\epsilon_j)$ represents $\ff{a}$, $(\ff{b}'_j,\mu_j)$ represents $\ff{b}$ and $\tilde{k_j}\geq \F(j') \ \forall \ j'<j$ (this can be done because $\ff{K}$ is cofinite and filtered). 

Suppose $\{j'| j'<j\}=\{0,j_0,...,j_n\}$. By applying \ref{lema1p} to $\tilde{\ff{a}}_0$ and $\ff{a}'_j$ we obtain \mbox{$\tilde{\tilde{k_j}}\geq \tilde{k_j}, \F(0)$} and an invertible 2-cell $\ff{A}_{\tilde{\tilde{k_j}}}\cellrd{\ff{Y}_{0<j} \ff{a}'_j \ff{A}_{\tilde{k_j}\leq \tilde{\tilde{k_j}}}}{\mu}{\tilde{\ff{b}}_0 \ff{A}_{\F(0) \leq \tilde{\tilde{k_j}}}} \ff{Y}_0$ such that 

%A195

$$\vcenter{\xymatrix@C=-0pc{ \Y\oj \ardr && \fa'_j \dc{\mu} && \A\kkj \ardl && \pi\skttj \deq \\ 
& \tilde{\fa}_0 \deq && \A\vokj &&& \pi\skttj \\
& \tilde{\fa}_0 \dl && \dc{\eps_0} & \dr \pi\svo \clunodos{\pi\vokj} \\
& \pi_0 &&& \fa }}
\vcenter{\xymatrix@C=-0pc{ \quad = \quad }}
\vcenter{\xymatrix@C=-0pc{ \Y\oj \ddeq && \fa'_j \deq && \A\kkj && \pi\skttj \\
&& \fa'_j \dl && \dc{\eps_j} & \dr \pi\sktj \cl{\pi\kkj \ \ } \\
\Y\oj && \pi_j &&& \fa \deq \\
& \pi_0 \cl{\pi\oj} &&&& \fa}}$$

We rename $\ff{a}'_j=\ff{a}'_j \ff{A}_{\tilde{k_j}\leq \tilde{\tilde{k_j}}}$ and $\epsilon_j$ the corresponding invertible 2-cell. We repeat the procedure with each $j_i$ from $j_0$ to $j_n$ instead of $0$. Then we do the same thing for $\ff{b}$ and consider a new $k_j$ above both the obtained ones. Then we repeat for $j$ the procedure we did in the beginning for $0$ and we take $\ff{i}'_j=\ff{i}_{\F(j)}$ and $\tilde{\gamma}_j= %A196
\vcenter{\xymatrix@C=-0pc{ & \p_j \ardl & \dc{\theta_j} & \tilde{\fa}_j \ardr \\
\fb'_j \deq && \ii\sktj \dl & \dc{\ii\kvj} & \dr \A\kvj \\
\fb'_j && \X\kvj && \ii\svj \deq \\
& \tilde{\fb}_j \cl{=} &&& \ii'_j}}
$.
 
Since $\F$ is an order preserving morphism, we define it in morphisms and 2-cells in the obvious way.

It is straightforward to check that $\ff{A}'$ and $\ff{X}'$ are 2-functors. It also can be easily checked that $\tilde{\ff{a}}$, $\tilde{\ff{b}}$ and $\ff{i}'$ are pseudo-natural transformations.   

Then we have a filler $(\ff{f}',\lambda',\rho')$ for diagram \eqref{diagramalema1'.2}. 

Since $\{\ff{X}\mr{\pi_{\F(j)}} \ff{X}'_j\}_{j\in \ff{J}}$, $\Bigg\{ %D36
\vcenter{\xymatrix@C=-0.2pc@R1pc{ \X'_{j'<j} \dcell{=} && \pi\svj \deq \\
\X_{\F(j')<\F(j)} && \pi\svj \\
& \pi\svjj \clrightb{\pi_{\F(j')<\F(j)}} }}
\Bigg\}_{j'<j}$ is a pseudo-cone, there exists a morphism $\ff{X}\mr{\ff{g}}\ff{X}'\in \Prop{C}$ such that $\pi_j \ff{g}=\pi_{\F(j)} \ \forall \ j\in \ff{J}$ and \mbox{$\pi_{j'<j} \ff{g}=\pi_{\F(j')< \F(j)} \ \forall \ j'<j\in \ff{J}$}. In a similar way, we can define a morphism \mbox{$\ff{A}\mr{\ff{f}}\ff{A}'\in \Prop{C}$} such that  $\pi_j \ff{f}=\pi_{\F(j)} \ \forall \ j\in \ff{J}$ and $\pi_{j'<j} \ff{f}=\pi_{\F(j')< \F(j)} \ \forall \ j'<j\in \ff{J}$.

We also have invertible 2-cells $\ff{A}\cellrd{\ff{a}}{\alpha}{\tilde{\ff{a}} \ff{f}} \ff{Y}$, $\ff{X}\cellrd{\tilde{\ff{b}} \ff{g}}{\beta}{\ff{b}} \ff{Y}$ induced by the morphisms of pseudo-cones $\Bigg\{ %A197
\vcenter{\xymatrix@C=-0.2pc@R.5pc{ \pi_j \dl & \dcr{\eps_j\inv} && \fa \dr \\  \tilde{a}_j \deq &&& \pi\svj \op{=} \\ \tilde{a}_j \dl & \dc{=} & \pi_j \dr && \f \deq \\ \pi_j && \tilde{\ff{a}} && \f }}
\Bigg\}_{j\in \ff{J}}$, $\Bigg\{ %A198
\vcenter{\xymatrix@C=-0.2pc@R.5pc{ \pi_j \dl & \dc{=} & \dr \tilde{b} && \g \deq \\  \tilde{b}_j \deq && \pi_j && \g \\  \tilde{b}_j \dl & \dcr{\mu_j} && \dr \pi\svj \cl{=} \\  \pi_j &&& \fb}}
\Bigg\}_{j\in \ff{J}}$ respectively.

Note that $\ff{i}' \ff{f}=\ff{g} \ff{i}$.

Then,  $\;\;(\ff{f}' \ff{g}, %A199 
\vcenter{\xymatrix@C=-0.2pc@R=1pc{ &&&  \fa \opdosuno{\alpha \;\;} \\  & \tilde{\fa} \op{\lambda'} &&& \f \deq \\  \f' \deq && \ii' \dl & \dc{=} & \dr \f \\  \f' && \g && \ii }}
, %A200
\vcenter{\xymatrix@C=-0.2pc@R=1.3pc{ \p && \f' && \g \deq \\ & \tilde{\fb} \clmediob{\rho'} &&& \g \\ && \fb \clunodos{\beta} }}
)\;\;$ is the filler that we were looking for.

\end{proof}

\begin{lemma}\label{lema2'.2}
\comw{A}

\begin{enumerate}
 \item A morphism $\ff{i}\in \Prop{C}$ is a trivial cofibration iff is both a cofibration and a weak equivalence.
 \item A morphism $\ff{p}\in \Prop{C}$ is a trivial fibration iff is both a fibration and a weak equivalence.
\end{enumerate}
\end{lemma}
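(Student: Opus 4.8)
The final statement to prove is Lemma \ref{lema2'.2}, which characterizes trivial cofibrations and trivial fibrations in $\Prop{C}$ in terms of the weak equivalence condition. I will prove part (1), the statement for trivial cofibrations; part (2) for trivial fibrations is entirely dual, exchanging the roles of fibrations and cofibrations, and can be treated by the analogous argument.

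The plan is to prove both implications. For the forward direction ($\Rightarrow$), suppose $\ff{i}$ is a trivial cofibration. That it is a cofibration is immediate, since every strong trivial cofibration is in particular a strong cofibration (being the image of a cofibration that is also a weak equivalence), and the retract of a strong trivial cofibration is then a retract of a strong cofibration, hence a cofibration by definition. To see that $\ff{i}$ is a weak equivalence, I would factor $\ff{i} \cong id \circ \ff{i}$: a trivial cofibration can serve as its own trivial-cofibration factor, and the identity is a trivial fibration (it is the image of an identity, which is trivially a fibration and a weak equivalence in any $\cc{H}om_p(\hat{\ff{J}}^{op},\cc{C})$, and identities are retracts of themselves). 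This exhibits the factorization $\ff{i} \cong \ff{p}\,\ff{j}$ with $\ff{p}$ a trivial fibration and $\ff{j} = \ff{i}$ a trivial cofibration, which is exactly the definition of weak equivalence in \ref{estructuraenpropC}.

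For the converse direction ($\Leftarrow$), suppose $\ff{i}$ is both a cofibration and a weak equivalence. This is where the argument has real content. I would first use axiom 2-M2 (already established above) to factor $\ff{i} \cong \ff{p}\,\ff{j}$ with $\ff{j}$ a trivial cofibration and $\ff{p}$ a fibration. The goal is to show $\ff{p}$ is in fact a trivial fibration, and then to invoke a retract argument to conclude that $\ff{i}$ itself is a retract of the trivial cofibration $\ff{j}$. To identify $\ff{p}$ as trivial, I would use that $\ff{i}$ is a weak equivalence together with the 2-out-of-3 behavior: since $\ff{i} \cong \ff{p}\,\ff{j}$ and both $\ff{i}$ and $\ff{j}$ are weak equivalences (the latter by the forward direction just proved), the composite structure forces $\ff{p}$ to be a weak equivalence, so $\ff{p}$ is a fibration that is also a weak equivalence; one then shows such a morphism is a trivial fibration. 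The key technical tool here is Lemma \ref{lema1'.2}: because $\ff{i}$ is a cofibration and $\ff{p}$ is a fibration which we now know to be a trivial fibration (or, dually, because $\ff{i}$ is a trivial cofibration), the square expressing $\ff{i} \cong \ff{p}\,\ff{j}$ admits a lifting. Applying the 2-dimensional retract argument \ref{retractargument}, the lifting exhibits $\ff{i}$ as a retract of $\ff{j}$ in $\cc{H}om_p(\ff{2},\Prop{C})$. Since $\ff{j}$ is a trivial cofibration and, by \ref{retractospreservan}-style reasoning, trivial cofibrations in $\Prop{C}$ are closed under retracts (being defined precisely as retracts of strong trivial cofibrations, and a retract of a retract is a retract), $\ff{i}$ is itself a trivial cofibration.

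The main obstacle I anticipate is the circularity lurking in the $\Leftarrow$ direction: to identify $\ff{p}$ as a trivial fibration I want to use that $\ff{p}$ is a fibration which is a weak equivalence, but establishing ``fibration $+$ weak equivalence $\Rightarrow$ trivial fibration'' is essentially part (2) of this very lemma. The resolution is to be careful about the logical order. One proves both directions of (1) and (2) together, using the weaker, definitional facts (closure of the strong classes under retracts, the factorization axiom 2-M2, and the lifting Lemma \ref{lema1'.2}) as the genuinely primitive inputs, and arranging the implications so that the forward directions of (1) and (2) are proved first with no dependence on the converses, and only then the converses, each of which invokes the already-established forward direction of the other. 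Verifying that this bootstrapping is non-circular, and that the 2-cells produced by the retract argument \ref{retractargument} assemble correctly in $\cc{H}om_p(\ff{2},\Prop{C})$ (matching the explicit retraction data of \ref{retractos}), will be the delicate part of the write-up; the elevator-calculus bookkeeping is routine once the logical skeleton is fixed.
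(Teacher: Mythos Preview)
Your forward direction $(\Rightarrow)$ is correct and matches the paper exactly.

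Your backward direction $(\Leftarrow)$ has a genuine gap, and the circularity you flag is not resolved by your proposed bootstrapping. You factor $\ff{i}$ via axiom 2-M2 as $\ff{i}\cong\ff{p}\,\ff{j}$ with $\ff{j}$ a (trivial) cofibration and $\ff{p}$ merely a fibration, and then you need to upgrade $\ff{p}$ to a trivial fibration. For this you invoke 2-out-of-3, but axiom 2-M5 for $\Prop{C}$ has \emph{not} been established at this point in the development; it is proved last, after Lemmas \ref{lema5'.2}--\ref{lema9'.2}. Moreover, your bootstrapping scheme says the converse of (1) uses the forward direction of (2), but what you actually need is ``fibration $+$ weak equivalence $\Rightarrow$ trivial fibration'', which is the \emph{converse} of (2), so the circularity is not broken. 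The parenthetical alternative you offer (``dually, because $\ff{i}$ is a trivial cofibration'') is likewise circular.

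The fix is much simpler and is what the paper does: do not use 2-M2 at all. Since $\ff{i}$ is assumed to be a weak equivalence, the \emph{definition} of weak equivalence in $\Prop{C}$ (Definition \ref{estructuraenpropC}) already gives a factorization $\ff{i}\cong\ff{p}\,\ff{j}$ with $\ff{p}$ a trivial fibration and $\ff{j}$ a trivial cofibration. Now $\ff{i}$ is a cofibration and $\ff{p}$ is a trivial fibration, so Lemma \ref{lema1'.2} applies directly to give the lifting, and then Proposition \ref{retractargument} exhibits $\ff{i}$ as a retract of $\ff{j}$. Since trivial cofibrations are by definition retracts of strong trivial cofibrations, and a retract of a retract is a retract, $\ff{i}$ is a trivial cofibration. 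No appeal to 2-M5 or to part (2) is needed.
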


\begin{proof}\comw{aaaa}

\begin{enumerate}
  \item $\Rightarrow)$ Let $\ff{i}\in \Prop{C}$ be a trivial cofibration. Since strong trivial cofibrations are cofibrations, $\ff{i}$ is a cofibration by definition. Plus, we can factorize $\ff{i}\cong id \ff{i}$ and so $\ff{i}$ is a weak equivalence.
  
  $\Leftarrow)$ Let $\ff{i}\in \Prop{C}$ be a morphism that is both a cofibration and a weak equivalence. Then we can factorize $\ff{i}\stackrel{\gamma}{\cong}\ff{p}\ff{j}$ where $\ff{p}$ is a trivial fibration and $\ff{j}$ is a trivial cofibration. It is enough to check that $\ff{i}$ is a retract of $\ff{j}$ but this is true by \ref{retractargument} plus \ref{lema1'.2}.
%   
%   \noindent Consider the following diagram:
%   
%  $$\xymatrix@R=.5pc@C=.5pc{\ff{X} \ar[rr]^{\ff{j}} \ar[dd]_{\ff{i}} 
% 			   & & \ff{Z} \ar[dd]^{\ff{p}} \\
% 			   & 
% 			   \gamma^{-1} \Downarrow \cong \\
% 			   \ff{Y} \ar@{=}[rr]
% 			   & & \ff{Y} }$$
%    
% Since $\ff{i}$ is a cofibration and $\ff{p}$ is a trivial fibration, by \ref{lema1'.2}, this diagram admits a filler $(\ff{f},\lambda,\rho)$ and so, $\ff{i}$ is a retract of $\ff{j}$ via $(id_{\ff{X}}, \ff{f},\lambda, id_{\ff{X}}, \ff{p}, \gamma, id_{id_{\ff{X}}}, \rho)$ which concludes the proof.

  \item The proof is analogous to the previous one and we leave it to the reader. 
 \end{enumerate}
\end{proof}

{\bfseries Axiom 2-M6a)}: $\Rightarrow)$ If $\ff{p}$ is a fibration and $\ff{i}$ is both a cofibration and a weak equivalence, then, by \ref{lema2'.2}, $\ff{i}$ is a trivial cofibration and, by \ref{lema1'.2}, the pair $(\ff{i},\ff{p})$ has the lifting property. 

$\Leftarrow)$ Suppose that we have a morphism $\ff{X}\mr{\ff{p}}\ff{Y}$ that has the right lifting property with respect to all morphisms that are both a cofibration and a weak equivalence. By 2-M2, $\ff{p}$ can be factorized as $\ff{p}\stackrel{\gamma}{\cong}\ff{q}\ff{i}$ where $\ff{q}$ is a fibration and $\ff{i}$ is both a cofibration and a weak equivalence. Then the pair $(\ff{i},\ff{p})$ has the lifting property and so, by \ref{retractargument}, $\ff{p}$ is a retract of $\ff{q}$. Then $\ff{p}$ is a fibration.

% 
% there exists a filler $(\ff{f},\lambda,\rho)$ for the following diagram
% 
%  $$\xymatrix@R=.5pc@C=.5pc{\ff{X} \ar@{=}[rr] \ar[dd]_{\ff{i}} & &\ff{X} \ar[dd]^{\ff{p}}\\
% 			   & 
% 			   \gamma \Downarrow \cong \\
% 			   \ff{Z} \ar[rr]_{\ff{q}}
% 			   & & \ff{Y} }$$
% 
% \noindent It can be easily checked that $\ff{p}$ is a retract of $\ff{q}$ and so it is a fibration by \ref{retractospreservan}.

{\bfseries Axiom 2-M6b)}: The proof of this axiom is analogous to the previous one and we leave it to the reader.

\vspace{2ex}
\begin{lemma}\label{lema3'.2}
\comw{A}

\begin{enumerate}
  \item A morphism $\ff{i}\in \Prop{C}$ is a trivial cofibration iff for every fibration $\ff{p}$, the pair $(\ff{p},\ff{i})$ has the lifting property.
 \item A morphism $\ff{p}\in \Prop{C}$ is a trivial fibration iff for every cofibration $\ff{i}$, the pair $(\ff{p},\ff{i})$ has the lifting property.
\end{enumerate} 
\end{lemma}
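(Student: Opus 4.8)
The plan is to prove both statements of Lemma \ref{lema3'.2} by using the characterization of trivial cofibrations and trivial fibrations as morphisms that are both (co)fibrations and weak equivalences (Lemma \ref{lema2'.2}), combined with the lifting properties already established. I will focus on the first item, the second being dual. Since the two halves of \ref{lema2'.2} let me interchange ``trivial cofibration'' with ``cofibration that is also a weak equivalence'', the forward direction will be essentially a restatement of Lemma \ref{lema1'.2}: given a trivial cofibration $\ff{i}$ and an arbitrary fibration $\ff{p}$, the pair $(\ff{i},\ff{p})$ has the lifting property by \ref{lema1'.2}. Note I must be careful with the ordering convention: the statement writes ``the pair $(\ff{p},\ff{i})$ has the lifting property,'' which by Definition \ref{lifting} should be read as $\ff{i}$ having the left lifting property against $\ff{p}$, i.e.\ the filling of squares with $\ff{i}$ on the left and $\ff{p}$ on the right, so this matches \ref{lema1'.2} directly.

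For the converse of item (1), I would run the retract argument, exactly as in the proof of axiom 2-M6a) $\Leftarrow$ given just above. Suppose $\ff{i}$ has the left lifting property with respect to every fibration. First I would factor $\ff{i}$ using axiom 2-M2 (already verified for $\Prop{C}$) as $\ff{i} \cong \ff{p}\ff{j}$ where $\ff{j}$ is a cofibration and $\ff{p}$ is a fibration which is also a weak equivalence, hence by \ref{lema2'.2} a trivial fibration. Since $\ff{p}$ is in particular a fibration, the hypothesis gives that the pair $(\ff{i},\ff{p})$ has the lifting property, so Proposition \ref{retractargument}(2) shows that $\ff{i}$ is a retract of $\ff{j}$ in $\cc{M}aps_p(\cc{C})$. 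Now $\ff{j}$ is a cofibration, and moreover $\ff{j}$ is a weak equivalence by axiom 2-M5 applied to $\ff{i}\cong \ff{p}\ff{j}$ (with $\ff{i}$ and $\ff{p}$ weak equivalences, the latter because it is a trivial fibration). Thus $\ff{j}$ is both a cofibration and a weak equivalence, hence a trivial cofibration by \ref{lema2'.2}. Finally, by the closure of trivial cofibrations under retracts (Proposition \ref{retractospreservan}, which applies once 2-M6a), 2-M6b), 2-M6c) are known), $\ff{i}$ is a trivial cofibration, as desired.

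The one subtlety to handle is whether $\ff{i}$ itself is already known to be a weak equivalence in the converse: in fact I do not need this a priori, since the argument derives the trivial-cofibration status of $\ff{i}$ directly from its being a retract of a trivial cofibration $\ff{j}$, and \ref{retractospreservan} is precisely about closure of trivial cofibrations under retracts. I should make sure the hypotheses of \ref{retractospreservan} are in force at this point in the development, namely that axioms 2-M6a), 2-M6b), 2-M6c) hold in $\Prop{C}$; since this lemma is being proved after 2-M6a) and 2-M6b), I would either invoke \ref{retractospreservan} after 2-M6c) is also established, or argue the retract closure of trivial cofibrations by hand using \ref{retractospreservanlifting} together with the already-proven \ref{lema3'.2} forward direction. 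The main obstacle I anticipate is precisely this logical bookkeeping about which axioms and closure properties are available when \ref{lema3'.2} is invoked; the geometric content is entirely carried by the factorization axiom 2-M2, the retract argument \ref{retractargument}, and Lemma \ref{lema1'.2}, so no new hard computation is required. The second item is proved symmetrically, factoring as $\ff{p}\cong\ff{q}\ff{i}$ with $\ff{i}$ a trivial cofibration and $\ff{q}$ a fibration, using the hypothesis against the cofibration $\ff{i}$ and \ref{retractargument}(1) to exhibit $\ff{p}$ as a retract of $\ff{q}$.
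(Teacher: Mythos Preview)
Your forward direction is fine and matches the paper. The converse, however, has a genuine gap caused by choosing the wrong factorization from axiom 2-M2. You factor $\ff{i}\cong \ff{p}\ff{j}$ with $\ff{j}$ a cofibration and $\ff{p}$ a trivial fibration, obtain $\ff{i}$ as a retract of $\ff{j}$ via the retract argument, and then try to show $\ff{j}$ is a trivial cofibration by invoking 2-M5 on the triple $\ff{i}\cong\ff{p}\ff{j}$. But this requires knowing that $\ff{i}$ is a weak equivalence, which is exactly what you do not yet have (you acknowledge this subtlety but your proposed workaround still needs $\ff{j}$ to be trivial, and that is precisely what hangs on $\ff{i}$ being a weak equivalence). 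Moreover, 2-M5 for $\Prop{C}$ is proved only at the very end of the section, well after \ref{lema3'.2}, so it is not available here.

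The paper avoids all of this by using the \emph{other} factorization from 2-M2: write $\ff{i}\cong \ff{u}\ff{v}$ with $\ff{u}$ a fibration and $\ff{v}$ a trivial cofibration. Since $\ff{u}$ is a fibration, the hypothesis gives the pair $(\ff{i},\ff{u})$ the lifting property, and \ref{retractargument}(1) exhibits $\ff{i}$ as a retract of $\ff{v}$. Now $\ff{v}$ is already a trivial cofibration by construction, and since trivial cofibrations in $\Prop{C}$ are \emph{defined} as retracts of strong trivial cofibrations (Definition~\ref{estructuraenpropC}), closure under retracts is immediate---no appeal to \ref{retractospreservan}, 2-M5, or 2-M6c) is needed. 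Your sketch for item (2) has the same defect: you should factor $\ff{p}\cong\ff{q}\ff{j}$ with $\ff{q}$ a trivial fibration and $\ff{j}$ a cofibration, then use the hypothesis against $\ff{j}$ and \ref{retractargument}(2) to get $\ff{p}$ as a retract of the trivial fibration $\ff{q}$. (Also, in item (1) you want \ref{retractargument}(1), not (2).)
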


\begin{proof}\comw{aaa}
 \begin{enumerate}
  \item $\Rightarrow)$ It is immediate from \ref{lema1'.2}.
  
  $\Leftarrow)$ By axiom 2-M2 plus \ref{lema2'.2}, we can factorize $\ff{i}\stackrel{\gamma}{\cong} \ff{u}\ff{v}$ where $\ff{u}$ is a fibration and $\ff{v}$ is a trivial cofibration. By \ref{retractargument}, $\ff{i}$ is a retract of $\ff{v}$ which concludes the proof. 
  
  \item The proof is analogous to the previous one and we leave it to the reader. 
 \end{enumerate}
\end{proof}

{\bfseries Axiom 2-M6c)}: It follows immediately from \ref{lema3'.2} and the definition of weak equivalences in $\Prop{C}$.

\begin{lemma}\label{lema4'.2}
 \comw{A}
 
 \begin{enumerate}
 \item If a morphism $\ff{i}\in \Prop{C}$ has the left lifting property with respect to all fibrations between fibrant objects, then $\ff{i}$ is a trivial cofibration.
 \item If a morphism $\ff{p}\in \Prop{C}$ has the right lifting property with respect to all cofibrations between cofibrant objects, then $\ff{p}$ is a trivial fibration.
\end{enumerate} 
\end{lemma}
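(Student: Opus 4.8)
The plan is to deduce both statements from the lifting characterizations of trivial (co)fibrations in \ref{lema3'.2}, the point being to upgrade the hypothesis from the smaller class of fibrations between fibrant objects to the full class of all fibrations. I will carry out part (1) and obtain part (2) by the evident dualization. So suppose $\ff{i}$ has the left lifting property with respect to every fibration between fibrant objects. By \ref{lema3'.2}(1), it is enough to prove that the pair $(\ff{i},\ff{p})$ has the lifting property for an \emph{arbitrary} fibration $\ff{p}$; from that, $\ff{i}$ is immediately a trivial cofibration. First I would invoke the niceness condition 2-N2 to present a given fibration $\ff{Y}\mr{\ff{p}}\ff{B}$ as a bi-pullback of a fibration $\ff{Y}'\mr{\ff{p}'}\ff{B}'$ between fibrant objects, say along a map $\ff{B}\mr{\ff{g}}\ff{B}'$ with projection $\ff{Y}\mr{\ff{q}}\ff{Y}'$ and structural invertible 2-cell $\ff{p}'\,\ff{q}\Mr{\cong}\ff{g}\,\ff{p}$.

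Given any lifting problem
$$\xymatrix@C=1.5pc@R=3pc{\ff{A} \ar[rr]^{\ff{a}} \ar[d]_{\ff{i}} &\ar@{}[d]|{\cong \; \Downarrow \; \gamma}& \ff{Y} \ar[d]^{\ff{p}} \\ \ff{X} \ar[rr]_{\ff{b}} && \ff{B}}$$
I would post-compose $\ff{a}$ with $\ff{q}$ and paste in the structural 2-cell of the bi-pullback together with $\ff{g}\gamma$, producing a lifting problem of $\ff{i}$ against $\ff{p}'$. Since $\ff{p}'$ is a fibration between fibrant objects, the hypothesis on $\ff{i}$ supplies a filler $(\ff{f}',\lambda',\rho')$. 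Next I would feed $\ff{f}'$, the original arrow $\ff{b}$, and the appropriate composite of $\rho'$, $\gamma$ and the structural 2-cell into the universal property of the bi-pullback $\ff{Y}$ to obtain the sought 1-cell $\ff{X}\mr{\ff{f}}\ff{Y}$ together with the two invertible 2-cells $\lambda,\rho$ exhibiting $(\ff{f},\lambda,\rho)$ as a filler of the original square, and to verify the filler equation of \ref{lifting}. This is precisely the construction already performed in the bi-pullback clause of the proof of axiom 2-M3b in \ref{sobran axiomas}, so I would reuse that computation here. Having shown that $\ff{i}$ lifts against every fibration, \ref{lema3'.2}(1) concludes that $\ff{i}$ is a trivial cofibration. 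Part (2) runs identically after replacing 2-N2 by 2-N1, bi-pullbacks by bi-pushouts, \ref{lema3'.2}(1) by \ref{lema3'.2}(2), and the bi-pullback clause of 2-M3b by its bi-pushout clause.

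The main obstacle I expect is not conceptual but the 2-categorical coherence bookkeeping in the transfer step: the bi-pullback satisfies its universal property only up to invertible 2-cells, so one must not merely transport the lifting 1-cell $\ff{f}$ but also assemble the comparison 2-cells $\lambda$ and $\rho$ from $\lambda'$, $\rho'$, $\gamma$ and the bi-pullback structure, and then check, by elevator calculus, that the filler equation holds after projecting along both legs. A secondary point that must be settled for the reduction to be legitimate is that the niceness conditions 2-N1 and 2-N2 (together with 2-N3) are available in $\Prop{C}$ itself, and that the objects produced by 2-N2 are genuinely fibrant there; I would record this explicitly before running the argument.
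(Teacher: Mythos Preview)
Your overall strategy---show $\ff{i}$ lifts against every fibration and then invoke \ref{lema3'.2}(1)---is the same as the paper's. The gap is where you write that 2-N2 being ``available in $\Prop{C}$ itself'' is a ``secondary point'' to be ``recorded explicitly.'' It is not secondary: the niceness conditions 2-N1--2-N3 are assumed \emph{on $\cc{C}$}, not on $\Prop{C}$, and the paper never establishes them for $\Prop{C}$. Fibrations in $\Prop{C}$ are by definition retracts of strong fibrations, i.e.\ fibrations living in some $\cc{H}om_p(\hat{\ff{J}}^{op},\cc{C})$; there is no reason a priori why such a map is a bi-pullback (in $\Prop{C}$) of a fibration between fibrant objects. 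Proving that would amount to the content of this lemma.

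The paper instead proves closure properties of the class $L=\{\ff{p}\mid (\ff{i},\ff{p})\text{ has the lifting property}\}$: closure under bi-pullbacks (your argument), closure under retracts, and---the step you are missing---closure under certain inverse bi-limit towers indexed by cofinite filtered posets. With these in hand, the paper climbs up in stages: fibrations between fibrant objects of $\cc{C}$ lie in $L$ by hypothesis (via \ref{lema0.2}); then every fibration in $\cc{C}$ lies in $L$ using 2-N2 \emph{in $\cc{C}$} and bi-pullback closure; then every strong fibration lies in $L$ by presenting it as the projection to stage $0$ of an explicit tower $\{\ff{E}(j)\}_{j\in\ff{J}}$ built from bi-pullbacks of the $\ff{p}_j$, where each bonding map $\ff{e}_j$ is shown to be in $L$ using the $\ff{q}_j$ of \ref{lema1.2}; finally retract closure gives all fibrations. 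Your bi-pullback transfer argument is one ingredient in this, but the tower step is the substantive new work and cannot be bypassed without first proving 2-N2 in $\Prop{C}$.
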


\begin{proof}\comw{aaa}
 \begin{enumerate}
  \item By \ref{lema3'.2}, it is enough to check that the set \mbox{$L=\{\ff{p}\ |\ \mbox{ the pair } (\ff{i},\ff{p}) \mbox{ has the lifting property }\}$} contains all fibrations. In order to do that, first, we are going to prove some properties about this set:
  
  \begin{enumerate}
   \item $L$ is closed under bi-pullbacks: The proof is the same as the one in \ref{sobran axiomas} of the fact that fibrations are closed under bi-pushouts.
%    Suppose that we have $\ff{p}\in L$, and the following situation
% \begin{equation}\label{ppb3}    
% \xymatrix@R=.5pc@C=.5pc{\ff{A} \ar[rr]^{\ff{a}} \ar[dd]_{\ff{i}} 
% 			   & & \ff{P} \ar[dd]^{\pi_1} \ar[rr]^{\pi_0} & & \ff{Y} \ar[dd]^{\ff{p}} \\
% 			   & 
% 			   \gamma \Downarrow \cong &  & \alpha \Downarrow \cong \\
% 			   \ff{X} \ar[rr]_{\ff{b}} & & \ff{Z} \ar[rr]_{\ff{g}} & & \ff{B}}
% \end{equation}
% 			   
% \noindent where the right square is a bi-pullback.
% 
% 			   
% \noindent Since $\ff{p}\in L$, there exists a filler $(\ff{f}',\lambda',\rho')$ for the following diagram
% 
%    $$\xymatrix@R=.5pc@C=.5pc{\ff{A} \ar[rr]^{\pi_0\ff{a}} \ar[dd]_{\ff{i}} 
% 			   & & \ff{Y} \ar[dd]^{\ff{p}} \\
% 			   & 
% 			   \ff{g}\gamma\circ \alpha\ff{a} \Downarrow \cong \\
% 			   \ff{X} \ar[rr]_{\ff{g}\ff{b}}
% 			   & & \ff{B} }$$
% 
% \noindent Then, since the right square in \eqref{ppb3} is a bi-pullback, there exists a morphism $\ff{X}\mr{\ff{f}}\ff{P}$ such that $\pi_0\ff{f}=\ff{f}'$, $\pi_1\ff{f}=\ff{b}$ and $\alpha \ff{f}=\rho'$. It is straightforward to check that $(\ff{f},\lambda,1)$ (where $\lambda$ is such that $\pi_0\lambda= A75 $ and $\pi_1\lambda= A76 $) is the filler that we were looking for.
   
   \item Given an inverse bi-limit pseudo-cone $\{\ff{Y}\mr{\pi_j}\ff{E}(j)\}_{j\in \ff{J}}$ where $\{\ff{E}(j)\}_{j\in \ff{J}}$ is an inverse pseudo-system in $\Prop{C}$ with $\ff{J}$ a cofinite filtered poset with a unique initial object $0$ such that for each \mbox{$j\neq 0\in \ff{J}$}, the morphism $\ff{E}(j)\mr{\ff{e}_j} \biLim{k<j}{\ff{E}(k)}$ induced by the pseudo-cone \mbox{$\{\ff{E}(j)\mr{\ff{E}(k<j)} \ff{E}(k)\}_{k<j}$}, $\Bigg\{%D37
   \vcenter{\xymatrix@R=1pc@C=-1.7pc{ \ff{E}(k<l) && \ff{E}(l<j) \\ & \ff{E}(k<j) \cl{{\alpha^\ff{E}_{k,l,j}}} }}
   \Bigg\}_{k<l<j}$ belongs to $L$, then the induced morphism \mbox{$\ff{Y} \mr{\pi_0} E(0)$} also belongs to $L$:
   
   Suppose that we have a diagram  as follows in $\Prop{C}$: 
   
   $$\xymatrix@R=1.5pc@C=1.5pc{\ff{A} \ar@{}[ddrr]|{\cong \; \Downarrow \; \gamma } \ar[rr]^{\ff{a}} \ar[dd]_{\ff{i}} 
			   & & \ff{Y} \ar[dd]^{\pi_0} \\
			   & 
			   \\
			   \ff{X} \ar[rr]_{\ff{b}}
			   & & \ff{E}(0) }$$
			   
We are going to define inductively a pseudo-cone $\{\ff{X}\mr{\ff{f}_j}\ff{E}(j)\}_{j\in \ff{J}}, \{\ff{f}_{k<j}\}_{k<j}$ and an isomorphism of pseudo-cones $\{\pi_j \ff{a} \Mr{\alpha_j} \ff{f}_j \ff{i}\}_{j\in \ff{J}}$: $\ff{f}_0=\ff{b}$, $\alpha_0=\gamma$. Suppose that we have already defined $\ff{f}_k$, $\alpha_k \ \forall \ k<j$:

\noindent From definition of $\ff{e}_j$, we have invertible 2-cells $\pi_k \ff{e}_j \Mr{\beta_k} \ff{E}(k<j) \ \forall \ k<j$ such that $ %D38
\vcenter{\xymatrix@C=-1pc{ \E\pkl && \pi_l && \e_j \deq \\ & \pi_k \cl{\pi_{k<l}} &&& \e_j \\ && \E\pkj \clunodos{\beta_k} }}
\vcenter{\xymatrix@C=-0pc{ \quad = \quad }}
\vcenter{\xymatrix@C=-1pc{ \E\pkl \deq && \pi_l && \e_j \\ \E\pkl &&& \E\plj \cl{\beta_l} \\ && \E\pkj \cldosuno{\alpha^\E_{k,l,j}} }}
\ \forall \ k<l<j$. 

\noindent Since $\ff{e}_j \in L$, there exists a filler $(\ff{f}_j,\tilde{\lambda_j},\tilde{\rho_j})$ for the following diagram:

$$\xymatrix@R=1.5pc@C=1.5pc{\ff{A} \ar@{}[rrdd]|{\cong \; \Downarrow \; \tilde{\gamma} } \ar[rr]^{\pi_j \ff{a}} \ar[dd]_{\ff{i}} 
			   & & \ff{E}(j) \ar[dd]^{\ff{e}_j} \\
			   & 
			    \\
			   \ff{X} \ar[rr]_{\tilde{\ff{b}}}
			   & & \biLim{k<j}{\ff{E}(k)} }$$

\noindent where $\tilde{b}$ is induced by the pseudo-cone $\{\ff{f}_k\}_{k<j}$, $\{\ff{f}_{k<l}\}_{k<l<j}$ and so we have invertible 2-cells $\pi_k \tilde{\ff{b}} \Mr{\delta_k} \ff{f}_k \ \forall \ k<j$ such that $ %D39
\vcenter{\xymatrix@C=-0pc@R=1pc{ \E\pkl && \pi_l && \tilde{\fb} \deq \\ & \pi_k \cl{\pi_{k<l} \;\;} &&& \tilde{\fb} \\ && \f_k \clunodos{\delta_k} }}
\vcenter{\xymatrix@C=-0pc{ \quad = \quad }}
\vcenter{\xymatrix@C=-0pc@R=1pc{ \E\pkl \deq && \pi_l && \tilde{\fb} \\ \E\pkl &&& \f_l \cl{\delta_l} \\ && \f_k \cldosuno{\f_{k<l} \quad} }}
\ \forall \ k<l<j$. And $\tilde{\gamma}$ is such that $$\pi_k \gamma= %A81
\vcenter{\xymatrix@C=-0pc@R=1pc{ \pi_k && \e_j && \pi_j \deq && \fa \deq \\
& \E\pkj \cl{\beta_k} &&& \pi_j && \fa \deq \\
&& \pi_k \clunodos{\pi\kj} \dl && \dc{\alpha_k} && \dr \fa \\
&& \f_k \opmediob{\delta_k\inv} &&&& \ii \deq \\
& \pi_k && \tilde{\fb} &&& \ii}}
\ \forall k<j.$$

\noindent Take $\ff{f}_{k<j}= %A82
\vcenter{\xymatrix@C=-0pc@R=1pc{ & \E\pkj \op{\beta_k\inv} &&& \f_j \deq \\   \pi_k \deq && \e_j && \f_j \\   \pi_k &&& \tilde{\fb} \cl{\tilde{\rho}_j} \\   & \f_k \clunodos{\delta_k} }}
$ and $\alpha_j=\tilde{\lambda_j}$. It can be easily checked that this data defines a pseudo-cone and an isomorphism of pseudo-cones as we wanted.
                                                                                                       
This pseudo-cone induces a morphism $\ff{X}\mr{\ff{f}}\ff{Y} \in \Prop{C}$ and so we have invertible 2-cells $\pi_j \ff{f} \Mr{\mu_j} \ff{f}_j \ \forall \ j\in \ff{J}$ such that $ %D40
\vcenter{\xymatrix@C=-0pc{ \E\pkl && \pi_l && \f \deq \\ & \pi_k \cl{\pi_{k<l} \;} &&& \f \\ && \f_k \clunodos{\mu_k} }}
\vcenter{\xymatrix@C=-0pc{ \quad = \quad }}
\vcenter{\xymatrix@C=-0pc{ \E\pkl \deq && \pi_l && \f \\ \E\pkl &&& \f_l \cl{\mu_l} \\ && \f_k \cldosuno{\f_{k<l} \quad} }}
\ \forall \ k<l<j$. Let's check that $(\ff{f}, \lambda , \rho)$ is the filler that we were looking for, where $\lambda$ is such that $\pi_j \lambda= %A83
\vcenter{\xymatrix@C=-0pc{ & \pi_j \dl & \dcr{\alpha_j} && \dr \fa \\   & \f_j \opb{\mu_j\inv} &&& \ii \deq \\   \pi_j && \f && \ii }}
\ \forall \ j\in \ff{J}$ and $\rho=\mu_0$: 

%A84

$$\vcenter{\xymatrix@C=-0pc{\pi_0 \deq &&& \fa \op{\lambda} \\   \pi_0 && \f && \ii \deq \\  & \fb \cl{\mu_0} &&& \ii }}
\vcenter{\xymatrix@C=-0pc{ \quad = \quad }}
\vcenter{\xymatrix@C=-0pc{ & \pi_0 \dl & \dcr{\gamma} && \dr \fa \\  & \f_0 \opb{\mu_0\inv} &&& \ii \deq \\   \pi_0 && \f && \ii \deq \\   & \fb \cl{\mu_0} &&& \ii }}
\vcenter{\xymatrix@C=-0pc{ \quad = \quad }}
\vcenter{\xymatrix@C=-0pc{ \pi_0 \dl & \dc{\gamma} & \dr \fa \\   \fb && \ii}}$$
   
   \item $L$ is closed under the formation of retracts: It follows immediately from \ref{retractospreservanlifting}.
%    Suppose that we have $\ff{Y}'\mr{\ff{p}}\ff{B}'\in L$, $\ff{Y}\mr{\ff{q}}\ff{B}$ a retract of $\ff{p}$ via some $\theta$, $\eta$ and $\mu$ as in \ref{retractos}; and a diagram of the form
%    
%   $$\xymatrix@R=.5pc@C=.5pc{\ff{A} \ar[rr]^{\ff{a}} \ar[dd]_{\ff{i}} 
% 			   & & \ff{Y} \ar[dd]^{\ff{q}} \\
% 			   & 
% 			   \gamma \Downarrow \cong \\
% 			   \ff{X} \ar[rr]_{\ff{b}}
% 			   & & \ff{B} }$$  
%    
% Since $\ff{p}\in L$, there exists a filler $(\ff{f}',\lambda',\rho')$ for the following diagram
% 
% \begin{equation}\label{diagramaprima}
%  \xymatrix@R=.5pc@C=.5pc{\ff{A} \ar[rr]^{\theta_0 \ff{a}} \ar[dd]_{\ff{i}} 
% 			   & & \ff{Y}' \ar[dd]^{\ff{p}} \\
% 			   & 
% 			   \theta_1\gamma\circ \theta_m \ff{a} \Downarrow \cong \\
% 			   \ff{X} \ar[rr]_{\theta_1\ff{b}}
% 			   & & \ff{B}' }
% \end{equation}  
% 			   
% \noindent Let's check that $(\eta_0 \ff{f}', A85 , A86 )$ is the filler that we were looking for:
% 
% A87
% 
% \noindent where the first equality is due to elevators calculus plus the fact that $(\ff{f}', \lambda', \rho')$ is a filler for diagram \eqref{diagramaprima} and the second one is due to the definition of retract.
\end{enumerate}

Now we are going to prove that $L$ contains all fibrations in $\Prop{C}$:

Let $\ff{p}$ be a fibration in $\Prop{C}$. If $\ff{p}$ is a fibration between fibrant objects of $\cc{C}$, by \ref{lema0.2}, $\ff{p}$ is a fibration between fibrant objects in $\pCJ$ and $\hat{\ff{p}}$ is a fibration between fibrant objects in $\cc{H}om_p(\hat{\ff{J}}^{op},\cc{C})$. But $\hat{\ff{p}}$ is $\ff{p}$ seen as a morphism in $\cc{H}om_p(\hat{\ff{J}}^{op},\cc{C})$ and so $\ff{p}$ is a fibration between fibrant objects in $\Prop{C}$. Then, by hypothesis, $\ff{p}\in L$.

% Let $\ff{p}$ be a fibration in $\Prop{C}$. If $\ff{p}$ is a fibration between fibrant objects of $\cc{C}$, by \ref{lema0.2}, $\ff{p}$ is a fibration between fibrant objects in $\CJ$ and so is a fibration between fibrant objects in $\Prop{C}$. Then, by hypothesis, $\ff{p}\in L$.

Since $L$ is closed under bi-pullbacks and $\cc{C}$ satisfies axiom 2-N2, $L$ contains every fibration in $\cc{C}$.

Let $\ff{Y}\mr{\ff{p}}\ff{B}\in \cc{H}om_p(\hat{\ff{J}}^{op},\cc{C})$ be a fibration for some cofinite and filtered poset $\ff{J}$ with a unique initial object. 

We are going to construct a system $\{\ff{E}(j)\}_{j\in \ff{J}}$ satisfying the hypothesis of $b)$:

Take $\ff{E}(0)=\ff{B}$ and for each $j\neq 0$, let $\ff{E}(j)$ be the following bi-pullback:

$$\xymatrix@C=1.5pc@R=3pc{\ff{E}(j) \ar[rr]^{\h_0^j} \ar[d]_{\h_1^j}
                            & \ar@{}[d]|{\cong \; \Downarrow \; \delta_j } 
                            &\ff{Y}_j \ar[d]^{\ff{p}_j} \\
                            \ff{B} \ar[rr]_{\pi^{\ff{B}}_j} 
                            && \ff{B}_j}$$

\noindent $\ff{E}(0<j)=\h_1^j$ and if $0\neq k<j$, $\ff{E}(k<j)$ is given by the following diagram

$$\xymatrix@R=3pc@C=3.5pc{\ff{E}(j)\ar@/^4ex/[rrd]^{\ff{Y}_{k<j}\h_0^j} \ar@{-->}[rd]|{\ff{E}(k<j)} \ar@/_4ex/[rdd]_{\h_1^j}  & \ar@{}[d]|{\cong \; \Uparrow \; \beta_0^{k,j}}\\
            \ar@{}[r]|{\cong \; \Downarrow \; \beta_1^{k,j}} & \ff{E}(k) \ar[r]^{\h_0^k} \ar[d]_{\h_1^k} \ar@{}[rd]|{\cong \; \Downarrow \; \delta_k} & \ff{Y}_k \ar[d]^{\ff{p}_k}\\
            & \ff{B} \ar[r]_{\pi^{\ff{B}}_k} & \ff{B}_k}$$

\noindent and so $ %A88
\vcenter{\xymatrix@C=-0pc@R=1.3pc{ \p_k \dl & \dc{\delta_k} & \dr \h_0^k && \E\pkj \deq \\  \pi_k^{\B} \deq && \h_1^k && \E\pkj \\  \pi_k^{\B} &&& \h_1^j \cl{\beta_1\tkj} }}
\vcenter{\xymatrix@C=-0pc{ \quad = \quad }}
\vcenter{\xymatrix@C=-0pc@R=1pc{ \p_k \deq && \h_0^k \dl & \dc{\beta_0\tkj} & \dr \E\pkj \\
\p_k \dl & \dc{\p\kj\inv} & \dr \Y\kj && \h_0^j \deq \\
\B\kj \deq && \p_j \dl & \dc{\delta_j} & \dr \h_0^j \\
\B\kj && \pi_j^{\B} && \h_1^j \deq \\
& \pi_k^{\B} \cl{\pi\kj} &&& \h_1^j}}
$. It can be checked that $\ff{E}$ is a pseudo-functor.

Let's check that $\ff{Y}$ satisfies the universal property of $\biLim{j\in \ff{J}}{\ff{E}(j)}$ with projection $\h_0=\ff{p}$: 

For $j\neq 0$, take $\ff{Y} \mr{\pi_j} \ff{E}(j)$ as in the following diagram:

$$\xymatrix@R=3pc@C=3.5pc{\ff{Y} \ar@/^4ex/[rrd]^{\pi^{\ff{Y}}_{j}} \ar@{-->}[rd]|{\comw{M^M} \pi_j \comw{M^M}} \ar@/_4ex/[rdd]_{\ff{p}}  & \ar@{}[d]|{\cong \; \Uparrow \; \theta_0^{j}} \\
            \ar@{}[r]|{\cong \;\Downarrow \;\theta_1^{j}} & \ff{E}(j) \ar[r]^{\h_0^j} \ar[d]_{\h_1^j} \ar@{}[rd]|{\cong \; \Downarrow \; \delta_j } & \ff{Y}_j \ar[d]^{\ff{p}_j}\\
            & \ff{B} \ar[r]_{\pi^{\ff{B}}_j} & \ff{B}_j}$$

\noindent and so we have 

%A201
$$\vcenter{\xymatrix@C=-0pc{ \p_j \dl & \dc{\delta_j} & \dr \h_0^j && \pi_j \deq \\  \pi_j\tB \deq && \h_1^j && \pi_j \\  \pi_j\tB &&& \p \cl{\theta_1^j} }}
\vcenter{\xymatrix@C=-0pc{ \quad = \quad }}
\vcenter{\xymatrix@C=-0pc{ \p_j \deq && \h_0^j && \pi_j \\  \p_j \dl & \dcr{=} && \dr \pi_j\tY \cl{\theta_0^j} \\  \pi_j\tB &&& \p}}$$

Then take $\pi_{0<j}=\theta_1^j$ and if $0\neq k<j$, $\pi_{k<j}$ such that

%A202 
$\h_0^k \pi_{k<j}=\vcenter{\xymatrix@C=-0pc@R=1pc{\h_0^k \dl & \dc{\beta_0\tkj} & \dr \E\pkj && \pi_j \deq \\
\Y\kj \deq && \h_0^j && \pi_j \\
\Y\kj &&& \pi_j\tY \cl{\theta_0^j} \\
&& \pi_k\tY \cldosuno{\pi\kj\tY} \opdosuno{(\theta_0^k)\inv} \\ 
\h_0^k &&& \pi_k}}$
and $\quad$ %A203
$\h_1^k \pi_{k<j}=\vcenter{\xymatrix@C=-0pc@R=1.3pc{\h_1^k && \E\pkj && \pi_j \deq \\ 
& \h_1^j \cl{\; \; \beta_1\tkj} &&& \pi_j \\ 
&& \p \clunodos{\theta_1^j} \opunodos{(\theta_1^k)\inv \quad \quad } \\ 
& \h_1^k &&& \pi_k }}$

It can be checked that this data defines a pseudo-cone. 

Now suppose that we have another pseudo-cone $\{\ff{Z}\mr{\varphi_j} \ff{E}(j)\}_{j\in \ff{J}}$, \mbox{$\{\ff{E}(k<j) \varphi_j \Mr{\varphi_{k<j}} \varphi_k \}_{k<j}$}. Consider the pseudo-cone $\{\ff{Z}\mr{\h_0^j\varphi_j} \ff{Y}_j\}_{j\in \ff{J}}$, $\Bigg\{ %A204
\vcenter{\xymatrix@C=-0.2pc@R=.5pc{\Y\kj \dl & \dc{\;\; (\beta_0\tkj)\inv} & \dr \h_0^j && \varphi_j \deq \\
\h_0^k \deq && \E\pkj && \varphi_j \\
\h_0^k &&& \varphi_k \clb{\varphi\kj} }}
\Bigg\}_{k<j}$. Then there exists a morphism $\ff{Z}\mr{\varphi} \ff{Y} \in \Prop{C}$ such that $\pi^{\ff{Y}}_j\varphi=\h_0^j \varphi_j$ and $ %A205
\vcenter{\xymatrix@C=-0pc@R=1pc{ \Y\kj && \pi_j\tY && \varphi \deq \\  & \pi_k\tY \clb{\pi\kj\tY} &&& \varphi}}
\vcenter{\xymatrix@C=-0pc{ \quad = \quad }}
\vcenter{\xymatrix@C=-0pc@R=1pc{ \Y\kj \deq && \pi_j\tY \dl & \dc{=} & \dr \varphi \\
\Y\kj \dl & \dc{(\beta_0\tkj)\inv} & \dr \h_0^j && \varphi_j \deq \\
\h_0^k \deq && \E\pkj && \varphi_j \\
\h_0^k \dl & \dcr{=} && \dr \varphi_k \clb{\varphi\kj} \\
\pi_k\tY &&& \varphi}}
$.
 
It can be checked that $\{\pi_j \varphi \Mr{\rho_j} \varphi_j\}_{j\in \ff{J}}$ is an isomorphism of pseudo-cones where $\rho_j$ is such that $\h_0^j \rho_j= %A206
\vcenter{\xymatrix@C=-0pc@R=1pc{ \h_0^j && \pi_j && \varphi \deq \\  & \pi_j \cl{\theta_0^j} \dl & \dcr{=} && \dr \varphi \\ & \h_0^j &&& \varphi_j }}
\quad $ and 

$\pi^{\ff{B}}_k \h_1^j \rho_j= %A207
\vcenter{\xymatrix@C=-0pc@R=1pc{ \pi_k\tB \deq && \h_1^j && \pi_j && \varphi \deq \\
\pi_k\tB \dl & \dcr{=} && \dr \p \cl{\theta_1^j} &&& \varphi \deq \\
\p_k \deq &&& \pi_k\tY \dl & \dcr{=} && \dr \varphi \\
\p_k \dl & \dcr{\delta_k} && \dr \h_0^k &&& \varphi_k \deq \\
\pi_k\tB \deq &&& \h_1^k \deq &&& \varphi_k \op{\varphi\kjj\inv} \\
\pi_k\tB \deq &&& \h_1^k && \E\pkjj && \varphi\sj \deq \\
\pi_k\tB \deq &&&& \h_1\tj \cl{\:\: \beta_1\tkjj} \opbymedio{\;\; (\beta_1\tjjj)\inv} &&& \varphi\sj \deq \\
\pi_k\tB \deq &&& \h_1^j \deq && \E\pjjj && \varphi\sj \\
\pi_k\tB &&& \h_1^j &&& \varphi_j \cl{\varphi_{j<j'}} }}
\forall \ k\in \ff{J}$ where $j'$ is such that \mbox{$k,j\leq j'$.}

By similar arguments, one can check the full and faithfulness of the equivalence.

Now we are going to check that $\ff{E}(j)\mr{\ff{e}_j}\biLim{k<j}{\ff{E}(k)}\in L$:

\noindent Suppose that we have a diagram of the form

\begin{equation}\label{diagramaej}
\xymatrix@R=1.5pc@C=1.5pc{\ff{A} \ar@{}[ddrr]|{\cong \; \Downarrow \; \gamma } \ar[rr]^{\ff{a}} \ar[dd]_{\ff{i}} 
			   & & \ff{E}(j) \ar[dd]^{\ff{e}_j} \\
			   & 
			    \\
			   \ff{X} \ar[rr]_{\ff{b}}
			   & & \biLim{k<j}{\ff{E}(k)}}
\end{equation}
			   
\noindent It can be checked that the following square is a bi-pullback

$$\xymatrix@R=1.5pc@C=1.5pc{\biLim{k<j}{\ff{E}(k)} \ar@{}[rrdd]|{\cong \; \Downarrow \; \nu_j  } \ar[rr]^{\ff{a}_{\ff{E},\ff{Y}}^j} \ar[dd]_{\pi^{\ff{E}}_0} 
			   & & \biLim{k<j}{\ff{Y}_k} \ar[dd]^{\biLim{k<j}{\ff{p}_k}} \\
			   & 
			   \\
			   \ff{B} \ar[rr]_{\ff{b}_{\ff{B}}^j}
			   & & \biLim{k<j}{\ff{B}_k}}$$
			   
\noindent where $\biLim{k<j}{\ff{p}_k}$ is defined as in \ref{lema1.2},
%induced by the pseudo-cone $\{\biLim{k<j}{\ff{Y}_k} \mr{\ff{p}_k \pi^{\ff{Y}}_k} \ff{B}_k\}_{k<j}$, $\{ A208 \}_{k<l<j}$ and so we have an isomorphism of pseudo-cones $\{\pi^{\ff{B}}_k\biLim{k<j}{\ff{p}_k}\Mr{\beta'_k} \ff{p}_k\pi^{\ff{Y}}_k\}_{k<j}$, 
$\ff{b}_{\ff{B}}^j$ is induced by the pseudo-cone $\{\ff{B}\mr{\pi^{\ff{B}}_k} \ff{B}_k\}_{k<j}$, $\{ \pi^{\ff{B}}_{k<l}\}_{k<l<j}$ and so we have an isomorphism of pseudo-cones $\{\pi^{\ff{B}}_k  \ff{b}_{\ff{B}}^j \Mr{\beta'_k} \pi^{\ff{B}}_k\}_{k<j}$; and $\ff{a}_{\ff{E},\ff{Y}}^j$ is induced by the pseudo-cone $\{\biLim{k<j}{\ff{E}(k)} \mr{\h_0^k \pi_k} \ff{Y}_k\}_{k<j}$, $\Bigg\{ %A209
\vcenter{\xymatrix@C=-0pc{ \Y\kj \dl & \dc{(\beta_0\tkl)\inv} & \dr \h_0^l && \pi_l \deq \\  
\h_0^k \deq && \E\pkl && \pi_l \\
\h_0^k &&& \pi_k \cl{\pi\kl}  }}
\Bigg\}_{k<l<j}$ and so we have an isomorphism of pseudo-cones $\{\pi^{\ff{Y}}_k \ff{a}_{\ff{E},\ff{Y}}^j \Mr{\theta_k} \h_0^k \pi_k\}_{k<j}$.

Consider the following diagram 

%Diagrama
$$\xymatrix@C=3pc{\ff{X} \ar@{-->}[rd]^{\ff{h}} \ar[d]_{\ff{b}}  \ar[r]^{\ff{b}} & \biLim{k<j}{\E(k)} \ar@/^4ex/[rd]^{\ff{a}_{\ff{E},\ff{Y}}^j }  \ar@{}[d]^{\cong \; \Uparrow \; \nu_0^{j}}\\
            \biLim{k<j}{\E(k)} \ar@{}[r]_{\cong \; \Downarrow \; \nu_1^{j}} \ar[d]_{\pi_0\tE} & \ff{P}_j \ar[r]^{\pi_0^j} \ar[d]_{\pi_1^j} 
            \ar@{}[rd]^{bipb \quad\quad\quad\quad}_{\quad\quad\quad\quad\quad \cong \; \Downarrow \; \alpha_j } 
             & \biLim{k<j}{\ff{Y}_k} \ar[d]^{\biLim{k<j}{\ff{p}_k}}\\
            \B \ar[r]_{\pi^{\ff{B}}_j} & \ff{B}_j \ar[r]_{\ff{a}_{\ff{B}}^j} & \biLim{k<j}{\ff{B}_k}}$$
            
\noindent where $\ff{a}_{\ff{B}}^j$ is defined as in \ref{lema1.2}
%induced by the pseudo-cone $\{\ff{B}_j\mr{\ff{B}_{k<j}} \ff{B}_k\}_{k<j}$, $\{\ff{B}_{k,l,j}\}_{k<l<j}$ and so we have an isomorphism of pseudo-cones $\{\pi^{\ff{B}}_k  \ff{a}_{\ff{B}}^j \Mr{\beta'''_k} \ff{B}_{k<j}\}_{k<j}$ 
and the following equality holds

%A210
$$\vcenter{\xymatrix@C=-0pc{ \biLim{k<j}{\p_k} \ardr &&  \dcr{\alpha_j} &&& \pi_0^j \ardl && \h \deq \\
& \fa\sB^j \deq &&& \pi^j_1 \ardl & \dcr{\nu_1^j} && \h \ardr \\
& \fa\sB^j && \pi_j\tB && \pi^{\E}_0 &&& \fb}}
\vcenter{\xymatrix@C=-0pc{ \quad = \quad }}
\vcenter{\xymatrix@C=-0pc{& \biLim{k<j}{\p_k} \deq && \pi_0^j \dl & \dc{\nu_0^j} & \dr \h \\
& \biLim{k<j}{\p_k} \ardl && \fa\sEY^j \dc{\eps} && \fb \ardr \\
\fa\sB^j && \pi_j^{\B} && \pi^{\E}_0 && \fb}}$$

\noindent where $\epsilon$ is such that $\pi^{\B}_k \epsilon= %A211
\vcenter{\xymatrix@C=-0pc@R=1pc{& \pi_k\tB \deq &&& \biLim{k<j}{\p_k} \dl & \dc{\nu_j} & \dr \fa_{\E,\Y}^j && \fb \deq \\
& \pi_k\tB &&& \fb\sB^j && \pi^{\E}_0 \deq && \fb \deq \\
&& \pi_k\tB \clunodos{\beta'_k} \opunodosb{(\pi\kj\tB)\inv \; \; } &&&& \pi^{\E}_0 \deq && \fb \deq \\
& \B\kj \op{\eps_k\inv} &&& \pi_j \deq && \pi^{\E}_0 \deq && \fb \deq \\
\pi_k\tB && \fa\sB^j && \pi_j\tB && \pi^{\E}_0 && \fb}}
\ \forall \ k<j$.

Consider also $\ff{q}_j$ as in \ref{lema1.2}.

% $$\xymatrix@C=3pc{\ff{Y}_j\ar@/^4ex/[rrd]^{\ff{a}_{\ff{Y}}^j} \ar@{-->}[rd]^{\ff{q}_j} \ar@/_4ex/[rdd]_{\ff{p}_j} \ar@{}[dr]_{\cong \Downarrow \theta_1^{j}} & \ar@{}[d]^{\cong \Uparrow \theta_0^{j}}\\
%             & \ff{P}_j \ar[r]^{\h_0} \ar[d]_{\h_1} \ar@{}[rd]|{\Downarrow \alpha_j \cong} & \biLim{k<j}{\ff{Y}_k} \ar[d]^{\biLim{k<j}{\ff{p}_k}}\\
%             & \ff{B}_j \ar[r]_{\ff{a}_{\ff{B}}^j} & \biLim{k<j}{\ff{B}_k}}$$
%             
% \noindent where the square is a bi-pullback, 
% $\ff{a}_{\ff{Y}}^j$ is induced by the pseudo-cone $\{\ff{Y}_{k<j}\}_{k<j},\ \{ A173 \}_{k<l<j}$ and so we have an isomorphism of pseudo-cones $\{\pi^{\ff{Y}}_k \ff{a}_{\ff{Y}}^j \Mr{\lambda_k}\ff{Y}_{k<j}\}_{k<j}$.

Now consider $(\ff{h}',\lambda',\rho')$ a filler for the following diagram

 $$\xymatrix@R=3pc@C=1.5pc{\ff{A} \ar[rr]^{\h_0^j\ff{a}} \ar[d]_{\ff{i}} &\ar@{}[d]|{\cong \; \Downarrow \; \gamma'}& \ff{Y}_j \ar[d]^{\ff{q}_j} \\
            \ff{X} \ar[rr]_{\ff{h}} && \ff{P}_j}$$

\noindent where $\gamma'$ is such that $\pi^{\Y}_k \pi^j_0 \gamma= %A212
\vcenter{\xymatrix@C=-0pc@R=1pc{ \pi_k\tY \deq && \pi_0^j && \q_j && \h_0^j \deq &&& \fa \deq \\
\pi_k\tY &&& \fa\sY^j \cl{\beta_0^j} &&& \h_0^j \deq &&& \fa \deq \\
&& \Y\kj \cldosuno{\eta_k} \dl && \dc{\beta_0^{k,j}{}\inv} && \dr \h_0^j &&& \fa \deq \\
&& \h_0^k \deq &&&& \E\pkj \opdosuno{\beta_k\inv} &&& \fa \deq \\
&& \h_0^k \dl & \dc{\theta_k\inv} & \dr \pi_k &&& \e_j \deq && \fa \deq \\
&& \pi_k \deq && \fa_{\E,\Y}^j \deq &&& \e_j \dl & \dc{\gamma} & \dr \fa \\
&& \pi_k \deq && \fa_{\E,\Y}^j \dl & \dcr{\nu_0^j{}\inv} && \dr \fb && \ii \deq \\
&& \pi_k && \pi_0^j &&& \h && \ii}}
$ and \mbox{$\pi^j_1 \gamma= %A213
\vcenter{\xymatrix@C=-0pc@R=1pc{ \pi_1^j && \q_j && \h_0^j \deq &&& \fa \deq \\
& \p_j \cl{\beta_1^j} \dl & \dcr{\delta_j} && \dr \h_0^j &&& \fa \deq \\
& \pi_j\tB \deq &&& \h_1^j \dcellopbb{(\beta_1^{0,j})\inv} &&& \fa \deq \\  
& \pi_j\tB \deq &&& \E(0<j) \op{\beta_0\inv} &&& \fa \deq \\
& \pi_j\tB \deq && \pi_0 \deq && \e_j \dl & \dc{\gamma} & \dr \fa \\
& \pi_j\tB \ardr && \pi_0 \dc{\nu_1^j{}\inv} && \fb \ardl && \ii \deq \\
&& \pi_1^j && \h &&& \ii}}
$.}          

Finally, consider $\ff{g}$ as in the following diagram

$$\xymatrix@R=3pc@C=3.5pc{\ff{X} \ar@/^4ex/[rrd]^{\ff{h}'} \ar@{-->}[rd]|{\comw{M^M} \ff{g} \comw{M^M} } \ar@/_4ex/[rdd]_{\pi^{\E}_0 \ff{b}}  & \ar@{}[d]|{\cong \; \Uparrow \; \rho_0^{j}} \\
            \ar@{}[r]|{\cong \; \Downarrow \; \rho_1^{j}} & \ff{E}(j) \ar[r]^{\h_0^j} \ar[d]_{\h^j_1} 
            \ar@{}[rd]^{bipb \quad\quad\quad\quad\quad\quad\quad\quad\quad\quad\quad}_{\quad\quad\quad\quad\quad\quad\quad\quad\quad\quad\quad\quad \cong \; \Downarrow \; \delta_j } 
            & \ff{Y}_j \ar[d]^{\ff{p}_j}\\
            & \ff{B} \ar[r]_{\pi^{\ff{B}}_j} & \ff{B}_j }$$
            
It can be checked that $(\ff{g}, \lambda, \rho)$ is a filler for diagram \eqref{diagramaej} where $\lambda$ is such that \mbox{$\h_0^j\lambda = %A214
\vcenter{\xymatrix@C=-0pc@R=1pc{& \h_0^j \dl & \dcr{\lambda'} && \dr \fa \\  & \h' \opb{\rho_0^j{}\inv} &&& \ii \deq \\  \h_0^j && \g && \ii}}
$,} \mbox{$\h_1^j \lambda= %A215
\vcenter{\xymatrix@C=-0pc@R=1pc{ & \h_1^j \dcellopbb{(\beta_1^{0,j})\inv} &&& \fa \deq \\
& \E(0<j) \op{\beta_0\inv} &&& \fa \deq \\
\pi_0 \deq && \e_j \dl & \dc{\gamma} & \dr \fa \\
\pi_0 \dl & \dc{\rho_1^j{}\inv} & \dr \fb && \ii \deq \\
\h_1^j && \g && \ii}}
$} and $\rho$ is such that \mbox{$\h_0^k\pi^{\E}_k \rho= %A216
\vcenter{\xymatrix@C=-0.2pc@R=1pc{ & \h_0^k \deq && \pi_k\tE && \e_j && \g \deq \\
& \h_0^k \dl & \dcr{\beta_0^{k,j}} && \dr \E\pkj \cl{\beta_k} &&&  \g \deq \\
& \Y\kj \deq &&& \h_0^j &&& \g \\
& \Y\kj \opunodos{\eta_k\inv \quad} &&&&& \h' \cldosuno{\rho_0^j \;} \deq \\
\pi_k\tY \deq &&& \fa\sY^j \opb{(\beta_0^j)\inv} &&& \h' \deq \\
\pi_k\tY \deq && \pi_0^j \deq && \q_j && \h' \\
\pi_k\tY \deq && \pi_0^j \dl & \dcr{\nu_0^j} && \dr \h \cl{\rho'} \\
\pi_k\tY \dl & \dc{\theta_k} & \dr \fa_{\E,\Y}^j &&& \fb \deq \\
\h_0^k && \pi_k\tE &&& \fb}}
$} and \mbox{$\h_1^k\pi^{\E}_k \rho= %A217
\vcenter{\xymatrix@C=-0.2pc@R=1pc{ \h_1^k \deq && \pi_k\tE && \e_j && \g \deq \\
\h_1^k &&& \E\pkj \cl{\beta_k} &&& \g \deq \\
&& \h_1^j \cldosuno{\beta_1^{k,j}} \dl & \dcr{\rho_1^j} &  && \dr \g \\
&& \pi_0\tE \opdosuno{\pi_{0<k}\inv} &&&& \fb \deq \\
\h_1^k &&& \pi_k\tE &&& \fb}}$}
$\ \forall \ k<j$.

Then, by b), $\Y \mr{\ff{p}}\ff{B}\in L$.

We have proved that $L$ contains all strong fibrations. Then, by c), $L$ contains all fibrations as we wanted to prove.

  \item The proof is analogous to the previous one and we leave it to the reader. 
 \end{enumerate}
\end{proof}

\begin{lemma}\label{lema5'.2}
If $\ff{f}$ and $\ff{g}$ are two composable morphisms in $\Prop{C}$ such that $\ff{f}$ and $\ff{g}$ are both fibrations or both cofibrations, then if two out of the three morphisms $\ff{f}$, $\ff{g}$ and $\ff{g}\ff{f}$ are weak equivalences, so is the third one.
\end{lemma}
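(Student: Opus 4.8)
The plan is to reduce the statement, by duality, to the case in which $\ff{f}$ and $\ff{g}$ are both cofibrations; the case of fibrations then follows from the formally dual argument, using path objects in place of cylinder objects together with the second halves of \ref{lema3'.2} and \ref{lema4'.2}. Throughout I would use \ref{lema2'.2} to identify ``cofibration which is also a weak equivalence'' with ``trivial cofibration'', and \ref{sobran axiomas} (available since $2$-M6a), $2$-M6b), $2$-M6c) are already proved) to know that $\ff{g}\ff{f}$ is a cofibration, so each of the three subcases of two-out-of-three becomes an assertion about trivial cofibrations. The subcase where $\ff{f}$ and $\ff{g}$ are both weak equivalences is immediate: by \ref{lema3'.2} each of $\ff{f}$, $\ff{g}$ has the left lifting property with respect to all fibrations, this property is stable under composition (the $2$-cell bookkeeping being routine), and a second application of \ref{lema3'.2} shows $\ff{g}\ff{f}$ is a trivial cofibration.

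For the two cancellation subcases I would first invoke \ref{lema4'.2}: to prove that the cofibration in question is a trivial cofibration it suffices to produce a filler (in the sense of \ref{lifting}) against an arbitrary fibration $\ff{p}:\ff{E}\mr{}\ff{B}$ between fibrant objects. Consider first the subcase where $\ff{g}$ and $\ff{g}\ff{f}$ are weak equivalences and we want $\ff{f}$ to be one. Given a square with $\ff{f}$ on the left and such a $\ff{p}$ on the right, I would extend the bottom map $\ff{b}$ along the trivial cofibration $\ff{g}$ — possible because $\ff{B}$ is fibrant, so $\ff{B}\mr{}*$ is a fibration and \ref{lema3'.2} applies — obtaining $\ff{b}'$ with $\ff{b}'\ff{g}\cong\ff{b}$; then lift the resulting square against the trivial cofibration $\ff{g}\ff{f}$ to get $\ff{h}:\ff{Z}\mr{}\ff{E}$, and finally take $\ff{h}\ff{g}$ as the desired filler, verifying the two triangles and the coherence equation of \ref{lifting} by elevator calculus. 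No homotopies intervene here.

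The remaining subcase — $\ff{f}$ and $\ff{g}\ff{f}$ weak equivalences, conclude $\ff{g}$ — is the main obstacle. Given a square with $\ff{g}$ on the left and $\ff{p}$ on the right, lifting the precomposed square against the trivial cofibration $\ff{g}\ff{f}$ produces $\ff{h}:\ff{Z}\mr{}\ff{E}$ with the correct lower triangle $\ff{p}\ff{h}\cong\ff{b}$, but only satisfying $\ff{h}\ff{g}\ff{f}\cong\ff{a}\ff{f}$ rather than $\ff{h}\ff{g}\cong\ff{a}$. To repair this I would use the double mapping cylinder of $\ff{f}$ from \ref{2.3.5 de EH}: since $\ff{f}$ is a trivial cofibration the induced map $\ff{k}_{\ff{f}}:\ff{Y}\bigtriangleup\widetilde{\ff{X}}\bigtriangledown\ff{Y}\mr{}\widetilde{\ff{Y}}$ is again a trivial cofibration. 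A lifting problem with $\ff{k}_{\ff{f}}$ on the left and $\ff{p}$ on the right, whose top map out of $\ff{Y}\bigtriangleup\widetilde{\ff{X}}\bigtriangledown\ff{Y}$ encodes the pair $(\ff{a},\,\ff{h}\ff{g})$ together with the witnessing $2$-cell $\ff{a}\ff{f}\cong\ff{h}\ff{g}\ff{f}$, and whose bottom map is the constant homotopy $\ff{b}\ff{g}\,\sigma^{\ff{Y}}$, yields a homotopy $\ff{H}:\widetilde{\ff{Y}}\mr{}\ff{E}$ over $\ff{B}$, rel $\ff{X}$, between $\ff{a}$ and $\ff{h}\ff{g}$. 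A covering-homotopy argument against the fibration $\ff{p}$, starting from $\ff{h}$ and sliding it along $\ff{H}$, then produces the corrected filler $\ff{k}:\ff{Z}\mr{}\ff{E}$ with $\ff{k}\ff{g}\cong\ff{a}$ and $\ff{p}\ff{k}\cong\ff{b}$.

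The hard part will be the bookkeeping of the invertible $2$-cells: every commutativity holds only up to a specified isomorphism $2$-cell, so assembling the top map of the $\ff{k}_{\ff{f}}$-lifting problem, and then checking that the corrected $\ff{k}$ satisfies the coherence equation of \ref{lifting}, requires carefully pasting these $2$-cells together by elevator calculus. For these constructions to be mutually compatible I would choose the cylinder objects locally functorially, as in \ref{cilindros funtoriales}; this is precisely what replaces the functorial-cylinder hypothesis N4 of the one-dimensional treatment (cf.\ \ref{N4}), while the fibrancy of the relevant targets is supplied by the reduction in \ref{lema4'.2}.
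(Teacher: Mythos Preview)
Your overall strategy matches the paper's: reduce to cofibrations, handle the composition case by stability of the left lifting property, and for the two cancellation cases use the mapping-cylinder constructions of \ref{2.3.5 de EH}. The case ``$\ff{g}$ and $\ff{g}\ff{f}$ weak equivalences $\Rightarrow$ $\ff{f}$'' is exactly the paper's Case~III, and your argument there is the same (the paper also invokes \ref{lema4'.2} and lifts $\ff{b}$ along $\ff{g}$ using fibrancy of $\ff{B}$).

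There are, however, two genuine gaps in your treatment of the hard case (``$\ff{f}$ and $\ff{g}\ff{f}$ weak equivalences $\Rightarrow$ $\ff{g}$'', the paper's Case~II). First, the ``covering-homotopy argument'' you invoke is not a primitive operation here; it has to be implemented as a lifting problem, and the relevant left map is not $\ii^{\ff{Z}}_1:\ff{Z}\to\widetilde{\ff{Z}}$ alone but the \emph{single mapping cylinder} map $\ff{k}'_{\ff{g}}:\ff{Z}\bigtriangleup\widetilde{\ff{Y}}\to\widetilde{\ff{Z}}$ of \ref{2.3.5 de EH}, which is a trivial cofibration because $\ff{g}$ is a cofibration. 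The paper sets up exactly this second lift, with top map $\ff{h}'\,\bigtriangleup\,\ff{H}$ and bottom $\ff{b}\,\sigma^{\ff{Z}}$, and takes the filler composed with $\ii^{\ff{Z}}_1$ as the desired diagonal. You never name $\ff{k}'_{\ff{g}}$, and ``sliding along $\ff{H}$'' does not by itself explain why the lift respects the constraint coming from $\ff{H}$ on $\widetilde{\ff{Y}}$. (Incidentally, the paper does Case~II against an arbitrary fibration via \ref{lema3'.2}; your detour through \ref{lema4'.2} is harmless but unnecessary here.)

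Second, and more seriously, you cite \ref{2.3.5 de EH} and \ref{cilindros funtoriales} directly in $\Prop{C}$, but both are stated for a closed 2-bmodel 2-category and their proofs use axiom 2-M5, which is precisely what \ref{lema5'.2} is a step toward establishing. The paper avoids this circularity: since $\ff{f}$ (resp.\ $\ff{g}$) is by definition a retract of a strong trivial cofibration $\ff{f}'$ (resp.\ strong cofibration $\ff{g}'$) living in some $\cc{H}om_p(\hat{\ff{J}}^{op},\cc{C})$, one applies \ref{2.3.5 de EH} \emph{there}, where the full structure is already in place by \ref{CJdeclosed 2-modelos}, to obtain the trivial cofibrations $\ff{k}_{\ff{f}'}$ and $\ff{k}'_{\ff{g}'}$; then \ref{kretracto} shows $\ff{k}_{\ff{f}}$ and $\ff{k}'_{\ff{g}}$ are retracts of these, hence trivial cofibrations in $\Prop{C}$. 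Your sketch needs this detour to be sound.
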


\begin{proof} \comw{a}
We will do the case where $\ff{f}$ and $\ff{g}$ are both cofibrations. The other case is analogous and we omit it.
\begin{itemize}
 \item[-] Case I: Suppose that $\ff{f}$ and $\ff{g}$ are both weak equivalences. By \ref{lema2'.2}, it is enough to check that $\ff{g}\ff{f}$ is a trivial cofibration and, by \ref{lema3'.2}, this can be checked by proving that $\ff{g}\ff{f}$ has the left lifting property with respect to all fibrations. The proof is dual to the proof of the fact that fibrations are closed under composition from \ref{sobran axiomas}.
% Then, suppose that we have a fibration $\ff{p}$ and a diagram of the form
% 
%   $$\xymatrix@R=.5pc@C=.5pc{\ff{X} \ar[rr]^{\ff{a}} \ar[d]_{\ff{f}} & & \ff{E} \ar[dd]^{\ff{p}} \\ 
% 			     \ff{Y} \ar[d]_{\ff{g}} & \gamma \Downarrow \cong  \\
% 			     \ff{Z} \ar[rr]_{\ff{b}} & & \ff{B} }$$  
%    
% By \ref{lema2'.2}, $\ff{f}$ is a trivial cofibration and so, by \ref{lema3'.2}, there exists a filler $(\ff{h}',\lambda',\rho')$ for the following diagram
% 
% $$\xymatrix@R=.5pc@C=.5pc{\ff{X} \ar[rr]^{\ff{a}} \ar[dd]_{\ff{f}} 
% 			   & & \ff{E} \ar[dd]^{\ff{p}} \\
% 			   & 
% 			   \gamma \Downarrow \cong \\
% 			   \ff{Y} \ar[r]_{\ff{g}}
% 			   & \ff{Z} \ar[r]_{\ff{b}}
% 			   & \ff{B}}$$
% 
% By similar arguments, there exists a filler $(\ff{h},\lambda,\rho)$ for the following diagram
% 
% $$\xymatrix@R=.5pc@C=.5pc{\ff{Y} \ar[rr]^{\ff{h}'} \ar[dd]_{\ff{g}} 
% 			   & & \ff{E} \ar[dd]^{\ff{p}} \\
% 			   & 
% 			   \rho' \Downarrow \cong \\
% 			   \ff{Z} \ar[rr]_{\ff{b}}
% 			   & & \ff{B}}$$
% 
% \noindent Let's check that $(\ff{h},\lambda \ff{f}\circ \lambda',\rho)$ is the filler that we were looking for:
% 
% %A77
% 
% $$\vcenter{\xymatrix@C=-0pc{ \p \ddeq &&&& \fa \opunodos{\lambda' \ \ \ \ } \\
% &&& \h' \op{\lambda} &&& \f \ddeq \\
% \p && \h && \g \deq \\
% & \fb \cl{\rho} &&& \g && \f}}
% \vcenter{\xymatrix@C=-0pc{ \quad = \quad }}
% \vcenter{\xymatrix@C=-0pc{ \p \deq &&& \fa \op{\lambda'} \\
% \p \dl & \dc{\rho'} & \dr \h' && \f \deq \\
% \fb && \g && \f}}
% \vcenter{\xymatrix@C=-0pc{ \quad = \quad }}
% \vcenter{\xymatrix@C=-0pc{ & \p \ardl & \dc{\gamma} & \fa \ardr \\  \fb && \g && \f }}$$

\item[-] Case II: Suppose that $\ff{f}$ and $\ff{g}\ff{f}$ are both weak equivalences. We want to check that $\ff{g}$ has the right lifting property with respect to all fibrations. So suppose that we have a fibration $\ff{p}$ and a diagram of the form 

  $$\xymatrix@R=1.5pc@C=1.5pc{\ff{Y} \ar@{}[rrdd]|{\cong \; \Downarrow \; \gamma } \ar[rr]^{\ff{a}} \ar[dd]_{\ff{g}} & & \ff{E} \ar[dd]^{\ff{p}} \\ 
			     &   &\\
			     \ff{Z} \ar[rr]_{\ff{b}} & & \ff{B} }$$  
			     
Since $\ff{g}\ff{f}$ is a trivial cofibration, there exists a filler $(\ff{h}',\lambda',\rho')$ for the following diagram

  $$\xymatrix@R=1.5pc@C=1.5pc{\ff{X} \ar@{}[rrdd]|{\cong \; \Downarrow \; \gamma \ff{f}} \ar[rr]^{\ff{a}\ff{f}} \ar[d]_{\ff{f}} & & \ff{E} \ar[dd]^{\ff{p}} \\ 
			     \ff{Y} \ar[d]_{\ff{g}} &   &\\
			     \ff{Z} \ar[rr]_{\ff{b}} & & \ff{B} }$$  
			     
Since $\ff{f}$ is a trivial cofibration, there exists $\ff{X}'\mr{\ff{f}'}\ff{Y}'\in \cc{H}om_p(\hat{\ff{J}}^{op},\cc{C})$ for some cofinite filtered poset $\ff{J}$ with a unique initial object such that $\ff{f}$ is a retract of $\ff{f}'$. From the fact that $\cc{H}om_p(\hat{\ff{J}}^{op},\cc{C})$ is a closed 2-bmodel 2-category and \ref{2.3.5 de EH}, we have that $\ff{f}'$ induces a trivial cofibration $\ff{Y}'\bigtriangleup \widetilde{\ff{X}'}\bigtriangledown \ff{Y}' \mr{\ff{k}_{\ff{f}'}} \widetilde{\ff{Y}'} \in \cc{H}om_p(\hat{\ff{J}}^{op},\cc{C})$ and so, by \ref{kretracto} $\ff{f}$ induces a trivial cofibration $\ff{Y}\bigtriangleup \widetilde{\ff{X}}\bigtriangledown \ff{Y} \mr{\ff{k}_{\ff{f}}} \widetilde{\ff{Y}} \in \Prop{C}$. Then there exists a filler $(\ff{h}_0,\lambda_0,\rho_0)$ for the following diagram

%Diagrama
$$\xymatrix@R=3pc@C=5pc{\Y \bigtriangleup \widetilde{\X} \bigtriangledown \Y \ar[d]_{\fk_{\f}} \ar[r]^>>>>>>>>>>>>{\fa \ \bigtriangleup \ \fa \f \sigma\tX \ \bigtriangledown \ \h'\g} \ar@{}[rd]|{\cong \; \Downarrow \; \alpha \ \bigtriangleup \ \beta \ \bigtriangledown \ \delta} & \E \ar[d]^{\p} \\
\widetilde{\Y} \ar[r]_{\fb \g \sigma\tY} & \B}$$

%   $$\xymatrix@R=.5pc@C=.5pc{\ff{Y}\otimes 0 \cup \ff{X}\otimes [0,1] \cup \ff{Y} \otimes 1 \ar[rr]^<<<<<<<<<{\ff{h}'\ff{g} \cup \ff{a}\ff{f}\circ proj \cup \ff{a}} \ar[dd]_{\ff{i}} 
%                             & & \ff{E} \ar[dd]^{\ff{p}} \\ 
% 			     &  \rho' \ff{g}\cup \rho' \ff{g}\ff{f} \circ \ff{p}\lambda' \cup \gamma \Downarrow \cong &\\
% 			     \ff{Y} \otimes [0,1] \ar[rr]_{\ff{b}\ff{g}\circ proj} & & \ff{B} }$$  

\noindent where $\alpha= %A'1
\vcenter{\xymatrix@C=-0pc@R=1pc{ \p \dl && \dc{\gamma} && \dr \fa \\
 \fb \deq &&&& \g \opdostres{=} \\
 \fb \deq && \g \deq &&&&& id\sY \optrestres{\gamma_0\tY} \\
 \fb && \g && \sigma\tY && \fk_{\f} && \fb_{\f} && \lambda_0 }}$,
$\beta= %A'2
\vcenter{\xymatrix@C=-0pc@R=1pc{\p \dl & \dc{\gamma} & \dr \fa &&& \f \dl & \dcr{\beta_{\f}\inv} && \dr \sigma\tX \\
\fb \deq && \g \deq &&& \nabla_{\f} \op{\theta_{\f}} &&& \fa_{\f} \deq \\
\fb && \g && \sigma\tY && \fk_{\f} && \fa_{\f}}}
$ and
\mbox{$\delta= %A'3
\vcenter{\xymatrix@C=-0pc@R=1pc{\p && \h' &&& \g \deq \\
& \fb \cl{\rho'} \deq &&&& \g \opdostres{=} \\
& \fb \deq && \g \deq &&&&& id\sY \optrestres{\gamma_1\tY} \\
& \fb && \g && \sigma\tY && \fk_{\f} && \fb_{\f} && \lambda_1 }}$.}

By similar arguments, $\ff{g}$ induces a trivial cofibration $\ff{Z}\bigtriangleup \widetilde{\ff{Y}} \mr{\ff{k}'_{\g}} \widetilde{\ff{Z}}$. Then there exists a filler $(\ff{h}_1,\lambda_1,\rho_1)$ for the following diagram

%Diagrama

$$\xymatrix@R=3pc@C=3.5pc{ \Z \bigtriangleup \widetilde{\Y} \ar[d]_{\fk'_{\g}} \ar@{}[dr]|{\cong \; \Downarrow \; \alpha' \ \bigtriangleup \ \beta'} \ar[r]^{\h' \ \bigtriangleup \ \h_0} & \E \ar[d]^{\p} \\
\widetilde{\Z} \ar[r]_{\fb \sigma\tZ} & \B}$$

%   $$\xymatrix@R=.5pc@C=.5pc{\ff{Z}\otimes 0 \cup \ff{Y}\otimes [0,1] \ar[rr]^<<<<<<<<<{\ff{h}'\cup \ff{h}^{(2)}} \ar[dd]_{\ff{i}'} 
%                             & & \ff{E} \ar[dd]^{\ff{p}} \\ 
% 			     &  \rho'\cup \rho^{(2)} \Downarrow \cong &\\
% 			     \ff{Z} \otimes [0,1] \ar[rr]_{\ff{b}\circ proj} & & \ff{B} }$$ 
  
\noindent where \mbox{$\alpha'= %A'6
\vcenter{\xymatrix@C=-0pc@R=1pc{& \p && \h' \\
&& \fb \cl{\rho} \opdostres{=} \\
\fb \deq &&&&& id_{\Z} \optrestres{\gamma_0\tZ} \\
\fb \deq && \sigma\tZ \deq && \fk_{\g} \deq && \fb_{\g} \dl & \dc{\mu_{\g}\inv} & \dr \lambda_0 \\
\fb \deq && \sigma\tZ \deq && \fk_{\g} && \tilde{\fk}_{\g}  && \fb_{\g}' \deq \\
\fb && \sigma\tZ &&& \fk'_{\g} \cl{=} &&& \fb_{\g}'}}
$} and
\mbox{$\beta'= %A'7
\vcenter{\xymatrix@C=-0pc@R=1pc{ & \p \ardl &&& \dc{\rho_0} && \h_0 \ardr \\
\fb \deq &&& \g \dl && \dc{\beta_{\g}\inv} && \dr \sigma\tY \\
\fb \deq &&& \nabla_{\g} \op{\theta_{\g}} &&&& \fa_{\g} \deq \\
\fb \deq && \sigma\tZ \deq && \fk_{\g} \deq &&& \fa_{\g} \op{\alpha_{\g}\inv} \\
\fb \deq && \sigma\tZ \deq && \fk_{\g} && \tilde{\fk}_{\g}  && \fa'_{\g} \deq \\
\fb && \sigma\tZ &&& \fk'_{\g} \cl{=} &&& \fa'_{\g}}}$.}

\noindent It can be checked that $(\ff{h}_1\circ \ii^{\Z}_1, %A'4
\vcenter{\xymatrix@C=-0.2pc@R=1pc{ &&&&&&&& \fa \ardlllll \ardrrrr \dcl{\lambda_0 \fb_{\f} \lambda_1} \\
&&& \h_0 \optrestres{\lambda_1 \fa_{\g}'} &&&&& \fk_{\f} \deq && \fb_{\f} \deq && \lambda_0 \deq \\
\h_1 \deq &&& \fk'_{\g} \op{=} &&& \fa'_{\g} \deq && \fk_{\f} && \fb_{\f} && \lambda_0 \\
\h_1 \deq && \fk_{\g} \deq && \tilde{\fk}_{\g} \deq && \fa'_{\g} \dl && \dc{\delta'_{\g}} && \dr \ii\tY_0 \cldosdos{=} \\
\h_1 \deq && \fk_{\g} \deq && \tilde{\fk}_{\g} \dl & \dc{\mu_{\g}} & \dr \fb'_{\g} &&&& \g \deq \\
\h_1 \deq && \fk_{\g} && \fb_{\g} && \lambda_0 &&&& \g \deq \\
\h_1 &&&& (\ii\tZ)_1 \cldosdos{=} &&&&&& \g }}
, %A'5
\vcenter{\xymatrix@C=-0pc@R=1pc{\p \dl & \dc{\rho_1} & \dr \h_1 && \ii_1\tZ \deq \\
\fb \deq && \sigma\tZ && \ii_1\tZ \\
\fb &&& id_{\Z} \cl{\gamma_1\tZ} \\
& \fb \clunodos{=} }}
)$ is the filler that we were looking for. 
 
\item[-] Case III: Suppose that $\ff{g}$ and $\ff{g}\ff{f}$ are weak equivalences. By \ref{lema4'.2}, it is enough to check that $\ff{f}$ has the left lifting property with respect to all fibrations between fibrant objects. So suppose that we have a fibration $\ff{E}\mr{\ff{p}}\ff{B}$ between fibrant objects and a diagram of the form

$$\xymatrix@R=1.5pc@C=1.5pc{\ff{X} \ar@{}[rrdd]|{\cong \; \Downarrow \; \gamma } \ar[rr]^{\ff{a}} \ar[dd]_{\ff{f}} & & \ff{E} \ar[dd]^{\ff{p}} \\ 
			     &  &\\
			     \ff{Y} \ar[rr]_{\ff{b}} & & \ff{B}}$$  
			     
Since $\ff{g}$ is a trivial cofibration, there exists a filler $(\ff{h}',\lambda',\rho')$ for the following \mbox{diagram}

$$\xymatrix@R=1.5pc@C=1.5pc{\ff{Y} \ar@{}[rrdd]|{\cong} \ar[rr]^{\ff{b}} \ar[dd]_{\ff{g}} & & \ff{B} \ar[dd]\\ 
			     &  &\\
			     \ff{Z} \ar[rr] & & \ * \ }$$ 

Since $\ff{g}\ff{f}$ is a trivial cofibration there exists a filler $(\ff{h},\lambda,\rho)$ for the following diagram

 $$\vcenter{\xymatrix@R=1.5pc@C=1.5pc{\ff{X} \ar@{}[rrd]|{\cong  \; \Downarrow \; \gamma } \ar[rr]^{\ff{a}} \ar[d]_{\ff{f}} & & \ff{E} \ar[dd]^{\ff{p}} \\ 
			     \ff{Y} \ar@{}[rd]|>>>>{\cong  \; \Downarrow \; \lambda'} \ar[d]_{\ff{g}} \ar@/^1ex/[rrd]^{\fb} && \\
			     \ff{Z} \ar[rr]_{\ff{h}'} & & \ff{B} }}
\vcenter{\xymatrix@R=1.5pc@C=1.5pc{\quad = \quad}}
 \vcenter{\xymatrix@R=1.5pc@C=1.5pc{\ff{X} \ar@{}[rrdd]|{\cong  \; \Downarrow \; \lambda' \ff{f} \circ \gamma } \ar[rr]^{\ff{a}} \ar[d]_{\ff{f}} & & \ff{E} \ar[dd]^{\ff{p}} \\ 
			     \ff{Y} \ar[d]_{\ff{g}} & \\
			     \ff{Z} \ar[rr]_{\ff{h}'} & & \ff{B} }}$$ 
                
\noindent It is straightforward to check that $(\ff{h}\ff{g},\lambda, %A191
\vcenter{\xymatrix@R=1pc@C=.1pc{ \p && \h && \g \deq \\ & \h' \cl{\rho} &&& \g \\ && \fb \clunodos{\; \; \lambda'{}\inv} }})$ is the filler that we were looking for.

\end{itemize}
 
\end{proof}

\begin{lemma}\label{lema6'.2}
If $\ff{X}\mr{\ff{f}}\ff{Y}$ is a weak equivalence and $\ff{f}\Mr{\alpha \cong} \ff{p}\ff{i}$ with $\ff{i}$ a trivial cofibration and $\ff{p}$ a fibration, then $\ff{p}$ is a trivial fibration.  
\end{lemma}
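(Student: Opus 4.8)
The statement asserts that if $\ff{f}$ is a weak equivalence and we factor $\ff{f} \Mr{\alpha \cong} \ff{p}\ff{i}$ with $\ff{i}$ a trivial cofibration and $\ff{p}$ a fibration, then $\ff{p}$ is automatically a trivial fibration. By Lemma \ref{lema2'.2}, this is equivalent to showing $\ff{p}$ is both a fibration (which is given) and a weak equivalence. So the entire content of the proof is to establish that $\ff{p}$ is a weak equivalence. The plan is to invoke the ``two out of three'' principle: since $\ff{f} \cong \ff{p}\ff{i}$ and $\ff{f}$ is a weak equivalence, and since $\ff{i}$ is a trivial cofibration hence a weak equivalence (by Lemma \ref{lema2'.2}), I would like to conclude that $\ff{p}$ is a weak equivalence as the ``third'' morphism.

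The key subtlety is that the general axiom 2-M5 has \emph{not} yet been proved for $\Prop{C}$ at this point in the text — only the restricted version Lemma \ref{lema5'.2} is available, which requires the two composable morphisms to be \emph{both fibrations} or \emph{both cofibrations}. In our situation $\ff{i}$ is a cofibration but $\ff{p}$ is a fibration, so Lemma \ref{lema5'.2} does not apply directly to the pair $(\ff{i},\ff{p})$. This is the main obstacle, and it is presumably why this lemma is stated separately as a stepping stone. First I would reduce to the genuine composite: using axiom 2-M7 (closure of weak equivalences under invertible 2-cells, from Proposition \ref{sobran axiomas}, which holds once 2-M6a/b/c are established — and these are proved earlier in the proof of \ref{Propde2modelos}), the invertible 2-cell $\alpha$ lets me replace $\ff{f}$ by $\ff{p}\ff{i}$ and work with the strict composite.

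The strategy to circumvent the restriction in \ref{lema5'.2} is to factor $\ff{p}$ itself and play the factorizations against each other. Concretely, I would apply axiom 2-M2 to factor $\ff{p} \cong \ff{q}\ff{j}$ with $\ff{j}$ a trivial cofibration and $\ff{q}$ a fibration. Then $\ff{f} \cong \ff{q}\ff{j}\ff{i}$, where $\ff{j}\ff{i}$ is a composite of two trivial cofibrations; by Case I of Lemma \ref{lema5'.2} (both cofibrations, both weak equivalences), $\ff{j}\ff{i}$ is a trivial cofibration. Now $\ff{f} \cong \ff{q}(\ff{j}\ff{i})$ exhibits the weak equivalence $\ff{f}$ as a composite of a trivial cofibration $\ff{j}\ff{i}$ followed by the fibration $\ff{q}$; since $\ff{f}$ is a weak equivalence and $\ff{j}\ff{i}$ is a weak equivalence, I want $\ff{q}$ to be a weak equivalence. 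Here I can apply Lemma \ref{lema5'.2} Case II or III only if both morphisms in the relevant pair share type — so instead I would argue that $\ff{q}$ is a trivial fibration \emph{directly} via the retract/lifting characterization: because $\ff{f} = \ff{q}\ff{j}\ff{i}$ is a weak equivalence, by axiom 2-M6c) it factors as a map with the right lifting property against cofibrations followed by a map with the left lifting property against fibrations, and combining with the lifting properties of $\ff{j}\ff{i}$ (Lemma \ref{lema3'.2}) and the retract argument (Proposition \ref{retractargument}) forces $\ff{q}$ to be a trivial fibration.

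Finally, having shown $\ff{q}$ is a trivial fibration, I return to $\ff{p} \cong \ff{q}\ff{j}$: this is now a composite of a trivial cofibration $\ff{j}$ and a trivial fibration $\ff{q}$, both of which are weak equivalences, and — crucially — this composite can be analyzed. Since $\ff{p}$ is a fibration and $\ff{j}$ is a trivial cofibration with $\ff{q}$ a trivial fibration, I would use the retract argument once more: the lifting property of $\ff{j}$ against the fibration $\ff{p}$ (via the square with $\ff{q}$) shows $\ff{p}$ is a retract of $\ff{q}$ in $\cc{H}om_p(\ff{2},\Prop{C})$, and since trivial fibrations are closed under retracts (Proposition \ref{retractospreservan} together with Lemma \ref{lema2'.2}), $\ff{p}$ is a trivial fibration. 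The delicate point throughout is bookkeeping the invertible $2$-cells so that each application of the retract argument genuinely produces a retract diagram in $\cc{H}om_p(\ff{2},\Prop{C})$; I expect that to be routine but notation-heavy, and the real conceptual hurdle is precisely the first reduction that avoids needing the not-yet-available full 2-M5 for a mixed cofibration/fibration pair.
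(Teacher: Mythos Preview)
Your proposal has a genuine gap in step 4, and the detour through the factorization $\ff{p}\cong\ff{q}\ff{j}$ does not actually buy you anything.

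Observe that after step 3 you have $\ff{f}\cong\ff{q}(\ff{j}\ff{i})$ with $\ff{j}\ff{i}$ a trivial cofibration and $\ff{q}$ a fibration, and you want to conclude that $\ff{q}$ is a trivial fibration. But this is \emph{exactly} the statement of the lemma you are trying to prove, with $(\ff{q},\ff{j}\ff{i})$ in place of $(\ff{p},\ff{i})$. So the reduction is circular. Your attempt to break the circle by invoking 2-M6c) and the retract argument does not succeed: from 2-M6c) you get a second factorization $\ff{f}\cong\ff{u}\ff{v}$ with $\ff{u}$ having RLP against cofibrations, and the two lifting squares give you comparison maps $\ff{g}:W\to W'$ and $\ff{g}':W'\to W$ with $\ff{u}\ff{g}\cong\ff{q}$ and $\ff{q}\ff{g}'\cong\ff{u}$. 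For the retract argument to apply you would need $\ff{g}'\ff{g}\cong id_W$, but you only know $\ff{q}\ff{g}'\ff{g}\cong\ff{q}$ and $(\ff{g}'\ff{g})(\ff{j}\ff{i})\cong\ff{j}\ff{i}$, which is strictly weaker. Bridging this gap is precisely what requires the cylinder machinery.

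The paper's proof does the comparison of factorizations directly for $\ff{p}$ (skipping your factorization of $\ff{p}$ entirely): it takes the factorization $\ff{f}\cong\ff{p}'\ff{i}'$ from the \emph{definition} of weak equivalence, builds the comparison liftings $\ff{g},\ff{g}'$, and then uses the trivial cofibrations $\ff{k}_{\ff{i}}$ and $\ff{k}_{\ff{i}'}$ from Proposition \ref{2.3.5 de EH} together with the homotopy-extension style argument of Lemma \ref{lema5'.2} Case II to show that $\ff{p}$ and $\ff{p}'$ have the same lifting properties. The cylinder construction is the missing conceptual ingredient in your proposal; once you have it, there is no need for the intermediate $\ff{q}$ at all.
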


\begin{proof}
 Since $\ff{f}$ is a weak equivalence, we can factorize $\ff{f}\Mr{\beta \cong} \ff{p}'\ff{i}'$ where $\ff{p}'$ is a trivial fibration and $\ff{i}'$ is a trivial cofibration. Then there exist fillers $(\ff{g},\lambda,\rho)$, $(\ff{g}',\lambda',\rho')$ for the following diagrams

$$\vcenter{\xymatrix@C=1.5pc@R=1.5pc{\X \ar[rrdd]|{\comw{M^M} \ff{f} \comw{M^M} } \ar[rr]^{\ii'} \ar[dd]_{\ii}  & \ar@{}[dr]|{\cong \; \Uparrow \; \beta} & \Z' \ar[dd]^{\p'}\\
                  \ar@{}[dr]|{ \cong \; \Downarrow \; \alpha} &&  \\ 
                  \Z \ar[rr]_{\p} && \Y}}
\vcenter{\xymatrix@C=1.5pc@R=1.5pc{=}}
\vcenter{\xymatrix@R=1.5pc@C=1.5pc{\ff{X} \ar@{}[rrdd]|{\cong \; \Downarrow \; \alpha \circ \beta^{-1} } \ar[rr]^{\ff{i}'} \ar[dd]_{\ff{i}} & & \ff{Z}' \ar[dd]^{\ff{p}'}\\ 
			     &  &\\
			     \ff{Z} \ar[rr]_{\ff{p}} & & \ff{Y}}}
\vcenter{\xymatrix@C=1.5pc@R=1.5pc{\quad\quad\quad}}
\vcenter{\xymatrix@R=1.5pc@C=1.5pc{\ff{X} \ar@{}[rrdd]|{\cong \; \Downarrow  \; \beta^{-1} \circ \alpha } \ar[rr]^{\ff{i}} \ar[dd]_{\ff{i}'} & & \ff{Z} \ar[dd]^{\ff{p}}\\ 
			     &  &\\
			     \ff{Z}' \ar[rr]_{\ff{p}'} & & \ff{Y}}}
\vcenter{\xymatrix@C=1.5pc@R=1.5pc{=}}
\vcenter{\xymatrix@C=1.5pc@R=1.5pc{\X \ar[rrdd]|{\comw{M^M} \ff{f} \comw{M^M} } \ar[rr]^{\ii} \ar[dd]_{\ii'}  & \ar@{}[dr]|{\cong \; \Uparrow \; \alpha} & \Z \ar[dd]^{\p}\\
                  \ar@{}[dr]|{ \cong \; \Downarrow \; \beta} &&  \\ 
                  \Z' \ar[rr]_{\p'} && \Y}}$$   

%Then we have invertible 2-cells $ A192 $, $ A193 $ and, 
By \ref{2.3.5 de EH}, there exist fillers $(\ff{h}_0,\lambda_0, \rho_0)$, $(\ff{h}_1,\lambda_1, \rho_1)$ for the following diagrams
 
$$\vcenter{\xymatrix@R=3pc@C=5pc{\Z \bigtriangleup \widetilde{\X} \bigtriangledown \Z \ar[d]_{\fk_{\ii}} \ar[r]^>>>>>>>>>>>>{id_{\Z} \ \bigtriangleup \ \ii \sigma\tX \ \bigtriangledown \ \g'\g} \ar@{}[rd]|{\cong \; \Downarrow \; id_{\p} \ \bigtriangleup \ id_{\p \ii} \ \bigtriangledown \ \rho \circ \rho' \g} & \Z \ar[d]^{\p} \\
\widetilde{\Z} \ar[r]_{\p \sigma\tZ} & \Y}}
\vcenter{\xymatrix@R=3pc@C=5pc{ \quad \quad }}
\vcenter{\xymatrix@R=3pc@C=5pc{\Z' \bigtriangleup \widetilde{\X} \bigtriangledown \Z' \ar[d]_{\fk_{\ii'}} \ar[r]^>>>>>>>>>>>>{id_{\Z'} \ \bigtriangleup \ \ii' \sigma\tX \ \bigtriangledown \ \g\g'} \ar@{}[rd]|{\cong \; \Downarrow \; id_{\p'} \ \bigtriangleup \ id_{\p' \ii'} \ \bigtriangledown \ \rho' \circ \rho \g'} & \Z' \ar[d]^{\p'} \\
\widetilde{\Z}' \ar[r]_{\p' \sigma^{\Z'}} & \Y}}$$

%  $$\xymatrix@R=1.5pc@C=1.5pc{\ff{Z}\otimes 0 \cup \ff{X}\otimes [0,1] \cup \ff{Z} \otimes 1 \ar@{}[rrdd]|{\cong\;\Downarrow\;id_{\ff{p}}\cup id_{\ff{p}\ff{i}} \cup \rho \circ \rho'\ff{g}  }  \ar[rr]^{id_{\ff{Z}} \cup \ff{i} proj \cup \ff{g}'\ff{g}} \ar[dd]_{\ff{j}} & & \ff{Z} \ar[dd]^{\ff{p}}\\ 
% 			     &  &\\
% 			     \ff{Z} \otimes [0,1] \ar[rr]_{\ff{p}proj} & & \ff{Y}} $$
% $$ \xymatrix@R=1.5pc@C=1.5pc{\ff{Z'}\otimes 0 \cup \ff{X}\otimes [0,1] \cup \ff{Z'} \otimes 1 \ar@{}[rrdd]|{\cong \;\Downarrow \; id_{\ff{p}'}\cup id_{\ff{p}' \ff{i}'} \cup \rho' \circ \rho \ff{g}'   } \ar[rr]^{id_{\ff{Z}'} \cup \ff{i}' proj \cup \ff{g}\ff{g}'} \ar[dd]_{\ff{j}'} & & \ff{Z}' \ar[dd]^{\ff{p}'}\\ 
% 			     & &\\
% 			     \ff{Z}' \otimes [0,1] \ar[rr]_{\ff{p}'proj} & & \ff{Y}}$$ 

%\noindent such that $\ff{j}$, $\ff{j}'$ are the ones induced by $\ff{i}$, $\ff{i}'$ respectively as in the proof of \ref{lema5'.2}.

By working as in the proof of \ref{lema5'.2} Case II, one can check that $\ff{p}$ and $\ff{p}'$ have similar lifting properties and so, by \ref{lema3'.2}, $\ff{p}$ is a trivial fibration.
% By \ref{lema3'.2}, to check that $\ff{p}$ is a trivial fibration it is enough to check that it has the right lifting property with respect to all cofibrations. So, let $\ff{k}$ be a cofibration and suppose that we have a diagram of the form 
% 
% $$\xymatrix@R=1.5pc@C=1.5pc{\ff{A} \ar@{}[rrdd]|{\cong \; \Downarrow \; \gamma }\ar[rr]^{\ff{a}} \ar[dd]_{\ff{k}} & & \ff{Z} \ar[dd]^{\ff{p}} \\ 
% 			     &  &\\
% 			     \ff{B} \ar[rr]_{\ff{b}} & & \ff{Y}}$$ 
% 			     
% Since $\ff{p}'$ is a trivial fibration, there exists a filler $(\ff{h}_2,\lambda_2,\rho_2)$ for the following diagram
% 
% $$\xymatrix@R=1.5pc@C=1.5pc{\ff{A} \ar@{}[rrdd]|{\cong \; \Downarrow \; \gamma \circ \rho \ff{a} } \ar[r]^{\ff{a}} \ar[dd]_{\ff{k}} & \ff{Z} \ar[r]^{\ff{g}} & \ff{Z}' \ar[dd]^{\ff{p}'} \\ 
% 			     &  &\\
% 			     \ff{B} \ar[rr]_{\ff{b}} & & \ff{Y}}$$ 
%                   
% \noindent It can be checked that $(\ff{g}'\ff{h}_2,\ff{g}' \lambda_2 \circ \lambda_0 \ff{a} \circ \lambda_0 \ff{a},\rho_2 \circ \rho' \ff{h}_2)$ (\comr{son distintas partes de $\lambda_0 \ff{a}$, ver como escribirlo bien}) is the filler that we were looking for.			     
\end{proof}

\begin{lemma}\label{lema7'.2}
If $\ff{X}\mr{\ff{f}}\ff{Y}$ is a weak equivalence in $\Prop{C}$ and $\ff{f}\Mr{\alpha\cong} \ff{p}\ff{i}$ with $\ff{i}$ a cofibration and $\ff{p}$ a trivial fibration, then $\ff{i}$ is a trivial cofibration.  
\end{lemma}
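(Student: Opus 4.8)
The statement is the exact dual of \ref{lema6'.2}, and the plan is to mirror that proof. By \ref{lema3'.2}, it suffices to show that the cofibration $\ff{i}$ has the left lifting property with respect to every fibration; since a trivial cofibration is precisely a cofibration enjoying this lifting property, establishing it is both necessary and sufficient.

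First I would replace the given decomposition of $\ff{f}$ by a second one coming from the definition of weak equivalence. Because $\ff{f}$ is a weak equivalence in $\Prop{C}$, by \ref{estructuraenpropC} it factors up to an invertible 2-cell as $\ff{f} \Mr{\beta \cong} \ff{p}'\ff{i}'$ with $\ff{p}'$ a trivial fibration and $\ff{i}'$ a trivial cofibration. Writing $\Z$ for the codomain of $\ff{i}$ (the domain of $\ff{p}$) and $\Z'$ for the codomain of $\ff{i}'$ (the domain of $\ff{p}'$), I now have two factorizations of $\ff{f}$ through trivial fibrations. Comparing them, the pair $(\ff{i},\ff{p}')$ has the lifting property because $\ff{i}$ is a cofibration and $\ff{p}'$ a trivial fibration (\ref{lema3'.2}), and dually $(\ff{i}',\ff{p})$ has the lifting property because $\ff{i}'$ is in particular a cofibration and $\ff{p}$ is a trivial fibration. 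These two fillers produce comparison morphisms $\Z \mr{\ff{g}} \Z'$ and $\Z' \mr{\ff{g}'} \Z$ together with invertible 2-cells witnessing $\ff{g}\ff{i} \cong \ff{i}'$, $\ff{p}'\ff{g} \cong \ff{p}$, $\ff{g}'\ff{i}' \cong \ff{i}$ and $\ff{p}\ff{g}' \cong \ff{p}'$; in particular $\ff{g}'\ff{g}$ and $id_{\Z}$ are both solutions of the same square and are therefore comparable by a homotopy.

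Next I would feed the two cofibrations $\ff{i}$ and $\ff{i}'$ into \ref{2.3.5 de EH} to obtain double-cylinder cofibrations $\ff{k}_{\ff{i}}$ and $\ff{k}_{\ff{i}'}$, exactly as in the proof of \ref{lema6'.2}, and then run the argument of \ref{lema5'.2} Case II in its dual form: given any fibration $\ff{q}$ and a square under $\ff{i}$, I would build the required filler by first lifting against $\ff{i}'$ (which, being a trivial cofibration, has the left lifting property with respect to $\ff{q}$ by \ref{lema3'.2}), and then transporting the resulting solution back along $\ff{g}$, $\ff{g}'$ and the cylinder homotopy $\ff{k}_{\ff{i}}$. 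This shows that $\ff{i}$ has the same left lifting property against $\ff{q}$ as $\ff{i}'$ does; since $\ff{q}$ was an arbitrary fibration, \ref{lema3'.2} then yields that $\ff{i}$ is a trivial cofibration, as desired.

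The routine part is the construction of the fillers $\ff{g}$, $\ff{g}'$ and the invocation of \ref{2.3.5 de EH}. The main obstacle, as in \ref{lema5'.2} Case II, is the elevators-calculus verification that the filler assembled for a square under $\ff{i}$ genuinely satisfies the defining 2-cell equation of \ref{lifting}: one must paste together the homotopy $\ff{g}'\ff{g} \simeq id_{\Z}$ (realized through the cylinder $\ff{k}_{\ff{i}}$), the 2-cells $\lambda,\rho,\lambda',\rho'$ from the comparison fillers, and the structural 2-cells $\gamma^{\Z}$, $\sigma^{\Z}$ coming from the cylinder objects, so that all the invertible 2-cells cancel in the required order. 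This bookkeeping is the exact dual of the diagram computations already carried out in detail for \ref{lema5'.2} Case II and \ref{lema6'.2}, so I would reuse those calculations \emph{mutatis mutandis} rather than reproduce them.
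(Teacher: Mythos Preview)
Your proposal is correct and follows essentially the same approach as the paper, which simply states that the proof is analogous to that of \ref{lema6'.2} and omits it. You have correctly identified all the ingredients: the second factorization $\ff{f}\cong\ff{p}'\ff{i}'$, the comparison fillers $\ff{g},\ff{g}'$ (your justification of the lifting properties is right, noting that in this lemma $\ff{p}$ is already a \emph{trivial} fibration, so $\fk_{\ff{i}}$ being only a cofibration still lifts against it), the cylinder constructions from \ref{2.3.5 de EH}, and the transport of lifting properties from $\ff{i}'$ to $\ff{i}$. One small remark: since \ref{lema7'.2} concludes a left lifting property just as \ref{lema5'.2} Case~II does, the argument here is really in the \emph{same} direction as \ref{lema5'.2} Case~II rather than its dual---it is \ref{lema6'.2} that sits on the dual side---but this does not affect the substance of your outline.
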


\begin{proof}
 The proof is analogous to the proof of \ref{lema6'.2} and is omitted.
\end{proof}

The following lemmas assume that 2-N3a) holds in $\cc{C}$ but they would be analogous in case 2-N3b) holds instead of 2-N3a).

\begin{lemma}\label{lema8'.2}
 If $\ff{E}\mr{\ff{p}}\ff{B}$ is a trivial fibration in $\Prop{C}$, then there exists a trivial cofibration $\ff{B}\mr{\ff{s}}\ff{E}$ such that $\ff{p}\ff{s}\cong id$. Furthermore, every quasi-section of $\ff{p}$ is a trivial cofibration.
\end{lemma}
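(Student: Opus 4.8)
The plan is to exhibit the quasi-section $\ff{s}$ as a filler and then identify it as a trivial cofibration. Since $\cc{C}$ satisfies 2-N3a), every object is cofibrant, so in particular $\ff{B}$ is a cofibrant object of $\Prop{C}$; that is, the unique morphism $0 \mr{} \ff{B}$ is a cofibration. First I would set up the lifting square whose diagonal will produce $\ff{s}$. Consider the diagram in $\Prop{C}$
$$\xymatrix@C=1.5pc@R=3pc{0 \ar[rr] \ar[d] &\ar@{}[d]|{\cong \; \Downarrow}& \ff{E} \ar[d]^{\ff{p}} \\ \ff{B} \ar[rr]_{id_{\ff{B}}} && \ff{B}}$$
where the top morphism is the essentially unique arrow $0 \mr{} \ff{E}$ and the invertible $2$-cell is the one guaranteed by bi-initiality of $0$ (any two morphisms out of $0$ are connected by a unique invertible $2$-cell, so $\ff{p} \circ (0\mr{}\ff{E})$ and $id_{\ff{B}}\circ (0\mr{}\ff{B})$ agree up to invertible $2$-cell). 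Since $\ff{p}$ is a trivial fibration and $0 \mr{} \ff{B}$ is a cofibration, by \ref{lema1'.2} this square admits a filler $(\ff{s},\lambda,\rho)$. The component $\rho$ then furnishes exactly the invertible $2$-cell $\ff{p}\ff{s} \Mr{\cong} id_{\ff{B}}$, giving $\ff{p}\ff{s}\cong id$, so $\ff{s}$ is a quasi-section of $\ff{p}$.

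The second and main part is to show that every quasi-section of $\ff{p}$ (in particular this $\ff{s}$) is a trivial cofibration. Let $\ff{B}\mr{\ff{s}}\ff{E}$ be any morphism with $\ff{p}\ff{s}\Mr{\mu\cong} id_{\ff{B}}$. By \ref{lema2'.2} it is equivalent to prove that $\ff{s}$ is both a cofibration and a weak equivalence. For the weak equivalence part I would use axiom 2-M5 (the two-out-of-three property, valid since $\Prop{C}$ is a closed $2$-bmodel $2$-category once the earlier axioms are in place, or more safely via the factorization definition of weak equivalence): from $\ff{p}\ff{s}\cong id_{\ff{B}}$ and the fact that $id_{\ff{B}}$ is a weak equivalence and $\ff{p}$ is a weak equivalence (being a trivial fibration), two-out-of-three forces $\ff{s}$ to be a weak equivalence. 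One must check that 2-M5 applies to the composite $\ff{p}\ff{s}$, which is legitimate because $\ff{p}\ff{s}$ is isomorphic to $id_{\ff{B}}$ and weak equivalences are closed under isomorphism by 2-M7.

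The harder obstacle is proving that $\ff{s}$ is a \emph{cofibration}. Here I would invoke \ref{lema3'.2}(2): $\ff{s}$ is a cofibration if and only if it has the left lifting property with respect to all trivial fibrations, equivalently (by the characterization of trivial fibrations and the dual lifting criterion) one shows $\ff{s}$ lifts against fibrations after establishing it is a weak equivalence. More directly, I would use the retract argument: factor $\ff{s}\Mr{\cong}\ff{q}\ff{j}$ with $\ff{j}$ a (trivial) cofibration and $\ff{q}$ a fibration via axiom 2-M2, and show the pair $(\ff{j},\ff{s})$—or rather that $\ff{s}$ has the appropriate lifting property—so that \ref{retractargument} realizes $\ff{s}$ as a retract of $\ff{j}$; since trivial cofibrations are closed under retracts (\ref{retractospreservan} together with \ref{lema2'.2}), $\ff{s}$ is itself a trivial cofibration. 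The delicate point throughout is that all the commutativities hold only up to specified invertible $2$-cells, so the quasi-section condition $\ff{p}\ff{s}\cong id$ must be tracked coherently through the factorization and the lifting diagrams; assembling the invertible $2$-cells $\mu$, $\lambda$, $\rho$ into a genuine retraction datum in $\cc{H}om_p(\ff{2},\Prop{C})$ (in the sense of \ref{retractos}) is where the real bookkeeping, and the main risk of error, lies.
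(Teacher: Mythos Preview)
Your construction of the quasi-section $\ff{s}$ as a filler of the square with $0 \to \ff{B}$ on the left and $\ff{p}$ on the right is exactly what the paper does, so that part is fine.

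The second part, however, has a genuine circularity. At this point in the paper the two-out-of-three axiom 2-M5 for $\Prop{C}$ has \emph{not} been established: it is proved only at the very end of the section, and its proof uses Lemma~\ref{lema9'.2}, which in turn invokes Lemma~\ref{lema8'.2}. So you cannot appeal to 2-M5 to conclude that $\ff{s}$ is a weak equivalence from $\ff{p}\ff{s}\cong id$ and $\ff{p}$ being a weak equivalence. You noted this risk (``or more safely via the factorization definition''), but you did not actually supply the alternative argument, and the retract-argument route you sketch afterwards still depends on it: to make $\ff{s}$ a retract of $\ff{j}$ via \ref{retractargument} you need the pair $(\ff{s},\ff{q})$ to have the lifting property, and the only way you indicate to obtain that is to know $\ff{q}$ is a trivial fibration, which via \ref{lema6'.2} again requires $\ff{s}$ to already be a weak equivalence.

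The paper avoids this loop entirely. Instead of separating ``cofibration'' from ``weak equivalence'', it shows directly that any quasi-section $\ff{s}'$ of $\ff{p}$ has the left lifting property with respect to \emph{all} fibrations, and then invokes \ref{lema3'.2}(1) to conclude that $\ff{s}'$ is a trivial cofibration in one stroke. The verification of that lifting property is done ``as in the proof of \ref{lema5'.2} Case~II'': given a lifting problem for $\ff{s}'$ against a fibration, one uses the relation $\ff{p}\ff{s}'\cong id$ together with the cylinder-object machinery (\ref{2.3.5 de EH}, \ref{kretracto}) to manufacture the filler. That argument uses only \ref{lema1'.2}, \ref{lema3'.2}, and the cylinder constructions, all of which are available at this stage; it does not touch 2-M5.
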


\begin{proof}
By axiom 2-N3, $\ff{B}$ is cofibrant. Then, by \ref{lema1'.2}, we have a filler $(\ff{s},\lambda,\rho)$ for the following diagram: 

$$\xymatrix@R=1.5pc@C=1.5pc{\emptyset \ar@{}[rrdd]|{\cong }\ar[rr] \ar[dd] & & \ff{E} \ar[dd]^{\ff{p}} \\ 
			     &  &\\
			     \ff{B} \ar[rr]_{id_{\ff{B}}} & & \ff{B}}$$ 
			     
Then $\ff{p}\ff{s} \cong id$.

Let $\ff{s}'$ be a quasi-section of $\ff{p}$. One can proceed as in the proof of \ref{lema5'.2} Case II to prove that $\ff{s}'$ has the left lifting property with respect to all fibrations and so is a trivial cofibration.
\end{proof}

\begin{lemma}\label{lema9'.2}
 If $\ff{X}\mr{\ff{f}}\ff{Y}$ is a trivial fibration, $\ff{Y}\mr{\ff{g}}\ff{Z}$ is a trivial cofibration in $\Prop{C}$ and $\ff{h}\cong \ff{g}\ff{f}$, then $\ff{h}$ is a weak equivalence.
\end{lemma}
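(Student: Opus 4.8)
Lemma \ref{lema9'.2} asserts that if $\ff{f}$ is a trivial fibration, $\ff{g}$ is a trivial cofibration, and $\ff{h} \cong \ff{g}\ff{f}$, then $\ff{h}$ is a weak equivalence. The plan is to reduce the statement, via the two-out-of-three style lemma \ref{lema5'.2}, to comparing $\ff{h}$ with a morphism we already know to be a weak equivalence, using the quasi-section technology of \ref{lema8'.2}. The key observation is that $\ff{f}$ being a trivial fibration gives us, by \ref{lema8'.2}, a quasi-section $\ff{Y} \mr{\ff{s}} \ff{X}$ which is a \emph{trivial cofibration} and satisfies $\ff{f}\ff{s} \cong id_{\ff{Y}}$.

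First I would invoke \ref{lema8'.2} applied to the trivial fibration $\ff{f}$ to obtain a trivial cofibration $\ff{Y} \mr{\ff{s}} \ff{X}$ with $\ff{f}\ff{s} \cong id_{\ff{Y}}$. Now consider the composite $\ff{g}\ff{f}\ff{s}$. On one hand, $\ff{g}\ff{f}\ff{s} \cong \ff{g} \, id_{\ff{Y}} \cong \ff{g}$, which is a trivial cofibration and hence (by \ref{lema2'.2}) a weak equivalence. On the other hand, $\ff{g}\ff{f}\ff{s} \cong \ff{h}\ff{s}$ since $\ff{h} \cong \ff{g}\ff{f}$. So $\ff{h}\ff{s}$ is a weak equivalence (weak equivalences are closed under isomorphism by the $\Prop{C}$ analogue of axiom 2-M7, which follows from \ref{sobran axiomas} once the closed 2-bmodel structure axioms are in place, or directly from the definition via \ref{lema3'.2}).

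Next I would like to conclude that $\ff{h}$ is a weak equivalence from the fact that $\ff{h}\ff{s}$ and $\ff{s}$ are weak equivalences. The natural tool is \ref{lema5'.2}, but that lemma requires the two relevant composable morphisms to be \emph{both fibrations or both cofibrations}. Here $\ff{s}$ is a trivial cofibration, hence a cofibration; so it would suffice to know that $\ff{h}$ is a cofibration, which in general it is not. Thus the main obstacle is precisely that the pair $(\ff{s}, \ff{h})$ need not satisfy the common-class hypothesis of \ref{lema5'.2}. To circumvent this, I would instead argue as follows: factor $\ff{h}$ (up to isomorphism) via axiom 2-M2 as $\ff{h} \cong \ff{q}\ff{j}$ with $\ff{j}$ a cofibration and $\ff{q}$ a trivial fibration, or alternatively apply \ref{lema6'.2}/\ref{lema7'.2} to an appropriate factorization. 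The cleanest route is: since $\ff{s}$ is a trivial cofibration and $\ff{h}\ff{s}$ is a weak equivalence, apply \ref{lema5'.2} Case III (with the roles $\ff{f} := \ff{s}$, $\ff{g} := \ff{h}$, $\ff{g}\ff{f} := \ff{h}\ff{s}$) \emph{provided} $\ff{s}$ and $\ff{h}$ are both cofibrations; failing that, I would factor $\ff{h} \cong \ff{p}'\ff{i}'$ with $\ff{i}'$ a trivial cofibration and $\ff{p}'$ a fibration (axiom 2-M2), whence $\ff{h}\ff{s} \cong \ff{p}'\ff{i}'\ff{s}$ with $\ff{i}'\ff{s}$ a composite of trivial cofibrations (hence a trivial cofibration by \ref{lema5'.2} Case I), so by \ref{lema6'.2} the fibration $\ff{p}'$ is a trivial fibration; then $\ff{h} \cong \ff{p}'\ff{i}'$ is a composite of a trivial cofibration and a trivial fibration, hence a weak equivalence by the very definition in \ref{estructuraenpropC}.

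The main technical point to verify carefully is that $\ff{i}'\ff{s}$ is genuinely a trivial cofibration so that the hypotheses of \ref{lema6'.2} are met, and that the isomorphisms $\ff{h}\ff{s} \cong \ff{p}'\ff{i}'\ff{s}$ and $\ff{g}\ff{f}\ff{s} \cong \ff{g}$ can be assembled coherently; these are routine manipulations with invertible $2$-cells in $\Prop{C}$ of the kind handled repeatedly above via elevators calculus. Thus the whole argument hinges on the interplay of \ref{lema8'.2} (producing the trivial-cofibration quasi-section), \ref{lema5'.2} Case I (stability of trivial cofibrations under composition), \ref{lema6'.2} (upgrading a fibration to a trivial fibration), and the definition of weak equivalence in \ref{estructuraenpropC}. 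The only real obstacle is organizing the factorization so that the common-class restriction in \ref{lema5'.2} is never violated, which the factorization-plus-\ref{lema6'.2} detour resolves.
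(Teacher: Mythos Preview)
Your argument is correct and takes a genuinely different route from the paper's. Both proofs begin the same way: invoke \ref{lema8'.2} to get a trivial-cofibration quasi-section $\ff{s}$ of the trivial fibration $\ff{f}$, so that $\ff{f}\ff{s}\cong id_{\ff{Y}}$. From there the paths diverge. You factor $\ff{h}\cong \ff{p}'\ff{i}'$ with $\ff{i}'$ a trivial cofibration and $\ff{p}'$ a fibration, observe that $\ff{h}\ff{s}\cong \ff{g}$ is a weak equivalence, note $\ff{i}'\ff{s}$ is a trivial cofibration (composite of two such, using 2-M3b and \ref{lema2'.2}), and then apply \ref{lema6'.2} to the factorization $\ff{h}\ff{s}\cong \ff{p}'(\ff{i}'\ff{s})$ to conclude $\ff{p}'$ is a trivial fibration; this exhibits $\ff{h}$ directly as a weak equivalence by the definition in \ref{estructuraenpropC}. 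The paper instead factors $\ff{g}\ff{f}\cong \ff{p}\ff{i}$ with $\ff{p}$ a trivial fibration and $\ff{i}$ a cofibration, then performs an \emph{additional} lifting of $\ff{g}$ against $\ff{p}$ to produce a quasi-section $\ff{Z}\to\ff{W}$ of $\ff{p}$; by the second clause of \ref{lema8'.2} this quasi-section is a trivial cofibration, its composite with $\ff{g}$ is a trivial cofibration isomorphic to $\ff{i}\ff{s}$, and then \ref{lema5'.2} yields that $\ff{i}$ itself is a trivial cofibration. Your detour through \ref{lema6'.2} is arguably cleaner: it avoids the second lifting and the second application of \ref{lema8'.2}, at the modest cost of invoking \ref{lema6'.2} (whose proof is itself of similar complexity to the lifting argument). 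Both arguments rely on the same infrastructure (2-M7 via \ref{sobran axiomas}, closure of trivial cofibrations under composition via \ref{lema5'.2} Case~I plus \ref{lema2'.2}).
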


\begin{proof}
First observe that since we have already proved axioms 2-M6a), 2-M6b) and \mbox{2-M6c),} we can assume that axiom 2-M7 holds in $\Prop{C}$. Then it is enough to prove the lemma only for the case where $\ff{h}=\ff{g}\ff{f}$. 

By axiom 2-M2, $\ff{g}\ff{f}$ can be factored as $\ff{g}\ff{f}\Mr{\cong \alpha} \ff{p}\ff{i}$ where $\ff{p}$ is a trivial fibration and $\ff{i}$ is a cofibration. Also, by \ref{lema8'.2}, there exists a trivial cofibration $\ff{Y}\mr{\ff{s}}\ff{X}$ such that $\ff{f}\ff{s}\Mr{\cong \beta}1$.
 
 Since $\ff{p}$ is a fibration and $\ff{g}$ is a trivial cofibration, there exists a filler $(\ff{h},\lambda,\rho)$ for the following diagram
 
 $$\xymatrix@R=3pc@C=1.5pc{\ff{Y} \ar[rr]^{\ff{i}\ff{s}} \ar[d]_{\ff{g}} 
             & \ar@{}[d]|{\cong \; \Downarrow \; \ff{g}\beta \circ \alpha \ff{s}}
             & \ff{W} \ar[d]^{\ff{p}}
             \\
             \ff{Z} \ar[rr]_{id_{\ff{Z}}} 
             && \ff{Z}}$$

Observe that, by \ref{lema8'.2}, $\ff{h}$ is a trivial cofibration. Then, since $\ff{g}$ is also a trivial cofibration, by axiom 2-M3 (this axiom is satisfied by \ref{sobran axiomas} and the fact that we have already proved axioms 2-M6a),b) and c)) plus \ref{lema5'.2}, $\ff{h}\ff{g}$ is a trivial cofibration. Then, since $\ff{i}\ff{s}\cong \ff{h}\ff{g}$, $\ff{i}\ff{s}$ is also a trivial cofibration and so, by \ref{lema5'.2}, $\ff{i}$ is a trivial cofibration which concludes the proof.
%From \ref{lema5'.2} and the fact that $\ff{s}$ is a trivial cofibration, it follows that $\ff{i}$ is a trivial cofibration which concludes the proof.             
\end{proof}

{\bfseries Axiom 2-M5w}: Case I: Suppose that $\ff{f}$ and $\ff{g}$ are weak equivalences: In this case, we can factorize $\ff{f}\cong \ff{p}\ff{i}$ and $\ff{g}\cong \ff{q}\ff{j}$ where $\ff{p}$, $\ff{q}$ are trivial fibrations and $\ff{i}$, $\ff{j}$ are trivial cofibrations. By \ref{lema9'.2}, $\ff{j}\ff{p}$ is a weak equivalence and so it can be factorized  as $\ff{j}\ff{p}\cong \ff{r}\ff{k}$ where $\ff{r}$ is a trivial fibration and $\ff{k}$ is a trivial cofibration. 

\noindent By \ref{lema5'.2} plus axiom 2-M3, $\ff{q}\ff{r}$ is a trivial fibration and $\ff{k}\ff{i}$ is a trivial cofibration. Then $\ff{g}\ff{f} \cong \ff{q}\ff{j}\ff{p}\ff{i}\cong \ff{q}\ff{r}\ff{k}\ff{i}$ and so is a weak equivalence as we wanted to prove.

Case II: Suppose that $\ff{f}$ and $\ff{g}\ff{f}$ are weak equivalences: In this case, we can factorize $\ff{f}\cong \ff{p}\ff{i}$ where $\ff{p}$ is a trivial fibration and $\ff{i}$ is a trivial cofibration. By axiom 2-M2, we can also factorize $\ff{g}\cong \ff{q}\ff{j}$ where $\ff{q}$ is a fibration and $\ff{j}$ is a trivial cofibration. Then $\ff{g}\ff{f} \cong \ff{q}\ff{j}\ff{p}\ff{i}$ where $\ff{j}\ff{p}$ is a weak equivalence by \ref{lema9'.2}. Then we can factorize $\ff{j}\ff{p}\cong \ff{r}\ff{k}$ where $\ff{r}$ is a trivial fibration and $\ff{k}$ is a trivial cofibration and so $\ff{g}\ff{f}\cong \ff{q}\ff{r}\ff{k}\ff{i}$ where $\ff{q}\ff{r}$ is a fibration and $\ff{k}\ff{i}$ is a trivial cofibration by \ref{lema5'.2}. Therefore, by \ref{lema6'.2}, $\ff{q}\ff{r}$ is a trivial fibration and so, by \ref{lema5'.2}, $\ff{q}$ is a trivial fibration which concludes the proof that $\ff{g}$ is a weak equivalence.

Case III: Suppose that $\ff{g}$ and $\ff{g}\ff{f}$ are weak equivalences: The proof is analogous to the proof of Case II but using \ref{lema7'.2} instead of \ref{lema6'.2}.

To conclude the proof, suppose that $\ff{f}$ is an isomorphism. Then it can be easily checked that $\ff{f}$ has the left lifting property with respect to all fibrations and so, by \ref{lema3'.2}, $\ff{f}$ is a trivial cofibration. Besides, $\ff{f}$ can be factored as $\ff{f}\cong id \ff{f}$ and so is a weak equivalence.

This was the only remaining axiom to conclude that $\Prop{C}$ is a closed 2-bmodel 2-category as we wanted to prove.
\cqd

\pagebreak
\begin{center}
{\bf Resumen en castellano de la secci\'on \ref{2-modelos}}
\end{center}

En esta secci\'on dotamos a $\Pro{C}$ de una estructura de ``closed 2-bmodel \mbox{2-category}'' cuando $\cc{C}$ la posee. Esta secci\'on est\'a inspirada en la prueba dada en \cite{EH} del hecho de que $\ff{Pro}(\ff{C})$ es una ``closed model category'' en el caso 1-dimensional. La demostraci\'on en nuestro contexto result\'o ser mucho m\'as complicada debido a que los diagramas no conmutan estrictamente sino que conmutan salvo isomorfismo. Por esta raz\'on, nos vimos obligados a trabajar con pseudo-funtores y transformaciones pseudo-naturales si bien los objetos y los morfismos en $\Pro{C}$ son 2-funtores y transformaciones 2-naturales. Se podr\'ia pensar (y nosotros lo hicimos por un tiempo) que trabajar con 2-funtores y transformaciones pseudo-naturales ser\'ia suficiente pero no lo es. La raz\'on para tomar pseudo-funtores queda evidenciada en la prueba del axioma 2-M2 para $\pCJ$ donde $\ff{Z}$ resulta un pseudo-funtor que no es necesariamente un 2-funtor a\'un cuando todos los dem\'as lo son. 

La demostraci\'on del teorema principal que establece que $\Pro{C}$ es una ``closed 2-bmodel 2-category'' tiene tres pasos. El primero, \ref{closed 2-model structure for CJ}, consiste en definir una estructura de ``closed 2-bmodel 2-category'' para la 2-categor\'ia $\pCJ$ (\ref{ccHom}) a partir de una estructura para $\cc{C}$, donde $\ff{J}$ es un poset cofinito y filtrante con un \'unico objeto inicial. Cabe comentar que en \cite{EH} no se pide que $\ff{J}$ tenga un \'unico objeto inicial, lo cual para nosotros es un requisito esencial, incluso en el caso 1-dimensional. El segundo paso, \ref{closed 2-bmodel structure in Prop}, consiste en usar la estructura en $\pCJ$ para definir una en la 2-categor\'ia 
$\Prop{C}$. Para esto se prueban primero los aspectos de completitud y co-completitud finita que resultar\'an en la demostraci\'on del axioma 2-M0b y luego se demuestran de manera encadenada el resto de los axiomas que requieren la demostraci\'on de varias propiedades que destacamos como lemas pues tienen inter\'es en s\'i mismas. Finalmente, el tercer paso consiste en transferir esta estructura a $\Pro{C}$ usando que esta 2-categor\'ia es ``retract pseudo-equivalente'' a $\Prop{C}$ ( \ref{proppseudoeqapro}) mediante el resultado probado en \ref{pseudo-equivalencerespetaaxiomas}.   

\pagebreak

\bibliographystyle{unsrt}

\end{document}